\newtheorem{theorem}{Theorem}[chapter]
\newtheorem{lemma}[theorem]{Lemma}
\newtheorem{proposition}[theorem]{Proposition}
\newtheorem{corollary}[theorem]{Corollary}
\theoremstyle{definition}
\newtheorem{definition}[theorem]{Definition}
\theoremstyle{remark}
\newtheorem{remark}[theorem]{Remark}
\numberwithin{section}{chapter}
\numberwithin{equation}{chapter}
\newcommand{\mint}{- \mskip-19,5mu \int}
\def\N{\mathbb{N}}
\def\R{\mathbb{R}}
\def\H{\mathcal{H}}
\def\bg{{\boldsymbol\gamma}}
\renewcommand{\d}{\mathrm{d}}
\newcommand{\dx}{\mathrm{d}x}
\newcommand{\dy}{\mathrm{d}y}
\newcommand{\dt}{\mathrm{d}t}
\newcommand{\ds}{\mathrm{d}s}
\renewcommand{\epsilon}{\varepsilon}
\DeclareMathOperator{\spt}{spt}
\DeclareMathOperator{\Div}{div}
\DeclareMathOperator{\dist}{dist}
\DeclareMathOperator{\loc}{loc}
\renewcommand{\epsilon}{\varepsilon}
\newcommand{\eps}{\varepsilon}
\renewcommand{\rho}{\varrho}
\def\eqn#1$$#2$${\begin{equation}\label#1#2\end{equation}}
\newcommand{\noi}{\noindent}
\newcommand{\dsty}{\displaystyle}
\newcommand{\al}{\alpha}
\newcommand{\be}{\beta}
\newcommand{\gm}{\gamma}
\newcommand{\dl}{\delta}
\newcommand{\Dl}{\Delta}
\newcommand{\lm}{\lambda}
\newcommand{\Lm}{\Lambda}
\newcommand{\varep}{\varepsilon}
\newcommand{\vp}{\varphi}
\newcommand{\sig}{\sigma}
\newcommand{\z}{\zeta}
\newcommand{\rr}{\mathbb{R}}
\newcommand{\rn}{\rr^N}
\newcommand{\bl}[1]{\mathbf{#1}}
\newcommand{\dvg}{\operatorname{div}}
\newcommand{\essup}{\operatornamewithlimits{ess\,sup}}
\newcommand{\osc}{\operatornamewithlimits{osc}}
\newcommand{\pl}{\partial}
\newcommand{\power}[2]{\bm{#1^{\mbox{\unboldmath{\scriptsize$#2$}}}}}
\def\Xint#1{\mathchoice
    {\XXint\displaystyle\textstyle{#1}}%
    {\XXint\textstyle\scriptstyle{#1}}%
    {\XXint\scriptstyle\scriptscriptstyle{#1}}%
    {\XXint\scriptscriptstyle\scriptscriptstyle{#1}}%
    \!\int}
\def\XXint#1#2#3{\setbox0=\hbox{$#1{#2#3}{\int}$}
    \vcenter{\hbox{$#2#3$}}\kern-0.5\wd0}
\def\bint{\Xint-}
\def\dashint{\Xint{\raise4pt\hbox to7pt{\hrulefill}}}
\def\Xiint#1{\mathchoice
    {\XXiint\displaystyle\textstyle{#1}}%
    {\XXiint\textstyle\scriptstyle{#1}}%
    {\XXiint\scriptstyle\scriptscriptstyle{#1}}%
    {\XXiint\scriptscriptstyle\scriptscriptstyle{#1}}%
    \!\iint}
\def\XXiint#1#2#3{\setbox0=\hbox{$#1{#2#3}{\iint}$}
    \vcenter{\hbox{$#2#3$}}\kern-0.5\wd0}
\def\biint{\Xiint{-\!-}}
\def\XXiiint#1#2#3{\setbox0=\hbox{$#1{#2#3}{\iint}$}
    \vcenter{\hbox{$#2#3$}}\kern-0.5\wd0}
\begin{document}

\frontmatter

\title{Hölder Continuity of the Gradient of Solutions to\\ Doubly Non-Linear Parabolic Equations}

%    Remove any unused author tags.

%    author one information
\author{Verena B\"{o}gelein}
\address{Fachbereich Mathematik, Universit\"at Salzburg,
Hellbrunner Str. 34, 5020 Salzburg, Austria}
\curraddr{}
\email{verena.boegelein@plus.ac.at}
\thanks{The first author was supported in part by the FWF-Project P31956-N32 “Doubly nonlinear evolution equations”}

%    author two information
\author{Frank Duzaar}
\address{Fachbereich Mathematik, Universit\"at Salzburg,
Hellbrunner Str. 34, 5020 Salzburg, Austria}
\curraddr{}
\email{frankjohannes.duzaar@plus.ac.at}

%    author three information
\author{Ugo Gianazza}
\address{Dipartimento di Matematica ``F. Casorati",
Universit\`a di Pavia,
via Ferrata 5, 27100 Pavia, Italy}
\curraddr{}
\email{gianazza@imati.cnr.it}
\thanks{The third author was supported in part by Grant  2017TEXA3H\_002 ``Gradient flows, Optimal Transport and Metric Measure Structures".}

%    author four information
\author{Naian Liao}
\address{Fachbereich Mathematik, Universit\"at Salzburg,
Hellbrunner Str. 34, 5020 Salzburg, Austria}
\curraddr{}
\email{naian.liao@plus.ac.at}
\thanks{The fourth author was supported by the FWF-Project P36272-N “On the Stefan type problems” and the FWF-Project P31956-N32 “Doubly nonlinear evolution equations”}

%    author five information
\author{Christoph Scheven}
\address{Fakult\"at f\"ur Mathematik, 
Universit\"at Duisburg-Essen, 45117 Essen, Germany}
\curraddr{}
\email{christoph.scheven@uni-due.de}

%    \date is required; it is the date received by the editor.
\date{}

\subjclass[2020]{Primary 35K67,35B45,35B65;\\Secondary 35K92, 76S05}
%    Recognition of the 2010 edition of the Mathematics Subject
%    Classification requires a version of amsbook.cls from July 2009
%    or later.  If "2010" is not recognized, please upgrade.

\keywords{Doubly non-linear parabolic equations, Gradient estimates, Boundedness of solutions, Intrinsic scaling, Expansion of positivity, Harnack inequality, Schauder estimates}

\dedicatory{To the memory of Emmanuele DiBenedetto }

\begin{abstract}
This paper is devoted to studying the local behavior of non-negative weak solutions to the doubly non-linear parabolic equation
\begin{equation*}%\label{doubly-nonlinear-prototype}
    \partial_t u^q - 
	\Div\big(|D u|^{p-2}D u\big)
	=
	0 %\quad \mbox{in $ E_T$,}
\end{equation*}
in a space-time cylinder.
%where $p>1$, $q>0$ and $E_T:= E\times(0,T)$ is a space-time cylinder over an open and bounded set $E\subset\R^N$. 
H\"older estimates are established for the gradient of its weak solutions in the super-critical fast diffusion regime $0<p-1< q<\frac{N(p-1)}{(N-p)_+}$ where $N$ is the space dimension. 
%In points where the solution is strictly positive we prove a quantitative bound for the spatial gradient of the solution and its Hölder continuity. 
Moreover,   decay estimates are obtained for weak solutions and their gradient in the vicinity of possible extinction time. 
Two main components towards these regularity estimates are a time-insensitive Harnack inequality that is particular about this regime, and Schauder estimates for the parabolic $p$-Laplace equation. %which both are interesting in themselves.
\end{abstract}

\maketitle

\tableofcontents

%    Include unnumbered chapters (preface, acknowledgments, etc.) here.
%\include{preface}

\mainmatter

\chapter{Introduction}\label{Intro}
\section{Regularity for doubly non-linear parabolic equations}\label{sec:intro-intro}

Porous medium type equations and more generally doubly non-linear parabolic equations arise in numerous applications of physics. In Chapter~\ref{SEC:model} we will explain  a physical model for the flow of fluids in porous media that leads to the doubly non-linear parabolic equation considered in this paper. Despite their importance,  many fundamental questions are still largely open in the field and a complete mathematical understanding of these equations is yet to be established.

Although some parts of the paper are dealing with more general PDEs, the main result is concerned with non-negative weak solutions to the prototype doubly non-linear parabolic equation
\begin{equation}\label{doubly-nonlinear-prototype}
    \partial_t u^q - 
	\Div\big(|D u|^{p-2}D u\big)
	=
	0\quad \mbox{in $ E_T$,}
\end{equation}
where $p>1$, $q>0$ and $E_T:= E\times(0,T]$ is a space-time cylinder over an open and bounded set $E\subset\R^N,\,N\in\mathbb{N}$. 
%Depending on whether $p<2$ or $p>2$, respectively $q<1$ or $q>1$, 
A mathematical interest of the equation lies in its singularity or/and degeneracy at points where either $u=0$ or $Du=0$. 
Moreover, it covers the porous medium equation ($p=2$),  the parabolic $p$-Laplace equation ($q=1$) and Trudinger's equation ($q=p-1$). Monographs \cite{DB, Vazquez, WZYL-01} deal with various topics about this equation. The theme of this work concerns  the {\it fast diffusion} regime $q>p-1$ where solutions to \eqref{doubly-nonlinear-prototype} show an infinite speed of propagation as in the case of the heat equation.

Existence of weak solutions to \eqref{doubly-nonlinear-prototype} can be established in the parabolic Sobolev-space 
\begin{equation*}  
	C\big( [0,T]; L^{q+1}(E)\big) \cap L^p \big(0,T; W^{1,p} (E)\big),
\end{equation*}
see~Section~\ref{S:existence} for more discussion. Whereas uniqueness of weak solutions was known only in very limited cases; the issue will be discussed further in Section~\ref{S:CP}. 

The investigation of regularity of weak solutions was initiated by Ivanov in a series of papers. Local boundedness of weak solutions has been studied in~\cite{Ivanov-1995-2}. Regarding this topic, we refer to Section~\ref{sec:int-bound} for a more detailed exposition. The issue of H\"older continuity of weak solutions requires more delicate analysis and have attracted a number of authors. The well-studied ranges are the borderline case $p-1=q$, the doubly singular case, i.e. $p-1<q$ and $1<p<2$, as well as the doubly degenerate case, i.e. $q<p-1$ and $p>2$. See~\cite{Ivanov-1989, Ivanov-1994, Ivanov-1995-3, Porzio-Vespri, Urbano-08, KSU-12, BDL-21, BDLS-22, Liao-Schaetzler, Vespri-Vestberg}, and also Figure~\ref{Fig:hold}, where the white regions give a graphical representation of the well-studied ranges.
\begin{center}
\begin{figure}
\psfragscanon
\psfrag{p}{$\scriptstyle p$}
\psfrag{q}{$\scriptstyle q$}
\psfrag{pzero}{$\scriptstyle p=1$}
\psfrag{qzero}{$\scriptstyle q=0$}
\psfrag{ptwo}{$\scriptstyle p=2$}
\psfrag{line}{$\scriptstyle q=p-1$}
%\psfrag{curve}{$\scriptstyle q=\frac{N(p-1)+p}{N-p}$}
\begin{center}
%\scalebox{0.9}{
\includegraphics[width=.75\textwidth]{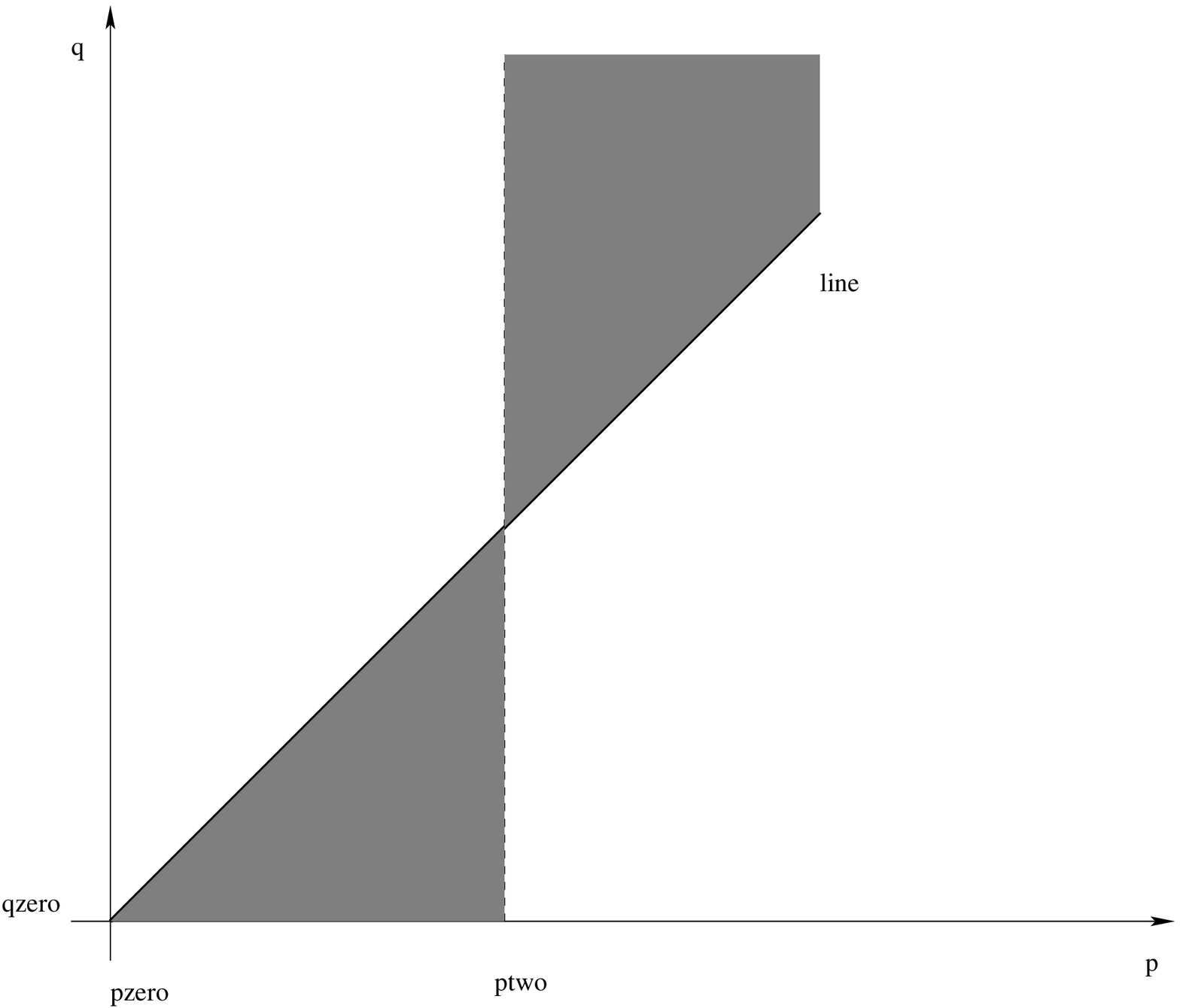}
\caption{\label{Fig:hold}
}
\end{center}
\end{figure}
\end{center}

The theory becomes even more fragmented when it comes to gradient regularity. 
%In the special cases $p=2$ or $q=1$, the theory is more advanced.  
The well-studied case is the parabolic $p$-Laplace equation starting from DiBenedetto \& Friedman \cite{DiBenedetto-Friedman, DiBenedetto-Friedman3}. Indeed, the gradient of weak solutions is H\"older continuous.  %More precisely, in the super-critical range $p>\frac{2n}{n+2}$ they first established local boundedness of the spatial gradient and subsequently proved its Hölder-continuity. 

On the other hand, gradient regularity for solutions to the porous medium equation is more involved and much less understood. 
In the slow diffusion regime ($p=2$ \& $q<1$), only the one-dimensional case is reasonably understood. Aronson studied the Lipschitz continuity of $x\mapsto u^{1-q}(x,t)$ 
%\textcolor{Green}{(the function that is Lipschitz is $v^{m-1}=(u^q)^{\frac1q-1}=u^{1-q}$, where $v=u^q$ is the solution considered by Aronson)}
in \cite{Aronson-69}. For $0<q\le\frac12$ DiBenedetto~\cite{DB-1d} investigated the Lipschitz continuity of $t\mapsto u^{1-q}(x,t)$. The full range $0<q<1$ has later been established by B\'enilan~\cite{Benilan} and Aronson \& Caffarelli~\cite{Aronson-Caffarelli}. 
In the same regime, the optimal regularity in the multi-dimensional case is still a major open problem. 
 Caffarelli \& V\'azquez \& Wolanski \cite{CVW} proved for a solution $u$ to the Cauchy problem that $D u^{1-q}$ becomes bounded after some waiting time. This result was localized and improved for $0<q\le\frac12$ by Gianazza \& Siljander \cite{Gianazza-Siljander} who were, roughly speaking, able to quantify the waiting time. More precisely, if at some point $(x_o,t_o)$ the average $a^{q}=\bint_{B_r(x_o)} u^{q}(x,t_o) \,\dx$ is strictly positive, then $Du^{1-q}$ is locally bounded in a certain cylinder, whose center lies at time $t_o+a^{q-1} r^2$. 
 %\textcolor{orange}{Should we mention Ivanov \cite{Ivanov-1996} for $\frac12<q<1$ 
 %(see \cite{Gianazza-Siljander} Remark 1.1)?} 

In the fast diffusion regime, ($p=2$ \& $q>1$) the picture is clearer. In the super-critical range $1<q<\frac{N}{(N-2)_+}$ DiBenedetto \& Kwong \& Vespri \cite{DBKV} proved that weak solutions are locally analytic in the space variable and at least Lipschitz-continuous in time up to the extinction time. They presented very precise, quantitative, regularity estimates up to the boundary. See also \cite{JX} for recent contributions.

As for the doubly non-linear equation \eqref{doubly-nonlinear-prototype}, to our knowledge, the only attempts were made by Ivanov \& Mkrtychyan \cite{Ivanov-Mk}, Ivanov \cite{Ivanov-1996}, and Savar\'e \& Vespri \cite{Savare}. More details will be given in Section~\ref{sec:reg:opt}.

Our main aim is to establish the  gradient H\"older regularity of weak solutions to \eqref{doubly-nonlinear-prototype} in the {\it super-critical fast diffusion} regime $0<p-1< q<\frac{N(p-1)}{(N-p)_+}$. %\textcolor{}{In points where the solution is strictly positive we establish a quantitative bound for the spatial gradient of the solution and its H\"older continuity. By dimensional analysis it turns out that our estimates are the optimal ones. }
%We thereby completely recover the result from \cite{DBKV} for doubly nonlinear equations. Note that the upper bound $\frac{N(p-1)}{(N-p)_+}$ on $q$ is equal to $\frac{N}{(N-2)_+}$ when $p=2$ and Hölder continuity of the spatial gradient is the best possible regularity due to the presence of the $p$-Laplace in the diffusion part of the equation.  
Our approach relies on a ``time-insensitive" Harnack inequality that is particular about this regime and Schauder estimates for the parabolic $p$-Laplace equation, both of which are of independent interest.
The thrust of our contribution is to generate precise, quantitative, local estimates regarding weak solutions as well as their gradient that dictate their behavior. Moreover, it turns out that such estimates are sharp as evidenced by various explicit examples.

\section{The main  regularity  theorem}\label{S:intro:main:thm}
As already mentioned, the main result of the present paper deals with the gradient regularity of weak solutions $u$
to doubly non-linear parabolic equations \eqref{doubly-nonlinear-prototype} in the super-critical fast diffusion range. The formulation of the result is based, among other things, on the pointwise value $u(x_o,t_o)>0$. For this reason, it is convenient to state the result for continuous weak solutions. %However, this is not restrictive, as we will see later, since we can always proceed to the upper semicontinuous representative. The main result is

\begin{theorem}\label{THM:REGULARITY-INTRO}
Assume that $0<p-1< q<\frac{N(p-1)}{(N-p)_+}$.
There are constants $\boldsymbol\gm,\widetilde{\boldsymbol\gm}>1$ and $\alpha_o\in(0,1)$, which  depend only on $N$, $p$ and $q$, such that if $u$ is a 
non-negative,  continuous, weak solution to \eqref{doubly-nonlinear-prototype} in $E_T$,
if $u_o:=u(x_o,t_o)>0$, and if the set inclusion
\[
\widetilde{\bg}Q_o
:=
K_{\widetilde{\bg}\rho}(x_o) \times \big(t_o - u_o^{q+1-p}(\widetilde{\boldsymbol\gm}\rho)^p, t_o + u_o^{q+1-p}(\widetilde{\boldsymbol\gm}\rho)^p\big)\subset E_T
\]
holds, then we have the gradient bound
\begin{equation}\label{grad-est-intro}
    \sup_{Q_o}|Du|
    \le
    \frac{\boldsymbol\gm u_o}{\rho},
\end{equation}
the Lipschitz estimate
\begin{equation}\label{diff-u-intro}
    |u(z_1)-u(z_2)|
    \le
    \boldsymbol\gm u_o\,
    \Bigg[\frac{|x_1-x_2|}{\rho}+\sqrt{\frac{|t_1-t_2|}{u_o^{q+1-p}\rho^p}} \,\Bigg],
\end{equation}
and the gradient H\"older estimate
\begin{equation}\label{C1alph-intro}
    |Du(z_1) - Du(z_2)|
    \le
    \frac{\boldsymbol\gm u_o}{\rho}\,
    \Bigg[\,\frac{|x_1-x_2|}{\rho}+\sqrt{\frac{|t_1-t_2|}{
      u_o^{q+1-p}\rho^p}}\, \Bigg]^{\alpha_o},
\end{equation}
for all $z_1=(x_1,t_1), z_2=(x_2,t_2)\in Q_o$. 
\end{theorem}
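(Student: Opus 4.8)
The plan is to reduce the theorem to two pillars announced in the introduction — a time-insensitive Harnack inequality valid in the super-critical fast diffusion regime, and Schauder-type estimates for the parabolic $p$-Laplace equation — and to bridge the gap between them via intrinsic scaling. The first move is a reduction to a normalized situation: since $u_o=u(x_o,t_o)>0$ and $u$ is continuous, one may work in the intrinsic cylinder $\widetilde{\bg}Q_o$ with the natural time scaling $u_o^{q+1-p}\rho^p$ dictated by the equation. Rescaling $u\mapsto u/u_o$ and $x\mapsto x/\rho$, $t\mapsto t/(u_o^{q+1-p}\rho^p)$ transforms \eqref{doubly-nonlinear-prototype} into itself, so it suffices to prove \eqref{grad-est-intro}--\eqref{C1alph-intro} with $u_o=\rho=1$. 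The role of the large dilation constants $\bg,\widetilde{\bg}$ is to create enough room: on the enlarged cylinder the time-insensitive Harnack inequality forces $u$ to stay comparable to $u_o$, i.e. $\tfrac12 u_o\le u\le 2u_o$ (say) on a fixed fraction of $\widetilde{\bg}Q_o$ containing $Q_o$.

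Once $u$ is trapped in $[c,C]$ with $c,C$ depending only on the data, the singularity at $u=0$ of the doubly non-linear structure disappears: on this set the equation $\partial_t u^q-\Div(|Du|^{p-2}Du)=0$ is a genuine (non-degenerate in $u$) parabolic $p$-Laplace–type equation, because $\partial_t u^q = q u^{q-1}\partial_t u$ with the coefficient $qu^{q-1}$ bounded above and below. This is exactly the setting to which the Schauder estimates for the $p$-Laplacian apply. Invoking them on the good cylinder yields the sup-bound on $|Du|$ in \eqref{grad-est-intro} (after undoing the scaling, $\sup_{Q_o}|Du|\le \bg u_o/\rho$), the Lipschitz estimate \eqref{diff-u-intro} for $u$ itself in the intrinsic metric, and the Hölder continuity \eqref{C1alph-intro} of $Du$ with exponent $\alpha_o$ and the correct intrinsic parabolic distance $\frac{|x_1-x_2|}{\rho}+\sqrt{|t_1-t_2|/(u_o^{q+1-p}\rho^p)}$. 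The exponent $\alpha_o$ and the constant $\bg$ produced this way depend only on $N,p,q$ and on the oscillation bounds for $u$, which in turn depend only on $N,p,q$ — so the final constants depend only on the data, as claimed.

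The main obstacle is the first step: establishing that the Harnack inequality available in this regime is genuinely \emph{time-insensitive}, i.e. that a lower bound on $u$ at $(x_o,t_o)$ propagates both forward and backward in time over an intrinsically long interval $(t_o-u_o^{q+1-p}(\widetilde{\bg}\rho)^p,\,t_o+u_o^{q+1-p}(\widetilde{\bg}\rho)^p)$, rather than only forward as in the standard parabolic Harnack. This is the feature "particular about this regime" $0<p-1<q<\frac{N(p-1)}{(N-p)_+}$, and it is what allows one to control the full two-sided cylinder $Q_o$ and hence to get estimates comparing $z_1,z_2$ anywhere in $Q_o$. Technically it rests on expansion of positivity arguments together with energy/De Giorgi estimates, and on the super-criticality condition $q<\frac{N(p-1)}{(N-p)_+}$ which guarantees the correct balance between the diffusion exponent and the dimension (preventing extinction effects from destroying the backward-in-time control on the relevant scale). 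A secondary technical point is keeping the quantitative dependence of all constants on $N,p,q$ only, and verifying that the scaling is respected consistently across the Harnack step and the Schauder step so that the two intrinsic metrics match; this is routine bookkeeping once the structural facts are in place.
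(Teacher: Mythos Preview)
Your overall architecture matches the paper: rescale intrinsically, use the time-insensitive Harnack inequality to trap $u$ in $[c,C]$ on a unit cylinder, reduce to a parabolic $p$-Laplacian with coefficients, apply Schauder estimates, then scale back. But the reduction step, as you have written it, does not work.

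You write $\partial_t u^q = qu^{q-1}\partial_t u$ and claim this exhibits the equation as a $p$-Laplace equation with bounded coefficients. The resulting equation is
\[
qu^{q-1}\,\partial_t u-\Div\big(|Du|^{p-2}Du\big)=0,
\]
which has the coefficient on the \emph{time} derivative, not inside the divergence. The Schauder estimates of Theorem~\ref{theorem:schauder:intro} apply to equations of the form $\partial_t w-\Div\big(a(x,t)|Dw|^{p-2}Dw\big)=0$; your form is not of this type, and dividing through by $qu^{q-1}$ destroys the divergence structure since $qu^{q-1}$ depends on $x$. The paper's remedy is to substitute $v:=\tilde u^q$ rather than to peel off the power from the time term. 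Then $\partial_t v=\partial_t\tilde u^q$ directly, while $D\tilde u=\tfrac1q\,\tilde u^{1-q}Dv$ gives
\[
|D\tilde u|^{p-2}D\tilde u=\big(\tfrac1q\big)^{p-1}\tilde u^{(p-1)(1-q)}|Dv|^{p-2}Dv,
\]
so $v$ solves $\partial_t v-\Div\big(a\,|Dv|^{p-2}Dv\big)=0$ with $a=(\tfrac1q)^{p-1}\tilde u^{(p-1)(1-q)}$ sitting inside the divergence, as required.

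There is a second point you skip. Even after the substitution, the coefficient $a$ depends on the solution itself and is a~priori only bounded and measurable; Theorem~\ref{theorem:schauder:intro} requires $a$ to be H\"older in $x$. The paper bootstraps: first apply the classical $C^{0,\alpha}$ theory for $p$-Laplacians with merely bounded measurable coefficients (DiBenedetto, Chapters~III--IV) to conclude that $v$ is H\"older continuous on $\tfrac12\mathcal Q_1$; since $\tilde u=v^{1/q}$ is bounded away from zero and infinity, $a$ is then H\"older as well, and only now do the Schauder estimates apply.
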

Figure \ref{Fig:har} illustrates the region where Theorem~\ref{THM:REGULARITY-INTRO} holds. 
We emphasize that Theorem~\ref{THM:REGULARITY-INTRO} is false for $q=p-1$ and $q=\frac{N(p-1)}{(N-p)_+}$; see Section~\ref{sec:reg:opt-1}.

\begin{center}
\begin{figure}
\psfragscanon
\psfrag{p}{$\scriptstyle p$}
\psfrag{q}{$\scriptstyle q$}
\psfrag{pzer}{$\scriptstyle p=1$}
\psfrag{qzero}{$\scriptstyle q=0$}
\psfrag{pn}{$\scriptstyle p=N$}
\psfrag{line}{$\scriptstyle q=p-1$}
\psfrag{curve}{$\scriptstyle q=\frac{N(p-1)}{N-p}$}
\begin{center}
%\scalebox{0.9}{
\includegraphics[width=.75\textwidth]{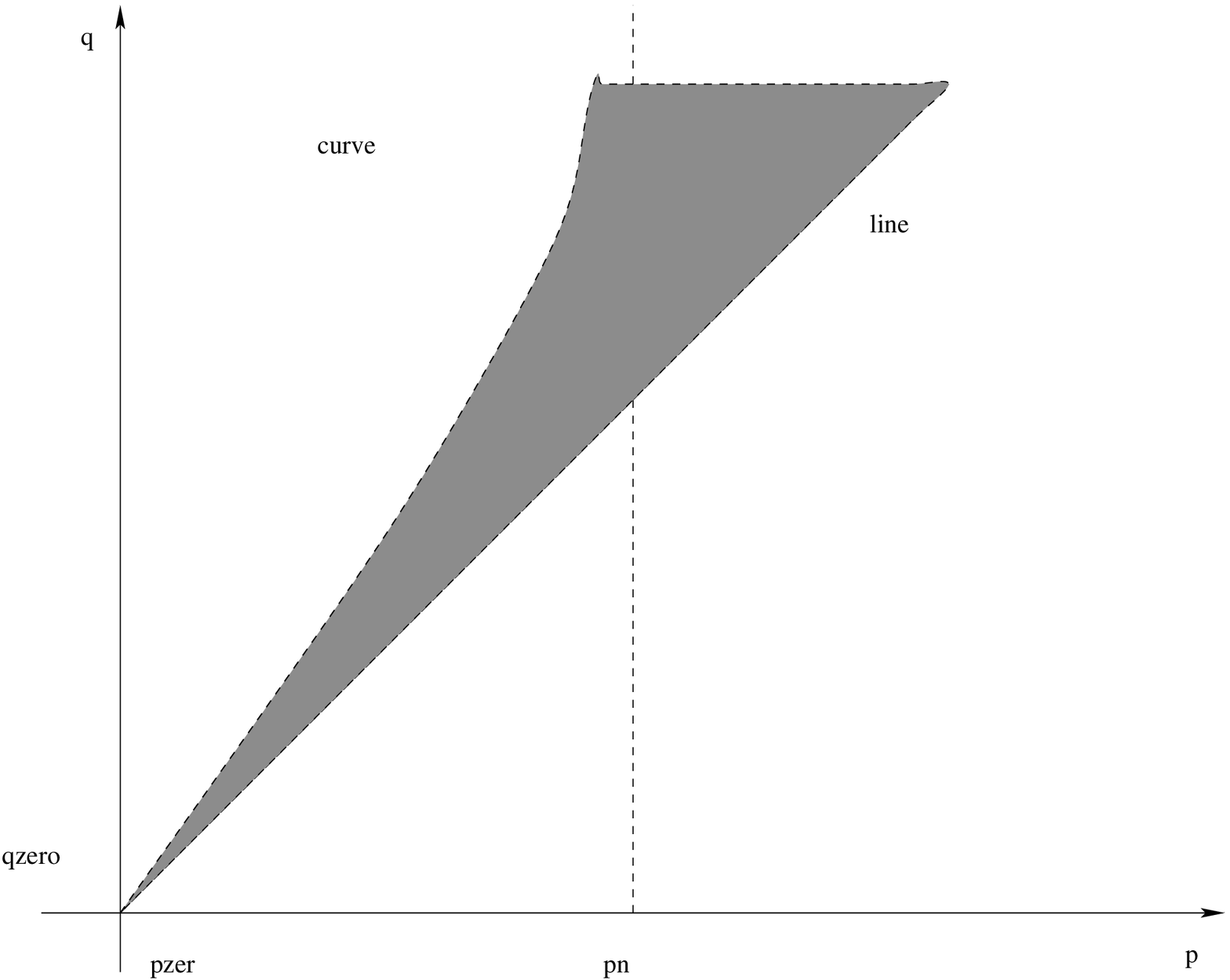}
\caption{\label{Fig:har}
}
\end{center}
\end{figure}
\end{center}

\begin{remark}\label{Rmk:semicontin}
In the statement of gradient regularity, the continuity of $u$ is assumed in order to give an unambiguous meaning to $u(x_o,t_o)$. It suffices to state these estimates for the {\it upper semicontinuous regularization} $u^*$ of $u$, which is, for locally bounded solutions, uniquely determined and verifies $u=u^*$~a.e. in $E_T$. See~\cite[Theorem~2.3]{Liao-JMPA-21} for details. 
\end{remark}

Based on Theorem~\ref{THM:REGULARITY-INTRO}, the gradient estimates can also be formulated in a generic compact subset $\mathcal{K}$ of $E_T$. In particular, $Du$ is H\"older continuous over $\mathcal{K}$, and precise estimates can be obtained in terms of $\sup_{\mathcal{K}}u$ and $\dist(\mathcal{K},\pl E_T)$.

\begin{corollary}\label{COR:GRAD-REG}
Under the assumptions of Theorem~\ref{THM:REGULARITY-INTRO} there exists $\alpha_1\in(0,1)$, depending only on $N,p$ and $q$,  such that the following holds.  Let
  $\mathcal{K}\subset E\times(0,T)$ be a compact subset and define
  \begin{equation*}
    \rho_o:=\inf_{\genfrac{}{}{0pt}{2}{(x,t)\in \mathcal K,}{ (y,s)\in\partial E_T}}\big(|x-y|+|t-s|^{\frac1p}\big)
  \end{equation*}
  and $\mathcal{M}:=\max\{1,\sup_{\mathcal{K}} u\}$. Then we have the $L^\infty$-gradient estimate
  \begin{equation}\label{grad-bound-K}
    \sup_{\mathcal{K}}|Du|\le \bg\frac{\mathcal{M}^{\frac{q+1}{p}}}{\rho_o},
  \end{equation}
  as well as the H\"older  gradient estimate
  \begin{equation}\label{grad-holder-K}
    |Du(x_1,t_1)-Du(x_2,t_2)|
     \le
    \bg\frac{\mathcal{M}^{\frac{q+1}{p}}}{\rho_o}
    \Bigg[\mathcal{M}^{\frac{q+1-p}{p}}\frac{|x_1-x_2|}{\rho_o}
    +\sqrt{\frac{|t_1-t_2|}{\rho_o^p}}\, \Bigg]^{\alpha_1}
\end{equation} 
for every $(x_1,t_1),$ $(x_2,t_2)\in \mathcal{K}$.     
\end{corollary}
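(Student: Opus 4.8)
The idea is to obtain both estimates from Theorem~\ref{THM:REGULARITY-INTRO}, applied at every point of $\mathcal K$ with an intrinsic radius adapted to the value of $u$ there. Fix $z_o=(x_o,t_o)\in\mathcal K$ and put $u_o:=u(x_o,t_o)$. Since $z_o\in\mathcal K$ and $\rho_o$ is the parabolic distance from $\mathcal K$ to $\partial E_T$, one has $\dist(x_o,\partial E)\ge\rho_o$ and $\min\{t_o,T-t_o\}\ge\rho_o^p$, so, using $q+1-p>0$, the choice
\[
  \rho:=\frac{\rho_o}{\widetilde{\bg}\,\max\big\{1,\,u_o^{\frac{q+1-p}{p}}\big\}}
\]
is tailored to give both $\widetilde{\bg}\rho\le\rho_o$ and $u_o^{q+1-p}(\widetilde{\bg}\rho)^p\le\rho_o^p$, hence (after a harmless dimensional adjustment) the inclusion $\widetilde{\bg}Q_o\subset E_T$ demanded by the theorem. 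When $u_o>0$ the theorem applies, and since $q>p-1$ yields $\tfrac{q+1}{p}>1$ and $u_o\le\mathcal M$, the gradient bound \eqref{grad-est-intro} becomes
\[
  \sup_{Q_o}|Du|\le\frac{\bg\,u_o}{\rho}
  =\frac{\bg\,\widetilde{\bg}\,\max\{u_o,\,u_o^{\frac{q+1}{p}}\}}{\rho_o}
  \le\frac{\bg\,\widetilde{\bg}\,\mathcal M^{\frac{q+1}{p}}}{\rho_o}\,,
\]
which is \eqref{grad-bound-K} at $z_o$. At a point with $u_o=0$ the same bound \eqref{grad-est-intro}, evaluated along a sequence tending to $z_o$ on which $u>0$, has right-hand side tending to $0$, so $Du(z_o)=0$; together with the interior gradient continuity on $\{u>0\}$ furnished by Theorem~\ref{THM:REGULARITY-INTRO} this shows $Du\in C(\mathcal K)$ and that \eqref{grad-bound-K} holds on all of $\mathcal K$.

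\textbf{The gradient H\"older estimate.} Fix $z_1=(x_1,t_1),z_2=(x_2,t_2)\in\mathcal K$ and, relabelling if needed, assume $u_1:=u(x_1,t_1)\ge u_2:=u(x_2,t_2)$; if $u_1=0$ then $Du(z_1)=Du(z_2)=0$ and there is nothing to prove, so suppose $u_1>0$ and let $\rho_1$, $Q_1:=K_{\rho_1}(x_1)\times(t_1-u_1^{q+1-p}\rho_1^p,\,t_1+u_1^{q+1-p}\rho_1^p)$ be the radius and cylinder constructed above at $z_1$ (this requires no lower bound on $u_2$, since the estimates of Theorem~\ref{THM:REGULARITY-INTRO} only use positivity of $u$ at the centre). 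If $z_2\in Q_1$ (``close points'') I would apply \eqref{C1alph-intro} at $z_1$: inserting the value of $\rho_1$ and using $u_1\le\mathcal M$, $\mathcal M\ge1$, its right-hand side is $\bg\,\mathcal M^{\frac{q+1}{p}}/\rho_o$ times a bracket whose spatial part is bounded by $\widetilde{\bg}\,\mathcal M^{\frac{q+1-p}{p}}|x_1-x_2|/\rho_o$, while its time part equals $\widetilde{\bg}^{\,p/2}\big(|t_1-t_2|/(u_1^{q+1-p}\rho_o^p)\big)^{1/2}$; for $u_1\ge1$ the latter is just $\widetilde{\bg}^{\,p/2}(|t_1-t_2|/\rho_o^p)^{1/2}$ and one concludes with exponent $\alpha_o$, whereas for $u_1<1$ one first splits $[a+b]^{\alpha_o}\le a^{\alpha_o}+b^{\alpha_o}$ and then uses the membership condition $|t_1-t_2|<u_1^{q+1-p}(\rho_o/\widetilde{\bg})^p$ — which bounds $u_1$ from below by a power of $|t_1-t_2|/\rho_o^p$ — to absorb the unbounded factor $u_1^{-(q+1-p)/2}$, at the cost of replacing $\alpha_o$ by $\alpha_1:=\min\{\alpha_o,\tfrac2{q+1-p}\}\in(0,1)$. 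If $z_2\notin Q_1$ (``far points''), the triangle inequality together with \eqref{grad-bound-K} at $z_1$ and $z_2$ suffices, because the failure of $z_2\in Q_1$ bounds the bracket in \eqref{grad-holder-K} below by a quantity — an absolute constant, or a multiple of $u_1^{(q+1-p)/2}$ — against which the $L^\infty$-bounds, themselves of order $u_1/\rho_o$ when $u_1<1$, are absorbed, once more with exponent $\alpha_1$. Since all the brackets that occur are bounded above in terms of $\diam\mathcal K$, $\rho_o$ and $\mathcal M$, any H\"older exponent may be freely lowered from $\alpha_o$ to $\alpha_1$, and collecting constants gives \eqref{grad-holder-K}.

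\textbf{Main difficulty.} The only genuinely delicate point is the regime $u_1<1$: there the intrinsic time-scale $u_1^{q+1-p}\rho_1^p$ degenerates as $u_1\to0$, so the estimate naively inherited from \eqref{C1alph-intro} carries the unbounded factor $u_1^{-(q+1-p)/2}$ on the time term, and it is precisely the trade-off between this factor and the lower bound on $u_1$ enforced by $z_2\in Q_1$ (resp.\ by the separation of $z_1$ and $z_2$) that dictates the choice $\alpha_1=\min\{\alpha_o,2/(q+1-p)\}$. The remaining ingredients — the elementary verification of $\widetilde{\bg}Q_o\subset E_T$, the harmless lowering of H\"older exponents, and the reduction of points where $u$ vanishes — are routine.
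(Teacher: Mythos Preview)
Your proof is correct and arrives at the same exponent $\alpha_1=\min\{\alpha_o,\tfrac{2}{q+1-p}\}$ as the paper, but it is organized differently. You choose at each point $z_o$ an intrinsic radius
\[
\rho=\frac{\rho_o}{\widetilde{\bg}\,\max\{1,u_o^{(q+1-p)/p}\}},
\]
and are therefore forced to split according to whether $u_1\ge1$ or $u_1<1$; in the latter regime you recover the missing power of $u_1$ from the membership condition $|t_1-t_2|<u_1^{q+1-p}(\rho_o/\widetilde{\bg})^p$ (close case) or from the smallness of $|Du(z_i)|\le \bg\widetilde{\bg}\,u_1/\rho_o$ (far case). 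The paper instead fixes a \emph{single} radius $\rho=\widetilde{\bg}^{-1}\mathcal M^{-(q+1-p)/p}\rho_o$, independent of the point. With that choice the bracket in \eqref{C1alph-intro} is bounded by $2$ in Case~1, so one may lower $\alpha_o$ to $\alpha_1$; then the whole right-hand side of \eqref{C1alph-intro}, viewed as a function of $u_1$, is \emph{monotone increasing} (the exponent of $u_1$ is $1-\tfrac{q+1-p}{2}\alpha_1\ge0$), and one simply replaces $u_1$ by $\mathcal M$. This monotonicity trick replaces your case analysis by a single line and also treats the far case uniformly.

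Two small remarks on presentation. First, your reduction at points with $u_o=0$ via sequences is correct but indirect; the paper notes that Harnack's inequality forces $u(\cdot,t_o)\equiv0$ on $E$ whenever $u(x_o,t_o)=0$, so $Du(x_o,t_o)=0$ immediately. Second, your final sentence justifies lowering $\alpha_o$ to $\alpha_1$ by invoking a bound on the bracket involving $\diam\mathcal K$; this is unnecessary (and would make $\bg$ depend on $\mathcal K$): in the close case the bracket is already bounded by an absolute constant from the membership condition, which is all that is needed.
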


For  $p=2$ our result reduces to the case of the singular porous medium equation in the range $1<q<\frac{N}{(N-2)_+}$. With $q=\frac1m$ 
and the substitution $w=u^\frac1m$ this corresponds to the bound $\frac{(N-2)_+}{N}<m<1$ for the standard form $\partial_tw-\Delta w^m=0$ of the porous medium equation. Therefore, Theorem \ref{THM:REGULARITY-INTRO} recovers
as a special case the gradient estimates of DiBenedetto \& Kwong \& Vespri \cite[Theorem 2.1]{DBKV} for weak solutions to the singular porous medium equation.

A few words about the proof (detailed in Chapter~\ref{sec:regularity}) and the underlying ideas are in order here. 
The first observation is general in nature, and reflects a common principle in the treatment of nonlinear parabolic equations with degeneracy as in \eqref{doubly-nonlinear-prototype}. One crucial building block in the study of the problem is the choice of the correct geometry in space and time: only the proper choice of geometry will lead to homogeneous estimates. In present situation it turns out that the natural cylinders for our purposes are given by  $Q_o:=K_\rho(x_o)\times
\big( t_o-u_o^{q+1-p}\rho^p, t_o+u_o^{q+1-p}\rho^p\big)$, provided $u_o>0$. The geometry reflected by such cylinders is \emph{intrinsic} as it varies according to the value of the solution at the center. Once the correct
{\it intrinsic geometry} %i.e.~a geometry that depends on the solution itself, 
is identified, one can turn to the actual regularity assertions.

The starting point is the fact that, in the described parameter range a Harnack inequality without waiting-time phenomenon holds true. Therefore, from the assumption $u_o>0$ we may conclude that $u$ is controlled from above and below in terms of $u_o$ on the symmetric, intrinsic, space-time cylinder $Q_o$ (backward-forward cylinder).
More precisely, if $\bg_{\rm h}$ denotes the Harnack constant, then $\bg_{\rm h}^{-1}\le u/u_o\le \bg_{\rm h}$ on $Q_o$. 
Rescaling $u/u_o$ in time and space to
the symmetric unit cylinder $\mathcal Q_1:= K_1(0)\times (-1,1)$ leads to a weak solution $\tilde u$ to the doubly non-linear parabolic equation \eqref{doubly-nonlinear-prototype} in $\mathcal Q_1$ with values in $[\bg_{\rm h}^{-1}, \bg_{\rm h}]$. This allows us to pass from   $\tilde u$ to $v:=\tilde u^q$.  Then  $v(y,s)$ is a weak solution to the parabolic equation
\begin{equation*}
	\partial_s v - 
	\Div\Big(\big(\tfrac1q\big)^{p-1}\tilde u^{(p-1)(1-q)}|Dv|^{p-2}Dv\Big)
	=
	0
	\quad\mbox{in $\mathcal{Q}_1$,}
\end{equation*}
with values in the interval $[\bg_{\rm h}^{-q}, \bg_{\rm h}^{q}]$. Furthermore, denoting
\begin{align*}
	a(y,s)
	:=
	\big(\tfrac1q\big)^{p-1}\tilde u(y,s)^{(p-1)(1-q)},
\end{align*}
the above equation can be interpreted as a parabolic equation of $p$-Laplace type with measurable and bounded coefficients that are also uniformly bounded away from zero. This allows us to apply
the by now classical $C^{0,\alpha}$-regularity results from \cite{DB} to deduce that $v$ is $\alpha$-Hölder continuous in $\frac12\mathcal{Q}_{1}$ with some Hölder exponent $\alpha\in(0,1)$. Together with the lower and upper bound for $v$, such a fact implies that the coefficient $a$ is also $\alpha$-Hölder continuous. In particular, this shows that $v$ is a weak solution to a parabolic equation of $p$-Laplace type with H\"older continuous coefficients in $\frac12\mathcal{Q}_{1}$. This reduces questions about Lipschitz and gradient H\"older regularity of $v$ to whether or not corresponding Schauder estimates for weak solutions can be established. Since the exponent $p$ can take values close to $1$ in the described parameter range, boundedness of weak solutions must be assumed in the context of Schauder estimates. However, in the range of $p$ and $q$ required, this is indeed the case. Theorem~\ref{theorem:schauder:intro} 
ensures  $\alpha_o$-H\"older continuity of $Dv$ for some  H\"older exponent $\alpha_o\in(0,1)$ together with the quantitative gradient bound 
$$
    \sup_{\frac14\mathcal Q_1}|Dv|\le \boldsymbol\gm,
$$
and the quantitative $\alpha_o$-H\"older gradient estimate 
\begin{equation*}
    \big|Dv(y_1,s_1) - Dv(y_2,s_2)\big|
    \le
    \boldsymbol\gm\Big[ |y_1-y_2|+\sqrt{|s_1-s_2|}\Big]^{\al_o}%\quad \mbox{for any $(y_1,s_1), (y_2,s_2)\in \tfrac14 \mathcal{Q}_{1}$.}
\end{equation*}
for any $(y_1,s_1), (y_2,s_2)\in \tfrac14 \mathcal{Q}_{1}$.
The asserted inequalities for the original solution $u$ are obtained by rescaling $v$ to $u$.

 To summarize, the proof strategy consists of three major steps. In the first step, we prove that weak solutions are bounded in the parameter range under consideration. In the second step we show that for non-negative weak solutions an elliptic Harnack-type inequality holds. This means that the Harnack inequality holds without waiting-time (backward-forward Harnack inequality). Thus, in a neighborhood near a point $(x_o,t_o)$ with $u_o=u(x_o,t_o)>0$ weak solutions are controlled from below and from above by $u_o$.
In the third and last step, one  substitutes $v=u^q$. The resulting equation of $v$ can be interpreted as a parabolic $p$-Laplace equation with measurable coefficients, so that  the classical result about Hölder regularity of weak solutions can be directly applied to it, and consequently the coefficients (essentially power functions of the solution) are identified as being themselves Hölder continuous. At this point the desired gradient regularity can be deduced from suitable Schauder estimates for parabolic $p$-Laplace equations with Hölder continuous coefficients. The peculiarity (and new feature) in this step is that we need to establish these Schauder estimates for any $p>1$. In particular, $p$ may be close to $1$.
Needless to say, Schauder estimates  for $p$ close to 1, can only be true under the additional assumption that weak solutions are bounded, which, however, is granted for our application here. 

A unique feature of weak solutions to \eqref{doubly-nonlinear-prototype} in the fast diffusion regime is that they might become extinct abruptly.
As a direct application of the gradient estimates from Theorem~\ref{THM:REGULARITY-INTRO} and the integral Harnack inequality from Theorem~\ref{Thm:bd:2}, decay estimates for weak solutions at their extinction time can be established.

\begin{corollary}\label{Cor:12:4}
Let $p$ and $q$ be as in Theorem \ref{THM:REGULARITY-INTRO}. Then there exist
 constants $\bg>1$ and $\bg_o>1$ depending only on $N$, $p$ and $q$, such that if $u$ is a non-negative weak solution
to the doubly non-linear parabolic equation \eqref{doubly-nonlinear-prototype} in $E_T$ and if $T$ is an extinction time for $u$, i.e.~$u(\cdot,T)=0$ a.e.~in $E$,
then, for any $x_o\in E$ and $t_o\in(\tfrac12T,T)$ we have
\begin{align*}\label{ext-speed}
    u(x_o,t_o)
    &
    \le \bg\bigg[\frac{T-t_o}{d^p(x_o)}\bigg]^{\frac{1}{q+1-p}},\\
    \big|Du(x_o,t_o)\big|
    &
    \le \frac{\bg}{d(x_o)}\bigg[ \frac{T-t_o}{d^p(x_o)}\bigg]^{\frac{1}{q+1-p}}.
\end{align*}
Here, $d(x_o):=\dist(x_o,\partial E)$ denotes the usual Euclidean distance of $x_o$ to the boundary $\partial E$. Moreover, for any $r\in(0,d(x_o)/\bg_o)$
we have the following decay estimates for the oscillation of $u$ and its gradient, i.e.
\begin{align*}
   \osc_{K_r(x_o)} u(\cdot, t_o) 
   &\le 
   \bg\frac{ r}{d(x_o)}\bigg[\frac{T-t_o}{d^p(x_o)}\bigg]^{\frac{1}{q+1-p}},\\    \osc_{K_r(x_o)} Du(\cdot, t_o) 
   &\le
\frac{\bg}{d(x_o)}\bigg[\frac{r}{d(x_o)}\bigg]^{\al_o}\bigg[\frac{T-t_o}{d^p(x_o)}\bigg]^{\frac{1}{q+1-p}}.
\end{align*}
Here,  $\al_o\in(0,1)$ is the H\"older exponent from Theorem~\ref{THM:REGULARITY-INTRO}. 
\end{corollary}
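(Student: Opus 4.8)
The plan is to derive Corollary~\ref{Cor:12:4} from Theorem~\ref{THM:REGULARITY-INTRO} together with the integral Harnack inequality (Theorem~\ref{Thm:bd:2}) by a scaling argument that exploits the extinction condition $u(\cdot,T)=0$. The starting point is the pointwise bounds \eqref{grad-est-intro} and \eqref{diff-u-intro}, which hold on the intrinsic cylinder $Q_o$ centered at $(x_o,t_o)$ of intrinsic radius $\rho$ and intrinsic height $u_o^{q+1-p}\rho^p$, provided the dilated inclusion $\widetilde{\bg}Q_o\subset E_T$ holds. The crucial observation is that the extinction time imposes an \emph{a priori} bound on $u_o=u(x_o,t_o)$: since $u$ vanishes at time $T$, the value $u_o$ cannot be too large relative to the distance one can travel forward in time to reach the extinction, essentially because the forward half of the intrinsic cylinder must fit inside $(0,T]$. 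Quantitatively, if $u_o$ were large then the intrinsic height $u_o^{q+1-p}\rho^p$ would be large (recall $q+1-p>0$ in this regime), but it must satisfy $u_o^{q+1-p}(\widetilde{\bg}\rho)^p\le T-t_o$ for the forward inclusion to hold. Choosing $\rho$ comparable to $d(x_o)/\widetilde{\bg}$ — the largest admissible spatial radius so that $K_{\widetilde\bg\rho}(x_o)\subset E$ — the time-constraint then forces
\[
    u_o^{q+1-p}
    \le
    \bg\,\frac{T-t_o}{d(x_o)^p},
\]
which upon taking the $\tfrac1{q+1-p}$ power is precisely the asserted pointwise decay estimate for $u(x_o,t_o)$. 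Here one must use Theorem~\ref{Thm:bd:2} (the integral Harnack inequality) to legitimately propagate information forward in time up to $T$ and to justify that the extinction at $T$ indeed controls $u_o$ at the earlier time $t_o\in(\tfrac12T,T)$; this is where the ``time-insensitive'' nature of the Harnack inequality in this parameter range is essential, since a classical Harnack inequality with a waiting time would not suffice.

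Once the bound on $u_o$ is in hand, the gradient decay estimate follows immediately by feeding it into \eqref{grad-est-intro}: with $\rho\sim d(x_o)$ one gets $\sup_{Q_o}|Du|\le \bg u_o/\rho\le \bg d(x_o)^{-1}\big[(T-t_o)/d(x_o)^p\big]^{1/(q+1-p)}$, and in particular $|Du(x_o,t_o)|$ obeys the claimed bound. The oscillation estimate for $u(\cdot,t_o)$ over $K_r(x_o)$ is obtained from the Lipschitz estimate \eqref{diff-u-intro}: for two points $x_1,x_2\in K_r(x_o)$ at the same time level $t_o$, the time-increment vanishes and \eqref{diff-u-intro} yields $|u(x_1,t_o)-u(x_2,t_o)|\le \bg u_o |x_1-x_2|/\rho\le \bg u_o\, r/d(x_o)$, after which one inserts the bound on $u_o$. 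Finally, the oscillation decay for $Du(\cdot,t_o)$ over $K_r(x_o)$ comes from the gradient H\"older estimate \eqref{C1alph-intro} in the same way — at a fixed time level the bracket reduces to $|x_1-x_2|/\rho\le r/d(x_o)$, raised to the power $\al_o$ — combined once more with the bound on $u_o$; the restriction $r<d(x_o)/\bg_o$ is exactly what guarantees that the pair of points stays inside the cylinder $Q_o$ on which \eqref{C1alph-intro} is valid, so $\bg_o$ should be chosen accordingly (comparable to the dilation constant $\widetilde{\bg}$).

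The main obstacle I anticipate is making the extinction-to-value bound fully rigorous: one needs to verify that, for $t_o\in(\tfrac12 T,T)$ and a suitable choice of intrinsic radius $\rho$, the dilated inclusion $\widetilde{\bg}Q_o\subset E_T$ actually holds — which requires simultaneously controlling the spatial extent (handled by taking $\rho\le d(x_o)/\widetilde\bg$) and the forward temporal extent $t_o+u_o^{q+1-p}(\widetilde\bg\rho)^p\le T$. The latter is not automatic; if it fails, it means $u_o$ is already large, and one must argue that this situation itself yields the desired bound on $u_o$ (by a contrapositive/dichotomy argument), or alternatively choose $\rho$ smaller so that the forward inclusion holds with room to spare and then optimize over $\rho$. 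The cleanest route is probably a dichotomy: either $u_o^{q+1-p}(\widetilde\bg\, d(x_o)/\widetilde\bg)^p\le T-t_o$, in which case Theorem~\ref{THM:REGULARITY-INTRO} applies with $\rho=d(x_o)/\widetilde\bg$ and gives everything directly; or this fails, in which case one already has $u_o^{q+1-p}>(T-t_o)/d(x_o)^p$ reversed — so instead one shrinks $\rho$ to $\rho=c\,(T-t_o)^{1/p}u_o^{-(q+1-p)/p}$ (still admissible spatially because $T-t_o<T$ and $d(x_o)$ is bounded below on compact subsets, or rather because the failed inequality forces $\rho<d(x_o)/\widetilde\bg$), apply the theorem on this smaller cylinder, and then a covering/chaining argument along $K_r(x_o)$ delivers the oscillation bounds with $r$ correspondingly small. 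A secondary technical point is ensuring the integral Harnack inequality of Theorem~\ref{Thm:bd:2} is applied on a cylinder reaching up to $T$ so that the extinction hypothesis $u(\cdot,T)=0$ can be inserted; the condition $t_o>\tfrac12 T$ is what provides the needed room below $t_o$ for the Harnack ball.
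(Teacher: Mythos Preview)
Your proposal identifies the right two ingredients (Theorem~\ref{THM:REGULARITY-INTRO} and Theorem~\ref{Thm:bd:2}) but inverts the logical order, and the dichotomy you propose in the ``main obstacle'' paragraph does not close. You try to extract the bound on $u_o$ from the requirement that the forward half of $\widetilde{\bg}Q_o$ fit inside $(0,T]$; but that inclusion is a \emph{hypothesis} of Theorem~\ref{THM:REGULARITY-INTRO}, not a consequence of anything, so arguing ``it must hold, hence $u_o$ is small'' is circular. In your second branch of the dichotomy you observe $u_o^{q+1-p}>(T-t_o)/d(x_o)^p$, which is the wrong inequality, and shrinking $\rho$ only makes the conclusion $|Du|\le\bg u_o/\rho$ worse; it never produces a bound on $u_o$ itself.

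The paper's route is to apply the integral Harnack inequality \emph{first}, directly, on the cylinder $K_{d(x_o)}(x_o)\times(T-2(T-t_o),T)\subset E_T$ (the condition $t_o>\tfrac12T$ is exactly what makes this cylinder fit). Because $u(\cdot,T)=0$, the infimum over time of $\int_{K_{d(x_o)}}u^q\,\dx$ vanishes, so the first term on the right of Theorem~\ref{Thm:bd:2} drops out and one reads off
\[
  u_o\le \bg\Big[\tfrac{T-t_o}{d(x_o)^p}\Big]^{\frac{1}{q+1-p}}
\]
immediately --- no dichotomy, no set inclusion from Theorem~\ref{THM:REGULARITY-INTRO} required at this stage. \emph{With this bound in hand}, one then chooses $\rho_o=\bg_o^{-1}d(x_o)$ with $\bg_o$ large and checks that $u_o^{q+1-p}(\widetilde\bg\rho_o)^p\le\tfrac12(T-t_o)$, so $\widetilde\bg Q_o\Subset E_T$ and Theorem~\ref{THM:REGULARITY-INTRO} applies. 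From that point on your derivation of the gradient and oscillation estimates from \eqref{grad-est-intro}, \eqref{diff-u-intro}, \eqref{C1alph-intro} is correct.
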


Corollary~\ref{Cor:12:4} will be proved in Section~\ref{sec:ext-time}, where we will also provide an estimate for the extinction time of the solution to a Cauchy-Dirichlet problem.

\section{Boundedness, expansion of positivity }\label{sec:int-bound}
% \textcolor{red}{Would it be better to discuss expansion of positivity here?}
% \textcolor{purple}{It sounds like a good idea. I can do it, if you agree.}\textcolor{red}{Please! I changed the title of the section. I put the PDE that will be used in this section and the next here.}

%Let $E$ be an open and bounded subset of $\rn$ and $E_T:=E\times(0,T]$ the space time cylinder with base $E$ for some $T>0$.
Besides being important building blocks in the general theory developed here, {\em boundedness of solutions} and the so-called {\em expansion of positivity} property are significant results by themselves. We study them in a fairly general setting. Indeed,
in this section we consider non-negative solutions to quasi-linear, parabolic partial differential equations of the form 
\begin{equation}  \label{Eq:1:1f}
	\partial_t u^q-\dvg\bl{A}(x,t,u, Du) = 0\quad \mbox{in $ E_T$.}
\end{equation}
 Here the function $\bl{A}(x,t,u,\xi)\colon E_T\times\rr^{N+1}\to\rn$ is Carath\'eodory (cf.~\S~\ref{sec:preliminaries}) and subject to the structure conditions
\begin{equation}\label{Eq:1:2}
	\left\{
	\begin{array}{c}
		\bl{A}(x,t,u,\xi)\cdot \xi\ge C_o|\xi|^p \\[5pt]
		|\bl{A}(x,t,u,\xi)|\le C_1|\xi|^{p-1}%
	\end{array}
	\right .
	\qquad \mbox{for a.e.~$(x,t)\in E_T$, $\forall\,u\in\rr$, $\forall\,\xi\in\rn$,}
\end{equation}
where $C_o$ and $C_1$ are given positive constants. The set of parameters $\{p, q, N, C_o,C_1\}$ is referred to as the structural data.

Both issues we deal with here
%namely local boundedness estimates and the so-called {\em expansion of positivity}, 
present similar aspects, as far as the previous literature is concerned. 
%\textcolor{red}{(What does this paragraph deliver?)}
%\section{Sup-bound for weak solutions}

%Local boundedness estimates 
The boundedness results which we discuss next, are a consequence of the energy estimates we will state in Proposition~\ref{Prop:2:1} and De Giorgi's iteration. For all the details of the proof, see Chapter~\ref{sec:boundedness}.
%\begin{theorem}[quantitative $L^\infty$-bound] \label{Thm:bd:1}
% Assume that $0<p-1<q$ and let $r\ge1$ satisfy
%\[
%\lm_r:=N(p-1-q)+rp>0.
%\]
%Then, there exists a constant  $ \boldsymbol \gm $ depending only on the %data, such that whenever $u$ is  a locally bounded, non-negative, local, weak %sub-solution to \eqref{Eq:1:1} with \eqref{Eq:1:2} in $E_T$ then on any 
%at is contained in $E_T$,
%we have
%\[
%    \essup_{Q_{\frac12\rho, \frac12 s}} u 
%    \le  
%    \boldsymbol \gm  \Big(\frac{\rho^p}{s} \Big)^{\frac{N}{\lm_{r}}}  
%    \bigg[
%    \biint_{Q_{\rho, s}}u^{r}\,\dx\dt\bigg]^{\frac{p}{\lm_{r}}} 
%+
%    \boldsymbol \gm 
%    \Big(\frac{s}{\rho^p}\Big)^{\frac1{q+1-p}}.
%\]
%\end{theorem}

\begin{theorem}[quantitative $L^\infty$-bound] \label{THM:BD:1}
 Assume that $0<p-1<q$ and let $r\ge1$ satisfy
\begin{equation}\label{def:lambda-r}
    \lm_r:=N(p-q-1)+rp>0.
\end{equation}
Let $u$ be a non-negative weak sub-solution to \eqref{Eq:1:1f} with \eqref{Eq:1:2} in $E_T$. 
In the case $r>m:=p\frac{N+q+1}{N}$, we assume additionally that $u$ is  locally bounded.
Then on any parabolic cylinder $Q_{\rho, s}=K_\rho(x_o)\times(t_o-s,t_o]\Subset E_T$ 
we have
\[
    \essup_{Q_{\frac12\rho, \frac12 s}} u 
    \le  
    \boldsymbol \gm  \Big(\frac{\rho^p}{s} \Big)^{\frac{N}{\lm_{r}}}  
    \bigg[
    \biint_{Q_{\rho, s}}u^{r}\,\dx\dt\bigg]^{\frac{p}{\lm_{r}}} 
    +
    \boldsymbol \gm 
    \Big(\frac{s}{\rho^p}\Big)^{\frac1{q+1-p}},
\]
with a constant $\boldsymbol\gm$ that depends only on the data. 
\end{theorem}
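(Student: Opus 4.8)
The plan is to run a De Giorgi-type iteration over a nested family of cylinders, using the Caccioppoli (energy) estimates from Proposition \ref{Prop:2:1} together with the parabolic Sobolev embedding. Fix the cylinder $Q_{\rho,s}=K_\rho(x_o)\times(t_o-s,t_o]$. First I would reduce to a clean dimensionless setting: normalize so that $x_o=0$, $t_o=0$, and introduce the scaled solution $\tilde u(x,t):=\theta^{-1}u(\rho x,\omega t)$ for suitable positive constants $\theta,\omega$ with $\omega=\theta^{q+1-p}\rho^p$; this choice is exactly the intrinsic scaling that makes the equation $\partial_t \tilde u^q-\Div\bl A=0$ invariant (up to adjusting $C_o,C_1$ by fixed factors), and it turns the two terms on the right-hand side into a single geometric parameter. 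The free parameter $\theta$ will ultimately be taken comparable to the right-hand side of the claimed estimate, so that one only needs to prove the bound $\essup \tilde u\le 1$ on the shrunk cylinder under a smallness assumption on a normalized integral of $\tilde u^r$.

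Next I would set up the iteration. Let $k_j=k(1-2^{-j})$ for a level $k$ to be chosen, and let $\rho_j=\tfrac12\rho(1+2^{-j})$, $s_j=\tfrac12 s(1+2^{-j})$, with the associated cylinders $Q_j=K_{\rho_j}\times(-s_j,0]$ shrinking to $Q_{\rho/2,s/2}$. Testing the energy inequality of Proposition \ref{Prop:2:1} on $Q_j$ with cutoff functions adapted to the pair $(Q_j,Q_{j+1})$, I obtain control of $\sup_t\int (\tilde u-k_{j+1})_+^{q+1}\dx$ and of $\iint |D(\tilde u-k_{j+1})_+|^p\,\dx\dt$ in terms of $2^{pj}(\text{scaling factors})\iint_{Q_j}(\tilde u-k_j)_+^p\dx\dt$ plus a lower-order contribution coming from the $(\tfrac{s}{\rho^p})^{1/(q+1-p)}$-type term. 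Applying the parabolic Sobolev inequality (in the form interpolating $L^p_tW^{1,p}_x$ with $L^\infty_tL^{q+1}_x$) to $(\tilde u-k_{j+1})_+$, and using $|\{\tilde u>k_{j+1}\}\cap Q_j|\le (k\,2^{-j-1})^{-r}\iint_{Q_j}(\tilde u-k_j)_+^r$ (Chebyshev at exponent $r$) to convert the spatial measure factor, one arrives at a recursive inequality
\[
Y_{j+1}\le \bg\, b^{j}\, k^{-\beta}\, Y_j^{1+\kappa},
\]
where $Y_j:=\iint_{Q_j}(\tilde u-k_j)_+^{r}\,\dx\dt$ (possibly raised to a fixed power), $b>1$, $\kappa>0$, and $\beta>0$ all depend only on the data; the condition $\lambda_r=N(p-q-1)+rp>0$ is precisely what guarantees $\kappa>0$ (it is the exponent bookkeeping in the Sobolev step that forces this, and it is where the hypothesis is used). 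By the standard fast-geometric-convergence lemma, $Y_j\to0$ provided $Y_0\le \bg^{-1} b^{-1/\kappa^2} k^{\beta/\kappa}$, which after unraveling the scaling gives the claimed bound with $k$ (hence $\theta$) chosen as the sum of the two terms on the right-hand side.

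The subtlety specific to the doubly nonlinear, fast-diffusion regime — and the step I expect to be the main obstacle — is the handling of the time term $u^q$ in the energy estimate, i.e.\ comparing $(\tilde u-k_{j+1})_+^{q+1}$ with $(\tilde u^q-k_{j+1}^q)(\tilde u-k_{j+1})_+$ and keeping the estimate homogeneous when $q\ne1$; together with the appearance of the extra additive term $(s/\rho^p)^{1/(q+1-p)}$, which is not present in the $p$-Laplacian case and which must be tracked through the iteration so that it only contributes to the final estimate additively rather than multiplicatively. For the range $r>m=p\frac{N+q+1}{N}$ the raw energy estimate no longer self-improves (the Sobolev exponent balance degenerates), so there I would instead run the argument over the already-known local bound: assuming $u$ locally bounded a priori, interpolate $\iint u^r \le (\essup u)^{r-r_0}\iint u^{r_0}$ for some admissible $r_0\le m$, apply the already-established case to the $L^{r_0}$-integral, and reabsorb the resulting power of $\essup u$ by Young's inequality — this is the standard device and causes no real difficulty beyond careful exponent arithmetic.
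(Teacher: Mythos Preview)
Your proposal is correct and follows essentially the same route as the paper: De Giorgi iteration on nested cylinders using the energy estimate of Proposition~\ref{Prop:2:1}, the parabolic Sobolev embedding, Chebyshev at exponent $r$ for the measure of superlevel sets, and fast geometric convergence; the case $r>m$ is handled exactly as you describe, by interpolating against the assumed sup-bound and reabsorbing.

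Two small points of comparison. First, the paper does not perform your preliminary intrinsic rescaling; it works directly with the given cylinder and introduces an auxiliary parameter $\sigma\in(0,1)$ controlling the shrinking, which is later fed into the iteration Lemma~\ref{lem:Giaq}. Second, and more usefully, the additive term $(s/\rho^p)^{1/(q+1-p)}$ is \emph{not} tracked through the iteration as you anticipate: instead the paper simply imposes from the outset the constraint $k\ge (s/\rho^p)^{1/(q+1-p)}$ on the level, which makes the offending factor $1 + s\rho^{-p}k^{-(q+1-p)}\le 2$ disappear inside the recursion; the additive term then reappears only at the very end when $k$ is chosen as the maximum of this lower bound and the integral quantity. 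This is cleaner than carrying it along. The comparison $(\tilde u-k_{j+1})_+^{q+1}$ versus $\mathfrak g_+(\tilde u,k_{j+1})$ that worries you is dispatched via Lemma~\ref{Lm:g} together with the elementary observation that on $\{u>k_{n+1}\}$ one has $(u+\tilde k_n)/(u-\tilde k_n)\le 2^{n+3}$, so the mismatch only produces geometric factors $2^{cn}$ absorbed into the constant $\boldsymbol b$. The paper also splits the range $r\le m$ into $r\ge q+1$ (direct) and $r<q+1$ (first prove it for $r=q+1$, then interpolate down), a minor bookkeeping step you do not mention explicitly.
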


%\textcolor{Green}{
%New version of the proof (this would replace Subsections \ref{SS:3:1} and \ref{SS:3:2}).
%}\textcolor{red}{Naian: In the end, we will draw a diagram of p-q axis. There we exhibit the region confined by $\lm_q>0$ and the larger region confined by $\lm_{q+1}>0$; the former hosts the Harnack while the latter hosts the local boundedness. From current statement of the theorem I don't see the role of $\lm_{q+1}>0$. I would suggest we keep the old version. Meanwhile we can put an additional restriction $r\in[1,q+1]$ to the case $\lm_{q+1}>0$ in the old version. This should solve the concern on the negative exponent when applying H\"older's inequality in order to bound $L^{q+1}$ by $L^r$ for some $r>q+1$. } \textcolor{blue}{Frank: This seems to me to be a discussion on "talking about DiBenedetto's style of proof or not". The proof was not correct before, now it is correct and includes all cases. This is great, even if it is not proved with the pure DiBenedetto notation. Or?}
%\textcolor{Green}{Christoph: Concerning the meaning of $\lambda_{q+1}$, we could add a  remark that $\lambda_{q+1}>0$ is equivalent to $q+1<m$, so that in this case, the theorem can be applied with $r=q+1$ without the additional assumption of local boundedness. Later on we show
%that the region confined by $\lm_q>0$  hosts the Harnack inequality, while in the region, $\lm_{q+1}>0$ local boundedness of .}

\begin{remark}\label{rm:boundedness}\upshape
We first note that $u\in L^m_{\loc}(E_T)$ by the embedding of Lemma~\ref{lem:Sobolev}. 
The proof of Theorem~\ref{THM:BD:1} will distinguish three cases. In the first two cases, we will deal with exponents $r\le m$ and show that local boundedness of weak sub-solutions is inherent in the notion of solution, %for exponents below the Sobolev exponent, i.e.~if $r\le m$. 
whereas in the third case, we will examine exponents $r>m$ and establish the quantitative $L^\infty$-estimate by assuming that sub-solutions are locally bounded {\it a priori}. Note also that
there exists some $r\ge1$ such that $r\leq m$ and $\lm_r>0$ if and only if $\lambda_m>0$. Since 
\[
    \lambda_m =
    (N+p)(m-q-1)
    =
    \frac{N+p}{N}\,\lambda_{q+1},
\]
this is equivalent to $q+1<m$  on the one hand and to $\lambda_{q+1}>0$, respectively 
$$
q<\frac{N(p-1)+p}{(N-p)_+}
$$
on the other hand.
Therefore, in this case, Theorem~\ref{THM:BD:1} can be applied with $r=q+1$ and without assuming local boundedness of $u$.   
\end{remark}

In view of Remark~\ref{rm:boundedness} we obtain the following corollary of Theorem~\ref{THM:BD:1}. See also Figure~\ref{Fig:bound}, where the shaded region gives the admissible ranges of parameters $p$ and $q$ in order for $u$ to be bounded.

\begin{corollary}\label{Cor:bdd}
Assume that $0<p-1<q<\frac{N(p-1)+p}{(N-p)_+}$. Then, any non-negative weak sub-solution to \eqref{Eq:1:1f} with \eqref{Eq:1:2} in $E_T$ is locally bounded.
\end{corollary}

\begin{center}
\begin{figure}
\psfragscanon
\psfrag{p}{$\scriptstyle p$}
\psfrag{q}{$\scriptstyle q$}
\psfrag{pzer}{$\scriptstyle p=1$}
\psfrag{qzer}{$\scriptstyle q=0$}
\psfrag{pn}{$\scriptstyle p=N$}
\psfrag{line}{$\scriptstyle q=p-1$}
\psfrag{curve}{$\scriptstyle q=\frac{N(p-1)+p}{N-p}$}
\begin{center}
%\scalebox{0.9}{
\includegraphics[width=.75\textwidth]{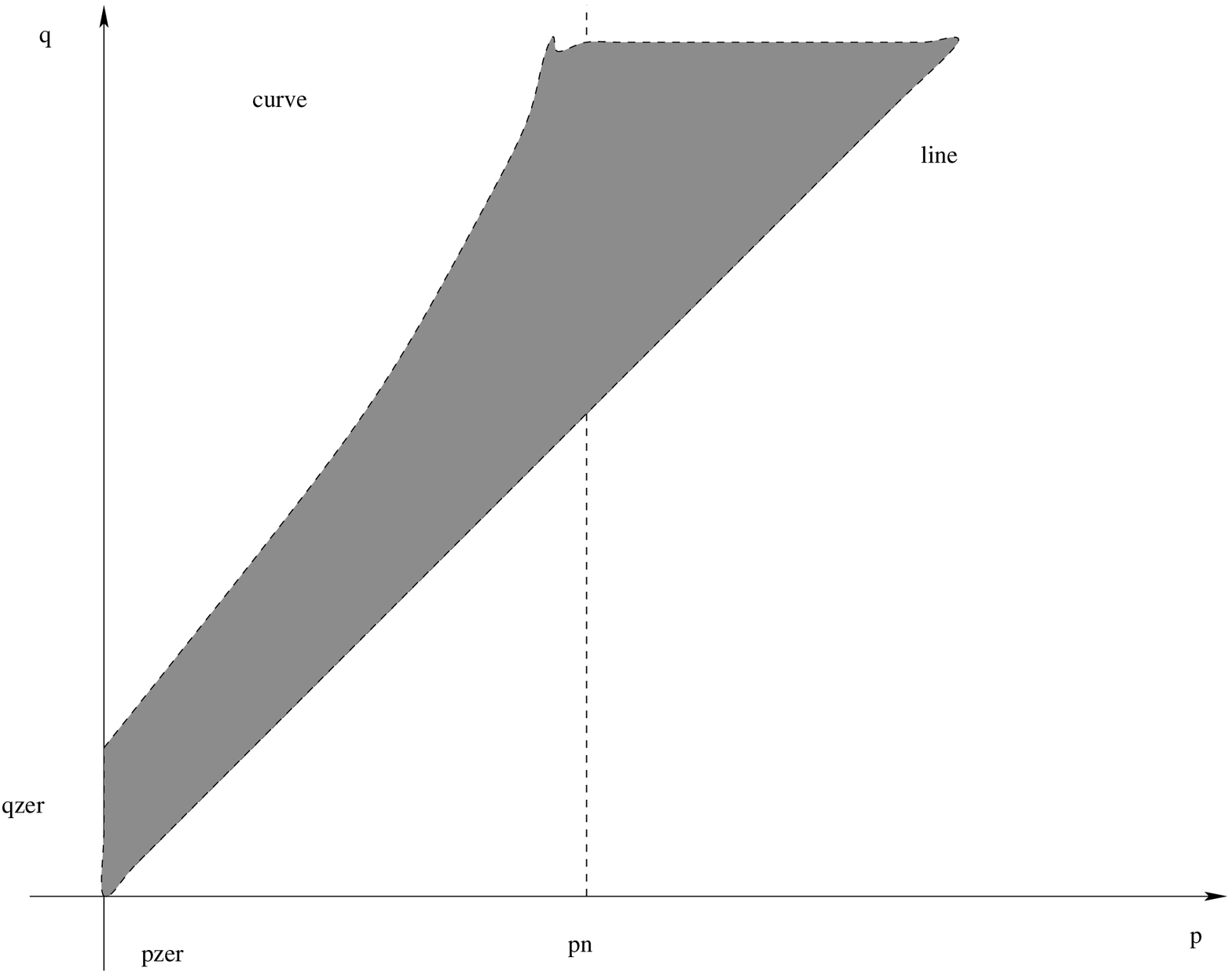}
\caption{\label{Fig:bound}}
\end{center}
\end{figure}
\end{center}

Further details about previous results in the literature, and the optimality of our estimates will be given in Section~\ref{S:bd-opt}; here it suffices to remark that 
%Taking into account the previous remarks, with respect to the existing literature, 
the novelty of Theorem~\ref{THM:BD:1} consists in being, to the best of our knowledge, the first result where explicit {\em quantitative} boundedness estimates are provided for solutions to doubly non-linear parabolic equations with the full quasi-linear structure as in \eqref{Eq:1:1f}--\eqref{Eq:1:2}. 

Moreover, the full range $0<p-1<q$ is dealt with in Theorem~\ref{THM:BD:1}, and 
Corollary~\ref{Cor:bdd} clearly singles out the ranges of values of $p$ and $q$, such that the sheer notion of solution ensures its local boundedness, without the need to assume it a priori, and avoiding any kind of further technical limitations on $p$ and $q$. 

It had already been pointed out in \cite[Theorem~3.1]{FSV-14} that one needs to work with different energy estimates for different ranges of parameters, but in our opinion the statement here is terser, and also the proofs are more streamlined than the existing ones. 
% \textcolor{orange}{What do you mean by different assumptions (does it refer to the lower order terms in the assumption or to the different energy estimate for different parameters)?}

Coming to the \textit{expansion of positivity}, as pointed out in \cite[Chapter~4, Section~1]{DBGV-mono}, it lies at the heart of any form of Harnack estimate, both for elliptic and for parabolic partial differential equations.

It is a property of non-negative super-solutions; if we limit ourselves to the parabolic setting for simplicity, it states that the information on the measure of the positivity set of a super-solution $u$ at the time level $s$ over the cube $K_{\varrho}(y)$ translates into an expansion of the positivity set both in space (from $K_{\varrho}(y)$ to $K_{2\varrho}(y)$) and in time (from $s$ to $s_1>s$, where the precise value of $s_1$ depends on the specific equation under consideration).

When dealing with \eqref{Eq:1:1f}--\eqref{Eq:1:2}, we have the following statement.
\begin{proposition}\label{PROP:EXPANSION}
Let $u$ be a non-negative,  local, weak super-solution to \eqref{Eq:1:1f} -- \eqref{Eq:1:2} in $E_T$. Assume that $0<p-1\le q$.
Suppose for some constants $M>0$ and $\al\in(0,1)$, there holds
	\begin{equation*}
		\Big|\big\{ u(\cdot, t_o) \ge M \big\}\cap K_{\varrho}(x_o)\Big|
		\ge
		\al  |K_\varrho |.
		%\quad
		%\mbox{ for all $t_o<t< t_o+\dl\boldsymbol\om^{q+1-p}\varrho^p$}.
	\end{equation*}
Then there exist constants $\dl,\,\eta\in(0,1)$ depending only on the data and $\al$, such that 
\begin{equation*}
	u\ge\eta M
	\quad
	\mbox{a.e.~in $ K_{2\varrho}(x_o) \times\big( t_o+\tfrac12 \dl M^{q+1-p}\varrho^p,
	t_o+\dl M^{q+1-p}\varrho^p\big],$}
\end{equation*}
provided %this time interval is included in $(0,T]$.
$$K_{8\rho}(x_o)\times\big(t_o, t_o+\dl M^{q+1-p}\varrho^p\big]\subset E_T.$$
\end{proposition}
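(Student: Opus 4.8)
The plan is to prove Proposition~\ref{PROP:EXPANSION} following the by-now standard ``expansion of positivity'' machinery for degenerate/singular parabolic equations, adapted to the doubly non-linear operator $\partial_t u^q - \Div\bl{A}$, which after the change of variable $v:=u^q$ behaves like a $p$-Laplacian with a bounded, positive multiplier. I would carry out the argument in four steps.

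\textbf{Step 1: Shrinking the measure-theoretic information in a thin time strip.} Starting from the hypothesis $|\{u(\cdot,t_o)\ge M\}\cap K_\varrho(x_o)|\ge\al|K_\varrho|$, I would use the logarithmic energy estimate (a Caccioppoli-type inequality for $(\log^+(M/(u+c)))$, obtained from the energy estimates behind Proposition~\ref{Prop:2:1} tested with an appropriate logarithmic function of $u$) to propagate a measure bound forward in time: there exist $\dl,\al'\in(0,1)$, depending on the data and $\al$, such that
\[
\Big|\{u(\cdot,t)\ge \al' M\}\cap K_\varrho(x_o)\Big|\ge\tfrac12\al|K_\varrho|
\qquad\text{for all }t\in\big(t_o,\,t_o+\dl M^{q+1-p}\varrho^p\big].
\]
The intrinsic waiting time $\dl M^{q+1-p}\varrho^p$ is exactly the one dictated by the scaling of the equation, and the sign condition $q+1-p>0$ (equivalently $q>p-1$, the fast-diffusion regime; the borderline $q=p-1$ is allowed and degenerates the time scale to $\varrho^p$) is what makes this step work.

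\textbf{Step 2: From measure to pointwise positivity via De Giorgi / shrinking the level.} Next I would run a De Giorgi iteration on the cylinder $K_\varrho(x_o)\times(t_o+\tfrac12\dl M^{q+1-p}\varrho^p,\,t_o+\dl M^{q+1-p}\varrho^p]$, using the energy (Caccioppoli) estimates for super-solutions together with the measure lower bound from Step 1 as the ``initial'' information, to conclude that $u\ge\sigma M$ a.e.\ on a possibly smaller cube $K_{\varrho/2}(x_o)$ over that time strip, for some $\sigma\in(0,1)$ depending only on the data and $\al$. This is the standard expansion-of-positivity-in-a-ball step; the only care needed is that the fast-diffusion sign of $q+1-p$ and the bounds $C_o|\xi|^p\le\bl{A}\cdot\xi$, $|\bl{A}|\le C_1|\xi|^{p-1}$ enter the energy estimates with the right homogeneity.

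\textbf{Step 3: Spatial expansion from $K_{\varrho/2}$ (or $K_\varrho$) to $K_{2\varrho}$.} With a pointwise lower bound $u\ge\sigma M$ available on a smaller cube at a positive time, I would iterate the ball-expansion step finitely many times (a fixed number depending only on $N$), each iteration doubling the spatial scale while keeping the time level inside a comparable intrinsic interval, at the cost of multiplying the positivity constant by a fixed factor $<1$; after finitely many steps this yields $u\ge\eta M$ a.e.\ on $K_{2\varrho}(x_o)$ over the asserted time interval $(t_o+\tfrac12\dl M^{q+1-p}\varrho^p,\,t_o+\dl M^{q+1-p}\varrho^p]$, with $\eta\in(0,1)$ depending only on the data and $\al$. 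The inclusion hypothesis $K_{8\varrho}(x_o)\times(t_o,t_o+\dl M^{q+1-p}\varrho^p]\subset E_T$ provides exactly the room needed for these finitely many spatial doublings and the associated cutoff functions.

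\textbf{Main obstacle.} The delicate point is Step 1: obtaining the time-propagation of the measure estimate with constants that are \emph{uniform in $M$} and do not deteriorate, which requires the correct intrinsic scaling $\dl M^{q+1-p}\varrho^p$ and a logarithmic energy inequality tailored to the doubly non-linear structure $\partial_t u^q$ rather than $\partial_t u$. One must handle the term $\partial_t u^q$ carefully — e.g.\ via Steklov averaging and the monotonicity of $s\mapsto s^q$ — so that testing with a logarithmic function of $u$ produces a coercive boundary term at the time slices. Once this is in place, Steps 2 and 3 are routine adaptations of the DiBenedetto--Gianazza--Vespri expansion-of-positivity scheme. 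I would also remark that all constants track only through $\{p,q,N,C_o,C_1\}$ and $\al$, as claimed, and that the factor $\tfrac12$ in the lower time endpoint is harmless — it just reflects that the pointwise bound holds on the upper half of the intrinsic waiting interval.
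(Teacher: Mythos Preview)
Your outline has the right macroscopic shape but misses the crux of the argument, and in fact misplaces where the logarithmic estimate enters.

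The paper assembles three building blocks (Section~\ref{S:proof-expansion}): (i) a time-propagation lemma (Lemma~\ref{Lm:3:1}) showing that the measure condition persists forward in time, proved with the \emph{ordinary} energy estimate of Proposition~\ref{Prop:2:1} --- no logarithm; (ii) a measure-shrinking lemma (Lemma~\ref{Lm:shrink}) which, for any prescribed $\nu\in(0,1)$, upgrades ``$\{u\ge M\}$ has measure $\ge\alpha|K|$ on each time slice'' to ``$\{u\le\xi M\}$ has measure $\le\nu|K|$ on each time slice'', and \emph{this} is where the logarithmic test function $\zeta^p/[u_k+ck]^{p-1}$ is deployed, through a delicate two-case iteration (Lemma~\ref{Lm:Aux}) hinging on the sign of $\partial_t\int\zeta^p\Phi_{k_n}(u)\,\dx$; (iii) a De~Giorgi lemma (Lemma~\ref{Lm:DG:1}) converting smallness of the sub-level set into a pointwise lower bound.

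Your Step~2 conflates (ii) and (iii): you claim a De~Giorgi iteration alone bridges the measure bound from Step~1 to a pointwise bound. It cannot. After your Step~1 the sub-level set $\{u<\alpha'M\}$ still has measure roughly $(1-\tfrac12\alpha)|K_\varrho|$, nowhere near the threshold $\nu|K_\varrho|$ that the De~Giorgi lemma requires. The shrinking mechanism (ii) is the genuine heart of the proof and the real ``main obstacle'' --- not the time propagation, which in the paper is a short, elementary computation with $\mathfrak g_-$. Your Step~3 (iterated spatial doubling) is also not what the paper does: it instead enlarges the cube to $K_{4\varrho}$ at the outset, runs (i)--(iii) there, and the final De~Giorgi step on $Q_{4\rho}(\theta)$ already delivers the bound on $K_{2\varrho}$ without further iteration.
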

\begin{remark}\label{Rmk:1:4}
\upshape
The constants $\eta$ and $\dl$ are stable as $q-1+p\downarrow0$. Therefore, Proposition~\ref{PROP:EXPANSION} recovers \cite[Proposition~B.1]{BDL-21}.
The dependence of $\eta(\al)$ can be traced by 
\[
\eta=\exp\Big\{-\boldsymbol\gm_1 \al^{-\boldsymbol\gm_3 -1} \exp\big\{(q+1-p) \boldsymbol\gm_2\al^{-\boldsymbol\gm_3}\big\}\Big\}
\]
for some $\boldsymbol\gm_1,\,\boldsymbol\gm_2>1$  depending only on the data, while $\boldsymbol\gm_3=\tfrac1{p-1}\big(2p+1+N\frac{p+1}{p}\big)$. 
\end{remark}

Per se, the result is not completely new. It was first given in \cite[Proposition~5]{FSV-14} for non-negative super-solutions to doubly non-linear equations written as
\begin{equation*}
u_t-\Div(|u|^{m-1}|Du|^{p-2}Du)=0,
\end{equation*}
for $p>1$, $2<p+m<3$. 
% Comparing \eqref{doubly-nonlinear-prototype} with \eqref{DNL-FSV}, it is apparent that they are (at least formally) equivalent, once we set
% \[
% q=\frac1{1+\frac{m-1}{p-1}},
% \]
The  expansion of positivity was proven under the same extra assumption about the gradient $Du$ which we have already mentioned above when discussing local boundedness; once more, in \cite[Theorem~6.4]{Vespri-Vestberg} such a hypothesis was shown to be unnecessary.

Coming back to our formulation, if the equation is just the prototype \eqref{doubly-nonlinear-prototype}, Proposition~\ref{PROP:EXPANSION} was proven in 
\cite[Section~4]{Henriques-22} under the same conditions for $p$ and $q$ as we have here. 

In all these three instances, the proof relies on a proper adaptation of the techniques used in \cite[Chapter~4]{DBGV-mono} in the proof of the expansion of positivity for non-negative super-solutions to the singular parabolic $p$-Laplacian and porous medium equation.

Besides considering \eqref{Eq:1:1f} in its full generality under conditions \eqref{Eq:1:2}, here the other important novelty is represented by the technical tools we employ in the proof; indeed, we get back to the approach discussed in \cite[Chapter~4]{DB}, based on a proper use of the logarithmic function, and we suitably adapt it to our context. It is worth noticing that such a function is a fundamental tool in Moser's proof of the Harnack inequality for linear parabolic equations in divergence form with bounded and measurable coefficients (see \cite[Lemma~2]{moser-71}), and it is quite remarkable that it plays such an important role in two very different frameworks; it is somehow suggestive that the common diffusive feature of the two equations is more basic than their technical differences.

Finally, as pointed out in Remark~\ref{Rmk:1:4}, we pay particular attention to the stability of the estimates, so that we can recover the limiting case $p-1=q$.

The proof of Proposition~\ref{PROP:EXPANSION}, together with the development of all the necessary technical tools is given in Chapter~\ref{sec:Expansion}.
%%%%%%
\section{Harnack's inequality}

Our first result concerning the Harnack inequality is as follows. For the same reason as Remark~\ref{Rmk:semicontin}, it is stated for continuous weak solutions. We recall Figure~\ref{Fig:har} which illustrates the range where the Harnack inequality holds. 

\begin{theorem}[Harnack inequality]\label{THM:HARNACK:0}
Let $0<p-1< q<\frac{N(p-1)}{(N-p)_+}$.
There are constants $\boldsymbol\gm>1$ and $\sig\in(0,1)$, which  depend
only on the data
$p$, $q$, $N$, $C_o$ and $C_1$, so that 
if $u$ is a  non-negative,  continuous, weak solution to the general parabolic equation
\eqref{Eq:1:1f}  in $E_T$ such that  the coercivity and growth conditions \eqref{Eq:1:2} hold, that $u(x_o,t_o)>0$, and  that the set inclusion 
\begin{equation} \label{Eq:set-incl}
K_{8\rho}(x_o) \times \big(t_o -  \mathcal{M}^{q+1-p}(8\rho)^p, t_o + \mathcal{M}^{q+1-p}(8\rho)^p\big)\subset E_T, 
\end{equation}
with
\[
    \mathcal{M}:=\sup_{K_{\rho}(x_o)}u(\cdot, t_o)
\]
hold, then for all
\[
    (x,t)\in K_{\rho}(x_o) \times \big(t_o - \sig [u(x_o,t_o)]^{q+1-p}\rho^p, t_o + \sig [u(x_o,t_o)]^{q+1-p}\rho^p\big),
\]
we have
\[
    \boldsymbol\gm^{-1} u(x_o,t_o) \le u(x,t) \le \boldsymbol\gm u(x_o,t_o).
\]
\end{theorem}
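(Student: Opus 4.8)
\textbf{Proof plan for Theorem~\ref{THM:HARNACK:0}.}
The strategy is the classical intrinsic-scaling / De Giorgi--DiBenedetto scheme for singular parabolic equations, adapted to the doubly nonlinear setting, with the \emph{expansion of positivity} of Proposition~\ref{PROP:EXPANSION} as the engine for the lower bound. By translation we may assume $(x_o,t_o)=(0,0)$, and by the scaling $u\mapsto u/u_o$ together with the intrinsic rescaling $(x,t)\mapsto(\rho x, u_o^{q+1-p}\rho^p t)$ we may normalize $u(0,0)=1$ and work on the unit-type cylinder, so the claim becomes: there are $\boldsymbol\gm>1$, $\sig\in(0,1)$ depending only on the data such that $\boldsymbol\gm^{-1}\le u\le\boldsymbol\gm$ on $K_1\times(-\sig,\sig)$. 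Note the structural exponent $q+1-p>0$ is exactly what makes the intrinsic time-scale $u_o^{q+1-p}\rho^p$ shrink as $u_o\to 0$; this is the singular regime where a \emph{time-insensitive} (elliptic-type) Harnack estimate is expected.

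\textbf{Lower bound.} First I would establish a measure-theoretic alternative at the level $t_o$: since $u(0,0)=u_o>0$ and $u$ is continuous, $u(\cdot,t_o)\ge\tfrac12 u_o$ on a small cube $K_{\varepsilon\rho}(x_o)$, so the positivity set has positive measure in $K_{\rho}(x_o)$ for some $\al=\al(\varepsilon)$. Feeding this into Proposition~\ref{PROP:EXPANSION} (applied on a suitably shrunk cube, iterating the space-doubling from $K_{\varepsilon\rho}$ up to $K_{\rho}$ and beyond, with a geometric loss $\eta$ at each of the $\sim\log(1/\varepsilon)$ steps) yields $u\ge\eta_o u_o$ on a forward cylinder $K_{2\rho}(x_o)\times(t_o+\tfrac12\dl u_o^{q+1-p}\rho^p,\,t_o+\dl u_o^{q+1-p}\rho^p]$ for data-dependent $\eta_o,\dl$. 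To obtain positivity at \emph{all} times in $(t_o-\sig u_o^{q+1-p}\rho^p, t_o+\sig u_o^{q+1-p}\rho^p)$ — both forward \emph{and backward} — I would run the expansion of positivity starting from time-slices $t_o-\tau$ with $\tau$ ranging over a dyadic scale down to $0$, covering the backward cylinder, and similarly bridge the short forward gap $(t_o, t_o+\tfrac12\dl\cdots]$ left uncovered by a single application; a standard covering/chaining argument in $\tau$ with the $L^\infty$-bound controlling the number of steps then gives $u\ge\boldsymbol\gm^{-1}u_o$ on the full symmetric cylinder. The inclusion hypothesis \eqref{Eq:set-incl} with radius $8\rho$ and the factor-$\mathcal M$ time-length is precisely what guarantees every invoked application of Proposition~\ref{PROP:EXPANSION} stays inside $E_T$.

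\textbf{Upper bound.} Here I would use a De Giorgi-type argument for sub-solutions together with the quantitative $L^\infty$-estimate of Theorem~\ref{THM:BD:1}. The key point is that an \emph{upper} bound for $u(x_o,t_o)$ on the forward-in-time neighborhood must be deduced from the lower bound already obtained, via a reverse (or "spreading of negativity") argument applied to a shifted, reflected solution — or, more in line with DiBenedetto--Urbano--Vespri, from the following dichotomy: either $u$ is already $\le\boldsymbol\gm u_o$ near $(x_o,t_o)$, or there is a point $(x_1,t_1)$ with $u(x_1,t_1)$ much larger than $u_o$, around which one may re-center the intrinsic cylinder, apply the lower bound to the now-large value, and thereby force $u(x_o,t_o)$ to be large as well, contradicting the normalization; running this as an iteration produces a universal constant $\boldsymbol\gm$. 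Theorem~\ref{THM:BD:1} (valid for all $q>p-1$) supplies the a~priori finiteness of $\sup u$ needed to start the iteration, and the supercriticality $q<\frac{N(p-1)}{(N-p)_+}$ ensures $\lambda_r>0$ for $r=q+1$ so the sup-estimate is applicable without extra boundedness assumptions.

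\textbf{Main obstacle.} I expect the delicate part to be stitching together the forward and backward time-covers so that the \emph{same} cylinder radius works on both sides and the number of expansion-of-positivity steps stays bounded by the data alone — equivalently, showing that the Harnack constant $\boldsymbol\gm$ does not blow up as one tries to reach times arbitrarily close to $t_o$ from below. This requires carefully balancing the geometric loss $\eta<1$ per step against the number of steps, and using the $L^\infty$-bound of Theorem~\ref{THM:BD:1} to cap the iteration; the singular exponent $q+1-p>0$ helps because the intrinsic time-length is controlled, but keeping all constants stable and data-dependent through the chaining is the crux. The second subtlety is the upper bound's iteration: ensuring the re-centering at a hypothetical large value of $u$ still satisfies an inclusion of the form \eqref{Eq:set-incl} (now with a larger $\mathcal M$), which is where the generous factor $8$ and the $\mathcal M^{q+1-p}$ time-length in the hypothesis are used.
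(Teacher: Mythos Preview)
Your overall architecture is right in spirit (intrinsic scaling, expansion of positivity for the lower bound, contradiction for the upper bound), but the lower-bound argument has a genuine gap and is missing the paper's two key ingredients.

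\textbf{The gap.} You propose to start from continuity: $u(\cdot,t_o)\ge\tfrac12 u_o$ on some $K_{\varepsilon\rho}(x_o)$, then iterate Proposition~\ref{PROP:EXPANSION} roughly $\log(1/\varepsilon)$ times. But $\varepsilon$ depends on the modulus of continuity of $u$, not on the data, so the resulting $\eta_o$ and $\boldsymbol\gm$ would depend on the solution. This is exactly the difficulty the paper's machinery is built to avoid. Your backward-time plan is also problematic: Proposition~\ref{PROP:EXPANSION} is purely forward in time, and ``running it from slices $t_o-\tau$'' presupposes measure information at $t_o-\tau$ that you do not have.

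\textbf{What the paper actually does.} Two ingredients replace your continuity step. First, a Krylov--Safonov-type point-location argument (Section~\ref{S:Harnack:2}): compare $M_\tau=\sup_{K_\tau}v(\cdot,0)$ with $N_\tau=(1-\tau)^{-\beta}$ and take the largest root $\tau_*$; this produces a point $\bar x\in K_{\tau_*}$ with $v(\bar x,0)=(1-\tau_*)^{-\beta}$, where the value is large \emph{relative to the available space} $r\sim(1-\tau_*)$ in a data-controlled way. Second, the \emph{integral Harnack inequality} (Theorem~\ref{Thm:bd:2}), which you do not invoke at all, is the engine that converts this single pointwise value into (i) a sup-bound on an intrinsic cylinder about $(\bar x,0)$ (Lemma~\ref{Lm:sup-v}) and (ii) a \emph{measure} lower bound on every time slice in a symmetric interval $[-\dl\theta_* r^p,\dl\theta_* r^p]$ (Lemma~\ref{Lm:measure-v}) --- this is where backward-time coverage comes from, not from reversing the expansion of positivity. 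Only then does Proposition~\ref{PROP:EXPANSION} enter, and for Theorem~\ref{THM:HARNACK:0} it is iterated $n$ times with $2^n r\approx 1$; the point is that the free parameter $\beta$ is chosen large (specifically $\beta\ge -\log\bar\eta/\log 2$) so that the initial height $(1-\tau_*)^{-\beta}$ absorbs the geometric loss $\bar\eta^{\,n}$, yielding a data-only lower bound on $K_1\times[-\sig,\sig]$. The set inclusion~\eqref{Eq:set-incl} with the $\mathcal M$-dependent time length is needed precisely to accommodate the possibly large intrinsic cylinder $\widetilde Q_{2r}(\theta_*)$ when $\tau_*$ is close to $1$.

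Your upper-bound sketch (re-center at a hypothetical point with $u(x_*,t_*)=2\boldsymbol\gm u_o$ and apply the already-proven lower bound there to reach a contradiction) matches Section~\ref{S:Harnack:4} closely; the only refinement is that one first uses the lower bound plus Theorem~\ref{THM:BD:1} to show $u$ is genuinely continuous on the relevant cylinder, so such a point $(x_*,t_*)$ can be selected.
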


Theorem~\ref{THM:HARNACK:0} allows for a full general quasi-linear structure of $\bl{A}(x,t, u,\xi)$ in the doubly non-linear equations. However, the required set inclusion \eqref{Eq:set-incl} sometimes poses certain constraints in applications. %cf.~\cite[Chapter~7, \S~1.1]{DBGV-mono}. 
Our second result weakens this requirement, yet only for the model equation; 
see Remark~\ref{Rmk:Harnack-cp} for possible generalizations.

\begin{theorem}[Harnack inequality:  model equation]\label{THM:HARNACK}
Let $0<p-1< q<\frac{N(p-1)}{(N-p)_+}$. There are constants $\boldsymbol\gm>1$ and $\sig\in(0,1)$, which  depend only on $N$, $p$ and $q$, such that if $u$ is a 
non-negative,  continuous, weak solution to the model equation \eqref{doubly-nonlinear-prototype} in $E_T$, that $u(x_o,t_o)>0$, and that the set inclusion
\[
K_{8\rho}(x_o) \times \big(t_o - \boldsymbol\gm [u(x_o,t_o)]^{q+1-p}(8\rho)^p, t_o + \boldsymbol\gm [u(x_o,t_o)]^{q+1-p}(8\rho)^p\big)\subset E_T
\]
holds, then for all 
\[
(x,t)\in K_{\rho}(x_o) \times \big(t_o - \sig [u(x_o,t_o)]^{q+1-p}\rho^p, t_o + \sig [u(x_o,t_o)]^{q+1-p}\rho^p\big),
\]
we have 
\[
\boldsymbol\gm^{-1} u(x_o,t_o) \le u(x,t) \le \boldsymbol\gm u(x_o,t_o).
\]
\end{theorem}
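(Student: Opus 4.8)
\textbf{Proof plan for Theorem~\ref{THM:HARNACK}.}

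The plan is to deduce the model-equation Harnack inequality (Theorem~\ref{THM:HARNACK}) from the general quasi-linear Harnack inequality (Theorem~\ref{THM:HARNACK:0}) by trading the pointwise-at-time-$t_o$ quantity $\mathcal M=\sup_{K_\rho(x_o)}u(\cdot,t_o)$ for the pointwise value $u_o:=u(x_o,t_o)$. The obstacle in Theorem~\ref{THM:HARNACK:0} is that the required set inclusion~\eqref{Eq:set-incl} involves $\mathcal M$, which could in principle be much larger than $u_o$; what we want instead is an inclusion phrased purely in terms of $u_o$. So the core task is an a priori bound $\mathcal M\le \boldsymbol\gm\, u_o$, valid on a slightly smaller cube, from which the two inclusions become comparable up to constants.

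First I would set up a continuity/iteration argument over dyadic radii. Fix $\rho$ so that the cylinder $K_{8\rho}(x_o)\times(t_o-\boldsymbol\gm u_o^{q+1-p}(8\rho)^p,\,t_o+\boldsymbol\gm u_o^{q+1-p}(8\rho)^p)$ sits inside $E_T$, with $\boldsymbol\gm$ to be chosen. For $j\ge 0$ put $\rho_j:=\rho/2^j$ and $M_j:=\sup_{K_{\rho_j}(x_o)}u(\cdot,t_o)$. The idea is: at any point $y\in K_{\rho_j}(x_o)$ where $u(y,t_o)$ is close to $M_j$, apply Theorem~\ref{THM:HARNACK:0} centered at $(y,t_o)$ with radius comparable to $\rho_j$ — this is legitimate provided the $\mathcal M$-inclusion for that application holds, which, since the relevant $\mathcal M$ there is itself $\le M_j$, is guaranteed once the original cylinder is large enough (here $q+1-p>0$ so $M_j^{q+1-p}$ is monotone in $M_j$ and can be absorbed). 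The conclusion of Theorem~\ref{THM:HARNACK:0} then forces $u(x_o,t_o)\ge \boldsymbol\gm^{-1}\sup_{K_{c\rho_j}(y,t_o)}u \ge \boldsymbol\gm^{-1}u(y,t_o)$ provided $x_o$ lies in the time-slice neighborhood where the two-sided bound is valid; choosing the geometric relation between $\rho_j$ and $|x_o-y|$ correctly (using $\sig$) makes $x_o$ admissible. This yields $M_j\le \boldsymbol\gm u_o$ for every $j$ with a constant independent of $j$ — in particular $\mathcal M=M_0^{(\rho)}\lesssim u_o$ on, say, $K_{\rho/2}(x_o)$ — which is exactly the missing comparison. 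A cleaner packaging: cover $K_\rho(x_o)$ by finitely many cubes $K_{\rho/8}(y_i)$ with $y_i$ such that $x_o$ lies in the Harnack time-slice neighborhood for the application centered at $y_i$; chaining finitely many two-sided bounds gives $\sup_{K_\rho(x_o)}u(\cdot,t_o)\le \boldsymbol\gm u_o$ directly, the number of cubes (hence the constant) depending only on $N$.

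Once $\mathcal M\le\boldsymbol\gm u_o$ is in hand, the rest is bookkeeping. The hypothesis of Theorem~\ref{THM:HARNACK} — the set inclusion with $\boldsymbol\gm u_o^{q+1-p}(8\rho)^p$ in the time direction — implies, via $\mathcal M^{q+1-p}\le(\boldsymbol\gm u_o)^{q+1-p}=\boldsymbol\gm^{q+1-p}u_o^{q+1-p}$ and a suitable choice of the constant $\boldsymbol\gm$, the inclusion~\eqref{Eq:set-incl} required by Theorem~\ref{THM:HARNACK:0} (possibly after shrinking $\rho$ by a fixed factor and rechoosing constants). Applying Theorem~\ref{THM:HARNACK:0} then gives $\boldsymbol\gm^{-1}u_o\le u(x,t)\le\boldsymbol\gm u_o$ on $K_\rho(x_o)\times(t_o-\sig u_o^{q+1-p}\rho^p,\,t_o+\sig u_o^{q+1-p}\rho^p)$, which is the assertion (relabeling $\rho$ and the constants).

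The main difficulty I anticipate is the covering/iteration step: one must choose the radius ratios and the relevant powers of $M_j$ so that at each stage the $\mathcal M$-type set inclusion needed to invoke Theorem~\ref{THM:HARNACK:0} is genuinely implied by the single large inclusion assumed at scale $\rho$, \emph{uniformly} in the iteration, and so that $x_o$ always falls inside the cylinder on which the two-sided bound holds. Because $q+1-p>0$, larger values of $u$ require \emph{larger} time cylinders, so one has to be careful that the bound $M_j\le\boldsymbol\gm u_o$ being derived is used self-consistently (a bootstrap: assume $M_j\le Ku_o$ on a subcollection, derive it on the next, close the loop with $K$ fixed). This is precisely the kind of argument used to upgrade intrinsic Harnack inequalities from $\sup$-based to pointwise-value-based forms in the degenerate/singular parabolic literature, and I would model the details on that; everything else is routine rescaling.
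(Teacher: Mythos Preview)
Your approach is genuinely different from the paper's, and the core idea is sound, though your writeup does not cleanly close the circularity you yourself identify.

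\textbf{What the paper does.} The paper does \emph{not} reduce Theorem~\ref{THM:HARNACK} to Theorem~\ref{THM:HARNACK:0}. Instead it proves both theorems largely in parallel: after rescaling to $v(0,0)=1$, locating a near-maximum point $\bar x$, and using the integral Harnack inequality plus expansion of positivity to obtain a quantitative lower bound for $v$ on a small cylinder around $(\bar x,0)$, the two proofs diverge. For Theorem~\ref{THM:HARNACK} the paper constructs a comparison function $w$ solving an auxiliary Cauchy--Dirichlet problem for the model equation with zero lateral data and uses the comparison principle (Proposition~\ref{Prop:CP}) to transfer the lower bound from $w$ to $v$ on the full cube $K_2(\bar x)$; this step requires the special structure of the model equation and avoids any $\mathcal M$-sized cylinder. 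For Theorem~\ref{THM:HARNACK:0} the paper instead iterates the expansion of positivity, which is what forces the $\mathcal M$-dependent set inclusion. Only afterwards, for the upper bound, does the paper use a continuity argument (Section~8.6), but one based on the already-proven lower bound, not on Theorem~\ref{THM:HARNACK:0}.

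\textbf{On your argument.} Your covering/chaining description does not quite close: when you center at $y\in K_{\rho_j}(x_o)$ near the maximum, the $\mathcal M$ appearing in the hypothesis of Theorem~\ref{THM:HARNACK:0} is $\sup_{K_{\rho_j}(y)}u(\cdot,t_o)\le M_{j-1}$, not $M_j$, and you have not yet controlled $M_{j-1}$. The bootstrap you allude to is the right fix, but the clean way to run it is a continuity argument in the radius, centered at $x_o$ throughout: let $\rho_\ast:=\sup\{r\le\rho:\mathcal M(r)\le 2\boldsymbol\gm_0 u_o\}$, where $\boldsymbol\gm_0$ is the constant from Theorem~\ref{THM:HARNACK:0}. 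If $\rho_\ast<\rho$, then $\mathcal M(\rho_\ast)=2\boldsymbol\gm_0 u_o$ by continuity and monotonicity; since $\rho_\ast<\rho$, the set inclusion needed for Theorem~\ref{THM:HARNACK:0} at radius $\rho_\ast$ follows from your assumed inclusion with constant $(2\boldsymbol\gm_0)^{q+1-p}$, whence $\mathcal M(\rho_\ast)\le\boldsymbol\gm_0 u_o<2\boldsymbol\gm_0 u_o$, contradicting the choice of $\rho_\ast$ by continuity. Thus $\mathcal M(\rho)\le 2\boldsymbol\gm_0 u_o$, and one final application of Theorem~\ref{THM:HARNACK:0} at radius $\rho$ yields the theorem.

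\textbf{What each approach buys.} Your route is shorter and, notably, uses nothing specific to the model equation: it would upgrade Theorem~\ref{THM:HARNACK:0} to the form of Theorem~\ref{THM:HARNACK} for the full quasi-linear class \eqref{Eq:1:1f}--\eqref{Eq:1:2}, which the paper flags as an open question. The paper's route, by contrast, showcases the comparison principle Proposition~\ref{Prop:CP}, which is of independent interest, and proceeds without invoking Theorem~\ref{THM:HARNACK:0} as a black box.
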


This kind of Harnack inequality presents two unique features that differ from the usual parabolic Harnack inequality. First of all, pointwise information has to be read in an intrinsic geometry induced by the particular scaling of the partial differential equation. Second, the usual waiting-time phenomenon is defied by  the forward-backward nature of the Harnack inequality. That is, a strong ``elliptic" feature appears, the role of time diminishes  and pointwise information can be read not only forward but also backward. We refer the reader to the monograph \cite{DBGV-mono} for a comprehensive discussion of this issue. In particular, our Theorem~\ref{THM:HARNACK:0} includes Theorems~1.2 \& 16.2 in \cite[Chapter~6]{DBGV-mono} as special cases and offers a unified perspective in a larger framework. A Harnack inequality for the complete slow diffusion range can be found in \cite{BHSS}. 

Harnack-type estimates for doubly non-linear equations of fast diffusion type have been considered by a number of authors, cf.~\cite{FSV-15, Vespri-Vestberg}. However, the previous works have used different notions of solution, and have required a time derivative of solutions, which is quite unnatural for a parabolic theory. In a recent work, a similar result as Theorem~\ref{THM:HARNACK:0} is obtained, cf.~\cite[Theorem~9.5]{Vespri-Vestberg}. However, their range of $p$ and $q$ is not optimal for the Harnack inequality to hold.

Our Harnack inequality dispenses with any requirement on the time derivative of solutions and holds true with an optimal range of $p$ and $q$. %that is, $0<p-1< q<\frac{N(p-1)}{(N-p)_+}$. 
The optimality will be discussed in \S~\ref{S:Harnack-opt}. 

An interesting consequence of Theorem~\ref{THM:HARNACK:0} is the following Liouville-type result.

\begin{corollary}\label{Cor:Liouville}
Let $0<p-1< q<\frac{N(p-1)}{(N-p)_+}$. If $u$ is a non-negative weak solution to the general parabolic equation \eqref{Eq:1:1f} in any sub-domains of $\rn\times\rr$, then it must be a constant on $\rn\times\rr$.
\end{corollary}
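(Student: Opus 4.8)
The plan is to deduce the Liouville property from the time-insensitive Harnack inequality of Theorem~\ref{THM:HARNACK:0} by a scaling argument that sends the radius $\rho\to\infty$. First I would fix a point $(x_o,t_o)\in\rn\times\rr$ and, aiming for a contradiction, assume $u(x_o,t_o)>0$ — if $u(x_o,t_o)=0$ at a point where $u$ attains an interior minimum, one can invoke the strong maximum principle (or, more in the spirit of this paper, the expansion of positivity from Proposition~\ref{PROP:EXPANSION}) to see that $u$ is either identically zero or strictly positive everywhere, so it suffices to treat the case $u>0$ and show $u$ is constant. Since $u$ is defined and a weak solution on all of $\rn\times\rr$, the set inclusion \eqref{Eq:set-incl} is satisfied for \emph{every} $\rho>0$, no matter how large, because $E_T=\rn\times\rr$.

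Next I would apply Theorem~\ref{THM:HARNACK:0} on the cylinder centered at $(x_o,t_o)$ with radius $\rho$. This yields, for all $(x,t)$ in $K_\rho(x_o)\times\bigl(t_o-\sigma u(x_o,t_o)^{q+1-p}\rho^p,\,t_o+\sigma u(x_o,t_o)^{q+1-p}\rho^p\bigr)$, the two-sided bound $\bg^{-1}u(x_o,t_o)\le u(x,t)\le\bg u(x_o,t_o)$, with $\bg$ and $\sigma$ independent of $\rho$. Letting $\rho\to\infty$, the spatial slab $K_\rho(x_o)$ exhausts $\rn$ and the time interval, which has half-length $\sigma u(x_o,t_o)^{q+1-p}\rho^p\to\infty$, exhausts $\rr$ (here I use that $u(x_o,t_o)>0$ is a fixed positive number and $q+1-p>0$ is irrelevant to the sign since $\rho^p\to\infty$ regardless). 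Hence $\bg^{-1}u(x_o,t_o)\le u\le\bg u(x_o,t_o)$ on all of $\rn\times\rr$: every non-negative entire solution is globally bounded above and below by fixed multiples of its value at any single point, so in particular $u$ is bounded and bounded away from zero.

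To upgrade global boundedness to constancy, I would run the Harnack inequality once more, but now exploiting its quantitative form. Set $m:=\inf_{\rn\times\rr}u>0$ and $M:=\sup_{\rn\times\rr}u<\infty$, both finite and positive by the previous step. Pick a point $(x_1,t_1)$ with $u(x_1,t_1)$ close to $m$; applying the Harnack inequality centered there on arbitrarily large cylinders gives $u(x,t)\le\bg\,u(x_1,t_1)$ everywhere, hence $M\le\bg m$. Iterating this observation does not immediately give $M=m$, so instead I would argue as follows: consider $w:=u-m\ge0$. A short computation using the structure conditions \eqref{Eq:1:2} and the fact that $u$ is bounded away from $0$ shows that $u$ satisfies, after the substitution $v=u^q$ used throughout Chapter~\ref{sec:regularity}, a uniformly parabolic $p$-Laplace-type equation with bounded measurable coefficients bounded away from zero; the gradient estimate \eqref{grad-est-intro} of Theorem~\ref{THM:REGULARITY-INTRO} then applies on cylinders of every radius $\rho$ centered at a given point, giving $|Du(x_o,t_o)|\le\bg u(x_o,t_o)/\rho\le\bg M/\rho\to0$ as $\rho\to\infty$. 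Since $(x_o,t_o)$ was arbitrary, $Du\equiv0$, and feeding this back into the equation gives $\partial_t u^q\equiv0$, so $u$ is constant.

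The main obstacle I anticipate is the last step: passing from the two-sided Harnack bound (which only gives $u$ oscillates within a fixed multiplicative factor) to $u\equiv\text{const}$. The clean route is the gradient bound \eqref{grad-est-intro}, whose right-hand side $\bg u_o/\rho$ decays in $\rho$ while the hypothesis cylinder $\widetilde\bg Q_o$ is automatically contained in $E_T=\rn\times\rr$ for all $\rho$; one must check that $u_o=u(x_o,t_o)$ stays fixed (it does) so the bound genuinely tends to zero. A subtlety worth a sentence is that Theorem~\ref{THM:REGULARITY-INTRO} is stated for the model equation \eqref{doubly-nonlinear-prototype} while Corollary~\ref{Cor:Liouville} allows the general structure \eqref{Eq:1:1f}; here one should instead use the general Harnack inequality twice — a second application, with the oscillation reduced, combined with an iteration/De~Giorgi-type oscillation-decay argument as in the proof of interior continuity — or simply remark that boundedness plus the Harnack two-sided bound on all scales forces the oscillation to vanish by a standard doubling/compactness argument. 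Either way, the heart of the matter is the scale-invariance of the constants $\bg,\sigma$ in Theorem~\ref{THM:HARNACK:0}, which is exactly the ``elliptic'' feature of this fast-diffusion regime.
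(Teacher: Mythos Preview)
Your overall strategy matches the paper's: first apply Theorem~\ref{THM:HARNACK:0} on arbitrarily large cylinders to get $\bg_{\rm h}^{-1}u_o\le u\le\bg_{\rm h}u_o$ globally, then perform the substitution $v=\tilde u^q$ (after a harmless time rescaling) so that $v$ solves a parabolic $p$-Laplace-type equation with bounded measurable coefficients bounded away from zero. Where you diverge is in the final step. You try to invoke the gradient bound \eqref{grad-est-intro}, correctly notice that Theorem~\ref{THM:REGULARITY-INTRO} is only stated for the model equation, and then retreat to vaguer alternatives (``Harnack twice with oscillation reduced'', ``a standard doubling/compactness argument''). Neither of these is made precise enough to close the argument.

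The paper's execution of this last step is cleaner and avoids gradient estimates entirely: once $v$ solves a $p$-Laplace equation with merely bounded measurable coefficients on all of $\rn\times\rr$, apply the classical H\"older continuity results of \cite[Chapter~III, \S1, Theorem~1.1]{DB} (for $p\ge2$) or \cite[Chapter~IV, \S1, Theorem~1.1]{DB} (for $1<p<2$). These give an oscillation decay
\[
\osc_{(y,s)+Q_r(\bg_{\rm h}^{q(2-p)})} v\le \bg\,\bg_{\rm h}^q\Big(\frac{r}{\rho}\Big)^{\al}
\]
for arbitrary $0<r<\rho$ and any center $(y,s)$. Fixing $r$ and sending $\rho\to\infty$ forces the oscillation to vanish, so $v$ (hence $u$) is constant. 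This is precisely the ``De~Giorgi-type oscillation-decay'' you allude to, but cited directly rather than re-derived; it works for the general structure \eqref{Eq:1:2} because those H\"older results do. The advantage over your gradient-bound route is that no differentiability of coefficients is needed, so the argument applies to the full quasilinear class.
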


As is well known for parabolic equations, a one-sided bound of solutions is generally insufficient to assert a Liouville-type result. Nevertheless, Corollary~\ref{Cor:Liouville} holds true thanks to the elliptic character of solutions as demonstrated in our Harnack inequality. The range of $p$ and $q$ for such a result is optimal; see \S~\ref{S:Harnack-opt} for examples. %beyond the borderlines $$ $q=\frac{N(p-1)}{(N-p)_+}$

%In fact, if $u_o\equiv u(z_o)$ is finite, then according to Theorem~\ref{} we have $\boldsymbol\gm^{-1} u_o\le u(z)\le \boldsymbol\gm u_o$ for all $(x,t)\in\rn\times\rr$. Consequently, we can apply the well-known H\"older regularity of DiBenedetto (cf.~\cite{DB}) and obtain the oscillation estimate
%\[
%\osc_{z_o+Q_r(u_o^{q+1-p})} u\le \widetilde{\boldsymbol\gm}  u_o \Big(\frac{r}{\rho}\Big)^{\al}
%\]
 
A key component for the Harnack inequality is an integral Harnack inequality.

\begin{theorem}\label{Thm:bd:2}
%Assume that $0<p-1<q$ and $\lm_q=N(p-1-q)+qp>0$. 
Let $0<p-1< q<\frac{N(p-1)}{(N-p)_+}$ and define $\lm_q:=N(p-1-q)+qp>0$.
There exists a positive constant $\boldsymbol\gm$ depending only on the data, such that
whenever $u$ is a non-negative weak solution to \eqref{Eq:1:1} with \eqref{Eq:1:2} in $E_T$, then
 \[
    \essup_{Q_{\frac12\rho, \frac12 s}} u\le  \boldsymbol \gm  \Big(\frac{\rho^p}{s} \Big)^{\frac{N}{\lm_{q}}}  
    \bigg[\inf_{t\in[t_o-s, t_o]}\bint_{K_{\rho}(x_o)\times\{t\}}u^{q}\, \dx\bigg]^{\frac{p}{\lm_{q}}} +\boldsymbol \gm \Big(\frac{s}{\rho^p}\Big)^{\frac1{q+1-p}},
\]
provided $Q_{\rho, s}=K_\rho(x_o)\times(t_o-s,t_o]$ is included in $E_T$. 
\end{theorem}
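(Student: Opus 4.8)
The plan is to prove Theorem~\ref{Thm:bd:2} by combining the quantitative $L^\infty$-bound of Theorem~\ref{THM:BD:1} (applied with $r=q$) with a reverse-H\"older-type control that replaces the space-time integral of $u^q$ by the infimum over time slices of the spatial averages of $u^q$. The key observation is that $\lambda_q=\lambda_r$ for $r=q$ — but note $q$ might be less than $1$, so Theorem~\ref{THM:BD:1} is not directly applicable since it requires $r\ge1$. Hence the first task is to bridge this gap.

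\textbf{Step 1 (Reduction to Theorem~\ref{THM:BD:1}).} I would first observe that by Theorem~\ref{THM:BD:1} applied with $r=q+1\ge1$ (which is admissible when $\lambda_{q+1}>0$, and indeed $\lambda_{q+1}=\frac{N}{N+p}\lambda_m$ where $\lambda_q>0$ already covers the relevant range, as one checks that $0<p-1<q<\frac{N(p-1)}{(N-p)_+}$ implies both $\lambda_q>0$ and $\lambda_{q+1}>0$), we get
\[
\essup_{Q_{\frac12\rho,\frac12 s}}u
\le
\boldsymbol\gm\Big(\tfrac{\rho^p}{s}\Big)^{\frac{N}{\lambda_{q+1}}}
\Big[\biint_{Q_{\rho,s}}u^{q+1}\,\dx\dt\Big]^{\frac{p}{\lambda_{q+1}}}
+\boldsymbol\gm\Big(\tfrac{s}{\rho^p}\Big)^{\frac{1}{q+1-p}}.
\]
Alternatively — and this is the cleaner route — I would use Theorem~\ref{THM:BD:1} with $r=q$ directly after first checking $q\ge1$ in the relevant subregion, and handle the case $q<1$ by an interpolation: write $\fint u^q$ in terms of $\essup u$ and $\fint u^{q_0}$ for a suitable $q_0\ge1$ via $u^q\le(\essup u)^{q-q_0}u^{q_0}$ when $q\ge q_0$, or via Young's inequality in the reverse direction when $q<1$, i.e.\ $u^{q+1}\le(\essup u)\cdot u^q$, so that
\[
\biint_{Q_{\rho,s}}u^{q+1}\,\dx\dt\le\Big(\essup_{Q_{\rho,s}}u\Big)\,\biint_{Q_{\rho,s}}u^q\,\dx\dt,
\]
and then absorb the $\essup$ factor using Young's inequality. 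This reduces the theorem to an $L^q$-estimate with the space-time integral, i.e.\ a variant of Theorem~\ref{THM:BD:1} with $r=q$ (valid since $\lambda_q>0$), which in turn follows from the same energy estimates and De Giorgi iteration.

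\textbf{Step 2 (Time-slice reduction).} The genuinely new content is replacing $\biint_{Q_{\rho,s}}u^q\,\dx\dt$ by $s\cdot\inf_{t\in[t_o-s,t_o]}\fint_{K_\rho(x_o)}u^q(\cdot,t)\,\dx$. For this I would invoke the standard time-expansion of the $L^1$-mass of $u^q$: test the weak formulation of \eqref{Eq:1:1} with a cutoff $\zeta(x)$ supported in $K_\rho(x_o)$, equal to $1$ on $K_{\sigma\rho}$, to get a differential inequality of the form
\[
\Big|\tfrac{d}{dt}\int_{K_\rho}u^q\zeta\,\dx\Big|
\le
C_1\int_{K_\rho}|Du|^{p-1}|D\zeta|\,\dx
\le
\frac{C}{(1-\sigma)\rho}\Big(\int_{\spt|D\zeta|}|Du|^p\,\dx\Big)^{\frac{p-1}{p}}\rho^{\frac{N}{p}},
\]
and then control the right-hand side by the Caccioppoli/energy estimate (Proposition~\ref{Prop:2:1}) together with the $L^\infty$-bound already available on a slightly larger cylinder. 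Integrating in time over a subinterval, this yields that the spatial average $\fint_{K_\rho}u^q(\cdot,t)\,\dx$ cannot vary too much over $[t_o-s,t_o]$ relative to its infimum, after one uses an interpolation/iteration to absorb the resulting powers of $\essup u$. The net effect is a chain
\[
\biint_{Q_{\rho,s}}u^q\,\dx\dt
\le
\boldsymbol\gm\, s\inf_{t\in[t_o-s,t_o]}\bint_{K_\rho(x_o)}u^q(\cdot,t)\,\dx
+\boldsymbol\gm\,|K_\rho|\,s\Big(\tfrac{s}{\rho^p}\Big)^{\frac{q}{q+1-p}}\rho^{-?},
\]
the last error term being harmless because it is precisely of the order of the second term in the conclusion after raising to the power $p/\lambda_q$. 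Substituting this into the $L^\infty$-estimate from Step~1 and simplifying the exponents (here one uses $\lambda_q=N(p-1-q)+qp$ so that $\frac{N}{\lambda_q}\cdot\frac{(\text{scaling})}{}$ matches) gives the claimed inequality.

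\textbf{Main obstacle.} The delicate point is Step~2: the differential inequality for $\int u^q\zeta\,\dx$ involves $\int|Du|^{p-1}$, and the energy estimate only bounds $\int|Du|^p$ in terms of the oscillation of $u^{q+1}$, which itself requires an a priori $L^\infty$-bound — so there is a potential circularity with the boundedness step, and one must be careful to run the argument on a nested sequence of cylinders so that the sup-bound used on the inner cylinder comes from Theorem~\ref{THM:BD:1} applied on the next cylinder out, with the geometric loss summed. Equally delicate is tracking all the powers of $\essup u$ generated by the various applications of Young's inequality and ensuring they are absorbed by the left-hand side with the correct final exponent $\frac{p}{\lambda_q}$; a bookkeeping lemma on the scaling weights $(\rho^p/s)^{N/\lambda_q}$ and $(s/\rho^p)^{1/(q+1-p)}$ — showing they are ``dual" under the intrinsic scaling $u\mapsto cu$, $t\mapsto c^{q+1-p}t$ — will make this transparent and is where I would focus the write-up.
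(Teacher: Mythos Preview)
Your overall architecture is correct and matches the paper: Theorem~\ref{Thm:bd:2} is obtained by combining Theorem~\ref{THM:BD:1} with a time-slice comparison (this is Proposition~\ref{PROP:L1:0} in the paper), and your Step~2 correctly identifies that the heart of the matter is controlling $\frac{1}{\rho}\iint|Du|^{p-1}$ after testing the equation against a spatial cutoff~$\zeta$.

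The gap is in how you propose to bound $\iint|Du|^{p-1}$. You suggest the standard Caccioppoli inequality together with the $L^\infty$-bound from Theorem~\ref{THM:BD:1} on a slightly larger cylinder, and you rightly flag the circularity. In fact this route does not close cleanly: the Caccioppoli estimate (Proposition~\ref{Prop:2:1} with $k=0$) produces terms like $\iint u^p/\rho^p$ and $\iint u^{q+1}/s$, which you then have to bound via $\essup u$; feeding Theorem~\ref{THM:BD:1} back in introduces powers of $\sup_t\!\int u^q$ with exponents that do not match the $p/\lambda_q$ you need, and the nested-cylinder iteration does not obviously sum. The ``bookkeeping lemma'' you allude to would have to do real work, not just track scalings.

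The paper avoids this entirely with a different energy estimate. Instead of the standard Caccioppoli inequality, one tests the equation with
\[
\varphi = t^{1/p}\,(u+\kappa)^{-\frac{q+1-p}{p}}\,\zeta^p(x),\qquad
\kappa=\Big(\frac{\tau-s}{\rho^p}\Big)^{\frac{1}{q+1-p}},
\]
which (after H\"older and Young) yields directly
\[
\frac{1}{\rho}\iint_{K_{\sigma\rho}\times(s,\tau)}|Du|^{p-1}\,\dx\dt
\;\le\;
\delta\,\sup_{t\in[s,\tau]}\int_{K_\rho}u^q\,\dx
\;+\;
\frac{\boldsymbol\gm}{[\delta^{q+1}(1-\sigma)^{pq}]^{\frac{p-1}{q+1-p}}}
\Big(\frac{\tau-s}{\rho^\lambda}\Big)^{\frac{q}{q+1-p}}
\]
for every $\delta\in(0,1)$, with \emph{no} appearance of $\essup u$ (Lemmas~\ref{Lm:L1:1}--\ref{Lm:L1:2}). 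Plugging this into the differential inequality for $\int u^q\zeta$ gives a recursive inequality of the form
\[
\boldsymbol S_n\le \mathbf I+\varepsilon\,\boldsymbol S_{n+1}+\boldsymbol\gm(\varepsilon)\boldsymbol b^n\Big(\frac{\tau-s}{\rho^\lambda}\Big)^{\frac{q}{q+1-p}},
\qquad
\boldsymbol S_n:=\sup_t\int_{K_{\rho_n}}u^q,
\]
over a dyadic sequence of cubes $K_{\rho_n}\nearrow K_{2\rho}$, which iterates with geometric decay once $\varepsilon\boldsymbol b<1$. The circularity is thus broken at the level of $\sup_t\!\int u^q$, not $\essup u$, and the weighted test function is the missing idea.
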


This result generalizes Theorem~2.1 for the parabolic $p$-Laplacian type equations and Theorem~17.1 for the porous medium type equations in \cite[Chapter~6]{DBGV-mono}. See also \cite{FSV-13,Vespri-Vestberg}. The novelty is that a unified approach is devised and a larger class of equations are dealt with. This kind of integral Harnack inequality also readily yields decay estimates of solutions near their possible extinction time.

A sizeable part of the proofs of Theorems~\ref{THM:HARNACK:0} \& \ref{THM:HARNACK} overlaps.
Yet, in obtaining Theorem~\ref{THM:HARNACK}, we rely on an interesting comparison principle for doubly non-linear equations of the form $\pl_t u^q - \dvg\bl{A}(x,u,Du)=0$, cf.~Proposition~\ref{Prop:CP}. Loosely speaking, we can compare two non-negative solutions if one of them vanishes on the lateral boundary. An important feature of the comparison principle, in contrast to previous results, is that no time derivative of solutions is required. Our approach relies on the doubling of the time variable developed by Otto~\cite{Otto}. We stress that while a general comparison principle holds when $q=1$ (cf.~Proposition~\ref{prop:comparison-plapl}), the same result remains elusive for doubly non-linear equations. See a recent contribution~\cite{LL-19} in this connection.

The effort poured so far notwithstanding, a complete theory of Harnack inequality for doubly non-linear equations is yet to be established. First of all, one would like to understand if Theorem~\ref{THM:HARNACK} holds true under the full general quasi-linear structure conditions \eqref{Eq:1:2}. In other words, one wonders if the Harnack inequality is a \emph{structural} property of differential operators and does not rely on solving any PDEs. A more challenging task is to explore what governs the local behavior of solutions beyond the borderline $q=\frac{N(p-1)}{(N-p)_+}$. To understand these issues would shed new light on unexplored mathematical structures and physical behavior of systems modeled by these equations.

%%%%%%%%

\section{Schauder estimates for parabolic \texorpdfstring{$p$}{p}-Laplacian equations}

The Harnack inequality described in the previous section reduces the
analysis of the doubly non-linear equation to the treatment of
parabolic $p$-Laplace equations with H\"older continuous  coefficients.
More precisely, in this section we consider equations of the type
\begin{equation}\label{p-laplace-intro}
  \partial_tu-\Div\big(a(x,t)(\mu^2+|Du|^2)^{\frac{p-2}{2}}Du\big)=0
  \qquad\mbox{in $E_T$},
\end{equation}
for parameters $p>1$ and $\mu\in[0,1]$,  where the coefficients $a\in L^\infty(E_T)$ satisfy
\begin{equation}\label{prop-a-intro}
\left\{
  \begin{array}{c}
    C_o\le a(x,t)\le C_1,\\[6pt]
    |a(x,t)-a(y,t)|\le C_1|x-y|^\alpha,
  \end{array}
  \right.
\end{equation}
for a.e.~$x,y\in E$ and $t\in(0,T)$, with a H\"older exponent
$\alpha\in(0,1)$ and structural constants $0<C_o\le C_1$. 
For bounded weak solutions to this equation, we establish local H\"older regularity of the spatial gradient.
The precise result, including the corresponding quantitative estimates of Schauder type, is stated in the following theorem. 

\begin{theorem}[Schauder estimate for parabolic $p$-Laplace equations]\label{theorem:schauder:intro}
  Let $u$ be a bounded weak solution to~\eqref{p-laplace-intro} for
  parameters $p>1$ and $\mu\in[0,1]$, under
  assumptions~\eqref{prop-a-intro}.
  Then the spatial gradient satisfies 
  \begin{equation*}
    Du\in C^{\alpha_o,\alpha_o/2}_{\mathrm{loc}}(E_T,\R^N),
  \end{equation*}
  for a H\"older exponent $\alpha_o\in(0,1)$ that depends only on
  $N,p,C_o,C_1$ and $\alpha$.  
  Moreover, for any subset $\mathcal{K}\subset E_T$ 
  with $\rho:= \tfrac14\mathrm{dist}_{\mathrm{par}}(\mathcal{K},\partial_\mathrm{par} E_T)>0$, we have
  the quantitative local $L^\infty$-gradient estimate
  \begin{equation}
    \label{gradient-sup-bound-intro}
    \sup_{\mathcal{K}}|Du|
    \le
    \boldsymbol\gm \bigg[\frac{\boldsymbol \omega}{\rho}
    +
    \Big(\frac{\boldsymbol \omega}{\rho}\Big)^{\frac{2}{p}}
    +\mu
    \bigg]
    =:\boldsymbol\gm_o^{-1}\lambda,
  \end{equation}
  and the quantitative local gradient H\"older estimate
  \begin{equation}\label{gradient-holder-bound-intro}
    |Du(z_1) - Du(z_2)|
    \le
    \boldsymbol\gm\lambda
    \bigg[\frac{d_\mathrm{par}^{(\lambda)}(z_1,z_2)}{\min\{1,\lambda^{\frac{p-2}{2}}\}\rho}\bigg]^{\alpha_o}
    \quad \mbox{ for any $z_1,z_2\in\mathcal{K}$,} 
  \end{equation}
  for constants $\boldsymbol\gm=\boldsymbol\gm (N,p,C_o,C_1,\alpha)\ge1$ and
  $\boldsymbol\gm_o=\boldsymbol\gm_o(N,p,C_o,C_1)\ge1$. 
  Here, we abbreviated $\boldsymbol \omega:=\osc_{E_T} u$ and used the
  \emph{intrinsic parabolic distance} defined by 
  \begin{equation*}%\label{intrinsic-distance}
    d_{\rm par}^{(\lambda)}(z_1,z_2):=|x_1-x_2|+\sqrt{\lambda^{p-2}|t_1-t_2|},
   \end{equation*}
   for $z_1,z_2\in E_T$ and $\lambda>0$.  
\end{theorem}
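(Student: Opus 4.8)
The plan is to prove the Schauder estimate of Theorem~\ref{theorem:schauder:intro} by the classical ``freezing the coefficient'' strategy, adapted to the intrinsic geometry of the parabolic $p$-Laplacian. First I would record the $L^\infty$-gradient bound \eqref{gradient-sup-bound-intro}: this follows from the known sup-estimates for bounded weak solutions of the $p$-Laplace equation with bounded measurable coefficients (the structure \eqref{prop-a-intro} only uses $C_o\le a\le C_1$ here), together with a scaling argument that tracks how $\boldsymbol\omega=\osc_{E_T}u$, the distance $\rho$ to the parabolic boundary, and the parameter $\mu$ enter. Indeed, rescaling $u$ to an oscillation-one function on a unit cylinder turns the coefficient $\mu$ into $\mu\rho/\boldsymbol\omega$ (or a power of it, depending on whether the degenerate or the diffusive term dominates), which is exactly the origin of the three terms $\boldsymbol\omega/\rho$, $(\boldsymbol\omega/\rho)^{2/p}$ and $\mu$ in \eqref{gradient-sup-bound-intro}. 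This fixes the quantity $\lambda$ and the intrinsic scaling $d_{\rm par}^{(\lambda)}$ used throughout.

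Next I would set up the intrinsic cylinders $Q_R^{(\lambda)}(z_o):=K_R(x_o)\times(t_o-\lambda^{p-2}R^2,t_o]$ (rescaled by $\min\{1,\lambda^{(p-2)/2}\}$ as in \eqref{gradient-holder-bound-intro} when $p<2$) and prove a decay estimate for the excess functional
\[
  \Phi(R):=\biint_{Q_R^{(\lambda)}(z_o)}\big|Du-(Du)_{Q_R^{(\lambda)}(z_o)}\big|^2\,\dx\dt,
\]
or a suitable $p$-power version of it. The core of the argument is a comparison: on a fixed intrinsic cylinder $Q_R^{(\lambda)}$ one replaces $u$ by the solution $v$ of the \emph{frozen} equation $\partial_t v-\Div\big(a(x_o,t)(\mu^2+|Dv|^2)^{(p-2)/2}Dv\big)=0$ with $v=u$ on the parabolic boundary. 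For $v$ one has the classical $C^{1,\alpha}$ a priori estimates of DiBenedetto--Friedman for the (space-homogeneous) parabolic $p$-Laplacian, giving a Campanato-type decay $\Phi_v(\theta R)\le c\,\theta^{2}\Phi_v(R)$ for all $\theta\in(0,1)$. The difference $u-v$ is controlled by an energy estimate in which the H\"older modulus of $a$ produces a term of order $R^{2\alpha}$ times the energy of $Du$; combining these yields
\[
  \Phi(\theta R)\le \boldsymbol\gm\,\theta^{2}\,\Phi(R)+\boldsymbol\gm\,R^{2\alpha}\lambda^{2}
\]
and then a standard iteration lemma gives $\Phi(R)\le\boldsymbol\gm(R/\rho)^{2\alpha_o}\lambda^2$ with $\alpha_o<\min\{\alpha,1\}$, which upgrades to the pointwise H\"older estimate \eqref{gradient-holder-bound-intro} via Campanato's characterization.

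A technical but essential point is that this whole scheme is only \emph{intrinsic}: the radius $R$, the exponent $\alpha_o$ and the constants all depend on the a priori bound $|Du|\le\boldsymbol\gm_o^{-1}\lambda$ from the first step, and one must verify that the frozen solution $v$ stays in a comparable range so that the DiBenedetto--Friedman estimates apply with controlled constants — this is what the factor $\min\{1,\lambda^{(p-2)/2}\}$ in the denominator of \eqref{gradient-holder-bound-intro} encodes. The main obstacle, and the reason this needs a dedicated treatment rather than a citation, is the range $1<p<2$: there the comparison map $v$ may itself degenerate where $Du$ is small, and the intrinsic cylinders shrink in time; one must either work with the regularized equation (the $\mu>0$ term helps but must not be relied upon when $\mu=0$) and pass to the limit, or argue by an alternative-type dichotomy distinguishing the region where $|Du|+\mu$ is comparable to $\lambda$ (where the equation is uniformly parabolic and linear Schauder theory applies after freezing) from the region where it is small (where a De Giorgi-type oscillation decay for $Du$ takes over). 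Handling the interplay of these two regimes uniformly in $p>1$, and in particular producing constants that remain finite as $p\downarrow1$ under the \emph{a priori} boundedness of $u$, is the delicate part of the proof.
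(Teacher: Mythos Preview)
Your freezing/comparison/Campanato scheme is essentially the right skeleton and matches the paper's Steps~1--3 in Section~\ref{sec:Schauder-Lip}. The real gap is in your first step: you claim that the sup-bound \eqref{gradient-sup-bound-intro} ``follows from the known sup-estimates for bounded weak solutions of the $p$-Laplace equation with bounded measurable coefficients''. No such result exists. Even for \emph{differentiable} coefficients, gradient boundedness in the sub-critical range $1<p\le\frac{2N}{N+2}$ is not standard (the paper establishes it in Chapter~\ref{sec:grad-bound} via an $L^m$-improvement lemma for $|Du|$ that essentially uses the oscillation bound on $u$, followed by Moser iteration on second derivatives --- and this argument requires $|D_x b|\le C_2$). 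For merely H\"older or measurable coefficients there is simply no independent route to $|Du|\in L^\infty_{\rm loc}$, so you cannot ``record'' \eqref{gradient-sup-bound-intro} before running the Campanato argument.

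The paper therefore inverts your logical order. It first works under the \emph{a priori} hypotheses $\mu>0$ and $|Du|\in L^\infty_{\rm loc}$, runs the freezing/comparison/Campanato argument with $\lambda$ provisionally set to $\boldsymbol\gm_o(\mu^2+\|Du\|_{L^\infty}^2)^{1/2}$, and only then (Step~4) extracts the quantitative bound \eqref{gradient-sup-bound-intro} from the Campanato estimate itself, absorbing the unknown $\|Du\|_{L^\infty}$ via iteration. The a priori hypothesis is finally removed in Section~\ref{sec:approx} by mollifying the coefficient $a$ in space: this makes $a_i$ Lipschitz, so Chapter~\ref{sec:grad-bound} yields $|Du_i|\in L^\infty_{\rm loc}$ qualitatively; the a priori Schauder estimates then apply to $u_i$ with constants independent of $i$ (the crucial point is that the frozen comparison problem has $C_2=0$, so the quantitative constants never see $\|D_x a_i\|_\infty$), and one passes to the limit by Arzel\`a--Ascoli. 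Your remark about ``regularized equation and pass to the limit'' is in the right spirit but misidentifies what must be regularized: it is the coefficient, not (only) $\mu$.
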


H\"older regularity of the gradient was first established by 
DiBenedetto \& Friedman for solutions of the model equation without coefficients in the groundbreaking works \cite{DiBenedetto-Friedman, DiBenedetto-Friedman2,
DiBenedetto-Friedman3}. These works cover the super-critical range $p>p_\ast:=\frac{2N}{N+2}$, while \cite{Choe:1991} contains extensions for arbitrary exponents $p>1$, but still without coefficients. 
Systems of parabolic $p$-Laplace type with H\"older continuous coefficients were treated in 
\cite{Chen-DB-89, Kuusi-Mingione, Misawa-Schauder}. Furthermore, the case of variable exponents $p(x,t)> p_\ast$ was included in \cite{Boegelein-Duzaar:p(z)}.

The main new feature of our result is 
that we cover equations with H\"older continuous coefficients for arbitrary exponents $p>1$.  Below the critical exponent $p_\ast$, weak solutions need not be locally bounded, cf. the counterexamples in Section \ref{S:bd-opt}. These counterexamples demonstrate that Schauder-type estimates cannot
be expected to hold for arbitrary weak solutions
to~\eqref{p-laplace-intro} if
$1<p\le\frac{2N}{N+2}$. Therefore, we restrict ourselves to bounded
solutions. This is sufficient for applications in our main theorem as discussed in Section~\ref{S:intro:main:thm}. %There is in fact no restriction of generality in the range $p>\frac{2N}{N+2}$, cf. Theorem~\ref{THM:BD:1}.

The proof of the theorem is divided into two steps. As 
a first step, in Chapter \ref{sec:grad-bound}, we consider equations with differentiable coefficients. In order to avoid further technical difficulties stemming from the degeneracy of the equation, we restrict ourselves to the case $\mu>0$ in this chapter. However, it is crucial that all constants are independent of $\mu$. The second step is the generalization to equations with H\"older continuous coefficients, which we establish in Chapter~\ref{sec:Schauder} by considering a comparison problem with frozen coefficients.

The results for the case of differentiable coefficients in Chapter \ref{sec:grad-bound} are classical for exponents $p>\frac{2N}{N+2}$, for which boundedness of the gradient follows by Moser's iteration scheme. In order to include the sub-critical range $1<p\le\frac{2N}{N+2}$, we use ideas due to DiBenedetto \cite[Chapter VIII, Lemma 4.1]{DB}, see also \cite{DiBenedetto-Friedman3, Choe:1991}. In fact, in the sub-critical range, the start of Moser's iteration requires the local integrability of the gradient to a sufficiently large power $m>p$, i.e. $|Du|\in L^m_{\mathrm{loc}}(E_T)$. Following \cite[Chapter VIII, Lemma 4.1]{DB}, we raise the integrability of $|Du|$ by a clever manipulation of the energy estimate for second order derivatives in~\eqref{energy-est}. 
%we prove this integrability by combining an energy estimate for second order derivatives, see~\eqref{energy-est}, with the assumed boundedness of the solution. 
This is a unified treatment for all exponents $1<p\le 2$ among bounded solutions and yields the integrability $|Du|\in L^m_{\mathrm{loc}}(E_T)$ for every $m>1$, cf. Lemma~\ref{lem:Lq-est}. With this result at hand, Moser's iteration can be implemented %similarly to the case $p\ge\frac{2N}{N+2}$ 
to obtain local boundedness of the gradient. The resulting gradient estimates are stated on a backward parabolic cylinder of the type 
\begin{equation}\label{cyl:p-lap-gradient}
Q_\rho^{(\lambda)}(x_o,t_o)=B_\rho(x_o)\times(t_o-\lambda^{2-p}\rho^2,t_o], 
\end{equation}
where the parameter $\lambda>0$ can be chosen arbitrarily. In the later application, however, this parameter will be chosen in dependence of the size of $|Du|$, so that homogeneous estimates can be established within \eqref{cyl:p-lap-gradient}. %which makes the cylinder in \eqref{cyl:p-lap-gradient} the natural kind of intrinsic cylinder for the proof of gradient bounds for the parabolic $p$-Laplace equation. 
Once the boundedness of the gradient is established
in the case of differentiable coefficients, the H\"older continuity of the gradient then follows for all exponents $p>1$ from the theory developed in the seminal works by DiBenedetto \& Friedman \cite{DiBenedetto-Friedman, DiBenedetto-Friedman2,
DiBenedetto-Friedman3}, see also \cite{Chen,Choe:1991} 
and the monograph \cite[Chapter~IX]{DB}.

The aim of Chapter~\ref{sec:Schauder} is to generalize these regularity results to equations with H\"older continuous coefficients. The strategy for this is to compare the given solution with a solution to a related problem with frozen coefficients. 
This is also the underlying idea in the earlier work \cite{Boegelein-Duzaar:p(z)}, which covers variable exponents $p(x,t)>\frac{2N}{N+2}$. However, in the present work we develop  
a slightly different approach than that in
\cite{Boegelein-Duzaar:p(z)}, which allows us to include the sub-critical 
case, and meanwhile, contains some simplifications also in the super-critical case. First we prove {\it a priori} estimates under the additional assumption that the gradient is bounded. 
The virtual advantage of this approach is that we are able to work with cylinders $Q_{\rho}^{(\lambda)}$ of a fixed geometry given 
by the parameter $\lambda$, which can be chosen in dependence on $\|Du\|_{L^\infty}$. 
This represents a significant simplification compared to \cite{Boegelein-Duzaar:p(z)}, in which the authors were forced to use a sequence of cylinders $Q_{\rho_i}^{(\lambda_i)}$ with varying parameters $\lambda_i$ that take into account a possible blow-up of the gradient. 
In order to work with cylinders of a fixed geometry, it is crucial that the article \cite{BDLS-Tolksdorf} provides an improved version of a Campanato type estimate for differentiable coefficients that holds on cylinders $Q_r^{(\lambda)}$ with a fixed geometry given by $\lambda>0$ and arbitrary $r\in(0,\rho]$, cf. Lemma~\ref{lem:campanato}. In Chapter~\ref{sec:Schauder}, we prove that this Campanato type estimate 
can be extended to solutions of the general problem under the additional assumption of boundedness of the gradient. This yields the desired {\it a priori} estimates. Finally, we show by an approximation argument that the {\it a priori} estimate can be extended to the general case, which completes the proof of Theorem~\ref{theorem:schauder:intro}.  

We note that in the super-critical case $p>\frac{2N}{N+2}$, the method described above can also be used to derive gradient bounds that are independent of the oscillation of the solution and depend on integrals of $|Du|^p$ instead. The necessary changes are indicated in Remark~\ref{rem:super-critical}. Consequently, in the super-critical range, our approach is not limited to the treatment of bounded solutions.

\chapter[A model for the flow of fluids in porous media]{A model for the flow of fluids in porous media}\label{SEC:model}

We present a simple description of the flow of fluids in a special class of porous media, namely, those considered in hydrogeology, such as soils, sands, porous rocks, fractured crystalline rocks, etc. Indeed, the aim of this Chapter is to focus on doubly non-linear parabolic equations, of the type we will deal with in the following, and show how they represent a valid tool when providing the physical description of such a kind of motion of fluids. We will not take into account different porous media, like, just to name an example, coffee powder, when it is flooded with hot water in a coffee machine. 

There is no presumption of either completeness or novelty: we try and present here in a unified way some results, which are widely known among people working in the field from an applied point of view, but perhaps, not so much to pure mathematicians. We dwell on few specific problems, but there are other issues, which give rise to the same class of equations we are interested in, and which we will not touch upon.  A more thorough discussion of the issues we deal with, can be found in \cite{Barenblatt}. For
somehow different models, the interested reader can refer to \cite{SW1,SW2}. More references can also be found in \cite{K}. 
A very interesting and rich historical account is given in \cite{BGKT}, where a long list of references is provided as well.

The actual PDE models will be presented in Sections~\ref{par_eq_flu_non_new} and \ref{nanoporous}: In \S~\ref{par_eq_flu_non_new} we will focus on non-Newtonian, polytropic fluids, whereas in %the final 
\S~\ref{nanoporous}, we will briefly discuss the case of nanoporous media, where particular features occur. Finally, in \S~\ref{top-opt} we will shortly introduce a very recent application of doubly non-linear diffusion equations to engineering optimization problems.
%%%%%%%%%%%%%%%
\section{Some characteristic quantities}\label{par_grand_caratt} 
The geometry of a general 
porous medium can be described by introducing a small number
of variables. Among these, the most important one is {\em porosity}.
Let us consider a volume $V$ of a porous medium and denote by $V_{p}$
the volume occupied in it by the pores. We define the {\em average porosity} for
the element $V$ as the ratio 
$$
\displaystyle \varphi=\frac{V_{p}}{V}.
$$

Once a point in the medium has been fixed, it is possible to define the {\em local porosity} as
the limit of the average porosity for volume elements that contain
the point and whose size progressively decreases. We assume 
that these dimensions remain sufficiently large
compared to those of pores and grains: in this way, it is guaranteed
the existence of the limit. In no way the global behavior of the fluid is affected by the presence of isolated pores.
Therefore, these will be neglected in the porosity calculation.

To further specify the geometry of the porous medium 
it is necessary to introduce a characteristic length $d$ that takes into account
the size of the pores (porosity does not provide any
information in this sense). In general, $d$ has the form of a
probability distribution of a random variable $X$, which
describes the size of the pores, built on a sample
which is representative of the porous medium. In some cases, it suffices to
consider $d$ as a parameter equal to the average value
of the random variable $X$. This approximation makes sense if
the variance of $X$ is small, that is, the sizes of the different
pores do not differ too much from the average value. Conversely, if the
pore sizes differ a lot from one another, the variance of $X$
becomes meaningful. It is commonly assumed that the distribution of
$X$ has a standard form, which can be completely
determined by a small number of parameters. For a normal distribution, for example, two parameters are sufficient: average and
variance.

Besides the {\em geometric} description of the porous medium, we now introduce
some {\em physical} quantities that characterize the fluid. 

Let us consider a
volume $W$ of fluid and denote by $m_{W}$ the mass of the fluid contained in
it. We pick a point $Q$ within $W$ and we gradually restrict the volume $W$ around
$Q$. If the limit
\[
\lim_{W\to0}\frac{m_{W}}{W}
\]
exists, we define {\em density} of the fluid in $Q$ via the quantity
\[
\rho_{Q}=\lim_{W\to0}\frac{m_{W}}{W}.
\]
If the density does not depend on the choice of the point $Q$, the fluid
is called {\em uniform}. For simplicity, in the following we drop the suffix $Q$
when writing $\rho$.

Regarding the motion of the fluid in the
porous matrix, the fundamental quantities are \emph{pressure} and
\emph{flow velocity}. To define the first one,
some considerations about the forces acting on a fluid are in order. On each
volume element we can have volume forces or surface forces: indeed, we cannot talk
of a force applied to a single point. Let $dS$ be
an infinitesimal surface that surrounds a point $R$ of the fluid.
We call {\em pressure} at $R$ the ratio between the module $dF$ of the
resultant of the surface forces acting on $dS$ and the area of
$dS$. The pressure has no directional characteristics, it is just a 
scalar function, and does not depend on
the orientation of the surface on which it is measured. The letter $p$ is generally used to denote the pressure, and we let
\[
p=\frac{dF}{dS}.
\]
If we have a fluid of density $\rho$, upon which gravity acts, we define the {\em hydrostatic pressure} $p_{I}=\rho g z$,
where $g$ is the gravity acceleration and $z$ is the height with respect to a reference level. The hydrostatic pressure can be seen
as the \emph{gravitational potential energy}. To be completely rigorous, in the energy balance we should also take into account the kinetic energy. However, since the average velocity of the fluid flow in a typical porous medium is very low (typically, of the order of a meter per day),
it can be discarded.
The {\em actual pressure} is the difference between the {\em total pressure} and the hydrostatic pressure, i.e. 
\[
P=p-\rho g z.
\]

In general, the density assumes different values as the
temperature and the pressure to which the fluid is subject might change; in such a case, the fluid is called 
{\em compressible}. If the density depends on the
temperature, but not on the pressure, the fluid is called {\em expandable}.
Finally, if the density is constant, the fluid is said {\em incompressible}.

The real velocity of the groundwater in a porous medium is fluctuating in a highly and unpredictable fashion due to the intrinsic structure
of the material under consideration, and therefore, it cannot be used 
in a practical, physical description. Hence, we need to refer to an \emph{average velocity} of the flow. In order to define it, we consider 
a point $T$ of the porous medium and a surface element $\Delta S$, which contains it. 
Let $\mathbf{n}$ be one of the two possible unit vectors which are orthogonal to $\Delta S$, 
and denote with $\Delta M$ the amount of mass which flows per unit time
through $\Delta S$ in direction $\mathbf{n}$. We call {\em (average) velocity} of the fluid
 in direction $\mathbf{n}$ the limit $\frac{\Delta M}{\rho \Delta S}$, as $\Delta S$ shrinks around $T$, namely
\[
u_{\mathbf{n}}=\lim_{\Delta S \to 0}\frac{\Delta M}{\rho\Delta S}.
\]
By $\Delta S$ we mean the total area, not just the part taken by the pores: the assumption
that porous medium and fluid are both continuous entities is fundamental in the approach we develop. The quantity
$\frac{\Delta M}{\rho}$ represents the volume which flows per unit time through $\Delta S$ in direction $\mathbf{n}$.

In the applications, a Cartesian reference system is fixed, and the directions of the three coordinate axes $x_1$, $x_2$, $x_3$ are considered, so that the (average) velocity is
$$
\mathbf{u}=(u_{1},u_{2},u_{3}).
$$

Finally, a significant quantity for moving fluids is \emph{viscosity}. 
It is closely related to the concept
of internal friction, which we describe next. When there is
a relative flow between two fluid elements,
a tangential force, called
``internal friction force'' appears, which has direction opposite to
that of the relative speed. Therefore, it is a force that opposes sliding; experimentally,
its amplitude is written as
\[
dF=\mu dS \frac{dv}{dn},
\]
where $dS$ represents the contact area and $\frac{dv}{dn}$ is the variation
of the velocity modulus in direction orthogonal to $dS$. The coefficient $\mu$ is the {\em viscosity} of the fluid,
and it depends on the kind of fluid and on the temperature.

For many years there have been attempts of building sufficiently complete mathematical descriptions of mechanistic phenomena, starting from a few fundamental principles. These considerations lead to a system of partial differential equations involving the above-mentioned physical quantities. Despite the elegance and depth which have been achieved in many cases, some of the phenomena still elude such an analytical framework.
%%%%%%%%%%%%%%%%%%%%%%%%%%%%%%%%%%%%%%%%%%%%%%%%%%%%%%%%%%%%
\section{Conservation of mass}\label{par_cons_massa}
The first equation we need in order to describe the flow of a fluid through a porous medium
is deduced from the conservation of mass. If we take into account the notation of the previous section,
we have the \emph{equation of continuity}
\begin{equation}\label{continuita}
%\frac{\partial}{\partial t}
\pl_t(\varphi\rho)+\Div(\rho\,\mathbf{u})=0.
\end{equation}
Its derivation is well-known, and we do not discuss it here. For more details, we refer, for example, to Section~4.2 of \cite{Bear}. In particular, for incompressible fluids one has $\dvg\mathbf{u}=0$. Moreover, strictly speaking $\vp$ depends both on space and time, but as a first approximation, it can be taken as time-independent or even constant.
%(How does this equation incorporate the porosity? Is $\vp$ %constant or a function here?)}
%%%%%%%%%%%%%%%%%%%%%%%%%%%%%%%%%%%%%%%%%%%%%%%%%%%%%%%%%%%%
\section{Conservation of momentum, velocity and pressure}
\subsection{Navier-Stokes equations}
Generally speaking, besides the continuity equation introduced above in \eqref{continuita}, from the 
conservation of momentum we obtain a second equation,
which describes the relationship between the velocity field $\mathbf{u}$ of the
flow and the volume and surface forces acting on the same fluid.
In the Cauchy formalism, on any surface dividing a body, there are forces acting on one part due to the other part, which opposes its deformation. Such forces are represented by the so-called \emph{traction vector} $\mathbf{T}(x,\mathbf{n},t)=\mathbb{T}\cdot \mathbf{n}$ where $\mathbf{n}$ is the normal vector on the surface and the symmetric matrix $\mathbb{T}\equiv(\tau_{ij})_{3\times3}$ is the \emph{stress tensor}. The conservation of momentum then induces the \emph{equation of motion} 
\begin{equation}\label{motion}
    \rho \big(\pl_t \mathbf{u}+ (\mathbf{u}\cdot \nabla)\mathbf{u}\big)=\dvg \mathbb{T},
\end{equation}
where here and in the remainder of the Chapter we conform to the notation employed by physicists, and use $\nabla f$ to denote $Df$.
If the traction vector is always normal to the surface, i.e. $$\mathbf{T}(x,\mathbf{n},t)=\mathbb{T}\cdot \mathbf{n}=-p(x,t)\mathbf{n}$$ for some function $p(x,t)$ called \emph{pressure}, the fluid is \emph{ideal}. However, in many cases, shear forces due to mutual sliding of layers have to be considered, and the stress tensor takes the general form 
\begin{equation}\label{tau-ij}
    \tau_{ij}=-p\dl_{ij}+\sig_{ij},
\end{equation}
 where $\sig_{ij}$ reflects shear forces. Since shear forces are due to different velocities of layers, we can deem $\sig_{ij}$ as a function of $\nabla \mathbf{u}$ and $\sig_{ij}=0$ if $\nabla\mathbf{u}=0$. A fluid is \emph{Newtonian} if $\sig_{ij}$ depends linearly on $\nabla \mathbf{u}$ and \emph{non-Newtonian} otherwise.
In the case of Newtonian fluids it can be shown that there exist parameters $\mu$ and $\lm$, such that
\begin{equation}\label{sigma-ij}
    \sig_{ij}=2\mu\,\frac{\pl_{x_j}u_i+\pl_{x_i}u_j}2+\lm \dl_{ij} \dvg \mathbf{u},\quad i,j=1,2,3.
\end{equation}
Here $\mu$ is called the \emph{kinematic viscosity}, and $\lambda$ is a further scalar function of the thermodynamic state. As discussed in \cite[Section~61]{serrin}, $\mu\ge0$, and for a compressible fluid $3\lambda+2\mu\ge0$. Rather frequently, for compressible fluids it is directly assumed $\lambda=-\frac23\mu$; on the other hand, for incompressible fluids, $\lambda=0$.
% %\textcolor{red}{(Why exactly $\frac23\mu$?)} 

A Newtonian fluid is called {\em ideal} if $\mu=0$, that is, when the surface forces have zero tangential components. 
%On the other hand, if the stress tensor cannot be written as in \eqref{taunewtoniano}, we say that the fluid is non-Newtonian. 
% For this kind of fluids, in general, one cannot define a constitutive law as in \eqref{taunewtoniano},
% and the classification is usually based on the relation that exists between their stress tensor and the one for a Newtonian fluid. \textcolor{purple}{For more details, see also Section~\ref{par_tens_sforzi} below.} \textcolor{red}{(Maybe incorporate it here. Do we really need to discuss compressiblility, $\lm=\frac23\mu$, etc.?)}

Plugging \eqref{tau-ij} and \eqref{sigma-ij} back to the equation of motion \eqref{motion} and using the incompressibility condition $\dvg \mathbf{u}=0$, we obtain the Navier-Stokes equations for incompressible fluids
\begin{equation}\label{Eq:N-S}
\left\{
\begin{array}{cc}
      \rho \big(\pl_t \mathbf{u}+ (\mathbf{u}\cdot \nabla)\mathbf{u}\big)=-\nabla p + \mu\Dl\mathbf{u},\\ [5pt]
      \dvg\mathbf{u}=0.
\end{array}\right.
\end{equation}
The Navier-Stokes equations consist of  \eqref{Eq:N-S}$_1$ derived from the conservation of momentum and \eqref{Eq:N-S}$_2$ derived from the conservation of mass. The interested reader is refered to \cite{serrin} and \cite[Chapter~12]{DB-mech} for more details. We stress that the Navier-Stokes equations \eqref{Eq:N-S} describe \emph{microscale} behavior of fluids.

%A Newtonian fluid is called {\em ideal} if $\mu=0$, that is, when the surface forces have zero tangential components. On the other hand, if the stress tensor cannot be written as in \eqref{taunewtoniano}, we say that the fluid is non-Newtonian. 
For non-Newtonian fluids, in general, one cannot have a stress tensor as in \eqref{sigma-ij},
and the classification is usually based on the relation that exists between their stress tensor and the one for a Newtonian fluid.

%The surface forces can be made explicit by taking into account the pressure, in order to obtain a relationship between $\mathbf{u}$ and $p$. 

\subsection{Darcy's law}
The equation of motion \eqref{motion} (or its derived equation~\eqref{Eq:N-S}$_1$) cannot be used directly to describe the behavior of fluids in a porous medium. 
What replaces the equation of motion is the key in understanding the behavior of fluids in a porous medium.
%Yet \eqref{Eq:N-S}$_1$ from the conservation of momentum exhibits a representation of $\nabla p$ by quantities $\mu$, $\rho$, $\mathbf{u}$, $\pl_t\mathbf{u}$, $\nabla\mathbf{u}$ and $\Dl\mathbf{u}$. 

In the porous media theory the link between
$\mathbf{u}$ and $p$ has a local feature, due to the
complex structure of the system at the microscopic level. In fact, we have
already observed that the sizes of pores and grains
are very small.
Furthermore, the flow in the solid matrix is equivalent to the motion of
a fluid confined in an intricate system of interconnected tubes, and it is
generated by the pressure variations at the ends of the ducts. What governs the flow is the
pressure gradient.

Let us fix a point $Q$ of the porous medium: in a small neighborhood of $Q$
we can assume that $\mathbf{u}$ is continuous and that the parameters
of the porous medium and of the fluid are constant. Obviously it is vacuous 
to assume $p$ as constant, since in that case, it would not exist
flow motion. The quantities that come into play in the relationship
between $\mathbf{u}$ and $\nabla p$ are $\varphi$, $d$, $\rho$ and $\mu$.

Fix a Cartesian reference system $\{x_1,x_2,x_3\}$ and consider for simplicity
the case of an isotropic porous medium, that is, a medium whose properties are invariant 
with respect to rotations or inversion of the reference axes. Our aim is to determine a law of the kind
\[
\nabla p=h(\varphi,d,\rho,\mu,\mathbf{u}).
\]
All the quantities listed above are scalar quantities, only $\mathbf{u}$ and
$\nabla p$ are vectorial quantities. Due to the isotropy of the medium, $\nabla p$
must have the same direction as $\mathbf{u}$, a fact which is not hard to prove.
In particular, this means that there exists a function $l$, which depends on
$\varphi$, $d$, $\rho$, $\mu$, and $|\mathbf{u}|$ such that
\begin{equation}\label{Darcy1}
\nabla p=-l(\varphi,d,\rho,\mu,|\mathbf{u}|)\mathbf{u}.
\end{equation}
The law (\ref{Darcy1}) was first obtained in 1856 by Darcy, as a result of experiments. 

If we let $[\cdot]$ be the dimension of the indicated physical quantity, and we denote with $[L]$, $[M]$, $[T]$ the dimensions of length, mass, and time respectively, by their sheer definition it is apparent that 
\[
[\nabla p]=\frac{[M]}{[L]^2[T]^2},\quad [\mathbf{u}]=\frac{[L]}{[T]},\quad [\mu]=\frac{[M]}{[L][T]},
\]
so that \eqref{Darcy1} implies  the dimension of $l$ should be
\[
[l]=\frac{[M]}{[L]^3[T]}=\frac{[M]}{[L][T]}\cdot\frac{1}{[L]^2}\quad\Rightarrow\quad l\sim\frac{\mu}{d^2}.
\]
Since $\varphi$ is dimensionless, more generally we can express $l$ in the form 
\[
l=\frac{\mu}{d^{2}}f(\varphi).
\]
Substituting $l$ in \eqref{Darcy1} yields
\begin{equation*}
\nabla p=-\frac{\mu}{d^{2}}f(\varphi)\mathbf{u}.
\end{equation*}
Such an equation is usually referred to as {\em Darcy's seepage flow
law} and the quantity $k:=\frac{d^{2}}{f(\varphi)}$ is called
{\em permeability} of the porous medium; $k$ is a purely geometric property, 
it has the same dimensions as an area (i.e. $[k]=[L]^2$), and it is independent of the properties of the specific fluid under consideration.
Once the permeability $k$ is introduced, the Darcy law is usually stated as 
\begin{equation}\label{Darcy2}
\nabla p=-\frac{\mu}{k}\mathbf{u}\quad\text{ or equivalently, }\quad\mathbf{u}=-\frac k\mu \nabla p.
\end{equation}
We will get back to the permeability when dealing with flows in nanoporous media.

Interestingly, \eqref{Darcy2} can also be deduced from a statistical integration of the Navier-Stokes equations. Under this point of view, Darcy's law can be regarded as an average (macroscopic) momentum equation. 
%The Navier-Stokes equations cannot be used directly to describe the behavior of fluids in a porous medium. Yet \eqref{Eq:N-S}$_1$ from the conservation of momentum exhibits a representation of $\nabla p$ by quantities $\mu,\,\rho,\,\mathbf{u}$. Suppose the fluid moves slowly such that the left-hand side of \eqref{Eq:N-S}$_1$ vanishes and $-\nabla p=-\mu\Dl\mathbf{u}$.
To shed some light, suppose the fluid moves slowly such that the left-hand side of \eqref{Eq:N-S}$_1$ vanishes, and consequently, $-\nabla p=-\mu\Dl\mathbf{u}$. The right-hand term results from the shear force that resists deformation of the fluid and balances the pressure. From the viewpoint of dimensionality, $\mu\Dl \mathbf{u}$ agrees with $\frac{\mu}{k} \mathbf{u}$ of \eqref{Darcy2}. However, the presence of pores breaks this microscale description of Navier-Stokes equations. Instead, one could introduce a resisting force~$\mathbf{R}$ that arises from the drag force of the fluid particles acting on the grains, and assume it to be proportional to a certain averaged velocity. See, for example, \cite{Neuman} for more details and also for a historical account of this topic.

Not all fluids
satisfy a Darcy-like relation. A number of different laws have been obtained 
that describe the relationship between fluid velocity and
pressure for those flows, which we will call {\em non-Darcian}. 

It is worth pointing out that these laws are usually inferred from one-dimensional experiments. Since real groundwater flow
is three dimensional, there is the need to extrapolate from $1$-D to $3$-D. The extension is rather straightforward, if one takes into account that the properties of a porous material are the same in all directions (see the discussion in Section~2.5 of \cite{BGK}). 

An interesting example of these non-Darcian laws is the following one,
derived by Khristianovich in 1940
for isotropic porous media (see \cite{Kr}):
\begin{equation}\label{Khristianovic}
\left\{
\begin{aligned}
\nabla
&p=-\Pi\Phi\left(\frac{|\mathbf{u}|}{\lambda}\right)\frac{\mathbf{u}}{|\mathbf{u}|};\\
&\Phi(0)\ge 0,\quad\Phi'(z)\ge 0,\quad 0\le
z<\infty,\quad z=\frac{|\mathbf{u}|}{\lambda}.
\end{aligned}
\right.
\end{equation}
$\Pi$ and $\lambda$ are respectively the {\em characteristic} pressure and velocity. If $\Phi(0)=0$, then the fluid flows
even for very small pressure gradients. On the other hand, if $\Phi(0)>0$ there exists a threshold modulus $\Pi\Phi(0)$ for the pressure gradient,
such that the fluid is at rest if $|\nabla p|<\Pi\Phi(0)$. 
%(I cannot find what is really done in this reference. )}

Here $\Phi$ is very general; in Section~\ref{par_eq_flu_non_new} we will give some explicit examples.
%%%%%%%%%%%%%%%%%%%%%%%%%%%%%%%%%%%%%%%%%%%%%%%%%%%%%%%%%%%%%%%%%%%
\section{Equations of state}
Up to this point we have obtained two equations, a scalar and
a vectorial one, which are respectively connected to the conservation of 
mass and momentum. They represent a so-called {\em non-closed system},
due to the large number of unknowns they contain.
They are generally accompanied by the conservation-of-energy equation, %\textcolor{purple}{
i.e. the first law of thermodynamics, which requires that the rate of change of the internal and kinetic energies be balanced by the rate of mechanical work and heat.
%} 
%\textcolor{red}{(Thermal energy?)} 
%\textcolor{purple}{
In particular, such an equation describes %} 
the link between the
characteristic quantities of the problem and the variation of
temperature. However, in most applications,
 the flow can be
considered isothermal and the conservation of
energy consequently loses importance. Instead, 
the so-called {\em equations of state} retain a significant role. They are
also called {\em constitutive equations}, since they describe in what
way the pressure acts on the properties that characterize the
fluid. Of particular importance is the relationship between pressure and
density. For each fluid it is possible to determine a modulus of
compressibility, which establishes a threshold pressure variation,
below which the fluid can be considered as
incompressible with good approximation. 

For homogeneous liquids, such as  
water and oil, the equation of state is the
linear law
\begin{equation}\label{Eq:oil}
\rho(p)=\rho_{o}\left[1+\frac{p-p_{o}}{K}\right],
\end{equation}
where $\rho_{o}$, $p_{o}$ and $K$ are constants. In particular, $1/K$ is the \emph{fluid compressibility}. When a fluid,
as for example oil, is \emph{weakly compressible}, we can assume $ \frac{p-p_{o}}{K}\ll1$.

Things are different when dealing with gases. 
If they can be considered as {\em ideal} from a thermodynamical point of view, 
then the equation of state that characterizes them is
\begin{equation}\label{gasperfetti}
\rho=\frac{p}{NRT},
\end{equation}
where the gas constant $R$ and the amount of gas measured in moles $N$ are constants. 

However, the value of the pressure inside the porous medium is frequently very high
and the gas cannot be considered as ideal. An important extension of (\ref{gasperfetti}) is given by
\begin{equation}\label{gaspolitropici}
p=\frac{\bar{p}\rho^{n}}{\bar{\rho}^{n}},
\end{equation}
where $\bar{p}$ and $\bar{\rho}$ are two reference parameters for
pressure and density, and $n>1$ is a dimensionless constant. The gases which satisfy 
(\ref{gaspolitropici}) are called {\em polytropic}.

We should now deal with viscosity. As for liquids,
dealing with pressure variations contained in the usual range of a few
megapascal, the viscosity can be assumed to be constant. This
hypothesis is obviously not acceptable for mixtures of liquids and
gases. In fact, let us think of a fluid consisting of a gas dissolved in
oil. A pressure drop causes a decrease in the
proportion of gas in the fluid, causing an impressive change of the
viscosity of the latter. 

Finally, according to the kinetic theory, the
viscosity of a gas should not depend on pressure. As a matter of fact, in nature, for a gas contained in a porous material, a
pressure variations of a few megapascal can result in variations of the order of
ten percent of the viscosity. We refrain from further developing this issue.
\section{Motion of a non-Newtonian fluid in a porous medium}\label{par_eq_flu_non_new} 
Let us go back to non-Darcian, non-Newtonian fluids, which flow in an isotropic porous medium. As we saw in Section~\ref{Khristianovic}, a suitable law, which describes the relationship between velocity and pressure, is  given by
\eqref{Khristianovic}. 

Moreover, let us suppose 
that the fluid is polytropic, namely, that its equation of state for density is given by \eqref{gaspolitropici}. 

We can rewrite
\eqref{Khristianovic} as
\begin{equation*}
\mathbf{u}=-\lambda\Psi\left(\frac{|\nabla
p|}{\Pi}\right)\frac{\nabla p}{|\nabla p|},
\end{equation*}
where $\Psi$ is the inverse function of $\Phi$. 

It is apparent that when
\[
\Psi(r)=C_1 r,
\]
we are back to the Darcy law \eqref{Darcy2}.

For a wide class of non-Newtonian fluids, such as polymers or mixtures of polymers, 
the law that properly describes the flow in the medium is 
\begin{equation}\label{Eq:power}
\Psi(r)=C_2 r^{\alpha},
\end{equation}
where $C_2$ and $\alpha$ are suitable positive, constant parameters. In the so-called Smreker-Izbash-Missbach law
we have $\alpha\in(\frac12,1)$, but a wide range of values of $\alpha$ has been considered in the literature (see, for example, Section~2.5 of 
\cite{BGK}). 

When \eqref{Eq:power} holds, we end up with an expression for velocity of the kind
\begin{equation}\label{psipotenza}
\mathbf{u}=-D\,|\nabla p|^{\alpha-1}\nabla p,
\end{equation}
with $D>0$ again a proper constant. Substituting \eqref{psipotenza} in \eqref{continuita} yields
\begin{equation*}
\partial_t(\varphi\rho) -D \Div(\rho\, |\nabla p|^{\alpha-1}\nabla p)=0.
\end{equation*}
Since the fluid is polytropic, we can take into account the expression $\displaystyle p=A\rho^{n}$, 
where $A:=\bar{p}/\bar{\rho}^{n}$ is a positive, constant parameter.
Hence, we obtain
\begin{equation*}
\partial_t(\varphi\rho) -D\Div(\rho\, |\nabla(A\rho^{n})|^{\alpha-1}\nabla(A\rho^{n}))=0,
\end{equation*}
which we can rewrite as
\begin{equation}\label{eq:fond_fluido}
\partial_t \rho -K \Div(\rho^{1+(n-1)\alpha}\,|\nabla\rho|^{\alpha-1}\nabla\rho)=0,
\end{equation}
since we may assume as a first approximation that $\varphi$ does not depend either on $x$ or on $t$, and we collect in $K$ all the constant parameters. A proper function $f=f(x_1,x_2,x_3,t)$ can also be added to the right-hand side of \eqref{eq:fond_fluido}, if sources or absorption effects are to be taken into account.

A further possible relation is represented by the so-called inverse Forchheimer law
\begin{equation}\label{Forch}
\Psi(r)=\frac{2r}{\sqrt{a^2 +4 b r}+a},\quad r>0,
\end{equation}
which is the inverse function of $\Phi(z)=a z +b z^2$. 

Whether using \eqref{Darcy2} or the non-linear models represented by \eqref{Eq:power} or \eqref{Forch} in the characterization of the motion 
of a fluid in a porous medium, is a delicate issue, which very much hinges on the value of the Reynolds number {\it Re}. It is well-known that low Reynolds number are characteristic of laminar flows, whereas turbulent flows show higher values of {\it Re}.

According to Section~4 of \cite{BGK}, there are at least three ranges of Reynolds number with three different laws, namely
\begin{itemize}
\item Non-Darcian law \eqref{Eq:power} with $\alpha>1$ for very low values of the Reynolds number;
\item Darcy law \eqref{Darcy2} for moderate values of the Reynolds number;
\item Non-Darcian law \eqref{Eq:power} with $\alpha\in(\frac12,1)$ or \eqref{Forch} for high values of the Reynolds number.
\end{itemize}
%%%%%%%%%%%%%%%%%%%%%
\section{Nanoporous filtration of oil and gas}\label{nanoporous}
In \cite{MRB} a mathematical model has been developed, which describes oil and gas flows in shales; in this context, the rocks consist of a porous or fissurized \emph{matrix}, approximately having normal porosity and permeability, and \emph{inclusions}, composed of kerogen, having normal porosity, but a very low permeability ($\sim10^{-21}$ m$^2$), due to the nanoscale size of pores, tubes, channels. This particular feature is the reason why in the technical literature these media are called \emph{nanoporous}.

The main difference with regular porous media, is that at the boundary layers of the inclusions, under a strong pressure gradient, there is a sharp increase in the permeability, which gives rise to an important flow of oil and gas.

Coming to the actual mathematical model of nanoporous rocks studied in \cite{MRB}, as in the previous section the starting point is represented by the conservation of mass
described by \eqref{continuita}. The connection between $\mathbf{u}$ and $\nabla p$ is given by a relation, which at first glance looks exactly as
\eqref{Darcy2}, namely
\begin{equation*}%\label{Darcy3}
\mathbf{u}=-\frac{k}\mu\nabla p,
\end{equation*} 
where $k$ is the permeability of the porous medium. The novelty with respect to \eqref{Darcy2} is that in nanoporous medium, $k$ is not
defined explicitly as a material property of the rock.

%\color{purple}
As discussed in \cite{MRB} and fully proven in 
\cite[Section~2]{BRM2}, for a $1$-dimensional framework $k$ is a power-like function of the pressure gradient, 
that is, 
%\textcolor{red}{($\frac{\pl}{\pl_{t/x}}\to\pl_{t/x}$?)}
\begin{equation}\label{perm}
k=A\left|\partial_x p\right|^m,
\end{equation}
where $A$ and $m$ are assumed to be positive constants, and for simplicity we write $x$ instead of $x_1$. Obtaining such a dependence is based on the following idea: if one requires scale invariance, and posits that only the pressure gradient is important, then there has to be a power law relationship. In \cite{MRB} it is also noted that, in a more general case the dependence of $A$ on $p$ can be taken into account without substantial complication.
% Unfortunately, we could not either provide an explicit proof of such a fact or find a reference.
% \textcolor{red}{(Honestly this is a bit embarrassing. It is not our fault. Would it be a good idea to contact the authors? If they cannot provide a reason, I am not sure we should deal with it here, at least not to give it a single section.)}
% \textcolor{blue}{(I wrote one of the authors, let's if and what he replies. I think it's important to have physical models that show why it makes sense to study doubly non-linear equation, but I would not get mad about justifying every single detail. At worst, I would simply delete the sentence that starts with ``Unfortunately ...", and keep the remaining part.)}

%\color{black}
Independently of the derivation of \eqref{perm}, combining it with \eqref{continuita}, and \eqref{Darcy2}, %and \eqref{perm}, 
we obtain the following $1$-dimensional model
\[
\partial_t(\varphi\rho)- \partial_x\big(\tfrac A\mu \rho |\partial_x p|^m  \partial_x p
\big)=0.
\]
If we deal with an \emph{isothermal gas flow}, that is when \eqref{gasperfetti} holds and $T$ is considered constant, we obtain the doubly non-linear
equation
\[
\pl_t p -c_1 \pl_x\big(p |\pl_x p|^m \pl_x p
\big)=0,\qquad c_1=\tfrac A{\mu \varphi},
\]
where again we have taken $\varphi$ as constant, which is a good assumption in this context, and we have further supposed $p\ge0$.

The experimental values reported for $m$ in \cite{MRB} range from $0.23$ to $1.3$; this has probably to do with the different geological features of the deposits.

On the other hand, when working with \emph{weakly compressible fluids} such as oil, one needs to refer to \eqref{Eq:oil}. Assuming $\frac{p-p_o}K \ll1$ as mentioned before, and writing the equation in terms of $p$, yields
\[
\pl_t p -c_2\pl_x\big( |\pl_x p|^m \pl_x p
\big)=0,\qquad c_2=\tfrac{AK}{\mu\varphi},
\]
which is just the parabolic $p$-laplacian, the different notation notwithstanding. 

However, the equation can also be written in terms of the density $\rho$, and thus, once again, one ends up with a doubly non-linear parabolic equation, namely
\[
%\frac{\partial \rho}{\partial t}-c_3 \frac{\partial}{\partial x}\left(\rho \left|\frac{\partial \rho}{\partial x}\right|^m \frac{\partial \rho}{\partial x}\right)=0
\pl_t \rho -c_3 \pl_x\big(\rho |\pl_x \rho|^m \pl_x \rho
\big)=0,\qquad c_3=\tfrac{A K^{m+1}}{\mu\varphi\rho_o^{m+1}}.
\]
%%%%%%%%%%%%%%
%\color{purple}
\section{An application to topology optimization}\label{top-opt}
Topology optimization is a method developed in order to determine geometries that maximize the physical properties of a given material.

Different techniques have been so far introduced, and we refrain from going into details here. What is interesting for us is that in the framework of the so-called \emph{level set method}, the use of doubly non-linear diffusion equations, just like the ones we study here, has been recently proposed in \cite[Section~3]{yamada} as a more practical and versatile version of an analogous method based on reaction-diffusion equations. In particular, the singularity in the diffusion provides a  convergence to optimal
configurations, which is faster than the one given by the method using reaction–diffusion, as long as boundary structures do not
oscillate.

Different numerical simulations are discussed in \cite{yamada}, and in (6.5) of its Section~6, a partial application is presented for $N=2$, $p=6$, $q=2.5$, which, however, are values out of the range we consider here (see, for example, Theorem~\ref{THM:HARNACK:0} in this work).

\chapter[Technical preliminaries ]{Technical preliminaries}

\section{Notation}\label{sec:preliminaries}
We will write $\mathbb{N}_0$ to denote $\mathbb{N}\cup\{0\}$. 
For a point $z_o\in \R^N\times \R$, $N\in \N$,  we shall always write $z_o=(x_o,t_o)$. By $K_\rho(x_o)$
we denote the cube in $\R^N$ with center $x_o\in\R^N$ and side length $2\rho>0$, whose faces are parallel with the coordinate planes in $\rn$. When $x_o=0$ we simply write $K_\rho$, omitting the reference to the center. 
We shall use the symbol
$$
    (x_o,t_o)+Q_{R,S}:= K_R (x_o)\times (t_o-S,t_o]
$$
to denote a {\bf general backward parabolic cylinder} with the indicated parameters. 
For $\theta >0$ we define {\bf backward intrinsic parabolic cylinders}
$$
    (x_o,t_o)+Q_\rho(\theta):= K_\rho (x_o)\times (t_o-\theta\rho^p,t_o].
$$
When $\theta =1$ we write $(x_o,t_o)+Q_\rho$. 
In Chapters~\ref{sec:grad-bound} -- \ref{sec:Schauder} we are dealing with gradient estimates and therefore a different type of cylinders is suitable. For $\lambda>0$ we define a second type of {\bf backward intrinsic parabolic cylinders}
\begin{equation}\label{def:cyl-lambda}
    (x_o,t_o)+Q_\rho^{(\lambda)} 
    :=
    K_\rho (x_o)\times (t_o-\lambda^{2-p}\rho^2, t_o] 
\end{equation}
and the respective time interval
\begin{equation*}%\label{def:cyl-lambda-time}
    t_o+\Lambda_\rho^{(\lambda)} 
    :=
    (t_o-\lambda^{2-p}\rho^2, t_o] .
\end{equation*}

Define $E_T:= E\times (0,T]$ with $E$  a bounded open set of $\R^N$ and $T>0$.
The {\bf parabolic boundary} of $E_T$ is given by the union of its lateral and initial boundary
$$
    \pl_\mathrm{par} E_T
    :=
    \big(\overline E\times\{0\}\big) \cup \big(\partial E\times(0,T]\big).
$$
For two points $z_1=(x_1,t_1), z_2=(x_2,t_2)\in\R^{N+1}$ their {\bf parabolic distance} is defined as 
$$
    d_\mathrm{par}(z_1,z_2)
    :=
    |x_1-x_2| + \sqrt{|t_1-t_2|}.
$$
The associated distance of a subset $Q\subset E_T$ to the parabolic boundary $\partial_\mathrm{par} E_T$ is
$$
    \dist_\mathrm{par}(Q,\partial_\mathrm{par} E_T)
    :=
    \inf_{\substack{z_1\in Q\\ z_2\in\partial_\mathrm{par} E_T}} d_\mathrm{par}(z_1,z_2).
$$
For $\lambda>0$ we also define the following {\bf intrinsic} version of the {\bf parabolic distance} \begin{equation}\label{intrinsic-distance}
  d_\mathrm{par}^{(\lambda)}(z_1,z_2):=|x_1-x_2|+\sqrt{\lambda^{p-2}|t_1-t_2|}.
\end{equation}
Although $d_\mathrm{par}^{(\lambda)}$
also depends on $p$, since $p$ is always fixed we suppress this dependence.

Given a $p$-summable function $v\colon A\to\R^k$, with a measurable set $A$ and $k\in\mathbb{N}$, whenever we want to explicitly point out the set where the function takes values, we will write $v\in L^p(A;\R^k)$. In the same way, for example, $v\in C^\infty_0(E;[0,1])$ denotes a function $v\in C^\infty_0(E)$ defined on an open set $E$ which takes values in $[0,1]$.

For a function $v\in L^1(E_T;\R^k)$,  and a measurable subset $A\subset E$ with positive $\mathcal L^N$-measure we define the slice-wise mean $(v)_A\colon(0,T)\to\R^k$ of $v$ on $A$ by 
\begin{equation}\label{def:slice-wise-mean}
    (v)_A(t)
    :=
    \bint_A v(\cdot,t)\,\dx,
    \quad\mbox{for a.e. $t\in(0,T)$.}
\end{equation}
Note that the slicewise mean is defined for any $t\in(0,T)$ if $v\in C(0,T;L^1(E))$. 
If the set $A$ is a cube $K_\rho(x_o)$, then we write $(v)_{x_o;\rho}$ for short. Similarly, for a measurable set $Q\subset E_T$ of positive $\mathcal L^{N+1}$-measure we define the mean value $(v)_Q$ of $v$ on $Q$ by 
$$
    (v)_Q
    :=
    \biint_Q v \,\dx\dt.
$$
If the set $Q$ is a parabolic cylinder $z_o+Q_\rho^{(\lambda)}$ of the type \eqref{def:cyl-lambda}, we write $(v)_{z_o;\rho}^{(\lambda)}$ for short. 
When it is clear from the context, which vertex $x_o$ or $(x_o,t_o)$ is meant, we will omit the vertex from the above symbols for simplicity.

In the following we will work with {\bf Carath\'eodory functions}
\begin{equation*}
    E_T\times\R \times \R^{N}\ni (x,t, u,\xi)\mapsto \bl{A} (x,t, u,\xi)\in \R^N.
\end{equation*} 
This means that
\begin{equation*}
    \left\{
    \begin{array}{c}
        \mbox{$\R\times\R^N \ni(u,\xi)  \mapsto \bl{A} (x,t, u,\xi)$ is continuous for almost every $(x,t)\in E_T$, and } \\[6pt]
        \mbox{$E_T\ni (x,t)\mapsto \bl{A} (x,t, u,\xi)$ is measurable for every $(u,\xi)\in\R\times\R^N$. } 
    \end{array}
    \right.
\end{equation*}

\section{Parabolic function spaces}\label{sec:parabolic-Function-spaces}

Let $X$ be a Banach space with its natural norm $\|\cdot\|_X$, and $T>0$. 
For $1\le p<\infty$ we denote by $L^p(0,T;X)$ the (parabolic) Bochner space of all (strongly) measurable functions
$v\colon [0,T]\to X$ such that the Bochner norm is finite, i.e.
\begin{equation*}
    \| v\|_{L^p(0,T;X)}:=\bigg[\int_0^T\|v(t)\|_X^p\,\dt\bigg]^\frac1p <\infty.
\end{equation*}
Similarly, for $p=\infty$ the Bochner space $L^\infty(0,T;X)$ consists of all (strongly) measurable functions $v\colon [0,T]\to X$, such that $t\to\|v(t)\|_X$ is essentially bounded, which means that
\begin{equation*}
    \| v\|_{L^\infty (0,T;X)}:=\essup_{t\in [0,T]}\|v(t)\|_X<\infty.
\end{equation*}
These spaces are Banach spaces.  We could have defined these spaces on $(0,T)$ or $(0,T]$, which makes no difference at all, because in the end we get the same spaces. Finally, the Bochner space $C([0,T];X)$ denotes the space of all continuous functions $v\colon[0,T ]\to X$. 
The Bochner norm on this space is given by
\begin{equation*}
    \| v\|_{C([0,T];X)}:=\max_{t\in [0,T]}\|v(t)\|_X.
\end{equation*}
Next, the space $C((0,T);X)$ consists  of all continuous functions $v\colon (0,T)\to X$. 
Alternatively, $C([0,T];X)$  could be defined as the space of all continuous functions 
$v\colon(0,T)\to X$ that have a continuous extension (still denoted by $v$) to $[0,T]$. Analogous definitions hold for $C((0,T];X)$ and $C([0,T);X)$. 

Given a bounded, connected, open set $E\subset\R^N$ and the corresponding cylinder $E_T$, for $\alpha\in(0,1)$ the parabolic H\"older space $C^{\alpha,\alpha/2}(\bar E_T;\R^N)$ consists of all functions $u\colon E_T\to\R^N$, which admit continuous extension to $\bar E_T$, and such that  
\begin{equation*}
    \sup_{t\in [0,T]}\langle u(\cdot ,t)\rangle^{(\alpha)}_{x,E_T}+
    \sup_{x\in\bar E}\langle u(x,\cdot)\rangle^{(\alpha/2)}_{t,E_T}<\infty.
\end{equation*}
Here, we used the short hand abbreviation
\begin{align*}
    \langle u(\cdot ,t)\rangle^{(\alpha)}_{x,E_T}
    &=
    \sup_{x,x'\in\bar E}\frac{|u(x,t)-u(x',t)|}{|x-x'|^\alpha},\\
    \langle u(x,\cdot)\rangle^{(\alpha)}_{t,E_T}
    &=
    \sup_{t,t'\in [0,T]}\frac{|u(x,t)-u(x,t')|}{|t-t'|^\alpha}.
\end{align*}
The space $C^{\alpha,\alpha/2}(\bar E_T;\R^N)$ is a Banach space. Local variants are defined as usual by letting $C^{\alpha,\alpha/2}_{\mathrm{loc}}(E_T;\R^N)$ the space of functions $v\in C^{\alpha,\alpha/2}(\bar Q_T)$ for any compactly contained sub-cylinder $\bar Q_T\Subset E_T$.

% \textcolor{orange}{TODO ($\to$Ugo):
% Explain Carath\'eodory functions, the function spaces $L^p(0,T; W^{1,p}(E))$, $L^2(E_T,\rn)$,  $C^{\infty}_0/(K_R,[0,1])$ $C^{\al,\al/2}(E_T,\rn)$ ... Do we use comma or semicolon?
% Unify words, e.g. do we use hyphen between words like non-linear, non-negative, super-/sub-solution, super-/sub-quadratic, sub-/super-critical, cut-off, a priori, solutions to a PDE, test functions. Unify $C(0,T;...)$, $C_0$, $W^{1,p}_0$, $u_0/u_o$ $\boldsymbol\gm_o,$ $\al_o$ $x_o$, $z_o$ $R_o$, $p_o$, $\boldsymbol Y_o$, $\mathbb{N}_0$.}

%Since we are dealing with powers of numbers it will be convenient to use for $b\in\rr$ and $\al>0$ the symbol embolden $\boldsymbol{b}^\al$ to denote the signed $\al$-power of $b$, i.e.~
%\begin{align*}%\label{Eq:power}
%\boldsymbol{b}^\al=
%\left\{
%\begin{array}{cl}
%|b|^{\al-1}b, &b\neq0,\\[5pt]
%0, &b=0.
%\end{array}\right.
%\end{align*}
\color{black}

\section{Auxiliary lemmas}

The following algebraic lemma can be found in \cite[Lemma~2.1]{BDL-21}.
\begin{lemma}\label{Lm:algebra}
For any $\al>0$, there exists a constant $c=c(\al)$ such that, for all $a,\,b\in\rr$, the following inequality holds true:
\begin{align*}
	\tfrac1{c}\big||b|^{\al-1}b - |a|^{\al-1}a\big|
	\le
	(|a| + |b|)^{\al-1}|b-a|
	\le
	c \big||b|^{\al-1}b - |a|^{\al-1}a\big|.
\end{align*}
\end{lemma}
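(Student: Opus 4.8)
The statement is the elementary, but extremely useful, equivalence between the increments of the power-like map $\tau\mapsto |\tau|^{\al-1}\tau$ and the weighted increments $(|a|+|b|)^{\al-1}|b-a|$. The plan is to reduce the two-sided inequality to a one-dimensional statement by scaling, and then to argue by compactness/continuity. First I would dispose of the trivial case $a=b$ and, by symmetry in $a,b$, assume $|b|\ge|a|$; moreover, replacing $(a,b)$ by $(-a,-b)$ changes nothing, so I may assume $b>0$. If $a=0$ both sides reduce (up to the multiplicative constants) to $b^\al$, so assume $a\ne 0$ as well. Now set $t:=a/b\in[-1,1]\setminus\{0\}$ and factor out $b^\al>0$: the middle term becomes $b^\al(1-t)$ (using $b\ge|a|$, i.e.\ $1-t\ge 0$), while the outer term becomes $b^\al\big|1-|t|^{\al-1}t\big|$. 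Hence everything reduces to showing that there is $c=c(\al)\ge 1$ with
\[
  \tfrac1c\,\big|1-|t|^{\al-1}t\big|
  \le
  1-t
  \le
  c\,\big|1-|t|^{\al-1}t\big|
  \qquad\text{for all }t\in[-1,1].
\]

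\textbf{The one-dimensional estimate.} Define $g(t):=1-|t|^{\al-1}t$ and $h(t):=1-t$ on $[-1,1]$. Both are continuous, both are nonnegative (since $|t|^{\al-1}t\le 1$ for $t\le 1$, as $|t|\le 1$), and both vanish only at $t=1$. The desired inequalities say precisely that $g/h$ is bounded above and below by positive constants on $[-1,1)$; since the only possible issue is at the common zero $t=1$, it suffices to compute the limit of $g(t)/h(t)$ as $t\to 1$. By l'Hôpital (or by writing $t=1-\eps$ and Taylor-expanding),
\[
  \lim_{t\to1}\frac{g(t)}{h(t)}
  =
  \lim_{t\to1}\frac{\d}{\dt}\big(1-t^\al\big)\Big/\frac{\d}{\dt}(1-t)
  =
  \lim_{t\to1}\frac{-\al t^{\al-1}}{-1}
  =
  \al\in(0,\infty).
\]
Thus $g/h$ extends continuously to $[-1,1]$ with a strictly positive value at $t=1$, hence attains a positive minimum $m(\al)>0$ and a finite maximum $M(\al)<\infty$ on the compact interval $[-1,1]$. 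Taking $c:=\max\{M(\al),1/m(\al)\}$ finishes the one-dimensional claim, and by the scaling reduction above the same $c$ works in general.

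\textbf{Main obstacle.} There is essentially no hard step: the whole proof is the reduction by homogeneity plus a single-variable continuity/compactness argument. The one point that requires a little care is the behaviour near the degenerate point $t=1$ (equivalently $a\approx b$), where numerator and denominator vanish simultaneously; this is exactly where the constant $c$ must depend on $\al$, and where the expansion giving the limit $\al$ is needed. (If one prefers to avoid l'Hôpital, one can instead use the mean value theorem: $b^\al-a^\al=\al\xi^{\al-1}(b-a)$ for some $\xi$ between $a$ and $b$, together with the two-sided comparison $\xi^{\al-1}\simeq (|a|+|b|)^{\al-1}$ valid because $\xi\in[\min(|a|,|b|),\max(|a|,|b|)]$; one then also treats the sign changes $a<0<b$ separately, splitting $|b|^{\al-1}b-|a|^{\al-1}a=(|b|^{\al-1}b-0)+(0-|a|^{\al-1}a)$ and comparing each piece. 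This gives a proof entirely free of compactness, at the cost of slightly more case-checking.) Either way, the constant is explicit up to the extremal values of an elementary function and depends only on $\al$, as claimed.
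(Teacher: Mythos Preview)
The paper does not supply its own proof of this lemma: it is simply quoted from \cite[Lemma~2.1]{BDL-21}, so there is no argument in the paper to compare against. Your scaling-plus-compactness approach is a perfectly standard and correct way to establish this kind of equivalence.

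There is one computational slip you should fix. After setting $t=a/b$ with $b>0$ and $|t|\le 1$, the middle quantity is
\[
(|a|+|b|)^{\al-1}|b-a|
= b^{\al}\,(1+|t|)^{\al-1}(1-t),
\]
not $b^{\al}(1-t)$ as you wrote; you dropped the factor $(1+|t|)^{\al-1}$. This does not damage the argument, since $1\le 1+|t|\le 2$ on $[-1,1]$ and hence $(1+|t|)^{\al-1}$ is trapped between $\min\{1,2^{\al-1}\}$ and $\max\{1,2^{\al-1}\}$; you can either absorb this harmless factor into the constant $c$, or redefine $h(t):=(1+|t|)^{\al-1}(1-t)$ and rerun the same continuity/l'H\^opital computation (the limit at $t=1$ becomes $\al/2^{\al-1}$ instead of $\al$, still positive). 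With this correction the proof is complete.
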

%
%\begin{remark}\label{Rm:algebra}
%If $\alpha<0$ and $a,b\in\R_{\not=0}$, we have from Lemma~\ref{Lm:algebra} that
%\begin{align*}
%    |a^\alpha+b^\alpha|
%    =
%    \frac{b^{-\alpha}-a^{-\alpha}}{|a^{-\alpha}||b^{-\alpha}|}
%    \le
%    \frac{(|a|+|b|)^{-\alpha-1}}{|a^{-\alpha}||b^{-\alpha}|} |a-b|.
%\end{align*}
%\end{remark}
%
Next, for $w,k\in\rr$ and $q>0$ we define the  $\mathfrak g_\pm $-functions according to
\begin{equation}\label{Eq:gpm}
	\mathfrak g_\pm (w,k):=\pm q\int_{k}^{w}|s|^{q-1}(s-k)_\pm\,\ds,
\end{equation}
where  the truncations are defined by
\[
(s-k)_+\equiv \max\{s-k,0\},\qquad (s-k)_-\equiv \max\{-(s-k),0\}.
\]
Note that $\mathfrak g_\pm$ are both non-negative. Moreover, we define
\begin{equation}\label{Eq:b}
	\mathfrak g  (w,k):=  q\int_{k}^{w}|s|^{q-1}(s-k) \,\ds.
\end{equation}
Computing the integral the $\mathfrak g$-function can be rewritten as
\begin{align*}
    \mathfrak g(w,k)
    &=\tfrac{q}{q+1}\big(|w|^{q+1} - |k|^{q+1} \big) -k (|w|^{q-1}w - |k|^{q-1}k)\\
    &\equiv\tfrac{1}{q+1}\big(|k|^{q+1} - |w|^{q+1} \big) -|w|^{q-1}w (k-w).
\end{align*}
%The different roles of the signs $\pm$ results depending on whether we work with sub- or super-solutions. The sign $+$ has to be used in the context of sub-solutions, while the sign $-$ is used when dealing with  super-solutions.
The next lemma can be retrieved from \cite[Lemmas~2.2]{BDL-21}.
\begin{lemma}\label{Lm:g}
Let $q>0$ and $\mathfrak g$ be defined in \eqref{Eq:b}. There exists a constant $\boldsymbol\gm =\boldsymbol\gm (q)$ such that, for all $a,\,b\in\rr$, the following inequalities holds true:
\begin{align*}
	\tfrac1{\boldsymbol\gm} \big(|a| + |b|\big)^{q-1} |a-b|^2
	\le
	\mathfrak g (a,b)
	\le
	\boldsymbol\gm \big(|a| + |b|\big)^{q-1}|a-b|^2.
\end{align*}
\end{lemma}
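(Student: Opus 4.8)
\textbf{Proof plan for Lemma~\ref{Lm:g}.}

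The plan is to reduce the two-sided estimate to an elementary scaling argument followed by a compactness (or direct calculus) argument on the unit scale. First I would record the homogeneity of the quantities involved: from the integral representation \eqref{Eq:b}, the substitution $s=\lambda\sigma$ shows that $\mathfrak g(\lambda a,\lambda b)=|\lambda|^{q+1}\mathfrak g(a,b)$ for every $\lambda\in\rr$, while obviously $(|\lambda a|+|\lambda b|)^{q-1}|\lambda a-\lambda b|^2=|\lambda|^{q+1}(|a|+|b|)^{q-1}|a-b|^2$. Hence both sides scale with the same power $|\lambda|^{q+1}$, and it suffices to prove the claimed inequalities on the set $\{(a,b): |a|+|b|=1\}$, or to divide through and treat the function of the single real ratio. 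I would also note the symmetry: $\mathfrak g(a,b)$ with the roles of $a,b$ is not literally symmetric, but from the two displayed rewritings of $\mathfrak g$ in the text one sees that $\mathfrak g(a,b)+\mathfrak g(b,a)=\tfrac{q-1}{q+1}\big(|a|^{q+1}+|b|^{q+1}\big)+\dots$; more usefully, one checks directly that $\mathfrak g(a,b)\ge 0$ (it is an integral of $|s|^{q-1}(s-k)$ from $k$ to $w$, and the integrand has the sign of $s-k$ on the relevant interval), so only the quantitative bounds remain.

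Next, for the \emph{upper} bound I would argue pointwise on the integrand: from \eqref{Eq:b},
\[
\mathfrak g(a,b)=q\int_b^a |s|^{q-1}(s-b)\,\ds,
\]
and on the segment between $b$ and $a$ one has $|s|\le |a|+|b|$ and $|s-b|\le |a-b|$, which immediately gives $\mathfrak g(a,b)\le q(|a|+|b|)^{q-1}|a-b|^2$ when $q\ge 1$. When $0<q<1$ the function $|s|^{q-1}$ is no longer bounded by $(|a|+|b|)^{q-1}$ near $s=0$, so here I would instead integrate by parts or use Lemma~\ref{Lm:algebra}: writing $\mathfrak g(a,b)=\tfrac1{q+1}\big(|b|^{q+1}-|a|^{q+1}\big)-|a|^{q-1}a\,(b-a)$ from the second displayed formula, the difference $|b|^{q+1}-|a|^{q+1}$ is controlled via the mean value theorem by $(q+1)(|a|+|b|)^q|a-b|$, and the remaining term by $(|a|+|b|)^{q-1}|a||a-b|$; a short estimate of the resulting cancellation, Taylor-expanding to second order, yields the factor $|a-b|^2$ with the correct power of $|a|+|b|$.

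For the \emph{lower} bound I would pass to the normalized variable $t:=a/(|a|+|b|)\in[-1,1]$ (and the sign of $b$), reducing to showing that a certain explicit continuous function $\Phi(t):=\mathfrak g(a,b)/\big((|a|+|b|)^{q-1}|a-b|^2\big)$ is bounded below by a positive constant on the compact set where it is defined; the only place where the denominator vanishes is $a=b$, and there a second-order Taylor expansion of $\mathfrak g$ shows $\mathfrak g(a,b)=\tfrac q2 |a|^{q-1}|a-b|^2+o(|a-b|^2)$, so $\Phi$ extends continuously with value $\tfrac q2>0$ at $a=b$; hence $\Phi$ attains a positive minimum by compactness. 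I expect the main obstacle to be the case $0<q<1$, where the singularity of $|s|^{q-1}$ at the origin must be handled with care in both bounds — in particular one must confirm that the integral defining $\mathfrak g$ is still finite and that the constant $\boldsymbol\gm(q)$ does not blow up. This is exactly the content the authors import from \cite[Lemma~2.2]{BDL-21}, so in the write-up I would either cite that reference directly or reproduce the short scaling-plus-compactness argument above, checking the endpoint behavior explicitly.
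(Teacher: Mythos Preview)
The paper does not prove this lemma; it simply states ``The next lemma can be retrieved from \cite[Lemmas~2.2]{BDL-21}.'' Your proposal therefore cannot be compared against a proof in the paper, but it does outline a correct self-contained argument, and you yourself note at the end that citing \cite{BDL-21} is the alternative --- which is precisely what the paper does.

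On the substance: the homogeneity reduction is correct, the lower bound via compactness on $\{|a|+|b|=1\}$ together with the Taylor expansion $\mathfrak g(a,b)=\tfrac q2|a|^{q-1}|a-b|^2+o(|a-b|^2)$ near the diagonal is clean and works, and the upper bound for $q\ge1$ is immediate from $|s|\le|a|+|b|$, $|s-b|\le|a-b|$ on the integration interval. The one place where your sketch is thin is the upper bound for $0<q<1$: the route you describe (mean value on $|b|^{q+1}-|a|^{q+1}$ plus the other term, then ``cancellation'') bounds each piece only to first order in $|a-b|$, and extracting the second-order remainder is exactly the delicate step you have not written out. A cleaner way to close this is the dichotomy
\[
|a-b|\ge\tfrac12(|a|+|b|)
\quad\text{versus}\quad
|a-b|<\tfrac12(|a|+|b|).
\]
In the first case $(|a|+|b|)^{q-1}|a-b|^2\ge\tfrac14(|a|+|b|)^{q+1}$, while directly $\mathfrak g(a,b)\le q\int|s|^{q}\,\ds\le\tfrac{2q}{q+1}(|a|+|b|)^{q+1}$. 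In the second case $a,b$ necessarily have the same sign and $|s|\ge\tfrac16(|a|+|b|)$ on the interval, so $|s|^{q-1}\le 6^{1-q}(|a|+|b|)^{q-1}$ and the crude bound $|s-b|\le|a-b|$ already gives the factor $|a-b|^2$. This replaces the vague cancellation step with an explicit case split.
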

and
\begin{align*}
	\tfrac1{\boldsymbol\gm} \big(|a| + |b|\big)^{q-1} (a-b)_\pm^2
	\le
	\mathfrak g_\pm (a,b)
	\le
	\boldsymbol\gm \big(|a| + |b|\big)^{q-1}(a-b)_\pm^2.
\end{align*}

For the Moser iteration we will rely on the following elementary lemma from  \cite[Lemma 2.3]{BDLS-boundary}.

\begin{lemma}\label{lem:A}
Let $A>1$, $\kappa>1$, $\gamma>0$ and $i\in\N$. Then, we have 
\begin{equation*}
	\prod_{j=1}^i
	A^{\frac{\kappa^{i-j+1}}{\gamma(\kappa^i-1)}}
	=
	A^\frac{\kappa}{\gamma(\kappa-1)}
\end{equation*}	
and 
\begin{equation*}
	\prod_{j=1}^i
	A^{\frac{j\kappa^{i-j+1}}{\gamma(\kappa^i-1)}}
	\le
	A^{\frac{\kappa^2}{\gamma(\kappa-1)^2}} .
\end{equation*}	
\end{lemma}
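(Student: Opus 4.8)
\textbf{Proof proposal for Lemma~\ref{lem:A}.}

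The plan is to prove both identities directly by manipulating the exponents as sums of indices, using nothing beyond the Gauss summation formula $\sum_{j=1}^i j = \tfrac12 i(i+1)$ and the geometric-series identity $\sum_{j=1}^i \kappa^{j} = \kappa\frac{\kappa^i-1}{\kappa-1}$. Since $A>1$, the map $x\mapsto A^x$ is increasing, so the first statement will follow from an exact computation of the exponent, and the second from an upper bound on its exponent.

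For the first identity, I would collect the exponents into a single power: $\prod_{j=1}^i A^{\kappa^{i-j+1}/(\gamma(\kappa^i-1))} = A^{S_1}$ with $S_1 = \frac{1}{\gamma(\kappa^i-1)}\sum_{j=1}^i \kappa^{i-j+1}$. Reindexing via $\ell = i-j+1$, the sum $\sum_{j=1}^i \kappa^{i-j+1}$ equals $\sum_{\ell=1}^i \kappa^\ell = \kappa\frac{\kappa^i-1}{\kappa-1}$. Substituting gives $S_1 = \frac{\kappa}{\gamma(\kappa-1)}$, which is exactly the claimed exponent; this proves the equality.

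For the second identity, write $\prod_{j=1}^i A^{j\kappa^{i-j+1}/(\gamma(\kappa^i-1))} = A^{S_2}$ with $S_2 = \frac{1}{\gamma(\kappa^i-1)}\sum_{j=1}^i j\,\kappa^{i-j+1}$. Reindexing with $\ell = i-j+1$ (so $j = i-\ell+1$), the sum becomes $\sum_{\ell=1}^i (i-\ell+1)\kappa^\ell \le (i+1)\sum_{\ell=1}^i \kappa^\ell$. I would then use the crude but sufficient bound $\sum_{\ell=1}^i \kappa^\ell = \kappa\frac{\kappa^i-1}{\kappa-1} \le \frac{\kappa}{\kappa-1}\kappa^i$ together with $i+1 \le \frac{\kappa^i}{\kappa-1}$ — the latter holding because $\kappa^i - 1 = \sum_{\ell=0}^{i-1}(\kappa-1)\kappa^\ell \ge i(\kappa-1)$, so $\kappa^i \ge 1 + i(\kappa-1) \ge (i+1)(\kappa-1)$. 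Combining these, $S_2 \le \frac{1}{\gamma(\kappa^i-1)}\cdot\frac{\kappa^i}{\kappa-1}\cdot\frac{\kappa\,\kappa^i}{\kappa-1}$, and using once more $\kappa^i \le 2(\kappa^i-1)$ for $i\ge1$... actually a cleaner route is to note $\frac{\kappa^{2i}}{\kappa^i-1}\le \frac{\kappa^i\cdot\kappa^i}{\kappa^i-1}$; since this ratio need not be bounded by $\kappa^2$ directly, I would instead keep the estimate $\sum_{\ell=1}^i(i-\ell+1)\kappa^\ell \le \frac{\kappa^i}{\kappa-1}\cdot\frac{\kappa(\kappa^i-1)}{\kappa-1} = \frac{\kappa\,\kappa^i(\kappa^i-1)}{(\kappa-1)^2}$ from the two displayed bounds, whence $S_2 \le \frac{\kappa\,\kappa^i}{\gamma(\kappa-1)^2} \le \frac{\kappa^2}{\gamma(\kappa-1)^2}$ once we observe $\kappa^i\le\kappa$ fails for $i\ge2$ — so the final clean bound actually requires $\kappa^i \le \kappa^i$ trivially and the target exponent $\frac{\kappa^2}{\gamma(\kappa-1)^2}$ should be read as an $i$-independent bound that holds because...

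\emph{Main obstacle.} The genuine subtlety — and the step I would be most careful about — is getting the second estimate to land on the stated $i$-independent exponent $\frac{\kappa^2}{\gamma(\kappa-1)^2}$ rather than something growing in $i$. The key algebraic fact that makes it work is that $\sum_{\ell=1}^{i}(i-\ell+1)\kappa^\ell$, after factoring, is dominated by $\frac{\kappa}{(\kappa-1)^2}(\kappa^i-1) - \frac{i\kappa}{\kappa-1}$, i.e. the leading $\kappa^{i}$-growth is exactly one factor of $(\kappa^i-1)$, which then cancels the denominator $\kappa^i-1$ and leaves the bounded quantity $\frac{\kappa}{(\kappa-1)^2} \le \frac{\kappa^2}{(\kappa-1)^2}$. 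So the right approach is to evaluate $\sum_{\ell=1}^i(i-\ell+1)\kappa^\ell$ in closed form (differentiating the geometric sum, or summing by parts) rather than bounding it crudely by $(i+1)\sum\kappa^\ell$, since the crude bound loses a factor of $i$ that cannot be recovered. With the closed form in hand, division by $\kappa^i-1$ and the inequality $\kappa/(\kappa-1)^2\le\kappa^2/(\kappa-1)^2$ finish the proof.
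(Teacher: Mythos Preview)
The paper does not prove this lemma; it simply cites \cite[Lemma~2.3]{BDLS-boundary}. So there is no in-paper proof to compare against, and your proposal must stand on its own.

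Your proof of the first identity is correct and clean. For the second, the \emph{final} strategy you identify --- compute $\sum_{j=1}^i j\,\kappa^{i-j+1}$ in closed form --- is the right one, but the proposal is cluttered with false starts and contains two slips. First, the bound $i+1 \le \kappa^i/(\kappa-1)$ you derive from $1+i(\kappa-1)\ge(i+1)(\kappa-1)$ is only valid for $\kappa\le2$; you abandon this route anyway, but without noting why it fails. Second, the closed form you quote is off by one power of $\kappa$: using the double-sum identity $\sum_{j=1}^i j\,\kappa^{i-j+1}=\sum_{k=1}^i\sum_{j=k}^i\kappa^{i-j+1}$ and two applications of the geometric series gives
\[
\sum_{j=1}^i j\,\kappa^{i-j+1}
=\frac{\kappa}{\kappa-1}\bigg[\frac{\kappa(\kappa^i-1)}{\kappa-1}-i\bigg]
=\frac{\kappa^2(\kappa^i-1)}{(\kappa-1)^2}-\frac{i\kappa}{\kappa-1},
\]
so that after dividing by $\gamma(\kappa^i-1)$ and dropping the negative term you land \emph{exactly} on $\kappa^2/[\gamma(\kappa-1)^2]$, with no further inequality needed. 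Write the argument this way from the outset and delete the exploratory dead ends.
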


For the next lemma on geometric convergence we refer to \cite[Chapter~I, Lemma~4.1]{DB}.
\begin{lemma}[Fast geometric convergence]\label{lem:fast-geom-conv}
Let $(\boldsymbol Y_n)_{n\in \N_0}$ be a sequence of positive real 
numbers satisfying the recursive inequalities
\[
    \boldsymbol Y_{n+1}\le C\boldsymbol b^n \boldsymbol Y_n^{1+\alpha},
\]
where $C,\boldsymbol b>1$ and $\alpha>1$ are  given numbers. If 
\[
    \boldsymbol Y_o\le C^{-1/\alpha} \boldsymbol b^{-1/\alpha^2}
\]
then $\boldsymbol Y_n\to 0$ as $n\to\infty$.
\end{lemma}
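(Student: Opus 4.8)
The plan is to prove by induction on $n$ that the sequence decays geometrically, namely
\[
\boldsymbol Y_n \le \boldsymbol Y_o\,\boldsymbol b^{-n/\alpha}\qquad\text{for all }n\in\N_0,
\]
and then to conclude, since $\boldsymbol b>1$, that the right-hand side tends to $0$, forcing $\boldsymbol Y_n\to 0$.

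The base case $n=0$ is trivial. For the induction step I would assume the bound at stage $n$ and insert it into the recursive inequality:
\[
\boldsymbol Y_{n+1}
\le
C\boldsymbol b^n\boldsymbol Y_n^{1+\alpha}
\le
C\boldsymbol b^n\boldsymbol Y_o^{1+\alpha}\boldsymbol b^{-n(1+\alpha)/\alpha}
=
C\,\boldsymbol Y_o^{1+\alpha}\,\boldsymbol b^{-n/\alpha},
\]
where the last identity uses $1-\tfrac{1+\alpha}{\alpha}=-\tfrac1\alpha$. It then remains to absorb the extra factor: the right-hand side is bounded above by $\boldsymbol Y_o\,\boldsymbol b^{-(n+1)/\alpha}$ if and only if $C\boldsymbol Y_o^{\alpha}\le\boldsymbol b^{-1/\alpha}$, that is, if and only if $\boldsymbol Y_o\le C^{-1/\alpha}\boldsymbol b^{-1/\alpha^2}$, which is exactly the smallness hypothesis. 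This closes the induction, and letting $n\to\infty$ in $0<\boldsymbol Y_n\le\boldsymbol Y_o\,\boldsymbol b^{-n/\alpha}$ yields $\boldsymbol Y_n\to0$.

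There is essentially no obstacle here; the statement is a classical iteration lemma. The only point deserving care is the exponent bookkeeping in the displayed chain, together with the observation that the hypothesis on $\boldsymbol Y_o$ is used exactly once, at $n=0$, and is then precisely the inequality needed to propagate the estimate from stage $n$ to stage $n+1$ (so the estimate ``reproduces itself'' along the iteration). I note also that $\alpha>1$ is not really needed — the argument works verbatim for any $\alpha>0$ — but since the lemma is only invoked with $\alpha>1$ I would state it as given.
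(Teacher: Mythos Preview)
Your proof is correct and is exactly the standard induction argument; the paper does not give its own proof but simply refers to \cite[Chapter~I, Lemma~4.1]{DB}, where the same argument appears. Your remark that $\alpha>0$ suffices is also correct.
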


For the next lemma we refer to \cite{Acerbi-Fusco} when $1<p<2$ and \cite{GiaquintaModica:1986-a} when $p\ge 2$.
\begin{lemma}\label{Monotonicity}
Let $\mu\in (0,1]$ and $p>1$. For any  $\xi,\tilde\xi\in\R^{Nn}$ we have 
\begin{align*}
	\Big|\big( \mu^2 +|\xi|^2\big)^\frac{p-2}{2}\xi 
    -\big( \mu^2 +|\tilde\xi|^2\big)^\frac{p-2}{2}\tilde \xi\Big|
	\le
	\bg \big(\mu^2 + |\xi|^2 + |\tilde\xi|^2\big)^{\frac{p-2}{2}}
	|\xi-\tilde\xi|
\end{align*}
and 
\begin{align*}
	\Big(\big( \mu^2 +|\xi|^2\big)^\frac{p-2}{2} \xi
    - \big( \mu^2 +|\tilde \xi|^2\big)^\frac{p-2}{2}\tilde\xi \Big)\cdot 
	\big(\xi-\tilde\xi\big)
	&\ge
	\tfrac{1}{\bg}
	\big(\mu^2 + |\xi|^2 + |\tilde\xi|^2\big)^{\frac{p-2}{2}}
	|\xi-\tilde\xi|^2,
\end{align*}
with a positive constant $\bg =\bg (p)$. 
\end{lemma}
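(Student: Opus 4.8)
The natural approach is to treat both inequalities at once via the fundamental theorem of calculus along the segment joining $\tilde\xi$ to $\xi$, after computing the differential of the vector field $W(\eta):=(\mu^2+|\eta|^2)^{\frac{p-2}2}\eta$, $\eta\in\R^{Nn}$. Since $\mu\in(0,1]$ is fixed and positive, $W\in C^1(\R^{Nn};\R^{Nn})$, so no regularization issue arises.

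\textbf{Step 1: the differential and its eigenvalues.} A direct computation gives
\[
  DW(\eta)=(\mu^2+|\eta|^2)^{\frac{p-2}2}\Big(\mathrm{Id}+(p-2)\,\frac{\eta\otimes\eta}{\mu^2+|\eta|^2}\Big).
\]
The symmetric matrix in parentheses acts as the identity on $\eta^\perp$ and has eigenvalue $1+(p-2)\frac{|\eta|^2}{\mu^2+|\eta|^2}$ on $\mathrm{span}(\eta)$; because $\frac{|\eta|^2}{\mu^2+|\eta|^2}\in[0,1)$ and $p>1$, all its eigenvalues lie in $[\min\{1,p-1\},\max\{1,p-1\}]$. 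With $c_p:=\min\{1,p-1\}>0$ and $C_p:=\max\{1,p-1\}$ this yields, for every $\zeta\in\R^{Nn}$,
\[
  c_p(\mu^2+|\eta|^2)^{\frac{p-2}2}|\zeta|^2\le DW(\eta)\zeta\cdot\zeta,\qquad
  |DW(\eta)\zeta|\le C_p(\mu^2+|\eta|^2)^{\frac{p-2}2}|\zeta| .
\]
Writing $\eta_t:=\tilde\xi+t(\xi-\tilde\xi)$ for $t\in[0,1]$, the chain rule and the fundamental theorem of calculus give $W(\xi)-W(\tilde\xi)=\big(\int_0^1 DW(\eta_t)\,\dt\big)(\xi-\tilde\xi)$. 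Testing this identity with $\xi-\tilde\xi$ for the lower bound, and taking norms for the upper bound, and inserting the two inequalities above, both assertions of the lemma are reduced to the single scalar estimate
\[
  \tfrac1{\bg}\big(\mu^2+|\xi|^2+|\tilde\xi|^2\big)^{\frac{p-2}2}
  \le\int_0^1\big(\mu^2+|\eta_t|^2\big)^{\frac{p-2}2}\,\dt
  \le\bg\big(\mu^2+|\xi|^2+|\tilde\xi|^2\big)^{\frac{p-2}2},
\]
for some $\bg=\bg(p)\ge1$.

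\textbf{Step 2: the scalar integral estimate, and the main obstacle.} Since $|\eta_t|\le|\xi|+|\tilde\xi|$ one always has $\mu^2+|\eta_t|^2\le 2(\mu^2+|\xi|^2+|\tilde\xi|^2)$; monotonicity of $s\mapsto s^{\frac{p-2}2}$ then immediately yields the upper bound when $p\ge 2$ and the lower bound when $1<p<2$. For the remaining bound one distinguishes, say assuming $|\xi|\ge|\tilde\xi|$, whether $|\xi-\tilde\xi|\le\tfrac12|\xi|$ — in which case $|\eta_t|\ge\tfrac12|\xi|$ on all of $[0,1]$, so the integrand is comparable to $(\mu^2+|\xi|^2+|\tilde\xi|^2)^{\frac{p-2}2}$ throughout and the estimate is trivial — or $|\xi-\tilde\xi|>\tfrac12|\xi|$, so that $|\xi-\tilde\xi|$ is comparable to $(\mu^2+|\xi|^2+|\tilde\xi|^2)^{1/2}$ up to the size of $\mu$; in the latter case one writes $\mu^2+|\eta_t|^2=b^2+|\xi-\tilde\xi|^2(t-t_*)^2$ with $b^2=\mu^2+\dist(0,\text{line through }\xi,\tilde\xi)^2$ and estimates the resulting integral over the unit $t$-interval by elementary means. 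The only genuinely delicate point is the \emph{sub-critical case} $1<p<2$ when the segment from $\tilde\xi$ to $\xi$ passes near the origin: there $(\mu^2+|\eta_t|^2)^{\frac{p-2}2}$ can be very large near $t=t_*$, and one must check that this singularity is integrable (it is, since $p-2>-1$) and that its contribution is compensated by the short length of the near-origin region so as to produce exactly the power $(\mu^2+|\xi|^2+|\tilde\xi|^2)^{\frac{p-2}2}$. This is precisely the estimate established in \cite{Acerbi-Fusco} for $1<p<2$ and in \cite{GiaquintaModica:1986-a} for $p\ge 2$, to which we refer for these routine computations. This completes the plan.
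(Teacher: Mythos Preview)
Your proposal is correct and in fact more detailed than what the paper provides: the paper gives no argument at all and simply refers to \cite{Acerbi-Fusco} for $1<p<2$ and \cite{GiaquintaModica:1986-a} for $p\ge 2$. Your sketch via the fundamental theorem of calculus along the segment and the reduction to the scalar integral $\int_0^1(\mu^2+|\eta_t|^2)^{\frac{p-2}{2}}\,\dt$ is exactly the standard route in those references, so there is no substantive difference.
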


For the proof of the next lemma we refer  to \cite[Lemma 6.1, p.191]{Giusti}.
  \begin{lemma}\label{lem:Giaq}
    For $r<\rho$, consider a bounded function
    $f:[r,\rho]\to[0,\infty)$ with
    \begin{equation*}
      f(R_1)\le\vartheta f(R_2)
      +\frac A{(R_2-R_1)^\alpha}+\frac B{(R_2-R_1)^\beta}+C
       \qquad\mbox{for all }r<R_1<R_2<\rho,
    \end{equation*}
    where $A,B,C$, and $\alpha>\beta$ denote non-negative constants
    and $\vartheta\in(0,1)$. Then we have
    \begin{equation*}
      f(r)\le c(\alpha,\vartheta)
     \Big(\frac A{(\rho-r)^\alpha}+\frac B{(\rho-r)^\beta}   +C\Big).
    \end{equation*}
\end{lemma}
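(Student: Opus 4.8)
The plan is to prove the estimate by a standard iteration on a geometric sequence of radii, exploiting the contraction factor $\vartheta\in(0,1)$ to absorb the first term on the right-hand side. First I would fix $r<\rho$ and a parameter $\tau\in(0,1)$ to be chosen later, and define the increasing sequence of radii $R_i := r + (1-\tau^i)(\rho-r)$ for $i\in\N_0$, so that $R_0 = r$, $R_i\uparrow\rho$, and $R_{i+1}-R_i = \tau^i(1-\tau)(\rho-r)$. Applying the hypothesis with $R_1 = R_i$ and $R_2 = R_{i+1}$ gives
\begin{equation*}
  f(R_i) \le \vartheta f(R_{i+1}) + \frac{A}{\big[\tau^i(1-\tau)(\rho-r)\big]^\alpha} + \frac{B}{\big[\tau^i(1-\tau)(\rho-r)\big]^\beta} + C.
\end{equation*}
Since $\alpha>\beta\ge 0$ and (without loss of generality) $\rho-r\le 1$ — or else one simply keeps both terms — one can bound $\tau^{-i\beta}\le\tau^{-i\alpha}$ and group the $A,B$ terms; for a cleaner bookkeeping I would just carry both terms through in parallel.

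Next I would iterate this inequality $n$ times, obtaining
\begin{equation*}
  f(r) \le \vartheta^n f(R_n) + \Big[\frac{A}{(1-\tau)^\alpha(\rho-r)^\alpha} + \frac{B}{(1-\tau)^\beta(\rho-r)^\beta} + C\Big]\sum_{i=0}^{n-1}\vartheta^i\max\{\tau^{-i\alpha},\tau^{-i\beta}\}.
\end{equation*}
The crucial choice is to pick $\tau$ close enough to $1$ that $\vartheta\,\tau^{-\alpha} < 1$; this is possible precisely because $\vartheta<1$ and $\alpha$ is fixed, and it makes the geometric series $\sum_i (\vartheta\tau^{-\alpha})^i$ converge, with sum depending only on $\alpha$ and $\vartheta$. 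Then letting $n\to\infty$, using that $f$ is bounded so $\vartheta^n f(R_n)\to 0$, yields
\begin{equation*}
  f(r) \le c(\alpha,\vartheta)\Big(\frac{A}{(\rho-r)^\alpha} + \frac{B}{(\rho-r)^\beta} + C\Big),
\end{equation*}
where $c(\alpha,\vartheta)$ absorbs the factors $(1-\tau)^{-\alpha}$, $(1-\tau)^{-\beta}$ and the value of the convergent series — all depending only on $\alpha$ and $\vartheta$ once $\tau$ is fixed as a function of these.

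The only delicate point — hardly an obstacle — is the choice of $\tau$ and the verification that the resulting constant depends only on $\alpha$ and $\vartheta$ (not on $\beta$, $A$, $B$, $C$, $r$, $\rho$); this is handled by the inequality $\alpha>\beta$, which lets us dominate the $\beta$-power by the $\alpha$-power in the geometric sum. Boundedness of $f$ is used only to kill the leftover term $\vartheta^n f(R_n)$ in the limit; no continuity or monotonicity of $f$ is needed. This is exactly the argument of \cite[Lemma~6.1, p.~191]{Giusti}, which we invoke directly.
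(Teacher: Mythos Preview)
Your proof is correct and is exactly the standard iteration argument from \cite[Lemma~6.1, p.~191]{Giusti}, which is precisely what the paper cites for this lemma without giving its own proof. One minor technicality: the hypothesis as stated in the paper uses strict inequalities $r<R_1<R_2<\rho$, so your choice $R_0=r$ formally falls outside its scope; this is easily handled by starting the sequence at $R_0=r+\epsilon$ for arbitrary $\epsilon>0$ and letting $\epsilon\downarrow0$ at the end (the final bound is independent of $\epsilon$), or by noting that in all applications within the paper the hypothesis actually holds on the closed interval.
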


The following results are extracted from \cite[Chapter~I, Propositions~2.1 \& 3.1]{DB}.
\begin{lemma}[Poincar\'e type inequality]\label{lem:Poincare}
Let $\Omega\subset\rr^N$ be a bounded convex set and let $\varphi\in C\big( \overline{\Omega}\big)$ with $0\le\varphi\le 1$. Moreover, assume that the super-level sets
$\big\{ \varphi >k\big\}$ are convex for all  $k\in (0,1)$. Let $v\in W^{1,p}(\Omega)$ with $p>1$ and assume  that the set
\[
    \mathcal E:=\{ v=0\}\cap \{ \varphi =1\}
\]
has positive measure. Then
\[
    \bigg[ \int_\Omega \varphi |v|^p\,\dx\bigg]^\frac{1}{p}
    \le 
    \boldsymbol\gm_*\frac{({\rm diam\,} \Omega)^N}{|\mathcal E|^\frac{N-1}{N}}
     \bigg[ \int_\Omega \varphi |Dv|^p\,\dx\bigg]^\frac{1}{p}
\]
with a positive constant $\boldsymbol\gm_*$ depending on $N$ and $p$, but independent of $v$ and $\varphi$.
\end{lemma}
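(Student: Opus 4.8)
The plan is to derive a pointwise potential representation of $v$ from the one‑dimensional fundamental theorem of calculus along segments that terminate in $\mathcal{E}$, and then to convert it into the asserted $L^p$‑estimate by classical Riesz‑potential bounds. First I would reduce to the case $v\in C^\infty(\overline{\Omega})$: since $\Omega$ is convex we may approximate $v$ by smooth functions $v_j\to v$ in $W^{1,p}(\Omega)$, prove the inequality for $v_j-(v_j)_{\mathcal{E}}$ in place of $v$, and then pass to the limit, using that $(v_j)_{\mathcal{E}}\to(v)_{\mathcal{E}}=0$ because $v=0$ a.e.\ on $\mathcal{E}$; the weight $\varphi$ is bounded, so this limiting step is harmless.

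The geometric core — and the only place the hypothesis on the super‑level sets is used — is the following observation. If $y\in\mathcal{E}$ then $\varphi(y)=1$, and for every $x\in\Omega$ and every point $z$ on the segment $[x,y]$ one has $\varphi(z)\ge\varphi(x)$, because for each $k\in(0,\varphi(x))$ both $x$ and $y$ belong to the convex set $\{\varphi>k\}$, hence so does $z$. Using $v(y)=0$ for $y\in\mathcal{E}$ and writing $\omega_{x,y}:=(y-x)/|y-x|$, the fundamental theorem of calculus gives $|v(x)|\le\int_0^{|x-y|}|Dv(x+s\,\omega_{x,y})|\,\ds$ for every such pair. Averaging over $y\in\mathcal{E}$, multiplying by $\varphi(x)^{1/p}$, and exploiting the monotonicity $\varphi(x)^{1/p}|Dv(z)|\le\varphi(z)^{1/p}|Dv(z)|$ along the relevant segments yields, with $g:=\varphi^{1/p}|Dv|$ extended by zero outside $\Omega$,
\[
  \varphi(x)^{1/p}|v(x)|
  \;\le\;
  \frac{1}{|\mathcal{E}|}\int_{\mathcal{E}}\int_0^{|x-y|} g\big(x+s\,\omega_{x,y}\big)\,\ds\,\dy .
\]

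Next I would pass to polar coordinates centred at $x$: with $y=x+r\omega$, $r=|x-y|$, the $y$‑integral becomes $\int_{\mathbb{S}^{N-1}}\big(\int_{\{r:\,x+r\omega\in\mathcal{E}\}}r^{N-1}\,\dr\big)\big(\int_0^{d(x,\omega)}g(x+s\omega)\,\ds\big)\d\omega$, where $d(x,\omega)$ is the exit distance of the ray from $\Omega$. Bounding the radial factor by $(\diam\Omega)^N/N$ and unfolding the remaining double integral via $\int_{\mathbb{S}^{N-1}}\int_0^{d(x,\omega)}g(x+s\omega)\,\ds\,\d\omega=\int_{\Omega}|z-x|^{1-N}g(z)\,\dz$ gives the potential bound $\varphi(x)^{1/p}|v(x)|\le\frac{(\diam\Omega)^N}{N|\mathcal{E}|}\int_\Omega|z-x|^{1-N}g(z)\,\dz$. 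The operator $g\mapsto\int_\Omega|z-x|^{1-N}g(z)\,\dz$ is bounded on $L^p(\Omega)$ by Schur's test, since both $\int_\Omega|z-x|^{1-N}\dz$ and $\int_\Omega|z-x|^{1-N}\dx$ are controlled by the diameter (respectively volume) of $\Omega$; raising to the power $p$, integrating in $x$, and letting $j\to\infty$ in the approximation then gives an inequality of the required shape. To recover the precise dependence $(\diam\Omega)^N/|\mathcal{E}|^{\frac{N-1}{N}}$ I would instead keep the radial factor $\int_{\{r:\,x+r\omega\in\mathcal{E}\}}r^{N-1}\,\dr$ intact, use the identity $\int_{\mathbb{S}^{N-1}}(\,\cdots\,)\,\d\omega=|\mathcal{E}|$ together with Hölder's inequality in $\omega$, and combine this with a one‑dimensional Hardy inequality along rays; this is the bookkeeping that produces exactly the stated powers of $\diam\Omega$ and $|\mathcal{E}|$.

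The hard part is really the weight transfer: once the pointwise potential bound carrying the factor $\varphi^{1/p}$ on both sides is in hand, the rest is the standard potential‑estimate proof of Poincaré's inequality, but getting $\varphi$ onto both sides relies entirely on the segment‑monotonicity of $\varphi$, which in turn forces the averaging to run over $\mathcal{E}$ (not over all of $\Omega$) and the segments to end in $\mathcal{E}$. The two remaining points that need care are the exact accounting of the geometric constants and the approximation step, since $\mathcal{E}$ is defined through the (continuous) representatives of $v$ and $\varphi$ while the $v_j$ are merely smooth.
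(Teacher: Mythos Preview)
The paper does not prove this lemma; it simply extracts it from DiBenedetto's monograph \cite[Chapter~I, Proposition~2.1]{DB}. Your outline is precisely the classical argument given there: represent $v(x)$ by a line integral of $|Dv|$ along the segment from $x$ to a point $y\in\mathcal E$, exploit the convexity of the super-level sets of $\varphi$ to push the factor $\varphi(x)^{1/p}$ under the integral (this is indeed the only place that hypothesis is used), average over $y\in\mathcal E$, pass to polar coordinates, and bound the resulting Riesz potential of order one in $L^p$. So the approach is correct and coincides with the cited source.

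One minor point on the constants: your Schur-test bound on the potential gives the weaker constant $(\diam\Omega)^{N+1}/|\mathcal E|$ rather than the stated $(\diam\Omega)^N/|\mathcal E|^{(N-1)/N}$; your alternative sketch via H\"older in $\omega$ and a Hardy inequality along rays is vague and does not obviously recover the sharp form either. The clean route to the stated constant is to use the standard fractional-integration estimate $\|\int_\Omega|\cdot-z|^{1-N}g(z)\,\dz\|_{L^p(\Omega)}\le c(N,p)|\Omega|^{1/N}\|g\|_{L^p(\Omega)}$ in place of Schur. For the applications in this paper (cubes $K_\rho$ with $|\mathcal E|\ge\alpha|K_\rho|$, see the proof of Lemma~\ref{Lm:shrink}) either constant is perfectly adequate, since only the dependence $\rho^p/\alpha^p$ is actually used.
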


\begin{lemma}[Parabolic Sobolev embedding]\label{lem:Sobolev}
Let $E$ be a bounded domain in $\rn$ and $E_T:=E\times(0,T]$.
    Suppose that $u\in L^{\infty}\big(0,T;L^m(E)\big)\cap L^{p}\big(0,T;W_0^{1,p}(E)\big)$ for $m,p\ge1$. Then setting $q=p\frac{N+m}{N}$, we have
    \[
    \iint_{E_T}|u|^q\,\dx\dt\le \boldsymbol\gm\iint_{E_T}|Du|^p\,\dx\dt\bigg(\essup_{t\in[0,T]}\int_{E\times\{t\}} |u|^m\,\dx\bigg)^{\frac{p}{N}}
    \]
   with a positive constant $\boldsymbol\gm$ depending on $N$, $p$ and $q$, but independent of $u$.
\end{lemma}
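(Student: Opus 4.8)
The plan is to reduce the space--time estimate to a family of fixed--time (slice--wise) inequalities and then integrate in $t$. Since $u\in L^p(0,T;W_0^{1,p}(E))$, Fubini's theorem gives $u(\cdot,t)\in W_0^{1,p}(E)$ for a.e.\ $t\in(0,T)$, with $t\mapsto\int_E|Du(\cdot,t)|^p\,\dx$ integrable. It therefore suffices to prove the slice--wise bound
\[
  \int_E|u(\cdot,t)|^{q}\,\dx
  \le
  \bg\Big(\int_E|Du(\cdot,t)|^p\,\dx\Big)\Big(\int_E|u(\cdot,t)|^m\,\dx\Big)^{\frac pN}
  \qquad\mbox{for a.e.\ }t\in(0,T),
\]
for then, bounding the last factor by $\big(\essup_{\tau\in[0,T]}\int_{E\times\{\tau\}}|u|^m\,\dx\big)^{p/N}$ and integrating over $t\in(0,T)$ yields exactly the assertion.

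For $1\le p<N$ the slice--wise inequality follows from Hölder's inequality and the Sobolev embedding. The key observation is the algebraic identity $q=p\frac{N+m}{N}=p+\frac{pm}{N}$, so that $|u|^{q}=|u|^{p}\,|u|^{pm/N}$. Applying Hölder's inequality on $E$ with conjugate exponents $\frac{N}{N-p}$ and $\frac{N}{p}$ gives, for fixed admissible $t$,
\[
  \int_E|u(\cdot,t)|^{q}\,\dx
  \le
  \Big(\int_E|u(\cdot,t)|^{\frac{Np}{N-p}}\,\dx\Big)^{\frac{N-p}{N}}
  \Big(\int_E|u(\cdot,t)|^{m}\,\dx\Big)^{\frac pN}.
\]
The first factor equals $\|u(\cdot,t)\|_{L^{p^*}(E)}^{p}$ with $p^*=\frac{Np}{N-p}$, which by the Sobolev embedding $W_0^{1,p}(E)\hookrightarrow L^{p^*}(E)$ is bounded by $\bg\int_E|Du(\cdot,t)|^p\,\dx$. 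Inserting this bound and using that the second factor is dominated by the essential supremum over $\tau$ of $\int_E|u(\cdot,\tau)|^m\,\dx$ finishes this case.

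The remaining case $p\ge N$ is the only delicate point, since then there is no finite Sobolev exponent $p^*$ to play the role above. Here one instead appeals to the general Gagliardo--Nirenberg interpolation inequality on $W_0^{1,p}(E)$ with interpolation parameter $\theta=\frac{N}{N+m}\in(0,1)$, namely $\|u(\cdot,t)\|_{L^{q}(E)}\le\bg\|Du(\cdot,t)\|_{L^{p}(E)}^{\theta}\|u(\cdot,t)\|_{L^{m}(E)}^{1-\theta}$; raising this to the power $q$ and checking that $\theta q/p=1$ and $(1-\theta)q/m=p/N$ reproduces precisely the slice--wise bound above. Equivalently, one may apply the $L^1$-Sobolev inequality $\|w\|_{L^{N/(N-1)}(E)}\le \bg\|Dw\|_{L^1(E)}$ to $w=|u(\cdot,t)|^{\gamma}$ for a suitable power $\gamma\ge1$, followed by Hölder's inequality and an interpolation between the resulting $L^{\gamma N/(N-1)}$-norm and $L^m$-norm of $u$. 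The bookkeeping of exponents in this last step is the main (and only) technical obstacle; it is, however, entirely classical, and the full statement is \cite[Chapter~I, Proposition~3.1]{DB}, which may simply be invoked.
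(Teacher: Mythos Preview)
Your proof is correct and follows the standard route: the paper itself does not prove this lemma but simply cites \cite[Chapter~I, Proposition~3.1]{DB}, which is exactly the reference you invoke and whose argument you have faithfully sketched. The slice-wise H\"older/Sobolev splitting for $p<N$ and the Gagliardo--Nirenberg interpolation for $p\ge N$ are precisely the ingredients of that classical proof.
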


\section{Time mollification}\label{sec:moly}

Weak solutions to parabolic equations in general do not possess a time derivative in the Sobolev sense. On the other hand, it is desirable to use weak solutions in testing functions. In order to overcome this problem, proper mollifications in the time variable are often employed. A simple version, termed the Steklov average, is sufficient for many occasions. 
Indeed, given a function $v \in L^1(E_T)$ and $0<h<T$, we define its {\it forward Steklov-average} $[v]_h$ by 
\begin{equation}\label{def-stek}
	[v]_h(x,t) 
	:=
	\left\{
	\begin{array}{cl}
		\displaystyle{\frac{1}{h} \int_t^{t+h} v(x,s) \,\ds ,}
		& t\in (0,T-h) , \\[9pt]
		0 ,
		& t\in (T-h,T) %\ \ \mbox{and} \ \ h>0\
\,.
	\end{array}
	\right.
\end{equation}
Steklov avereges  are differentiable with respect to $t$. More precisely we have
\begin{equation*}
    \partial_t [v]_h(x,t)=\tfrac1h \big(v(x,t+h)-v(x,t)\big)
\end{equation*}
for almost every $(x,t) \in E\times (0,T-h)$.
The relevant properties of Steklov averages are summarized in the following lemma. 
\begin{lemma}\label{lm:Stek}
    For any $r\ge 1$ we have
    \begin{itemize}
        \item[(i)] If $v\in L^r(E_T)$, then $[v]_h\in L^r(E_{T})$.  Moreover,  
        $
          \| [v]_h\|_{L^r(E_{T})}
            \le 
            \| v\|_{L^r(E_T)}
        $
        and
           $[v]_h\to v$ in $L^r(E_{T})$ and almost everywhere on $E_T$
           as $h\downarrow 0$.
        \item[(ii)] If $Dv\in L^r(E_T)$, then $D[v]_h=[Dv]_h\to Du$ in $L^r(E_T)$ and almost everywhere on $E_T$ as $h\downarrow 0$. 
        \item[(iii)] If  $v\in C\big(0,T;L^r(E)\big)$, then $[v]_h(\cdot ,t)\to v(\cdot, t)$ in $L^r(E)$
        as $h\downarrow 0$ for every $t\in (0,T-\eps)$ for any $\eps\in (0,T)$.
    \end{itemize}
\end{lemma}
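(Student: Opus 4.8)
The plan is to reduce all three assertions to Fubini's theorem, Jensen's inequality and the Lebesgue differentiation theorem, following the classical treatment of Steklov averages.

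For (i), I would first establish the norm bound by applying Jensen's inequality to the average defining $[v]_h$: for $t\in(0,T-h)$ one has $|[v]_h(x,t)|^r\le\frac1h\int_t^{t+h}|v(x,s)|^r\,\ds$, and integrating this over $E_T$ and exchanging the order of integration via Fubini yields $\|[v]_h\|_{L^r(E_T)}^r\le\|v\|_{L^r(E_T)}^r$, since for fixed $s$ the set of admissible $t$ has length at most $h$. For the $L^r$-convergence $[v]_h\to v$, I would run a density argument: for $w\in C(\overline{E_T})$ one checks directly that $[w]_h\to w$ uniformly on compact subsets of $E_T$ (up to the harmless truncation near $t=T$ built into \eqref{def-stek}), hence in $L^r(E_T)$; the general case then follows from the uniform bound just proved, writing $[v]_h-v=([v]_h-[w]_h)+([w]_h-w)+(w-v)$ and choosing $w$ close to $v$ in $L^r(E_T)$. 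The a.e.\ convergence is treated separately: for a.e.\ $x\in E$ the slice $v(x,\cdot)$ lies in $L^1(0,T)$, and for such $x$ and a.e.\ $t$ the Lebesgue differentiation theorem gives $\frac1h\int_t^{t+h}v(x,s)\,\ds\to v(x,t)$.

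For (ii), the key point is the identity $D[v]_h=[Dv]_h$, which I would verify in the weak sense by testing $[v]_h(\cdot,t)$ against $\varphi\in C_0^\infty(E)$ and using Fubini to move the spatial integration inside the $s$-average; once this is known, both convergence assertions in (ii) follow by applying (i) to $Dv\in L^r(E_T)$. For (iii), fix $\eps\in(0,T)$ and $t\in(0,T-\eps)$; for $0<h<\eps$ I would write $[v]_h(\cdot,t)-v(\cdot,t)=\frac1h\int_t^{t+h}\big(v(\cdot,s)-v(\cdot,t)\big)\,\ds$ as a Bochner integral in $L^r(E)$, so that Minkowski's integral inequality bounds its $L^r(E)$-norm by $\sup_{s\in[t,t+h]}\|v(\cdot,s)-v(\cdot,t)\|_{L^r(E)}$, and the right-hand side tends to $0$ as $h\downarrow0$ by the continuity of $s\mapsto v(\cdot,s)$ from $(0,T)$ into $L^r(E)$.

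I do not expect a genuine obstacle here: everything is a routine application of Fubini's theorem and approximation. The only points requiring a little care are joint measurability of $[v]_h$ (needed before invoking Fubini) and the uniformity in $h$ of the estimate in (i), which is precisely what makes the density reduction in the proof of the $L^r$-convergence legitimate.
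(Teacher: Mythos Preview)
The paper states this lemma without proof, treating the properties of Steklov averages as standard background material; your outline is correct and follows exactly the classical route one would expect (Jensen plus Fubini for the norm bound, density for $L^r$-convergence, Lebesgue differentiation for a.e.\ convergence, and the weak identity $D[v]_h=[Dv]_h$ reducing (ii) to (i)). There is nothing to compare against, and no gap in what you wrote.
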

In case of a nonlinearity with respect to $u$, as for instance present in the porous medium equation or doubly non-linear equations, Steklov averages do not have all the  properties needed. It turns out that a different kind of mollification is better suited. 
Indeed, for any $v\in L^1(E_T)$ and $h>0$, we introduce mollified functions
\begin{equation}\label{def:mol}
	\llbracket v \rrbracket_h(x,t)
	:= 
	\tfrac 1h \int_0^t \mathrm e^{\frac{\tau-t}h} v(x,\tau) \, \d \tau,\quad%\text{ and }\quad
	\llbracket v \rrbracket_{\bar{h}}(x,t)
	:= 
	\tfrac 1h \int_t^T \mathrm e^{\frac{t-\tau}h} v(x,\tau) \, \d \tau.
\end{equation}
 Some relevant properties are collected in the following; see~\cite[Appendix~B]{BDM-13} for more.

\begin{lemma}\label{Lm:mol} 
For any  $r\ge1$ we have
\begin{itemize}
    \item[(i)] If $v\in L^r(E_T)$, then $\llbracket v \rrbracket_h \in L^r(E_T)$. Moreover, 
    $\|\llbracket v \rrbracket_h\|_{L^r(E_T)}\le \|v\|_{L^r(E_T)}$ and $ \llbracket v \rrbracket_h \to v$
    in $L^r(E_T)$ and almost everywhere on $E_T$ as $h\downarrow 0$. 
    \item[(ii)] $\llbracket v \rrbracket_{h}\in C\big([0,T]; L^r(E)\big)$.
    \item[(iii)] Almost everywhere on $E_T$ holds
    \begin{equation*}%\label{dt-mol}
	 \partial_t \llbracket v \rrbracket_h
	 =
	 \tfrac{1}{h} \big(v-\llbracket v \rrbracket_h\big), \quad
	  \partial_t \llbracket v \rrbracket_{\bar{h}}
	 =
	 \tfrac{1}{h} \big(\llbracket v \rrbracket_{\bar{h}}- v\big).
\end{equation*}
    \item[(iv)] If $Du\in L^r(E_T)$, then $D\llbracket v \rrbracket_{h} =\llbracket Dv \rrbracket_{h}\to Dv$
    in $L^r(E_T)$ and almost everywhere in $E_T$ as $h\downarrow 0$. 
    \item[(v)] If $v\in C\big([0,T]; L^r(E)\big) $, then $\llbracket v \rrbracket_{h}(\cdot ,t)\to v(\cdot,t)$
    in $L^r(E)$ and almost everywhere on $E$  for every $t\in [0,T]$ as $h\downarrow 0$. 
\end{itemize}
Similar conclusions as in  {\rm (i)}, {\rm(ii)}, {\rm(iv)}, and {\rm (v)} also apply for $\llbracket v \rrbracket_{\bar{h}}$.
\end{lemma}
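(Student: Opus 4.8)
The plan is to reduce every assertion to elementary facts about the one-dimensional approximate identity $\phi_h(\sigma):=\tfrac1h\mathrm e^{-\sigma/h}\mathbf{1}_{\{\sigma>0\}}$, a probability density on $(0,\infty)$. The substitution $\sigma=(t-\tau)/h$ rewrites \eqref{def:mol} as
\[
\llbracket v \rrbracket_h(x,t)=\int_0^{t/h}\mathrm e^{-\sigma}\,v(x,t-h\sigma)\,\d\sigma,
\qquad
\llbracket v \rrbracket_{\bar h}(x,t)=\int_0^{(T-t)/h}\mathrm e^{-\sigma}\,v(x,t+h\sigma)\,\d\sigma,
\]
i.e.\ weighted averages of backward, resp.\ forward, time translates of $v$ against $\mathrm e^{-\sigma}\mathbf{1}_{\{\sigma>0\}}$, truncated to $\tau\in(0,t)$, resp.\ $\tau\in(t,T)$. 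Since $\llbracket\cdot\rrbracket_{\bar h}$ is obtained from $\llbracket\cdot\rrbracket_h$ by the time reflection $t\mapsto T-t$ (one checks $\llbracket w\rrbracket_h(x,t)=\llbracket v\rrbracket_{\bar h}(x,T-t)$ for $w(x,t):=v(x,T-t)$), it suffices to prove everything for $\llbracket v\rrbracket_h$; the statements for $\llbracket v\rrbracket_{\bar h}$ then follow verbatim, with the sign flip in (iii) produced by the reflection.

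For (i), extend $v$ by zero outside $E_T$ and write $\llbracket v\rrbracket_h=\int_0^\infty\mathrm e^{-\sigma}(\tau_{h\sigma}v)\,\d\sigma$ with $(\tau_s v)(x,t):=v(x,t-s)$. Minkowski's integral inequality together with translation invariance of Lebesgue measure gives $\|\llbracket v\rrbracket_h\|_{L^r(E_T)}\le\int_0^\infty\mathrm e^{-\sigma}\|\tau_{h\sigma}v\|_{L^r(E_T)}\,\d\sigma\le\|v\|_{L^r(E_T)}$. For the convergence $\llbracket v\rrbracket_h\to v$ in $L^r(E_T)$ I would first verify it for $v\in C(\overline{E_T})$, where
\[
\llbracket v\rrbracket_h(x,t)-v(x,t)=\int_0^{t/h}\mathrm e^{-\sigma}\big(v(x,t-h\sigma)-v(x,t)\big)\,\d\sigma-\mathrm e^{-t/h}v(x,t),
\]
so uniform continuity of $v$ controls the first term and $\mathrm e^{-t/h}\to0$ ($t>0$) the second, yielding convergence locally uniformly in $E\times(0,T]$ and hence, by uniform boundedness, in $L^r(E_T)$; the general case then follows from density of $C(\overline{E_T})$ in $L^r(E_T)$ and the uniform operator bound just proved. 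To obtain a.e.\ convergence of the \emph{whole} family as $h\downarrow0$ (not merely of a subsequence) I would use the pointwise domination $\sup_{h>0}|\llbracket v\rrbracket_h(x,t)|\le\bg\,M_tv(x,t)$, obtained by splitting the defining integral over $\tau\in(t-(k+1)h,t-kh)$, $k\ge0$; here $M_t$ is the Hardy--Littlewood maximal operator in the $t$-variable, of strong type $(r,r)$ for $r>1$ and weak type $(1,1)$, so a.e.\ convergence for continuous data upgrades to all $v\in L^r(E_T)$ in the standard way.

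For (iii), write $\llbracket v\rrbracket_h(x,t)=\mathrm e^{-t/h}F(x,t)$ with $F(x,t):=\tfrac1h\int_0^t\mathrm e^{\tau/h}v(x,\tau)\,\d\tau$; by Fubini, for a.e.\ $x$ the map $t\mapsto F(x,t)$ is absolutely continuous with $\partial_tF=\tfrac1h\mathrm e^{t/h}v$, and the product rule gives $\partial_t\llbracket v\rrbracket_h=\tfrac1h(v-\llbracket v\rrbracket_h)$ a.e.\ on $E_T$ (the identity for $\llbracket v\rrbracket_{\bar h}$ then follows by reflection). For (ii), the same representation shows $t\mapsto F(\cdot,t)\in L^r(E)$ is continuous on $[0,T]$, because $\|F(\cdot,t)-F(\cdot,t')\|_{L^r(E)}\le\tfrac1h\mathrm e^{T/h}\big|\!\int_{t'}^t\|v(\cdot,\tau)\|_{L^r(E)}\,\d\tau\big|$ and $\tau\mapsto\|v(\cdot,\tau)\|_{L^r(E)}$ lies in $L^1(0,T)$; multiplying by the bounded continuous factor $\mathrm e^{-t/h}$ preserves continuity, so $\llbracket v\rrbracket_h\in C([0,T];L^r(E))$. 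For (iv), testing $\llbracket v\rrbracket_h(\cdot,t)$ against $\partial_{x_i}\varphi$, $\varphi\in C_0^\infty(E)$, and using Fubini to pass the time-only mollification through the spatial derivative shows $D\llbracket v\rrbracket_h=\llbracket Dv\rrbracket_h$ as weak spatial gradients for a.e.\ $t$; the asserted convergence is then (i) applied to $Dv$. For (v), fix $t\in(0,T]$ and estimate in $L^r(E)$
\[
\big\|\llbracket v\rrbracket_h(\cdot,t)-v(\cdot,t)\big\|_{L^r(E)}\le\int_0^{t/h}\mathrm e^{-\sigma}\big\|v(\cdot,t-h\sigma)-v(\cdot,t)\big\|_{L^r(E)}\,\d\sigma+\mathrm e^{-t/h}\|v(\cdot,t)\|_{L^r(E)};
\]
the integrand is dominated by $2\mathrm e^{-\sigma}\sup_{s\in[0,T]}\|v(\cdot,s)\|_{L^r(E)}$ and tends to $0$ pointwise in $\sigma$ by continuity of $s\mapsto v(\cdot,s)$, so dominated convergence finishes the argument, and a.e.\ convergence on $E$ follows along a subsequence (or directly from the maximal bound of step (i)).

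The only genuinely non-routine point is the a.e.\ convergence of the full continuous family $h\downarrow0$ in (i) (hence in (iv) and (v)): this requires the maximal-function domination rather than merely $L^r$-convergence of a subsequence; everything else is Fubini, Minkowski's integral inequality and the elementary asymptotics of $\mathrm e^{-t/h}$. A detailed treatment along exactly these lines is in \cite[Appendix~B]{BDM-13}. One small caveat: $\llbracket v\rrbracket_h(\cdot,0)\equiv0$, so in (v) the value at the left endpoint $t=0$ is to be read with this convention (which is harmless, since the mollification is always applied in situations where either $t>0$ or the relevant function vanishes at $t=0$).
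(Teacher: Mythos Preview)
The paper does not prove this lemma at all: it simply states the result and refers the reader to \cite[Appendix~B]{BDM-13}. Your proposal supplies a complete and correct argument along the standard lines (Minkowski's integral inequality for the norm bound, density plus maximal-function domination for convergence, the product-rule computation for (iii), Fubini for (iv)), and you yourself cite the same reference, so there is nothing to compare. Your caveat about $t=0$ in (v) is well taken and in fact flags a small imprecision in the statement as written.
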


\chapter[Notion of solution and comparison principles]{Notion of solution and comparison principles}\label{sec:sol-compar}

%In this chapter, we address some basic properties of solutions to doubly non-linear equations, such as continuity in the time variable and comparison principles. The results are scattered in the literature. For completeness, we give the detailed proofs here. To keep the results general, we consider a larger class of doubly non-linear equations of the type 
We start with a general notion of parabolic partial differential equation
\begin{equation}  \label{Eq:1:1}
	\partial_t\big(|u|^{q-1}u\big)-\dvg\bl{A}(x,t,u, Du) = 0\quad \mbox{in $ E_T$,}
\end{equation}
where the function $\bl{A}(x,t,u,\xi)\colon E_T\times\rr^{N+1}\to\rn$ is Carath\'eodory 
% only assumed to be
% measurable with respect to $(x, t) \in E_T$ for all $(u,\xi)\in \rr\times\rn$,
% continuous with respect to $(u,\xi)$ for a.e.~$(x,t)\in E_T$, 
and subject to the growth assumption \begin{equation}\label{Eq:1:2-}
	|\bl{A}(x,t,u,\xi)|\le C_1\big(1+|\xi|^{p-1}\big)
	\quad \mbox{for a.e.~$(x,t)\in E_T$, $\forall\,u\in\rr$, $\forall\,\xi\in\rn$,}
\end{equation}
for a given positive constant $C_1$. For the moment, we take $p>1$ and $q>0$.

The sole structure condition \eqref{Eq:1:2-} is insufficient to establish desired properties of weak solutions. Thereby various further requirements on $\bl{A}(x,t,u,\xi)$ will be introduced as they are called upon.
 
 In this chapter, some basic properties of solutions to \eqref{Eq:1:1}, such as continuity in the time variable and comparison principles will be addressed.  The results are scattered in the literature in various forms. We give detailed proofs here in accordance with our particular need.
%The precise notion of solution will be explained in the next subsection.

%%%%%%%%%%%%%%%
\section{Notion of solution}\label{S:notion-sol}

\begin{definition}\label{Def:notion-sol}
A function $u$
is termed a {\bf weak super(sub)-solution} to the parabolic equation \eqref{Eq:1:1} with \eqref{Eq:1:2-} in $E_T$, if
\begin{equation*}  %\label{Eq:1:3p}
	u\in C\big( [0,T]; L^{q+1}(E)\big) \cap L^p \big(0,T; W^{1,p} (E)\big)% C_{\loc}\big(0,T;L^{q+1}_{\loc}(E)\big)\cap 
\end{equation*}
and if the integral identity 
\begin{equation} \label{Eq:weak-form}
	\iint_{E_T} \Big[-|u|^{q-1}u\pl_t\z+\bl{A}(x,t,u,Du)\cdot D\z\Big]\dx\dt
	\ge (\le) 0
\end{equation}
is satisfied for all non-negative test functions 
\begin{equation}\label{Eq:test-func}
\z\in  W_0^{1,q+1} \big(0,T; L^{q+1}(E)\big)\cap L^p \big(0,T; W^{1,p}_{0}(E)\big).
\end{equation}
A function that is both a weak super- and sub-solution is called a {\bf weak solution}. 
\end{definition}

\begin{remark}
    The structure condition \eqref{Eq:1:2-} of $\bl{A}(x,t,u,\xi)$ and the function space \eqref{Eq:test-func} of $\z$ guarantee the convergence of integral in \eqref{Eq:weak-form}.
\end{remark}

\begin{remark}
A notion of {\bf local solution} is commonly used in the literature, cf.~\cite{DB, DBGV-mono}. %Indeed, a function $u$ is termed a {\bf local, weak  super(sub)-solution} to \eqref{Eq:1:1} with \eqref{Eq:1:2-}, if
%\begin{equation*}  %\label{Eq:1:3p}
%	u\in C_{\loc}\big( 0,T; L^{q+1}_{\loc} (E)\big) \cap L_{\loc}^p \big(0,T; W_{\loc}^{1,p} (E)\big)% C_{\loc}\big(0,T;L^{q+1}_{\loc}(E)\big)\cap 
%\end{equation*}
% and if for every compact set $\mathcal{K}\subset E$ and every sub-interval
% $[t_1,t_2]\subset (0,T)$ we have
% \begin{equation*}
% 	\int_{\mathcal{K}} |u|^{q-1}u\z \,\dx\bigg|_{t_1}^{t_2}
% 	+
% 	\iint_{\mathcal{K}\times(t_1,t_2)} \big[-|u|^{q-1}u\pl_t\z+\bl{A}(x,t,u,Du)\cdot D\z\big]\dx\dt
% 	\ge (\le) 0
% \end{equation*}
% for all non-negative test functions 
% \begin{equation*}
% \z\in  W_{\loc}^{1,q+1} \big(0,T; L^{q+1}(\mathcal{K})\big)\cap L^p_{\loc} \big(0,T; W^{1,p}_{0}(\mathcal{K})\big).
% \end{equation*}
We stress that this notion makes no essential difference from Definition~\ref{Def:notion-sol} modulo a localization. %our main regularity estimates are entirely local
%The regularity requirements on $\z$ guarantee that all the integrals in \eqref{Eq:weak-form} are convergent.
%A function that is both a local, weak super- and sub-solution is called a {\bf local, weak solution}. 
\end{remark}

\section{Existence of weak solutions}\label{S:existence}

Let us consider 
the following Cauchy-Dirichlet Problem:
\begin{equation}\label{Eq:CDP}
\left\{
\begin{array}{cl}
    \partial_t \big(|u|^{q-1}u\big) - \Div \bl{A}(x,t,u,Du)=0  & \quad \mbox{in  $E_T$,}\\[6pt]
    u =g &\quad \mbox{on $\partial E\times(0,T]$,}\\[6pt]
    u(\cdot,0)= u_o &\quad\mbox{in $E$,}
\end{array}
\right.
\end{equation}
where $p>1$, $q>0$, $u_o\in L^{q+1}(E)$ and $g\in  L^p(0,T;W^{1,p}(E))$ with $\partial_t g\in L^{p'}(0,T;W^{-1,p'}(E))$.
We first define what we mean by a weak solution to the Cauchy-Dirichlet Problem \eqref{Eq:CDP}.

\begin{definition}\label{def:weak}
Let \eqref{Eq:1:2-} be in force. 
A function $u$
is termed a {\bf weak solution to the Cauchy-Dirichlet Problem \eqref{Eq:CDP}}, if
\begin{equation*}  %\label{Eq:1:3p}
	u\in C\big( [0,T]; L^{q+1}(E)\big) \cap g + L^p \big(0,T; W_0^{1,p} (E)\big)% C_{\loc}\big(0,T;L^{q+1}_{\loc}(E)\big)\cap 
\end{equation*}
and if the integral identity
\begin{equation}\label{Eq:weak-form-1}
\iint_{E_T} \Big[ \big(|u_o|^{q-1}u_o - |u|^{q-1}u\big) \pl_t\z+\bl{A}(x,t,u,Du)\cdot D\z\Big]\dx\dt=0
\end{equation}
is satisfied for any test function
$
\z\in  W^{1,q+1}(0,T; L^{q+1}(E))\cap L^p (0,T; W^{1,p}_{0}(E))
$
such that $\z(\cdot, T)=0$. 
\end{definition}

Note that in the setting of Definition~\ref{def:weak}, the integral identity \eqref{Eq:weak-form-1} implies that $u(\cdot,0)=u_o$; see Proposition~\ref{Prop:B:3}.

The issue of the existence of weak solutions is extensively studied in the literature. A natural assumption  typically used is the monotonicity of the vector field $\bl{A}(x,t,u,\xi)$ with respect to the $\xi$-variable, in the sense that
\begin{equation}\label{Eq:CP:mono}
\big(\bl{A}(x,t,u,\xi_1)-\bl{A}(x,t,u,\xi_2)\big)
\cdot(\xi_1-\xi_2)\ge0
\end{equation}
for all variables in the indicated domains. In this paper we need existence results in two different situations. The first one concerns non-negative weak solutions to the model equation \eqref{doubly-nonlinear-prototype} with zero lateral boundary data. The second one concerns signed solutions in the case $q=1$.

\begin{remark}\label{Rmk:CP1}\upshape
Let $p>1$, $q>0$ and consider the Cauchy-Dirichlet Problem \eqref{Eq:CDP} for the model equation \eqref{doubly-nonlinear-prototype} and with $g\equiv 0$. 
Then there exists a non-negative weak solution to the Cauchy-Dirichlet Problem~\eqref{Eq:CDP} in the sense of Definition~\ref{def:weak}. 
This can be retrieved from \cite[Theorem~1.7]{AL} or \cite[Theorem~1.3]{BDMS-18} and \cite[Theorem~5.1]{BDMS-survey}. Some adjustments are required for the application of \cite[Theorem~1.7]{AL}. First, in the case $1<p<2$ a stronger monotonicity assumption is required. Secondly, the domain $E$ must have a Lipschitz boundary. However, also  under the weaker conditions the existence proof of \cite{AL} still works.
Note that the existence for time-dependent boundary data with a weak time derivative was obtained in \cite{Schatzler-1, Schatzler-2}. 
%
%Assuming $E$ is a bounded domain with Lipschitz boundary, existence of weak solution to the Cauchy-Dirichlet Problem \eqref{Eq:CDP} can be established as in \cite[Theorem~1.7]{AL}.
%
%In the application of \cite[Theorem~1.7]{AL}, we take $a(\xi)=|\xi|^{p-2}\xi$.
%It is required in \cite[\S~1.1, 3)]{AL} that $a(\xi)$  satisfies the ellipticity condition
%\[
%    \big(a(\xi_1)-a(\xi_2)\big)\cdot(\xi_1-\xi_2)
%    \ge 
%    c |\xi_1-\xi_2|^p\quad\text{for all}\>\xi_1,\,\xi_2\in\rn,
%\]
%for some $c>0$.
%In our case, such a condition is satisfied for $p\ge2$, whereas for $1<p<2$ it is false in general, cf.~\cite[Remark~4.2, Chapter~I]{DB}. Instead, one can use the mean value theorem and the triangle inequality to deduce that
%\[
%    \big(a(\xi_1)-a(\xi_2)\big)\cdot(\xi_1-\xi_2)
%    \ge 
%    c (|\xi_1|+|\xi_2|)^{p-2}|\xi_1-\xi_2|^2.
%\]
%(See \cite[4.2 Lemma]{L}, or \cite[Lemma~A.0.5]{Peral}). 
%Even under this different ellipticity condition, the original existence proof of \cite{AL} still works.
%\textcolor{red}{(Alt-Luckhaus assume $b(u)$ is Lipschitz which forces $q\ge1$ in our case. I cannot find a  theory for $q<1$ in print.)}
%Alternatively, one can construct a variational solution by minimizing movements (cf.~\cite{BDMS-18, BDMS-survey}); they turn out to be weak solutions also (cf.~\cite[Corollary~7.2]{BDMS-18}) and conclude that a solution to the previous Cauchy-Dirichlet Problem \eqref{Eq:CDP} exists. %
\end{remark}

\begin{remark}\label{Rmk:CP2}\upshape
Let $p>1$ and $q=1$ and consider the Cauchy-Dirichlet Problem \eqref{Eq:CDP} for the parabolic equation 
$$
    \partial_tu - \Div\big(a(x,t)\big(\mu^2+|Du|^2\big)^{\frac{p-2}{2}}Du\big) = 0
$$
with $\mu\in[0,1]$ and $(x,t)\mapsto a(x,t)$ measurable satisfying $0<C_o\le a\le C_1$. 
Then there exists a weak solution to the Cauchy-Dirichlet Problem~\eqref{Eq:CDP} in the sense of Definition~\ref{def:weak}. This result can for instance be deduced from \cite[Chapter III, Proposition 4.1 and Example 4.A]{Showalter}.
\end{remark}

\section{Notion of parabolicity}
%\textcolor{red}{The formulation of this proposition affects \eqref{Eq:parabolicity} and \eqref{Eq:weak-form:0}.}
A well-known property of harmonic function theory asserts that if $u$ is sub-harmonic or super-harmonic, then so is $\max\{u,k\}$ or $\min\{u,k\}$ for $k\in\rr$ respectively. The next result shows that a similar property holds in the parabolic setting. When dealing with signed solutions, it is often convenient to use the short-hand notation $\boldsymbol{a}^q:=|a|^{q-1}a$ for $a\in\R$. 
% ?? \textcolor{red}{(I am not sure how to formulate it.)} 
% \textcolor{cyan}{ I think that \eqref{Eq:1:2-} is not enough. We also need
% $\bl{A}(x,t,u,0)=0$ for the final argument.}
%the integral formulation~\eqref{Eq:weak-form} with $\bl{A}(x,t,u,Du)$ replaced by $\bl{A}(x,t,u,Du)\chi_{\{u>k\}}$.
%is a weak sub(super)-solution to \eqref{Eq:1:1} with \eqref{Eq:1:2-}.
\begin{proposition}\label{Prop:parab}
Assume that $p>1$ and $q>0$.
 In addition to \eqref{Eq:1:2-}, we assume $\bl{A}(x,t,z,\xi)\cdot\xi\ge0$ for a.e.~$(x,t)\in E_T$, for any $z\in\rr$ and for any $\xi\in\rn$. %For $k\in\rr$ define $\widetilde{\bl{A}}(x,t,z,\xi):=\bl{A}(x,t,z,\xi)\chi_{\{u>k\}}$ for a.e.~$(x,t)\in E_T$, for any $z\in\rr$ and for any $\xi\in\rn$.
 %\eqref{Eq:1:1} -- \eqref{Eq:1:2p} is parabolic.
 If $u$ is a weak sub-solution to \eqref{Eq:1:1}, then the truncation $u_k=k+(u-k)_+$ with $k\in\rr$ satisfies
\begin{equation*}
	-
	\iint_{E_T} \partial_t\zeta \power{u_k}{q}\,\dx\dt
	+
 \iint_{E_T}
	\mathbf A(x,t,u_k,Du_k)\cdot D\z\boldsymbol\chi_{\{u>k\}}\, \dx\dt
 \le
 0,
\end{equation*}
for all non-negative test functions 
$
\z\in  W_0^{1,q+1} \big(0,T; L^{q+1}(E)\big)\cap L^p \big(0,T; W^{1,p}_{0}(E)\big)$.
%An analogous statement holds for super-solutions.
 Similarly, if $u$ is a weak super-solution to \eqref{Eq:1:1}, then the truncation $u_k=k-(u-k)_-$ with $k\in\rr$ satisfies
\begin{equation*}
	-
	\iint_{E_T} \partial_t\zeta \power{u_k}{q}\,\dx\dt
	+
 \iint_{E_T}
	\mathbf A(x,t,u_k,Du_k)\cdot D\z\boldsymbol\chi_{\{u<k\}}\, \dx\dt
 \ge
 0,
\end{equation*}
for $\z$ the same type of test function.
\end{proposition}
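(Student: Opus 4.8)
The plan is to test the weak formulation \eqref{Eq:weak-form} for the sub-solution $u$ with a test function that is supported, in a suitable mollified sense, on the super-level set $\{u>k\}$, and then pass to the limit. The key obstacle is the usual one for doubly non-linear equations: the time-derivative term $\partial_t(|u|^{q-1}u)$ does not exist in a pointwise sense, so one cannot simply multiply by $\boldsymbol\chi_{\{u>k\}}$. The remedy is to work with the exponential mollification $\llbracket\,\cdot\,\rrbracket_h$ from \eqref{def:mol}, exploiting property (iii) of Lemma~\ref{Lm:mol}, namely $\partial_t\llbracket v\rrbracket_h=\tfrac1h(v-\llbracket v\rrbracket_h)$, which converts the singular time-derivative into a manageable difference quotient.

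First I would fix a non-negative $\z\in W_0^{1,q+1}(0,T;L^{q+1}(E))\cap L^p(0,T;W^{1,p}_0(E))$ and, for small $h>0$ and $\varepsilon>0$, use in \eqref{Eq:weak-form} the test function
\[
\z_{h,\varepsilon}(x,t):=\z(x,t)\,\Psi_\varepsilon\big(\llbracket u\rrbracket_h(x,t)-k\big),
\]
where $\Psi_\varepsilon$ is a smooth, non-decreasing approximation of the Heaviside function $\boldsymbol\chi_{\{s>0\}}$ with $\Psi_\varepsilon(s)=0$ for $s\le 0$ and $\Psi_\varepsilon(s)=1$ for $s\ge\varepsilon$; this is an admissible test function by Lemma~\ref{Lm:mol}(i),(ii),(iv). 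In the parabolic term one integrates by parts in time to move $\partial_t$ onto $|u|^{q-1}u$, then replaces $|u|^{q-1}u$ by $\llbracket |u|^{q-1}u\rrbracket_h$ up to an error that vanishes as $h\downarrow0$ by Lemma~\ref{Lm:mol}(i); using $\partial_t\llbracket v\rrbracket_h=\tfrac1h(v-\llbracket v\rrbracket_h)$ one is led to a term of the form $\iint \tfrac1h(|u|^{q-1}u-\llbracket|u|^{q-1}u\rrbracket_h)\,\z\,\Psi_\varepsilon(\cdots)$. The crucial sign observation is that, since $s\mapsto|s|^{q-1}s$ is increasing and $\Psi_\varepsilon$ composed with $\llbracket u\rrbracket_h-k$ is an increasing function of $\llbracket u\rrbracket_h$, the elementary inequality relating the difference $|u|^{q-1}u-\llbracket|u|^{q-1}u\rrbracket_h$ to $\partial_t\llbracket u\rrbracket_h$ together with a chain-rule/primitive argument (introducing the primitive $G_\varepsilon$ of $s\mapsto\Psi_\varepsilon(\mathfrak h(s)-k)$, where $\mathfrak h(s)=|s|^{q-1}s$) produces, after integration by parts, a term $-\iint\partial_t\z\,G_\varepsilon(\llbracket|u|^{q-1}u\rrbracket_h)+(\text{non-negative remainder})$.

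Next I would let $h\downarrow0$: by Lemma~\ref{Lm:mol}(i),(iv) one has $\llbracket u\rrbracket_h\to u$ and $\llbracket|u|^{q-1}u\rrbracket_h\to|u|^{q-1}u$ in $L^r$ and a.e., and $D\llbracket u\rrbracket_h\to Du$, while the non-negative remainder can be discarded (it only helps the inequality); the diffusion term converges because $\bl{A}(x,t,u,Du)\cdot D\z_{h,\varepsilon}$ is dominated using \eqref{Eq:1:2-} and $\Psi_\varepsilon$ bounded. This yields
\[
-\iint_{E_T}\partial_t\z\,G_\varepsilon\big(|u|^{q-1}u\big)\,\dx\dt
+\iint_{E_T}\bl{A}(x,t,u,Du)\cdot\Big[\Psi_\varepsilon(u-k)D\z+\z\,\Psi_\varepsilon'(u-k)Du\Big]\dx\dt\le0.
\]
Then I would let $\varepsilon\downarrow0$. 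On $\{u>k\}$ one has $\Psi_\varepsilon(u-k)\to1$ and $G_\varepsilon(|u|^{q-1}u)\to|u_k|^{q-1}u_k-|k|^{q-1}k$ (a constant shift, whose $\partial_t\z$-integral vanishes), while on $\{u\le k\}$ everything is zero; thus the first two terms converge to $-\iint\partial_t\z\,\power{u_k}{q}+\iint\bl{A}(x,t,u_k,Du_k)\cdot D\z\,\boldsymbol\chi_{\{u>k\}}$, using also that $Du=Du_k$ a.e.\ on $\{u>k\}$ and $Du_k=0$ a.e.\ on $\{u\le k\}$. The remaining term is $\iint\z\,\Psi_\varepsilon'(u-k)\,\bl{A}(x,t,u,Du)\cdot Du\,\dx\dt$, which by the extra hypothesis $\bl{A}(x,t,z,\xi)\cdot\xi\ge0$ and $\Psi_\varepsilon'\ge0$, $\z\ge0$ is \emph{non-negative}; dropping it preserves the inequality ``$\le0$''. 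This gives exactly the claimed inequality.

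**Main obstacle.** The delicate point is making rigorous the sign of the mollified time term, i.e.\ the step that turns $\tfrac1h\iint(|u|^{q-1}u-\llbracket|u|^{q-1}u\rrbracket_h)\,\z\,\Psi_\varepsilon(\llbracket u\rrbracket_h-k)$ into a total time-derivative plus a non-negative remainder. The clean way is to test instead directly with $\z\,\Psi_\varepsilon(\llbracket u\rrbracket_h-k)$ before any integration by parts, write the parabolic contribution as $-\iint|u|^{q-1}u\,\partial_t\big[\z\Psi_\varepsilon(\llbracket u\rrbracket_h-k)\big]$, split off the $\partial_t\z$ piece, and treat $-\iint|u|^{q-1}u\,\z\,\Psi_\varepsilon'(\llbracket u\rrbracket_h-k)\,\partial_t\llbracket u\rrbracket_h$; replacing the outer $|u|^{q-1}u$ by $\llbracket|u|^{q-1}u\rrbracket_h$ (error $\to0$) and invoking the monotonicity of $\mathfrak h$ together with Lemma~\ref{Lm:mol}(iii) gives the sign. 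For $1<q<\infty$ versus $0<q\le1$ the monotonicity argument is identical since $\mathfrak h(s)=|s|^{q-1}s$ is strictly increasing for every $q>0$; no case distinction is needed. The super-solution statement follows verbatim by reversing inequalities and using the truncation from below $u_k=k-(u-k)_-$.
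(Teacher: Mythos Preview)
Your overall strategy---testing the weak formulation with $\z$ times a regularized cutoff of $\{u>k\}$ built from an exponential time mollification, discarding a term of good sign, and passing to the limit---is exactly the paper's approach. Two coupled technical choices in your sketch, however, would make the sign argument fail.

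First, for a \emph{sub}-solution one must use the \emph{backward} mollification $\llbracket u\rrbracket_{\bar h}$ rather than the forward $\llbracket u\rrbracket_h$. The decisive cross-term is
\[
\iint_{E_T}\big(\power{\llbracket u\rrbracket_{\bar h}}{q}-\power{u}{q}\big)\,\z\,\Psi_\varepsilon'(\llbracket u\rrbracket_{\bar h}-k)\,\partial_t\llbracket u\rrbracket_{\bar h}\,\dx\dt,
\]
and by Lemma~\ref{Lm:mol}(iii) one has $\partial_t\llbracket u\rrbracket_{\bar h}=\tfrac1h(\llbracket u\rrbracket_{\bar h}-u)$, so the integrand is non-negative by monotonicity of $s\mapsto\power{s}{q}$. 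This is the direction needed to conclude $\le0$. With the forward mollification the sign of $\partial_t\llbracket u\rrbracket_h=\tfrac1h(u-\llbracket u\rrbracket_h)$ is reversed and the inequality comes out the wrong way.

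Second, the replacement you propose, $\power{u}{q}\rightsquigarrow\llbracket\power{u}{q}\rrbracket_h$ (mollification of the power), does not work: there is no monotonicity relation between $\llbracket\power{u}{q}\rrbracket_h$ and $\llbracket u\rrbracket_h$, and your ``error $\to0$'' is unjustified since the error still carries the factor $\tfrac1h(u-\llbracket u\rrbracket_h)$, whose $L^1$-norm need not vanish. The correct move---and the one the paper makes---is $\power{u}{q}\rightsquigarrow\power{\llbracket u\rrbracket_{\bar h}}{q}$ (power of the mollification), which makes the cross-term factor as $(\power{w}{q}-\power{u}{q})(w-u)\ge0$ with $w=\llbracket u\rrbracket_{\bar h}$. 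The paper's choice of the concrete cutoff $\tfrac{(w-k)_+}{(w-k)_++\sigma}$ in place of your $\Psi_\varepsilon$ is cosmetic; with the two corrections above, your argument is essentially the paper's.
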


\begin{proof}
%Suppose $u$ is a weak sub-solution to \eqref{Eq:1:1}.
%We show that $k+(u-k)_+$ is a weak sub-solution to \eqref{Eq:1:1} with \eqref{Eq:1:2-}. 
We only deal with the case of sub-solution, as the case of super-solution can be treated analogously. In the integral formulation \eqref{Eq:weak-form} we choose the test function
\begin{equation*}%\label{Test1}
	E_T\ni(x,t)\mapsto
	\vp_h 
	:= 
	 \frac{\z \big(\llbracket u \rrbracket_{\bar{h}}-k\big)_+}{\big(\llbracket u \rrbracket_{\bar{h}}-k\big)_++\sig},
\end{equation*}
where $\z$ is as in \eqref{Eq:test-func} and $\sig>0$.
%Here, $\z\in C_0^1(K_\rho;[0,1])$ satisfies $\z=1$ in $K_{\sig\rho}$ and $|D\z|\le\frac2{(1-\sig)\rho}$,
%whereas for $\varep>0$, the Lipschitz function $\psi_\eps$ satisfies  $\psi_\varep=1$ in $(\varep, \tau-\varep)$, $\psi_\varep=0$ outside $(0,\tau)$, while linearly interpolated otherwise.
Let us first focus on the term with time derivative. Indeed, we rewrite it as
\begin{equation}\label{parab-time-part}
    -\iint_{E_T}\boldsymbol u^q\pl_t\vp_h\,\dx\dt=\iint_{E_T}\big(\boldsymbol{\llbracket u \rrbracket_{\bar{h}}}^{q}-\boldsymbol u^q\big)\pl_t\vp_h\,\dx\dt-\iint_{E_T}\boldsymbol{\llbracket u \rrbracket_{\bar{h}}}^{q}\pl_t\vp_h\,\dx\dt.
\end{equation}
We continue to estimate the first term on  the right-hand side of \eqref{parab-time-part} by
\begin{align*}
    \iint_{E_T}&\big(\boldsymbol{\llbracket u \rrbracket_{\bar{h}}}^{q}-\boldsymbol u^q\big)\pl_t\vp_h\,\dx\dt\\
    &=\iint_{E_T}\z\big(\boldsymbol{\llbracket u \rrbracket_{\bar{h}}}^{q}-\boldsymbol u^q\big)\pl_t\bigg[\frac{\big(\llbracket u \rrbracket_{\bar{h}}-k\big)_+}{\big(\llbracket u \rrbracket_{\bar{h}}-k\big)_++\sig}\bigg]\,\dx\dt\\
    &\quad + \iint_{E_T}\pl_t\z\big(\boldsymbol{\llbracket u \rrbracket_{\bar{h}}}^{q}-\boldsymbol u^q\big)\frac{\big(\llbracket u \rrbracket_{\bar{h}}-k\big)_+}{\big(\llbracket u \rrbracket_{\bar{h}}-k\big)_++\sig}\,\dx\dt\\
    &=\iint_{E_T}\z\big(\boldsymbol{\llbracket u \rrbracket_{\bar{h}}}^{q}-\boldsymbol u^q\big)\tfrac1h\big(\llbracket u \rrbracket_{\bar{h}}-u\big) \frac{\sig \chi_{\{\llbracket u \rrbracket_{\bar{h}}>k\}}}{\big[\big(\llbracket u \rrbracket_{\bar{h}}-k\big)_++\sig\big]^2} \,\dx\dt\\
    &\quad + \iint_{E_T}\pl_t\z\big(\boldsymbol{\llbracket u \rrbracket_{\bar{h}}}^{q}-\boldsymbol u^q\big)\frac{\big(\llbracket u \rrbracket_{\bar{h}}-k\big)_+}{\big(\llbracket u \rrbracket_{\bar{h}}-k\big)_++\sig}\,\dx\dt.
\end{align*}
Here, in calculating $\pl_t\llbracket u \rrbracket_{\bar{h}}$ we used Lemma~\ref{Lm:mol}~({\rm iii}). As a result, the first integral on the right-hand side is non-negative thanks to the monotonicity of the map $u\mapsto\boldsymbol u^q$, and hence can be discarded. Whereas the second integral tends to zero as $h\downarrow0$ due to Lemma~\ref{Lm:mol}~({\rm i}).

Next we deal with the second term on the right-hand side of \eqref{parab-time-part}. Indeed,  integration by parts yields that
%Like in the proof of Proposition~\ref{Prop:2:1}, we treat the various terms in \eqref{mol-eq}.
%First of all, we consider the time part. We have
\begin{align*}
	-\iint_{E_T}\boldsymbol{\llbracket u \rrbracket_{\bar{h}}}^{q}\pl_t\vp_h\,\dx\dt
 &=\iint_{E_T} 
	\partial_t \power{u_{\bar{h}}}{q} \varphi_h \,\dx\dt \\
	&= 
	\iint_{E_T}
	\zeta    \partial_t \power{\llbracket u \rrbracket_{\bar{h}}}{q} \frac{\big(\llbracket u \rrbracket_{\bar{h}}-k\big)_+}{\big(\llbracket u \rrbracket_{\bar{h}}-k\big)_++\sig}
	\,\dx\dt \\
	&= 
	\iint_{E_T}
	\zeta  
	\partial_t \mathfrak h_+ (\llbracket u \rrbracket_{\bar{h}},\sig,k) \, \dx\dt \\
	& = 
	- \iint_{E_T} 
	   \partial_t\zeta \mathfrak h_+ (\llbracket u \rrbracket_{\bar{h}},\sig,k) 
	\,\dx\dt,
\end{align*}
where we have defined
\[
	 \mathfrak h_+ (\llbracket u \rrbracket_{\bar{h}},\sig,k)
	 :=
	 q
	 \int_k^{\llbracket u \rrbracket_{\bar{h}}}\frac{|s|^{q-1}(s-k)_+}{(s-k)_++\sig}\,\d s .
\]
 With these estimates, as well as Lemma~\ref{Lm:mol}~({\rm i}), we can send $h\downarrow0$ in \eqref{parab-time-part} and obtain
 \begin{align*}
 \lim_{h\downarrow0}\bigg(-\iint_{E_T}\boldsymbol u^q\pl_t\vp_h\,\dx\dt \bigg)
	\ge
	-\iint_{E_T} \partial_t\zeta\mathfrak h_+(u,\sig, k)\,\dx\dt.
\end{align*}

Next, we consider the diffusion term.
To this end, we again send $h\downarrow0$  with the aid of Lemma~\ref{Lm:mol}~({\rm i}), and use in addition $\bl{A}(x,t,u,\xi)\cdot\xi\ge0$ to estimate
\begin{align*}
	& \lim_{h\downarrow 0}
	\iint_{E_T} 
	\mathbf A(x,t,u,Du)\cdot D \varphi_h\, \dx\dt\\
	&=\iint_{E_T}
	\mathbf A(x,t,u,Du)\cdot \bigg[D\z\frac{(u-k)_+}{(u-k)_++\sig}
	+\z\frac{\sigma D(u-k)_+}{\big[(u-k)_++\sig\big]^2}\bigg]\, \dx\dt \\
	&\ge\iint_{E_T}
	\mathbf A(x,t,u,Du)\cdot D\z\frac{(u-k)_+}{(u-k)_++\sig}\, \dx\dt.
\end{align*}
Combining all above estimates gives
\begin{align*}
%\int_{K_R} &\zeta^p(x,t) \mathfrak h_+(u(x,t),\sig,k)\,\dx \bigg|^{t_2}_{t_1}
	-
	\iint_{E_T} \partial_t\zeta\mathfrak h_+(u,\sig, k)\,\dx\dt
	+\iint_{E_T}
	\mathbf A(x,t,u,Du)\cdot D\z\frac{(u-k)_+}{(u-k)_++\sig}\, \dx\dt\le0.
\end{align*}
Finally, noting that $\lim_{\sig\downarrow0} \mathfrak h_+(u,\sig,k)=\power{[k+(u-k)_+]}{q}=\power{u_k}{q}$ a.e.~$E_T$, we send $\sig\downarrow0$ in the above display to conclude
that
\begin{align*}
	-
	\iint_{E_T} \partial_t\zeta \power{u_k}{q}\,\dx\dt
	+
 \iint_{E_T}
	\mathbf A(x,t,u,Du)\cdot D\z\boldsymbol\chi_{\{u>k\}}\, \dx\dt
 \le
 0.
\end{align*}
%Taking into account $\mathbf A(x,t,u,0)=0$ 
The claim follows.
\end{proof}

\begin{remark}
If we additionally assume that $\bl{A}(x,t,z,0)=0$ for a.e.~$(x,t)\in E_T$ and for any $z\in\rr$,
then  for any $k\in\rr$, then the truncation $u_k=k+(u-k)_+$ is a sub-solution  to the original
equation, i.e.~it
satisfies
\begin{equation*}
	-
	\iint_{E_T} \partial_t\zeta \power{u_k}{q}\,\dx\dt
	+
 \iint_{E_T}
	\mathbf A(x,t,u_k,Du_k)\cdot D\z\, \dx\dt
 \le
 0,
\end{equation*}
for all non-negative test functions 
$
\z\in  W_0^{1,q+1} \big(0,T; L^{q+1}(E)\big)\cap L^p \big(0,T; W^{1,p}_{0}(E)\big)$.  An analogous statement holds for super-solutions.
\end{remark}

\section{Continuity in the time variable}

In this section we discuss some continuity properties of weak solutions in the time variable, either locally or globally. The results apply to both signed and non-negative solutions and to all $q>0$ and $p>1$. Note that in our definition of weak solution we assume that $u$ belongs to the space $C([0,T]; L^{q+1} (E))$ and not $L^\infty(0,T; L^{q+1}(E))$. The next proposition ensures that this property is already included in the integral identity of weak solutions and therefore it is not restrictive.

\begin{proposition}\label{Prop:B:3}
Assume that $p>1$, $q>0$ and hypothesis \eqref{Eq:1:2-} holds.  
For some $u_o\in L^{q+1}(E)$, let 
$u\in L^{\infty}(0,T; L^{q+1}(E))\cap L^p(0,T; W_0^{1,p}(E))$
satisfy the integral identity \eqref{Eq:weak-form-1}. 
Then there exists a representative $u \in C([0,T]; L^{q+1} (E))$, 
which verifies
\[
\lim_{t\downarrow 0}\int_{E} |u(\cdot, t) -u_o|^{q+1} \,\dx=0.
\]
%for any compact set $\mathcal{K}\subset E$.
\end{proposition}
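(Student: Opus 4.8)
The plan is to run the standard argument showing that $L^{q+1}$--time continuity (and the attainment of the initial datum) is automatic for weak solutions of $\partial_t(|u|^{q-1}u)-\dvg\bl{A}(x,t,u,Du)=0$. Write $\boldsymbol{u}^q:=|u|^{q-1}u$; by \eqref{Eq:weak-form-1} and \eqref{Eq:1:2-} we have $\bl{A}(\cdot,\cdot,u,Du)\in L^{p'}(E_T)$, so $\boldsymbol{u}^q$ solves a parabolic equation in divergence form, and $\int_E|\boldsymbol{u}^q(\cdot,t)|^{\frac{q+1}{q}}\dx=\int_E|u(\cdot,t)|^{q+1}\dx\le\|u\|_{L^\infty(0,T;L^{q+1}(E))}^{q+1}$. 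A convenient auxiliary function is $v:=|u|^{\frac{q-1}{2}}u$, for which $|v|^2=|u|^{q+1}$ and, combining Lemma~\ref{Lm:g} with Lemma~\ref{Lm:algebra} (used with exponent $\tfrac{q+1}{2}$), $\mathfrak g(a,b)\simeq\big||a|^{\frac{q-1}{2}}a-|b|^{\frac{q-1}{2}}b\big|^2$ for all $a,b\in\R$, with $\mathfrak g$ as in \eqref{Eq:b}. \emph{Step 1 (weak time continuity of $\boldsymbol{u}^q$).} Testing \eqref{Eq:weak-form-1} with $\zeta(x,t)=\psi(t)\phi(x)$, $\phi\in C_0^\infty(E)$, $\psi\in C^\infty([0,T])$ with $\psi(T)=0$, one sees that $t\mapsto\int_E\boldsymbol{u}^q(\cdot,t)\phi\,\dx$ has an absolutely continuous representative on $[0,T]$ with value $\int_E|u_o|^{q-1}u_o\,\phi\,\dx$ at $t=0$ (this is where the datum encoded in \eqref{Eq:weak-form-1} enters). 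With the uniform $L^{\frac{q+1}{q}}(E)$--bound, reflexivity, and density of $C_0^\infty(E)$ in $L^{q+1}(E)$, this upgrades to a representative of $\boldsymbol{u}^q$ that is weakly continuous from $[0,T]$ into $L^{\frac{q+1}{q}}(E)$ with $\boldsymbol{u}^q(\cdot,0)=|u_o|^{q-1}u_o$; hence $\boldsymbol{u}^q(\cdot,t)\rightharpoonup|u_o|^{q-1}u_o$ weakly in $L^{\frac{q+1}{q}}(E)$ as $t\downarrow0$, so weak lower semicontinuity of the norm yields $\liminf_{t\downarrow0}\int_E|u(\cdot,t)|^{q+1}\dx\ge\int_E|u_o|^{q+1}\dx$. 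From here on $u(\cdot,t)$ denotes the $q$-th ``root'' of this weakly continuous representative of $\boldsymbol{u}^q$, which agrees with the original $u$ for a.e.\ $t$.

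\emph{Step 2 (energy identity).} The core is to show that $t\mapsto\int_E|u(\cdot,t)|^{q+1}\dx$ agrees a.e.\ with a function $g\in C([0,T])$ with $g(0)=\int_E|u_o|^{q+1}\dx$ and
\[
  g(t_2)-g(t_1)=-\tfrac{q+1}{q}\int_{t_1}^{t_2}\!\!\int_E\bl{A}(x,s,u,Du)\cdot Du\,\dx\,\ds\qquad\mbox{for all }0\le t_1\le t_2\le T.
\]
I would obtain this by a time–mollification argument patterned on the proof of Proposition~\ref{Prop:parab}: insert $\zeta(x,t)=\psi(t)\llbracket u\rrbracket_h(x,t)$ into \eqref{Eq:weak-form-1} with a Lipschitz time cut-off $\psi$ ($\psi(T)=0$), compute $\partial_t\llbracket u\rrbracket_h$ by Lemma~\ref{Lm:mol}(iii), split off the sign-definite contribution $\tfrac1h\iint_{E_T}\psi\big(\boldsymbol{u}^q-\boldsymbol{\llbracket u\rrbracket_h}^q\big)\big(u-\llbracket u\rrbracket_h\big)\dx\dt\ge0$ coming from monotonicity of $s\mapsto|s|^{q-1}s$, integrate by parts in time (the boundary term at $t=0$ vanishes since $\llbracket u\rrbracket_h(\cdot,0)=0$, while the $|u_o|^{q-1}u_o$--term in \eqref{Eq:weak-form-1} is exactly what produces $\int_E|u_o|^{q+1}\dx$), and pass to the limit $h\downarrow0$ via Lemma~\ref{Lm:mol}(i),(iv); the forward mollification gives one inequality, the backward mollification $\llbracket u\rrbracket_{\bar h}$ the reverse one, and letting $\psi$ converge to $\chi_{(t_1,t_2)}$ along Lebesgue points yields the identity. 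Since $\bl{A}(\cdot,\cdot,u,Du)\cdot Du\in L^1(E_T)$ by Hölder's inequality, the right-hand side is absolutely continuous in $(t_1,t_2)$, which proves the claim; passing from a.e.\ $t$ to every $t$ for the Step~1 representative uses weak continuity of $\boldsymbol{u}^q$ and the Radon–Riesz property of $L^{\frac{q+1}{q}}(E)$ (uniformly convex, since $1<\tfrac{q+1}{q}<\infty$), after which Step~1 also gives $\lim_{t\downarrow0}\int_E|u(\cdot,t)|^{q+1}\dx=\int_E|u_o|^{q+1}\dx$.

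\emph{Step 3 (strong continuity).} Fix $t_0\in[0,T)$ and set $u(\cdot,0):=u_o$. Expanding $\mathfrak g$ as in \eqref{Eq:b},
\[
  \int_E\mathfrak g\big(u(\cdot,t),u(\cdot,t_0)\big)\dx=\tfrac{q}{q+1}\!\int_E\!\big(|u(\cdot,t)|^{q+1}-|u(\cdot,t_0)|^{q+1}\big)\dx-\int_E u(\cdot,t_0)\big(\boldsymbol{u}^q(\cdot,t)-\boldsymbol{u}^q(\cdot,t_0)\big)\dx .
\]
As $t\to t_0$ the first term tends to $0$ by Step~2 and the second by Step~1 (the factor in parentheses tends to $0$ weakly in $L^{\frac{q+1}{q}}(E)$, paired against the fixed $u(\cdot,t_0)\in L^{q+1}(E)$), so $\int_E\mathfrak g\big(u(\cdot,t),u(\cdot,t_0)\big)\dx\to0$, and by the equivalence above $v(\cdot,t)\to v(\cdot,t_0)$ in $L^2(E)$. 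Finally, $w\mapsto|w|^{\frac{2}{q+1}-1}w$ is continuous from $L^2(E)$ to $L^{q+1}(E)$ (pass to an a.e.\ convergent subsequence and combine with $\big\||w|^{\frac{2}{q+1}-1}w\big\|_{L^{q+1}}^{q+1}=\|w\|_{L^2}^2$ through Vitali's theorem) and sends $v(\cdot,t)$ to $u(\cdot,t)$; hence $u(\cdot,t)\to u(\cdot,t_0)$ in $L^{q+1}(E)$. Thus $u\in C([0,T];L^{q+1}(E))$ with $u(\cdot,0)=u_o$, which gives the asserted limit as $t\downarrow0$.

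I expect the main obstacle to be Step~2: making the mollified weak formulation rigorous — checking that $\psi\llbracket u\rrbracket_h$ is an admissible test function in \eqref{Eq:weak-form-1}, controlling the limit $h\downarrow0$, combining the forward and backward mollifications to promote the two inequalities to an equality, and (because $u_o\notin W^{1,p}(E)$) correctly extracting the boundary value $\int_E|u_o|^{q+1}\dx$ from the initial datum. The monotonicity device that makes the time term cooperate is precisely the one already carried out in the proof of Proposition~\ref{Prop:parab}, which should make the adaptation routine once set up.
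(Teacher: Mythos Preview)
Your strategy---weak time continuity of $\boldsymbol u^q$, an energy identity for $t\mapsto\int_E|u(\cdot,t)|^{q+1}\dx$, and then the expansion of $\int_E\mathfrak g(u(\cdot,t),u(\cdot,t_0))\,\dx$---is a valid and genuinely different route from the paper's. The paper instead proves (via Lemma~\ref{Lm:B:3} with $v=\llbracket u\rrbracket_{\bar h}$ and $\psi\in C^1_0(0,T)$) that $\sup_{\tau}\int_E\mathfrak g\big(u(\cdot,\tau),\llbracket u\rrbracket_{\bar h}(\cdot,\tau)\big)\,\dx\to0$ as $h\downarrow0$, so $u$ is a uniform-in-time $L^{q+1}$ limit of time-continuous functions; the identification $u(\cdot,0)=u_o$ is done \emph{afterwards} by a separate weak-continuity argument with product test functions $\varphi(x)\psi_\varepsilon(t)$. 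Your Radon--Riesz approach is more conceptual, but the paper's route has the advantage of never having to extract the value $\int_E|u_o|^{q+1}\dx$ from the mollified formulation.

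That extraction is precisely where your Step~2 has a gap. Your claim that ``the $|u_o|^{q-1}u_o$--term in \eqref{Eq:weak-form-1} is exactly what produces $\int_E|u_o|^{q+1}\dx$'' is incorrect as written: with the forward test function $\psi\llbracket u\rrbracket_h$ the $u_o$--term integrates to $\int_E\boldsymbol u_o^q\big[\psi(T)\llbracket u\rrbracket_h(\cdot,T)-\psi(0)\llbracket u\rrbracket_h(\cdot,0)\big]\dx=0$, so no $u_o$--information survives, and the resulting inequality is vacuous at $t=0$. With the backward mollification $\llbracket u\rrbracket_{\bar h}$ and $\psi(0)=1$ you do obtain $-\int_E\boldsymbol u_o^q\,\llbracket u\rrbracket_{\bar h}(\cdot,0)\,\dx$, but a second boundary term $\tfrac{1}{q+1}\int_E|\llbracket u\rrbracket_{\bar h}(\cdot,0)|^{q+1}\dx$ also appears from the integration by parts, and neither converges on its own (you do not yet know $u(\cdot,t)\to u_o$). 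The correct fix is to combine them via Young's inequality $\boldsymbol u_o^q\,w\le\tfrac{q}{q+1}|u_o|^{q+1}+\tfrac{1}{q+1}|w|^{q+1}$, which together with the sign-definite monotone piece yields the one-sided bound $g(0^+)\le\int_E|u_o|^{q+1}\dx$; the reverse inequality then comes from the weak lower semicontinuity in your Step~1. Without this argument (the $u_o$--corrected mollification $\llbracket u\rrbracket_h+e^{-t/h}u_o$ is unavailable since $u_o\notin W^{1,p}_0(E)$, as you note), your Step~2 only yields the interior identity for $0<t_1<t_2<T$, and then Step~3 fails at $t_0=0$ because you cannot conclude $g(t)\to\int_E|u_o|^{q+1}\dx$.
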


To prove Proposition~\ref{Prop:B:3}, we follow the strategy in \cite[\S~3.3]{Vespri-Vestberg} and first show the following result. For the exact formulation, we recall 
the definition of $\mathfrak{g}$ in \eqref{Eq:b}.

\begin{lemma}\label{Lm:B:3}
Suppose $u$ satisfies the hypothesis of Proposition \ref{Prop:B:3}. Then, for any function 
$v\in L^{q+1}(E_T)\cap L^p(0,T; W_0^{1,p}(E))$ satisfying $\pl_t v\in  L^{q+1}(E_T)$
and any  function $\psi\in C^1_0(0,T)$ we have 
\begin{align*}
    \iint_{E_T} \mathfrak{g}(u,v)\pl_t\psi \,\dx\dt 
    &=
    \iint_{E_T}\pl_t v \big(|u|^{q-1}u - |v|^{q-1}v\big)\psi  \,\dx\dt\\
    &\phantom{=\,}+
    \iint_{E_T}\bl{A}(x,t,u,Du)\cdot D(v-u)\psi \,\dx\dt.
\end{align*}
\end{lemma}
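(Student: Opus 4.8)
The plan is to derive the identity by testing the weak formulation \eqref{Eq:weak-form-1} with a time-mollified version of $(v-u)\psi$, so that we pick up a genuine time derivative $\partial_t$ on the solution term and can integrate by parts, and then pass to the limit in the mollification parameter. Concretely, first I would introduce the exponential mollification $\llbracket\cdot\rrbracket_h$ from \eqref{def:mol} and consider the test function $\zeta_h:=\big(\llbracket v\rrbracket_h-\llbracket u\rrbracket_h\big)\psi$, which lies in the admissible class $W^{1,q+1}(0,T;L^{q+1}(E))\cap L^p(0,T;W^{1,p}_0(E))$ with $\zeta_h(\cdot,0)=\zeta_h(\cdot,T)=0$ because $\psi\in C^1_0(0,T)$. (Since the statement only asks for the identity on test functions vanishing at $t=T$, using $\psi\in C^1_0(0,T)$ makes the boundary terms disappear; alternatively one can work with the localized weak form \eqref{Eq:weak-form} directly.) Plugging $\zeta_h$ into \eqref{Eq:weak-form-1}, the diffusion term is
\[
\iint_{E_T}\bl{A}(x,t,u,Du)\cdot D\big[(\llbracket v\rrbracket_h-\llbracket u\rrbracket_h)\psi\big]\,\dx\dt,
\]
which by Lemma~\ref{Lm:mol}~(i),(iv) converges as $h\downarrow0$ to $\iint_{E_T}\bl{A}(x,t,u,Du)\cdot D\big[(v-u)\psi\big]\,\dx\dt$. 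The point of the argument is the time term, which I treat next.

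For the time term, write $\partial_t\zeta_h=\big(\partial_t\llbracket v\rrbracket_h-\partial_t\llbracket u\rrbracket_h\big)\psi+\big(\llbracket v\rrbracket_h-\llbracket u\rrbracket_h\big)\psi'$ and use Lemma~\ref{Lm:mol}~(iii), namely $\partial_t\llbracket u\rrbracket_h=\tfrac1h(u-\llbracket u\rrbracket_h)$ and likewise for $v$. The term $\iint_{E_T}\big(|u_o|^{q-1}u_o-|u|^{q-1}u\big)\partial_t\zeta_h\,\dx\dt$ splits into a part carrying $\psi'$, which converges to $\iint_{E_T}\big(|u_o|^{q-1}u_o-|u|^{q-1}u\big)(v-u)\psi'\,\dx\dt$ by Lemma~\ref{Lm:mol}~(i), and a part carrying $\tfrac1h$. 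In the $\tfrac1h$-part, the contribution involving $u_o$ (a time-independent function) against $\partial_t\llbracket v\rrbracket_h-\partial_t\llbracket u\rrbracket_h$ can be integrated by parts in time and is handled by $\llbracket v\rrbracket_h,\llbracket u\rrbracket_h\to v,u$; the genuinely delicate piece is
\[
-\iint_{E_T}|u|^{q-1}u\,\tfrac1h\big(v-\llbracket v\rrbracket_h-(u-\llbracket u\rrbracket_h)\big)\psi\,\dx\dt.
\]
Here I would symmetrize: add and subtract $|\llbracket u\rrbracket_h|^{q-1}\llbracket u\rrbracket_h$ and exploit the algebraic structure of $\mathfrak g$. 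Recall $\partial_t\mathfrak g(\llbracket u\rrbracket_h,\cdot)$-type manipulations: using $\partial_t\big(\tfrac{1}{q+1}|\llbracket u\rrbracket_h|^{q+1}\big)=|\llbracket u\rrbracket_h|^{q-1}\llbracket u\rrbracket_h\,\partial_t\llbracket u\rrbracket_h$ together with $\partial_t\llbracket u\rrbracket_h=\tfrac1h(u-\llbracket u\rrbracket_h)$, one rewrites the $\tfrac1h(u-\llbracket u\rrbracket_h)$ factors as exact time derivatives, integrates by parts (moving $\partial_t$ onto $\psi$), and then sends $h\downarrow0$ using Lemma~\ref{Lm:mol}~(i),(v); the surviving limit assembles precisely into $-\iint_{E_T}\mathfrak g(u,v)\,\partial_t\psi\,\dx\dt$ minus $\iint_{E_T}\partial_t v\,(|u|^{q-1}u-|v|^{q-1}v)\psi\,\dx\dt$, where the hypothesis $\partial_t v\in L^{q+1}(E_T)$ is exactly what licenses differentiating the terms involving $v$ and identifying the limit. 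Rearranging and combining with the diffusion term yields the claimed identity.

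The main obstacle is the bookkeeping in the $\tfrac1h$-terms of the time part: one must combine the mollified factors so that every $\tfrac1h$ is absorbed into an exact $t$-derivative of a quantity converging in $L^1$, which requires the right symmetrization between $u$, $\llbracket u\rrbracket_h$, $v$, $\llbracket v\rrbracket_h$ and careful use of the monotonicity of $s\mapsto|s|^{q-1}s$ (to control the non-negative auxiliary term, as in the proof of Proposition~\ref{Prop:parab}) together with the two algebraic representations of $\mathfrak g$ recorded after \eqref{Eq:b}. Once that identification is made, all remaining convergences are routine consequences of Lemma~\ref{Lm:mol} and dominated convergence, using $u\in L^\infty(0,T;L^{q+1}(E))$, $v,\partial_t v\in L^{q+1}(E_T)$ and $\bl{A}(x,t,u,Du)\in L^{p'}(E_T)$ via \eqref{Eq:1:2-}.
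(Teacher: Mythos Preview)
Your proposal has a genuine gap in the time term. After you add and subtract $|\llbracket u\rrbracket_h|^{q-1}\llbracket u\rrbracket_h$, the residual term
\[
\iint_{E_T}\big(|u|^{q-1}u-|\llbracket u\rrbracket_h|^{q-1}\llbracket u\rrbracket_h\big)\,\tfrac1h\big(u-\llbracket u\rrbracket_h\big)\,\psi\,\dx\dt
\]
is non-negative by monotonicity, but it does \emph{not} tend to zero as $h\downarrow 0$ (there is no compensating factor to kill the $1/h$). Hence your computation only produces one inequality, not the claimed identity. You write that the ``surviving limit assembles precisely into'' the desired expression, but this is exactly the step that fails: the monotone remainder survives with a sign.

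The paper closes this gap by a two-sided argument: it tests with $(v-\llbracket u\rrbracket_h)\psi$ (forward mollification of $u$ only; $v$ is left unmollified since $\partial_t v\in L^{q+1}$ already makes $(v-\llbracket u\rrbracket_h)\psi$ admissible), obtains the inequality ``$\le$'', and then repeats the computation with the \emph{reversed} mollification $(v-\llbracket u\rrbracket_{\bar h})\psi$, for which $\partial_t\llbracket u\rrbracket_{\bar h}=\tfrac1h(\llbracket u\rrbracket_{\bar h}-u)$ flips the sign of the monotone term and yields ``$\ge$''. Your outline does not mention this device, and without it the proof is incomplete. (Mollifying $v$ as well is harmless but unnecessary and only obscures the structure.)
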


\begin{proof}
Throughout the proof we use the short-hand notation $\boldsymbol{u}^q:=|u|^{q-1}u$.  
Plug the test function $(v-\llbracket u \rrbracket_{h})\psi$ in the integral identity \eqref{Eq:weak-form-1}, where the mollification $\llbracket u \rrbracket_{h}$ of $u$ is defined in \eqref{def:mol}. By Lemma~\ref{Lm:mol} (iv), the diffusion part yields the limit
\[
    \iint_{E_T} \psi \bl{A}(x,t,u,Du)\cdot D(v-\llbracket u \rrbracket_{h})  \,\dx\dt
    \to \iint_{E_T} \psi \bl{A}(x,t,u,Du)\cdot D(v-u)\,\dx\dt%\quad\text{as}\>h\to0.
\] 
as $h\downarrow 0$.
Concerning the time part we note that the term containing $u_o$ vanishes, since $\psi(0)=0=\psi(T)$. The remaining term is estimated by
\begin{align*}
\iint_{E_T} & -\boldsymbol{u}^q\pl_t[(v-\llbracket u \rrbracket_{h})\psi]\,\dx\dt\\
&= - \iint_{E_T}\boldsymbol{u}^q(v-\llbracket u \rrbracket_{h})\pl_t\psi\,\dx\dt  - \iint_{E_T} \boldsymbol{u}^q\pl_t v \psi\,\dx\dt\\
&\quad+\iint_{E_T} (\boldsymbol{u}^q - \boldsymbol{\llbracket u \rrbracket_{h}}^q + \boldsymbol{\llbracket u \rrbracket_{h}}^q)\pl_t \llbracket u \rrbracket_{h}\psi\,\dx\dt\\
&\ge  - \iint_{E_T}\boldsymbol{u}^q(v-\llbracket u \rrbracket_{h})\pl_t\psi\,\dx\dt  - \iint_{E_T} \boldsymbol{u}^q\pl_t v \psi\,\dx\dt +
\iint_{E_T}  \boldsymbol{\llbracket u \rrbracket_{h}}^q \pl_t \llbracket u \rrbracket_{h}\psi\,\dx\dt\\
&= - \iint_{E_T}\boldsymbol{u}^q(v-\llbracket u \rrbracket_{h})\pl_t\psi\,\dx\dt  - \iint_{E_T} \boldsymbol{u}^q\pl_t v \psi\,\dx\dt\\
&\quad-\iint_{E_T} \tfrac1{q+1} | \llbracket u \rrbracket_{h}|^{q+1}\pl_t\psi\,\dx\dt\\
&\to - \iint_{E_T}\boldsymbol{u}^q(v-u)\pl_t\psi\,\dx\dt  - \iint_{E_T} \boldsymbol{u}^q\pl_t v \psi\,\dx\dt -
\iint_{E_T}  \tfrac1{q+1} |u|^{q+1}\pl_t\psi\,\dx\dt\\
&=\iint_{E_T} \mathfrak{g}(u,v)\pl_t\psi\,\dx\dt+\iint_{E_T}\pl_t v (\boldsymbol{v}^q- \boldsymbol{u}^q) \psi\,\dx\dt
\end{align*}
as $h\downarrow 0$. In the above inequality, we used Lemma~\ref{Lm:mol} (iii) and thus 
\[
(\boldsymbol{u}^q - \boldsymbol{\llbracket u \rrbracket_{h}}^q) \pl_t \llbracket u \rrbracket_{h}=(\boldsymbol{u}^q - \boldsymbol{\llbracket u \rrbracket_{h}}^q)\tfrac1h (u -  \llbracket u \rrbracket_{h})\ge0.
\]
In addition, to obtain the convergence, we employed Lemma~\ref{Lm:mol} (i).
This shows the desired identity with ``$\le$" instead of ``$=$". For the reverse inequality, we use the test function $(v-\llbracket u \rrbracket_{\bar{h}})\psi$ and run similar calculations.
\end{proof}
%%%%%%
%Now we present

\begin{proof}[{\rm \textbf{Proof of Proposition~\ref{Prop:B:3}}}]
Let $\tau\in[0,\tfrac12 T]$ and $\varep\in(0,\tfrac14 T)$. Define $\psi_\varep$ to be $1$ on $[\tau+\varep, \tfrac34 T]$, vanish on $[0,\tau]\cup \{T\}$ and be linearly interpolated otherwise. %Let $\vp\in C^1_0(E)$ take value $1$ over some cube $K$. 
Now we use $\psi=\psi_\varep$ and take $v\equiv v_h=\llbracket u \rrbracket_{\bar{h}}$ in Lemma~\ref{Lm:B:3}, and obtain
\begin{align*}
\frac1{\varep}\iint_{E\times(\tau,\tau+\varep)} \mathfrak{g}(u,v_h)\,\dx\dt &\le \frac4{T}\iint_{E_T} \mathfrak{g}(u,v_h)\,\dx\dt\\
&\quad+\iint_{E_T} \bl{A}(x,t,u,Du)\cdot D(v_h-u)\psi \,\dx\dt.
\end{align*}
Note that we have dropped the term containing $\pl_t v (|u|^{q-1}u- |v|^{q-1}v)$ due to its non-positive sign. 

Observe that the two terms on the right-hand side converge to zero as $h\to0$. Let us begin dealing with the first term. In fact, when $0<q\le 1$, using Lemma~\ref{Lm:g} and the triangle inequality, we have
\begin{align*}
    \iint_{E_T} \mathfrak{g}(u,v_h)\,\dx\dt 
    &\le 
    \boldsymbol\gm \iint_{E_T} (|u|+|v_h|)^{q-1} |u-v_h|^2\,\dx\dt\\
    &\le 
    \boldsymbol\gm\iint_{E_T} | u  -  v_h |^{q+1}\,\dx\dt;
\end{align*}
 when $q>1$, in addition to Lemma~\ref{Lm:g}, we also apply H\"older's inequality to estimate
\begin{align*}
    \iint_{E_T}& \mathfrak{g}(u,v_h)\,\dx\dt \\
    &\le 
    \boldsymbol\gm \iint_{E_T} (|u|+|v_h|)^{q-1} |u-v_h|^2\,\dx\dt\\ 
    &\le 
    \boldsymbol\gm 
    \bigg[\iint_{E_T} |u-v_h|^{q+1}\,\dx\dt\bigg]^{\frac2{q+1}} \bigg[ \iint_{E_T} (|u|+|v_h|)^{q+1}\,\dx\dt \bigg]^{1-\frac2{q+1}}.
\end{align*}
In either case, the claimed convergence of the first term follows. Whereas the convergence of the second term follows easily from Lemma~\ref{Lm:mol} (iv).

Next, observe that fixing $h>0$ and sending $\varep\downarrow 0$ in the above estimate, we have
\begin{align*}
     \int_{E\times\{\tau\}} \mathfrak{g}(u,v_h)\,\dx 
     &\le 
     \frac4{T}\iint_{E_T} \mathfrak{g}(u,v_h)\,\dx\dt\\
    &\phantom{\le\,}
    +\iint_{E_T} \bl{A}(x,t,u,Du)\cdot D(v_h-u)\psi \,\dx\dt
\end{align*}
for all $\tau\in[0,\tfrac12 T]\setminus N_h$, where $N_h$ is a null set.
Let us concentrate on the left-hand side in the last display. If $q\ge1$, then by Lemma~\ref{Lm:g}, we have
\begin{align*}
    \int_{E\times\{\tau\}} \mathfrak{g}(u,v_h)\,\dx 
    &\ge  
    \frac1{\boldsymbol\gm}\int_{E\times\{\tau\}} (|u|+|v_h|)^{q-1} |u-v_h|^2\,\dx \\
    &\ge  
    \frac1{\boldsymbol\gm} \int_{E\times\{\tau\}}  |u-v_h|^{q+1}\,\dx.
\end{align*}
If $0<q<1$, we instead use Lemmas~\ref{Lm:algebra} -- \ref{Lm:g} and H\"older's inequality to estimate
\begin{align*}
    \int_{E\times\{\tau\}} & |u-v_h|^{q+1}\,\dx \\
    &\le 
    \boldsymbol\gm \int_{E\times\{\tau\}} \big|\boldsymbol{u}^{\frac{q+1}2}-\boldsymbol{v_h}^{\frac{q+1}2}\big|^{q+1}  (|u|+|v_h|)^{\frac{(1-q)(q+1)}{2}}\,\dx\\
    &\le 
    \boldsymbol\gm \bigg[\int_{E\times\{\tau\}} \big|\boldsymbol{u}^{\frac{q+1}2}-\boldsymbol{v_h}^{\frac{q+1}2}\big|^2\,\dx\bigg]^{\frac{q+1}2} 
    \bigg[\int_{E\times\{\tau\}}(|u|+|v_h|)^{q+1}\,\dx\bigg]^{\frac{1-q}2}\\
     &\le 
     \boldsymbol\gm
     \bigg[ \int_{E\times\{\tau\}} \mathfrak{g}(u,v_h)\,\dx  \bigg]^{\frac{q+1}2} \cdot \bigg[\sup_{t\in[0,T]}\int_{E\times\{t\}} |u|^{q+1}\,\dx\bigg]^{\frac{1-q}2},
\end{align*}
where as before, we used the short-hand notation $\boldsymbol{u}^{\frac{q+1}2}:=|u|^{\frac{q-1}2}u$.

To proceed, choose $h\equiv h_j = 1/j$ with $j\in\mathbb{N}$ and set $N:=\cup_{j=1}^{\infty} N_{h_j}$. Then the above observations yield that for any $q>0$,
\[
\lim_{j\to\infty}\sup_{\tau\in[0,\frac12 T]\setminus N} \int_{E\times\{\tau\}}|u-v_{h_j}|^{q+1}\,\dx\dt=0. 
\]
According to this uniform convergence and the continuity of the map $[0,T]\ni t\mapsto v_{h_j}(\cdot, t)$ in $L^{q+1}(E)$ by Lemma~\ref{Lm:mol} (ii), we may conclude that there is a representative $u$ satisfying 
$$
u\in C \big([0,\tfrac12T]; L^{q+1} (E)\big).
$$
The continuous representative in $[\tfrac12 T, T]$ is sought in  a similar way. Patching the two representatives together  shows a representative
$$u\in C\big([0,T]; L^{q+1} (E)\big).$$

Now we only need to identify $u(\cdot, 0)=u_o$ a.e. in $E$. To this end, we choose $\z(x,t)=\vp(x)\psi_\varep(t)$ in \eqref{Eq:weak-form-1}, where $\vp\in C^1_0(E)$ and $\psi_\varep=1$ in $[0,\tau]$, $\psi_\varep=0$ in $[\tau+\varep, T]$, while linearly interpolated otherwise. A standard calculation yields that %$L^{q+1}(E)\cap W^{1,p}_{o}(E)$
\begin{align*}
	\int_{E }	
	\boldsymbol{u_o}^{q}\vp \,\dx
	-
    \int_{E \times\{\tau\}}	
	\boldsymbol{u}^{q}\vp \,\dx 
	=
	\iint_{E\times(0,\tau)}\bl{A}(x,t,u,Du)\cdot D\vp\,\dx\dt. %\\
%	&\phantom{\le\,}
%	+\int_{K_R(y)\times \{s-S\}} \z^p \mathfrak g (u,k)\,\dx.
\end{align*}
From this and a density argument, one derives
\[
\lim_{\tau\to0}\int_{E \times\{\tau\}}	
	\boldsymbol{u}^{q}\vp \,\dx =
	\int_{E}	
	\boldsymbol{u_o}^{q}\vp \,\dx\quad\text{for any}\>\vp\in L^{q+1}(E).
\]
Consequently, we have $u(\cdot, 0)=u_o$ a.e. in $E$.
\end{proof}

\begin{remark}\upshape
Instead of the notion of super(sub)-solution defined in Section~\ref{S:notion-sol}, one can start with a somewhat weaker notion.
Indeed, we could call a function $u$
to be a weak  super-solution to \eqref{Eq:1:1} under assumption \eqref{Eq:1:2-}, if
\begin{equation}  \label{Eq:func-space-A}
	u\in L^{q+1} (E_T)\cap L^p \big(0,T; W^{1,p} (E)\big)% C_{\loc}\big(0,T;L^{q+1}_{\loc}(E)\big)\cap 
\end{equation}
and if %for every compact set $\mathcal{K}\subset E$ and every sub-interval
%$[t_1,t_2]\subset (0,T)$ we have
\begin{equation} \label{Eq:weak-form-A}
	%\int_K |u|^{q-1}u\z \,\dx\bigg|_{t_1}^{t_2}
	%+
	\iint_{E_T} \big[-\boldsymbol{u}^{q}\pl_t\z+\bl{A}(x,t,u,Du)\cdot D\z\big]\dx\dt
	\ge 0
\end{equation}
for all non-negative test functions 
%$$\z\in  W^{1,q+1}_{\loc}\big(0,T; L^{q+1}(K)\big)\cap L^p_{\loc}\big(0,T; W^{1,p}_{o}(K)\big).$$
\begin{equation}\label{Eq:test-func-A}
\z\in  W_0^{1,q+1} \big(0,T; L^{q+1}(E)\big)\cap L^p \big(0,T; W^{1,p}_{0}(E)\big).
%\z\in W^{1,p}_{\loc}\big(0,T;L^p(K)\big)\cap L^p_{\loc}\big(0,T;W_0^{1,p}(K)%
%\big).
\end{equation}
The notion of weak sub-solution is defined by requiring $-u$ to be a weak super-solution.
A function that is both a weak super-solution and a weak sub-solution is called a  weak solution.
\end{remark}

Adapting the  arguments from above, we can show the following. 

\begin{proposition}
Suppose $u$ is a weak solution to \eqref{Eq:1:1} under the assumption \eqref{Eq:1:2-} in the sense of \eqref{Eq:func-space-A}, \eqref{Eq:weak-form-A} and \eqref{Eq:test-func-A}. Then $u\in C([0,T]; L_{\loc}^{q+1}(E))$.
\end{proposition}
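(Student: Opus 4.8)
The plan is to reduce the statement to the already-proved Proposition~\ref{Prop:B:3} by localizing in space and, beforehand, recovering the time-uniform integrability that is \emph{assumed} there but only available locally here. Fix nested open sets $E'\Subset E''\Subset E$ and a cut-off $\phi\in C_0^\infty(E'')$ with $0\le\phi\le1$ and $\phi\equiv1$ on $E'$; it suffices to prove $u\in C([0,T];L^{q+1}(E'))$. The role of $\phi$ is that whenever $w\in W^{1,p}(E'')$ the product $\phi^pw$ lies in $W_0^{1,p}(E)$, so that the $\phi^p$-weighted analogues of all test functions used in the proofs of Lemma~\ref{Lm:B:3} and Proposition~\ref{Prop:B:3} remain admissible in \eqref{Eq:test-func-A}, hence may be inserted into \eqref{Eq:weak-form-A}.

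\textbf{Step 1 (local time-uniform $L^{q+1}$ bound).} I would first show
\[
\essup_{t\in[0,T]}\int_{E''}|u(\cdot,t)|^{q+1}\phi^p\,\dx<\infty .
\]
This is obtained by testing \eqref{Eq:weak-form-A} with $\llbracket u\rrbracket_{\bar h}\,\phi^p\,\chi_{(\tau_1,\tau_2)}$, made rigorous via the mollification \eqref{def:mol} and Lemma~\ref{Lm:mol} (handling the characteristic function by a Lipschitz approximation, exactly as in the proof of Proposition~\ref{Prop:B:3}), letting $h\downarrow0$, and using \emph{only} the growth bound \eqref{Eq:1:2-}, which gives $\bl A(x,t,u,Du)\in L^{p'}(E_T)$ (no coercivity is needed). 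The outcome is, for a.e.\ $\tau_1,\tau_2\in(0,T)$,
\[
\Big|\int_{E''}|u(\cdot,\tau_1)|^{q+1}\phi^p\,\dx-\int_{E''}|u(\cdot,\tau_2)|^{q+1}\phi^p\,\dx\Big|
\le\tfrac{q+1}{q}\iint_{E_T}\big|\bl A(x,t,u,Du)\big|\,\big|D(u\phi^p)\big|\,\dx\dt=:C<\infty,
\]
where finiteness of $C$ follows from $u\in L^p(E_T)$ and $Du\in L^p(E_T)$. Since $t\mapsto\int_{E''}|u(\cdot,t)|^{q+1}\phi^p\,\dx$ is integrable on $(0,T)$ (as $u\in L^{q+1}(E_T)$), it is finite at some $\tau_2$, and the oscillation bound upgrades this to the claimed $\essup$ bound. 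After this step we are essentially in the setting of Proposition~\ref{Prop:B:3}, localized to $E''$.

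\textbf{Step 2 (weighted version of Lemma~\ref{Lm:B:3} and the argument of Proposition~\ref{Prop:B:3}).} Next I would prove that for every $v\in L^{q+1}(E_T)\cap L^p(0,T;W^{1,p}(E''))$ with $\partial_tv\in L^{q+1}(E_T)$ and every $\psi\in C_0^1(0,T)$,
\begin{align*}
\iint_{E_T}\mathfrak g(u,v)\,\phi^p\,\partial_t\psi\,\dx\dt
&=\iint_{E_T}\partial_tv\big(\power{u}{q}-\power{v}{q}\big)\phi^p\psi\,\dx\dt\\
&\quad+\iint_{E_T}\bl A(x,t,u,Du)\cdot D\big[(v-u)\phi^p\psi\big]\,\dx\dt ,
\end{align*}
with the proof of Lemma~\ref{Lm:B:3} carried through verbatim, now using the test function $(v-\llbracket u\rrbracket_h)\psi\phi^p$. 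With this identity in hand I would repeat the proof of Proposition~\ref{Prop:B:3}: take $v=v_h:=\llbracket u\rrbracket_{\bar h}$ and the piecewise-linear time cut-off $\psi_\varepsilon$, discard the term $\partial_tv_h(\power{u}{q}-\power{v_h}{q})=\tfrac1h(v_h-u)(\power{u}{q}-\power{v_h}{q})\le0$, and bound the diffusion term by $\|\bl A\|_{L^{p'}(E_T)}\|D((v_h-u)\phi^p)\|_{L^p(E_T)}$. Together with $\iint_{E_T}\mathfrak g(u,v_h)\,\dx\dt\to0$ (Lemma~\ref{Lm:g} and H\"older, since $v_h\to u$ in $L^{q+1}(E_T)$) and $v_h\to u$ in $L^p(0,T;W^{1,p}(E''))$ (Lemma~\ref{Lm:mol}), this yields a modulus $\omega(h)\to0$, independent of $\tau$ and $\varepsilon$, with
\[
\frac1\varepsilon\iint_{E\times(\tau,\tau+\varepsilon)}\mathfrak g(u,v_h)\,\phi^p\,\dx\dt\le\omega(h)
\qquad\text{for all }\tau\in[0,T],\ \varepsilon\in(0,\tfrac14T).
\]
Sending $\varepsilon\downarrow0$ and then applying Lemmas~\ref{Lm:algebra}--\ref{Lm:g} exactly as in Proposition~\ref{Prop:B:3} — this is the only point where Step~1 is used, namely to treat $0<q<1$ — gives $\sup_{\tau\in[0,T]}\int_{E'}|u(\cdot,\tau)-v_h(\cdot,\tau)|^{q+1}\,\dx\le\gamma\,\omega(h)\to0$ as $h\downarrow0$. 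Since each $v_h\in C([0,T];L^{q+1}(E''))$ by Lemma~\ref{Lm:mol}(ii), the uniform Cauchy property along $h=1/j$ produces a representative of $u$ in $C([0,T];L^{q+1}(E'))$; as $E'\Subset E$ was arbitrary, $u\in C([0,T];L^{q+1}_{\loc}(E))$.

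The genuinely new ingredient relative to Proposition~\ref{Prop:B:3}, and the step where I would be most careful, is Step~1: recovering the local time-uniform $L^{q+1}$-bound from the bare hypotheses \eqref{Eq:func-space-A} without any coercivity on $\bl A$ (the subtlety is that this bound is not assumed and must be derived, and that it has to hold up to the endpoints $t=0,T$ as well). Everything else is a routine insertion of the weight $\phi^p$, whose sole purpose is to force the relevant test functions to vanish on $\partial E$.
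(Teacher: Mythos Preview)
Your proposal is correct and follows essentially the same approach the paper intends: the paper gives no proof beyond ``adapting the arguments from above'', meaning a localized version of the proof of Proposition~\ref{Prop:B:3}, and that is precisely what you carry out by inserting the spatial weight $\phi^p$ into the test functions of Lemma~\ref{Lm:B:3} and Proposition~\ref{Prop:B:3}. You also correctly single out the one genuinely additional step, namely recovering the local bound $u\in L^\infty(0,T;L^{q+1}_{\loc}(E))$ from the bare hypothesis $u\in L^{q+1}(E_T)$; this is exactly where Proposition~\ref{Prop:B:3} used its stronger assumption (specifically in the slice estimate for $0<q<1$), and your Step~1 argument---testing with $\llbracket u\rrbracket_{\bar h}\phi^p$ and its forward counterpart to obtain the two-sided oscillation bound for $t\mapsto\int|u|^{q+1}\phi^p\,\dx$---is the right way to supply it.
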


Therefore, the two notions of solution are effectively equivalent. However, it is noteworthy that generally, weak super(sub)-solutions in the sense of \eqref{Eq:func-space-A} -- \eqref{Eq:test-func-A} possess mere $L^\infty$ regularity in the time variable.

Finally, we discuss the way in which the initial value is assumed. From Proposition~\ref{Prop:B:3} we know that it is taken in the $L^{q+1}$-sense. The next lemma is a consequence of this fact.

\begin{lemma}\label{Lm:CP1}
Suppose $u$ satisfies the hypothesis of Proposition \ref{Prop:B:3}. Then we have
\begin{equation*}%\label{Eq:CP1}
    \lim_{t\downarrow 0}\int_{E\times\{t\}}\big||u|^{q-1}u-|u_o|^{q-1}u_o\big|\,\dx=0.
\end{equation*}
\end{lemma}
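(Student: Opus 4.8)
The plan is to derive Lemma~\ref{Lm:CP1} from Proposition~\ref{Prop:B:3}, which tells us that the continuous representative satisfies $u(\cdot,t)\to u_o$ in $L^{q+1}(E)$ as $t\downarrow 0$. The task is to upgrade this $L^{q+1}$-convergence of $u$ to $L^1$-convergence of $\power u q=|u|^{q-1}u$ towards $\power{u_o}q=|u_o|^{q-1}u_o$. I would treat the two regimes $0<q\le 1$ and $q>1$ separately, exactly as the auxiliary lemmas in Section~3.3 suggest.

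\textbf{Case $0<q\le 1$.} Here the map $s\mapsto \power s q=|s|^{q-1}s$ is H\"older continuous of exponent $q$; more precisely, by Lemma~\ref{Lm:algebra} with $\al=q$ one has $\big||b|^{q-1}b-|a|^{q-1}a\big|\le c(q)|b-a|^q$ for all $a,b\in\R$ (since $(|a|+|b|)^{q-1}|b-a|\le |b-a|^q$ when $q\le 1$). Applying this pointwise with $a=u_o(x)$, $b=u(x,t)$ and integrating, then using H\"older's inequality with exponents $\frac1q$ and $\frac1{1-q}$ (and $|E|<\infty$), gives
\begin{equation*}
  \int_{E\times\{t\}}\big|\power u q-\power{u_o}q\big|\,\dx
  \le c(q)\int_E |u(\cdot,t)-u_o|^q\,\dx
  \le c(q)|E|^{1-q}\bigg[\int_E|u(\cdot,t)-u_o|^{q+1}\,\dx\bigg]^{\frac{q}{q+1}},
\end{equation*}
which tends to $0$ as $t\downarrow 0$ by Proposition~\ref{Prop:B:3}.

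\textbf{Case $q>1$.} Now Lemma~\ref{Lm:algebra} with $\al=q$ gives the complementary bound $\big||b|^{q-1}b-|a|^{q-1}a\big|\le c(q)(|a|+|b|)^{q-1}|b-a|$. Integrating and using H\"older's inequality with exponents $\frac{q+1}{q}$ and $q+1$ yields
\begin{equation*}
  \int_{E\times\{t\}}\big|\power u q-\power{u_o}q\big|\,\dx
  \le c(q)\bigg[\int_E(|u(\cdot,t)|+|u_o|)^{q+1}\,\dx\bigg]^{\frac{q-1}{q+1}}
  \bigg[\int_E|u(\cdot,t)-u_o|^{q+1}\,\dx\bigg]^{\frac1{q+1}}.
\end{equation*}
The last factor tends to $0$ by Proposition~\ref{Prop:B:3}, while the first factor stays bounded: since $u\in C([0,T];L^{q+1}(E))$, the quantity $\int_E|u(\cdot,t)|^{q+1}\,\dx$ is bounded for $t$ near $0$, and $\int_E|u_o|^{q+1}\,\dx<\infty$. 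Hence the product goes to $0$ as $t\downarrow 0$, completing the proof.

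I do not expect a genuine obstacle here; the only point requiring a little care is organizing the two cases so that the exponents in H\"older's inequality match the bounds coming from Lemma~\ref{Lm:algebra}, and invoking the uniform bound $\sup_{t\in[0,\eps]}\|u(\cdot,t)\|_{L^{q+1}(E)}<\infty$ in the case $q>1$, which is immediate from $u\in C([0,T];L^{q+1}(E))$ established in Proposition~\ref{Prop:B:3}.
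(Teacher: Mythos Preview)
Your proof is correct and follows essentially the same route as the paper: split into $0<q\le 1$ and $q>1$, invoke Lemma~\ref{Lm:algebra}, and conclude by H\"older together with the $L^{q+1}$-convergence from Proposition~\ref{Prop:B:3}. One cosmetic slip: in the case $0<q\le 1$ the H\"older exponents should be $\tfrac{q+1}{q}$ and $q+1$ (giving the factor $|E|^{1/(q+1)}$ rather than $|E|^{1-q}$), but this does not affect the argument.
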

\begin{proof}
According to Proposition~\ref{Prop:B:3}, we have
\[
    \lim_{t\downarrow 0}\int_{E\times\{t\}}|u-u_o|^{q+1}\,\dx=0.
\]
When $0<q\le1$, the desired conclusion follows from the estimate
\begin{align*}
    \int_{E\times\{t\}}\big||u|^{q-1}u-|u_o|^{q-1}u_o\big|\,\dx
    &\le \boldsymbol\gm \int_{E\times\{t\}}|u - u_o |^q\,\dx\\
    &\le \boldsymbol\gm |E|^{\frac1{q+1}}
    \bigg[\int_{E\times\{t\}}|u - u_o|^{q+1}\,\dx\bigg]^{\frac{q}{q+1}}.
\end{align*}
When $q>1$, we use Lemma~\ref{Lm:algebra} and H\"older's inequality to estimate
\begin{align*}
    \int_{E\times\{t\}} &\big||u|^{q-1}u-|u_o|^{q-1}u_o\big|\,\dx\\
    &\le 
    \boldsymbol\gm \int_{E\times\{t\}}|u+u_o|^{q-1} |u - u_o |\,\dx\\
    &\le 
    \boldsymbol\gm
    \bigg[\int_{E\times\{t\}}|u - u_o|^{q+1}\,\dx\bigg]^{\frac{1}{q+1}}
    \bigg[\int_{E\times\{t\}}|u+u_o|^{\frac{(q+1)(q-1)}{q}}\,\dx\bigg]^{\frac{q}{q+1}}.
\end{align*}
Note that the second integral on the right-hand side is bounded uniformly in $t$, since $u\in L^\infty(0,T;L^{q+1}(E))$ and by H\"older's inequality,
\[
    \bigg[\int_{E\times\{t\}}|u+u_o|^{\frac{(q+1)(q-1)}{q}}\,\dx\bigg]^{\frac{q}{q+1}}
    \le 
    |E|^{\frac1{q+1}} \bigg[\int_{E\times\{t\}}|u+u_o|^{q+1}\,\dx\bigg]^{\frac{q-1}{q+1}}.
\]
As a result, the desired conclusion also follows in this case.
\end{proof}

%%%%%%%%%%%%%%
%Analogously, we have the following result for $v$.
%\begin{lemma}\label{Lm:CP2}
%Let $v$ be a non-negative solution to the equation \eqref{DN-v}. Then we have
%\begin{equation}\label{Eq:CP2}
%\lim_{t\downarrow 0}\int_{E\times\{t\}}|v^q -v^q_o|\,\dx=0,
%\end{equation}
%where $v_o(x)\equiv v(x,0)$.
%\end{lemma}
%\begin{proof}
%It is an immediate consequence of $v\in C([0,T];L^{q+1}(\Omega))$. \textcolor{red}{For the sake of completeness, it might be worth providing a full proof of this fact. In such a case, we need to distinguish between $q\in(0,1]$ and $q>1$.}
%\end{proof}

\section{Comparison principles}\label{S:CP}

In this section we collect some comparison principles that will be used throughout the paper. On the one hand, the comparison principle  in the case $q=1$ is well-known to experts; it does not impose any sign restriction on solutions and allows for time-dependent boundary data. On the other hand, in the case $q\not =1$ the comparison principle is less understood. For our purpose, we present a version that deals with two non-negative solutions when one of them vanishes on the lateral boundary. %and its initial values are below the first one.

In addition to the ellipticity \eqref{Eq:1:2}$_1$, the upper bound \eqref{Eq:1:2-} and the monotonicity \eqref{Eq:CP:mono}, the vector field $\bl{A}(x,t,u,\xi)$ is assumed to be Lipschitz continuous 
with respect to $u$, i.e.
\begin{equation}\label{Eq:CP:growth}
\big|\bl{A}(x,t,u_1,\xi)-\bl{A}(x,t,u_2,\xi)\big|\le \Lm|u_1-u_2| \big(1+|\xi|^{p-1}\big)
\end{equation}
for some given $\Lm>0$, and for the variables in the indicated domains.

In what follows, we define the Lipschitz function $\H_{\dl}(s)$ to be $1$ for $s\ge \dl$, to vanish for $s\le0$ and to be linearly interpolated otherwise, i.e.
\begin{equation}\label{def:Hd}
    \mathcal H_\delta(s)
    :=
    \left\{\begin{array}{cl}
         1, &  \mbox{for $s\ge\delta$,}\\[5pt]
         \frac{1}{\delta}s, & \mbox{for $0<s<\delta$,}\\[5pt]
         0, & \mbox{for $s\le 0$.} 
    \end{array}\right.
\end{equation}

\subsection{Comparison principle~I}
The first comparison principle is as follows.
\begin{proposition}\label{prop:comparison-plapl}
Let $p>1$, $q=1$ and the assumptions \eqref{Eq:1:2-}, \eqref{Eq:CP:mono} and \eqref{Eq:CP:growth} be in force and let 
$$
    v,w\in C\big([0,T];L^2(E)\big)\cap L^p\big(0,T;W^{1,p}(E)\big)
$$
be weak solutions to \eqref{Eq:CDP}$_1$ in the sense of Definition~\ref{def:weak}. Suppose that $v\le w$ on $\partial_{\rm par} E_T$ in the sense that $(v-w)_+\in L^p(0,T;W_0^{1,p}(E))$ and $v(\cdot,0)\le w(\cdot,0)$ a.e. in $E$. Then $v\le w$ a.e. in $E_T$. 
\end{proposition}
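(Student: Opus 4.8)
The plan is to run an $L^1$-type (Kato) comparison argument. Set $z:=v-w$ and recall the Lipschitz approximation $\mathcal{H}_\delta$ of $\boldsymbol\chi_{(0,\infty)}$ from \eqref{def:Hd}. Testing the difference of the two weak formulations against $z_+$ itself would be awkward, because the monotonicity hypothesis \eqref{Eq:CP:mono} is only the degenerate one and would not permit reabsorbing a gradient term; instead I would test against $\mathcal{H}_\delta(z_+)$, suitably localized in time. Since $q=1$, Steklov averages (Lemma~\ref{lm:Stek}) suffice to give meaning to the time derivative: one starts from the $h$-averaged weak formulations of \eqref{Eq:CDP}$_1$ for $v$ and for $w$ on $E\times(0,T-h)$, subtracts them, and tests with $\mathcal{H}_\delta\big(([v-w]_h)_+\big)\,\psi_\tau$, where $\psi_\tau$ is a Lipschitz time cut-off supported in $(0,T-h)$ with $\psi_\tau\equiv1$ on $[\varepsilon_0,\tau]$. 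This is an admissible test function because $(v-w)_+\in L^p(0,T;W^{1,p}_0(E))$, $\mathcal{H}_\delta$ is Lipschitz with $\mathcal{H}_\delta(0)=0$, and time averaging preserves vanishing lateral traces.

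Since $[v-w]_h$ is Lipschitz in $t$, the time term is exact: the chain rule gives
\[
  \iint_{E\times(0,T-h)}\!\!\partial_t[v-w]_h\;\mathcal{H}_\delta\big(([v-w]_h)_+\big)\,\psi_\tau\,\dx\dt
  =-\iint_{E\times(0,T-h)}\!\!\Phi_\delta\big([v-w]_h\big)\,\psi_\tau'\,\dx\dt ,
\]
with $\Phi_\delta(s):=\int_0^s\mathcal{H}_\delta(\sigma_+)\,\dsigma$, which satisfies $0\le\Phi_\delta(s)\le s_+$ and $\Phi_\delta(s)\to s_+$ as $\delta\downarrow0$. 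Letting $h\downarrow0$, then $\varepsilon_0\downarrow0$, and finally shrinking $\psi_\tau$ to $\boldsymbol\chi_{[0,\tau]}$, this contributes $\int_E\Phi_\delta\big((v-w)(\cdot,\tau)\big)\,\dx$, the term from $t=0$ dropping out since $v(\cdot,0)\le w(\cdot,0)$ forces $\Phi_\delta(v_o-w_o)\equiv0$; this is exactly the primitive-function bookkeeping of the proof of Proposition~\ref{Prop:parab}, only simpler here because the nonlinearity $\boldsymbol u^q$ is linear. For the diffusion term, letting $h\downarrow0$ leaves $\tfrac1\delta$ times the integral over $\{0<v-w<\delta\}$ of $\big(\bl{A}(x,t,v,Dv)-\bl{A}(x,t,w,Dw)\big)\cdot(Dv-Dw)\,\psi_\tau$; writing $\bl{A}(x,t,v,Dv)-\bl{A}(x,t,w,Dw)=\big[\bl{A}(x,t,v,Dv)-\bl{A}(x,t,v,Dw)\big]+\big[\bl{A}(x,t,v,Dw)-\bl{A}(x,t,w,Dw)\big]$, the first bracket is non-negative by \eqref{Eq:CP:mono} and is discarded, while the second, by the Lipschitz bound \eqref{Eq:CP:growth} and $z/\delta<1$ on $\{0<z<\delta\}$, is bounded in modulus by $\Lambda\iint_{\{0<v-w<\delta\}}(1+|Dw|^{p-1})\,|D(v-w)_+|\,\dx\dt$.

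Putting the two contributions together, for every $\delta>0$ and every $\tau\in(0,T]$ one arrives at
\[
  \int_E\Phi_\delta\big((v-w)(\cdot,\tau)\big)\,\dx
  \le
  \Lambda\iint_{\{0<v-w<\delta\}}(1+|Dw|^{p-1})\,|D(v-w)_+|\,\dx\dt .
\]
The integrand on the right lies in $L^1(E_T)$ for every $p>1$: since $Dw\in L^p(E_T)$ one has $|Dw|^{p-1}\in L^{p/(p-1)}(E_T)$, and $D(v-w)_+\in L^p(E_T)$, so H\"older's inequality applies; moreover $\boldsymbol\chi_{\{0<v-w<\delta\}}\to0$ a.e.\ in $E_T$ as $\delta\downarrow0$, so the right-hand side tends to $0$ by dominated convergence, while the left-hand side tends to $\int_E(v-w)_+(\cdot,\tau)\,\dx$. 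Hence $\int_E(v-w)_+(\cdot,\tau)\,\dx\le0$ for every $\tau$, that is, $v\le w$ a.e.\ in $E_T$. The step I expect to be the main obstacle is the rigorous handling of the time term and, along with it, verifying that every error term produced by the averaging and the cut-offs genuinely vanishes and that the nested limits ($h\downarrow0$, then $\varepsilon_0\downarrow0$ and $\psi_\tau\to\boldsymbol\chi_{[0,\tau]}$, then $\delta\downarrow0$) are legitimate --- in particular the behaviour of $\mathcal{H}_\delta'$ and of $D(v-w)_+$ on the level set $\{v=w\}$ needs to be addressed --- but once this bookkeeping is arranged as in the proof of Proposition~\ref{Prop:parab}, the remaining steps are soft dominated-convergence arguments.
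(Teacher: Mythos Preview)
Your proposal is correct and follows essentially the same route as the paper's proof: Steklov averaging, testing with $\mathcal{H}_\delta$ of the averaged difference, integrating the time term to obtain the primitive $\Phi_\delta$ (the paper calls it $\mathcal{G}_\delta$), splitting the diffusion term exactly as you do, discarding the monotone piece, and killing the remaining piece via \eqref{Eq:CP:growth} together with dominated convergence as $\delta\downarrow0$. The only cosmetic difference is that the paper integrates the Steklov formulation directly over $(0,\tau)$ rather than using a time cut-off $\psi_\tau$; your concern about the level set $\{v=w\}$ is harmless since $D(v-w)_+=0$ a.e.\ there.
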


 \begin{proof} 
Even if the arguments are standard, we give the details for convenience of the reader. 
We first recall the definition of Steklov-averages from \eqref{def-stek}
and subtract the Steklov-formulations of the differential equations for $v$ and $w$, i.e.
\begin{equation*}
    \int_{E\times\{t\}} \partial_t[v]_h \zeta\,\dx
    +\int_{E\times\{t\}}
    \big[ \mathbf A(x,t,v,Dv) \big]_h\cdot D\zeta 
    \,\dx
    =
    0 
\end{equation*}
for any $t\in(0,T)$; see, for example, \cite[Chapter~II, (1.5)]{DB}. Integrating with respect to $t$ over $(0,\tau)$ for some $\tau\in(0,T)$, we obtain
\begin{align*}
    \iint_{E_\tau}& \partial_t[v-w]_h \zeta\,\dx\dt
    +\iint_{E_\tau}
    \big[ \mathbf A(x,t,v,Dv) -  \mathbf A(x,t,w,Dw)\big]_h\cdot D\zeta 
    \,\dx\dt
    =
    0 
\end{align*}
for $0<h\le\frac12(T-\tau)$ and for any $\zeta\in L^p(0,T;W_0^{1,p}(E))$. As test-function we choose $\zeta=\H_{\dl}([v-w]_h)$, which is admissible since $(v-w)_+\in L^p(0,T;W_0^{1,p}(E))$.
%$v-w\le 0$ on $\partial E\times[0,T]$. 
In this way, we obtain
\begin{align*}
    \int_{E\times\{\tau \} } & \mathcal G_\delta\big([v-w]_h\big)  \,\dx - \int_{E\times\{0\}} \mathcal G_\delta\big([v-w]_h\big) \,\dx \\
    &=
    -\iint_{E_\tau} \big[  \mathbf A(x,t,v,Dv) -  \mathbf A(x,t,w,Dw)\big]_h\cdot  D\H_{\dl}\big([v-w]_h\big) \,\dx\dt,
\end{align*}
where $\mathcal G_\delta$ is the primitive of $\mathcal H_\delta$, i.e.
$$
    \mathcal G_\delta(s)
    :=
    \int_0^s \H_{\dl}(\sigma)\,\d\sigma
    =
    \left\{\begin{array}{cl}
         s-\frac{1}{2}\delta, &  \mbox{for $s\ge\delta$,}\\[5pt]
         \frac{1}{2\delta}s^2, & \mbox{for $0<s<\delta$,}\\[5pt]
         0, & \mbox{for $s\le 0$.} 
    \end{array}\right.
$$
Note that $\mathcal G_\delta(s)\to \max\{ s,0\}$ as $\delta\downarrow 0$.
We now pass to the limit $h\downarrow 0$. Since $v,w\in C([0,T];L^2(E))$ the first integral on the left converges to $\int_{E\times\{\tau\}} \mathcal G_\delta(v-w) \,\dx$, while the second converges to $\int_{E\times\{0\}} \mathcal G_\delta(v-w) \,\dx=0$. Passing to the limit $h\downarrow 0$ also in the diffusion term, we obtain
\begin{align*}
    \int_{E\times\{\tau\}} & \mathcal G_\delta(v-w) \,\dx  \\
    &=
    -\iint_{E_\tau} 
    \H_\delta^{\prime}(v-w) 
    \big[\mathbf  A(x,t,v,Dv) - \mathbf A(x,t,w,Dw)\big]\cdot(Dv-Dw) \,\dx\dt \\
    &=
   -\iint_{E_\tau} 
   \H_\delta^{\prime}(v-w)
   \big[\mathbf A(x,t,v,Dv) - \mathbf A(x,t,v,Dw)\big]\cdot
    (Dv-Dw) \,\dx\dt \\
   &\phantom{=\,}
   -\iint_{E_\tau} \H_\delta^{\prime}(v-w)
   \big[ \mathbf  A(x,t,v,Dw) - \mathbf A(x,t,w,Dw)\big]\cdot
    (Dv-Dw) \,\dx\dt .
\end{align*}
The first term on the right-hand side is non-positive due to the monotonicity assumption \eqref{Eq:CP:mono} and henceforth can be discarded, 
whereas the second term tends to zero. Indeed, with  \eqref{Eq:CP:growth} we have
\begin{align*}
      \iint_{E_\tau} &\H_\delta^{\prime}(v-w)
   \big[ \mathbf  A(x,t,v,Dw) - \mathbf A(x,t,w,Dw)\big]\cdot
    (Dv-Dw) \,\dx\dt \\
   &=
   \frac{1}{\delta}\iint_{E_\tau\cap \{0<v-w<\delta\}} \big[ \mathbf  A(x,t,v,Dw) -  
    \mathbf A(x,t,w,Dw)\big]\cdot
    (Dv-Dw) \,\dx\dt \\
   & \le 
   \frac{\Lm}{\dl}\iint_{E_\tau\cap \{0<v-w<\delta\}}(v-w)\big(1+|Dw|^{p-1}\big)|D(v-w)|\,\dx\dt\\
   & \le \Lm\iint_{E_\tau\cap \{0<v-w<\delta\}}\big(1+|Dv|^{p-1}\big)|D(v-w)|\,\dx\dt   \to 0,
\end{align*}
as $\dl\downarrow0$. 
Therefore, we have
\begin{equation*}
    \int_{E\times\{\tau\}} (v-w)_+\,\dx
    =
    \lim_{\dl\downarrow0}  \int_{E\times\{\tau\}} \mathcal G_\delta(v-w) \,\dx
    \le 0.
\end{equation*}
Integrating this with respect to $\tau\in (0,T)$ we have
\begin{equation*}
    \iint_{E_T} (v-w)_+\,\dx\dt
    \le 0,
\end{equation*}
proving that $v\le w$ a.e.~in $E_T$.  This finishes the proof of the comparison principle.
\end{proof}

Joining Proposition~\ref{Prop:B:3} and Proposition~\ref{prop:comparison-plapl}, we obtain the uniqueness of weak solutions in the case $q=1$.
\begin{corollary}
Under the assumptions of Proposition~\ref{prop:comparison-plapl}, the weak solution to the Cauchy-Dirichlet Problem \eqref{Eq:CDP} with $q=1$ is unique.
\end{corollary}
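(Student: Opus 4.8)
The plan is to obtain uniqueness directly by applying the comparison principle of Proposition~\ref{prop:comparison-plapl} twice, in both directions. Let $u_1$ and $u_2$ be two weak solutions to the Cauchy-Dirichlet Problem~\eqref{Eq:CDP} with $q=1$, for the same boundary datum $g$ and the same initial datum $u_o$. By Definition~\ref{def:weak} each of them belongs to $C\big([0,T];L^2(E)\big)\cap\big(g+L^p(0,T;W_0^{1,p}(E))\big)$; in particular $u_1,u_2\in C\big([0,T];L^2(E)\big)\cap L^p(0,T;W^{1,p}(E))$, which is exactly the class in which Proposition~\ref{prop:comparison-plapl} is stated, and the structural hypotheses \eqref{Eq:1:2-}, \eqref{Eq:CP:mono} and \eqref{Eq:CP:growth} are in force by assumption.

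Next I would verify that $u_1$ and $u_2$ satisfy the comparison hypothesis on the parabolic boundary. Since $u_1-g$ and $u_2-g$ both lie in $L^p(0,T;W_0^{1,p}(E))$, their difference $u_1-u_2$ does as well, and hence so does $(u_1-u_2)_+$. For the initial values, Definition~\ref{def:weak} encodes $u(\cdot,0)=u_o$ only through the weak formulation, but by Proposition~\ref{Prop:B:3} each solution has a representative in $C\big([0,T];L^2(E)\big)$ with $u(\cdot,t)\to u_o$ in $L^2(E)$ as $t\downarrow0$, which forces $u_1(\cdot,0)=u_o=u_2(\cdot,0)$ a.e.~in $E$. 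Thus the assumptions of Proposition~\ref{prop:comparison-plapl} hold with $v=u_1$, $w=u_2$, and the proposition yields $u_1\le u_2$ a.e.~in $E_T$.

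Interchanging the roles of $u_1$ and $u_2$---the situation being symmetric, as both solve the same problem---the same proposition gives $u_2\le u_1$ a.e.~in $E_T$. Combining the two inequalities yields $u_1=u_2$ a.e.~in $E_T$, which is the claimed uniqueness. There is no genuine obstacle here beyond this bookkeeping: all the analytic substance has already been carried out in Proposition~\ref{Prop:B:3} (continuity in time and the precise sense in which the initial datum is attained) and in Proposition~\ref{prop:comparison-plapl} (the comparison principle itself).
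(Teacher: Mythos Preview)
Your argument is correct and matches the paper's approach exactly: the paper does not give a detailed proof but simply states that the corollary follows by ``joining Proposition~\ref{Prop:B:3} and Proposition~\ref{prop:comparison-plapl},'' which is precisely the bookkeeping you have carried out. The invocation of Proposition~\ref{Prop:B:3} to identify the initial trace with $u_o$ and the two-sided application of the comparison principle are both exactly as intended.
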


\subsection{Comparison principle~II}
Notwithstanding the lack of time derivative of weak solutions, this difficulty could be readily fixed by a proper time mollification when $q=1$. 
This is, however, is not the case for $q\not=1$:  more delicate analysis is required in order to achieve a comparison result, because of the absence of time derivative. The next comparison principle applies for all $p>1$ and $q>0$ provided that one of the two non-negative solutions vanishes on the lateral boundary. 
\begin{proposition}\label{Prop:CP}
Let $p>1$, $q>0$ and consider a vector field $\bl{A}(x,u,\xi)$ that is independent of the $t$-variable and satisfies the assumptions \eqref{Eq:1:2-}, \eqref{Eq:CP:mono} and \eqref{Eq:CP:growth}. Suppose that $w$ is a non-negative weak solution to \eqref{Eq:CDP}$_1$ and $v$ a non-negative weak solution to the Cauchy-Dirichlet Problem \eqref{Eq:CDP} with $g\equiv 0$ and $v_o\ge 0$, both in the sense of Definition~\ref{def:weak}. 
If $v_o\le w(\cdot,0)$ a.e. in $E$, then we have $v\le w$ a.e. in $E_T$. 
\end{proposition}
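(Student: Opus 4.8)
The plan is to mimic the scheme of Proposition~\ref{prop:comparison-plapl}, but to deal with the nonlinearity $\boldsymbol{u}^q=|u|^{q-1}u$ in the time term, which no longer has a usable Steklov-average formulation. The key new device is Otto's doubling of the time variable, which will let me replace the pointwise identity $\partial_t[v-w]_h$ with a well-defined object even though $\partial_t v$ and $\partial_t w$ need not exist. Since $v,w\ge0$ we write $\boldsymbol v^q=v^q$, $\boldsymbol w^q=w^q$. Because $g\equiv0$ for $v$, the difference $(v-w)_+$ has zero lateral trace, so the truncation $\mathcal H_\delta((v-w)\cdot)$—or rather its time-doubled analogue—remains an admissible test function in $L^p(0,T;W_0^{1,p}(E))$, which is exactly what makes the argument go through only under the assumed one-sided boundary condition.

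Concretely, I would proceed as follows. First, write the weak formulations \eqref{Eq:weak-form-1} for $v$ and \eqref{Eq:weak-form} for $w$ on the product cylinder $E_T\times E_T$, using for $v$ a test function depending on $(x,t)$ and for $w$ a test function depending on $(y,s)$, and add them. Introduce the mollifications $\llbracket\cdot\rrbracket_h$ and $\llbracket\cdot\rrbracket_{\bar h}$ from \eqref{def:mol} in the appropriate time variables (forward for one solution, backward for the other, as in the proof of Lemma~\ref{Lm:B:3}), which produces genuine time derivatives via Lemma~\ref{Lm:mol}~(iii) at the cost of error terms that vanish as $h\downarrow0$ by Lemma~\ref{Lm:mol}~(i),(iv). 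Second, choose the test function built from $\mathcal H_\delta$ applied to (the mollification of) $v(x,t)-w(y,s)$; the monotonicity of $s\mapsto s^q$ guarantees that the extra terms coming from $\partial_t\llbracket v\rrbracket$ and $\partial_s\llbracket w\rrbracket_{\bar h}$ have a favourable sign and can be discarded, just as in Proposition~\ref{Prop:parab} and Lemma~\ref{Lm:B:3}. Third, after passing $h\downarrow0$, handle the diffusion term: split $\mathbf A(x,v,Dv)-\mathbf A(y,w,Dw)$ as $[\mathbf A(x,v,Dv)-\mathbf A(x,v,Dw)]+[\mathbf A(x,v,Dw)-\mathbf A(x,w,Dw)]$ (using that $\mathbf A$ is $t$-independent), so that the first bracket is nonnegative against $D(v-w)$ by \eqref{Eq:CP:mono} and the second is controlled by $\Lambda|v-w|(1+|Dw|^{p-1})$ via \eqref{Eq:CP:growth}; on the set $\{0<v-w<\delta\}$, where $\mathcal H_\delta'=1/\delta$ is supported, this contributes at most $\Lambda\iint_{\{0<v-w<\delta\}}(1+|Dv|^{p-1})|D(v-w)|\,\dx\dt$, which tends to $0$ as $\delta\downarrow0$ by absolute continuity of the integral. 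Fourth, collapse the doubled variables by letting the two time mollification parameters and the spatial doubling parameter go to zero in the right order; since $v,w\in C([0,T];L^{q+1}(E))$ with $v(\cdot,0)\le w(\cdot,0)$, the surviving boundary term at $t=0$ is nonnegative, and one is left with $\int_{E\times\{\tau\}}\mathfrak g_+\!\big(v(\cdot,\tau),w(\cdot,\tau)\big)\,\dx\le0$ for a.e.\ $\tau$, where $\mathfrak g_+$ is the function from \eqref{Eq:gpm}. By Lemma~\ref{Lm:g}, $\mathfrak g_+(v,w)\gtrsim(|v|+|w|)^{q-1}(v-w)_+^2\ge0$, whence $(v-w)_+=0$ a.e., i.e.\ $v\le w$ a.e.\ in $E_T$.

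The main obstacle I expect is the bookkeeping in the doubling-of-variables step: making the two solutions' weak formulations interact on $E_T\times E_T$ requires a careful choice of a joint test function (typically $\mathcal H_\delta$ of a mollified difference, multiplied by a cutoff localizing $x$ near $y$ and $t$ near $s$), and one must verify that the time-nonlinearity terms indeed telescope into $\partial_t\big[\mathfrak g_+(\cdot,\cdot)\big]$-type quantities with the correct sign after sending the mollification parameters to zero — this is where the analogue of the computation in Lemma~\ref{Lm:B:3}, now in two time variables, has to be executed. A secondary technical point is ensuring admissibility of the test function throughout: this is precisely why $v$ must vanish on the lateral boundary (so $(v-w)_+\in L^p(0,T;W^{1,p}_0(E))$), and why the proposition cannot dispense with that hypothesis. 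I would cite Otto~\cite{Otto} for the doubling technique and lean on Lemmas~\ref{Lm:mol} and~\ref{Lm:g} for all the convergence and coercivity estimates, keeping the exposition parallel to the proof of Proposition~\ref{prop:comparison-plapl} wherever possible.
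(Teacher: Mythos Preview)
Your overall strategy is right — Otto's doubling, the truncation $\mathcal H_\delta$, the monotonicity/Lipschitz split of the diffusion term — and the paper proceeds along these lines. But two points in your plan differ from the actual argument and at least one of them matters.

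First, you propose doubling in both space and time (test functions in $(x,t)$ for $v$ and $(y,s)$ for $w$, with a cutoff localizing $x$ near $y$). The paper doubles \emph{only in time}: one works on $E\times(0,T)^2$ with $v(x,t_1)$ and $w(x,t_2)$ sharing the spatial variable. This is exactly why the hypothesis that $\mathbf A$ is $t$-independent is imposed. Your split $\mathbf A(x,v,Dv)-\mathbf A(y,w,Dw)=[\mathbf A(x,v,Dv)-\mathbf A(x,v,Dw)]+[\mathbf A(x,v,Dw)-\mathbf A(x,w,Dw)]$ silently drops the term $\mathbf A(x,w,Dw)-\mathbf A(y,w,Dw)$, which you cannot control since no regularity of $\mathbf A$ in $x$ is assumed. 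With time-only doubling this issue simply does not arise.

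Second, the limiting quantity is $(v^q-w^q)_+$, not $\mathfrak g_+(v,w)$. Testing with $\mathcal H_\delta(v-\tilde v)\psi$ produces in the time part the function $\mathfrak h_\delta(v,\tilde v)=\int_{\tilde v}^v \mathcal H_\delta(s-\tilde v)\,qs^{q-1}\,\d s$, which converges to $(v^q-\tilde v^q)_+$ as $\delta\downarrow0$ — a Kato-type inequality, not a $\mathfrak g_+$ estimate. The paper first isolates this computation in two preparatory lemmas (one for $v$ with frozen $\tilde v\in W^{1,p}_0(E)$, one for $w$ with the odd reflection $\widehat{\mathcal H}_\delta$), then combines them on $E\times(0,T)^2$, passes $\delta\downarrow0$, collapses $t_1-t_2\to0$ via a mollifier, and finally uses $v,w\in C([0,T];L^{q+1}(E))$ together with Lemma~\ref{Lm:CP1} to handle the initial layer. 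The conclusion $\iint_{E_T}(v^q-w^q)_+\,\dx\dt\le0$ then gives $v\le w$ directly.
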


The following uniqueness result is a direct consequence of Proposition~\ref{Prop:CP}. 

\begin{corollary}\label{Cor:Uniqueness}
Under the assumptions of Proposition~\ref{Prop:CP}, the non-negative weak solution to the Cauchy-Dirichlet Problem \eqref{Eq:CDP} with $g\equiv 0$ is unique.
\end{corollary}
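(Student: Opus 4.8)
The plan is to deduce the statement directly from the comparison principle in Proposition~\ref{Prop:CP}, applied twice with the roles of the two candidate solutions interchanged.

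First I would take two non-negative weak solutions $v_1$ and $v_2$ to the Cauchy-Dirichlet Problem \eqref{Eq:CDP} with $g\equiv 0$ and with one and the same initial datum $u_o\ge 0$, both understood in the sense of Definition~\ref{def:weak}. Each $v_i$ is then, in particular, a non-negative weak solution to the equation \eqref{Eq:CDP}$_1$ alone (forgetting the boundary conditions), and by Proposition~\ref{Prop:B:3} it attains its initial datum in the sense that $v_i(\cdot,0)=u_o$ a.e.~in $E$.

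Next I would invoke Proposition~\ref{Prop:CP} once with $w:=v_2$, regarded merely as a non-negative weak solution of \eqref{Eq:CDP}$_1$, and with $v:=v_1$, regarded as the non-negative weak solution of the Cauchy-Dirichlet Problem with $g\equiv 0$ and $v_o=u_o$. Since $v_o=u_o\le u_o=v_2(\cdot,0)$ a.e.~in $E$, all hypotheses of Proposition~\ref{Prop:CP} are satisfied, and its conclusion gives $v_1\le v_2$ a.e.~in $E_T$. Interchanging the roles of $v_1$ and $v_2$ — i.e.~now taking $w:=v_1$ and $v:=v_2$ — the very same argument yields $v_2\le v_1$ a.e.~in $E_T$. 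Combining the two inequalities gives $v_1=v_2$ a.e.~in $E_T$, which is the claimed uniqueness.

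There is no genuine obstacle in this argument; the only point requiring a brief remark is that the initial data are taken in the pointwise (a.e.~on $E$) sense demanded in the statement of Proposition~\ref{Prop:CP}, which is precisely what Proposition~\ref{Prop:B:3} provides. Thus the proof reduces to correctly matching each $v_i$ against the two slots (``equation solution'' versus ``Cauchy-Dirichlet solution with zero lateral datum'') in Proposition~\ref{Prop:CP}, and the corollary follows at once.
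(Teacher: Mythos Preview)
Your proof is correct and matches the paper's approach: the paper simply states that the corollary is a ``direct consequence of Proposition~\ref{Prop:CP}'' without spelling out the argument, and your two-fold application of the comparison principle with the roles of $v_1$ and $v_2$ interchanged is precisely the intended reasoning.
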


The rest of this subsection is devoted to the proof of Proposition~\ref{Prop:CP}. We recall the definition of the Lipschitz function $\H_{\dl}(z)$ in \eqref{def:Hd}. Accordingly, we introduce the quantity
\begin{equation}\label{Eq:def-q}
\mathfrak{h}_\dl (z,z_o):=\int_{z_o}^z \H_{\dl}(s-z_o) q s^{q-1}\,\ds\quad\text{for}\>z,\,z_o\in\rr_{\ge 0}.
\end{equation}
The odd reflection of $\H_\dl$ is defined by $\widehat{\H}_\delta(s):=-\H_{\dl}(-s)$ and the associated quantity is
\begin{equation}\label{Eq:def-q-hat}
\widehat{\mathfrak{h}}_\dl (z,z_o):=\int_{z_o}^z \widehat{\H}_{\dl}(s-z_o) q s^{q-1}\,\ds\quad\text{for}\>z,\,z_o\in\rr_{\ge 0}.
\end{equation}
%%%%%%%%%%%%%%%%

%%%%%%%%%%%%%%%
The first lemma for the function $v$ is as follows.
\begin{lemma}\label{Lm:CP3}
%Let $\eta:\R\to\R$ have bounded and continuous first and second order derivatives. Let $q:\R\times\R\to\R$ be related to $\eta$ by
%\begin{equation}\label{Eq:def-q}
%q(z,z_o)=\int_{z_o}^z \eta'(s-z_o) q s_+^{q-1}\,\ds.
%\end{equation}
%Suppose that $\eta$ is non-decreasing and 
Let $v$ be as in Proposition~\ref{Prop:CP} and suppose $\widetilde v\in L^{q+1}(E)\cap W^{1,p}_0(E)$ is non-negative.  %$\displaystyle\int_\Omega \tilde w^{q+1}\,\dx<+\infty$, and
Then we have
\[
    \iint_{E_T}\Big[-\mathfrak{h}_\dl(v,\widetilde v)\partial_t\psi+\bl{A}(x,v,Dv)\cdot D[\H_{\dl}(v-\widetilde v)\psi]\Big]\,\dx\dt=0
\]
for all  $\psi\in C^\infty_0(\R^N\times(0,T))$. 
\end{lemma}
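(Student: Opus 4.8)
The plan is to test the weak formulation \eqref{Eq:weak-form-1} for $v$ (with $g\equiv 0$, $v_o\geq 0$) with a regularized version of the function $\mathcal H_\delta(v-\widetilde v)\psi$, where the regularization is in the time variable so as to make the time-derivative term tractable. Since $v$ has no Sobolev time derivative, the first step is to replace $v$ inside the truncation by its exponential time mollification $\llbracket v\rrbracket_h$ or $\llbracket v\rrbracket_{\bar h}$ from \eqref{def:mol}, and use the test function $\varphi_h:=\mathcal H_\delta(\llbracket v\rrbracket_h-\widetilde v)\psi$, which is admissible because $\llbracket v\rrbracket_h\in C([0,T];L^{q+1}(E))$, $D\llbracket v\rrbracket_h=\llbracket Dv\rrbracket_h\in L^p$, $\psi$ has compact support in $E\times(0,T)$, and $\widetilde v\in W^{1,p}_0(E)$; note the boundary term involving $v_o$ drops out because $\psi(\cdot,0)=0=\psi(\cdot,T)$.

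The key computation is the time term $-\iint_{E_T}\boldsymbol v^q\,\partial_t\varphi_h\,\dx\dt$. I would split $\boldsymbol v^q = (\boldsymbol v^q - \boldsymbol{\llbracket v\rrbracket_h}^q)+\boldsymbol{\llbracket v\rrbracket_h}^q$, exactly as in the proof of Proposition~\ref{Prop:parab} and Lemma~\ref{Lm:B:3}. The term with $\boldsymbol v^q-\boldsymbol{\llbracket v\rrbracket_h}^q$ is handled using $\partial_t\llbracket v\rrbracket_h=\tfrac1h(v-\llbracket v\rrbracket_h)$ from Lemma~\ref{Lm:mol}~(iii) together with the monotonicity of $s\mapsto\boldsymbol s^q=|s|^{q-1}s$ and the monotonicity of $\mathcal H_\delta$, to produce a term of favourable sign (up to a remainder that vanishes as $h\downarrow 0$ by Lemma~\ref{Lm:mol}~(i)); here one should be slightly careful because differentiating $\varphi_h$ in time hits both $\mathcal H_\delta(\llbracket v\rrbracket_h-\widetilde v)$ and $\psi$, and the part where $\partial_t$ hits $\psi$ converges directly. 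For the term $\boldsymbol{\llbracket v\rrbracket_h}^q\,\partial_t[\mathcal H_\delta(\llbracket v\rrbracket_h-\widetilde v)\psi]$ one integrates by parts, writing $\boldsymbol{\llbracket v\rrbracket_h}^q\,\partial_t\mathcal H_\delta(\llbracket v\rrbracket_h-\widetilde v)=\partial_t\big[\mathfrak h_\delta(\llbracket v\rrbracket_h,\widetilde v)\big]$ by the very definition \eqref{Eq:def-q} of $\mathfrak h_\delta$ (using that $\widetilde v$ is time-independent), so this piece equals $-\iint_{E_T}\mathfrak h_\delta(\llbracket v\rrbracket_h,\widetilde v)\,\partial_t\psi\,\dx\dt$. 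Passing $h\downarrow 0$ with Lemma~\ref{Lm:mol}~(i),(iv) then converts everything into $-\iint_{E_T}\mathfrak h_\delta(v,\widetilde v)\,\partial_t\psi\,\dx\dt$ for the time part and $\iint_{E_T}\mathbf A(x,v,Dv)\cdot D[\mathcal H_\delta(v-\widetilde v)\psi]\,\dx\dt$ for the diffusion part, while the sign-definite remainder from the first split survives only as an inequality ``$\leq 0$'' in one direction.

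To upgrade the resulting inequality to the claimed identity, I would run the same argument a second time with the backward mollification $\llbracket v\rrbracket_{\bar h}$ in place of $\llbracket v\rrbracket_h$; by Lemma~\ref{Lm:mol}~(iii) the sign of $\partial_t\llbracket v\rrbracket_{\bar h}=\tfrac1h(\llbracket v\rrbracket_{\bar h}-v)$ is reversed, so the surviving remainder now has the opposite sign and yields the reverse inequality ``$\geq 0$''. Combining the two gives the equality. The main obstacle I anticipate is purely bookkeeping: keeping track of which terms are genuinely sign-definite versus which merely vanish in the limit, and making sure the truncation $\mathcal H_\delta(\llbracket v\rrbracket_h-\widetilde v)\psi$ is a legitimate test function in the class \eqref{Eq:test-func} (this is where compact support of $\psi$ in $E\times(0,T)$ and $\widetilde v\in W^{1,p}_0(E)$ are essential, so that the product lies in $L^p(0,T;W^{1,p}_0(E))$ and vanishes at $t=0,T$). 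No structural hypothesis on $\mathbf A$ beyond \eqref{Eq:1:2-} is needed for this lemma, and $v_o\geq 0$, $g\equiv 0$ are used only to guarantee $v\geq 0$ so that $\mathfrak h_\delta(v,\widetilde v)$ in \eqref{Eq:def-q} is evaluated on $\rr_{\geq 0}$.
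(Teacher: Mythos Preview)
Your plan is essentially the paper's proof: test with $\varphi_h=\mathcal H_\delta(\llbracket v\rrbracket_h-\widetilde v)\psi$, split $v^q=(v^q-\llbracket v\rrbracket_h^q)+\llbracket v\rrbracket_h^q$, use monotonicity together with Lemma~\ref{Lm:mol}~(iii) for the sign-definite piece, integrate by parts to produce $\mathfrak h_\delta$, pass $h\downarrow 0$, and then repeat with $\llbracket v\rrbracket_{\bar h}$ for the reverse inequality.

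Two small corrections. First, the chain-rule identity you wrote is off: what holds is
\[
\partial_t\big(\llbracket v\rrbracket_h^q\big)\,\mathcal H_\delta(\llbracket v\rrbracket_h-\widetilde v)=\partial_t\big[\mathfrak h_\delta(\llbracket v\rrbracket_h,\widetilde v)\big],
\]
not $\llbracket v\rrbracket_h^q\,\partial_t\mathcal H_\delta(\cdots)=\partial_t\mathfrak h_\delta$; the route is to integrate $-\iint\llbracket v\rrbracket_h^q\,\partial_t\zeta$ by parts first and then recognize the primitive. Second, $\psi\in C^\infty_0(\R^N\times(0,T))$ need \emph{not} vanish near $\partial E$, so compact support of $\psi$ in space is not what makes $\varphi_h$ admissible. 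The paper uses instead that $v\in L^p(0,T;W^{1,p}_0(E))$ (from $g\equiv 0$) and $\widetilde v\in W^{1,p}_0(E)$, so $\llbracket v\rrbracket_h-\widetilde v$ has zero trace on $\partial E$ and hence $\mathcal H_\delta(\llbracket v\rrbracket_h-\widetilde v)=\mathcal H_\delta(0)=0$ there in the trace sense. With these two fixes your argument goes through unchanged.
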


\begin{proof}
%Because of the linearity in $\eta$, without loss of generality we may assume that $\eta$ is convex. Moreover, since $w=0$ on $\Sigma$, the condition $\eta'(w-\tilde w)=0$ on $\Sigma$ can be satisfied if we assume that $\tilde w\ge0$ $\H^{N-1}$ a.e. on $\partial\Omega$ and $\eta'(t)=0$ for any $t\le0$. The remainder of the proof follows as in the proof of \cite[Lemma~1]{Otto}.
Since $v=0$ on $\partial E\times (0,T)$ and $\widetilde v=0$ on $\partial E$, we have
\[
\H_\dl \big(v(\cdot, t)-\widetilde v(\cdot)\big)=0\quad \mbox{in the sense of traces on $\pl E$,}
\]
for a.e. $ t\in(0,T)$.
This allows to choose the test function $\z=\H_\dl(\llbracket v \rrbracket_{h}-\widetilde{v})\psi$ in the weak formulation \eqref{Eq:weak-form-1}. Since $\zeta (\cdot,0)=0$ on $E$, the term containing $v_o$ vanishes.
By Lemma~\ref{Lm:mol} (iv), 
we have $D[\H_\dl(\llbracket v \rrbracket_{h}-\widetilde{v})\psi]\to D[\H_\dl(v -\widetilde{v})\psi]$ as $h\downarrow 0$ in $L^p(E_T)$. 
Therefore we obtain for the diffusion part 
\begin{align*}
    \lim_{h\downarrow 0}\iint_{E_T} &\bl{A}(x,v,Dv)\cdot D\z \,\dx\dt 
    =
    \iint_{E_T} \bl{A}(x,v,Dv)\cdot D[\H_{\dl}(v-\widetilde v)\psi] \,\dx\dt.
\end{align*}
Whereas, in view of Lemma~\ref{Lm:mol} (iii), the time part is estimated by
\begin{align*}
    \iint_{E_T}
    & 
    -v^q\pl_t \z\,\dx\dt=\iint_{E_T} \big(-\llbracket v \rrbracket_{h}^q+\llbracket v \rrbracket_{h}^q-v^q\big) \pl_t \z\,\dx\dt\\
    &=-
    \iint_{E_T} \llbracket v \rrbracket_{h}^q \pl_t \z\,\dx\dt
    +  
    \iint_{E_T} \big( \llbracket v \rrbracket_{h}^q-v^q\big) \H_\dl (\llbracket v \rrbracket_{h}-\widetilde{v}) \pl_t \psi\,\dx\dt\\
    &\quad+ 
    \iint_{E_T} \big( \llbracket v \rrbracket_{h}^q-v^q\big) 
    \H_\dl^{\prime}(\llbracket v \rrbracket_{h}-\widetilde{v})
    \tfrac1h (v-\llbracket v \rrbracket_{h}) \psi\,\dx\dt\\
    &\le  
    \iint_{E_T} \pl_t \llbracket v \rrbracket_{h}^q  \z\,\dx\dt
    +  
    \iint_{E_T} \big( \llbracket v \rrbracket_{h}^q-v^q\big) 
    \H_\dl (\llbracket v \rrbracket_{h}-\widetilde{v}) \pl_t \psi\,\dx\dt\\
    &=-
    \iint_{E_T} \mathfrak{h}_\dl \big( \llbracket v \rrbracket_{h},\widetilde{v}\big) \pl_t\psi\,\dx\dt
    +  
    \iint_{E_T} \big( \llbracket v \rrbracket_{h}^q-v^q\big) 
    \H_\dl (\llbracket v \rrbracket_{h}-\widetilde{v}) \pl_t \psi\,\dx\dt.
\end{align*}
By Lemma~\ref{Lm:mol} (i) (note that $v\in L^{q+1}(E_T)$) the second term
on the right vanishes in the limit $h\downarrow 0$. With the help of
\begin{align*}
    \big| \mathfrak{h}_\dl \big( \llbracket v \rrbracket_{h},\widetilde{v}\big)
     - 
    \mathfrak{h}_\dl ( v ,\widetilde{v})\big|
    &=
    \bigg| \int_v^{\llbracket v \rrbracket_{h}} \H_{\dl}(s-\widetilde v) q s^{q-1}\,\ds\bigg|
    \le 
    \big| \llbracket v \rrbracket_{h}^q-v^q\big|
\end{align*}
and Lemma~\ref{Lm:mol} (i) it also results
\begin{align*}
    -\lim_{h\downarrow 0}  
    \iint_{E_T} \mathfrak{h}_\dl \big( \llbracket v \rrbracket_{h},\widetilde{v}\big) \pl_t\psi\,\dx\dt
    &=
    - \iint_{E_T} \mathfrak{h}_\dl ( v ,\widetilde{v}) \pl_t\psi\,\dx\dt.
\end{align*}
Combining the above estimates yields the desired identity with ``$\le$". The reverse inequality is obtained by using the test function $\z=\H_\dl(\llbracket v \rrbracket_{\bar{h}}-\widetilde{v})\psi$ and running similar calculations.
\end{proof}
%%%%%%%%%%%%%%%%%
By an analogous argument, we also have the following version for $w$.
\begin{lemma}\label{Lm:CP4}
%Let $\eta:\R\to\R$ have bounded and continuous first and second order derivatives, and assume that $q$ is related to $\eta$ as in Lemma~\ref{Lm:CP3}. Suppose that $\eta$ is non-increasing and let $\tilde v\in W^{1,p}(\Omega)$ be such that $\displaystyle \int_\Omega\tilde v^{q+1}\,\dx<+\infty$, and 
Let $w$ be as in Proposition~\ref{Prop:CP} and suppose $\widetilde w\in L^{q+1}(E)\cap W_0^{1,p}(E)$ is non-negative. Then we have
\[
    \iint_{E_T}\Big[-\widehat{\mathfrak{h}}_\dl(w,\widetilde w)\partial_t\psi
    +
    \bl{A}(x,w,Dw)\cdot D[\widehat{\H}_\dl (w - \widetilde w)\psi]\Big]\dx\dt=0
\]
for all $\psi\in C^\infty_0(\R^N\times(0,T))$. 
\end{lemma}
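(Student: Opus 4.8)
The statement is the exact counterpart of Lemma~\ref{Lm:CP3} with $v$ replaced by $w$, the Lipschitz cut-off $\H_\dl$ replaced by its odd reflection $\widehat{\H}_\dl$, and $\mathfrak{h}_\dl$ replaced by the quantity $\widehat{\mathfrak{h}}_\dl$ from \eqref{Eq:def-q-hat}; the plan is therefore to rerun that proof, attending only to the one point at which the two settings genuinely differ. First I would insert the test function $\z:=\widehat{\H}_\dl\big(\llbracket w\rrbracket_h-\widetilde w\big)\psi$, with $\llbracket\cdot\rrbracket_h$ the exponential time mollification of \eqref{def:mol}, into the weak formulation \eqref{Eq:weak-form-1} for $w$. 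The admissibility of $\z$ is the crucial new ingredient: since $w$ solves \eqref{Eq:CDP}$_1$ but need \emph{not} vanish on the lateral boundary, I cannot argue as for $v$; instead I would observe that $w\ge0$ forces the trace of $w(\cdot,t)$ on $\partial E$ to be non-negative, whereas $\widetilde w\in W^{1,p}_0(E)$ has zero trace, so $\llbracket w\rrbracket_h(\cdot,t)-\widetilde w\ge0$ on $\partial E$ in the trace sense; since $\widehat{\H}_\dl$ is Lipschitz and vanishes identically on $[0,\infty)$, the composition $\widehat{\H}_\dl(\llbracket w\rrbracket_h-\widetilde w)$ has zero trace, hence lies in $L^p\big(0,T;W^{1,p}_0(E)\big)$, and the remaining requirements on $\z$ follow from Lemma~\ref{Lm:mol}. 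As $\psi\in C^\infty_0(\R^N\times(0,T))$ we have $\z(\cdot,0)=\z(\cdot,T)=0$, so the $w(\cdot,0)$-term in \eqref{Eq:weak-form-1} drops out.

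The passage $h\downarrow0$ then parallels the proof of Lemma~\ref{Lm:CP3}. In the diffusion term, $D\z=\widehat{\H}_\dl(\llbracket w\rrbracket_h-\widetilde w)D\psi+\psi\,\widehat{\H}_\dl'(\llbracket w\rrbracket_h-\widetilde w)\big(D\llbracket w\rrbracket_h-D\widetilde w\big)\to D\big[\widehat{\H}_\dl(w-\widetilde w)\psi\big]$ in $L^p(E_T)$ by Lemma~\ref{Lm:mol}\,(iv), while $\bl{A}(x,w,Dw)\in L^{p'}(E_T)$ by \eqref{Eq:1:2-}, which produces the second term of the asserted identity. For the time term $\iint_{E_T}-w^q\partial_t\z$ I would split $-w^q=-\llbracket w\rrbracket_h^q+(\llbracket w\rrbracket_h^q-w^q)$; integrating by parts in $t$ and using $\partial_t\llbracket w\rrbracket_h=\tfrac1h(w-\llbracket w\rrbracket_h)$ from Lemma~\ref{Lm:mol}\,(iii) together with the chain rule $\partial_z\widehat{\mathfrak{h}}_\dl(z,z_o)=\widehat{\H}_\dl(z-z_o)\,q\,z^{q-1}$, this term equals $-\iint_{E_T}\widehat{\mathfrak{h}}_\dl(\llbracket w\rrbracket_h,\widetilde w)\partial_t\psi+\iint_{E_T}(\llbracket w\rrbracket_h^q-w^q)\widehat{\H}_\dl(\llbracket w\rrbracket_h-\widetilde w)\partial_t\psi$ up to the term $\iint_{E_T}(\llbracket w\rrbracket_h^q-w^q)\widehat{\H}_\dl'(\llbracket w\rrbracket_h-\widetilde w)\tfrac1h(w-\llbracket w\rrbracket_h)\psi$, which for $\psi\ge0$ is $\le0$ because $\widehat{\H}_\dl'\ge0$ and $z\mapsto z^q$ is non-decreasing, and is therefore discarded. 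Letting $h\downarrow0$, the term $\iint_{E_T}(\llbracket w\rrbracket_h^q-w^q)\widehat{\H}_\dl(\llbracket w\rrbracket_h-\widetilde w)\partial_t\psi$ vanishes by Lemma~\ref{Lm:mol}\,(i) (invoking Lemmas~\ref{Lm:algebra} and \ref{Lm:g} together with H\"older's inequality to handle the power $q$), and from $\big|\widehat{\mathfrak{h}}_\dl(\llbracket w\rrbracket_h,\widetilde w)-\widehat{\mathfrak{h}}_\dl(w,\widetilde w)\big|\le\big|\llbracket w\rrbracket_h^q-w^q\big|$ (valid since $|\widehat{\H}_\dl|\le1$) one obtains $\iint_{E_T}\widehat{\mathfrak{h}}_\dl(\llbracket w\rrbracket_h,\widetilde w)\partial_t\psi\to\iint_{E_T}\widehat{\mathfrak{h}}_\dl(w,\widetilde w)\partial_t\psi$. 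This gives the asserted identity with ``$\le$'' in place of ``$=$'' for $\psi\ge0$; the reverse inequality follows from the same computation run with the backward mollification $\llbracket\cdot\rrbracket_{\bar h}$, for which $\partial_t\llbracket w\rrbracket_{\bar h}=\tfrac1h(\llbracket w\rrbracket_{\bar h}-w)$ flips the sign of the discarded term, and the restriction $\psi\ge0$ is finally removed by linearity, writing $\psi=(\psi+M\phi)-M\phi$ with $\phi\in C^\infty_0(\R^N\times(0,T);[0,1])$ equal to $1$ on $\supp\psi$ and $M$ large.

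The step I expect to be the only genuine obstacle — all the rest being a sign-careful transcription of the proof of Lemma~\ref{Lm:CP3} — is precisely this admissibility point: because $w$ need not vanish on $\partial E\times(0,T)$, one must replace the boundary-vanishing of the solution (exploited for $v$) by the one-sidedness of $\widehat{\H}_\dl$, namely its vanishing on $[0,\infty)$, in combination with $w\ge0$ and $\widetilde w\in W^{1,p}_0(E)$, to conclude that $\widehat{\H}_\dl(w-\widetilde w)$ has zero lateral trace.
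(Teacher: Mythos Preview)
Your proposal is correct and follows essentially the same route as the paper: the paper's proof of Lemma~\ref{Lm:CP4} singles out exactly the admissibility argument you identify (non-negativity of $w$, zero trace of $\widetilde w$, and vanishing of $\widehat{\H}_\dl$ on $[0,\infty)$ force $\widehat{\H}_\dl(w-\widetilde w)=0$ on $\partial E$ in the trace sense) and then refers the reader to the computations of Lemma~\ref{Lm:CP3}. Your write-up is simply more explicit about the $\psi\ge0$ restriction and its removal by linearity, which the paper leaves implicit in the forward/backward mollification step.
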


\begin{proof}
Note that $\widetilde w=0$ on $\partial E$ and $w$  is non-negative. Therefore, for a.e.~$t\in (0,T)$, we have
\[
\widehat\H_\dl \big(w(\cdot, t)-\widetilde w(\cdot)\big)=0\quad \mbox{in the sense of traces on $\pl E$.}
\]
This allows us to choose the test function $\z=\widehat\H_\dl(\llbracket w \rrbracket_{h}-\widetilde{w})\psi$ in the weak formulation \eqref{Eq:weak-form-1}.
The remainder of the proof is analogous to the one for Lemma~\ref{Lm:CP3}.
\end{proof}

%\begin{proof}
%Because of the linearity in $\eta$, as before, without loss of generality we may assume that $\eta$ is convex. Moreover, since $v\ge0$ on $\Sigma$, the condition $\eta'(v-\tilde v)=0$ on $\Sigma$ can be satisfied if we assume that $\tilde v\le0$ $\H^{N-1}$ a.e. on $\partial\Omega$ and $\eta'(t)=0$ for any $t\ge0$. The remainder of the proof follows as in the proof of \cite[Lemma~1]{Otto}.
%\end{proof}

With all the preparation at hand, the proof of Proposition~\ref{Prop:CP} is based on the approach developed in \cite{Otto}, and relies on the doubling of the time variable. %We give all the details for the sake of completeness.
%\vskip.2truecm
\begin{proof}[\textrm{\bf Proof of Proposition~\ref{Prop:CP}}]
Let us consider
\[
    (x,t_1,t_2)\in\widetilde{Q}:=E\times(0,T)^2,
\]
select a non-negative function $\psi\in C^{\infty}_0((0,T)^2)$,  and extend $v$ and $w$ to $\widetilde Q$ by setting
\[
    v(x,t_1,t_2)=v(x,t_1),\qquad w(x,t_1,t_2)=w(x,t_2).
\]
It is apparent that $\H_\delta$, the choices $\widetilde v(x)=w(x,t_2)=:w_{t_2}(x)$, and $(0,T)\ni t_1\mapsto \psi_{t_2}(t_1):= \psi(t_1,t_2)$ are admissible in Lemma~\ref{Lm:CP3} for fixed $\delta>0$ and a.e. $t_2\in(0,T)$. Hence, we have
\begin{align}\label{Eq:CP3}
    \iint_{E_T}\Big[ & -\mathfrak{h}_\delta(v,w_{t_2})\partial_{t_1}\psi_{t_2}
    +
    \bl{A}(x,v,Dv)\cdot D\big[\H_\dl \big(v-w_{t_2}\big)\psi_{t_2}\big]\Big]\,\dx\dt_1=0,
\end{align}
where $\mathfrak{h}_\delta$ is linked to $\H_\delta$ by \eqref{Eq:def-q}. 
Here $v$ is to be understood as a function of $(x,t_1)$, while $w_{t_2}$ depends on $x$,
and $\psi_{t_2}$ on $t_1$. Analogously, $\widehat{\H}_\delta$, the choices $\widetilde w(x)=v(x,t_1)=: v_{t_1}(x)$, and $(0,t)\ni t_2\mapsto \psi_{t_1}(t_2):= \psi(t_1,t_2)$ are admissible in Lemma~\ref{Lm:CP4} for fixed $\delta>0$ and a.e. $t_1\in(0,T)$. Hence
\begin{align}\label{Eq:CP4}
    \iint_{E_T}\Big[& -\widehat{\mathfrak{h}}_\delta(w,v_{t_1})\partial_{t_2}\psi_{t_1}
    + 
    \bl{A}(x,w,Dw)\cdot D\big[\widehat{\H}_\dl 
    \big(w-v_{t_1}\big)\psi_{t_1}\big]\Big]\,\dx\dt_2=0,
\end{align}
where $\widehat{\mathfrak{h}}_\delta$ is linked to $\widehat{\H}_\delta$ by \eqref{Eq:def-q-hat}. In obtaining \eqref{Eq:CP3} and \eqref{Eq:CP4}, we exploited the fact that $v=0$ on $\pl E\times(0,T]$.

We now integrate \eqref{Eq:CP3} over $t_2\in(0,T)$, \eqref{Eq:CP4} over $t_1\in(0,T)$, add both resulting identities, use the fact
\[
    \H_\delta(z)=-\widehat{\H}_\delta(-z),
\]
and conclude that 
\begin{align}\label{Eq:CP5}
    \iiint_{\widetilde Q}\Big[ &- \big[ \mathfrak{h}_\delta(v,w)\partial_{t_1}\psi + \widehat{\mathfrak{h}}_\delta(w,v)\partial_{t_2}\psi\big] \\\nonumber
    & + \big(\bl{A}(x,v,Dv)-\bl{A}(x,w,Dw)\big)\cdot  D[ \H_\delta(v-w)]\psi\Big]\,\dx\dt_1\dt_2=0.
\end{align}
In \eqref{Eq:CP5}, we will eventually pass to the limit with respect to $\delta$. Before doing it, we need to get rid of the terms which do not have a regular limit. Indeed, we have
\begin{align*}
    \big(\bl{A}(x,v,Dv)-&\bl{A}(x,w,Dw)\big)\cdot D[\H_\delta(v-w)] \\
    &= \H_\delta^{\prime}(v-w) \big(\bl{A}(x,v,Dv)-\bl{A}(x,v,Dw)\big)\cdot (D v-D w) \\
    &\quad +
    \H_\delta^{\prime}(v-w) \big(\bl{A}(x,v,Dw)-\bl{A}(x,w,Dw)\big)\cdot (D v-D w).
\end{align*}
The first term on the right-hand side is non-negative due to the monotonicity \eqref{Eq:CP:mono} and henceforth can be discarded. Whereas the second term tends to zero under the integral over $\widetilde{Q}$ because of \eqref{Eq:CP:growth}. Indeed, we estimate with the aid of \eqref{Eq:CP:growth} that
\begin{align*}
   \iiint_{\widetilde{Q}} & \H_\delta^{\prime}(v-w) \big(\bl{A}(x,v,Dw) - \bl{A}(x,w,Dw)\big)\cdot (Dv-Dw)\psi\,\dx\dt_1\dt_2\\
   & \le \frac{\Lm}{\dl}\iiint_{\widetilde{Q}\cap\{0<v-w<\dl\}}(v-w)\big(1+|Dw|^{p-1}\big)|D(v-w)|\psi\,\dx\dt_1\dt_2\\
   & \le \Lm\iiint_{\widetilde{Q}\cap\{0<v-w<\dl\}}\big(1+|Dw|^{p-1}\big)|D(v-w)|\psi\,\dx\dt_1\dt_2   \to 0,
\end{align*}
as $\dl\downarrow0$. 
Hence, we obtain from \eqref{Eq:CP5} that
\begin{equation}\label{Eq:CP7}
    \limsup_{\delta\downarrow0}\iiint_{\widetilde Q}-\big[ \mathfrak{h}_\delta(v,w)\partial_{t_1}\psi + \widehat{\mathfrak{h}}_\delta(w,v)\partial_{t_2}\psi\big]\,\dx\dt_1\dt_2\le0.
\end{equation}
In order to identify the limit as $\delta\downarrow 0$ in \eqref{Eq:CP7}, observe that for any non-negative $z\ge z_o$,
\begin{align*}
    \lim_{\delta\downarrow 0}\mathfrak{h}_\delta(z,z_o)
    &=
    \lim_{\delta\downarrow 0}\int_{z_o}^z \mathcal H_\delta (s-z_o)qs^{q-1}\,\ds\\
    &=
    \int_{z_o}^z \boldsymbol\chi_{(z_o,\infty)} (s)qs^{q-1}\,\ds\\
    &=\int_{z_o}^z qs^{q-1}\,\ds =z^q-z_o^q,
\end{align*}
and 
\begin{align*}
   0\le \mathfrak{h}_\delta(z,z_o)
    &\le  
    \int_{z_o}^z qs^{q-1}\,\ds =z^q-z_o^q.
\end{align*}
For non-negative $z<z_o$ we have $\mathfrak{h}_\delta(z,z_o)=0$. In summary, for $z,z_o\in \rr_{\ge0}$ we have
\begin{equation*}
    0\le \mathfrak{h}_\delta(z,z_o)\le(z^q-z_o^q)_+
    \quad\mbox{and}\quad 
    \lim_{\delta\downarrow 0}\mathfrak{h}_\delta(z,z_o)=(z^q-z_{o}^q)_+,
\end{equation*}
and similarly
\begin{equation*}
    0\le \widehat{\mathfrak{h}}_\delta(z,z_o)\le(z_o^q-z^q)_+
    \quad\mbox{and}\quad 
    \lim_{\delta\downarrow 0}\widehat{\mathfrak{h}}_\delta(z,z_o)=(z_o^q-z^q)_+
    .
\end{equation*}
Applying the Dominated Convergence Theorem as $\delta\downarrow 0$ in \eqref{Eq:CP7}, and taking into account that both $v$ and $w$ are non-negative yield
\begin{equation}\label{Eq:CP8}
    \iiint_{\widetilde Q} 
    -(v^q-w^q)_+(\partial_{t_1}\psi + \partial_{t_2}\psi) \,\dx\dt_1\dt_2\le0
\end{equation}
for all non-negative $\psi\in C^\infty_0((0,T)^2)$.

Let the non-negative $\phi\in C^\infty_0(0,T)$ be given. Fix a non-negative $\varphi\in C^\infty_0(\R)$ with unit $L^1$ norm and define
\[
    \psi_\epsilon(t_1,t_2)
    :=
    \frac1\epsilon\varphi\Big(\frac{t_1-t_2}\epsilon\Big)\phi\Big(\frac{t_1+t_2}2\Big).
\]
In order for $\psi_\epsilon$ to be admissible in \eqref{Eq:CP8}, we need
\begin{equation*}
    \psi_\epsilon\ge0
    \quad\mbox{and}\quad
    \spt\psi_\epsilon\Subset (0,T)^2.
\end{equation*}
The former condition is trivially satisfied, whereas the latter one holds, provided $\epsilon$ is chosen sufficiently small. By direct calculations, we have
\[
    \partial_{t_1}\psi_\epsilon+\partial_{t_2}\psi_\epsilon
    =
    \frac1{\epsilon}\varphi\Big(\frac{t_1-t_2}\epsilon\Big)
    \phi'\Big(\frac{t_1+t_2}2\Big).
\]
Making a change of variable by letting $\displaystyle t_1-t_2=\tau$, whence we obtain
\begin{align}\label{Eq:CP9}
    \int_{\R} \frac1\epsilon\varphi\Big(\frac{\tau}\epsilon\Big)\,
    \iint_{E_T}
    -\big(v^q(t)-w^q(t-\tau)\big)_+\phi'\Big(t-\frac\tau2\Big) \,\dx\dt\d\tau\le0.
\end{align}
It is apparent that
\begin{align}\label{Eq:CP10}
    \lim_{\tau\to 0}\iint_{E_T} 
    \big(v^q(t)&-w^q(t-\tau)\big)_+\phi'\Big(t-\frac\tau2\Big)\,\dx\dt\\\nonumber
    &=
    \iint_{E_T} \big(v^q(t)-w^q(t)\big)_+\phi'(t) \,\dx\dt.
\end{align}
Taking account of \eqref{Eq:CP10} in \eqref{Eq:CP9}, and passing to the limit as $\tau\to0$ and  $\epsilon\downarrow 0$ in \eqref{Eq:CP9} yields
\begin{equation*}
    \iint_{E_T} -(v^q-w^q)_+\phi'\,\dx\dt\le0
\end{equation*}
for any non-negative $\phi\in C^\infty_0(0,T)$.

Now we remove the assumption that $\phi(0)=0$. To obtain this, we choose $\phi$ in the form $\alpha\eta_\delta$ with $\alpha\in C^\infty_0((-\infty,T))$ and $\eta_\delta$ equal to
0 on $(-\infty, 0]$, equal to $1$ on $[\delta ,\infty)$ with $\delta>0$ and linearly interpolated on $(0,\delta)$. With the abbreviation $w_o=w(\cdot,0)$ we obtain
\begin{align*}
    \iint_{E_T} &-(v^q-w^q)_+\alpha'\eta_\delta\,\dx\dt\\
    &\le 
    \tfrac1{\delta}
    \iint_{E\times (0,\delta)} (v^q-w^q)_+\alpha\,\dx\dt\\
    &\le     
    \tfrac1{\delta}
    \iint_{E\times (0,\delta)}
    \Big[
    (v^q-v_o^q)_+ + (v_o^q-w_o^q)_+ +(w_o^q-w^q)_+\Big]\al\,\dx\dt\\
    &=
    \tfrac1{\delta}
    \iint_{E\times (0,\delta)}
    \Big[
    (v^q-v_o^q)_+ + (w_o^q-w^q)_+\Big]\al\,\dx\dt + \tfrac1{\delta}\int_0^\delta \alpha\, \dt
    \int_E  (v_o^q-w_o^q)_+\dx.
\end{align*}
As a consequence of Proposition \ref{Prop:B:3}, the fact that $w\in C ([0,T]; L^{q+1}(E))$, and 
Lemma~\ref{Lm:CP1} the first two terms on the right-hand side vanish in the limit $\delta\downarrow 0$, while the third integral converges to $\alpha (0)\int_E  (v_o^q-w_o^q)_+\,\dx$. Finally, for  the left-hand side we have
\begin{equation*}
    \lim_{\delta\downarrow 0}  \iint_{E_T} -(v^q-w^q)_+\alpha'\eta_\delta\,\dx\dt
    =
    \iint_{E_T} -(v^q-w^q)_+\alpha'\,\dx\dt.
\end{equation*}
Taking advantage of these convergences we get
\[
    -\int_0^T\alpha'(t)\bigg[\int_{E\times\{t\}} (v^q -w^q )_+\,\dx\bigg]\,\dt
    \le
    \alpha(0)\int_E (v_o^q - w_o^q)_+\,\dx.
\]
By assumption we have $(v_o^q-w_o^q)_+=0$ a.e.~in $E$, so that the right-hand side integral of the last display vanishes. Moreover, by density, we can consider $\alpha$, such that $\alpha(T)=0$, $\alpha(0)=1$, and  $\alpha'(t)=-\frac1T$ for all $ t\in(0,T)$. Hence, we obtain
\[
\iint_{E_T} (v^q -w^q )_+\,\dx\dt=0\quad\implies\quad v \le w \quad\text{a.e. in}\> E_T.
\]
This finishes the proof.
\end{proof}

\chapter{Boundedness of weak solutions}\label{sec:boundedness}

%\textcolor{orange}{TODO: some explanation}
We first discuss the significance of the range of $p$ and $q$ in Corollary~\ref{Cor:bdd}, comparing them with previous results. Subsequently, we provide the full proof of Theorem~\ref{THM:BD:1}.

\section{Remarks on the literature}\label{S:bd-opt}
%As mentioned in Section~\ref{sec:intro-intro}, 
Corollary~\ref{Cor:bdd} was first stated in \cite[Theorem~1.1]{Ivanov-1995-2} for the prototype equation \eqref{doubly-nonlinear-prototype}, but written 
in a different form, namely
\begin{equation}\label{DNL-FSV}
u_t-\Div(|u|^{m-1}|Du|^{p-2}Du)=0,
\end{equation}
for $p>1$, $2<p+m<3$. Comparing \eqref{doubly-nonlinear-prototype} with \eqref{DNL-FSV}, it is apparent that they are (at least formally) equivalent, once we set
\[
q=\frac1{1+\frac{m-1}{p-1}},
\]
whence conditions $p>1$, $2<p+m<3$ translate into conditions $0<p-1<q$. In the sequel, %we are not going to discuss the correspondence between these two formulations, and
for simplicity we treat the two formulations as if they were fully synonymous.

Local boundedness is given in \cite{Ivanov-1995-2} assuming
\[
p<N,\qquad\frac{\frac{m-1}{p-1}+2}{\frac{m-1}{p-1}+1}<\frac{Np}{N-p},
\]
conditions which are equivalent to the assumptions on $p$ and $q$ of Corollary~\ref{Cor:bdd}, provided $p<N$. Even though the statement is given only for \eqref{DNL-FSV}, the author briefly discusses how the results can be extended to a more general class of quasi-linear equations, which correspond to \eqref{Eq:1:1f}--\eqref{Eq:1:2} here. Moreover, it is important to remark that the statement of \cite{Ivanov-1995-2} is purely qualitative, and there is no proof, whereas a detailed demonstration is provided for the corresponding {\em global result}. On the other hand, in \cite[Section~4]{Ivanov-1995-2}, Ivanov shows the sharpness of his result. Indeed, for $r\in(0,1)$, $h>1$, $s>0$, he considers the unbounded function
\[
w\colon \overline{B_r\times(0,1)}\to\R,\quad w(x,t)=(1-ht)_+ v(x),\quad v(x)=\left[\frac{(r^2-|x|^2)^2}{|x|^{N/s} \ln^2|x|^2}\right]^{\frac1{1+\frac{m-1}{p-1}}},
\]
and proves that if 
\[
p<N,\quad s=\frac Np\left(\frac{\frac{m-1}{p-1}+2}{\frac{m-1}{p-1}+1}-p\right),\quad
\frac{\frac{m-1}{p-1}+2}{\frac{m-1}{p-1}+1}\ge\frac{Np}{N-p},
\]
there exist proper values of $r$ and $h$, such that $w$ is a sub-solution to the Cauchy-Dirichlet Problem
\begin{equation*}
\left\{
\begin{array}{cl}
    \partial_t w - \Div (|w|^{m-1} |Dw|^{p-2}Dw)=0  & \quad \mbox{in  $B_r\times(0,1]$,}\\[6pt]
    w =0 &\quad \mbox{on $\partial B_r\times(0,1]$,}\\[6pt]
    w(\cdot,0)= v &\quad\mbox{in $B_r$.}
\end{array}
\right.
\end{equation*}

A complete proof of Ivanov's local boundedness estimates is provided in \cite[Theorem~9.1]{Ivanov-1997}; the statement is again qualitative, but a careful check of the computations allows one to recover a quantitative result, much like the one given in Theorem~\ref{THM:BD:1}. It is interesting to point out that in \cite{Ivanov-1997} both the so-called {\em fast diffusion} (i.e. $2<m+p<3$) and {\em slow diffusion} (i.e. $m+p\ge3$) ranges are considered.

Statements and complete proofs for equations like \eqref{DNL-FSV} but with the full quasi-linear structure are first given in \cite[Theorem~3.1]{FSV-14}, under the extra assumption that the gradient $Du$ of a solution $u$ is well-defined, which is not in general the case. Later, in 
\cite[Theorems~7.1--7.2]{Vespri-Vestberg} Vespri \& Vestberg showed that such a hypothesis is not necessary, and the same result holds true, simply assuming that $u$ is a weak solution.

Concerning the range of parameters, Vespri \& Vestberg take $p\in(1,2)$, $m>1$, $2<m+p<3$, and show that local boundedness of solutions is a direct consequence of their sheer definition in the range $m+p>3-\frac{p}{N-\frac{N-p}p}$. Since they implicitly assume $N\ge2$, they always have $p<N$. Moreover, $m>1$ in \eqref{DNL-FSV} implies $0<q<1$ in \eqref{Eq:1:1f}. Keeping in mind these constraints on the values of the parameters, in \cite{Vespri-Vestberg} their optimal range amounts to assuming for \eqref{Eq:1:1f}
\[
q<\frac{N(p-1)+p}{N-p},
\]
and therefore, Corollary~\ref{Cor:bdd} represents a straightforward extension to a wider range.
% \textcolor{red}{(I checked their paper long time ago. I still remember their proof does not cover all cases. For example, on p. 25, line 3, they require $M<2$ which yields, in our symbolism, $q<1$. If I understood correctly, in the 2nd line of their (7.2), they required $p<N$ to proceed. In fact, they didn't use their "parabolic embedding" Lemma 3.3. Instead, they applied "elliptic Gagliardo-Nirenberg" in their (7.2) and had in mind $p^*=\frac{Np}{N-p}$ with again $p<N$ only. Hope it helps.)}

% \textcolor{purple}{(I did not check their paper in all details, hence, honestly, I do not know whether they cover all the cases or not. But, again honestly, does it really matter here? It's not us versus them; moreover, we are not relying in our proofs on what they did in theirs. At this stage we are simply giving the state of the literature, and explain what we did. We are giving a full proof, that covers all the cases we have in the statement, and in my opinion this is what really matters.)}

% Taking into account the previous remarks, with respect to the existing literature, the novelty of Theorem~\ref{THM:BD:1} consists in being, to the best of our knowledge, the first result where explicit {\em quantitative} boundedness estimates are provided for solutions to doubly non-linear parabolic equations with the full quasi-linear structure as in \eqref{Eq:1:1f}--\eqref{Eq:1:2}. 

\section{Optimality}
It is worth to point out that the borderline case distinguishing local boundedness is $q=\frac{N(p-1)+p}{(N-p)_+}$, and not $q=p-1$. Indeed, when $0<q\le p-1$, weak solutions are locally bounded, cf.~\cite{BHSS}.
The optimality of the range of $p$ and $q$ given in Corollary~\ref{Cor:bdd} is shown in the following counterexamples, 
built  on the analogous ones for the fast diffusion equation given in \cite{king, peletier} or for the parabolic $p$-Laplacian considered in \cite{bidaut}. 

A first counterexample is the function
\begin{equation}\label{Ex-bdd}
u(x,t)=C(N,p,q)\frac{(T-t)_+^{\frac1{q-(p-1)}}}{|x|^{\frac{p}{q-(p-1)}}},
\end{equation}
with
\[
C(N,p,q)=\left[\frac{N(q-(p-1))-pq}{q}\left(\frac p{q-(p-1)}\right)^{p-1}\right]^{\frac1{q-(p-1)}};
\]
$u$ is a non-negative, local, weak solution to the prototype equation \eqref{doubly-nonlinear-prototype} in ${\mathbb R}^N\times(-\infty,T)$ for $p<N$ and $0<p-1<\frac{N(p-1)+p}{(N-p)_+}<q$. Obviously, the solution is unbounded near $x=0$ for all $t<T$ and finite otherwise. 

A second counterexample in the same range of values for $p$ and $q$ can be built looking for solutions to \eqref{doubly-nonlinear-prototype} of the form
\begin{equation*}%\label{self-sim-II}
u(x,t)=(T-t)_+^\alpha\, f\big(|x|(T-t)_+^\beta\big),
\end{equation*}
where $T\in\R$, $\alpha,\,\beta\in\R$ to be determined,
and $f(r)$ is a positive, 
monotone decreasing,
radial function also to be computed; for simplicity, we assume $q=1$, and we very briefly discuss later on what happens in a more general case.
Due to the conditions
\[
p<N,\quad 0<p-1<\frac{N(p-1)+p}{N-p}<q,
\]
the previous choice for $q$ implies $1<p<\frac{2N}{N+2}$.

If we assume $f$ to be regular and smooth, it is rather straightforward to verify that $f$ must satisfy the ordinary differential equation
\begin{equation*}
 -\big[\alpha f+\beta r f'\big]+\frac{N-1}r(-f')^{p-1}-(p-1)(-f')^{p-2}f''=0,
\end{equation*}
with $\displaystyle \alpha=\frac{1+\beta p}{2-p}$.
Choosing $\alpha=0$ yields $\beta=-\frac1p$, and $f$ solves
\[
    \frac{r}{p} f'+\frac{N-1}r (-f')^{p-1}-(p-1)(-f')^{p-2}f''=0,
\]
which we can rewrite as
\[
    \frac{r}{p} -\frac{N-1}r(-f')^{p-2}+(p-1)(-f')^{p-3}f''=0.
\]
The substitution $\displaystyle y:=(-f')^{p-2}$ leads to the first order linear ordinary differential equation
\begin{equation}\label{eq-for-y}
y'=\frac{(2-p)(N-1)}{p-1}\frac{y}r -\frac{2-p}{p(p-1)}r,
\end{equation}
which can be explicitly solved. Since $1<p<\frac{2N}{N+2}$, a straightforward integration gives
\[
y=r^2\left[C r^{\frac{|\lambda_1|}{p-1}}+\frac{2-p}{p|\lambda_1|}\right],
\]
where, according to \eqref{def:lambda-r}, $\lambda_1=N(p-2)+p<0$, and $C$ is an arbitrary constant. In order to streamline the presentation, we directly assume $C>0$. If we revert to $f'$, we conclude that
\[
f'(r)=-r^{-\frac 2{2-p}}\left[C r^{\frac{|\lambda_1|}{p-1}}+\frac{2-p}{p|\lambda_1|}\right]^{-\frac1{2-p}}.
\]
This result is also given in \cite[Section 1, (1.9)]{bidaut}, in the framework of a wider study of explicit and semi-explicit solutions to the parabolic $p$-Laplacian for $1<p<2$.

Once $f'$ is known, $f$ can be fully computed for specific values of $N$ and $p$, but we refrain from further going into details here.

Instead of proceeding as we did here, the same result can be obtained relying on the correspondence between radial solutions to the porous medium equation and to the parabolic $p$-Laplacian, studied in \cite{iagar}. In such a case, the starting point to recover the previous expression for $f'$, is the so-called \emph{dipole solution} to the porous medium equation
\[
u_t-\Delta u^m=0,
\]
for $0<m<\frac{N-2}{N+2}$, $N\ge3$, discussed in 
\cite[Section~3, (3.4)]{king1990} and in \cite[Proposition~7]{kosov}. 

Independently of the way used to obtain it, it is apparent that
\begin{equation*}
    f(r)
    \simeq 
    \left\{
    \begin{array}{cl}
    \displaystyle
    \bigg[\Big(\frac p{2-p}\Big)^{p-1}|\lambda_1|\bigg]^{\frac1{2-p}} 
    r^{-\frac p{2-p}} &\mbox{as $r\downarrow 0$},\\[9pt]
    \displaystyle
    C\frac{p-1}{N-p }r^{-\frac{N-p}{p-1}}&\mbox{as $r\to\infty$,}
    \end{array}
    \right.
\end{equation*}
which yields the asymptotic behaviour
\begin{equation*}
    u(x,t)
    \simeq 
    \left\{
    \begin{array}{cl}
    \displaystyle
    \bigg[\Big(\frac p{2-p}\Big)^{p-1}|\lambda_1|\bigg]^{\frac1{2-p}}\left[\frac{(T-t)_+}{|x|^p}\right]^{\frac1{2-p}}
    &\mbox{as $ \displaystyle \frac{|x|}{(T-t)_+^{1/p}}\downarrow 0$,}\\[10pt]
    \displaystyle
    C\frac{p-1}{N-p } \bigg[\frac{(T-t)_+^{1/p}}{|x|}\bigg]^{\frac{N-p}{p-1}}
    &
    \mbox{as $ \displaystyle\frac{|x|}{(T-t)_+^{1/p}}\to\infty$.}
     \end{array}
    \right.
\end{equation*}
Notice that the behavior at the origin is precisely the one given by \eqref{Ex-bdd}, once $q=1$ is assumed, whereas the decay at infinity is faster. Moreover, the condition $1<p<\frac{2N}{N+2}$ ensures that $u$ and $Du$ have the right integrability in a neighborhood of the origin, so that $u$ is indeed a weak solution.

Just for the sake of completeness, it is worth pointing out that for $p=\frac{2N}{N+1}$, \eqref{eq-for-y} has the solution
\[
f'(r)=-r^{-(N+1)}\left[C-\frac{N+1}{N(N-1)}\ln r\right]^{-\frac{N+1}2},\quad \mbox{$C\in\R$ arbitrary,}
\]
defined on the open interval $\big( 0,\exp\big[C\frac{N(N-1)}{N+1}\big]\big)$, which is unbounded both in a neighborhood of $r=0$ and as $r\uparrow\exp\big[C\frac{N(N-1)}{N+1}\big]$. However, it is a matter of straightforward computations, to verify that the corresponding solution $u=u(x,t)$ fails to be a weak solution.

Finally, we expect that radial, unbounded solutions similar to the ones we have just discussed,
exist also for $q\not=1$, but we refrain from going into details here.

\section{Energy estimates}\label{S:energy}

In this section we present certain energy estimates for weak sub(super)-solutions to \eqref{Eq:1:1f} under assumption \eqref{Eq:1:2}.
They are analogs of the energy estimates  derived in \cite[Proposition~3.1]{BDL-21}, which will be referred to for details.
Moreover, it is noteworthy that they actually hold true for signed solutions and for all $p>1$ and $q>0$. In deriving the following energy estimates, the mollification \eqref{def:mol} in the time variable is called for. 

\begin{proposition}\label{Prop:2:1}
    Let $p>1$ and $q>0$. There exists a constant  $\boldsymbol \gm (C_o,C_1,p)>0$, such that whenever
	$u$ is a non-negative weak sub(super)-solution to \eqref{Eq:1:1f} with \eqref{Eq:1:2} in $E_T$, 
    $Q_{\rho,s}=K_{\rho}(x_o)\times (t_o-s,t_o]\Subset E_T$ is a parabolic cylinder,
 	 $k>0$, and $\z$ any non-negative, piecewise smooth cutoff function
 	vanishing on $\pl K_{\rho}(x_o)\times (t_o-s,t_o)$,  then we have
\begin{align*}
	\max \bigg\{ \essup_{t_o-s<t<t_o}&\int_{K_\rho(x_o)\times\{t\}}	
	\z^p\mathfrak g_\pm (u,k)\,\dx\, ,\
	\iint_{Q_{\rho,s}}\z^p|D(u-k)_\pm|^p\,\dx\dt \bigg\}\\
	&\le
	\boldsymbol \gm\iint_{Q_{\rho,s}}
		\Big[
		(u-k)^{p}_\pm|D\z|^p + \mathfrak g_\pm (u,k)|\partial_t\z|
		\Big]
		\,\dx\dt\\
	&\quad +
	\int_{K_\rho(x_o)\times \{t_o-s\}} \z^p \mathfrak g_\pm (u,k)\,\dx,
\end{align*}
where $\mathfrak{g}_\pm$ is defined in \eqref{Eq:gpm}.
\end{proposition}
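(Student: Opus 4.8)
The strategy is the classical Caccioppoli/energy estimate for doubly non-linear equations, with the only delicacy coming from the time term $\partial_t u^q$, which must be handled by the exponential time-mollification $\llbracket\,\cdot\,\rrbracket_h$ introduced in~\eqref{def:mol} rather than by a naive integration by parts. I will treat only the sub-solution case with $(u-k)_+$; the super-solution case with $(u-k)_-$ is entirely analogous (replace $u$ by $-u$ and $k$ by $-k$, or simply track signs).

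\textbf{Step 1: choice of test function.} Fix $t_1\in(t_o-s,t_o]$ and a smooth cutoff $\vartheta_\eps$ in time that equals $1$ on $(t_o-s,t_1-\eps)$, vanishes at $t_o-s$ in the sense that it interpolates linearly on $(t_o-s,t_o-s+\eps)$ (to match the initial-slice term), and interpolates linearly down to $0$ on $(t_1-\eps,t_1)$. In the weak formulation~\eqref{Eq:weak-form} for the sub-solution $u$, insert the test function
\[
	\zeta(x,t)\mapsto \vartheta_\eps(t)\,\zeta^p(x,t)\,\big(\llbracket u\rrbracket_h(x,t)-k\big)_+ ,
\]
which is admissible: it lies in the required parabolic space because $\zeta$ vanishes on the lateral boundary and $\llbracket u\rrbracket_h\in C([0,T];L^{q+1}(E))$ by Lemma~\ref{Lm:mol}(ii), with spatial gradient in $L^p$ by Lemma~\ref{Lm:mol}(iv), and it is non-negative.

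\textbf{Step 2: the parabolic (time) term.} Using Lemma~\ref{Lm:mol}(iii), $\partial_t\llbracket u\rrbracket_h=\tfrac1h(u-\llbracket u\rrbracket_h)$, so after integrating by parts in $t$ the term $-\iint \boldsymbol u^q\,\partial_t[\cdots]$ splits into (i) a term where the $t$-derivative hits $\vartheta_\eps\zeta^p$, giving $\iint \vartheta_\eps'\zeta^p\,\boldsymbol u^q(\llbracket u\rrbracket_h-k)_+ + p\iint\vartheta_\eps\zeta^{p-1}\partial_t\zeta\,\boldsymbol u^q(\llbracket u\rrbracket_h-k)_+$, and (ii) the crucial term $\iint\vartheta_\eps\zeta^p\,\boldsymbol u^q\,\tfrac1h(u-\llbracket u\rrbracket_h)\chi_{\{\llbracket u\rrbracket_h>k\}}$. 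In (ii) I write $\boldsymbol u^q=(\boldsymbol u^q-\boldsymbol{\llbracket u\rrbracket_h}^q)+\boldsymbol{\llbracket u\rrbracket_h}^q$; the first piece gives $\tfrac1h(\boldsymbol u^q-\boldsymbol{\llbracket u\rrbracket_h}^q)(u-\llbracket u\rrbracket_h)\ge0$ by monotonicity of $s\mapsto\boldsymbol s^q$ and may be discarded (it has the right sign for a sub-solution, i.e.\ it is subtracted), while $\boldsymbol{\llbracket u\rrbracket_h}^q\,\partial_t\llbracket u\rrbracket_h(\llbracket u\rrbracket_h-k)_+=\partial_t\mathfrak g_+(\llbracket u\rrbracket_h,k)$ by the very definition~\eqref{Eq:gpm} of $\mathfrak g_+$. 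Integrating this last exact derivative in $t$ against $\vartheta_\eps\zeta^p$ and letting first $h\downarrow0$ (Lemma~\ref{Lm:mol}(i),(v)) and then $\eps\downarrow0$ produces, on the left, $\int_{K_\rho\times\{t_1\}}\zeta^p\mathfrak g_+(u,k)\,\dx$ minus the initial-slice term $\int_{K_\rho\times\{t_o-s\}}\zeta^p\mathfrak g_+(u,k)\,\dx$, and on the right the controllable term $\iint_{Q_{\rho,s}}\mathfrak g_+(u,k)|\partial_t\zeta|\,\dx\dt$ (after also absorbing the $\boldsymbol u^q(u-k)_+$-type terms, which are comparable to $\mathfrak g_+(u,k)$ up to the bound $|\boldsymbol u^q(u-k)_+|\le \tfrac{q+1}{q}\,\mathfrak g_+(u,k)$ plus lower-order, or can be handled directly).

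\textbf{Step 3: the diffusion term and conclusion.} Passing $h\downarrow0$ in the diffusion term and using Lemma~\ref{Lm:mol}(iv) turns it into $\iint\vartheta_\eps\,\mathbf A(x,t,u,Du)\cdot D[\zeta^p(u-k)_+]$; expanding $D[\zeta^p(u-k)_+]=\zeta^pD(u-k)_+ + p\zeta^{p-1}(u-k)_+D\zeta$ and invoking the structure conditions~\eqref{Eq:1:2}, the principal part is bounded below by $C_o\iint\vartheta_\eps\zeta^p|D(u-k)_+|^p$, while the cross term is estimated by $C_1\iint\vartheta_\eps\zeta^{p-1}(u-k)_+|Du|^{p-1}|D\zeta|$ and absorbed via Young's inequality with exponents $(p/(p-1),p)$, producing a small multiple of $\iint\vartheta_\eps\zeta^p|D(u-k)_+|^p$ (reabsorbed into the left side) plus a constant times $\iint(u-k)_+^p|D\zeta|^p$. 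Letting $\eps\downarrow0$, combining with Step~2, and finally taking the supremum over $t_1\in(t_o-s,t_o]$ on the left of the first term and keeping $t_1=t_o$ for the gradient term yields exactly the asserted inequality with a constant $\boldsymbol\gm=\boldsymbol\gm(C_o,C_1,p)$.

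\textbf{Main obstacle.} The only genuinely delicate point is Step~2: one cannot test with $u$ itself (no time derivative), so the whole argument hinges on the identity $\partial_t\mathfrak g_+(\llbracket u\rrbracket_h,k)=\boldsymbol{\llbracket u\rrbracket_h}^q\,\partial_t\llbracket u\rrbracket_h\,(\llbracket u\rrbracket_h-k)_+$ together with the sign of the discarded commutator term $\tfrac1h(\boldsymbol u^q-\boldsymbol{\llbracket u\rrbracket_h}^q)(u-\llbracket u\rrbracket_h)\ge0$, and on carefully controlling the interplay of the two limits $h\downarrow0$ and $\eps\downarrow0$ so that the initial slice $\int_{K_\rho\times\{t_o-s\}}\zeta^p\mathfrak g_+(u,k)\,\dx$ appears with the correct sign and the boundary contributions at $t_1$ survive. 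Everything else is routine, exactly as in \cite[Proposition~3.1]{BDL-21}, to which I would refer for the remaining details.
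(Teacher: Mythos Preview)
Your overall strategy---test with a mollified truncation, handle the diffusion part via~\eqref{Eq:1:2} and Young's inequality, and treat the time term through the exponential mollification~\eqref{def:mol}---is precisely the approach of \cite[Proposition~3.1]{BDL-21} that the paper invokes. Step~3 is correct. Step~2, however, contains two linked errors that make the argument fail as written.

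First, the identity you claim in (ii), namely $\llbracket u\rrbracket_h^q\,\partial_t\llbracket u\rrbracket_h\cdot(\llbracket u\rrbracket_h-k)_+=\partial_t\mathfrak g_+(\llbracket u\rrbracket_h,k)$, is false: from~\eqref{Eq:gpm} one has $\partial_w\mathfrak g_+(w,k)=qw^{q-1}(w-k)_+$, so the correct identity is $\partial_t\llbracket u\rrbracket_h^q\cdot(\llbracket u\rrbracket_h-k)_+=\partial_t\mathfrak g_+(\llbracket u\rrbracket_h,k)$. The multiplier $(\llbracket u\rrbracket_h-k)_+$ must come from the test function itself, not from the characteristic function $\chi_{\{\llbracket u\rrbracket_h>k\}}$. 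This forces a different order of operations: decompose $-u^q=(\llbracket u\rrbracket_{h}^q-u^q)-\llbracket u\rrbracket_{h}^q$ \emph{first}, then integrate by parts in $t$ on the mollified piece $-\iint\llbracket u\rrbracket_{h}^q\partial_t\varphi=\iint\partial_t\llbracket u\rrbracket_{h}^q\cdot\varphi$, after which the exact derivative appears correctly. With this ordering the stray $u^q(\llbracket u\rrbracket_h-k)_+$ terms of your (i) never arise---which is fortunate, since your absorption claim $u^q(u-k)_+\le\tfrac{q+1}{q}\mathfrak g_+(u,k)$ is also false (as $u\downarrow k$ the left side behaves like $k^q(u-k)$ while $\mathfrak g_+(u,k)\sim\tfrac{q}{2}k^{q-1}(u-k)^2$ by Lemma~\ref{Lm:g}).

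Second, for sub-solutions the \emph{forward} mollification gives the commutator the wrong sign. With $\partial_t\llbracket u\rrbracket_h=\tfrac1h(u-\llbracket u\rrbracket_h)$, the commutator contribution to $-\iint u^q\partial_t\varphi$ is $\iint\vartheta_\eps\zeta^p(\llbracket u\rrbracket_h^q-u^q)\tfrac1h(u-\llbracket u\rrbracket_h)\chi\le0$; since this sits on the left of the sub-solution inequality $\le0$, discarding it would \emph{increase} the left-hand side. Use the \emph{backward} mollification $\llbracket\cdot\rrbracket_{\bar h}$ instead: then $\partial_t\llbracket u\rrbracket_{\bar h}=\tfrac1h(\llbracket u\rrbracket_{\bar h}-u)$ and the analogous commutator is $\ge0$, hence may legitimately be dropped. (For super-solutions with $(u-k)_-$, forward mollification is the correct choice.) The paper's own proof of Proposition~\ref{Prop:parab} illustrates exactly this pairing.
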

Here, we used a convention about $\pm$: for sub-solutions the energy estimate reads with $+$, whereas for super-solutions the energy estimate reads with $-$.

\section{Proof of Theorem~\ref{THM:BD:1}}

The first step is to derive a general iterative inequality in \eqref{pre-recursiv}, departing from which we distinguish three cases. %Subsequently we prove the sup-bound by distinguishing three cases.
Throughout the proof we assume $(x_o,t_o)=(0,0)$ for simplicity.

\subsection{A general iterative inequality for $r\ge q+1$}

In order to use the energy estimate for sub-solutions in Proposition~\ref{Prop:2:1}, we set for $\sig\in(0,1)$, some $k>0$ to be chosen and $n\in\mathbb{N}_0$,
\begin{align*}%\label{choices:k_n}
	\left\{
	\begin{array}{c}
	\displaystyle k_n=k - \frac{k}{2^{n}},\quad \widetilde{k}_n=\frac{k_n+k_{n+1}}2,\\[5pt]
	\displaystyle \varrho_n=\sig\varrho+\frac{(1-\sig)\varrho}{2^{n}},
	\quad s_n=\sig s+\frac{(1-\sig)s}{2^{n}},\\[5pt]
	\displaystyle \widetilde{\varrho}_n=\frac{\varrho_n+\varrho_{n+1}}2,
	\quad\widetilde{s}_n=\frac{s_n+s_{n+1}}2,\\[5pt]
	\displaystyle K_n=K_{\varrho_n},\quad \widetilde{K}_n=K_{\widetilde{\varrho}_n},\\[5pt] 
	\displaystyle Q_n=K_n\times(-s_n,0],\quad
	\widetilde{Q}_n=\widetilde{K}_n\times(-\widetilde{s}_n,0].
	\end{array}
	\right.
\end{align*}
Introduce the cutoff function $0\le\z\le 1$ vanishing on the parabolic boundary of $Q_{n}$ and
equal to the identity in $\widetilde{Q}_{n}$, such that
\begin{equation*}
	|D\z|\le\frac{  2^{n+2}}{(1-\sig)\varrho}
	\quad\text{and}\quad 
	|\pl_t\z|\le\frac{  2^{n+2}}{(1-\sig)s}.
\end{equation*}
The above energy estimate in this setting gives that
\begin{align}\label{Eq:E-est}
    \essup_{-\widetilde{s}_n<t<0}\,  
    & 
    \int_{\widetilde{K}_n\times\{t\}}  \mathfrak g_+(u, \widetilde{k}_n)\,\dx 
    + 
    \iint_{\widetilde{Q}_n} |D(u-\widetilde{k}_n)_+|^p\,\dx\dt\\\nonumber
    &\le  
    \frac{ \boldsymbol \gm 2^{n}}{(1-\sig)s}\iint_{Q_{n}}  \mathfrak g_+(u, \widetilde{k}_n) \, \dx\dt
    + 
    \frac{ \boldsymbol \gm 2^{pn}}{(1-\sig)^p\varrho^p} \iint_{Q_{n}} (u-\widetilde{k}_n)_+^p \, \dx\dt,
\end{align}
with a constant $\boldsymbol\gm$ that depends only on $C_o,C_1$, and $p$. 
Here we also used that $\z_n$ vanishes on $K_n\times \{-s_n\}$, resulting in the boundary term on the right-hand side of the energy estimate  being zero.
To treat the first term on the left, we proceed to estimate $\mathfrak g_+(u, \widetilde{k}_n)$ from below. First observe that on the set $\{u>k_{n+1}\}$, there holds
\begin{equation*}%\label{Eq:obs}
1\le \frac{u+\widetilde{k}_n}{u-\widetilde{k}_n}\le
\frac{2u}{u-\widetilde{k}_n}\le\frac{2k_{n+1}}{k_{n+1} - \widetilde{k}_n}\le 2^{n+3}.
\end{equation*}
Therefore, Lemma~\ref{Lm:g} yields the bound
\begin{align*}
    \mathfrak g_+ (u,\widetilde{k}_n)
    &\ge  
    \tfrac1{\boldsymbol\gamma} (u+\widetilde{k}_n)^{q-1}  (u-\widetilde{k}_n)_+^2
    \ge  
    \tfrac1{\boldsymbol\gamma} (u+\widetilde{k}_n)^{q-1}  (u-k_{n+1})_+^2 \\
    &\ge  
    \tfrac1{\boldsymbol\gamma}2^{-(n+3)(1-q)_+}(u-\widetilde{k}_n)^{q-1}_+(u-k_{n+1})_+^2\\
    &\ge  
    \tfrac1{\boldsymbol\gamma}2^{-(n+3)(1-q)_+}(u-k_{n+1})^{q+1}_+,
\end{align*} 
with a constant $\boldsymbol\gamma=\boldsymbol\gamma(q)$.
Next, we deal with the first term on the right-hand side of the energy estimate.
Observe that on the set $\{u>\widetilde{k}_n\}$, there holds
\begin{equation*}%\label{Eq:obs-2}
1\le \frac{u+\widetilde{k}_n}{u-k_n}\le
\frac{2u}{u-k_n}\le\frac{2\widetilde{k}_n}{\widetilde{k}_n - k_n}\le 2^{n+3}.
\end{equation*}
In view of Lemma~\ref{Lm:g} we obtain
\begin{align*}%\label{Eq:up_bd_gplus}
    \mathfrak g_+ (u,\widetilde{k}_n)
    &\le  
    \boldsymbol\gamma (u+\widetilde{k}_n)^{q-1}  (u-\widetilde{k}_n)_+^2\\\nonumber
    &\le  
    \boldsymbol\gamma 2^{(n+3)(q-1)_+}(u-k_n)^{q-1}_+(u-\widetilde{k}_n)_+^2 \\\nonumber
    &\le  
    \boldsymbol\gamma 2^{(n+3)(q-1)_+}(u-k_{n})^{q+1}_+ \chi_{\{u>\widetilde k_n\}},
\end{align*}
where $\boldsymbol\gamma=\boldsymbol\gamma(q)$. 
Therefore, the energy estimate~\eqref{Eq:E-est} gives  
\begin{align*}
    &2^{-n(1-q)_+}\essup_{-\widetilde{s}_n<t<0}\,  
    \int_{\widetilde{K}_n\times\{t\}} (u-k_{n+1})_+^{q+1}\,\dx 
    + 
    \iint_{\widetilde{Q}_n} |D(u-\widetilde{k}_n)_+|^p\,\dx\dt \\
    &\qquad\le  
    \frac{ \boldsymbol \gm 2^{n(1+(q-1)_+)}}{(1-\sig)s}\iint_{\widetilde A_{n}}  (u-k_n)_+^{q+1} \, \dx\dt
    + 
    \frac{ \boldsymbol \gm 2^{pn}}{(1-\sig)^p\varrho^p} \iint_{Q_{n}} (u-\widetilde{k}_n)_+^p \, \dx\dt,
\end{align*}
where we used the abbreviation $\widetilde{A}_n:=\{u>\widetilde{k}_n\}\cap Q_n$, and where $\boldsymbol\gamma$ depends only on the data $\{C_o,C_1,p,q\}$. Next, we use the measure estimate
\begin{equation}\label{Eq:Meas-int}
    \big|\widetilde{A}_n\big|
    = 
    \big|\big\{u>\widetilde{k}_n\big\}\cap Q_n\big|
    \le 
    \frac{2^{(n+2)r}}{k^r} \iint_{Q_{n}} (u-k_n)_+^{r} \, \dx\dt.
\end{equation}
Since $r\ge q+1\ge p$, H\"older's inequality and \eqref{Eq:Meas-int} imply
\begin{align*}
    \iint_{\widetilde{A}_n} (u-k_n)_+^{q+1} \, \dx\dt
    &\le 
    \bigg[ \iint_{Q_{n}} (u-k_n)_+^{r} \, \dx\dt\bigg]^{\frac{q+1}{r}} 
    \big|\widetilde{A}_n\big|^{1-\frac{q+1}{r}}\\
    &\le
    \boldsymbol \gm (q)\frac{2^{(r-q-1)n}}{k^{r-q-1}}\iint_{Q_{n}} (u-k_n)_+^{r} \, \dx\dt,
\end{align*}
and similarly,
\begin{align}\label{Meas-int-p}
    \iint_{Q_{n}} (u-\widetilde{k}_n)_+^p \, \dx\dt
    &\le 
    \bigg[ \iint_{Q_{n}} (u-\widetilde{k}_n)_+^{r} \, \dx\dt\bigg]^{\frac{p}{r}} 
    \big|\widetilde{A}_n\big|^{1-\frac{p}{r}} \nonumber\\
    &\le
    \boldsymbol \gm (q)\frac{2^{(r-p)n}}{k^{r-p}}\iint_{Q_{n}} (u-k_n)_+^{r} \, \dx\dt.
\end{align}
Inserting these inequalities above, we arrive at
\begin{align*}
    2^{-n(1-q)_+}\essup_{-\tilde{s}_n<t<0}\,  
    & 
    \int_{\widetilde{K}_n\times\{t\}}  (u-k_{n+1})^{q+1}_+\,\dx + \iint_{\widetilde{Q}_n} |D(u-k_{n+1})_+|^p\,\dx\dt\\
    &\le 
    \frac{ \boldsymbol \gm 2^{ nr}}{(1-\sig)^p s }\frac{1}{k^{r-q-1}} \Big[ 1+ \frac{s}{\rho^p}\cdot\frac1{k^{q+1-p}} \Big]
    \iint_{Q_{n}} (u-k_n)_+^{r} \, \dx\dt,
\end{align*}
with a constant $\boldsymbol\gm$ that depends only on the data $\{C_o,C_1,p,q\}$. 
At this stage we may stipulate to take $k$ to satisfy
\begin{equation}\label{eq:k:1}
k\ge\Big(\frac{s}{\rho^p}\Big)^{\frac1{q+1-p}},
\end{equation}
such that the quantity in the right-hand side bracket is bounded by $2$. Hence
\begin{align}\nonumber
    2^{-n(1-q)_+}\essup_{-\tilde{s}_n<t<0}
    \int_{\widetilde{K}_n\times\{t\}}  &(u-k_{n+1})^{q+1}_+\,\dx + \iint_{\widetilde{Q}_n} |D(u-k_{n+1})_+|^p\,\dx\dt\\
    &\le 
    \frac{ \boldsymbol \gm 2^{ nr}}{(1-\sig)^p s }\frac{1}{k^{r-q-1}} 
    \iint_{Q_{n}} (u-k_n)_+^{r} \, \dx\dt.
    \label{Energy-new}
\end{align}

Now introduce $0\le\phi\le1$ to be a cutoff function which vanishes on the parabolic boundary of $\widetilde{Q}_n$
and equals the identity in $Q_{n+1}$. We recall the abbreviation $m=p\frac{N+q+1}{N}$ for the parabolic Sobolev  exponent. An application of the Sobolev embedding Lemma~\ref{lem:Sobolev} gives that
\begin{align*}
    & \iint_{Q_{n+1}} (u-k_{n+1})_+^{m}\,\dx\dt
    \le 
    \iint_{\widetilde{Q}_{n}}[(u-k_{n+1})_+\phi]^{m}\,\dx\dt\\
    &\qquad\le 
    \boldsymbol \gm 
    \iint_{\widetilde{Q}_n} |D[(u-k_{n+1})_+\phi]|^p\,\dx\dt \bigg[\essup_{-\tilde{s}_n<t<0}\int_{\widetilde{K}_n\times\{t\}}  [(u-k_{n+1})_+\phi]^{q+1}\,\dx
    \bigg]^{\frac pN} \\
    &\qquad=:
    \mathbf{I}\cdot\mathbf{II}^{\frac pN}.
\end{align*}
The integrals on the right-hand side of the last display are bounded by the energy estimate \eqref{Energy-new}. 
To deal with the  integral $\mathbf I$ on the right-hand side, we use the energy estimate \eqref{Energy-new} and inequality \eqref{Meas-int-p}. In this way we obtain
\begin{align*}
     \mathbf{I}
     &\le
    2^{p-1}\bigg[
     \iint_{\widetilde{Q}_n} |D(u-k_{n+1})_+|^p\phi^p\,\dx\dt 
     +
     \iint_{\widetilde{Q}_n} (u-k_{n+1})_+^p|D\phi|^p\,\dx\dt 
     \bigg]\\
     &\le
     \boldsymbol\gm \bigg[ 
    \frac{2^{ nr}}{(1-\sig)^p s }\frac{1}{k^{r-q-1}}
    \iint_{Q_{n}} (u-k_n)_+^{r} \, \dx\dt
    +
    \frac{2^{(n+2)p}}{(1-\sigma)^p\rho^p} \iint_{\widetilde Q_{n}} (u-\widetilde k_{n})_+^{p} \, \dx\dt\bigg]\\
     &\le
     \boldsymbol\gm \frac{2^{ nr}}{(1-\sig)^ps}\frac{1}{k^{r-q-1}} \Big[ 1+ \frac{s}{\rho^p}\frac{1}{k^{q+1-p}}\Big]
    \iint_{Q_{n}} (u-k_n)_+^{r} \, \dx\dt\\
    &\le 
    \boldsymbol\gm \frac{2^{ nr}}{(1-\sig)^ps}\frac{1}{k^{r-q-1}}
    \iint_{Q_{n}} (u-k_n)_+^{r} \, \dx\dt.
\end{align*}
The last estimate follows from \eqref{eq:k:1}.
Using again energy estimate \eqref{Energy-new} we have
\begin{align*}
    \mathbf{II}&\le\essup_{-\tilde{s}_n<t<0}
    \int_{\widetilde{K}_n\times\{t\}}  (u-k_{n+1})^{q+1}_+\,\dx\\
    &\le \boldsymbol \gm
    \frac{  2^{n(r+(1-q)_+)}}{(1-\sig)^p s }\frac{1}{k^{r-q-1}} 
    \iint_{Q_{n}} (u-k_n)_+^{r} \, \dx\dt.
\end{align*}
Inserting the above results, recalling the definition of $\lambda_{q+1}$, and finally taking means on both sides of the resulting inequality yields after simple
manipulations of the appearing terms, the inequality
\begin{align}\label{pre-recursiv}
        \biint_{Q_{n+1}} & (u-k_{n+1})_+^{m}\,\dx\dt\\\nonumber
        &\le\boldsymbol\gm
        \frac{ 2^{n[r\frac{N+p}{N}+\frac{p}{N}(1-q)_+]}}{(1-\sig)^\frac{p(N+p)}{N} k^{(r-q-1)\frac{N+p}{N}}}
        \frac{\rho^p}{s}
        \bigg[ \biint_{Q_{n}} (u-k_n)_+^{r} \, \dx\dt\bigg]^{\frac{N+p}{N}}.
\end{align}
The constant $\boldsymbol\gm$ depends only on $\{C_o,C_1,p,q\}$. Recall that this inequality holds whenever $r\ge q+1$. 
As mentioned earlier we now distinguish three cases: 
\[
    \left\{
    \begin{array}{c}
    q+1\le r\le m,\\[5pt]
    \mbox{$r\le m$ and $ r<q+1$,}\\[5pt]
    r>m.
    \end{array}
    \right.
\]
Note that the first two cases cover all exponents $r\le m$, while in the third case we additionally assume $u\in L^\infty_{\mathrm{loc}}(E_T)$ {\it a priori}. This is because when $r\le m$ the right-hand side of the energy estimate \eqref{pre-recursiv} is ensured to be finite, according the embedding in Lemma~\ref{lem:Sobolev}, while it is generally not true when $r>m$.  In what follows, we will deal with these cases one by one.

%$q+1\le r\le m$, $r\le m \land r<q+1$, and $r>m$
%
%Note that our assumptions exclude the case $m\le r\le q+1$, since $r\le q+1$ implies $\lambda_{q+1}\ge\lambda_r>0$, which yields $r\le q+1<m$. Therefore, it suffices to consider the three cases $q+1\le r\le m$, $1\le r<q+1$, and $r>\max\{q+1,m\}$. In particular, the first two cases cover all exponents $1\le r\le m$, while the third one covers all possible cases with $r>m$. We will deal with these cases one by one. 

\subsection{The case $q+1\le r\le m$. }In this case, we estimate the left-hand side of~\eqref{pre-recursiv} from below by H\"older's inequality. 
Letting 
\begin{equation}\label{def:Yn}
   \boldsymbol{Y}_n:=\biint_{Q_n}(u-k_n)_+^{r}\,\dx\dt,
\end{equation}
we obtain
\begin{align*}
   \boldsymbol{Y}_{n+1}
   &\le
   \frac{1}{|Q_{n+1}|}\bigg[\iint_{Q_{n+1}}(u-k_{n+1})_+^m\,\dx\dt\bigg]^{\frac{r}{m}}|\widetilde A_n|^{1-\frac{r}{m}}\\
   &\le
   \boldsymbol\gm\bigg[\biint_{Q_{n+1}}(u-k_{n+1})_+^m\,\dx\dt\bigg]^{\frac{r}{m}}\bigg[\frac{2^{(n+2)r}}{k^r}\biint_{Q_{n}}(u-k_n)_+^r\bigg]^{1-\frac{r}{m}}
\end{align*}
In the last step we used \eqref{Eq:Meas-int}. 
We estimate the left-hand side by means of~\eqref{pre-recursiv}. Recalling the definition of $\lambda_r$ in~\eqref{def:lambda-r}, we obtain
\begin{equation*}
    \boldsymbol{Y}_{n+1}
    \le
    \frac{\boldsymbol\gm \boldsymbol{b}^n}{(1-\sig)^{\frac{pr(N+p)}{Nm}}}
    \Big(\frac{\rho^p}{s}\Big)^{\frac{r}{m}}
    k^{-\frac{r\lm_r}{Nm}}
    \boldsymbol{Y}_n^{1+\frac{rp}{Nm}},
\end{equation*}
where we abbreviated  
$\boldsymbol b:= 2^{\frac{pr}{Nm}(r+(1-q)_+)+r}$.
Note that $\boldsymbol{b}$ depends only on the data $\{q,r,N\}$, whereas $\gm$ depends on $\{C_o,C_1,p,q,r,N\}$. Hence, by the fast geometric convergence %\cite[Chapter I, Lemma~4.1]{DB},  i.e.~
Lemma \ref{lem:fast-geom-conv},
there exists a positive constant $\boldsymbol \gm$ depending only on the data, such that
$\boldsymbol  Y_n\to0$ if we require that 
\[
    \boldsymbol{Y}_o=\biint_{Q_o}u^{r}\,\dx\dt\le \boldsymbol \gm^{-1} (1-\sig)^{p+N}  \Big(\frac{s}{\rho^p}\Big)^{\frac{N}{p}} k^{\frac{\lm_{r}}{p}}.
\]
This amounts to requiring
\begin{equation}\label{Eq:k:2}
    k\ge \frac{\boldsymbol \gm}{(1-\sig)^{\frac{p(p+N)}{\lm_{r}}}} \Big(\frac{\rho^p}{s}\Big)^{\frac{N}{\lm_{r}}}  \bigg[\biint_{Q_o}u^{r}\,\dx\dt\bigg]^{\frac{p}{\lm_{r}}}.
\end{equation}
Taking both conditions \eqref{eq:k:1} and  \eqref{Eq:k:2} on $k$ into account, the convergence $\boldsymbol  Y_n\to0$ gives us that
\begin{align}\label{sup-est-sigma}
    \essup_{Q_{\sig\rho,\sig s}}u
    \le \frac{\boldsymbol \gm}{(1-\sig)^{\frac{p(p+N)}{\lm_{r}}}} 
    \Big(\frac{\rho^p}{s}\Big)^{\frac{N}{\lm_{r}}}  
    \bigg[\biint_{Q_{\rho, s}}u^{r}\,\dx\dt\bigg]^{\frac{p}{\lm_{r}}} +\Big(\frac{s}{\rho^p}\Big)^{\frac1{q+1-p}},
\end{align}
where $\boldsymbol\gm$ depends only on the data $\{C_o,C_1,p,q,N\}$.
This yields the asserted estimate in the case $q+1\le  r\le m$ if we choose $\sigma=\frac12$. 

\subsection{The case $r\le m$ and $r<q+1$. }
Note that in the following we will only make use of $r<q+1$. In this case, we have $\lm_{q+1}>\lm_r>0$, which implies $q+1<m$.
Therefore, we are able to apply the results from the previous case for the exponent $q+1$ in place of $r$. This guarantees $u\in L^\infty_{\mathrm{loc}}(E_T)$, and~\eqref{sup-est-sigma} implies the local estimate
\begin{align*}
    \essup_{Q_{\sig\rho,\sig s}}u
    \le \frac{\boldsymbol \gm}{(1-\sig)^{\frac{p(p+N)}{\lm_{q+1}}}} 
    \Big(\frac{\rho^p}{s}\Big)^{\frac{N}{\lm_{q+1}}}  
    \bigg[\biint_{Q_{\rho, s}}u^{q+1}\,\dx\dt\bigg]^{\frac{p}{\lm_{q+1}}} +\Big(\frac{s}{\rho^p}\Big)^{\frac1{q+1-p}}
\end{align*}
for any $\sigma\in(0,1)$, provided $Q_{\rho,s}\Subset E_T$. 
We define
\[
    {\bf M}_\sig = \essup_{Q_{\sig\rho,\sig s}} u\quad\text{and}\quad {\bf M}_1 = \essup_{Q_{\rho, s}} u.
\]
From the above estimate we get
\[
{\bf M}_\sig \le {\bf M}_1^{1-\frac{\lm_r}{\lm_{q+1}}}\frac{\boldsymbol \gm }{(1-\sig)^{\frac{p(p+N)}{\lm_{q+1}}}} \Big(\frac{\rho^p}{s} \Big)^{\frac{N}{\lm_{q+1}}}  \bigg[\biint_{Q_{\rho, s}}u^{r}\,\dx\dt\bigg]^{\frac{p}{\lm_{q+1}}} +\Big(\frac{s}{\rho^p}\Big)^{\frac1{q+1-p}},
\]
where we used $r<q+1$ and the definition of $\lambda_r$ and $\lambda_{q+1}$ in~\eqref{def:lambda-r}. 
Departing from this, a standard interpolation argument  yields the desired estimate.
Indeed, by Young's inequality we have
\[
    {\bf M}_\sig 
    \le 
    \tfrac12 {\bf M}_1
    +
    \frac{\boldsymbol \gm }{(1-\sig)^{\frac{p(p+N)}{\lm_{r}}}} \Big(\frac{\rho^p}{s} \Big)^{\frac{N}{\lm_{r}}}  \bigg[\biint_{Q_{\rho, s}}u^{r}\,\dx\dt\bigg]^{\frac{p}{\lm_{r}}} +\Big(\frac{s}{\rho^p}\Big)^{\frac1{q+1-p}}.
\]
Apply this to cylinders $Q_{\sig_2\rho,\sig_2s}$ and $Q_{\sig_1\rho,\sig_1s}$, with $\frac12 \le\sig_1<\sig_2\le 1$, to get
\begin{align*}
    {\bf M}_{\sig_1} 
    &\le   
    \tfrac12 {\bf M}_{\sig_2}
    +
    \frac{\boldsymbol \gm }{(\sig_2-\sig_1)^{\frac{p(p+N)}{\lm_{r}}}} \Big(\frac{\rho^p}{s} \Big)^{\frac{N}{\lm_{r}}}  \bigg[\biint_{Q_{\sig_1\rho, \sig_1s}}u^{r}\,\dx\dt\bigg]^{\frac{p}{\lm_{r}}} +\boldsymbol \gm \Big(\frac{s}{\rho^p}\Big)^{\frac1{q+1-p}}.
\end{align*}
Arriving at this point, the iteration lemma \ref{lem:Giaq} can be applied to $[\frac12, 1]\ni\sig\mapsto {\bf M}_\sig$, yielding the desired sup-bound in the case $1\le r< q+1$.

\subsection{The case $r>m$.}
Note that $r>m$ implies that $r>q+1$ due to our assumption that $\lm_r>0$. For otherwise, we would have obtained $0< \lambda_r\le \lambda_{q+1} = N(m-q-1)$ which in turn would imply $q+1<m$ and thus $r>q+1$.
We stress that in the present case $u\in L^\infty_{\mathrm{loc}}(E_T)$ is assumed {\it a priori}.
The starting point is again the integral inequality~\eqref{pre-recursiv}, and we continue to use the abbreviation $\boldsymbol{Y}_n$ introduced in~\eqref{def:Yn}. Since $r>m$, estimate~\eqref{pre-recursiv} implies
\begin{align*}
    \boldsymbol{Y}_{n+1}&=\biint_{Q_{n+1}}(u-k_{n+1})_+^{r}\,\dx\dt
    \le
    \|u\|^{r-m}_{\infty,Q_o}\biint_{Q_{n+1}}(u-k_{n+1})_+^{m}\,\dx\dt\\
    %&\le
    %\boldsymbol\gm\|u\|^{r-m}_{\infty,Q_o}
    %    \frac{ 2^{n[r\frac{N+p}{N}+\frac{p}{N}(1-q)_+]}}{(1-\sig)^\frac{p(N+p)}{N} k^{(r-q-1)\frac{N+p}{N}}}
    %    \frac{\rho^p}{s}
    %\bigg[ \biint_{Q_{n}} (u-k_n)_+^{r} \, \dx\dt\bigg]^{\frac{N+p}{N}}\\
    &\le
    \boldsymbol\gm \|u\|_{\infty,Q_o}^{r-m} \frac{\rho^p}s
    \frac{\boldsymbol{b}^n}{(1-\sig)^{p\frac{N+p}{N}} k^{(r-q-1)\frac{N+p}N}} \,\boldsymbol{Y}_n^{1+\frac pN},
\end{align*}
where $\boldsymbol{b}=2^{r\frac{N+p}N+\frac pN(1-q)_+}$. By the fast geometric convergence %\cite[Chapter I, Lemma~4.1]{DB},  i.e.~
Lemma \ref{lem:fast-geom-conv}, %\cite[Chapter I, Lemma~4.1]{DB}, 
there exists
a positive constant $\boldsymbol \gm$ depending only on the data, such that
$\boldsymbol  Y_n\to0$ if we require that 
\[
    \boldsymbol{Y}_o
    =
    \biint_{Q_o}u^{r}\,\dx\dt
    \le 
    \boldsymbol \gm^{-1} \|u\|_{\infty,Q_o}^{-(r-m)\frac Np} (1-\sig)^{N+p}  \Big(\frac{s}{\rho^p}\Big)^{\frac{N}{p}} k^{\frac{(r-q-1)(N+p)}{p}}.
\]
Since $r>q+1$, this amounts to asking for
\begin{equation}\label{eq:k:3}
    k
    \ge 
    \frac{\boldsymbol \gm \|u\|_{\infty,Q_o}^{\frac{N(r-m)}{(N+p)(r-q-1)}} }{(1-\sig)^{\frac{p}{r-q-1}}} \Big(\frac{\rho^p}{s}\Big)^{\frac{N}{(N+p)(r-q-1)}} \bigg[\biint_{Q_o}u^{r}\,\dx\dt\bigg]^{\frac{p}{(N+p)(r-q-1)}}.
\end{equation}
Taking both conditions \eqref{eq:k:1} and  \eqref{eq:k:3} into account, the convergence $\boldsymbol  Y_n\to0$ gives us 
\begin{align*}
    &\essup_{Q_{\sig\rho,\sig s}}u\\
    & \quad\le
    \boldsymbol \gm 
    \frac{\|u\|_{\infty,Q_o}^{\frac{N(r-m)}{(N+p)(r-q-1)}}}{(1-\sig)^{\frac{p}{r-q-1}}} 
    \Big(\frac{\rho^p}{s}\Big)^{\frac{N}{(N+p)(r-q-1)}}  
    \bigg[\biint_{Q_{\rho, s}}\!\! u^{r}\,\dx\dt\bigg]^{\frac{p}{(N+p)(r-q-1)}}
    +
    \Big(\frac{s}{\rho^p}\Big)^{\frac1{q+1-p}},
\end{align*}
with a constant $\boldsymbol\gm$ depending on the data $\{C_o,C_1,p,q,r,N\}$. 
Noticing that
\[
    \frac{N(r-m)}{(N+p)(r-q-1)}=1-\frac{\lm_r}{(N+p)(r-q-1)}\in(0,1), 
\]
as a consequence of our assumptions $\lambda_r>0$, $r>m$, and $r>q+1$,
a standard interpolation argument using the iteration Lemma~\ref{lem:Giaq}  allows to conclude the claim.  This is similar to the argument from Case $q+1\le r\le m$.
Consequently, we have completed the proof of the theorem. 

%%%%%%%%%%%%%%%%%%%%%%%%%%%%%%%%%%%%%%%%%%%

\chapter[Expansion of positivity]{Expansion of positivity}\label{sec:Expansion}

The main components of the proof of Proposition~\ref{PROP:EXPANSION} will be given in the following Sections~\ref{S:DG} -- \ref{S:shrink}. Afterwards, we present the proof of Proposition~\ref{PROP:EXPANSION} in Section~\ref{S:proof-expansion}.

\section{A De Giorgi type lemma}\label{S:DG}
Starting from the energy estimate in Proposition~\ref{Prop:2:1} we can perform the De Giorgi iteration and obtain the following result, which actually holds for all $q>0$ and $p>1$.
\begin{lemma}\label{Lm:DG:1}
 Let $u$ be a non-negative weak super-solution to \eqref{Eq:1:1f} in $E_T$, under assumption \eqref{Eq:1:2}.
 Set $\theta=\dl M^{q+1-p}$ for some $\dl\in(0,1)$ and $M>0$, and assume $(x_o,t_o) + Q_\varrho(\theta) \subset E_T$.
There exists a constant $\nu\in(0,1)$ depending only on 
 the data and $\delta$, such that if
\begin{equation*}
	\Big|\big\{u\le M\big\}\cap (x_o,t_o)+Q_{\varrho}(\theta)\Big|
	\le
	\nu|Q_{\varrho}(\theta)|,
\end{equation*}
then 
\begin{equation*}
	u\ge\tfrac{1}2 M
	\quad
	\mbox{a.e.~in $(x_o,t_o)+Q_{\frac{1}2\varrho}(\theta)$.}
\end{equation*}
Moreover, we have $\nu=\nu_o \dl^{\frac{N}{p}}$ for some $\nu_o$ depending only on the data.
\end{lemma}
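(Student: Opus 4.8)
The plan is to run a De Giorgi iteration adapted to the intrinsic geometry, with the super-solution form of the energy estimate in Proposition~\ref{Prop:2:1} (the one with the $-$ sign) as the engine. After translating so that $(x_o,t_o)=(0,0)$, I would introduce, for $j\in\N_0$,
\[
k_j:=\tfrac12M+\tfrac{M}{2^{j+1}},\qquad \varrho_j:=\tfrac12\varrho+\tfrac{\varrho}{2^{j+1}},\qquad Q_j:=K_{\varrho_j}\times(-\theta\varrho_j^p,0],
\]
so that $k_j\downarrow\tfrac12M$ and $Q_j\downarrow Q_{\varrho/2}(\theta)$, together with split cutoffs $\zeta_j=\zeta_j^{(1)}(x)\zeta_j^{(2)}(t)$ equal to $1$ on $Q_{j+1}$, vanishing outside $Q_j$ and on the lateral and bottom boundary of $Q_j$, with $|D\zeta_j|\le\boldsymbol\gm 2^j/\varrho$ and $|\partial_t\zeta_j|\le\boldsymbol\gm 2^j/(\theta\varrho^p)$ (here one uses $\varrho_j^p-\varrho_{j+1}^p\ge\boldsymbol\gm^{-1}\varrho^p2^{-j}$). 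Since $\zeta_j$ vanishes at the bottom of $Q_j$, the initial term in Proposition~\ref{Prop:2:1} drops, and with $A_j:=\{u<k_j\}\cap Q_j$ the estimate reads
\[
\sup_{t}\int_{K_{\varrho_j}\times\{t\}}\zeta_j^p\,\mathfrak g_-(u,k_j)\,\dx+\iint_{Q_j}\zeta_j^p\big|D(u-k_j)_-\big|^p\,\dx\dt\le\boldsymbol\gm\Big[\tfrac{2^{pj}}{\varrho^p}\iint_{Q_j}(u-k_j)_-^p+\tfrac{2^j}{\theta\varrho^p}\iint_{Q_j}\mathfrak g_-(u,k_j)\Big].
\]

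The next step is to insert the elementary pointwise bounds valid on $A_j$: $0\le(u-k_j)_-\le M$, and, using Lemma~\ref{Lm:g} together with $\tfrac12M\le k_j\le M$ and $0\le u<k_j\le M$ (which make $|u|+k_j$ comparable to $M$, \emph{uniformly for every $q>0$}),
\[
\boldsymbol\gm^{-1}M^{q-1}(u-k_j)_-^2\le\mathfrak g_-(u,k_j)\le\boldsymbol\gm M^{q-1}(u-k_j)_-^2 .
\]
Plugging in $\theta=\delta M^{q+1-p}$, so that $M^{q-1}/\theta=M^{p-2}/\delta$ and $M^{q+1}/\theta=M^p/\delta$, collapses the right-hand side of the energy estimate to $\le\boldsymbol\gm\,2^{pj}M^p(\delta\varrho^p)^{-1}|A_j|$. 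Adding the term $\iint(u-k_j)_-^p|D\zeta_j|^p$ (itself controlled by the same quantity), the left-hand side then dominates both $\iint_{Q_j}|D(\zeta_j(u-k_j)_-)|^p$ and $M^{q-1}\sup_t\int(\zeta_j(u-k_j)_-)^2\,\dx$. I would apply the parabolic Sobolev embedding Lemma~\ref{lem:Sobolev} with $m=2$ to $v:=\zeta_j(u-k_j)_-$ (which is supported in $A_j$), follow it by Hölder's inequality to reintroduce $|A_j|$, and finally use $A_{j+1}\subset\{(u-k_j)_-\ge M2^{-(j+2)}\}$ to convert $\iint_{Q_{j+1}}(u-k_j)_-^p$ into a bound for $|A_{j+1}|$. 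Collecting exponents gives
\[
|A_{j+1}|\le\boldsymbol\gm\,\boldsymbol b^{\,j}\,\frac{M^{p\frac{p-1-q}{N+2}}}{\delta^{\frac{N+p}{N+2}}}\,\varrho^{-p\frac{N+p}{N+2}}\,|A_j|^{\frac{N+p+2}{N+2}},\qquad\boldsymbol b=\boldsymbol b(N,p)>1 .
\]

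To finish, I would normalize, setting $Y_j:=|A_j|/|Q_j|$ and using $|Q_j|\simeq\theta\varrho^{N+p}=\delta M^{q+1-p}\varrho^{N+p}$ with constants depending only on $N,p$. Substituting $|A_j|=Y_j|Q_j|$ makes every power of $M$ and of $\varrho$ cancel exactly — this cancellation is the whole point of the intrinsic choice $\theta=\delta M^{q+1-p}$ — and leaves precisely one factor $\delta^{-N/(N+2)}$, so that
\[
Y_{j+1}\le\boldsymbol\gm\,\boldsymbol b^{\,j}\,\delta^{-\frac{N}{N+2}}\,Y_j^{\,1+\frac{p}{N+2}} .
\]
The fast geometric convergence Lemma~\ref{lem:fast-geom-conv} (with $C=\boldsymbol\gm\delta^{-N/(N+2)}$ and $\alpha=p/(N+2)$) then yields $Y_j\to0$ — whence $\big|\{u<\tfrac12M\}\cap Q_{\varrho/2}(\theta)\big|=0$, i.e.\ $u\ge\tfrac12M$ a.e.\ in $(x_o,t_o)+Q_{\varrho/2}(\theta)$ — provided $Y_0\le C^{-1/\alpha}\boldsymbol b^{-1/\alpha^2}=\nu_o\,\delta^{N/p}$ with $\nu_o=\boldsymbol\gm^{-(N+2)/p}\boldsymbol b^{-((N+2)/p)^2}$ depending only on the data. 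Since $Y_0\le\big|\{u\le M\}\cap Q_\varrho(\theta)\big|/|Q_\varrho(\theta)|$, taking $\nu:=\nu_o\delta^{N/p}$ proves the claim. Conceptually this is the classical De Giorgi scheme; the only genuinely delicate point is the bookkeeping — arranging the cutoff bounds, the $\mathfrak g_-$-estimates uniformly in $q$, and the Sobolev/Hölder exponents so that the homogeneity in $M$ and $\varrho$ is exactly recovered and the $\delta$-dependence emerges precisely as $\delta^{N/p}$.
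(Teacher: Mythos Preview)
Your proposal is correct and follows essentially the same De Giorgi iteration as the paper: the same nested cylinders and levels, the energy estimate from Proposition~\ref{Prop:2:1} with the bottom boundary term dropped, Lemma~\ref{Lm:g} to convert $\mathfrak g_-$ into $M^{q-1}(u-k_j)_-^2$ uniformly in $q$, the parabolic Sobolev embedding with $m=2$, and Lemma~\ref{lem:fast-geom-conv} to conclude. The only (harmless) variation is that you pass through $\iint (u-k_j)_-^p$ via H\"older, obtaining the recursion exponent $1+\tfrac{p}{N+2}$ and prefactor $\delta^{-N/(N+2)}$, whereas the paper passes through $\iint (u-k_j)_-$, obtaining $1+\tfrac{1}{N+2}$ and a different $\delta$-power; both routes yield $\nu=\nu_o\delta^{N/p}$ after applying the convergence lemma.
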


%A detailed proof of Lemma~\ref{Lm:DG:1} can be found in \cite[Lemma~2.2]{Liao-JMPA-21}. Indeed, one only needs to take $a=\frac12$ and $\boldsymbol\mu^-=0$ in \cite[Lemma~2.2]{Liao-JMPA-21} and trace the dependence of $\nu$  with the choice $\theta=\dl M^{q+1-p}$. See also \cite[Lemma~3.2]{Henriques-22} in this connection.
\begin{proof}
Assume $(x_o,t_o)=(0,0)$ for simplicity. 
%We will show the case of super solutions only as the other case is similar. 
%In order to employ the energy estimate in Proposition~\ref{Prop:2:1}, 
%we notice first that for $0<\tilde k<k$ there holds $(u-k)_-\ge (u-\tilde k)_-$. Therefore, 
By the technical Lemma~\ref{Lm:g},
 the energy estimate in Proposition~\ref{Prop:2:1} yields  
 \begin{align}
	&\essup_{-\theta\varrho^p<t<0}
	\int_{K_\varrho}\z^p(u+k)^{q-1}(u-k)_-^2\,\dx
	+
	\iint_{Q_\varrho(\theta)}\z^p|D(u-k)_-|^p\,\dx\dt\nonumber\\
	&\quad\le
	\bg\iint_{Q_\varrho(\theta)}(u-k)^{p}_-|D\z|^p\,\dx\dt
	+
	\bg \iint_{Q_\varrho(\theta)}(u+k)^{q-1} (u-k)_-^2|\pl_t\z|\,\dx\dt\nonumber\\ \label{Eq:energy}
	&\quad=:\mathbf I_1+\mathbf I_2,%\nonumber
\end{align}
for any $k>0$ and
for any non-negative piecewise smooth cut-off function $\zeta$ vanishing on the parabolic boundary of $Q_\varrho(\theta)$.
In order to use this energy estimate \eqref{Eq:energy}, we set
\begin{align*}%\label{choices:k_n}
	\left\{
	\begin{array}{c}
	%\dsty M=H-\mu^-,\\[5pt]
\dsty k_n=\frac{M}2+\frac{M}{2^{n+1}}, \\[5pt] %\quad \tilde{k}_n=\frac{k_n+k_{n+1}}2,\\[5pt]
\dsty \rho_n=\frac{\rho}2+\frac{\rho}{2^{n+1}},\quad\widetilde{\rho}_n=\frac{\rho_n+\rho_{n+1}}2,\\[5pt]
\dsty K_n=K_{\rho_n}, \quad \widetilde{K}_n=K_{\widetilde{\rho}_n},\\[5pt]
\dsty Q_n=K_n\times(-\theta\rho_n^p,0],\quad\widetilde{Q}_n=\widetilde{K}_n\times(-\theta\widetilde{\rho}_n^p,0].
\end{array}
	\right.
\end{align*}
Introduce the test function $\z$ vanishing on the parabolic boundary of $Q_{n}$ and
equal to the identity in $\widetilde{Q}_{n}$, such that
\begin{equation*}
|D\z|\le \frac{2^{n+4}}{\rho}\quad\text{ and }\quad |\pl_t\z|\le \frac{2^{p(n+4)}}{\theta\rho^p}.
\end{equation*}
%Since
%\[\be'(s)=m^{\frac1m-1}|s|^{\frac1m-1},\]
%it is straightforward to check that
%\[
%\max\{\be'(k\pm(v-k)_{\pm}),\,\be'(k)\}\le m^{\frac1m-1}\om^{\frac1m-1}.
%\]
Hence the energy estimate \eqref{Eq:energy} is now read with $k$, $K_\rho$ and $Q_\rho(\theta)$ replaced by $k_n$, $K_n$ and $Q_n$.
In this setting, we treat the two terms on the right-hand side of the energy estimate \eqref{Eq:energy} as follows.
For the first term, we estimate
\[
    \mathbf I_1\le\bg \frac{2^{pn}}{\varrho^p}M^{p}|A_n|,
\]
where
\begin{equation*}
	A_n=\big\{u<k_n\big\}\cap Q_n.
\end{equation*}
For the second term, note that when $u<  k_n$ we have 
\begin{equation}\label{Eq:obs:1}
  \tfrac12 M\le k_n\le u+k_n\le 2M,
\end{equation}
and then estimate
\begin{align*}
    \mathbf I_2
    &\le
    \bg \frac{2^{pn}}{\theta\rho^p}
	\iint_{Q_n}(u+k_n)^{q-1} (u-k_n)^2_- \,\dx\dt \\
	&\le
    \bg \frac{2^{pn}}{\theta\rho^p} M^{q+1}|A_n|.
\end{align*}
 Whereas the first term on the left-hand side of \eqref{Eq:energy} is estimated analogously by \eqref{Eq:obs:1}.
%Observe that,  on the set $\{0\le u\le\tilde k_n\}$ we have $u+k_n\le 2M$ and $u+k_n\ge k_n\ge\frac12 M$. 
Inserting these estimates back in the energy estimate \eqref{Eq:energy}, we find that 
\begin{align*}
	\frac{M^{q-1}}{2^{|q-1|}} \essup_{-\theta\widetilde{\varrho}_n^p<t<0}&
	\int_{\widetilde{K}_n} (u-k_n)_-^2\,\dx
	+
	\iint_{\widetilde{Q}_n}|D(u-k_n)_-|^p \,\dx\dt\\
	&\le
	\bg \frac{2^{pn}}{\varrho^p}M^{p}\left(1+\frac{M^{q+1-p}}{\theta}\right)|A_n|.
\end{align*}
Now setting $0\le\phi\le1$ to be a cutoff function which vanishes on the parabolic boundary of $\widetilde{Q}_n$, 
equals the identity in $Q_{n+1}$ and satisfies $|D\phi|\le 2^{n+4}/\rho$, an application of the H\"older inequality  and the Sobolev imbedding
Lemma~\ref{lem:Sobolev} gives that
\begin{align*}
	\frac{M}{2^{n+4}}
	|A_{n+1}|
	&\le 
	\iint_{\widetilde{Q}_n}\big(u-k_n\big)_-\phi\,\dx\dt\\
	&\le
	\bigg[\iint_{\widetilde{Q}_n}\big[\big(u-k_n\big)_-\phi\big]^{p\frac{N+2}{N}}
	\,\dx\dt\bigg]^{\frac{N}{p(N+2)}}|A_n|^{1-\frac{N}{p(N+2)}}\\
	&\le\bg
	\bigg[\iint_{\widetilde{Q}_n}\big|D\big[(u-k_n)_-\phi\big]\big|^p\,
	\dx\dt\bigg]^{\frac{N}{p(N+2)}}\\
	&\quad\ 
	\times\bigg[\essup_{-\theta\tilde{\varrho}_n^p<t<0}
	\int_{\widetilde{K}_n}\big(u-k_n\big)^{2}_-\,\dx\bigg]^{\frac{1}{N+2}}
	 |A_n|^{1-\frac{N}{p(N+2)}}\\
	&\le 
	\bg 
	\bigg[\frac{2^{pn}}{\varrho^p}M^p\left(1+\frac{M^{q+1-p}}{\theta}\right)\bigg]^{\frac{N+p}{p(N+2)}}
	\bigg(\frac{2^{|q-1|}}{M^{q-1}}\bigg)^{\frac{1}{N+2}}
	|A_n|^{1+\frac{1}{N+2}}.
%	&=
%	\boldsymbol\gm
%	\frac{2^{\frac{(2p+N)n}{N+2}}}{\rho^\frac{N+p}{N+2}}
%	M |A_n|^{1+\frac{1}{N+2}}.
\end{align*}
In the last line we used the above energy estimate.
In terms of $ \boldsymbol{Y}_n=|A_n|/|Q_n|$, this can be rewritten as
\begin{equation*}
\begin{aligned}
	 \boldsymbol{Y}_{n+1}
	\le
	&\bg \boldsymbol{b}^n  \left(1+\frac{M^{q+1-p}}{\theta}\right)^{\frac{N+p}{p(N+2)}}\left(\frac{\theta}{M^{q+1-p}}\right)^{\frac1{N+2}} \boldsymbol{Y}_n^{1+\frac{1}{N+2}},
\end{aligned}
\end{equation*}
for positive constants $\boldsymbol{b}=2^{\frac{2N+p+2}{N+2}}$ and $\bg$ depending only on the data. %and with $\boldsymbol b\equiv 2^\frac{2(N+p+1)}{N+2}$.
Hence, by the fast geometric convergence Lemma~\ref{lem:fast-geom-conv} and recalling $\theta=\dl M^{q+1-p}$, 
there exists
a positive constant $\nu$ of the form $\nu_o \dl^{\frac{N}{p}}$ for some $\nu_o$ depending only on the data, 
 such that
$Y_n\to0$ if we require that $Y_o\le \nu$.
%which is the same as
%assuming
%\begin{equation*}
%	|A_o|=\big|\big\{u<k_o\big\}\cap Q_o\big|
%	= 
%	\Big|\Big\{u<H\Big\}\cap Q_\varrho(\theta)\Big|
%	\le
%	\nu\big| Q_\varrho(\theta)\big|.
%\end{equation*}
%Since $\boldsymbol Y_n\to 0$ in the limit $n\to\infty$ we have
%\begin{equation*}
%	\Big|\Big\{u<\boldsymbol\mu^-+(1-a) M \Big\}\cap Q_{\frac12 \varrho}(\theta)\Big|
%	= 
%	0.
%\end{equation*}
%This concludes the proof of the lemma.
\end{proof}

%%
%\begin{proof}
%The De Giorgi iteration has been performed in \cite[Lemma~2.2]{Liao-21} for super-solutions, whereas the proof for sub-solutions is analogous.
%%\textcolor{blue}{
%In order to obtain the present formulation, choose $a=\frac{1}{2}$ and replace $M$ by $\xi \boldsymbol \omega$.
%If $| \boldsymbol \mu^\pm | > 8 \xi \omega$, there is nothing to prove.
%In the opposite case, the assumption $| \boldsymbol \mu^\pm | \leq 8 \xi \omega$ allows us to estimate $L\approx M$. %$\max\{ L^{q-1}, M^{q-1} \}$ by $\max\{9^{q-1}, 1\} M^{q-1}$.
%Therefore, the critical number $\nu$ depends only on the data.
%%}
%%
%%\textcolor{blue}{
%%Alternatively, one could adapt the proof of \cite[Lemma 4.3]{BDL}.
%%The only differences are that $M$ has to be replaced by $\xi\boldsymbol\om$, and in the definition of the boundary term $\mathfrak g_\pm$ one has to replace $p-1$ by $q$.
%%Then, the proof is literally the same up to the point where the choice of $\theta$ comes into the play to eliminate $\xi\boldsymbol\omega$ and $\theta$ from the final inequality.
%%This is the reason why $\nu$ only depends on the structural data.
%%In the range of $p,q$ considered in the present paper even the choice of $\boldsymbol b$ would be the same.
%%For the whole range of $p,q$ we would have $\boldsymbol b=2^\frac{2N+p+q+2}{N+2}$.
%%}
%\end{proof}

%%%%%%%%%%%%%%%%%
\section{Propagation of positivity in measure}\label{S:time-propag} 
The following result forwards the positivity in measure of non-negative super-solutions in the time direction; it holds for all $q>0$ and $p>1$.

\begin{lemma}\label{Lm:3:1}
 Let $u$ be a non-negative weak super-solution to \eqref{Eq:1:1f} in $E_T$, under assumption \eqref{Eq:1:2}.
%Let $M>0$ and $\al\in(0,1)$. 
Suppose for some constants $M>0$ and $\al\in(0,1)$, there holds
	\begin{equation*}
	\Big|\big\{
		u(\cdot, t_o) \ge M
		\big\}\cap K_{\varrho}(x_o)\Big|
		\ge\al |K_{\varrho}|;
	\end{equation*}
	then there exist constants $\dl$ and $\eps$ in $(0,1)$, depending only on the data and $\al$, such that 
	%either $$|\boldsymbol \mu^{\pm}|>8\xi\boldsymbol \om$$ or
	\begin{equation*}%\label{Eq:3:1}
	\Big|\big\{
	u(\cdot, t) \ge \eps M\big\} \cap K_{\varrho}(x_o)\Big|
	\ge\tfrac{1}2 \al|K_\varrho|
	\quad\mbox{ for all $t\in\big(t_o,t_o+\dl M^{q+1-p}\varrho^p\big]$,}
\end{equation*}
provided this cylinder is included in $E_T$. Moreover, $\varep=\varep_o\al^{1/q}$ and $\dl=\dl_o\al^{p+1}$ for some $\varep_o$ and $\dl_o$ depending only on the data.
\end{lemma}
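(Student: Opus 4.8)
The statement is the standard ``propagation of positivity in measure in time'' for non-negative super-solutions, and the natural tool is a logarithmic energy estimate in the spirit of Moser/DiBenedetto, localized only in space. First I would choose a cutoff function $\zeta=\zeta(x)$ that equals $1$ on $K_{(1-\sigma)\varrho}(x_o)$, vanishes on $\partial K_\varrho(x_o)$, with $|D\zeta|\le (\sigma\varrho)^{-1}$; the parameter $\sigma\in(0,1)$ will be fixed later in dependence on $\alpha$. Then I would test the (mollified) weak formulation of \eqref{Eq:1:1f} with a test function built from the logarithmic function
\[
 \psi(u):=\Big[\ln\!\Big(\frac{HM}{H M - (M-u)_+ + \nu M}\Big)\Big]_+,
\]
for constants $H,\nu\in(0,1)$, multiplied by $\zeta^p$ and by an appropriate derivative factor $\psi'(u)$, exactly as in \cite[Chapter~4]{DB}. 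After passing to the limit in the time-mollification (using Lemma~\ref{Lm:mol}) and discarding the non-negative diffusion contribution by the coercivity in \eqref{Eq:1:2}$_1$, one arrives at the inequality
\[
 \int_{K_\varrho(x_o)\times\{t\}}\zeta^p\,\Psi(u(\cdot,t))\,\dx
 \le
 \int_{K_\varrho(x_o)\times\{t_o\}}\zeta^p\,\Psi(u(\cdot,t_o))\,\dx
 +\boldsymbol\gm\,\frac{|t-t_o|}{(\sigma\varrho)^p}\,\sup\{ \text{stuff}\},
\]
where $\Psi$ is the primitive of $s\mapsto |s|^{q-1}\psi^2(s)$ arising from the doubly non-linear time term, and the last term comes from estimating $D\zeta$ with the growth bound \eqref{Eq:1:2}$_2$ and Young's inequality; here the correct intrinsic time scale $M^{q+1-p}\varrho^p$ appears precisely because the time term carries the factor $|u|^{q-1}$ which on the relevant set $\{u<M\}$ is comparable to $M^{q-1}$.

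The second step is the bookkeeping that turns this energy inequality into a measure statement. On the left I restrict the integral to the sub-level set $\{u(\cdot,t)<\varepsilon M\}$ with $\varepsilon:=\tfrac12\nu$, on which $\psi\ge c\ln(1/\nu)$, so that $\Psi(u)\ge c\,M^{q+1}\big(\ln\tfrac1\nu\big)^2$ up to $q$-dependent constants; hence the left side is bounded below by $c\,M^{q+1}(\ln\tfrac1\nu)^2\,|\{u(\cdot,t)<\varepsilon M\}\cap K_{(1-\sigma)\varrho}(x_o)|$. On the right, the initial term at $t=t_o$ is controlled by the hypothesis: on $\{u(\cdot,t_o)\ge M\}$ the function $\psi$ vanishes, so the integral is supported on $\{u(\cdot,t_o)<M\}\cap K_\varrho(x_o)$, which has measure at most $(1-\alpha)|K_\varrho|$, and there $\psi\le\ln(1/\nu)$ trivially, giving the bound $\boldsymbol\gm\,M^{q+1}(\ln\tfrac1\nu)\,(1-\alpha)|K_\varrho|$. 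The extra $D\zeta$-term is bounded by $\boldsymbol\gm\,M^{q+1}\,\dfrac{M^{q+1-p}\varrho^p\,\delta}{(\sigma\varrho)^p}\ln\tfrac1\nu\,|K_\varrho| = \boldsymbol\gm\,M^{q+1}\,\dfrac{\delta}{\sigma^p}\ln\tfrac1\nu\,|K_\varrho|$ for $t\le t_o+\delta M^{q+1-p}\varrho^p$. Combining and dividing by $M^{q+1}(\ln\tfrac1\nu)^2$ yields
\[
 \big|\{u(\cdot,t)<\varepsilon M\}\cap K_{(1-\sigma)\varrho}(x_o)\big|
 \le
 \Big(\frac{\boldsymbol\gm(1-\alpha)}{\ln\frac1\nu}+\frac{\boldsymbol\gm\,\delta}{\sigma^p\,\ln\frac1\nu}\Big)|K_\varrho|.
\]
Finally, accounting also for the discarded outer shell $K_\varrho\setminus K_{(1-\sigma)\varrho}$, of measure $\le N\sigma|K_\varrho|$, I get
\[
 \big|\{u(\cdot,t)\ge\varepsilon M\}\cap K_\varrho(x_o)\big|
 \ge
 \Big(1-N\sigma-\frac{\boldsymbol\gm(1-\alpha)}{\ln\frac1\nu}-\frac{\boldsymbol\gm\,\delta}{\sigma^p\,\ln\frac1\nu}\Big)|K_\varrho|.
\]

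The last step is to choose the parameters in the right order so that the bracket is at least $\tfrac12\alpha$, which forces the claimed dependencies. First pick $\sigma$ small, proportional to $\alpha$, so $N\sigma\le\tfrac14\alpha$; then pick $\nu$ small enough (depending on $\alpha$) so that $\boldsymbol\gm(1-\alpha)/\ln\tfrac1\nu\le\tfrac14\alpha$ — tracing constants one finds $\ln\tfrac1\nu\sim\alpha^{-1}$ suffices, i.e. $\nu$ of order $e^{-c/\alpha}$, hence $\varepsilon=\tfrac12\nu$ is of the claimed form (a routine but slightly delicate point is that the paper states $\varepsilon=\varepsilon_o\alpha^{1/q}$; to match that one refines the lower bound for $\Psi$ on the sub-level set keeping the $\varepsilon$-dependence explicit rather than absorbing it, which replaces the logarithmic gain by a power and produces the $\alpha^{1/q}$); then pick $\delta$ small, $\delta\le\delta_o\sigma^p\ln\tfrac1\nu\,\alpha\sim\delta_o\alpha^{p+1}$ as claimed, so that the last term is $\le\tfrac14\alpha$ uniformly for $t\in(t_o,t_o+\delta M^{q+1-p}\varrho^p]$. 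Collecting, the bracket is $\ge 1-\tfrac34\alpha\ge\tfrac12\alpha$ since $\alpha<1$, finishing the proof. The main obstacle is the careful tracking of the $\nu$- and $\varepsilon$-dependence through the logarithmic estimate so that the final constants come out with the precise exponents $\varepsilon=\varepsilon_o\alpha^{1/q}$ and $\delta=\delta_o\alpha^{p+1}$ — the qualitative existence of such constants is standard, but the quantitative form requires the sharper lower bound on the primitive $\Psi$ and honest use of $|u|^{q-1}\asymp M^{q-1}$ on the relevant sub-level sets, together with the absence of any restriction on $q>0$ and $p>1$.
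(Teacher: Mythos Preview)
Your approach via the logarithmic test function is genuinely different from the paper's. The paper does \emph{not} use the logarithm here; it applies the plain Caccioppoli estimate of Proposition~\ref{Prop:2:1} (super-solution, minus-case) with level $k=M$ and a purely spatial cutoff $\zeta$. The relevant quantity is $\mathfrak g_-(u,M)=q\int_u^M s^{q-1}(M-s)\,ds$: the term at the initial time is bounded by $(1-\alpha)|K_\varrho|\int_0^{M}s^{q-1}(M-s)\,ds$, while the left-hand side at time $t$ is bounded below by $|\{u(\cdot,t)<\varepsilon M\}\cap K_{(1-\sigma)\varrho}|\int_{\varepsilon M}^{M}s^{q-1}(M-s)\,ds$. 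The decisive observation is the ratio
\[
\frac{\int_0^{\varepsilon M}s^{q-1}(M-s)\,ds}{\int_{\varepsilon M}^{M}s^{q-1}(M-s)\,ds}\le \boldsymbol\gm(q)\,\varepsilon^q,
\]
which \emph{directly} forces $\varepsilon=\varepsilon_o\alpha^{1/q}$ via the requirement $(1-\alpha)(1+\boldsymbol\gm\varepsilon^q)\le 1-\tfrac34\alpha$; then $\sigma=\alpha/(8N)$ and $\boldsymbol\gm\delta/\sigma^p\le\tfrac18\alpha$ give $\delta=\delta_o\alpha^{p+1}$.

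Your logarithmic route would indeed establish the qualitative claim, but the gain it produces is $1/\ln(1/\nu)$, which inverts to $\nu\sim e^{-c/\alpha}$ and hence $\varepsilon\sim e^{-c/\alpha}$ --- exponentially worse than the power $\alpha^{1/q}$ stated in the ``Moreover'' clause. Your parenthetical that one can ``refine the lower bound for $\Psi$\ldots which replaces the logarithmic gain by a power'' is not substantiated: the logarithmic structure gives a logarithmic gain by construction, and extracting a pure power from it essentially means abandoning the logarithm --- which is exactly what the paper does by working with $\mathfrak g_-$ instead. So your sketch proves the lemma without its last sentence, but not with it. (Incidentally, the paper \emph{does} deploy the logarithmic machinery --- but only later, in Lemma~\ref{Lm:shrink}, where one must push the measure below an \emph{arbitrary} threshold $\nu$; for the present propagation-in-time statement the simpler $\mathfrak g_-$ estimate is both adequate and quantitatively sharper.)
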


%The particular case $q=p-1$ of Lemma \ref{Lm:3:1} has been provided in \cite[Lemma~4.1]{BDL-21}. The proof of the general case can be adapted  by letting $\boldsymbol\mu^-=0$ and
%%setting $M = \xi \boldsymbol{\omega}$ and 
%considering the energy estimate of Proposition~\ref{Prop:2:1} for the cylinder $Q = K_\rho(x_o) \times (t_o, t_o + \delta M^{q+1-p} \rho^p]$ instead of $K_\rho(x_o) \times (t_o, t_o + \delta \rho^p]$.
%Moreover, in \cite[Lemma~4.1]{BDL-21} the term $\mathfrak{g}_-$ has been defined as in \eqref{Eq:b} for the particular parameter $q=p-1$.
%Whereas in the present situation all computations related to 
%the quantity $\mathfrak{g}_-$ need to be performed with a general parameter %$q > 0$ instead of $q=p-1$. 
%See also \cite[Lemma~4.1]{Henriques-22} in this connection.
%\textcolor{orange}{(In my opinion this is a bit too sketchy. How about writing the full proof? It would not be too long, and we avoid referring somewhere else.)}

\begin{proof} %We only show the case of super-solutions, the other case of sub-solutions being similar.
Assume $(x_o,t_o)=(0,0)$ for simplicity. %and $|\boldsymbol \mu^{-}|\le 8M$. Otherwise there is nothing to prove.
We plan to use the energy estimate in Proposition~\ref{Prop:2:1}
in the cylinder $Q=K_{\varrho}\times(0,\dl M^{q+1-p} \varrho^p]$, with
$k=M$. To this end, we choose 
$\z(x,t)\equiv \z(x)$ to be  a standard non-negative cutoff function that is independent of time,  equals $1$ on $K_{(1-\sig)\varrho}$ for $\sigma\in(0,1)$ to be selected 
and vanishes on $\pl K_{\varrho}$ satisfying
$|D\z|\le(\sig\varrho)^{-1}$.
As a result, for all $0<t\le\dl M^{q+1-p} \varrho^p$, the energy estimate yields that
\begin{align}\label{start:en}
	\int_{K_\varrho\times\{t\}}\int_{u}^M &s^{q-1}(s-M)_-\,\ds \z^p\,\dx\nonumber\\
	&\le
	\int_{K_\varrho\times\{0\}}\int_{u}^M s^{q-1}(s-M)_-\,\ds\z^p\dx
	+
	\boldsymbol \gm\iint_{Q}(u-M)^{p}_-|D\z|^p\,\dx\dt.
\end{align}
%In order to estimate the first integral on the right-hand side, we 
%take into consideration 
Let us first treat the right-hand side of \eqref{start:en}. Indeed,
employing the measure theoretical information
at the initial time $t=0$, the first integral  is estimated by %the fact $u\ge\boldsymbol\mu^-$. This leads to
\begin{align*}
	\int_{K_\varrho\times\{0\}}\int_{u}^M s^{q-1}(s-M)_-\,\ds\z^p\,\dx
	\le 
	(1-\al)|K_\varrho|\int_{0}^{M} s^{q-1}(s-M)_-\,\ds.
\end{align*}
Whereas it is standard to estimate  by
\begin{equation*}
	\iint_{Q}(u-M)^{p}_-|D\z|^p\,\dx\dt
	\le
	\frac{M^p}{(\sigma\varrho )^p}|Q| 
	= \frac{\dl M^{q+1}}{\sig^p} |K_\varrho |.
\end{equation*}
As for the left-hand side of the energy estimate \eqref{start:en}, it can be estimated from below by extending the integral over a smaller set. Indeed, we have
\begin{align*}
	\int_{K_\varrho\times\{t\}}\int_{u}^M s^{q-1}(s-M)_-\,\ds \z^p\,\dx
	\ge 
	\big|A_{\eps M,(1-\sig)\varrho}(t)\big| \int_{\eps M}^M s^{q-1}(s-M)_-\,\ds,
\end{align*}
where we have defined
\begin{equation*}
	A_{\eps M,(1-\sig)\varrho}(t)
	=
	\big\{ u(\cdot,t)\le \eps M\big\} \cap K_{(1-\sig)\varrho}
	%\quad 
	%\mbox{and}
	%\quad
	%k_\eps=\eps M,
\end{equation*}
with $\epsilon\in(0,\frac12)$ to be chosen later. 
%Due to Lemma~\ref{lem:g} and the fact that 
%$\frac12M\le(1-\eps)M=k-k_\eps\le |k_\eps|+|k|\le 2(|\boldsymbol\mu^-|+ M)\le 18 M$ 
By the mean value theorem, we can further estimate from below
\begin{align}\label{prop_1}
	\int_{\eps M}^{M} s^{q-1}(s-M)_-\,\ds
	&\ge
        \int_{\frac12 M}^{M} s^{q-1}(s-M)_-\,\ds\nonumber\\
	&\ge
        2^{-(q-1)_+}M^{q-1} \int_{\frac12 M}^{M} (s-M)_-\,\ds\nonumber\\
	&=  2^{-(q-1)_+-3} M^{q+1}.
\end{align}
Observe that
\begin{align*}
	\big|A_{\eps M,\varrho}(t)\big|
	&=
	\big|A_{\eps M,(1-\sig)\varrho}(t)\cup (A_{\eps M,\varrho}(t)\setminus A_{\eps M,(1-\sig)\varrho}(t))\big|\\
	&\le 
	\big|A_{\eps M,(1-\sig)\varrho}(t)\big|+|K_\varrho\setminus K_{(1-\sig)\varrho}|\\
	&\le
	\big |A_{\eps M,(1-\sig)\varrho}(t)\big|+N\sig |K_\varrho|.
\end{align*}
Collecting all the above estimates in \eqref{start:en} yields that
\begin{align*}
	|A_{\eps M,\varrho}(t)|
	&\le 
	\frac{\dsty\int_{0}^M s^{q-1}(s-M)_-\,\ds }{\dsty\int_{\eps M}^M s^{q-1}(s-M)_-\,\ds }
	(1-\al)|K_\varrho|
	+
	\frac{\boldsymbol\gm\dl}{\sig^p}|K_\varrho| +N\sig|K_\varrho|,
\end{align*}
for a constant $\boldsymbol\gamma =\boldsymbol\gamma (p,C_o,C_1)$.
Let us rewrite the fractional number in the preceding  inequality in the form
\[
	1+ \mathbf {I}_\eps \quad\text{ where }\quad
	\mathbf {I}_\eps =
	\frac{\dsty\int_{0}^{\eps M} s^{q-1}(s-M)_-\,\ds}{\dsty\int_{\eps M}^{M}  s^{q-1}(s-M)_-\,\ds}.
\]
%At this stage, we need to bound $I_\eps$ from above. Keeping in mind $|\boldsymbol \mu^-|\le 8M$ and $|k_\eps|\le 9M$ and applying Lemma~\ref{lem:Acerbi-Fusco}, we have
To control $I_\varepsilon$, we first easily estimate%, noting $\varep\in(0,\frac12)$,
\[
	\int_{0}^{\eps M} s^{q-1}(s-M)_-\,\ds
	\le 
	M\int_{0}^{\eps M}s^{q-1}\,\ds
	=
	%\tfrac1{q} M s^{q}\Big|_{0}^{\eps M}
        %\le
	\tfrac1{q} M^{q+1}\eps^q.
\]
This, together with \eqref{prop_1}, gives that
\begin{align*}
	\mathbf {I}_\eps
	\le
	\boldsymbol\gm (q) \eps^q.
\end{align*}

At this stage,  the various parameters are ready to be chosen quantitatively. In fact, 
 $\eps\in (0,\frac12)$ is chosen to verify
\begin{equation*}
	(1-\al)(1+\boldsymbol \gm(q)\eps^q)\le 1-\tfrac34\al.
\end{equation*}
For example, this holds true if we choose $\eps\le(\frac{\alpha}{4\boldsymbol\gm(q)})^{1/q}$. 
%This fixes $\eps$ as a constant depending only on $p$ and $\al$. 
Whereas $\sig$ is defined by $\sig:=\frac{\al}{8N}$.
Finally,  $\delta\in (0,1)$ is chosen to satisfy
$$
	\frac{\boldsymbol \gm\dl}{\sig^p}\le\tfrac1{8}{\al}.
$$
Note that this specifies the choice of $\dl=\dl_o\al^{p+1}$ for a constant $\dl_o$ depending on the data. With these choices we have  $|A_{\eps M,\varrho}(t)|\le
(1-\tfrac{\al}2)|K_\varrho|$ for $0<t\le\delta M^{q+1-p}\varrho^p$. This proves the asserted propagation of positivity.
\end{proof}

%%%%%%%%%%%%%%%%%%%%%%
\section{A measure shrinking lemma}\label{S:shrink}
The following is the key lemma in the proof of Proposition~\ref{PROP:EXPANSION}. An important feature is that  a smallness estimate in measure is given for each slice of time.
\begin{lemma}\label{Lm:shrink}
Assume that $0<p-1\le q$. Let $\dl\in (0,1)$, $M>0$, and $\al\in (0,1)$. Then for any 
$\nu\in (0,1)$ there exists $\xi\in(0,1)$ depending only on the data, $\dl$, $\nu$ and $\al$, such that the following statement holds: Whenever $u$ is a non-negative, local, weak super-solution to \eqref{Eq:1:1f} with \eqref{Eq:1:2} in $E_T$ satisfying on some cylinder $K_{2\varrho}(x_o)\times \big(t_o, t_o+\dl M^{q+1-p}\varrho^p\big]\subset E_T$ the slice-wise measure condition
\begin{equation*}%\label{Eq:3:1}
	\Big|\big\{
	u(\cdot, t)\ge  M\big\} \cap K_{\varrho}(x_o)\Big|
	\ge\al |K_\varrho|
	\quad\mbox{ for all $t\in\big(t_o, t_o+\dl M^{q+1-p}\varrho^p\big]$,}
\end{equation*}
then we have
\[
\Big|\big\{
	u(\cdot, t) \le \xi M \big\} \cap K_{\varrho}(x_o)
\Big|
	\le\nu |K_\varrho|,
\]
for any time
\[
    t\in  \big(t_o+\tfrac12\dl M^{q+1-p}\varrho^p, t_o+\dl M^{q+1-p}\varrho^p\big].
\]

\end{lemma}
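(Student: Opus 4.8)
\textbf{Proof strategy for Lemma~\ref{Lm:shrink}.} The plan is to use the logarithmic energy estimate, which is the natural tool here because it transforms an $L^\infty$-bound on a logarithm into a measure-shrinking statement. First I would introduce the function
\[
    \Psi(u):=\Big[\ln\Big(\frac{M}{u+cM}\Big)\Big]_+
\]
for a small parameter $c\in(0,1)$ (say $c=\xi$ or a fixed fraction thereof), defined so that $\Psi$ vanishes on $\{u\ge M\}$ and is bounded by $\ln(1/\xi)$ on $\{u\le\xi M\}$ — the standard truncated logarithm used in Moser's and DiBenedetto's arguments. I would then test the weak formulation of the super-solution inequality \eqref{Eq:1:1f} with a test function of the form $\zeta^p\,\partial_u\big(\text{(primitive of $\Psi\Psi'$ against the $u^{q-1}$ weight)}\big)$, where $\zeta=\zeta(x)$ is a time-independent cutoff equal to $1$ on $K_\varrho(x_o)$ and supported in $K_{2\varrho}(x_o)$ with $|D\zeta|\le c/\varrho$. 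Because the vector field satisfies the coercivity \eqref{Eq:1:2}$_1$ and growth \eqref{Eq:1:2}$_2$, the diffusion term produces a good term $\int\zeta^p|D\Psi|^p$ plus an error controlled by $\int|D\zeta|^p(\cdots)$; the parabolic term produces the time-monotone quantity $\int_{K_{2\varrho}}\zeta^p\,\mathfrak h(u)\,\dx$, where $\mathfrak h$ is the primitive with the $q s^{q-1}$ weight built from $\Psi$. The precise bookkeeping for the parabolic term is where the condition $p-1\le q$ enters: it ensures the relevant primitive is comparable to $\Psi^2$ times an appropriate power of $M$, with constants that stay under control.

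The key steps, in order, are: (i) derive the logarithmic energy estimate on $Q:=K_{2\varrho}(x_o)\times(t_o,t_o+\dl M^{q+1-p}\varrho^p]$, obtaining a bound of the form
\[
    \essup_{t}\int_{K_{2\varrho}(x_o)\times\{t\}}\zeta^p\,\mathfrak h(u)\,\dx
    \le \int_{K_{2\varrho}(x_o)\times\{t_o\}}\zeta^p\,\mathfrak h(u)\,\dx
    + \bg\,\ln\tfrac1\xi\,\frac{|Q|}{\varrho^p}\Big(\tfrac cM\Big)^{?}\cdots;
\]
(ii) discard the positivity-set part of the initial slice using the hypothesis $|\{u(\cdot,t_o)\ge M\}\cap K_\varrho|\ge\alpha|K_\varrho|$ — wait, the hypothesis here is the \emph{slice-wise} condition for all $t$ in the interval, which is even better: I evaluate the initial-slice term at the \emph{left endpoint} of a sub-interval. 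More carefully: for a fixed target time $t_1\in(t_o+\tfrac12\dl M^{q+1-p}\varrho^p,\,t_o+\dl M^{q+1-p}\varrho^p]$ I run the estimate on the shorter cylinder $K_{2\varrho}(x_o)\times(t_*,t_1]$ where $t_*:=t_1-\tfrac12\dl M^{q+1-p}\varrho^p$ lies in the interval where the slice-wise hypothesis holds, so the initial-slice integral at $t_*$ is bounded by $(1-\alpha)|K_\varrho|\,(\ln\tfrac1\xi)^2$-type quantity (using that $\Psi(u(\cdot,t_*))$ vanishes on a set of measure $\ge\alpha|K_\varrho|$); (iii) on the left-hand side, restrict the integral at time $t_1$ to the set $\{u(\cdot,t_1)\le\xi M\}\cap K_\varrho$, on which $\mathfrak h(u)\gtrsim M^{q+1}(\ln\tfrac1\xi)^2$ (for $\xi$ small); (iv) combine to get
\[
    \big|\{u(\cdot,t_1)\le\xi M\}\cap K_\varrho\big|\,(\ln\tfrac1\xi)^2
    \le \bg\Big[(1-\alpha)\ln\tfrac1\xi + \tfrac{\bg}{\ln\tfrac1\xi}\cdot(\text{terms from }|D\zeta|^p\text{ and the measure of }Q)\Big]|K_\varrho|,
\]
so dividing by $(\ln\tfrac1\xi)^2$ yields a bound $\le\big(\tfrac{\bg}{\ln\tfrac1\xi}+\tfrac{\bg}{(\ln\tfrac1\xi)^2}\big)|K_\varrho|$, and choosing $\xi$ small enough (depending on $\nu$, $\alpha$, $\dl$ and the data) makes this $\le\nu|K_\varrho|$.

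\textbf{Main obstacle.} The delicate point is step (i) together with the careful tracking in step (ii)–(iv) of how the various powers of $M$ cancel, so that the final constant depends only on the data, $\dl$, $\nu$, $\alpha$ and \emph{not} on $M$ or $\varrho$. The source of the difficulty is the nonlinear time term $\partial_t u^q$: the logarithmic test function must be chosen so that its primitive against the $q s^{q-1}\,\ds$ measure is still a genuine logarithm-squared quantity (this is what forces the shift by $cM$ inside the logarithm, keeping $u$ bounded away from $0$ on the support), and the homogeneity only balances cleanly when $q\ge p-1$ — exactly the stated hypothesis. A secondary technical nuisance is that weak solutions lack a time derivative, so the logarithmic estimate must be derived via the exponential time-mollification $\llbracket\cdot\rrbracket_h$ of \eqref{def:mol} and Lemma~\ref{Lm:mol}, passing to the limit $h\downarrow0$ exactly as in the proof of Proposition~\ref{Prop:parab}; I would cite that mechanism rather than repeat it. I expect steps (iii) and (iv) to be routine once (i) is in hand.
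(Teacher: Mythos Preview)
Your proposal has a genuine gap at step~(iv). The inconsistency is already visible between steps~(ii) and~(iv): in~(ii) you (correctly) say the initial-slice integral at $t_*$ is a ``$(1-\alpha)|K_\varrho|(\ln\tfrac1\xi)^2$-type quantity'', but in~(iv) you have downgraded it to $(1-\alpha)\ln\tfrac1\xi$. The second power is the right one. Whatever primitive $\mathfrak h$ you build from $\Psi$, its supremum (attained near $u=0$) and its value on $\{u\le\xi M\}$ are of the same order, so the initial-slice bound is $(1-\alpha)(\ln\tfrac1\xi)^2$ times the same $M$-factor that appears on the left. Dividing through by $(\ln\tfrac1\xi)^2$ leaves
\[
\big|\{u(\cdot,t_1)\le\xi M\}\cap K_\varrho\big|
\le\Big[(1-\alpha)+\tfrac{\boldsymbol\gm(\delta)}{\ln(1/\xi)}\Big]|K_\varrho|,
\]
which tends to $(1-\alpha)|K_\varrho|$ as $\xi\downarrow0$ and never gets below it. A single logarithmic step can only shrink the measure by a factor tied to $\alpha$; reaching an arbitrary $\nu$ requires iteration.

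The paper carries out such an iteration, and in doing so uses the slice-wise hypothesis at \emph{every} time, not just at one initial instant $t_*$. One sets $k_n=c^nM$ and $\boldsymbol Y_n:=\sup_{t}\bint_{K_{2\varrho}}\zeta^p\chi_{\{u<k_n\}}\,\dx$, and proves an auxiliary dichotomy: either $\boldsymbol Y_n\le\nu$ or $\boldsymbol Y_{n+1}\le\sigma\boldsymbol Y_n$ for some fixed $\sigma<1$. The test function is $\zeta^p/[k-(k-u)_++ck]^{p-1}$, producing a pair $\Phi_k,\Psi_k$ with $\Psi_k(u)=\ln\bigl[\tfrac{k(1+c)}{k(1+c)-(k-u)_+}\bigr]$. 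The decisive new ingredient is that $\Psi_{k_n}(u(\cdot,t))=0$ on $\{u(\cdot,t)\ge M\}\cap K_\varrho$ for every $t$, so the Poincar\'e inequality of Lemma~\ref{lem:Poincare} yields $\int\zeta^p\Psi_{k_n}^p\le\tfrac{\boldsymbol\gm\varrho^p}{\alpha^p}\int\zeta^p|D\Psi_{k_n}|^p$ on each slice; combining this with a case split on the sign of $\partial_t\int\zeta^p\Phi_{k_n}(u)$ at a suitably chosen instant gives the recursion. Iterating $n_o$ times with $\sigma^{n_o}\le\nu$ then proves the lemma with $\xi=c^{n_o}$. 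The hypothesis $p-1\le q$ enters in bounding $\Phi_k$ by $\overline{\boldsymbol\gm}\,k^{q+1-p}\ln\tfrac{1+c}{c}$, not through your $\mathfrak h$.
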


%%
%\begin{remark}\upshape
%\textcolor{red}{What are the dependences?}
%\end{remark}
%%%%%%%%%%

\subsection{An auxiliary lemma}

Some preparations for the proof of Lemma~\ref{Lm:shrink} are in order.
For simplicity, we assume $(x_o,t_o)=(0,0)$,
and for $\lm\in(0,1)$, we denote
\begin{equation}\label{Eq:Q-I}
\left\{
\begin{array}{c}
    I= \big(0,\dl M^{q+1-p}\varrho^p\big],\quad \lm I= \big((1-\lm)\dl M^{q+1-p}\varrho^p,\dl M^{q+1-p}\varrho^p\big],\\[5pt]
    Q=K_{2\rho}\times I,\quad \lm Q= K_{\lm 2\rho}\times \lm I,
\end{array}\right.
\end{equation} 
and for some $c\in(0,1)$ to be determined, introduce the sequence
\begin{equation}\label{Eq:c-k_n}
    k_n:= c^nM,\quad n\in\mathbb{N}_0,
\end{equation}
and the quantity
\begin{equation}\label{Eq:Y_n}
    \boldsymbol Y_n:=\sup_{t\in I}\bint_{K_{2\rho}\times\{t\} }\z^p\chi_{ \{ u<k_n \}}\,\dx.
\end{equation}
Here $\z(x,t)=\z_1(x)\z_2(t)$ is a piecewise smooth cutoff function in $Q$, satisfying
\begin{equation}\label{Eq:z}
\left\{
    \begin{array}{c}
    \mbox{$0\le\z\le 1$ in $Q$,}\\[5pt]
    \mbox{$\z=1$ in $\tfrac12 Q$ and $\z=0$ in $Q\setminus\tfrac34 Q$,}\\[5pt]
    \mbox{$|D\z_1|\le \frac{2}{\rho}$ and $0\le\pl_t\z_2\le \frac{4}{\dl M^{q+1-p}\varrho^p}$, and that}\\[5pt]
    \mbox{the sets $\big\{x\in K_{2\rho}\colon\> \z_1(x)>a \big\}$ are convex for all $a\in(0,1)$.}
\end{array}\right.
\end{equation}
The proof of Lemma~\ref{Lm:shrink} hinges upon the following result. 
%Since its proof constitutes the most difficult part, we postpone it to Section~\ref{S:proof-aux}.
\begin{lemma}\label{Lm:Aux}
Let the hypotheses of Lemma~\ref{Lm:shrink} hold.
For any $\nu\in(0,1)$, there exist $\sig,\, c\in(0,1)$ depending on the data, $\dl$,  $\al$ and $\nu$, such that
for any $n\in\mathbb{N}_0$, %when $|\boldsymbol \mu^-|\le c^n M$ holds, 
either
\[
    \boldsymbol Y_n\le \nu
\]
or
\[
    \boldsymbol Y_{n+1}\le\max\big\{\nu,\, \sig \boldsymbol Y_n\big\}.
\]
\end{lemma}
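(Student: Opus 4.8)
The plan is to run a De Giorgi iteration in the time-slice-wise formulation, with the quantity $\boldsymbol Y_n$ from \eqref{Eq:Y_n} playing the role of the iterated sequence. First I would fix $n\in\N_0$ and assume $\boldsymbol Y_n>\nu$ (otherwise the first alternative holds and there is nothing to prove); the goal is then to show $\boldsymbol Y_{n+1}\le\sig\boldsymbol Y_n$ for a suitable $\sig\in(0,1)$, unless $\boldsymbol Y_{n+1}\le\nu$. The starting point is the energy estimate of Proposition~\ref{Prop:2:1}, applied to the super-solution $u$ with truncation level $k=k_n=c^nM$ on the cylinder $Q=K_{2\rho}\times I$, with the cutoff $\z$ specified in \eqref{Eq:z}. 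Using Lemma~\ref{Lm:g} to estimate $\mathfrak g_-(u,k_n)$ from above and below by $(|u|+k_n)^{q-1}(u-k_n)_-^2$, and noting that on $\{u<k_n\}$ one has $k_n\le u+k_n\le 2k_n$, the energy estimate gives control of
$$
\essup_{t\in I}\int_{K_{2\rho}\times\{t\}}\z^p(u-k_n)_-^2\,\dx
\quad\text{and}\quad
\iint_{Q}\z^p|D(u-k_n)_-|^p\,\dx\dt
$$
in terms of $\tfrac{1}{\rho^p}\iint(u-k_n)_-^p|D\z_1|^p$ and $\tfrac{1}{\dl M^{q+1-p}\rho^p}\iint(u+k_n)^{q-1}(u-k_n)_-^2\pl_t\z_2$; both right-hand terms are bounded by $\boldsymbol\gm\, k_n^{q+1-p}\,\delta^{-1}\rho^{-p}\cdot k_n^{p-1-q}\cdot|\{u<k_n\}\cap (\tfrac34 Q\setminus\tfrac12 Q)|$ — crucially, because $\theta=\dl M^{q+1-p}$ and $k_n\le M$, the homogeneity in $M$ cancels and one is left (after dividing out $M^{q-1}$ or $M^{q+1-p}$) with a purely measure-theoretic quantity, times $\dl^{-1}$.

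The second step converts these energy bounds into a gain for $\boldsymbol Y_{n+1}$. Here the measure condition
$$
\big|\{u(\cdot,t)\ge M\}\cap K_\rho\big|\ge\al|K_\rho|\qquad\text{for all }t\in\lm I
$$
enters: it guarantees that on the support of $\z_1$ at each time $t$, the set $\{u(\cdot,t)<k_n\}$ (on which $(u-k_{n+1})_-\ge(c^n-c^{n+1})M>0$) misses a set of measure $\ge\al|K_\rho|$, so a Poincar\'e-type inequality on slices (Lemma~\ref{lem:Poincare}, which is exactly why we demanded convexity of the super-level sets of $\z_1$ in \eqref{Eq:z}) applies with $\mathcal E$ of measure $\gtrsim\al|K_\rho|$. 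This produces, for a.e.\ $t$,
$$
(k_n-k_{n+1})\,\big|\{u(\cdot,t)<k_{n+1}\}\cap\{\z_1=1\}\big|
\le\int_{K_{2\rho}\times\{t\}}\z_1(u-k_{n+1})_-\,\dx
\le\frac{\boldsymbol\gm\rho}{\al^{(N-1)/N}}\int\z_1|D(u-k_{n+1})_-|\,\dx,
$$
and integrating the $p$-th power in time, combining with the parabolic Sobolev embedding (Lemma~\ref{lem:Sobolev}) applied in the standard De Giorgi fashion, one arrives at a recursive inequality of the form
$$
\boldsymbol Y_{n+1}\le\frac{\boldsymbol\gm\, \boldsymbol b}{\al^{\kappa}\,\dl^{\kappa'}}\,\boldsymbol Y_n^{1+\beta}
$$
for explicit exponents $\kappa,\kappa',\beta>0$ and absolute $\boldsymbol b>1$. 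Since we are assuming $\boldsymbol Y_n>\nu$, we may write $\boldsymbol Y_n^{1+\beta}=\boldsymbol Y_n\cdot\boldsymbol Y_n^{\beta}\le\boldsymbol Y_n\cdot 1$, but that is too crude; instead one uses $\boldsymbol Y_n^{1+\beta}\le\boldsymbol Y_n\cdot\boldsymbol Y_n^\beta$ and then \emph{either} $\boldsymbol Y_{n+1}\le\nu$, \emph{or} $\boldsymbol Y_{n+1}>\nu$ in which case $\boldsymbol Y_n^\beta\ge(\boldsymbol Y_{n+1})^{\beta}$ is unhelpful — the clean way is: choose $c$ (hence the factor $(k_n-k_{n+1})^{-p}=c^{-(n+1)p}(1-c)^{-p}M^{-p}$, whose $n$-dependence is absorbed into a new geometric constant $\boldsymbol b_1^n$ at the cost of enlarging $\boldsymbol b$) and then set
$$
\sig:=\frac{\boldsymbol\gm\,\boldsymbol b}{\al^{\kappa}\dl^{\kappa'}}\,\nu^{\beta},
$$
so that $\boldsymbol Y_{n+1}\le\sig\,\boldsymbol Y_n$ as long as $\boldsymbol Y_n>\nu$ (using $\boldsymbol Y_n^\beta\le 1$ is false in general since $\boldsymbol Y_n$ is an average and $\le1$, so actually $\boldsymbol Y_n^\beta\le1$ \emph{is} fine — $\boldsymbol Y_n$ is a supremum of averages of a characteristic function times $\z^p\le1$, hence $\boldsymbol Y_n\in[0,1]$). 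Then one makes $\nu$ small, i.e.\ chooses the available $c,\sig$ so that $\sig<1$; this is possible precisely because $\beta>0$, so $\nu^\beta\to0$.

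The main obstacle, and the place requiring care, is the \emph{time-slice-wise} nature of the conclusion: the energy estimate naturally yields a bound on the cylindrical average $\iint\z^p\chi_{\{u<k_n\}}$, not on the supremum over $t$ of the spatial slices. Bridging this gap is exactly the role of the cutoff $\z_2$ with $\pl_t\z_2\ge0$ supported in the latter part of $I$ together with the $\essup_t$ term on the left of the energy estimate — the monotone-in-time cutoff lets the boundary term at the initial time vanish while the $L^\infty(L^2)$-in-time term on the left controls each slice. One must track that the slice estimate holds for \emph{all} $t$ in the final sub-interval $(\tfrac12\dl M^{q+1-p}\rho^p,\dl M^{q+1-p}\rho^p]$, which is why $\z_2$ is taken to be $1$ on $\tfrac12 Q$ and to vanish near $t=0$. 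A secondary technical point is keeping all constants dependent only on the data, $\dl$, $\al$, $\nu$ (and not on $n$): the powers of $c^{-n}$ arising from $(k_n-k_{n+1})^{-p}$ and from the lower bound $\mathfrak g_-(u,k_n)\ge\tfrac1{\boldsymbol\gm}k_n^{q-1}(u-k_{n+1})_-^2$ must be balanced against the geometric decay supplied by $\boldsymbol Y_n^{1+\beta}$, which is the usual bookkeeping in a De Giorgi iteration and works provided $c$ is chosen after $\beta$ is known.
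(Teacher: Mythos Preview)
Your approach has a genuine gap: the homogeneity does \emph{not} cancel as you claim. Using the standard energy estimate of Proposition~\ref{Prop:2:1} at level $k_n=c^nM$, the gradient cut-off term $\iint(u-k_n)_-^p|D\z|^p$ contributes a factor of order $k_n^p/\rho^p$ to the right-hand side, while the $\essup_t$ term on the left, after the level-set gap $(u-k_n)_-\ge(1-c)k_n$ on $\{u<k_{n+1}\}$, scales like $k_n^{q+1}$. Dividing, you are left with
\[
\boldsymbol Y_{n+1}\le \frac{\boldsymbol\gm}{(1-c)^2}\Big[\frac{1}{\dl}+\dl\,c^{-n(q+1-p)}\Big]\boldsymbol Y_n,
\]
and since $q+1-p>0$ and $c\in(0,1)$, the factor $c^{-n(q+1-p)}$ blows up with $n$. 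Absorbing it into a constant $\boldsymbol b^n$ is exactly what one does in a De~Giorgi iteration \emph{with small initial datum}, but the statement of Lemma~\ref{Lm:Aux} requires $\sig$ to be independent of $n$ and to work for every $n\in\N_0$ without any smallness of $\boldsymbol Y_0$. The superlinear gain $\boldsymbol Y_n^{1+\beta}$ you hope for cannot materialize either: $\boldsymbol Y_n$ is already a measure-density (not an integral), so the usual integral-versus-measure mechanism that produces the extra power in De~Giorgi is absent here.

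The paper's proof circumvents this by abandoning the standard test function $(u-k_n)_-\z^p$ in favour of the logarithmic-type test function $\z^p[k_n-(k_n-u)_++ck_n]^{-(p-1)}$ (applied to the truncated super-solution $u_{k_n}$). This produces an integral inequality involving $\Psi_{k_n}(u)=\ln\frac{k_n(1+c)}{k_n(1+c)-(k_n-u)_+}$, which is \emph{scale-invariant in $k_n$}: on $\{u<k_{n+1}\}$ one has $\Psi_{k_n}(u)\ge\ln\frac{1+c}{2c}$, a quantity independent of $n$. The slice-wise Poincar\'e inequality (Lemma~\ref{lem:Poincare}) is then applied to $\Psi_{k_n}(u)$, not to $(u-k_n)_-$, and this is precisely where the convexity of the super-level sets of $\z_1$ in \eqref{Eq:z} is used. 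The argument then splits into two cases according to the sign of the (upper) time derivative of $\int\z^p\Phi_{k_n}(u)\,\dx$ at the near-maximizing time $t_\eps$ for $\boldsymbol Y_{n+1}$; one case gives $\boldsymbol Y_{n+1}\le\nu$ directly, the other yields $\boldsymbol Y_{n+1}\le\sig\boldsymbol Y_n$ after a rather delicate analysis. This logarithmic device, going back to Moser and to \cite[Chapter~4]{DB}, is the missing key idea in your plan.
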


\noindent
The \textbf{Proof of Lemma \ref{Lm:Aux}} consists of several steps. Once Lemma \ref{Lm:Aux} is proven, we will use it to present the proof of Lemma~\ref{Lm:shrink} in the end, cf.~\S~\ref{S:Lm:shrink-proof}.

%%%%%%%%%%%%%%

%\section[Proof of Lemma~\ref{Lm:Aux}]{Proof of Lemma~\ref{Lm:Aux}}\label{S:proof-aux}
%\begin{proof}
\subsubsection{Step 1: Integral inequalities}
We will first proceed with an {\bf additional regularity assumption} that
\begin{equation}\label{Eq:C1-time}
\pl_t u^q\in C\big(I; L^1(K_{2\rho})\big).
\end{equation}
Recall also the notation $I$ and $Q$ defined in \eqref{Eq:Q-I}.
The purpose of introducing the temporary assumption \eqref{Eq:C1-time} is only expository, and it will be removed later in Section~\ref{S:remove}.
According to Proposition~\ref{Prop:parab}, the function $u_k:=k-(k-u)_+$ with $k\in(0,M)$ is a non-negative, local, weak super-solution to \eqref{Eq:1:1f} -- \eqref{Eq:1:2} in $Q$, that is,
\begin{equation}  \label{Eq:parabolicity}
	\partial_t u_k^q -\dvg\bl{A}(x,t,u_k, Du_k) \ge 0\quad \mbox{ weakly in $Q$.}
\end{equation}
In the weak formulation of \eqref{Eq:parabolicity}, let us use the testing function
\begin{equation}\label{Eq:test-func-1}
\frac{\z^p}{[k-(k-u)_+ +c k]^{p-1}},
\end{equation}
where $\z$ is defined in \eqref{Eq:z} and the positive constants $c\in(0,1)$ and $k\in(0,M)$ are to be specified.
Using the structure conditions \eqref{Eq:1:2}, a standard calculation yields for a.e. $t\in I$,
\begin{align}\label{Eq:int-inequ}
    \pl_t&
    \int_{K_{2\rho}\times\{t\}} \z^p \Phi_k(u) \,\dx 
    + 
    (p-1)C_o\int_{K_{2\rho}\times\{t\}} \z^p |D\Psi_k(u)|^p\,\dx\nonumber\\
    &\le p C_1 \int_{K_{2\rho}\times\{t\}} |D\Psi_k(u) |^{p-1} \z^{p-1} |D\z|\,\dx 
    + 
    \int_{K_{2\rho}\times\{t\}} \Phi_k(u) \pl_t\z^p\,\dx,
\end{align}
 where we have set
\begin{align}%\label{Eq:Psi}
    \Phi_k(u)
    &:= \int_0^{(k-u)_+}\frac{q(k-s)^{q-1}}{(k-s+ck)^{p-1}}\,\d s, \label{Eq:Phi}\\
    \Psi_k(u)
    &:= \ln \bigg[\frac{k(1+c)}{k(1+c)-(k-u)_+}\bigg].\label{Eq:Psi}
\end{align}
Note that the term involving the time derivative exists due to the additional assumption \eqref{Eq:C1-time}. 
The first term on the right-hand side of \eqref{Eq:int-inequ} is treated by Young's inequality. We have
\begin{align*}
    p C_1 \int_{K_{2\rho}\times\{t\}} |D\Psi_k(u) |^{p-1} \z^{p-1} |D\z|\,\dx
    &\le
    \tfrac12(p-1)C_o\int_{K_{2\rho}\times\{t\}} |D\Psi_k(u)|^p\z^p\,\dx\\
    &\phantom{\le\,} 
    +\widetilde{\boldsymbol\gm}(p,C_o, C_1)\int_{K_{2\rho}}|D\z|^p\,\dx,
\end{align*}
where $\widetilde{\boldsymbol\gm}=(pC_1)^p[\tfrac12 (p-1) C_o]^{1-p}$. For the second term on the right-hand side of \eqref{Eq:int-inequ}, we first observe that
\[
    \Phi_k(u)
    \le 
    \int_0^{k}\frac{q(k-s)^{q-1}}{(k-s+ck)^{p-1}}\,\d s 
    = 
    k^{q+1-p}\int_0^1\frac{q s^{q-1}}{(s+c)^{p-1}}\,\ds
    \le \overline{\boldsymbol\gm} k^{q+1-p}\ln\Big(\frac{1+c}{c}\Big).
\]
Here, the last estimate holds for any $0< p-1\le q$; the constant $\overline{\boldsymbol\gm}$ depends only on $p$ and $q$, and is stable as $q+1-p\downarrow0$. Indeed, let us  compute
\begin{align}\label{Eq:aux-inequ}
\int_0^1\frac{q s^{q-1}}{(s+c)^{p-1}}\,\ds 
&= \int_0^c\frac{q s^{q-1}}{(s+c)^{p-1}}\,\ds + \int_c^1\frac{q s^{q-1}}{(s+c)^{p-1}}\,\ds\\\nonumber
&\le  c^{q+1-p}  + q 2^{(1-q)_+} \int_c^1 (s+c)^{q-p}\,\ds\\\nonumber
&\le c^{q+1-p}  + q 2^{q+1-p+(1-q)_+} \int_c^1 \frac{1}{s+c} \,\ds \\\nonumber
&= c^{q+1-p}  + q 2^{q+1-p+(1-q)_+} \ln\Big(\frac{1+c}{2c}\Big) \\\nonumber
%&= c^{q+1-p} + \frac{q 2^{(1-q)_+}}{q+1-p}\big[(1+c)^{q+1-p} - (2c)^{q+1-p}\big]\\\nonumber
&\le \overline{\boldsymbol\gm}  \ln\Big(\frac{1+c}{c}\Big)
\end{align}
with $\overline{\boldsymbol\gm}=(q+1)2^{q+1-p+(1-q)_+}$.
Note that in estimating the integral over $(c,1)$ in the first line, we used the fact that $s+c>s>\tfrac12(s+c)$.  
Using \eqref{Eq:aux-inequ}, $k< M$ as well as the properties of $\z$ from \eqref{Eq:z}, we obtain
\begin{align*}
    \int_{K_{2\rho}\times\{t\}} \Phi_k(u) \pl_t\z^p\,\dx
    &\le 
    4p \overline{\boldsymbol\gm} \frac{k^{q+1-p}}{\dl M^{q+1-p}\rho^p} \ln\Big(\frac{1+c}{c}\Big)|K_{2\rho}|
    \le \frac{ 4p \overline{\boldsymbol\gm}}{\dl\rho^p}\ln\Big(\frac{1+c}{c}\Big)|K_{2\rho}|.
\end{align*}
Collecting all these estimates in \eqref{Eq:int-inequ} and using $\delta\in (0,1)$ as well as  $\ln\big(\frac{1+c}{c}\big)\ge\ln 2$, we have 
\begin{align}\label{Eq:int-inequ-1}
    \pl_t\int_{K_{2\rho}\times\{t\}} \z^p \Phi_k(u)\,\dx 
    + 
    \int_{K_{2\rho}\times\{t\}} \z^p |D\Psi_k(u)|^p\,\dx%\nonumber
    \le 
    \frac{\boldsymbol\gm}{\dl \rho^p}\ln\Big(\frac{1+c}{c}\Big) |K_{2\rho}|.
\end{align}
for a.e. $t\in I$.
Here $\boldsymbol\gm$ takes into account the constants $\{\widetilde{\boldsymbol\gm}, \overline{\boldsymbol\gm} \}$ which appeared previously, and depends only on $\{p,q,C_o,C_1\}$.

From the measure theoretical condition assumed in Lemma~\ref{Lm:shrink} and $k<M$ infer that
\[
    \Big| \big\{\Psi_k(u) = 0\big\}\cap K_{\rho} \Big| \ge \al2^{-N} |K_{2\rho}|\quad\text{for all}\>t\in I.
\]
Consequently, by the Poincar\'e type inequality in Lemma~\ref{lem:Poincare} %\cite[Proposition~2.1, Chapter~I]{DB}, \textcolor{orange}{(Collect it in appendix)} 
we deduce that
\[
    \int_{K_{2\rho}\times\{t\}} \z^p\Psi^p_k(u)\,\dx
    \le 
    \frac{\boldsymbol\gm_\ast \rho^p}{\al^p} \int_{K_{2\rho}\times\{t\}} \z^p|D\Psi_k(u)|^p \,\dx \quad\mbox{for a.e. $t\in I$,}
\]
where $\boldsymbol\gm_\ast>0$ stands for the Sobolev constant and depends only on $p$ and $N$. 
This joint with \eqref{Eq:int-inequ-1} yields that  for $t\in I$,
\begin{align}\label{Eq:int-inequ-2}
    \pl_t\int_{K_{2\rho}\times\{t\}}\z^p \Phi_k(u) \,\dx 
    +
    \frac{\al^p}{\boldsymbol\gm_\ast\rho^p} \int_{K_{2\rho}\times\{t\}} \z^p\Psi^p_k(u)\,\dx
    \le 
    \frac{\boldsymbol\gm}{\dl \rho^p}\ln\Big(\frac{1+c}{c}\Big) |K_{2\rho}|.
\end{align}
The arguments to be unfolded rest upon this integral inequality.
%%%%%
\subsubsection{Step 2: Two cases}
In \eqref{Eq:int-inequ-2} we take $k\equiv k_n = c^n M$ with $n\in\mathbb{N}_0$ as in \eqref{Eq:c-k_n}, for some $c\in(0,1)$ to be determined.
From the definition of $\boldsymbol Y_n$ in \eqref{Eq:Y_n}, it follows that for every $n\in\N\cup\{0\}$ and every $\varep\in(0,1)$ there is some $t_\eps\in I$, such that
\begin{equation}\label{Eq:Y-eps}
    \bint_{K_{2\rho}\times\{t_\eps\} }\z^p\chi_{ \{ u<k_{n+1} \}}\,\dx\ge \boldsymbol Y_{n+1} -\varep.
\end{equation}
Assuming $n$ is fixed, at the time level $t_\eps$ let us consider two cases:
\begin{equation}\label{Eq:alt}
\left\{
\begin{array}{c}
    \dsty \pl_t \int_{K_{2\rho}\times\{t_\eps\}} \z^p \Phi_{k_n}(u) \, \dx \ge 0,\\[6pt]
    \dsty \pl_t \int_{K_{2\rho}\times\{t_\eps\}} \z^p \Phi_{k_n}(u) \, \dx < 0.
\end{array}\right.
\end{equation}
In either case, we may always assume $\boldsymbol Y_n>\nu$, otherwise there is nothing to prove.
The proof of Lemma~\ref{Lm:Aux}, under the additional assumption \eqref{Eq:C1-time}, hinges upon showing that
$\boldsymbol Y_{n+1}\le\max\{\nu,\, \sig \boldsymbol Y_n\}$ in either case of \eqref{Eq:alt}.
\subsubsection{Step 3: The first case}
Let us consider the first case of \eqref{Eq:alt}. It follows from the integral inequality \eqref{Eq:int-inequ-2} at $t=t_\eps$ that
\begin{equation}\label{Eq:log-est}
    \frac{\al^p}{\boldsymbol\gm_\ast \rho^p} \int_{K_{2\rho}\times\{t_\eps\}} \z^p \Psi^p_{k_n}(u)\,\dx
    \le 
    \frac{\boldsymbol\gm}{\dl \rho^p}\ln\Big(\frac{1+c}{c}\Big) |K_{2\rho}|.
\end{equation}
The left-hand side integral is estimated over the smaller set $\{u(\cdot, t_\eps)<k_{n+1}\}\cap K_{2\rho}$. Indeed, by the definition \eqref{Eq:Psi} of $\Psi_{k_n}$ we estimate
\[
    \int_{K_{2\rho}\times\{t_\eps\}} \z^p\Psi^p_{k_n}(u)\,\dx
    \ge \Big[\ln\Big(\frac{1+c}{2c}\Big)\Big]^p\int_{K_{2\rho}\times\{t_\eps\} }\z^p\chi_{ \{ u<k_{n+1} \}}\,\dx.
\]
Putting this in \eqref{Eq:log-est} and using \eqref{Eq:Y-eps}, we obtain
\[
    \boldsymbol Y_{n+1}\le\varep+ \frac{\boldsymbol\gm}{\al^p\dl} \Big[\ln\Big(\frac{1+c}{2c}\Big)\Big]^{1-p},
\]
provided we choose $c\le\frac13$, which implies $\ln(\frac{1+c}{c})\le 2 \ln(\frac{1+c}{2c})$. 
If we restrict $\varep\in(0,\tfrac12\nu)$ and choose $c$ small  enough to satisfy
\begin{equation}\label{Eq:choice-c}
    \frac{\boldsymbol\gm}{\al^p\dl} \Big[\ln\Big(\frac{1+c}{2c}\Big)\Big]^{1-p}  \le
    \tfrac12\nu,\quad\text{i.e.}\quad c\le \tfrac12\exp\Big\{-\Big(\frac{2\boldsymbol\gm}{\al^p\dl\nu}\Big)^{\frac1{p-1}}\Big\},
\end{equation}
it follows that $\boldsymbol Y_{n+1}\le\nu$.
We note that this choice clearly implies the previous assumption $c\le\frac13$ if we enlarge the constant $\boldsymbol\gm$ if necessary. 

\subsubsection{Step 4: The second case}
Let us deal with the second case of \eqref{Eq:alt}. This case is much more involved.
To this end, we introduce
\[
t_*:=\sup\bigg\{t\in(0, t_\eps ):\>   \pl_t \int_{K_{2\rho}\times\{t\}} \z^p \Phi_{k_n}(u) \, \dx \ge 0\bigg\}.
\]
Such a set is non-empty and $t_*$ is well-defined, since by the definition \eqref{Eq:z} of $\z$, we have that $\z(\cdot, t)=0$ for $t\in I\setminus \tfrac34 I$.
By the definition of $t_*$ and the condition \eqref{Eq:alt}$_2$ on $t_\varep$, we infer that $t_*<t_\varep$ and 
\begin{equation}\label{Eq:t-t}
    \bint_{K_{2\rho}\times\{t_\eps\}} \z^p \Phi_{k_n}(u) \, \dx 
    \le 
    \bint_{K_{2\rho}\times\{t_*\}} \z^p \Phi_{k_n}(u) \, \dx.
\end{equation}

In what follows, we manage to estimate the two sides of \eqref{Eq:t-t}. Let us first treat the right-hand side. Some preparatory estimates are in order. Indeed, by the definition of $t_*$, just like in the first case, the integral inequality \eqref{Eq:int-inequ-2} at $t=t_*$ yields that
\begin{equation*}%\label{Eq:log-est}
    \frac{\al^p}{\boldsymbol\gm_\ast \rho^p} \int_{K_{2\rho}\times\{t_*\}} \z^p\Psi^p_{k_n}(u)\,\dx%\nonumber
\le \frac{\boldsymbol\gm}{\dl \rho^p} \ln\Big(\frac{1+c}{c}\Big) |K_{2\rho}|.
\end{equation*}
From this we recall the definition \eqref{Eq:Psi} of $\Psi_{k_n}$ and estimate the left side integral on the smaller set $\{(k_n-u)_+>s k_n\}$ for $s\in(0,1]$, which gives 
\begin{align*}
    \int_{K_{2\rho}\times\{t_*\}} \z^p\chi_{\{(k_n-u)_+>s k_n\}}\,\dx
    &\le 
    \frac{\boldsymbol\gm_o}{\al^p\dl }    \ln \Big(\frac{1+c}{c}\Big)  \Big[\ln\Big(\frac{1+c}{1+c-s}\Big)\Big]^{-p}  |K_{2\rho}|, 
\end{align*}
where $\boldsymbol\gm_o:=\boldsymbol\gm_*\boldsymbol\gm$ with the constant  $\boldsymbol \gm$ from the last inequality.

By the definition of $\boldsymbol Y_n$, we have for all $s\in(0,1]$ that
\begin{align}\label{Eq:Y-s}
    \bint_{K_{2\rho}\times\{t_*\}}& \z^p\chi_{\{(k_n-u)_+>s k_n\}}\,\dx \nonumber\\
    &\le 
    \min\bigg\{\boldsymbol Y_n; \frac{\boldsymbol\gm_o}{\al^p\dl }   \ln \Big(\frac{1+c}{c}\Big)  \Big[\ln\Big(\frac{1+c}{1+c-s}\Big)\Big]^{-p} \bigg\}\nonumber\\
    &=\left\{
    \begin{array}{cl}
    \boldsymbol Y_n, & \text{for $s\in(0,s_*]$,}\\[7pt]
    \dsty \frac{\boldsymbol\gm_o}{\al^p\dl }    \ln \Big(\frac{1+c}{c}\Big) \Big[\ln\Big(\frac{1+c}{1+c-s}\Big)\Big]^{-p}, & \text{for $ s\in[s_*,1]$,}
\end{array}
\right.
\end{align}
where $s_*$ is the root of the algebraic equation
\[
    \boldsymbol Y_n=\frac{\boldsymbol\gm_o}{\al^p\dl }   \ln \Big(\frac{1+c}{c}\Big)  \Big[\ln\Big(\frac{1+c}{1+c-s}\Big)\Big]^{-p}.
\]
There is always a root, since we assumed $\boldsymbol Y_n>\nu$ below \eqref{Eq:alt} and impose a smallness condition on $c$, that is,
\[
    \nu>\frac{\boldsymbol\gm_o}{\al^p\dl }  
    \Big[\ln\Big(\frac{1+c}{c}\Big)\Big]^{1-p},\quad\text{or  }\quad
    c<\exp\Big\{-\Big(\frac{\boldsymbol\gm_o}{\al^p\dl\nu}\Big)^{\frac1{p-1}}\Big\}, %\Big[\exp\Big(\frac{\boldsymbol\gm_o}{\al\dl\nu}\Big)^{\frac1{p-1}}-1\Big]^{-1}
\]
which is similar to the  requirement in \eqref{Eq:choice-c}.
From the above algebraic equation we conclude that
\begin{equation}\label{Eq:s-star}
    s_*
    <
    \sig_1(1+c)\quad\text{where}\>\sig_1=1-\exp\Big\{-\Big[\frac{\boldsymbol\gm_o}{\al^p\dl\nu} 
    \ln\Big(\frac{1+c}{c}\Big) \Big]^{\frac1{p}}\Big\}.
\end{equation}
In fact, $\sig_1(1+c)$ is the solution of the algebraic equation above where $\boldsymbol Y_n$ is replaced by $\nu$.
This has the consequence that 
\[
    \nu\ge \frac{\boldsymbol\gm_o}{\al^p\dl }  
    \ln \Big(\frac{1+c}{c}\Big)  \Big[\ln\Big(\frac{1+c}{1+c-s}\Big)\Big]^{-p}
    \qquad \mbox{for $s\ge\sig_1(1+c)$}.
\]

Equipped with these preparatory estimates, we are now able to estimate the right-hand side of \eqref{Eq:t-t}. In fact, by Fubini's theorem,
\begin{align*}
\int_{K_{2\rho}\times\{t_*\}}& \z^p \Phi_{k_n}(u) \, \dx\\
    & =
    \int_{K_{2\rho}\times\{t_*\}} \z^p \bigg[\int_0^{k_n}\frac{q(k_n-s)^{q-1}\chi_{\{s<(k_n-u)_+\}}}{[k_n(1+c)-s]^{p-1}}\,\d s\bigg]\dx\\
    & =
    \int_0^{k_n}\frac{q(k_n-s)^{q-1}}{[k_n(1+c)-s]^{p-1}}\bigg[\int_{K_{2\rho}\times\{t_*\}} \z^p\chi_{\{s<(k_n-u)_+\}}\,\dx \bigg]\ds\\
    & =
    k_n^{q+1-p}\int_0^1\frac{q(1-s)^{q-1}}{(1+c-s)^{p-1}}\bigg[\int_{K_{2\rho}\times\{t_*\}} \z^p\chi_{\{sk_n<(k_n-u)_+\}}\,\dx \bigg]\ds\\
    & =
    k_n^{q+1-p}\int_0^{s_*}\frac{q(1-s)^{q-1}}{(1+c-s)^{p-1}}\bigg[\int_{K_{2\rho}\times\{t_*\}} \z^p\chi_{\{sk_n<(k_n-u)_+\}}\,\dx \bigg]\ds\\
    & \phantom{=\,}
    +k_n^{q+1-p}\int_{s_*}^1\frac{q(1-s)^{q-1}}{(1+c-s)^{p-1}}\bigg[\int_{K_{2\rho}\times\{t_*\}} \z^p\chi_{\{sk_n<(k_n-u)_+\}}\,\dx\bigg]\ds.
\end{align*}
Let us continue to estimate the last two terms by \eqref{Eq:Y-s}. As a result, we have
\begin{align*}\label{Eq:t-t-right}
    &\bint_{K_{2\rho}\times\{t_*\}} \z^p \Phi_{k_n}(u) \, \dx \nonumber\\
    &\qquad\le 
    k_n^{q+1-p} \int_0^{s_*}\frac{q(1-s)^{q-1}}{(1+c-s)^{p-1}} \boldsymbol Y_n\,\ds \nonumber\\
    &\qquad\phantom{\le\,}
    +  
    k_n^{q+1-p}\frac{\boldsymbol\gm_o}{\al^p\dl }  \ln \Big(\frac{1+c}{c}\Big)  
    \int_{s_*}^1  \frac{q(1-s)^{q-1}}{(1+c-s)^{p-1}} \Big[\ln\Big(\frac{1+c}{1+c-s}\Big)\Big]^{-p}\,\ds \nonumber\\
    &\qquad\le 
    k_n^{q+1-p} \int_0^{1}\frac{q(1-s)^{q-1}}{(1+c-s)^{p-1}} \boldsymbol Y_n\,\ds \nonumber\\
    &\qquad\phantom{\le\,} -  
    k_n^{q+1-p} \int_{s_*}^1  \frac{q(1-s)^{q-1}}{(1+c-s)^{p-1}} 
    \bigg[
    \boldsymbol Y_n- \frac{\boldsymbol\gm_o}{\al^p\dl } 
    \ln \Big(\frac{1+c}{c}\Big) \Big[\ln\Big(\frac{1+c}{1+c-s}\Big)\Big]^{-p}\bigg]\, \ds.\nonumber
 \end{align*}   
Setting
\begin{align*}
    \boldsymbol{F}(\boldsymbol Y_n,c) 
    &=
    \int_0^{1}\frac{q(1-s)^{q-1}}{(1+c-s)^{p-1}}\,\ds\\
    &\phantom{=\,}
    -  
    \int_{s_*}^1  \frac{q(1-s)^{q-1}}{(1+c-s)^{p-1}} 
     \bigg[1- \frac{\boldsymbol\gm_o}{\al^p\dl\boldsymbol Y_n } 
    \ln \Big(\frac{1+c}{c}\Big) \Big[\ln\Big(\frac{1+c}{1+c-s}\Big)\Big]^{-p}\bigg]\,\ds
\end{align*} 
we get
\begin{equation}\label{Eq:t-t-right}
    \bint_{K_{2\rho}\times\{t_*\}} \z^p \Phi_{k_n}(u) \, \dx
    \le
    k_n^{q+1-p} \boldsymbol Y_n\,  \boldsymbol{F}(\boldsymbol Y_n,c).
\end{equation}
In order to estimate $ \boldsymbol{F}(\boldsymbol Y_n,c) $ from above, we evoke \eqref{Eq:s-star} and $\boldsymbol Y_n>\nu$, and obtain
\begin{align*}%\label{Eq:F_n}
    \boldsymbol{F}(\boldsymbol Y_n,c) 
    &\le 
    \int_0^{1}\frac{q(1-s)^{q-1}}{(1+c-s)^{p-1}}\,\ds \nonumber\\
    &\phantom{\le\,}
    -  
    \int_{\sig_1(1+c)}^1  \frac{q(1-s)^{q-1}}{(1+c-s)^{p-1}} \bigg[ 1 - \frac{\boldsymbol\gm_o}{\al^p\dl \nu } 
    \ln \Big(\frac{1+c}{c}\Big) 
    \Big[\ln\Big(\frac{1+c}{1+c-s}\Big)\Big]^{-p}\bigg]\,\ds \nonumber\\
    &\le 
    \int_0^{1}\frac{q(1-s)^{q-1}}{(1+c-s)^{p-1}}\,\ds \nonumber\\
    &\phantom{\le\,}
    -  
    \int_{\sig_o(1+c)}^1  \frac{q(1-s)^{q-1}}{(1+c-s)^{p-1}} \bigg[ 1 - \frac{\boldsymbol\gm_o}{\al^p\dl \nu } 
    \ln \Big(\frac{1+c}{c}\Big) 
    \Big[\ln\Big(\frac{1+c}{1+c-s}\Big)\Big]^{-p}\bigg]\,\ds ,
\end{align*}
where 
\begin{equation}\label{Eq:sig_o}
    \sig_o:=1-\exp\Big\{-\Big[\frac{2\boldsymbol\gm_o}{\al^p\dl\nu}\ln  \Big(\frac{1+c}{c}\Big) \Big]^{\frac1{p}}\Big\}
    >
    \sigma_1.
\end{equation}
Note that $(1+c)\sig_o$ is the solution of the algebraic equation
\[
    \tfrac12 \nu=
    \frac{\boldsymbol\gm_o}{\al^p\dl }  
    \ln \Big(\frac{1+c}{c}\Big)  \Big[\ln\Big(\frac{1+c}{1+c-s}\Big)\Big]^{-p}.
\]
Since the right-hand side for $s> (1+c)\sig_o$ is less than $\tfrac12\nu$, we conclude
\begin{align}\label{Eq:F_n}
    \boldsymbol{F}(\boldsymbol Y_n,c) 
    \le 
    \int_0^{1}\frac{q(1-s)^{q-1}}{(1+c-s)^{p-1}}\,\ds 
    -  
    \frac12 \int_{\sig_o(1+c)}^1  \frac{q(1-s)^{q-1}}{(1+c-s)^{p-1}} .
\end{align}
On the other hand, we estimate the left-hand side of \eqref{Eq:t-t} from below by shrinking the domain of integration to the smaller set $\{u<k_{n+1}\}$:
\begin{align}\label{Eq:t-t-left}
\bint_{K_{2\rho}\times\{t_\varep\}} \z^p \Phi_{k_n}(u) \, \dx &\ge k_n^{q+1-p} \bint_{K_{2\rho}\times\{t_\varep\}} \z^p\chi_{\{u<k_{n+1}\}}\,\dx \int_0^{1-c}\frac{q(1-s)^{q-1}}{(1+c-s)^{p-1}}\,\ds \nonumber\\
&\ge k_n^{q+1-p}(\boldsymbol Y_{n+1} -\varep ) \int_0^{1-c}\frac{q(1-s)^{q-1}}{(1+c-s)^{p-1}}\,\ds.
\end{align}
Here we also used the particular choice of $t_\eps$ from 
\eqref{Eq:Y-eps}.
Combining \eqref{Eq:t-t-right} and \eqref{Eq:t-t-left} in \eqref{Eq:t-t} and using \eqref{Eq:F_n}, we obtain
\[
    \boldsymbol Y_{n+1}-\varep \le \boldsymbol Y_n \big(1-f(c)\big)
\]
where $f(c)\in(0,1)$ is defined by% the equation
%\begin{align}\label{Eq:f}
%    f(c)  \int_0^{1-c}
%    &\frac{q(1-s)^{q-1}}{(1+c-s)^{p-1}}\, \ds
%    =
%    f(c)  
%   =
%   \textcolor{orange}{
%   \bigg[\int_0^{1-c}
%   \frac{q(1-s)^{q-1}}{(1+c-s)^{p-1}}\, \ds\bigg]^{-1}
%    \bigg[
%   \frac12 \int_{\sig_o(1+c)}^1  \frac{q(1-s)^{q-1}}{(1+c-s)^{p-1}} \,\ds -
%   \int_{1-c}^1\frac{q(1-s)^{q-1}}{(1+c-s)^{p-1}}\,\ds .\bigg]}   \\
%   =
%    \frac12 
%   \bigg[\int_0^{1-c}
%    \frac{q(1-s)^{q-1}}{(1+c-s)^{p-1}}\, \ds\bigg]^{-1}
%    \bigg[
%   \int_{\sig_o(1+c)}^{1-c}  \frac{q(1-s)^{q-1}}{(1+c-s)^{p-1}} \,\ds -
%   \int_{1-c}^1\frac{q(1-s)^{q-1}}{(1+c-s)^{p-1}}\,\ds .\bigg]
%    \int_{\sig_1(1+c)}^1  \frac{q(1-s)^{q-1}}{(1+c-s)^{p-1}} \bigg[ 1 - \frac{\boldsymbol\gm_o}{\al\dl \nu }  
%    \ln \Big(\frac{1+c}{c}\Big) \Big[\ln\Big(\frac{1+c}{1+c-s}\Big)\Big]^{-p}\bigg]\,\ds.
%\end{align}
\begin{align}\label{Eq:f}
    f(c)
    &=
    \frac{\displaystyle\frac12 \int_{\sig_o(1+c)}^1  \frac{q(1-s)^{q-1}}{(1+c-s)^{p-1}} \,\ds -
    \int_{1-c}^1\frac{q(1-s)^{q-1}}{(1+c-s)^{p-1}}\,\ds }{\displaystyle\int_0^{1-c}
    \frac{q(1-s)^{q-1}}{(1+c-s)^{p-1}}\, \ds}\nonumber\\
    &=
    \frac12
    \frac{\displaystyle\int_{\sig_o(1+c)}^{1-c}  \frac{q(1-s)^{q-1}}{(1+c-s)^{p-1}} \,\ds -
    \int_{1-c}^1\frac{q(1-s)^{q-1}}{(1+c-s)^{p-1}}\,\ds}{\displaystyle\int_0^{1-c}
    \frac{q(1-s)^{q-1}}{(1+c-s)^{p-1}}\, \ds}\,.
\end{align}
The desired estimate follows by choosing $\sig=1-f(c)$ where $c$ has been chosen in \eqref{Eq:choice-c}. However, in order to trace the dependence, we further estimate $f(c)$ from below. In particular, as we shall see in a moment, $f(c)\ge\boldsymbol{\gm} c^{q+1-p}$ for some $\boldsymbol{\gm}>0$ that depends only on  $\{p,q\}$ and is stable as $q+1-p\downarrow0$. %and thus $\sig$ can be selected in terms of  $\{p,q\}$ only.

%First of all, the second integral on the right-hand side of \eqref{Eq:f} is estimated in a smaller interval $(\sig_o(1+c),1)$, where $\sig_o>\sig_1$ (cf.~\eqref{Eq:s-star}) is defined by
%\begin{equation}\label{Eq:sig_o}
%    \sig_o:=1-\exp\Big\{-\Big[\frac{2\boldsymbol\gm_o}{\al\dl\nu}\ln  \Big(\frac{1+c}{c}\Big) \Big]^{\frac1{p}}\Big\}.
%\end{equation}
%\textcolor{cyan}{Note that $(1+c)\sig_o$ is the solution of the algebraic equation
%\[
%    \tfrac12 \nu=
%    \frac{\boldsymbol\gm_o}{\al\dl }  
%    \ln \Big(\frac{1+c}{c}\Big)  \Big[\ln\Big(\frac{1+c}{1+c-s}\Big)\Big]^{-p}.
%\]
%Observe that the right hand side for $s> (1+c)\sig_o$ is less than $\tfrac12\nu$.}
Consequently, stipulating $c<\frac12(1-\sig_o)$, and taking into account the lower bound of the interval for $s$, the integral is estimated from below by
\begin{align*}
    \tfrac12\int_{\sig_o(1+c)}^1  \frac{q(1-s)^{q-1}}{(1+c-s)^{p-1}}  \,\ds 
    &\ge 
    \tfrac12\int_{\sig_o(1+c)}^{1-c}  \frac{q(1-s)^{q-1}}{(1+c-s)^{p-1}}  \,\ds\\
    &\ge \frac{q }{2^{1+(q-1)_+}} \int_{\sig_o(1+c)}^{1-c} (1+c-s)^{q-p}  \,\ds\\
    &\ge
    \frac{q (2c)^{q-1-p}}{2^{1+(q-1)_+}}
    \int_{\sig_o(1+c)}^{1-c} \frac{1}{1+c-s}\,\ds\\
    &=
    \widehat{\boldsymbol\gm} c^{q+1-p} 
    \ln\Big[\frac{(1-\sig_o)(1+c)}{2c}\Big]
\end{align*}
for
$ \widehat{\boldsymbol\gm}=q2^{q-p-(q-1)_+}$.
Here, to obtain the first inequality we needed $1-c>\sig_o(1+c)$ and accordingly used the requirement $c<\frac12(1-\sig_o)$, which will be guaranteed by restricting $c$ shortly; to get the second inequality we used $1+c-s\ge 1-s\ge\frac12(1+c-s)$. %Whereas in evaluating the definite integral in the second line we stipulated $0<p-1<q$;  the case $p-1=q$ is straightforward. 
%Moreover, it is noteworthy that $ \widehat{\boldsymbol\gm}=q2^{q-p-(q-1)_+}$ depends only on $\{p,q\}$ and is stable as $q+1-p\downarrow0$.

Next, by similar calculations as in \eqref{Eq:aux-inequ}, 
\begin{align*}
    \int_{1-c}^1\frac{q(1-s)^{q-1}}{(1+c-s)^{p-1}}\,\ds 
    &= 
    \int_0^c\frac{q s^{q-1}}{(s+c)^{p-1}}\,\ds \le c^{q+1-p},
    \\
    \int_0^{1-c}\frac{q(1-s)^{q-1}}{(1+c-s)^{p-1}}\, \ds
    &= 
    \int_c^1\frac{q s^{q-1}}{(s+c)^{p-1}}\,\ds 
    \le \overline{\boldsymbol\gm} \ln\Big(\frac{1+c}{c} \Big), %- (2c)^{q+1-p}\big]
\end{align*}
where $\overline{\boldsymbol\gm}=(q+1)2^{q+1-p+(1-q)_+}$. %$\overline{\boldsymbol\gm}$ depends only on $\{p,q\}$ and is stable as $q+1-p\downarrow0$.

Combining these estimates in \eqref{Eq:f}, we obtain
\begin{equation}\label{Eq:f(c)}
    \frac{f(c)}{c^{q+1-p}}\ge 
    \frac{\widehat{\boldsymbol\gm}}{\overline{\boldsymbol\gm}} 
    \bigg[ 
    1 - \frac{ \ln\big(\frac2{1-\sig_o} \big)}{ \ln\big(\frac{1+c}{c} \big)} 
    \bigg]
    - 
    \frac1{\overline{\boldsymbol\gm }\ln\big(\frac{1+c}{c} \big)}.
\end{equation}
From the above display we further require $c$ to satisfy
\[
    \frac{ \ln\big(\frac2{1-\sig_o} \big)}{ \ln\big(\frac{1+c}{c} \big)} \le\tfrac12
    \quad\text{and}\quad  
    \frac1{ \ln\big(\frac{1+c}{c} \big)}\le\tfrac{1}{4}\widehat{\boldsymbol\gm},
\]
which is implied if we require 
\[
    c
    \le 
    \min\Big\{\exp\Big\{-\frac{4}{\widehat{\boldsymbol\gm}} \Big\}, \, 
    \Big[\frac{1-\sig_o}{2}\Big]                   ^2\Big\}.
\]
Taking also into account the previous requirement  \eqref{Eq:choice-c}  of $c$ and the value of $\sig_o$ in \eqref{Eq:sig_o}, the final choice of $c$ is made by
\[
c:=\exp\Big\{-\Big(\frac{\boldsymbol\gm}{\al^p\dl\nu}\Big)^{\frac1{p-1}}\Big\}
\]
for some proper $\boldsymbol\gm$ depending only on the data. Indeed, by \eqref{Eq:sig_o} the requirement $c\le \tfrac14 (1-\sig_o)^2$ is equivalent to
\[
    4c\le  \exp\Big\{-2\Big[\frac{2\boldsymbol\gm_o}{\al^p\dl\nu}\ln  \Big(\frac{1+c}{c}\Big) \Big]^{\frac1{p}}\Big\}
    \quad
    \Longleftrightarrow
    \quad
    \frac{\big[\ln\big( \frac1{4c}\big)\big]^p}{ \ln\big(\frac{1+c}{c}\big)}\ge \frac{2^{p+1}\boldsymbol\gm_o}{\al^p\dl\nu}. 
\]
This, however,  can be achieved by an appropriate choice of $\boldsymbol\gamma$ in the definition of $c$ as above. With this choice of $c$, we can estimate from \eqref{Eq:f(c)} that
\[
f(c)\ge \frac{\widehat{\boldsymbol\gm}}{4\overline{\boldsymbol\gm}} c^{q+1-p}.
\]
Hence the choice of $\sig$ is made to be
\[
\sig=1- \frac{\widehat{\boldsymbol\gm}}{4\overline{\boldsymbol\gm}} c^{q+1-p}.
\]
%which depends only on  $\{p,q\}$ and independent of $\al$.
This proves Lemma~\ref{Lm:Aux} under the additional assumption \eqref{Eq:C1-time}.
\subsubsection{Step 5: Removing the regularity assumption \eqref{Eq:C1-time}}\label{S:remove}
When the technical regularity assumption \eqref{Eq:C1-time} is not present, the first difference appears in the use of the testing function \eqref{Eq:test-func-1}. Indeed, in contrast to \eqref{Eq:test-func-1}, now an admissible one is
\[
\vp_h=\frac{\z^p \psi_{\varep}}{[k-(k-\llbracket u \rrbracket_{\bar{h}})_+ +c k]^{p-1}}.
\]
Here, the mollified function $\llbracket u \rrbracket_{\bar{h}}$ is defined in \eqref{def:mol}, while $c$, $k$ and $\z$ are just like in \eqref{Eq:test-func-1}. Moreover, for $(t_1,t_2)\subset I$ and for $2\varep<t_2-t_1$ the function $t\mapsto\psi_\varep(t)$ is a Lipschitz approximation of $\chi_{(t_1,t_2)}$, i.e., it takes $1$ in $(t_1+\varep, t_2-\varep)$, vanishes outside $(t_1,t_2)$ and is linearly interpolated otherwise.
Plug this testing function in the weak formulation of \eqref{Eq:parabolicity} to obtain
\begin{equation}\label{Eq:weak-form:0}
\iint_{E_T} \big[ -u_k^q\pl_t \vp_h + \bl{A}(x,t,u_k, Du_k) D\vp_h\big]\dx\dt\ge0.
\end{equation}

Let us first deal with the time part in \eqref{Eq:weak-form:0}. Indeed, setting $\llbracket u \rrbracket_{\bar{h}, k}:=k-(k-\llbracket u \rrbracket_{\bar{h}})_+$, we observe that
\begin{align*}
    -\iint_{E_T} u_k^q\pl_t \vp_h\,\dx\dt 
    &= 
    \iint_{E_T} (-u_k^q + \llbracket u \rrbracket^q_{\bar{h}, k} - \llbracket u \rrbracket^q_{\bar{h},k} )\pl_t \vp_h\,\dx\dt\\
    &= 
    \iint_{E_T}   \frac{(p-1)(u_k^q - \llbracket u \rrbracket^q_{\bar{h},k} )\pl_t \llbracket u \rrbracket_{\bar{h}, k}}{(\llbracket u \rrbracket_{\bar{h}, k} +c k)^{p}}\,\dx\dt \\
    &\phantom{=\,}
    -\iint_{E_T}   \frac{(u_k^q - \llbracket u \rrbracket^q_{\bar{h},k} )\pl_t (\z^p\psi_{\varep}) }{(\llbracket u \rrbracket_{\bar{h}, k} +c k)^{p-1}}\,\dx\dt\\
    &\phantom{=\,} 
    - \iint_{E_T} \llbracket u \rrbracket^q_{\bar{h},k} \pl_t \vp_h\,\dx\dt.
\end{align*}
Let us treat the terms on the right-hand side.
The first term  is non-positive due to Lemma~\ref{Lm:mol} (iii) and the monotonicity of the map $u\mapsto k-(k-u)_+$. Indeed, we have
\begin{equation*}
    \pl_t \llbracket u \rrbracket_{\bar{h}, k}= \chi_{\{\llbracket u \rrbracket_{\bar{h}}<k\}}\partial_t \llbracket u \rrbracket_{\bar{h}} = \tfrac1h \chi_{\{\llbracket u \rrbracket_{\bar{h}}<k\}} \big(\llbracket u \rrbracket_{\bar{h}}-u\big),
\end{equation*}
and therefore
\begin{align*}
    (u_k^q &- \llbracket u \rrbracket^q_{\bar{h},k} )
    \pl_t \llbracket u \rrbracket_{\bar{h}, k} \\
    &= 
    \tfrac1h \chi_{\{\llbracket u \rrbracket_{\bar{h}}<k\}} 
    \Big(\big(k-(k-u)_+\big)^q - \big(k-(k-\llbracket u \rrbracket_{\bar{h}})_+\big)^q\Big) \big(\llbracket u \rrbracket_{\bar{h}}-u\big)\le 0.
\end{align*}
The second term vanishes in the limit $h\downarrow 0$ due to Lemma~\ref{Lm:mol} (i). The last term is estimated by 
\begin{align*}
    - \iint_{E_T} \llbracket u \rrbracket^q_{\bar{h},k} \pl_t \vp_h\,\dx\dt 
    &=  
    \iint_{E_T}  \pl_t  \llbracket u \rrbracket^q_{\bar{h},k}\vp_h\,\dx\dt\\
    & =
    \iint_{E_T} \z^p\psi_{\varep} \pl_t\bigg(\int_k^{\llbracket u \rrbracket_{\bar{h},k}}\frac{q s^{q-1}}{(s+ck)^{p-1}}\,\d s\bigg)\dx\dt\\
    & =
    -\iint_{E_T} \z^p \pl_t \psi_{\varep}\bigg(\int_k^{\llbracket u \rrbracket_{\bar{h},k}}\frac{q s^{q-1}}{(s+ck)^{p-1}}\,\d s\bigg)\dx\dt\\
    & \phantom{=\ }
    -\iint_{E_T}  \pl_t\z^p\psi_{\varep}  \bigg(\int_k^{\llbracket u \rrbracket_{\bar{h},k}}\frac{q s^{q-1}}{(s+ck)^{p-1}}\,\d s\bigg)\dx\dt\\
    &\to 
    -\int_{K_{4\rho}\times\{t\}} \z^p \Phi_k(u) \,\dx\bigg|_{t_1}^{t_2} +\iint_{K_{4\rho}\times(t_1,t_2)} \!\! \pl_t\z^p\Phi_k(u) \,\dx\dt
\end{align*}
where, to obtain the convergence, we first let $h\downarrow 0$ in view of  Lemma~\ref{Lm:mol} (i) and then  $\varep\downarrow 0$. Recall also the definition of $\Phi_k(u)$ in \eqref{Eq:Phi}.

Next, we deal with the diffusion part in \eqref{Eq:weak-form:0}. With the aid of  Lemma~\ref{Lm:mol} (iv) we let $h\downarrow 0$ and then $\varep\downarrow 0$ to obtain
\begin{align*}
    \lim_{\varep\downarrow 0}\lim_{h\downarrow 0}
    \iint_{E_T} &  \bl{A}(x,t,u_k, Du_k) D\vp_h\,\dx\dt\\
    &=-(p-1)
    \iint_{K_{4\rho}\times(t_1,t_2)} \z^p  \frac{ \bl{A}(x,t,u_k, Du_k) \cdot Du_k }{(u_k +c k)^{p}}\, \dx\dt\\
    &\phantom{=\,}
    +
\iint_{K_{4\rho}\times(t_1,t_2)}    \frac{ \bl{A}(x,t,u_k, Du_k) \cdot D\z^p }{(u_k +c k)^{p-1}}\, \dx\dt \\
&\le -(p-1) C_o\iint_{K_{4\rho}\times(t_1,t_2)} \z^p |D\Psi_k(u)|^p\,\dx\dt \\
&\quad + p C_1 \iint_{K_{4\rho}\times(t_1,t_2)} |D\Psi_k(u) |^{p-1} \z^{p-1} |D\z|\,\dx\dt,
\end{align*}
where we invoked the structure conditions \eqref{Eq:1:2} to estimate. Recall also the definition of $\Psi_k(u)$ in \eqref{Eq:Psi}. Combining the above estimates in \eqref{Eq:weak-form:0}, we arrive at
\begin{align*}%\label{Eq:int-inequ:1}
    \int_{K_{4\rho}\times\{t\}} &\z^p \Phi_k(u) \,\dx\bigg|_{t_1}^{t_2}  
    + 
    (p-1)C_o\iint_{K_{4\rho}\times(t_1,t_2)} \z^p |D\Psi_k(u)|^p\,\dx\dt\nonumber\\
    &\le 
    p C_1 \iint_{K_{4\rho}\times(t_1,t_2)} |D\Psi_k(u) |^{p-1} \z^{p-1} |D\z|\,\dx\dt  \\
    &\phantom{\le\,}
    +\iint_{K_{4\rho}\times(t_1,t_2)}  \pl_t\z^p\Phi_k(u) \,\dx\dt,
\end{align*}
which can be viewed as an integration of \eqref{Eq:int-inequ} in $(t_1,t_2)$. 
Therefore, we can perform calculations similar to those that started with \eqref{Eq:int-inequ}  and led to 
\eqref{Eq:int-inequ-1} and obtain the following estimate
\begin{align}\label{Eq:int-inequ-3}
    \int_{K_{4\rho}\times\{t\}} \z^p \Phi_k(u)\,\dx\bigg|_{t_1}^{t_2} 
    + 
    \iint_{K_{4\rho}\times(t_1,t_2)}& \z^p |D\Psi_k(u)|^p\,\dx\dt \nonumber\\
    &\le 
    \frac{\boldsymbol\gm}{\dl \rho^p}\ln\Big(\frac{1+c}{c}\Big) |K_{4\rho}|(t_2-t_1), 
\end{align}
which  can be viewed as an integration of \eqref{Eq:int-inequ-1} in $(t_1,t_2)$. Like before
an application of the Poincar\'e type inequality in Lemma~\ref{lem:Poincare} %\cite[Proposition~2.1, Chapter~I]{DB}
on each time slices $K_{4\rho}\times\{t\}$ yields
\[
    \iint_{K_{4\rho}\times ( t_1,t_2)} \z^p\Psi^p_k(u)\,\dx\dt
    \le 
    \frac{\boldsymbol\gm_\ast \rho^p}{\al^p} \iint_{K_{4\rho}\times(t_1,t_2)} \z^p|D\Psi_k(u)|^p \,\dx\dt.
\]
Plugging this into \eqref{Eq:int-inequ-3} we deduce
\begin{align*}
    \int_{K_{4\rho}\times\{t\}} \z^p \Phi_k(u)\,\dx\bigg|_{t_1}^{t_2} 
    + 
    \frac{\al^p}{\boldsymbol\gm_\ast\rho^p}\iint_{K_{4\rho}\times(t_1,t_2)}& \z^p \Psi_k^p(u)\,\dx\dt \nonumber\\
    &\le 
    \frac{\boldsymbol\gm}{\dl \rho^p}\ln\Big(\frac{1+c}{c}\Big) |K_{4\rho}|(t_2-t_1).
\end{align*}
Now we divide both sides by $t_2-t_1$ and then let $t_1\uparrow t_2$ to obtain an analog of \eqref{Eq:int-inequ-2}:
\begin{align*}%\label{Eq:int-inequ-4}
    \pl_t^{-}\int_{K_{4\rho}\times\{t\}}\z^p \Phi_k(u) \,\dx 
    &+   
    \frac{\al^p}{\boldsymbol\gm_\ast\rho^p}\int_{K_{4\rho}\times\{t\}}   \z^p\Psi^p_k(u)\,\dx\nonumber\\
    &\qquad\qquad\le 
    \frac{\boldsymbol\gm}{\dl \rho^p}\ln\Big(\frac{1+c}{c}\Big) |K_{4\rho}|\quad\text{for any}\> t\in I.
%p C_1 \int_{K_{4\rho}\times\{t\}} |D\Psi_k(u) |^{p-1} \z^{p-1} |D\z|\,\dx + p\int_{K_{4\rho}\times\{t\}} \Phi_k(u) \pl_t\z^p\,\dx,
\end{align*}
Here, we have used the notion
\begin{align*}
\pl_t^{-}&\int_{K_{4\rho}\times\{t\}}\z^p \Phi_k(u) \,\dx\\
&\equiv \limsup_{h\downarrow0}\frac1{h} \bigg[\int_{K_{4\rho}\times\{t\}} \z^p \Phi_k(u)\,\dx-\int_{K_{4\rho}\times\{t-h\}} \z^p \Phi_k(u)\,\dx\bigg].
\end{align*}
To proceed, we define the set
\[
\mathcal{S}:=\bigg\{t\in I: \pl_t^{-}\int_{K_{4\rho}\times\{t\}}\z^p \Phi_k(u) \,\dx\ge 0\bigg\}
\]
and let $t_\eps$ be defined as in \eqref{Eq:Y-eps}. 
If $t_\eps\in\mathcal{S}$, then we have \eqref{Eq:log-est}. If $t_\eps\notin\mathcal{S}$ and $\sup\{t<t_\eps: t\in \mathcal{S}\}=t_\eps$,  by working with a sequence $t_n\in \mathcal{S}$ and $t_n\to t_\eps$ as $n\to\infty$, the estimate \eqref{Eq:log-est} still holds. If $t_\eps\notin\mathcal{S}$ but $t_*\equiv\sup\{t<t_\eps: t\in \mathcal{S}\}<t_\eps$, then we obtain \eqref{Eq:t-t}. This, although quite standard, requires an argument; we refer to \cite[p.~226, Problem~2.3]{DB-RA} for details.
Therefore, in any case we could reason just like before.
%\end{proof}

%%%%%
\subsection{Proof of Lemma~\ref{Lm:shrink}} \label{S:Lm:shrink-proof}
For $n_o\in\mathbb{N}$, let us %assume $|\boldsymbol \mu^-|\le c^{n_o} M$ and  
iterate Lemma~\ref{Lm:Aux} to obtain 
\[
    \boldsymbol Y_{n_o}\le\max\big\{\nu,\, \sig^{n_o}\boldsymbol Y_o\big\}.
\]
Since $\boldsymbol Y_o\le 1$, we may choose $n_o$ such that $ \sig^{n_o}\le \nu$, in order to obtain $\boldsymbol Y_{n_o}\le\nu$.
In this way, the above estimate, according to the definition \eqref{Eq:Y_n} of $Y_{n_o}$ and the properties \eqref{Eq:z} of $\z$, yields that
\[
    \frac1{|K_{2\rho}|}
    \Big|\big\{u(\cdot, t)\le  c^{n_o}M\big\}\cap K_{\rho}\Big|\le 
    \boldsymbol Y_{n_o}\le\nu\quad\mbox{for all  $t\in\tfrac12 I$.}
\]
 To get the estimate, one could replace $\nu$ by $2^{-N}\nu$ and adjust relevant constants. The proof is concluded with the choice $\xi=c^{n_o}$.

%%%%%%%%%%%%%%
\section{Proof of Proposition~\ref{PROP:EXPANSION}}\label{S:proof-expansion}
The  measure theoretical information at $t_o$ given in Proposition~\ref{PROP:EXPANSION} implies that
\begin{equation*}
		\Big|\big\{ u(\cdot, t_o) \ge M \big\}\cap K_{4\varrho}(x_o)\Big|
		\ge
        4^{-N}
		\al  |K_{4\varrho} |.
\end{equation*}
Starting from this we first apply Lemma~\ref{Lm:3:1} to determine
$\dl$ and $\eps$ in $(0,1)$, depending only on the data and $\al$, such that 
\begin{equation*}%\label{Eq:3:1}
	\Big|\big\{
	u(\cdot, t) \ge \eps M\big\} \cap K_{4\varrho}(x_o)\Big|
	\ge\tfrac12 4^{-N}\al |K_{4\varrho}|
\end{equation*}
for all 
$$
    t\in\big(t_o,t_o+\dl M^{q+1-p}(4\varrho)^p\big].
$$
This measure theoretical information allows  to apply Lemma~\ref{Lm:shrink}.

For given $\nu\in (0,1)$ we obtain  $\xi\in(0,1)$ depending only on the data, $\nu$, $\dl$ and $\al$,  such that
%either
%\[
%|\boldsymbol \mu^{\pm}|>\xi M
%\]
%or
\[
\Big|\Big\{
	u(\cdot, t) \le \xi \varep M \Big\} \cap K_{4\varrho}(x_o)
 \Big|
	\le\nu |K_{4\varrho}|
\]
for each slice of time 
\[
t\in \big(t_o+\tfrac12\dl (\varep M)^{q+1-p}(4\varrho)^p, t_o+\dl (\varep M)^{q+1-p}(4\varrho)^p\big].
\]
Observe, for any
\[
    \bar{t}\in 
    \big(t_o+\tfrac34\dl (\varep M)^{q+1-p}(4\varrho)^p, 
    t_o+\dl (\varep M)^{q+1-p}(4\varrho)^p\big]
\]
the (backward) cylinder
$$
    (x_o, \bar{t})+Q_{4\rho}(\theta)\quad 
    \text{where $\theta=\tfrac14\dl(\xi \varep M)^{q-1+p}$,}
$$
is contained in 
\[
    K_{4\rho} (x_o)\times \big(t_o+\tfrac12\dl (\varep M)^{q+1-p}(4\varrho)^p, 
    t_o+\dl (\varep M)^{q+1-p}(4\varrho)^p\big].
\]
Hence, for any $\bar{t}$ as above we have
\[
    \Big| \big\{ u\le \xi\varep M \big\}\cap (x_o, \bar{t})+Q_{4\rho}(\theta)\Big|
    \le 
    \nu |Q_{4\rho}(\theta)|.
\]
Finally, let $\nu$ be determined in Lemma~\ref{Lm:DG:1} (with $M$ replaced by $\xi\varep M$ and $(x_o,t_o)+Q_\rho (\theta)$ by $(x_o,\bar{t})+Q_{4\rho} (\theta)$) in terms of the data and $\dl$.
Note that the  above smallness estimate in measure allows us to apply Lemma~\ref{Lm:DG:1} in the cylinder  
$
(x_o, \bar{t})+Q_{4\rho}(\theta)
$
to  obtain that
\[
u\ge\tfrac12\xi\varep M \quad\text{a.e. in}\>(x_o, \bar{t})+Q_{2\rho}(\theta).
%K_{2\rho}(x_o)\times \big( t_o+\dl M^{q+1-p}(4\varrho)^p - (\xi M)^{q-1+p} (2\varrho)^p, t_o+\dl M^{q+1-p}(4\varrho)^p\big)
\]
The arbitrariness of $\bar{t}$ gives us the desired estimate upon redefining relevant constants.

\chapter[Integral Harnack inequality]{Integral Harnack inequality}\label{sec:int-harnack}
%Joining Proposition~\ref{PROP:L1:0} and Theorem~\ref{THM:BD:1}, 
%The following integral Harnack type estimate is the main result of this chapter. 
The main purpose of this chapter is to show Theorem~\ref{Thm:bd:2}, which will be employed repeatedly in the proof of Harnack's inequality.
Theorem~\ref{Thm:bd:2} is a straightforward consequence of the quantitative $L^\infty$-bound from Theorem~\ref{THM:BD:1} and the following result.

% \begin{theorem}\label{Thm:bd:2}
% Assume that $0<p-1<q$ and $\lm_q=N(p-1-q)+qp>0$. There exists a positive constant $\boldsymbol\gm$ depending only on the data, such that
% whenever $u$ is a non-negative, local, weak solution to \eqref{Eq:1:1} with \eqref{Eq:1:2} in $E_T$, then
%  \[
%     \essup_{Q_{\frac12\rho, \frac12 s}} u\le  \boldsymbol \gm  \Big(\frac{\rho^p}{s} \Big)^{\frac{N}{\lm_{q}}}  
%     \bigg[\inf_{t\in[t_o-s, t_o]}\bint_{K_{\rho}(x_o)\times\{t\}}u^{q}\, \dx\bigg]^{\frac{p}{\lm_{q}}} +\boldsymbol \gm \Big(\frac{s}{\rho^p}\Big)^{\frac1{q+1-p}},
% \]
% provided $Q_{\rho, s}=K_\rho(x_o)\times(t_o-s,t_o]$ is included in $E_T$. 
% \end{theorem}

\begin{proposition}\label{PROP:L1:0} 
Assume that $0<p-1<q$. There exists a positive constant
$\boldsymbol\gm$  depending only on the data, such that whenever
$u$ is a non-negative
weak  solution to \eqref{Eq:1:1f} with \eqref{Eq:1:2} in $E_T$, then
for any cylinder $K_{2\rho}(y)\times [s,\tau]\Subset E_T$,  we have 
\begin{equation*}%\label{Eq:L1}
    \sup_{t\in [s,\tau]}\int_{K_\rho(y)\times\{t\}} u^q\,\dx
    \le 
    \boldsymbol\gm \inf_{t\in [s,\tau]}\int_{K_{2\rho}(y)\times\{t\}}u^q\,\dx
    +
    \boldsymbol\gm\Big(\frac{\tau-s}{\rho^\lm}\Big)^{\frac{q}{q+1-p}},
\end{equation*}
where 
\[
    \lm:=\frac{\lm_q}{q} \equiv\frac{N}{q}(p-q-1)+p.
\]
\end{proposition}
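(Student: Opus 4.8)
The plan is to derive a family of integral inequalities of ``reverse Hölder in time'' type and then close them by interpolation. First I would fix the cylinder $K_{2\rho}(y)\times[s,\tau]$ and, by translation, assume $y=0$. The basic tool is the weak formulation tested against a function of the form $\z^p(x)\,|u|^{q-1}$-type nonlinearity, or more precisely, testing the equation for $v:=u^q$ after noting that $\partial_t u^q=\partial_t v$; but the cleanest route is to use the energy estimate of Proposition \ref{Prop:2:1} together with a comparison of the slicewise masses $\int_{K_r\times\{t\}}u^q\,\dx$ at two different time levels. Concretely, for concentric cubes $K_{r_1}\subset K_{r_2}\subset K_{2\rho}$ and a cutoff $\z$ with $\z\equiv1$ on $K_{r_1}$, vanishing outside $K_{r_2}$, $|D\z|\le (r_2-r_1)^{-1}$, testing the equation (using the time mollification of \S\ref{sec:moly} to make $\partial_t$ meaningful) gives, for $s\le t_1\le t_2\le\tau$,
\[
    \bigg|\int_{K_{r_2}\times\{t\}}\z^p u^q\,\dx\bigg|_{t_1}^{t_2}
    \le
    \frac{C_1 p}{r_2-r_1}\int_{t_1}^{t_2}\!\!\int_{K_{r_2}}|Du|^{p-1}\z^{p-1}\,\dx\dt.
\]
The gradient term on the right must then be absorbed: by Young's inequality and the energy estimate (Proposition \ref{Prop:2:1} with $k=0$), $\iint|Du|^p\z^p$ is controlled by $\iint u^p|D\z|^p$ plus the sup of the slicewise mass $\mathfrak g$-term, and the latter, via Lemma \ref{Lm:g}, is comparable to $\int u^{q+1}$. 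A further application of Hölder in space converts $\int_{K_{r}}u^{q+1}\,\dx$ and $\int_{K_r}u^p\,\dx$ into powers of $\int_{K_r}u^q\,\dx$ times powers of $|K_r|$, using $p<q+1$ and $p-1<q$; this is where the exponent $\lambda=\lambda_q/q=\frac Nq(p-q-1)+p$ enters, exactly so that the resulting inequality is scale-invariant under the intrinsic parabolic scaling.

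The outcome of this first stage is an estimate of the schematic form
\[
    \sup_{t\in[s,\tau]}\!\int_{K_{r_1}\times\{t\}}\!\!u^q\,\dx
    \le
    \tfrac12\sup_{t\in[s,\tau]}\!\int_{K_{r_2}\times\{t\}}\!\!u^q\,\dx
    +
    \frac{\boldsymbol\gm}{(r_2-r_1)^a}\Big(\!\int\!\!\int_{K_{2\rho}\times[s,\tau]}\!\!u^q\Big)^{b}
    +\boldsymbol\gm\Big(\tfrac{\tau-s}{\rho^\lambda}\Big)^{\frac{q}{q+1-p}},
\]
where the middle term has to be massaged so that its exponent $b$ on $\iint u^q$ becomes $1$ (again using the homogeneity dictated by $\lambda$, together with Young's inequality to split off the extra powers into the last ``geometric'' term), and the first term on the right is reabsorbed by the iteration Lemma \ref{lem:Giaq} applied to $r\mapsto\sup_t\int_{K_r\times\{t\}}u^q\,\dx$ over $[\rho,2\rho]$. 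This gives control of $\sup_t\int_{K_\rho\times\{t\}}u^q$ by $\iint_{K_{2\rho}\times[s,\tau]}u^q$ plus the geometric term. To convert the space-time integral $\iint u^q$ into $\inf_t\int_{K_{2\rho}\times\{t\}}u^q$ one runs essentially the same computation ``in reverse'' — bounding the increment of the slicewise mass downward rather than upward — so that $\int_{K_{2\rho}\times\{t_2\}}u^q\le\int_{K_{2\rho}\times\{t_1\}}u^q+(\text{error})$ for all $t_1,t_2$, hence $\iint_{K_{2\rho}\times[s,\tau]}u^q\le(\tau-s)\inf_t\int_{K_{4\rho}\times\{t\}}u^q+\dots$; combined with the previous bound (and, if one wants cubes $K_{2\rho}$ rather than $K_{4\rho}$ on the right, a further covering/shrinking argument built into the radius iteration) this yields the claimed inequality.

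I expect the main obstacle to be the bookkeeping of exponents in the passage from $\int u^{q+1}$ and $\int u^p$ to powers of $\int u^q$: one must verify that, after all the Hölder and Young steps, the coefficient of $\iint u^q$ is exactly linear and every leftover factor of $(\tau-s)$ and $\rho$ combines precisely into $\big((\tau-s)/\rho^\lambda\big)^{q/(q+1-p)}$. This is forced by the dimensional analysis — the equation is invariant under $u\mapsto\kappa u$, $x\mapsto\sigma x$, $t\mapsto\kappa^{1-q+p}\sigma^p t$ wait, more precisely $t\mapsto \kappa^{q+1-p}\sigma^p t$ — but checking it honestly, and ensuring the constant stays bounded as $q+1-p$ stays away from $0$ (no blow-up is needed here since $q>p-1$ strictly is assumed), is the delicate part. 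A secondary technical point is justifying the manipulations with $\partial_t u^q$: since weak solutions need not have a time derivative, every step above must first be carried out for the mollified functions $\llbracket u\rrbracket_h$ of Lemma \ref{Lm:mol}, with the monotonicity of $s\mapsto s^q$ used to discard a favorable term, and only then pass $h\downarrow0$ — exactly as in the proofs of Proposition \ref{Prop:parab} and Lemma \ref{Lm:B:3} earlier in the paper.
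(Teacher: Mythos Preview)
Your opening step is right and matches the paper: testing the equation with a spatial cutoff $\zeta$ gives the slice-to-slice identity (the paper's \eqref{L1-integral}), and the task reduces to estimating $\frac{1}{r_2-r_1}\iint |Du|^{p-1}$. The gap is in the next step. You propose to control this via the standard Caccioppoli estimate, Proposition~\ref{Prop:2:1} with $k=0$. That estimate produces $\iint u^p|D\zeta|^p$ and $\iint \mathfrak g_+(u,0)|\partial_t\zeta|\sim\iint u^{q+1}|\partial_t\zeta|$ on the right-hand side, and your claim that ``H\"older in space converts $\int u^{q+1}$ and $\int u^p$ into powers of $\int u^q$'' is false. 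H\"older goes the wrong way for the exponent $q+1>q$, and for $\int u^p$ it fails whenever $p>q$ (the hypothesis gives only $p-1<q$, so $p\in(q,q+1)$ is allowed). Without an $L^\infty$-bound on $u$, which is not available at this stage, there is no way to close the loop in terms of $\int u^q$.

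The paper circumvents this with a \emph{weighted} test function: one tests with $t^{1/p}(u+\kappa)^{-(q+1-p)/p}\zeta^p$, where $\kappa=\big((\tau-s)/\rho^p\big)^{1/(q+1-p)}$ (Lemma~\ref{Lm:L1:1}). The negative power of $(u+\kappa)$ is the missing idea. It makes the time-derivative term produce $\int_0^u s^{q-1}(s+\kappa)^{-(q+1-p)/p}\,ds\le\boldsymbol\gm\, u^{(p-1)(q+1)/p}$, and since $(p-1)(q+1)/p<q$ (this is equivalent to $p-1<q$), H\"older now legitimately bounds the slice integrals by $(\int u^q)^{(p-1)(q+1)/(pq)}$. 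A second H\"older step (Lemma~\ref{Lm:L1:2}) converts this into
\[
\frac{1}{\rho}\iint|Du|^{p-1}\le\delta\sup_t\int u^q\,+\,\frac{\boldsymbol\gm}{\delta^{\,c}}\Big(\frac{\tau-s}{\rho^\lambda}\Big)^{\frac{q}{q+1-p}},
\]
which feeds back into the slice-to-slice identity. The closing iteration is over a dyadic sequence of radii (a geometric-series absorption $\mathbf S_n\le\varepsilon\mathbf S_{n+1}+\dots$) rather than via Lemma~\ref{lem:Giaq}, but that part of your outline would work too; and your ``reverse'' argument for reaching the infimum is unnecessary, since \eqref{L1-integral} already holds for arbitrary $t_1,t_2$, so one can simply choose $t_2$ to realize $\inf_t\int_{K_{2\rho}}u^q$.
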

\begin{remark}\upshape
If $u$ is merely a non-negative weak {\it super-solution}, then we will have
\begin{equation*}%\label{Eq:L1}
\sup_{t\in [s,\tau]}\int_{K_\rho(y)\times\{t\}} u^q\,\dx\le \boldsymbol\gm
\int_{K_{2\rho}(y)\times\{\tau\}}u^q \,\dx
+\boldsymbol\gm\Big(\frac{\tau-s}{\rho^\lm}\Big)^{\frac{q}{q+1-p}}.
\end{equation*}
The proof requires minor modifications.
In fact, the only difference occurs in \eqref{L1-integral}, which only holds for times $t_1\le t_2$. Therefore, in the case of supersolutions, we have to choose $t_2=\tau$. The rest of the proof is then analogous. 
\end{remark}

\section{Anomalous energy estimates}

The proof of Proposition~\ref{PROP:L1:0} hinges upon the following lemmas. %\textcolor{red}{(I haven't seen a unanimous name in the literature. One possible solution is to give a general name to the title, like ``Anomalous energy estimates", and to avoid naming the 2 specific lemmas. Indeed, they are only intermediate estimates. We could name main results of chapters, or those with unanimous names, e.g. Sobolev/Poincar\'e inequalities, fast geometric convergence, De Giorgi type lemmas, measure shrinking lemma, etc.)}

\begin{lemma}\label{Lm:L1:1}
Assume that $0<p-1<q$. There exists   $\boldsymbol\gm>0$ depending on the data such that whenever
 $u$ is a non-negative weak super-solution to \eqref{Eq:1:1f} with \eqref{Eq:1:2} in $E_T$, then
 for all $K_{\rho}(y)\times [s,\tau]\subset E_T$, and all $\sig\in(0,1)$, we have
\begin{align*}
    \iint_{K_{\sig\rho}(y)\times(s,\tau)} &(t-s)^{\frac1p}(u+\kappa)^{-\frac{1+q}{p}}|Du|^p\,\dx\dt\\
    &\le 
    \frac{\boldsymbol\gm\rho}{(1-\sig)^p}\Big(\frac{\tau-s}{\rho^{\lm}}\Big)^{\frac1p}
    \bigg[\sup_{t\in[s,\tau]}\int_{K_\rho(y)\times\{t\}} u^q \,\dx +\kappa^q\rho^N\bigg]^{\frac{(p-1)(q+1)}{pq}},
\end{align*}
where
\[
    \lm=\frac{N}{q}(p-q-1)+p,
    \quad\mbox{and}\quad
    \kappa=\Big(\frac{\tau-s}{\rho^{p}}\Big)^{\frac{1}{q+1-p}}.
\]
\end{lemma}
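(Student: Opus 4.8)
The plan is to test the weak formulation of the supersolution inequality for \eqref{Eq:1:1f} with the non-negative function
\[
 \varphi := (t-s)^{\frac1p}\,\zeta(x)^p\,(u+\kappa)^{-\beta},\qquad \beta:=\tfrac{q+1-p}{p},
\]
where $\zeta=\zeta(x)$ is a standard spatial cutoff with $\zeta\equiv1$ on $K_{\sig\rho}(y)$, $\zeta\equiv0$ outside $K_\rho(y)$, and $|D\zeta|\le \tfrac2{(1-\sig)\rho}$. Since $0<p-1<q$ we have $0<\beta<q$ and $\beta+1=\tfrac{q+1}p$, so the leading diffusion term produced by this choice is precisely the quantity appearing on the left-hand side of the lemma. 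Because $u+\kappa\ge\kappa>0$, $\varphi$ is bounded and admissible; making the testing rigorous requires the time mollification of Lemma~\ref{Lm:mol} exactly as in Lemma~\ref{Lm:CP3} or in Step~5 of the proof of Lemma~\ref{Lm:shrink}, together with a routine cutoff near $t=\tau$ that produces a boundary term at the level $\tau$ (the weight $(t-s)^{1/p}$ kills the corresponding term at $t=s$).

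For the parabolic term one sets $\mathcal G(w):=\int_0^w qr^{q-1}(r+\kappa)^{-\beta}\,\dr$, so that $\pl_t(u^q)(u+\kappa)^{-\beta}=\pl_t\mathcal G(u)$, and integrates by parts in time; discarding the non-positive terms $-\tfrac1p\iint(t-s)^{\frac1p-1}\zeta^p\mathcal G(u)\,\dx\dt$ and $-\int_{K_\rho(y)\times\{\tau\}}u^q\varphi\,\dx$ and using the coercivity $\bl A\cdot Du\ge C_o|Du|^p$ together with the growth $|\bl A|\le C_1|Du|^{p-1}$, one arrives at
\[
 \beta C_o\,J \;\le\; I_1 + \int_{K_\rho(y)\times\{\tau\}}(\tau-s)^{\frac1p}\zeta^p\,\mathcal G(u)\,\dx,
\]
where $J:=\iint_{K_\rho(y)\times(s,\tau)}(t-s)^{\frac1p}\zeta^p(u+\kappa)^{-\frac{q+1}p}|Du|^p\,\dx\dt$ dominates the left-hand side of the lemma (since $\zeta\equiv1$ on $K_{\sig\rho}(y)$ and $\beta+1=\tfrac{q+1}p$), and $I_1:=\bg\iint_{K_\rho(y)\times(s,\tau)}(t-s)^{\frac1p}\zeta^{p-1}(u+\kappa)^{-\beta}|Du|^{p-1}|D\zeta|\,\dx\dt$. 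Since $\mathcal G(w)\le\tfrac q{q-\beta}w^{q-\beta}=\bg\,w^{\frac{(p-1)(q+1)}p}$ and $\tfrac{(p-1)(q+1)}p\le q$, H\"older's inequality on $K_\rho(y)$ gives $\int_{K_\rho(y)}\mathcal G(u)\,\dx\le\bg\,\rho^{\frac{N\beta}q}\big(\int_{K_\rho(y)}u^q\,\dx\big)^{\frac{(p-1)(q+1)}{pq}}$; recalling $\tfrac{N\beta}q=1-\tfrac\lambda p$, the boundary term at $\tau$ is already of the asserted form, with $\mathcal A:=\sup_{[s,\tau]}\int_{K_\rho(y)\times\{t\}}u^q\,\dx+\kappa^q\rho^N$ playing the role of the bracket.

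It remains to estimate $I_1$. Young's inequality, distributing the weight $(t-s)^{1/p}$ and the factor $|D\zeta|$ appropriately, yields for any $\varepsilon>0$
\[
 I_1 \;\le\; \varepsilon\,J + \frac{\bg_\varepsilon}{(1-\sig)^p\rho^p}\iint_{K_\rho(y)\times(s,\tau)}(t-s)^{\frac1p}(u+\kappa)^{p-1-\beta}\,\dx\dt .
\]
The term $\varepsilon J$ is absorbed into the left-hand side (legitimate because $(u+\kappa)^{-\frac{q+1}p}\le\kappa^{-\frac{q+1}p}$ and $|Du|^p\in L^1_{\loc}$, so $J<\infty$), once $\varepsilon$ is small in terms of $\beta C_o$. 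On each time slice, $\int_{K_\rho(y)\times\{t\}}(u+\kappa)^{p-1-\beta}\,\dx$ is bounded by $\bg\,\rho^{N(1-\frac{p-1-\beta}q)}\mathcal A^{\frac{p-1-\beta}q}$ via H\"older when $0\le p-1-\beta<q$ (that is, $q\le p^2-1$), and by $\bg\,\kappa^{p-1-\beta}\rho^N$ by monotonicity when $p-1-\beta\le0$; integrating the weight produces a factor $(\tau-s)^{1+\frac1p}$. Substituting $\tau-s=\kappa^{q+1-p}\rho^p$ for one of the two factors of $(\tau-s)$, together with $\kappa^q\rho^N\le\mathcal A$, a direct bookkeeping of exponents shows that in either case this term is bounded by $\tfrac{\bg}{(1-\sig)^p}(\tau-s)^{\frac1p}\rho^{\frac{N\beta}q}\mathcal A^{\frac{(p-1)(q+1)}{pq}}$; since $\rho^{N\beta/q}=\rho\cdot\rho^{-\lambda/p}$, this is of the asserted form. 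Collecting all contributions, $\tfrac12\beta C_o\,J\le\frac{\bg\rho}{(1-\sig)^p}\big(\tfrac{\tau-s}{\rho^\lambda}\big)^{1/p}\mathcal A^{\frac{(p-1)(q+1)}{pq}}$, which is the claim.

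The main obstacle is the dimensional bookkeeping. The whole argument hinges on the single choice $\beta=\tfrac{q+1-p}{p}$, which makes the diffusion term coincide with the left-hand side of the lemma, keeps the primitive $\mathcal G$ dominated by the sub-$q$ power $w^{(p-1)(q+1)/p}$, and — after invoking the definition of $\kappa$ — produces exactly the exponents $\tfrac{(p-1)(q+1)}{pq}$ on $\mathcal A$ and $1-\tfrac\lambda p$ on $\rho$. The remaining points (the mollification making the testing rigorous, the treatment of the endpoint $t=\tau$, and the sign-of-$(p-1-\beta)$ case split) are routine.
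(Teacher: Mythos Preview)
Your proposal is correct and follows essentially the same approach as the paper: test with $\varphi=(t-s)^{1/p}\zeta^p(u+\kappa)^{-\beta}$, $\beta=\tfrac{q+1-p}{p}$, justify via the exponential mollification, use coercivity/growth and Young's inequality, then H\"older on the slice integrals. The only notable difference is in handling the Young remainder $\iint(t-s)^{1/p}(u+\kappa)^{(p^2-1-q)/p}$: you split into cases according to the sign of $p-1-\beta$ and then trade a factor of $\tau-s$ for $\kappa^{q+1-p}\rho^p$, whereas the paper avoids the case split by writing $(u+\kappa)^{(p^2-1-q)/p}=(u+\kappa)^{-(q+1-p)}(u+\kappa)^{(p-1)(q+1)/p}$ and using $(u+\kappa)^{-(q+1-p)}\le\kappa^{-(q+1-p)}=\rho^p/(\tau-s)$ directly --- this cancels the extra factor of $\tau-s$ in one stroke and reduces everything to the single H\"older step already used for the boundary term. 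Either route gives the same exponents. (Your mention of discarding $-\int_{t=\tau}u^q\varphi$ is harmless but redundant: with the $\psi_\varepsilon$ cutoff the only boundary contribution at $\tau$ is $+\int_{t=\tau}(\tau-s)^{1/p}\zeta^p\,\mathcal G(u)$.)
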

\begin{proof}
Let us assume  $(y,s)=(0,0)$. In the weak formulation of weak super-solutions to the equation \eqref{Eq:1:1}, we choose the testing function
\[
    \vp_h(x,t)=t^{\frac1p}\big(\llbracket u \rrbracket_{\bar{h}}+\kappa\big)^{-\frac{q+1-p}{p}}\z^p(x)\psi_{\varep}(t).
\]
The properties of the exponential time mollification $\llbracket u \rrbracket_{\bar{h}}$  are collected in Lemma \ref{Lm:mol}. We  obtain
\begin{equation}\label{weak-form-1}
\iint_{E_T} \big[ -u^q\pl_t \vp_h + \bl{A}(x,t,u, Du) D\vp_h\big]\dx\dt\ge0.
\end{equation}
Here, $\z\in C_0^1(K_\rho;[0,1])$ satisfies $\z=1$ in $K_{\sig\rho}$ and $|D\z|\le\frac2{(1-\sig)\rho}$,
whereas for $\varep>0$, the Lipschitz function $\psi_\eps$ satisfies  $\psi_\varep=1$ in $(\varep, \tau-\varep)$, $\psi_\varep=0$ outside $(0,\tau)$, while linearly interpolated otherwise.
Let us first treat the time part of \eqref{weak-form-1}. Namely, using  Lemma~\ref{Lm:mol} (iii), we have
\begin{align*}
    -\iint_{E_T} & u^q \pl_t \vp_h\,\dx\dt\\
    &=
    -\iint_{E_T} \llbracket u \rrbracket_{\bar{h}}^q \pl_t \vp_h\,\dx\dt
    +
    \iint_{E_T} \big(\llbracket u \rrbracket_{\bar{h}}^q - u^q\big) \pl_t \vp_h\,\dx\dt\\
    &=
    \iint_{E_T}  \pl_t \llbracket u \rrbracket_{\bar{h}}^q \vp_h\,\dx\dt \\
    &\phantom{=\,}
    +
    \iint_{E_T} \big(\llbracket u \rrbracket_{\bar{h}}^q - u^q\big) 
    \big(\llbracket u \rrbracket_{\bar{h}}+\kappa\big)^{-\frac{q+1-p}{p}}\z^p(x)\pl_{t}\big(t^{\frac1p}\psi_{\varep}(t)\big)\,\dx\dt\\
    &\phantom{=\,}
    -
    \tfrac{q+1-p}{p}\iint_{E_T} 
    \big(\llbracket u \rrbracket_{\bar{h}}+\kappa\big)^{-\frac{q+1}{p}}
    \big(\llbracket u \rrbracket_{\bar{h}}^q - u^q\big)\tfrac1{h}
    \big(\llbracket u \rrbracket_{\bar{h}} - u\big)
    t^{\frac1p} \z^p(x)\psi_{\varep}(t) \,\dx\dt.
\end{align*}
The various terms on the right-hand side of the above display are treated as follows.
The second term vanishes in the limit $h\downarrow 0$ due to Lemma~\ref{Lm:mol} (i), whereas the third term is negative, thanks to the monotonicity of the map 
$u \mapsto u^q$. When it comes to the first term, one estimates
\begin{align*}
    \iint_{E_T}  \pl_t \llbracket u \rrbracket_{\bar{h}}^q \vp_h\,\dx\dt
    &=
    q\iint_{E_T} \z^p(x)  t^{\frac1p} \psi_{\varep}(t)\pl_t
    \bigg[\int_0^{\llbracket u \rrbracket_{\bar{h}}} s^{q-1}(s+\kappa)^{-\frac{q+1-p}{p}}\,\ds\bigg]\dx\dt\\
    &=
    -q\iint_{E_T} \z^p(x)  t^{\frac1p} \pl_t\psi_{\varep}(t) 
    \int_0^{\llbracket u \rrbracket_{\bar{h}}} s^{q-1}(s+\kappa)^{-\frac{q+1-p}{p}}\,\ds \dx\dt\\
    &\phantom{=\,}
    -q\iint_{E_T} \z^p(x) \pl_t t^{\frac1p} \psi_{\varep}(t) 
    \int_0^{\llbracket u \rrbracket_{\bar{h}}} s^{q-1}(s+\kappa)^{-\frac{q+1-p}{p}}\,\ds \dx\dt\\
    &\to %\overset{\varep\downarrow0}{\to}
    q\int_{K_\rho} \z^p(x)  t^{\frac1p}   \int_0^{u} s^{q-1}(s+\kappa)^{-\frac{q+1-p}{p}}\,\ds\dx\Big|_{0}^{\tau}\\
    &\phantom{\to}
    -q\iint_{K_\rho\times(0,\tau)} \z^p(x) \pl_t t^{\frac1p}   
    \int_0^{u} s^{q-1}(s+\kappa)^{-\frac{q+1-p}{p}}\,\ds \dx\dt\\
    &=
    q\tau^{\frac1p}\int_{K_\rho\times\{\tau\}} \z^p(x)     \int_0^{u} s^{q-1}(s+\kappa)^{-\frac{q+1-p}{p}}\,\ds\dx\\
    &\phantom{=\,}
    -q\iint_{K_\rho\times(0,\tau)} \z^p(x) \pl_t t^{\frac1p}   
    \int_0^{u} s^{q-1}(s+\kappa)^{-\frac{q+1-p}{p}}\,\ds \dx\dt
\end{align*}
where, to obtain the convergence, we  first let $h\downarrow 0$ in view of  Lemma~\ref{Lm:mol} (i) and then  $\varep\downarrow 0$. Next, we treat the diffusion part in \eqref{weak-form-1}. Letting $h\downarrow 0$ with the aid of  Lemma~\ref{Lm:mol} (iv) and then $\varep\downarrow 0$, we obtain
\begin{align*}
-\tfrac{q+1-p}{p} &\iint_{K_\rho\times(0,\tau)}  \bl{A}(x,t,u, Du)\cdot Du (u+\kappa)^{-\frac{q+1}{p}}t^{\frac1p}\z^p(x) \, \dx\dt\\
&+ \iint_{K_\rho\times(0,\tau)}  \bl{A}(x,t,u, Du)\cdot D\z^p(x)   (u+\kappa)^{-\frac{q+1-p}{p}}t^{\frac1p} \, \dx\dt.
\end{align*}
Then we use the structure conditions \eqref{Eq:1:2} to estimate
\begin{align*}
    -\tfrac{q+1-p}{p}& \iint_{K_\rho\times(0,\tau)}  \bl{A}(x,t,u, Du)\cdot Du (u+\kappa)^{-\frac{q+1}{p}}t^{\frac1p}\z^p(x) \, \dx\dt\\
    &\le 
    -\tfrac{q+1-p}{p} C_o \iint_{K_\rho\times(0,\tau)} |Du|^p  (u+\kappa)^{-\frac{q+1}{p}}t^{\frac1p}\z^p(x) 
    \, \dx\dt
\end{align*}
and
\begin{align*}
%-\tfrac{q+1-p}{p} \iint_{K_\rho\times(s,\tau)}  \bl{A}(x,t,u, Du)\cdot Du \big(u+\kappa\big)^{-\frac{q+1}{p}}t^{\frac1p}\z^p(x) \, \dx\dt\\
    \iint_{K_\rho\times(0,\tau)} 
    &  
    \bl{A}(x,t,u, Du)\cdot D\z^p(x) (u+\kappa)^{-\frac{q+1-p}{p}}t^{\frac1p} \, \dx\dt\\
    & \le  
    p C_1  \iint_{K_\rho\times(0,\tau)}  |Du|^{p-1} \z^{p-1} |D\z|   (u+\kappa)^{-\frac{q+1-p}{p}}t^{\frac1p} \, \dx\dt\\
    &\le 
    \tfrac{q+1-p}{2p} C_o \iint_{K_\rho\times(0,\tau)} |Du|^p  (u+\kappa)^{-\frac{q+1}{p}}t^{\frac1p}\z^p(x) \, \dx\dt\\
    &\phantom{\le\,}
    +\boldsymbol\gm \iint_{K_\rho\times(0,\tau)} (u+\kappa)^{\frac{p^2-1-q}{p}}|D\z|^p t^{\frac1p} \,\dx\dt.
\end{align*}
Here, Young's inequality is appealed to in the last estimate. Combining the above estimates we arrive at
\begin{align*}
    \iint_{K_\rho\times(0,\tau)} & |Du|^p  (u+\kappa)^{-\frac{q+1}{p}}t^{\frac1p}\z^p(x) \, \dx\dt\\
    &\le 
    \boldsymbol\gm \tau^{\frac1p}  \int_{K_\rho\times\{\tau\}} \z^p(x)   
    \int_0^{u} s^{q-1}(s+\kappa)^{-\frac{q+1-p}{p}}\,\ds \dx\\
    &\phantom{\le\,}
    -
   \boldsymbol\gm\iint_{K_\rho\times(0,\tau)}   \z^p(x) \pl_t t^{\frac1p} 
    \int_0^{u} s^{q-1}(s+\kappa)^{-\frac{q+1-p}{p}}\,\ds \dx\dt\\
    &\phantom{\le\,}
    +
    \frac{\boldsymbol\gm}{(1-\sig)^p\rho^p} 
    \iint_{K_\rho\times(0,\tau)} t^{\frac1p} (u+\kappa)^{\frac{p^2-1-q}{p}}\,\dx\dt.
\end{align*} 
To proceed, observe that, according to the range $0<p-1<q$,
\[
 \tfrac{(p-1)(q+1)}{pq}\in(0,1) \quad\text{and}\quad \int_0^{u} s^{q-1}(s+\kappa)^{-\frac{q+1-p}{p}}\,\ds\le\boldsymbol\gm u^{\frac{(p-1)(q+1)}{p}}.
\]
Hence the first term on the right-hand side is estimated by H\"older's inequality:
\[
    \boldsymbol\gm \tau^{\frac1p} \int_{K_\rho\times\{\tau\}} u^{\frac{(p-1)(q+1)}{p}}\,\dx
    \le 
    \boldsymbol\gm\rho\Big(\frac{\tau}{\rho^\lm}\Big)^{\frac1p}
    \bigg[\sup_{t\in[0,\tau]}\int_{K_\rho\times\{t\}} u^q \,\dx\bigg]^{\frac{(p-1)(q+1)}{pq}}.
\]
%Analogously, the second term is estimated by 
%\begin{align*}
%\boldsymbol\gm \int_0^{\tau}\pl_t t^{\frac1p}\,\dt&\cdot\sup_{t\in[0,\tau]}\int_{K_\rho}\int_0^{u} s^{q-1}(s+\kappa)^{-\frac{q+1-p}{p}}\,\ds\\
%    &\le\boldsymbol\gm \tau^{\frac1p} \sup_{t\in[0,\tau]}\int_{K_\rho\times\{t\}} u^{\frac{(p-1)(q+1)}{p}}\,\dx\\
%    &\le 
%    \boldsymbol\gm\rho\Big(\frac{\tau}{\rho^\lm}\Big)^{\frac1p}
    %\bigg[\sup_{t\in[0,\tau]}\int_{K_\rho\times\{t\}} u^q \,\dx\bigg]^{\frac{(p-1)(q+1)}{pq}}.
%\end{align*}
Recalling the definition of $\kappa$, the last term is estimated by
\begin{align*}
    \frac{\boldsymbol\gm}{(1-\sig)^p\rho^p} 
    &
    \iint_{K_\rho\times(0,\tau)} t^{\frac1p} (u+\kappa)^{\frac{p^2-1-q}{p}}\,\dx\dt\\
    &= 
    \frac{\boldsymbol\gm}{(1-\sig)^p\rho^p} 
    \iint_{K_\rho\times(0,\tau)} t^{\frac1p} (u+\kappa)^{-(q+1-p)} (u+\kappa)^{\frac{(p-1)(q+1)}{p}}\,\dx\dt\\
    &\le 
    \frac{\boldsymbol\gm  \tau^{\frac1p}}{(1-\sig)^p}\frac{\tau}{\rho^p}  \kappa ^{-(q+1-p)} \sup_{t\in[0,\tau]}\int_{K_\rho\times\{t\}} (u+\kappa)^{\frac{(p-1)(q+1)}{p}}\,\dx\\
    &= 
    \frac{\boldsymbol\gm  \tau^{\frac1p}}{(1-\sig)^p}   
    \sup_{t\in[0,\tau]}\int_{K_\rho\times\{t\}} (u+\kappa)^{\frac{(p-1)(q+1)}{p}}\,\dx\\
    &\le
    \frac{\boldsymbol\gm  \rho}{(1-\sig)^p}\Big(\frac{\tau}{\rho^\lm}\Big)^{\frac1p}
    \bigg[\sup_{t\in[0,\tau]}\int_{K_\rho\times\{t\}} u^q\,\dx +\kappa^q\rho^N\bigg]^{\frac{(p-1)(q+1)}{pq}}.
\end{align*}
Collecting the last estimates, we conclude the proof.
\end{proof}

A direct consequence of Lemma~\ref{Lm:L1:1} is the following.
%%%%%%%
\begin{lemma}\label{Lm:L1:2}
Let the assumptions of Lemma~\ref{Lm:L1:1} hold.
There exists   $\boldsymbol\gm>0$ depending on the data, such that for all $\dl,\,\sig\in(0,1)$, we have
\begin{align*}
    \frac{1}{\rho}& \iint_{K_{\sig\rho}(y)\times(s,\tau)} |Du|^{p-1}\,\dx\d\tau\\
    &\quad\le 
    \dl \sup_{t\in[s,\tau]}\int_{K_\rho(y)\times\{t\}} u^q\,\dx
    +
    \frac{\boldsymbol\gm}{[\dl^{q+1}(1-\sig)^{pq}]^{\frac{p-1}{q+1-p}}}\Big(\frac{\tau-s}{\rho^\lm}\Big)^{\frac{q}{q+1-p}}.
\end{align*}
\end{lemma}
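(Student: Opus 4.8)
The plan is to deduce Lemma~\ref{Lm:L1:2} from Lemma~\ref{Lm:L1:1} essentially by inserting a judicious factor of $t^{1/p}(u+\kappa)^{-(1+q)/p}$ together with its reciprocal and applying H\"older's inequality. First I would abbreviate $Q:=K_{\sig\rho}(y)\times(s,\tau)$ and (assuming $(y,s)=(0,0)$ for notational convenience) write
\begin{equation*}
  |Du|^{p-1}
  =
  \Big[t^{\frac1p}(u+\kappa)^{-\frac{1+q}{p}}|Du|^p\Big]^{\frac{p-1}{p}}
  \cdot
  \Big[t^{-\frac1p}(u+\kappa)^{\frac{1+q}{p}}\Big]^{\frac{p-1}{p}}.
\end{equation*}
Applying H\"older's inequality with exponents $\tfrac{p}{p-1}$ and $p$ over $Q$ then gives
\begin{equation*}
  \iint_Q |Du|^{p-1}\,\dx\dt
  \le
  \bigg[\iint_Q t^{\frac1p}(u+\kappa)^{-\frac{1+q}{p}}|Du|^p\,\dx\dt\bigg]^{\frac{p-1}{p}}
  \bigg[\iint_Q t^{-\frac{1}{p-1}}(u+\kappa)^{\frac{(1+q)}{p-1}}\,\dx\dt\bigg]^{\frac1p}.
\end{equation*}
The first factor is controlled directly by Lemma~\ref{Lm:L1:1}.

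The second factor is the technical heart of the estimate. Here I would compute $\iint_Q t^{-1/(p-1)}\,\dt$-type integrals explicitly: since $0<p-1<q$ forces $p<2$, we have $\tfrac{1}{p-1}>1$, but the time singularity at $t=s=0$ is integrable precisely when $\tfrac1{p-1}<1$, which fails — so in fact I expect the exponent on $t$ in Lemma~\ref{Lm:L1:1} must be exploited more carefully, or the splitting must distribute the $t$-weight differently (for instance raising $|Du|^p$ to the power $(p-1)/p$ carries a $t^{(p-1)/p^2}$, not $t^{1/p}$, so the complementary weight is $t^{-(p-1)/p^2}\cdot\ldots$, and $(p-1)/p^2<1$ is automatic). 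After getting the $t$-integration right, the spatial part $\iint_Q (u+\kappa)^{\beta}\,\dx\dt$ with the appropriate $\beta$ is bounded by $\sup_{t}\int (u+\kappa)^{\beta}\,\dx$ times $|s-\tau|$, and then by interpolation between $L^q$ and the definition of $\kappa$, exactly as in the last displays of the proof of Lemma~\ref{Lm:L1:1}, one rewrites everything in terms of $\sup_t\int_{K_\rho} u^q\,\dx+\kappa^q\rho^N$.

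Having bounded $\tfrac1\rho\iint_Q|Du|^{p-1}\,\dx\dt$ by a constant times
\begin{equation*}
  \frac{\rho^{\text{(something)}}}{(1-\sig)^{p-1}}\Big(\tfrac{\tau-s}{\rho^\lm}\Big)^{\text{(something)}}
  \bigg[\sup_{t}\int_{K_\rho} u^q\,\dx+\kappa^q\rho^N\bigg]^{\frac{(p-1)(q+1)}{pq}},
\end{equation*}
the final step is Young's inequality. Since $\tfrac{(p-1)(q+1)}{pq}<1$ in the range $0<p-1<q$, I can split the bracketed factor off with exponents $\tfrac{pq}{(p-1)(q+1)}$ and its conjugate, absorbing a small multiple of $\sup_t\int_{K_\rho}u^q\,\dx$ (this produces the $\dl$ term) at the cost of a large constant times the other term; substituting $\kappa=\big(\tfrac{\tau-s}{\rho^p}\big)^{1/(q+1-p)}$ and collecting powers of $\rho$ and $\tau-s$ should leave exactly $\big(\tfrac{\tau-s}{\rho^\lm}\big)^{q/(q+1-p)}$ with the claimed dependence $[\dl^{q+1}(1-\sig)^{pq}]^{-(p-1)/(q+1-p)}$ on $\dl$ and $\sig$. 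The main obstacle I anticipate is purely bookkeeping: getting the exponents of $t$, $\rho$, $\tau-s$, $\dl$ and $(1-\sig)$ to match up after the two applications of H\"older and the one application of Young — in particular verifying that the power of $\kappa$ arising from the spatial integration combines with the explicit $t$-integral to reproduce precisely $\lm=\tfrac{N}{q}(p-q-1)+p$ and the exponent $\tfrac{q}{q+1-p}$, which is where the definition of $\kappa$ and $\lm$ is engineered to make everything homogeneous.
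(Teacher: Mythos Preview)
Your overall plan is exactly the paper's: split $|Du|^{p-1}$ via H\"older with exponents $\tfrac{p}{p-1}$ and $p$, control the first factor by Lemma~\ref{Lm:L1:1}, handle the second by separating the $t$-integral and applying H\"older in space, then finish with Young's inequality. The gap is an arithmetic slip in the H\"older step that sends you chasing a phantom integrability problem.

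From your decomposition $|Du|^{p-1}=A^{(p-1)/p}\cdot B^{(p-1)/p}$ with $B=t^{-1/p}(u+\kappa)^{(1+q)/p}$, H\"older with exponents $\tfrac{p}{p-1}$ and $p$ makes the second integrand $\big(B^{(p-1)/p}\big)^{p}=B^{p-1}$, \emph{not} $B^{p/(p-1)}$. Thus the correct second factor is
\begin{equation*}
  \bigg[\iint_Q t^{\frac{1-p}{p}}\,(u+\kappa)^{\frac{(p-1)(q+1)}{p}}\,\dx\dt\bigg]^{1/p},
\end{equation*}
which is what the paper uses. Since $\tfrac{p-1}{p}<1$ for every $p>1$, the time weight $t^{(1-p)/p}$ is always integrable at $0$, with $\int_0^\tau t^{(1-p)/p}\,\dt=p\,\tau^{1/p}$; there is no obstruction and no need for an alternative splitting. (Also, $0<p-1<q$ does \emph{not} force $p<2$: take $p=3$, $q=5$.) The spatial exponent $\tfrac{(p-1)(q+1)}{p}<q$ --- equivalent to $p-1<q$ --- then permits the H\"older step in $x$, and Young's inequality with exponents $\tfrac{pq}{(p-1)(q+1)}$ and $\tfrac{pq}{q+1-p}$ produces the stated powers of $\delta$ and $(1-\sigma)$.
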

\begin{proof}
Let us fix $(y,s)$ at $(0,0)$. By H\"older's inequality, we have
\begin{align*}
    \iint_{K_{\sig\rho}\times(0,\tau)}|Du|^{p-1}\,\dx\dt
    &\le
    \bigg[\iint_{K_{\sig\rho}\times(0,\tau)} |Du|^{p} (u+\kappa)^{-\frac{q+1}{p}} t^{\frac1p}\,\dx\dt\bigg]^{\frac{p-1}{p}}\\
    &\qquad\quad\qquad\cdot
    \bigg[\iint_{K_{\sig\rho}\times(0,\tau)}(u+\kappa)^{\frac{(p-1)(q+1)}{p}} t^{\frac{1-p}{p}}\,\dx\dt\bigg]^{\frac1p}.
\end{align*}
The first integral on the right-hand side is estimated by Lemma~\ref{Lm:L1:1}, whereas the second integral is estimated by H\"older's inequality as
\begin{align*}
    \iint_{K_{\sig\rho}\times(0,\tau)}&(u+\kappa)^{\frac{(p-1)(q+1)}{p}} t^{\frac{1-p}{p}}\,\dx\dt\\
    &\le
    \int_0^{\tau} t^{\frac{1-p}{p}}\,\dt \cdot \sup_{t\in[0,\tau]}\int_{K_{\sig\rho}\times\{t\}}(u+\kappa)^{\frac{(p-1)(q+1)}{p}}\,\dx\\
    &\le
    \boldsymbol\gm \rho \Big(\frac{\tau}{\rho^\lm}\Big)^{\frac1p}
    \bigg[\sup_{t\in[0,\tau]}\int_{K_{\sig\rho}\times\{t\}} u^q\,\dx +\kappa^q\rho^N\bigg]^{\frac{(p-1)(q+1)}{pq}}.
\end{align*}
Combining them,  applying Young's inequality, and recalling the definition of $\kappa$ we obtain
\begin{align*}
    \iint_{K_{\sig\rho}\times(0,\tau)}&|Du|^{p-1}\,\dx\dt\\  
    &\le
    \frac{\boldsymbol\gm  \rho}{(1-\sig)^{p-1}}\Big(\frac{\tau}{\rho^\lm}\Big)^{\frac1p}
    \bigg[\sup_{t\in[0,\tau]}\int_{K_\rho\times\{t\}} u^q\,\dx +\kappa^q\rho^N\bigg]^{\frac{(p-1)(q+1)}{pq}}\\
    &\le 
    \dl\rho \sup_{t\in[0,\tau]}\int_{K_\rho\times\{t\}} u^q\,\dx + \frac{\boldsymbol\gm  \rho}{[\dl^{q+1}(1-\sig)^{pq}]^{\frac{p-1}{q+1-p}}}\Big(\frac{\tau}{\rho^\lm}\Big)^{\frac{q}{q+1-p}}.
\end{align*}
Dividing both sides by $\rho$ yields the conclusion.
\end{proof}
%%%
\section{Proof of Proposition~\ref{PROP:L1:0}}
Let us fix $(y,s)=(0,0)$. For $n\in\mathbb{N}_0$ introduce
\[
    \rho_n=\sum_{j=0}^{n}\frac{\rho}{2^j},\quad \widetilde\rho_n=\frac{\rho_n+\rho_{n+1}}{2},\quad K_n=K_{\rho_n},\quad\widetilde{K}_n=K_{ \widetilde\rho_n}.
\]
Let $\z\in C_0^1(\widetilde{K}_n;[0,1])$ satisfy $\z=1$ in $K_n$ and $|D\z|\le 2^{n+3}/\rho$.
Plugging this testing function in the weak formulation \eqref{Eq:weak-form} and using the structure condition \eqref{Eq:1:2} we get
\begin{equation}\label{L1-integral}
    \int_{\widetilde{K}_n\times\{t_1\}} u^q \z\,\dx
    \le
    \int_{\widetilde{K}_n\times\{t_2\}} u^q \z\,\dx +
    \frac{C_1 2^{n+3}}{\rho}\iint_{\widetilde{K}_n\times (t_1,t_2)}|Du|^{p-1}\,\dx\dt 
\end{equation}
for arbitrary $t_1,t_2\in[0,\tau]$. 
More precisely, in order to obtain this estimate for times $t_1>t_2$, we use \eqref{Eq:weak-form} with the test function $-\zeta$ instead of $\zeta$. 
Let us choose $t_2$ to satisfy
\[
\int_{K_{2\rho}\times\{t_2\}} u^q\,\dx=\inf_{t\in[0,\tau]}\int_{K_{2\rho}\times\{t\}} u^q\,\dx =: \bl{I}.
\]
In addition, we set
\[
\boldsymbol{S}_n:=\sup_{t\in[0,\tau]}\int_{K_n\times\{t\}} u^q\,\dx.
\]
Using this notation, by the arbitrariness of $t_1$ in \eqref{L1-integral}, we obtain from \eqref{L1-integral} that
\[
    \boldsymbol{S}_n\le \bl{I} + \frac{C_1 2^{n+2}}{\rho} \iint_{\widetilde{K}_n\times (0,\tau)}|Du|^{p-1}\,\dx\dt.
\]
To estimate the last integral, we appeal to Lemma~\ref{Lm:L1:2} with $\widetilde{K}_n\subset K_{n+1}$ and take into account
that $\rho\le \rho_n< 2\rho$ and $1-\sig\ge 2^{-(n+3)}$. This gives
\begin{align*}
    \frac{1}{2\rho} \iint_{\widetilde K_n\times(0,\tau)}&|Du|^{p-1}\,\dx\dt\le
   \frac{1}{\rho_{n+1}} \iint_{\widetilde K_n\times(0,\tau)}|Du|^{p-1}\,\dx\dt\\  
    &\le 
    \dl \sup_{t\in[0,\tau]}\int_{K_{n+1}\times\{t\}} u^q\,\dx + \frac{\boldsymbol\gm  }{[\dl^{q+1}(1-\sig)^{pq}]^{\frac{p-1}{q+1-p}}}\Big(\frac{\tau}{\rho^\lm_{n+1}}\Big)^{\frac{q}{q+1-p}}\\
%    &\le 
%    \dl \mathbf S_{n+1} 
%    + 
%    \frac{\boldsymbol\gm  }{[\dl^{q+1}(1-\sig)^{pq}]^{\frac{p-1}{q+1-p}}}\Big(\frac{\tau}{\rho^\lm_{n+1}}\Big)^{\frac{q}{q+1-p}}\\
    &\le    
    \dl \boldsymbol S_{n+1}
    +
    \frac{\boldsymbol\gm 2^{n\frac{pq(p-1)}{q+1-p}}  }{\dl^{\frac{(q+1)(p-1)}{q+1-p}}}\Big(\frac{\tau}{\rho^\lm}\Big)^{\frac{q}{q+1-p}}.
\end{align*}
Multiplying by $C_12^{n+3}$ we obtain
\begin{align*}
    \frac{C_12^{n+2}}{\rho} \iint_{\widetilde K_n\times(0,\tau)}&|Du|^{p-1}\,\dx\dt\\  
    &\le    
    C_1 2^{n+3}\dl \boldsymbol S_{n+1}
    +
    \frac{8C_1\boldsymbol\gm 2^{n(1+\frac{pq(p-1)}{q+1-p})}  }{\dl^{\frac{(q+1)(p-1)}{q+1-p}}}\Big(\frac{\tau}{\rho^\lm}\Big)^{\frac{q}{q+1-p}},
\end{align*}
and choosing $\dl= \varep/(C_1 2^{n+3})$ for some $\varep\in(0,1)$,  we have
\[
     \frac{C_1 2^{n+2}}{\rho} \iint_{\widetilde K_n\times (0,\tau)} |Du|^{p-1}\,\dx\dt
    \le \varep \boldsymbol{S}_{n+1} +\boldsymbol\gm(\varep) \boldsymbol{b}^n \Big(\frac{\tau}{\rho^\lm}\Big)^{\frac{q}{q+1-p}},
\]
for some  $\boldsymbol{b}=\boldsymbol{b}(p,q)>1$.
Plugging it back to the estimate for $\mathbf S_n$, we  get the recursive inequalities
\[
\boldsymbol{S}_n\le \varep \boldsymbol{S}_{n+1} +\boldsymbol\gm(\varep) \boldsymbol{b}^n \Big[\bl{I} +  \Big(\frac{\tau}{\rho^\lm}\Big)^{\frac{q}{q+1-p}}\Big]\quad\text{for}\quad n\in\mathbb{N}_0.
\]
We iterate these inequalities to obtain
\[
\boldsymbol{S}_0\le \varep^n \boldsymbol{S}_{n} +\boldsymbol\gm(\varep)   \Big[\bl{I} +  \Big(\frac{\tau}{\rho^\lm}\Big)^{\frac{q}{q+1-p}}\Big]
\sum_{i=0}^{n-1}(\varep \boldsymbol{b})^i.
\]
The number $\varep$ is chosen to be $1/ (2\boldsymbol{b})$ so that the summation in the last display is bounded by two, and meanwhile, $\varep^n \bl{S}_{n}\to0$ as $n\to\infty$.

\chapter[Proof of Harnack's inequality]{Proof of Harnack's inequality}

The plan of this chapter is to deal with Harnack's inequality presented in Theorems~\ref{THM:HARNACK:0}--\ref{THM:HARNACK}. We first discuss the optimality of our theorems in Section~\ref{S:Harnack-opt}.
Then, with the preparation from Chapters~\ref{sec:boundedness}--\ref{sec:int-harnack} at hand, the proof of Theorem~\ref{THM:HARNACK} is expanded on in the following. In particular, we focus on the proof of the left-hand side of the Harnack inequality in Sections~\ref{S:Harnack:1}--\ref{S:Harnack:3.5}; the right-hand side inequality is dealt with in Section~\ref{S:Harnack:4}. Whereas the proof of Theorem~\ref{THM:HARNACK:0} will be sketched in Section~\ref{S:Thm:0} and Corollary~\ref{Cor:Liouville} will be treated in Section~\ref{Proof:Liouville}.  %Whenever we speak of {\it data} in this section, we always refer to $\{p,q,N\}$.

%With our techniques Theorem~\ref{THM:HARNACK} can be proven for more general equations of the form $\pl_t u^q-\dvg\bl{A}(x,u,Du)=0$ under the assumptions \eqref{Eq:1:2-}, \eqref{Eq:CP:mono} and \eqref{Eq:CP:growth}. Indeed, the module needing some care is the comparison argument presented in Section~\ref{S:Harnack:3.5}. In order for that, a unique solution needs to be constructed for the Dirichlet problem~\eqref{Eq:comp-func} of such general equations, and the comparison Lemma~\ref{Lm:exp-comp} still holds, cf.~Proposition~\ref{Prop:CP}.

\section{Optimality}\label{S:Harnack-opt}

The purpose of this section is to show by explicit examples that the range of $p$ and $q$ in 
Theorems~\ref{THM:HARNACK:0}--\ref{THM:HARNACK} is optimal for such Harnack inequality to hold.

First of all, the case $q=p-1$ is generally not allowed. This can be seen from the following solution to the prototype equation \eqref{doubly-nonlinear-prototype} in $\rn\times\rr_+$ with $ N\ge1$ and $p>1$, i.e.
\[
u(x,t)=C t^{-\frac{N}{p(p-1)}}\exp\bigg\{-\frac{p-1}{p}\bigg(\frac{|x|^p}{pt}\bigg)^{\frac1{p-1}}\bigg\}
\]
for an arbitrary positive parameter $C$. Indeed, letting $e_1=(1,0,\dots,0)\in\rn$,  one only needs to check that the ratio $\frac{u(\ell e_1,2)}{u(\ell e_1+e_1,2)}$ tends to infinity as $\ell\to\infty$.
\color{black}

In what follows, we assume $0<p-1<q$. Let us consider the following counterexamples, which can be built as the ones discussed in \S~\ref{S:bd-opt} of Chapter~\ref{sec:boundedness}. The first one is constructed on ${\mathbb R}^N\times{\mathbb R}$. For parameters
\begin{equation*}
     N\ge2,
     \quad 
     N>p,
     \quad
     q=\frac{N(p-1)}{N-p},
     \quad
    b=\left(\frac{N}{q}\right)^{p}
\end{equation*}
the function
\begin{equation*}
    u(x,t)=\Big(|x|^{\frac{N(q+1)}{q(N-1)}}+e^{bt}\Big)^{-\frac{N-1}{q+1}}
\end{equation*}
is a non-negative solution to the prototype 
equation \eqref{doubly-nonlinear-prototype} in ${\mathbb R}^N\times{\mathbb R}$, and one verifies that it fails  to satisfy Theorem~\ref{THM:HARNACK}. The latter is easily seen by evaluating the quotient $\frac{u(x,t)}{u(x_o,t_o)}$ with $x=x_o=0$, i.e.~for $(0, t_o)$ and $(0,t)$, and then letting $t\to\pm\infty$.

The second counterexample is constructed on ${\mathbb R}^N\times(-\infty,T)$. For parameters
\begin{equation*}
     \max\left\{\frac{q+1}q,p\right\}<N,\qquad q={\frac{N(p-1)+p}{N-p}},
\end{equation*}
the two-parameter family of functions
\begin{equation*}
    u(x,t)=(T-t)_+^{\frac{N+q+1}{(q+1)^2}}\Big(a+
    b|x|^{\frac{N(q+1)}{Nq-q-1}}\Big)^{-\frac{N}{q+1}},
\end{equation*}
with arbitrary parameters $a>0$ and $T\in{\mathbb R}$, and 
\begin{equation*}
    b=b(N,q,a)={\frac{Nq-q-1}{N^2}}\bigg({\frac{q(N+q+1)}{(q+1)^2Na}}
\bigg)^{\frac{N+q+1}{Nq-q-1}}
\end{equation*}
are non-negative, locally bounded, weak solutions to the prototype equation \eqref{doubly-nonlinear-prototype} in ${\mathbb R}^N\times(-\infty,T)$ and they do not satisfy Theorem~\ref{THM:HARNACK}. For the calculation that they are indeed solutions,
one replaces $q$ in the formulas for $u$ and $b$ by its concrete value in terms of $N$ and $p$. The exchange of $q$ results in 
\begin{equation*}
    u(x,t)
    =
    (T-t)_+^{\frac{N-p}{p^2}}\Big(a+b|x|^{\frac{p}{p-1}}
    \Big)^{-\frac{N-p}{p}},
\end{equation*}
and 
\begin{equation*}
b=b(N,p,a)=\frac{p-1}{N-p}\Big({\frac{N(p-1)+p}{p^2Na}}
\Big)^{\frac{1}{p-1}}.
\end{equation*}
Analogous to the first counterexample, the evaluation of  $\frac{u(x,t)}{u(x_o,t_o)}$ with $x=x_o=0$ and  $t,t_o\in (-\infty, T)$ yields the contradiction to the Harnack inequality in the limit $t\uparrow T$. At this point we have exploited the fact that $u$ can be extended to $0$ on ${\mathbb R}^N\times [T,\infty)$ to a locally bounded, non-negative  weak solution of the prototype equation on all of ${\mathbb R}^N\times{\mathbb R}$.

We have a second two-parameter family of functions that serves as  a counterexample to the Harnack inequality. For
\begin{equation*}
    p<N,\quad N\ge2,\quad q>\frac{N(p-1)}{N-p}
\end{equation*}
we let
\begin{equation*}
    \lambda_{q}:=N(p-1-q)+qp.
\end{equation*}
Note that $\lambda_{q}<0$ for the indicated ranges of $p$ and $q$. Then the two-parameter family of functions
\begin{equation*}
    u(x,t)
    =(T-t)_+^{\frac{1}{q+1-p}}
    \Big(a|x|^{\frac p{p-1}}+C(T-t)_+^{\frac{pq}{(p-1)\lambda_{q}}}\Big)^{-\frac{p-1}{q+1-p}}
\end{equation*}
with arbitrary parameters $C>0$ and $T\in\rr$, and with 
\begin{equation*}
    a=a(N,p,q)=\left(\frac q{|\lambda_{q}|}\right)^{\frac1{p-1}}\frac{q+1-p}{p},
\end{equation*}
are once more non-negative, locally bounded, weak solutions 
to the prototype equation \eqref{doubly-nonlinear-prototype} in ${\mathbb R}^N\times{(-\infty,T)}$, and they do not satisfy Theorem~\ref{THM:HARNACK}. 
As in the first example, one first extends $u$ by $0$ to ${\mathbb R}^N\times[T,\infty)$. This again leads to a locally bounded, non-negative
weak solution of the prototype equation, violating the Harnack inequality. Incidentally, the above examples also show that non-constant, non-negative weak solutions exist in the whole $\rn\times\rr$ for $\lm_q<0$, confirming the optimality of the Liouville-type result in Corollary~\ref{Cor:Liouville}.

The counterexample \eqref{Ex-bdd} in \S~\ref{S:bd-opt} to boundedness can be formally obtained from this family of solutions, letting $C\downarrow 0$. Moreover, it is straightforward to check that one can also assume $C<0$. In this latter case, solutions are defined in the non-cylindrical domain
\begin{equation*}
    \Big\{ (x,t)\in \rr^N\times (-\infty ,T): |x|>R(t)\Big\}
\end{equation*}
with
\begin{equation*}
    R(t):=\left(\frac{|C|}{a}\right)^{\frac{p-1}p}(T-t)_+^{\frac{q}{\lambda_{q}}}.
\end{equation*}
Since $\lambda_{q}<0$, it is apparent that $R(t)\uparrow+\infty$ as 
$t\uparrow T$; moreover, $u$ blows up at the boundary 
$|x|=R(t)$, and the rate of divergence can be estimated using the same techniques employed in \cite{bidaut}.

\color{black}
%\textcolor{orange}{Remark on how to get rid of the continuity assumption (via semi-continuity).}
%%%%%%%%%%%%%%%

%%%%%%%%%%%%%%%%%%%%%%%%%%%%%%%%%%%%%%%%%%

%%%%%%%%%%%
\section{Scaling of Harnack's inequality}\label{S:Harnack:1}
The first step towards Harnack's inequality starts here. Throughout the proof we deem $u$ as its own unique {\it upper semi-continuous regularization}. However, the reader could ignore this technicality for first reading and proceed with a {\it continuous} representative.

Let us introduce a new function
\begin{equation}\label{Eq:u-to-v}
    v(x,t)
    :=
    \frac{u\big(x_o+\rho x, t_o+[u(x_o,t_o)]^{q+1-p}\rho^p t\big)}{u(x_o,t_o)}\quad\text{in}\> \widetilde{Q}_8=K_8\times(-8^p, 8^p).
\end{equation}
It satisfies $v(0,0)=1$ and 
\[
    \pl_t v^q - \dvg \big(|Dv|^{p-2} Dv\big)=0 \quad\text{weakly in}\> \widetilde{Q}_8. %\widetilde{\bl{A}}(x,t,v,Dv)
\]
%where $\widetilde{\bl{A}}$ is subject to the same structure conditions \eqref{}.
Thus $v$ is also granted with the properties shown in previous sections  with $C_o=C_1=1$ in the structure conditions \eqref{Eq:1:2}.

In what follows, we first prove that there exist $\boldsymbol\gm>1$ and $\sig\in(0,1)$ depending only on the data, such that 
\[
    v\ge \boldsymbol\gm^{-1} \quad\text{in}\> K_1\times (- \sig, \sig).
\]
This will yield the desired left-hand side inequality in Theorem~\ref{THM:HARNACK} upon rescaling.
%%%%%%%%%%
\section{Locating the supremum of \texorpdfstring{$v$}{v} in \texorpdfstring{$K_1$}{K1}}\label{S:Harnack:2}
%The argument is similar to \cite[Chapter~6, Section~4]{DBGV-mono}, which we include for the reader's convenience.
Let $\tau\in(0,1)$ and introduce two quantities
\[
    M_\tau := \sup_{K_\tau} v(\cdot,0),
    \qquad 
    N_\tau := (1-\tau)^{-\be}\quad\text{where}\>\be=\tfrac{p}{q+1-p}.
\]
Note that $M_0=N_0=1$ by definition and $N_\tau\to\infty$ as $\tau\uparrow 1$, whereas $M_\tau$ remains finite.
Hence the equation $M_\tau=N_\tau$ has roots. Denote by $\tau_*\in[0,1)$ the largest one, such that
\[
    M_{\tau_*}=N_{\tau_*}\quad\text{and}\quad M_\tau\le N_\tau\quad\text{for all}\>\tau\ge\tau_*.
\]
By the continuity of $v$, the supremum $M_{\tau_*}$ is achieved at some $\bar{x}\in K_{\tau_*}$. Note that upper-semicontinuity suffices in order to achieve the supremum, cf.~Remark~\ref{Rmk:semicontin}.
Choose $\bar\tau\in(\tau_\ast,1)$ from
\[
    N_{\bar\tau}=(1-\bar\tau)^{-\be}=4 (1-\tau_*)^{-\be},\quad\text{i.e.,}\quad \bar\tau=1-4^{-\frac{1}{\be}}(1-\tau_*)
\]
and set
\[
    2r:=\bar\tau - \tau_*=(1-4^{-\frac{1}{\be}})(1-\tau_*).
\]
Therefore, we have $K_{2r}(\bar{x})\subset K_{\bar\tau}$, $M_{\bar\tau}\le N_{\bar\tau}$, and
\begin{align}\label{sup-v}
    \sup_{K_{\tau_*}} v(\cdot, 0) 
    &= 
    M_{\tau_*}= v(\bar{x},0)\le \sup_{K_{2r}(\bar{x})} v(\cdot, 0)\nonumber \\
    &\le  
    \sup_{K_{\bar\tau}} v(\cdot, 0)
    =M_{\bar\tau}\le N_{\bar\tau}=4 (1-\tau_*)^{-\be}.
\end{align}
%A notable difference from \cite[Chapter~6, Section~4]{DBGV-mono} is that we fix $\be=\frac{p}{q+1-p}$ here, which was left to be chosen.
%For simplicity, we will use $\be$ to denote $\frac{p}{q+1-p}$ in what follows.
%%%%%%%%%%
\section{Expanding the positivity of \texorpdfstring{$v$}{v}}\label{S:Harnack:3}
Let us consider the cylinder centered at $(\bar{x},0)$, that is,
\[
\widetilde{Q}_{r}(\theta_*):=K_{r}(\bar{x})\times(-\theta_*r^p,\theta_*r^p)\quad\text{where}\quad \theta_*:=(1-\tau_*)^{-\be(q+1-p)}.
\]
Thanks to our choices of $\be=\tfrac{p}{q+1-p}$ and $r$, a simple calculation yields
\begin{equation*}
    \theta_\ast r^p= (1-\tau_\ast)^{-p}r^p =(1-\tau_\ast)^{-p} \big[\tfrac12(1-4^{-\frac{1}{\be}})(1-\tau_*)\big]^p=2^{-p}(1-4^{-\frac{1}{\be}})^p=:c
\end{equation*}
so that
\[
\widetilde{Q}_{2r}(\theta_*)=K_{2r}(\bar{x})\times(-2^p c,2^p c)\quad\text{where}\quad c=2^{-p}(1-4^{-\frac{1}{\be}})^p.
\]
We also note that 
$$
    r=c^{\frac{1}{p}}(1-\tau_\ast).
$$
%By the same token, we note that
% \[
%Q_{2r}(\theta_*)=K_{2r}(\bar{x})\times(-2^p c, 2^p c).
% \]
The next lemma estimates the supremum of $v$ over the cylinder $\widetilde{Q}_{r}(\theta_*)$ by the same quantity as in \eqref{sup-v}.
%\[
%\mathcal{Q}_{2r}
%\]
\begin{lemma}\label{Lm:sup-v}
There exist $\boldsymbol\gm_1>1$ depending only on the data, such that
\[
\sup_{\widetilde{Q}_{r}(\theta_*)} v \le \boldsymbol\gm_1 (1-\tau_*)^{-\be}.
\]
\end{lemma}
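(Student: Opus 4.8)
The plan is to read the bound off the integral Harnack inequality of Theorem~\ref{Thm:bd:2}, exploiting the structural feature that its right-hand side involves $\inf_{t}\bint_{K_\rho}u^q$, i.e. a quantity that is controlled as soon as one knows the slice-wise integral at a \emph{single} time level — here $t=0$, where the pointwise bound \eqref{sup-v} is available. Everything else is a matter of inserting the intrinsic scaling and watching the powers of $(1-\tau_*)$ cancel.

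First I would translate \eqref{sup-v} into an $L^q$-average bound: since $v(\cdot,0)\le 4(1-\tau_*)^{-\be}$ a.e. in $K_{2r}(\bar x)$, the slice-wise mean satisfies $\bint_{K_{2r}(\bar x)\times\{0\}}v^q\,\dx\le 4^q(1-\tau_*)^{-\be q}$, and in particular, for any time interval containing $0$, $\inf_{t}\bint_{K_{2r}(\bar x)\times\{t\}}v^q\,\dx\le 4^q(1-\tau_*)^{-\be q}$. Then I would apply Theorem~\ref{Thm:bd:2} to $v$ (a non-negative weak solution with $C_o=C_1=1$) on the backward cylinder $Q_{\rho,s}=K_\rho(\bar x)\times(t_o-s,t_o]$ with the choices
\[
\rho=2r,\qquad t_o=\theta_* r^p,\qquad s=4\theta_* r^p,
\]
so that $Q_{\rho,s}=K_{2r}(\bar x)\times(-3\theta_* r^p,\theta_* r^p]$. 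This is admissible: spatially $K_{2r}(\bar x)\subset K_{\bar\tau}\Subset K_8$ since $\bar x\in K_{\tau_*}$ and $2r=\bar\tau-\tau_*$ with $\bar\tau<1$, and temporally $\theta_* r^p=c<1$, so $(-3c,c]\subset(-8^p,8^p)$; hence $Q_{\rho,s}\Subset\widetilde Q_8$. Since $Q_{\frac12\rho,\frac12 s}=K_r(\bar x)\times(-\theta_* r^p,\theta_* r^p]\supset\widetilde Q_r(\theta_*)$, Theorem~\ref{Thm:bd:2} gives
\[
\sup_{\widetilde Q_r(\theta_*)}v\le\boldsymbol\gm\Big(\tfrac{\rho^p}{s}\Big)^{\frac{N}{\lm_q}}\Big[\inf_{t}\bint_{K_{2r}(\bar x)\times\{t\}}v^q\,\dx\Big]^{\frac{p}{\lm_q}}+\boldsymbol\gm\Big(\tfrac{s}{\rho^p}\Big)^{\frac1{q+1-p}}.
\]

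To finish, I would substitute $\theta_*=(1-\tau_*)^{-p}$ (from $\be(q+1-p)=p$) and $r^p=c(1-\tau_*)^p$, which give $\rho^p/s=2^{p-2}(1-\tau_*)^p$ and $s/\rho^p=2^{2-p}(1-\tau_*)^{-p}$, together with the average bound above, and then track the exponent of $(1-\tau_*)$. The first term carries the power $\tfrac{p}{\lm_q}\big(N-\be q\big)$; using the algebraic identity $N-\be q=-\tfrac{\lm_q}{q+1-p}$ (which is just $\lm_q=N(p-1-q)+qp$ rewritten), this power equals $-\tfrac{p}{q+1-p}=-\be$. The second term carries the power $-\tfrac{p}{q+1-p}=-\be$ directly. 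All the leftover numerical and data-dependent factors are absorbed into a single constant $\boldsymbol\gm_1>1$ depending only on the data, yielding $\sup_{\widetilde Q_r(\theta_*)}v\le\boldsymbol\gm_1(1-\tau_*)^{-\be}$.

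The only delicate point — more bookkeeping than genuine difficulty — is the joint calibration of $\rho$, $t_o$, $s$ so that simultaneously (i) $Q_{\rho,s}$ is admissible for Theorem~\ref{Thm:bd:2}, (ii) its lower half $Q_{\frac12\rho,\frac12 s}$ covers the symmetric intrinsic cylinder $\widetilde Q_r(\theta_*)$, and (iii) after inserting the intrinsic scaling all powers of $(1-\tau_*)$ collapse exactly to $-\be$. This last exact cancellation is dictated by the precise value of $\lm_q$ in the super-critical fast diffusion range $0<p-1<q<\tfrac{N(p-1)}{(N-p)_+}$, which is why that regime is the natural setting for this argument; outside it one would lose the homogeneity.
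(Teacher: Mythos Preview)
Your proposal is correct and follows essentially the same route as the paper: apply the integral Harnack inequality of Theorem~\ref{Thm:bd:2} to $v$ on a cylinder calibrated so that its lower half covers $\widetilde Q_r(\theta_*)$ and the time interval contains the slice $t=0$, bound the infimum of $\bint v^q$ by the value at $t=0$ via \eqref{sup-v}, and verify that the intrinsic scaling $\theta_* r^p=c$ together with $\be=\tfrac{p}{q+1-p}$ makes both terms collapse to $(1-\tau_*)^{-\be}$. Your explicit choice $\rho=2r$, $t_o=\theta_* r^p$, $s=4\theta_* r^p$ and the verification of the exponent identity $N-\be q=-\lm_q/(q+1-p)$ are exactly the bookkeeping the paper leaves implicit.
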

\begin{proof}
Let us apply Theorem~\ref{Thm:bd:2} to $v$, over $\widetilde{Q}_{r}(\theta_*)\subset \widetilde{Q}_{2r}(\theta_*)$.
Using \eqref{sup-v} to estimate the integral on the right-hand side, and recalling the definition of $r$, $c$, $\be$ and $\lm$,
a simple calculation yields the claim. Indeed, 
we have
\begin{align*}
    \sup_{\widetilde{Q}_{r}(\theta_*)} v^q 
    &\le 
    \frac{\boldsymbol\gm}{c^{\frac{N}{\lm}}}\bigg[\int_{K_{2r}(\bar{x})} v^q(x,0)\,\dx \bigg]^{\frac{p}{\lm}}
    +
    \boldsymbol\gm\Big(\frac{c}{r^p}\Big)^{\frac{q}{q+1-p}}\\
    &\le 
    \frac{\boldsymbol\gm}{c^{\frac{N}{\lm}}}\Big[r^N (1-\tau_*)^{-q\be} \Big]^{\frac{p}{\lm}}
    +
    \boldsymbol\gm\Big[(1-\tau_\ast)^{-p}\Big]^{\frac{q}{q+1-p}}\\
    &=
    \frac{\boldsymbol\gm}{c^{\frac{N}{\lm}}}\Big[c^\frac{N}{p} (1-\tau_*)^{N-q\be} \Big]^{\frac{p}{\lm}}
    +
    \boldsymbol\gm\Big[(1-\tau_\ast)^{-p}\Big]^{\frac{q}{q+1-p}}\\
    &=
    \boldsymbol \gm (1-\tau_*)^{-q\be},
\end{align*}
with $\boldsymbol\gm$ depending on the data.
\end{proof}

With the supremum estimate of Lemma~\ref{Lm:sup-v} at hand, we estimate the measure of positivity set of $v$.
\begin{lemma}\label{Lm:measure-v}
There exist $\dl$, $\bar{c}$ and $\al$ in $(0,1)$ depending only on the data, such that 
\[
    \Big|\big\{ v(\cdot, t)\ge\bar{c}(1-\tau_*)^{-\be}\big\}\cap K_r(\bar{x})\Big|
    \ge
    \al |K_r|\quad\text{for all}\> t\in[-\dl\theta_*r^p,\dl\theta_*r^p].
\]
\end{lemma}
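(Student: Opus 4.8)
The plan is to derive the measure lower bound from the supremum estimate in Lemma~\ref{Lm:sup-v} together with the fact that $v$ attains a comparatively large value at the space-time point $(\bar x,0)$. First I would record the two-sided control we have at our disposal: by construction $v(\bar x,0)=M_{\tau_*}=N_{\tau_*}=(1-\tau_*)^{-\be}$, and by Lemma~\ref{Lm:sup-v} we have $\sup_{\widetilde Q_r(\theta_*)}v\le \boldsymbol\gm_1(1-\tau_*)^{-\be}$. So on the cylinder $\widetilde Q_r(\theta_*)$ the function $v$ lives in an interval of the form $[0,\boldsymbol\gm_1 H]$ with $H:=(1-\tau_*)^{-\be}$, and it reaches the value $H$ at the "center" time slice $t=0$. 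The natural tool is a De Giorgi type argument: I would apply the energy estimate for super-solutions of Proposition~\ref{Prop:2:1} to $v$ on $\widetilde Q_r(\theta_*)$ with level $k=\tfrac14 H$ (or a comparable fraction), using a standard cutoff, to conclude that the set where $v$ is below a still smaller level $\bar c H$ cannot occupy too large a measure fraction on \emph{some} time slice, and then propagate that information to all slices in a smaller sub-cylinder.

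More concretely, the first key step is an \emph{alternative / one-slice} measure estimate: I want to show that there is a time $t_0\in[-\dl\theta_*r^p,\dl\theta_*r^p]$ on which $|\{v(\cdot,t_0)\ge \bar c H\}\cap K_r(\bar x)|\ge\al|K_r|$. If this failed for every slice, then $(v-\tfrac14 H)_-$ would be large on a big portion of the whole cylinder, while the super-solution energy estimate, combined with the fact that $v(\bar x,0)=H$ forces $(v-\tfrac14 H)_-$ to vanish on a positive-measure set near $(\bar x,0)$ (here I would use upper semicontinuity of $v$ and the pointwise value at $(\bar x,0)$ together with a Poincaré-type inequality, Lemma~\ref{lem:Poincare}, on time slices), yields a contradiction once $\bar c$, $\al$ and the sub-cylinder length $\dl$ are chosen small enough in terms of the data only. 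The constants $\bar c,\al,\dl$ coming out of this step depend only on $N,p,q$ because all the quantitative ingredients — the energy constant, the Sobolev/Poincaré constants, and the numerical factors $\boldsymbol\gm_1$, $c$, $\be$ — depend only on the data.

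The second key step is to \emph{spread} this single-slice positivity to all slices $t\in[-\dl\theta_*r^p,\dl\theta_*r^p]$. For this I would invoke Lemma~\ref{Lm:3:1} (propagation of positivity in measure), applied to the non-negative super-solution $v$ on the cube $K_r(\bar x)$ with $M=\bar c H$ and with base time $t_0$, run both forward and backward in time (the backward direction is legitimate because on $\widetilde Q_r(\theta_*)$ we may shift the time origin and $v$ is a super-solution on the whole symmetric cylinder). Lemma~\ref{Lm:3:1} produces constants $\dl'$, $\eps$ depending only on the data and on the measure-density $\al$ from the first step — hence only on the data — such that $|\{v(\cdot,t)\ge\eps\bar c H\}\cap K_r(\bar x)|\ge\tfrac12\al|K_r|$ for all $t$ in a time interval of length $\dl'(\bar c H)^{q+1-p}r^p$ around $t_0$. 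Recalling $\theta_*=(1-\tau_*)^{-\be(q+1-p)}$ and $H^{q+1-p}=(1-\tau_*)^{-p}$, this time length is comparable to $\dl'\bar c^{q+1-p}\theta_* r^p$, so after relabeling $\bar c\mapsto \eps\bar c$, $\al\mapsto\tfrac12\al$ and choosing $\dl$ to be $\dl'\bar c^{q+1-p}$ (shrunk if necessary so the propagated interval covers $[-\dl\theta_* r^p,\dl\theta_* r^p]$), we obtain exactly the claimed statement.

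The main obstacle I anticipate is the first step: turning the pointwise information $v(\bar x,0)=(1-\tau_*)^{-\be}$ into a \emph{quantitative measure} statement on some time slice with constants independent of $\tau_*$. The difficulty is that a pointwise value at a single point has, a priori, no measure content; one has to feed it into the De Giorgi machinery using upper semicontinuity (so that $v$ stays above a fraction of $H$ on a small ball around $\bar x$ at $t=0$, at least after passing to the semicontinuous regularization) and then track how the energy estimate converts the resulting initial-slice measure bound, via the sup bound $\boldsymbol\gm_1 H$ that fixes the scale, into the intermediate-slice bound. Getting the homogeneity of all terms right — so that the factors of $H$ and $\theta_* r^p$ cancel and leave pure data-dependent constants — is the delicate bookkeeping that the rest of the proof rests on; once that is in place, Lemmas~\ref{Lm:3:1} and the already-established Lemma~\ref{Lm:sup-v} do the remaining work mechanically.
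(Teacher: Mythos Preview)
Your proposal has a genuine gap in the first step, and you correctly identify it as the ``main obstacle'': converting the \emph{single pointwise value} $v(\bar x,0)=(1-\tau_*)^{-\be}=:H$ into a \emph{quantitative} measure lower bound on some time slice. Upper semicontinuity (or even continuity) of $v$ tells you only that $v\ge\tfrac12 H$ on \emph{some} ball around $(\bar x,0)$, but the radius of that ball depends on the modulus of continuity of $v$, which is not yet quantitatively controlled at this stage of the argument. Without a data-only lower bound on that radius, the energy/Poincar\'e machinery you describe cannot produce constants depending only on the data. There is also a second, smaller issue in your propagation step: Lemma~\ref{Lm:3:1} is a \emph{forward-in-time} statement for super-solutions, and the equation is not time-reversible, so you cannot simply run it backward from a slice $t_0>0$ to cover negative times.

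The paper bypasses both difficulties by a single application of the integral Harnack inequality, Theorem~\ref{Thm:bd:2}. Applied on $\widetilde Q_{r/2}(\dl\theta_*)\subset\widetilde Q_r(\dl\theta_*)$, it gives
\[
H^q=v^q(\bar x,0)\le\essup_{\widetilde Q_{r/2}(\dl\theta_*)}v^q\le\frac{\boldsymbol\gm}{(\dl\theta_*r^p)^{N/\lm}}\bigg[\inf_{t\in[-\dl\theta_*r^p,\dl\theta_*r^p]}\int_{K_r(\bar x)}v^q(x,t)\,\dx\bigg]^{p/\lm}+\boldsymbol\gm(\dl\theta_*)^{\frac{q}{q+1-p}}.
\]
Choosing $\dl$ small absorbs the last term and yields a lower bound on $\int_{K_r(\bar x)}v^q(\cdot,t)\,\dx$ \emph{for every} $t$ in the interval, directly from the pointwise value and with constants depending only on the data. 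Splitting this integral over $\{v<\bar cH\}$ and $\{v\ge\bar cH\}$, and bounding the second piece with the sup estimate of Lemma~\ref{Lm:sup-v}, then forces the claimed measure lower bound on every slice at once. This is the missing ingredient in your plan: a quantitative \emph{sup-to-integral} estimate that works on all time levels simultaneously, which is precisely what Theorem~\ref{Thm:bd:2} provides.
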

\begin{proof}
Let us apply Theorem~\ref{Thm:bd:2} to $v$, over $\widetilde{Q}_{\frac12 r}(\delta\theta_*)\subset \widetilde{Q}_{r}(\delta\theta_*)$, to obtain 
\begin{align*}
    (1-\tau_*)^{-\be q} 
    &= 
    v^q(\bar{x},0) \le \sup_{K_{\frac12r}(\bar{x})} v^q(\cdot, 0)\\
    &\le  
    \frac{\boldsymbol\gm}{(\dl\theta_* r^p)^{\frac{N}{\lm}}}\bigg[\int_{K_{r}(\bar{x})} v^q(x,t)\,\dx \bigg]^{\frac{p}{\lm}}+\boldsymbol\gm(\dl\theta_*)^{\frac{q}{q+1-p}}\\
    &=  
    \frac{\boldsymbol\gm}{(\dl\theta_* r^p)^{\frac{N}{\lm}}}\bigg[\int_{K_{r}(\bar{x})} v^q(x,t)\,\dx \bigg]^{\frac{p}{\lm}}
    +
    \boldsymbol\gm\dl^{\frac{q}{q+1-p}}(1-\tau_*)^{-\be q}
\end{align*}
for all $t\in[-\dl\theta_*r^p,\dl\theta_*r^p]$. In the last line we used the definition of $\theta_*$.
Hence, we may choose $\dl$ to satisfy $\boldsymbol\gm\dl^{\frac{q}{q+1-p}}\le\frac12$ and absorb the second term on the right into the left-hand side.
Consequently, recalling also $\theta_* r^p=c$ depends only on $p$ and $q$,  we have
\[
    (1-\tau_*)^{-\be q}\le\boldsymbol\gm_2\bigg[\int_{K_{r}(\bar{x})} v^q(x,t)\,\dx \bigg]^{\frac{p}{\lm}},
\]
for some generic $\boldsymbol\gm_2$ depending on the data. For $\bar{c}\in(0,1)$ to be determined, we continue
to estimate the above integral by
\begin{align*}
    \int_{K_{r}(\bar{x})}& v^q(x,t)\,\dx \\
    &=
    \int_{K_{r}(\bar{x})} v^q(x,t)\chi_{\{v<\bar{c}(1-\tau_*)^{-\be}\}}\,\dx
    + 
    \int_{K_{r}(\bar{x})} v^q(x,t)\chi_{\{v\ge\bar{c}(1-\tau_*)^{-\be}\}}\,\dx\\
    &\le
    \bar{c}^q(1-\tau_*)^{-\be q} (2r)^N
    +
    \boldsymbol\gm_1^q (1-\tau_*)^{-\be q} \Big|\big\{v\ge\bar{c}(1-\tau_*)^{-\be}\big\}\cap K_{r}(\bar{x})\Big|,
\end{align*}
where we used Lemma~\ref{Lm:sup-v} in the last line. We have $\frac{p}{\lm}= \frac{pq}{\lm_q}>1$; indeed since $\lm_q>0$ and $p-1<q$ we have $0<\lm_q= pq-N(q-(p-1))<pq$. Hence $s\mapsto s^\frac{p}{\lm}$ is convex. Substituting this in the last estimate, we have
\begin{align*}
    (1-\tau_*)^{-\be q}
    &\le 
    \boldsymbol\gm_2 2^{\frac{p}{\lm}-1}\big[\bar{c}^q(1-\tau_*)^{-\be q} (2r)^N\big]^{\frac{p}{\lm}}\\
    &\phantom{\le\,}
    + 
    \boldsymbol\gm_2 2^{\frac{p}{\lm}-1}\boldsymbol\gm_1^{q\frac{p}{\lm}} (1-\tau_*)^{-\be q\frac{p}{\lm}} 
    \Big|\big\{v\ge\bar{c}(1-\tau_*)^{-\be}\big\}\cap K_{r}(\bar{x})\Big|^{\frac{p}{\lm}}\\
    &= 
    \boldsymbol\gm_3 \bar{c}^{q\frac{p}{\lm}}(1-\tau_*)^{-\be q}\\
    &\phantom{\le\,}
    + 
    \boldsymbol\gm_2 2^{\frac{p}{\lm}-1}\boldsymbol\gm_1^{q\frac{p}{\lm}} (1-\tau_*)^{-\be q\frac{p}{\lm}} \Big|\big\{v\ge\bar{c}(1-\tau_*)^{-\be}\big\}\cap K_{r}(\bar{x})\Big|^{\frac{p}{\lm}},
\end{align*}
where $\boldsymbol\gm_3$ is another generic constant.
If we choose $\bar{c}$ to verify $\boldsymbol\gm_3 \bar{c}^{q\frac{p}{\lm}}=\frac12$, then we obtain
\[
    (1-\tau_*)^{-\be q}
    \le \overline{\boldsymbol\gm} (1-\tau_*)^{-\be q\frac{p}{\lm}} 
    \Big|\big\{v\ge\bar{c}(1-\tau_*)^{-\be}\big\}\cap K_{r}(\bar{x})\Big|^{\frac{p}{\lm}}
\]
for some generic $\overline{\boldsymbol\gm} $ depending only on the data. From this, recalling the definition of $r$, $\beta$, and $\lm$, we easily deduce the claimed estimate.
\end{proof}

The measure theoretical information from Lemma~\ref{Lm:measure-v} combined with the expansion of positivity in Proposition~\ref{PROP:EXPANSION} yields a pointwise estimate.
\begin{lemma}\label{Lm:expansion-1}
There exists $\eta,\delta\in(0,1)$ depending only on the data, such that
\[
v \ge\eta (1-\tau_*)^{-\be}\quad\text{in}\> K_{2r}(\bar{x})\times\big[-\tfrac12\dl\theta_*r^p,\dl\theta_*r^p\big].
\]
\end{lemma}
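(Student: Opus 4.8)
The goal is to upgrade the slice-wise measure density from Lemma~\ref{Lm:measure-v} into a pointwise lower bound on a full cylinder. The natural tool is the expansion of positivity statement of Proposition~\ref{PROP:EXPANSION}, applied at an appropriate initial time level inside the time interval supplied by Lemma~\ref{Lm:measure-v}. First I would record the abbreviations $M:=\bar c(1-\tau_\ast)^{-\be}$ and note that $\theta_\ast r^p=c$ depends only on $p$ and $q$, so all the cylinders below have a geometry controlled purely by the data. Lemma~\ref{Lm:measure-v} gives, in particular at the time level $t=-\tfrac12\dl\theta_\ast r^p=:t_o$, that
\[
\Big|\big\{v(\cdot,t_o)\ge M\big\}\cap K_r(\bar x)\Big|\ge\al|K_r|.
\]

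Next I would invoke Proposition~\ref{PROP:EXPANSION} with this $M$, this $\al$, the cube $K_r(\bar x)$, and the vertex $(\bar x,t_o)$. The proposition supplies constants $\dl',\eta'\in(0,1)$, depending only on the data and $\al$ — and since $\al$ itself depends only on the data, ultimately only on the data — such that
\[
v\ge\eta' M\quad\text{a.e. in }K_{2r}(\bar x)\times\big(t_o+\tfrac12\dl' M^{q+1-p}r^p,\,t_o+\dl' M^{q+1-p}r^p\big],
\]
provided the inclusion $K_{8r}(\bar x)\times\big(t_o,t_o+\dl' M^{q+1-p}r^p\big]\subset\widetilde Q_8$ holds. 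I would verify this inclusion: since $M^{q+1-p}=\bar c^{q+1-p}(1-\tau_\ast)^{-\be(q+1-p)}=\bar c^{q+1-p}\theta_\ast$ and $\theta_\ast r^p=c$, we have $\dl' M^{q+1-p}r^p=\dl'\bar c^{q+1-p}c$, a fixed multiple of a data-dependent constant; together with $K_{8r}(\bar x)\subset K_{8\bar\tau}\subset K_8$ and the fact that $t_o$ is within $O(c)$ of $0$, the inclusion into $\widetilde Q_8$ is immediate once $c$ is small, which it is. The only genuine bookkeeping is to check that the output time interval $(t_o+\tfrac12\dl' M^{q+1-p}r^p,t_o+\dl' M^{q+1-p}r^p]$ contains a sub-interval of the asserted form; writing everything in terms of $\theta_\ast r^p$ and choosing $\dl$ in the statement of Lemma~\ref{Lm:expansion-1} small enough relative to $\dl'\bar c^{q+1-p}$ (and also no larger than the $\dl$ already fixed in Lemma~\ref{Lm:measure-v}), the interval $[-\tfrac12\dl\theta_\ast r^p,\dl\theta_\ast r^p]$ is contained in the expansion-of-positivity time slab. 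Setting $\eta:=\eta'\bar c$ then yields $v\ge\eta(1-\tau_\ast)^{-\be}$ on $K_{2r}(\bar x)\times[-\tfrac12\dl\theta_\ast r^p,\dl\theta_\ast r^p]$.

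The main — and really only — obstacle is the consistent juggling of the two separate $\dl$'s: the one produced by Lemma~\ref{Lm:measure-v} (which pins down the length of the slab on which the measure density holds) and the one produced by Proposition~\ref{PROP:EXPANSION} (which dictates where the conclusion lives). One has to be careful that applying the expansion of positivity at the \emph{single} time level $t_o=-\tfrac12\dl\theta_\ast r^p$, rather than throughout the slab, still suffices, and that the final slab $[-\tfrac12\dl\theta_\ast r^p,\dl\theta_\ast r^p]$ fits inside the forward slab produced by the proposition; this forces $\dl$ in Lemma~\ref{Lm:expansion-1} to be redefined (shrunk) compared with the $\dl$ of Lemma~\ref{Lm:measure-v}, which is why the statement of Lemma~\ref{Lm:expansion-1} introduces its own $\dl$. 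Since all constants at each stage depend only on the data, there is no circularity. I would close by remarking that the constants $\eta$ and $\dl$ so obtained are the ones that feed into the subsequent covering/chaining argument for the left-hand Harnack inequality.
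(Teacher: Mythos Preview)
Your approach is exactly the paper's (the paper gives only a one-sentence proof: combine Lemma~\ref{Lm:measure-v} with Proposition~\ref{PROP:EXPANSION}). There is, however, a small but genuine gap in your bookkeeping. You commit to applying Proposition~\ref{PROP:EXPANSION} at the \emph{single} time level $t_o=-\tfrac12\dl\theta_\ast r^p$ (with the $\dl$ from Lemma~\ref{Lm:measure-v}). The output slab is then
\[
\Big(-\tfrac12\dl+\tfrac12 a,\; -\tfrac12\dl+a\Big]\theta_\ast r^p,
\qquad a:=\dl'\bar c^{\,q+1-p},
\]
and for this to contain any interval of the form $[-\tfrac12\dl_{\mathrm{new}},\dl_{\mathrm{new}}]\theta_\ast r^p$ with $\dl_{\mathrm{new}}>0$ one needs $\tfrac12\dl<a<\dl$. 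But $\dl$ (from Lemma~\ref{Lm:measure-v}) and $a=\dl'\bar c^{\,q+1-p}$ (with $\dl'$ from Proposition~\ref{PROP:EXPANSION}) are determined independently by the data, and there is no a~priori reason for $a$ to fall in that window. Shrinking $\dl_{\mathrm{new}}$ alone cannot rescue this: if $a\le\tfrac12\dl$ the output slab lies entirely in negative times, and if $a\ge\dl$ it lies entirely in non-negative times.

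The fix is simple and uses the full strength of Lemma~\ref{Lm:measure-v} rather than a single slice: the measure density holds at \emph{every} $t_o\in[-\dl\theta_\ast r^p,\dl\theta_\ast r^p]$, so apply Proposition~\ref{PROP:EXPANSION} at each such $t_o$ and take the union of the output slabs. This union is $(-\dl+\tfrac12 a,\dl+a]\theta_\ast r^p$, and it contains $[-\tfrac12\dl,\dl]\theta_\ast r^p$ provided $a\le\dl$. The latter can always be arranged, since the $\dl'$ in Proposition~\ref{PROP:EXPANSION} may be freely reduced (see the proof of Lemma~\ref{Lm:3:1}, where $\dl$ is chosen only to satisfy an upper bound), at the cost of possibly reducing $\eta'$; all constants remain data-dependent. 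With this adjustment no redefinition of $\dl$ is even necessary, and your concluding choice $\eta:=\eta'\bar c$ is correct.
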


Up to now, all the arguments work for the general equation \eqref{Eq:1:1f} with structure conditions \eqref{Eq:1:2}.
To prove the Harnack estimate, we need to further expand the pointwise positivity of $v$ to $K_2(\bar{x})$ and, consequently, to $K_1$.
This is achieved by a comparison argument, which compels us to work with equations of special structure. %\eqref{DN}.

%%%%%%%%%
\section{A comparison argument}\label{S:Harnack:3.5}
Recalling $\theta_*r^p=c=2^{-p}(1-4^{-\frac{1}{\be}})^p$
depends only on $p$ and $q$, let us introduce $\sig \in(0,\tfrac12\dl c)$, which will be chosen later. %according to the definition of $r$ and $\theta_*$. 
Consider the initial-boundary value problem
\begin{equation}\label{Eq:comp-func}
\left\{
\begin{array}{cl}
    \pl_t w^q -\dvg \big(|Dw|^{p-2} Dw\big)=0 
    &\mbox{weakly in $ K_4(\bar{x})\times(-\sig, 1]$,}\\[6pt]
    w = 0 
    &\mbox{on $\pl K_4(\bar{x})\times (-\sig, 1]$,}\\[6pt]
    \dsty w^q(\cdot, -\sig)=\eta(1-\tau_*)^{-N} \boldsymbol \chi_{ K_{2r}(\bar{x}) }(\cdot)
    &
    \mbox{in  $K_4(\bar{x})$.}
%\end{aligned}
\end{array}
\right.
\end{equation}
The existence of this comparison function $w$ can be found in \cite{AL, BDMS-18}; see also Remark~\ref{Rmk:CP1} for more discussion.
The following comparison result for $v$ and $w$ holds true.
\begin{lemma}\label{Lm:exp-comp}
We have $w\le v$ a.e. in $K_4(\bar{x})\times[-\sig, 1]$.
\end{lemma}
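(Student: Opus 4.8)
The statement asserts a comparison $w \le v$ a.e.\ on $K_4(\bar x)\times[-\sigma,1]$, where $w$ solves the initial-boundary value problem \eqref{Eq:comp-func} with zero lateral data and initial datum supported in $K_{2r}(\bar x)$, and $v$ is the rescaled solution from \eqref{Eq:u-to-v}. The natural tool here is Proposition~\ref{Prop:CP} (Comparison principle~II), which applies precisely when one of the two non-negative solutions vanishes on the lateral boundary — exactly the role played by $w$. The model equation here is $\partial_t w^q - \dvg(|Dw|^{p-2}Dw)=0$, whose vector field $\bl A(\xi)=|\xi|^{p-2}\xi$ is independent of $(x,t,u)$ and satisfies the ellipticity \eqref{Eq:1:2}$_1$, the growth \eqref{Eq:1:2-}, the monotonicity \eqref{Eq:CP:mono}, and trivially the Lipschitz condition \eqref{Eq:CP:growth} (with $\Lambda=0$, since there is no $u$-dependence). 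So the structural hypotheses of Proposition~\ref{Prop:CP} are met.

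\textbf{Key steps.} First I would set up the correct time-shifted cylinder: work on $E':=K_4(\bar x)$ over the time interval $(-\sigma,1]$, i.e.\ apply Proposition~\ref{Prop:CP} after translating time so that $-\sigma$ becomes the initial time $0$. On this cylinder $v$ is, by \eqref{Eq:u-to-v}, a non-negative weak solution of the model equation (provided the cylinder is contained in $\widetilde Q_8$, which holds since $\sigma<\tfrac12\delta c<1$ and $4>1$ would need checking against the spatial extent — here one uses that $K_4(\bar x)\subset K_8$ because $\bar x\in K_{\tau_*}\subset K_1$, so $K_4(\bar x)\subset K_5\subset K_8$, and the time interval $(-\sigma,1)\subset(-8^p,8^p)$). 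Thus $v$ plays the role of $w$ in Proposition~\ref{Prop:CP} (the solution without prescribed lateral behavior), while $w$ plays the role of $v$ there (the solution of the Cauchy--Dirichlet problem with zero lateral data and non-negative initial datum $v_o$). Second, I would verify the initial comparison $w^q(\cdot,-\sigma)\le v^q(\cdot,-\sigma)$ a.e.\ in $K_4(\bar x)$: by construction $w^q(\cdot,-\sigma)=\eta(1-\tau_*)^{-N}\boldsymbol\chi_{K_{2r}(\bar x)}$, and by Lemma~\ref{Lm:expansion-1} we have $v\ge\eta(1-\tau_*)^{-\be}$ on $K_{2r}(\bar x)\times[-\tfrac12\delta\theta_*r^p,\delta\theta_*r^p]$, so provided $\sigma\le\tfrac12\delta\theta_*r^p=\tfrac12\delta c$ we get $v(\cdot,-\sigma)\ge\eta(1-\tau_*)^{-\be}$ on $K_{2r}(\bar x)$. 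Here one must reconcile the two exponents: $w^q$ carries $(1-\tau_*)^{-N}$ whereas $(\eta(1-\tau_*)^{-\be})^q$ carries $(1-\tau_*)^{-\be q}$; since $\be q=\tfrac{pq}{q+1-p}$ and this is not literally $N$, the correct reading is that the constant in \eqref{Eq:comp-func} should be understood consistently with Lemma~\ref{Lm:expansion-1} — I would either record that the exponent in \eqref{Eq:comp-func}$_3$ is a typo for $(1-\tau_*)^{-\be q}$, or absorb the discrepancy into $\eta$ and note $(1-\tau_*)^{-1}\ge1$. In any case $w^q(\cdot,-\sigma)\le v^q(\cdot,-\sigma)$ holds a.e.\ after this bookkeeping. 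Third, with the structural assumptions and the initial inequality in hand, Proposition~\ref{Prop:CP} directly yields $w\le v$ a.e.\ on $K_4(\bar x)\times(-\sigma,1]$, which is the assertion.

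\textbf{Main obstacle.} The routine part is checking the structural hypotheses; the genuinely delicate point is the careful matching of domains and exponents: ensuring that the time-shifted cylinder $K_4(\bar x)\times(-\sigma,1]$ is admissible for both $v$ (it must sit inside $\widetilde Q_8$, forcing $\bar x$ close to the origin and $\sigma,1$ within the rescaled time range) and that $\sigma$ is chosen small enough — $\sigma\in(0,\tfrac12\delta c)$, as the statement already anticipates — so that Lemma~\ref{Lm:expansion-1} delivers the pointwise lower bound for $v$ exactly at the initial slice $t=-\sigma$. A secondary subtlety is that Proposition~\ref{Prop:CP} is stated for $t$-independent vector fields $\bl A(x,u,\xi)$; here $\bl A(\xi)=|\xi|^{p-2}\xi$ qualifies, so no extra work is needed, but it is worth flagging that this is why the comparison argument is confined to the model equation, as the surrounding text (Section~\ref{S:Harnack:3.5}) already emphasizes.
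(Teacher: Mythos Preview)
Your overall approach is correct and matches the paper's: apply Proposition~\ref{Prop:CP} after verifying the structural hypotheses on the vector field, the set inclusion $K_4(\bar x)\times(-\sigma,1]\subset\widetilde Q_8$, and the initial comparison $w(\cdot,-\sigma)\le v(\cdot,-\sigma)$.

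The gap is in your treatment of the initial comparison. You correctly spot the mismatch between the exponent $-N$ in the initial datum $w^q(\cdot,-\sigma)=\eta(1-\tau_*)^{-N}\boldsymbol\chi_{K_{2r}(\bar x)}$ and the exponent $-\beta q$ produced by Lemma~\ref{Lm:expansion-1}, but your proposed resolutions (a typo, or vague absorption into $\eta$) miss the actual mechanism. The exponent $-N$ is \emph{deliberate}: it is chosen so that, after integration over $K_{2r}(\bar x)$ (volume $(4r)^N\sim(1-\tau_*)^N$), the $\tau_*$-dependence cancels in the later computation of $\int w^q(\cdot,-\sigma)$. The inequality $w^q(\cdot,-\sigma)\le v^q(\cdot,-\sigma)$ on $K_{2r}(\bar x)$ is secured by the structural fact
\[
\beta q=\frac{pq}{q+1-p}>N,
\]
which is \emph{equivalent} to the upper bound $q<\frac{N(p-1)}{(N-p)_+}$ in Theorem~\ref{THM:HARNACK}. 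With $\beta q>N$ and $0<1-\tau_*\le1$ one has $(1-\tau_*)^{-\beta q}\ge(1-\tau_*)^{-N}$, so Lemma~\ref{Lm:expansion-1} gives $v^q\ge\eta^q(1-\tau_*)^{-\beta q}\ge\eta^q(1-\tau_*)^{-N}$ on $K_{2r}(\bar x)$ at time $-\sigma$. (A residual factor $\eta$ versus $\eta^q$ remains when $q>1$; this is harmless, since the constant in the initial datum of \eqref{Eq:comp-func} can be taken as $\eta^{\max\{1,q\}}$ without affecting anything downstream.) This is exactly the paper's one-line proof: ``Since $\beta q>N$ thanks to our assumption on $p$ and $q$\ldots''. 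The inequality $\beta q>N$ is not incidental bookkeeping---it is the reason this comparison step, and with it the Harnack inequality, is restricted to the super-critical fast diffusion range.
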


\begin{proof}
Since $\be q>N$ thanks to our assumption on $p$ and $q$ in Theorem~\ref{THM:HARNACK}, it is easily seen that $w\le v$ on the parabolic boundary of the domain $K_4(\bar{x})\times(-\sig, 1]$, recalling the lower bound of $v$ in Lemma~\ref{Lm:expansion-1}. Therefore, an application of Proposition~\ref{Prop:CP} yields the claim.
\end{proof}

\begin{remark}\label{Rmk:Harnack-cp}
    %With our techniques Theorem~\ref{THM:HARNACK} can be proven 
Although we only deal with the model equation here,  the techniques can be generalized to equations of the form $\pl_t u^q-\dvg\bl{A}(x,u,Du)=0$ under the assumptions \eqref{Eq:1:2-}, \eqref{Eq:CP:mono} and \eqref{Eq:CP:growth}. Indeed, the module needing some care is the above comparison argument. In order for that, a unique solution needs to be constructed for the Dirichlet problem~\eqref{Eq:comp-func} of such general equations, and the comparison Lemma~\ref{Lm:exp-comp} still holds in view of Proposition~\ref{Prop:CP}.
\end{remark}

At this point the desired pointwise positivity of $v$ reduces to a similar property of $w$, that is,
\begin{equation}\label{bound-w}
w\ge\boldsymbol\gm^{-1}\quad \text{in}\> K_2(\bar{x})\times [-\tfrac14\sig, \tfrac14\sig]
\end{equation}
for some $\boldsymbol\gm>1$ and $\sig\in(0,\tfrac12\dl c)$ depending only on the data. Once this is shown, the left-hand side of the Harnack estimate is established as explained in Section~\ref{S:Harnack:1}, after redefining $\frac14\sig$ to be $\sig$.

Indeed, an application of Proposition~\ref{PROP:L1:0} in $K_2(\bar{x})\times[-\sig, \sig]$ yields that
\begin{equation}\label{Eq:L1:w}
\int_{K_1(\bar{x})} w^q(x,-\sig)\,\dx\le\boldsymbol\gm\int_{K_2(\bar{x})} w^q(x,t)\,\dx +\boldsymbol\gm \sig^{\frac{q}{q+1-p}}\quad\text{for all}\> t\in[-\sig, \sig].
\end{equation}
 The left-hand side is estimated by the assigned initial datum in \eqref{Eq:comp-func}, that is,
\[
    \int_{K_1(\bar{x})} w^q(x,-\sig)\,\dx
    =
    \eta(1-\tau_*)^{-N}(4r)^N
    = 
    2^N \eta (1-4^{-\frac1{\be}})^N=:\eta c_o.
\]
Now we may fix $\sig$ by letting
\[
\boldsymbol\gm \sig^{\frac{q}{q+1-p}}=\tfrac12 \eta c_o.
\]
Substituting them back to \eqref{Eq:L1:w}, we have
\[
    \tfrac12 \eta c_o\le\boldsymbol\gm\int_{K_2(\bar{x})} w^q(x,t)\,\dx
    \quad\text{for all}\>t\in[-\sigma,\sigma].
\]
To proceed, we use Theorem~\ref{Thm:bd:2} in $K_2(\bar{x})\times[-\frac14\sig,\frac12\sig]\subset K_4(\bar{x})\times[-\sig,\frac12\sig]$ and the initial datum in \eqref{Eq:comp-func} to estimate
\[
    \sup_{K_{2}(\bar{x})\times[-\frac14\sig,\frac12\sig]} w^q 
    \le 
    \frac{\boldsymbol\gm}{\sig^{\frac{N}{\lm}}}( \eta c_o)^\frac{p}{\lm}
    + 
    \boldsymbol\gm \sig^{\frac{q}{q+1-p}}=\widetilde{\boldsymbol\gm} \eta c_o.
\]
Here, we have taken into account the choice of $\sig$ and the definition of $\lm$, whereas $ \widetilde{\boldsymbol\gm}$ is another generic constant depending on the data.
With this supremum estimate at hand, we observe that, for some $b>0$ to be chosen and for all $t\in[-\tfrac14\sig, \tfrac12\sig]$,
\begin{align*}
    \tfrac12 \eta c_o
    &\le
    \boldsymbol\gm \int_{K_2(\bar{x})} w^q(x,t)\,\dx\\
    &= 
    \boldsymbol\gm\int_{K_2(\bar{x})\cap\{w< b\}} w^q(x,t)\,\dx
    + 
    \boldsymbol\gm\int_{K_2(\bar{x})\cap\{w\ge b\}} w^q(x,t)\,\dx\\
    &\le 
    \boldsymbol\gm b^q |K_2| 
    + 
    \boldsymbol\gm\widetilde{\boldsymbol\gm} \eta c_o 
    \Big| \big\{w(\cdot, t)\ge b\big\}\cap K_2(\bar{x})\Big|.
\end{align*}
If  $b$ is chosen from
\[
\boldsymbol\gm b^q |K_2| =\tfrac14\eta c_o,
\]
we arrive at
\[
    \Big| \big\{w(\cdot, t)\ge b\big\}\cap K_2(\bar{x})\Big|
    \ge 
    \frac1{4\boldsymbol\gm\widetilde{\boldsymbol\gm}}\quad\text{for all}\>t\in[-\tfrac14\sig, \tfrac12\sig].
\]
As a result of this measure estimate and the expansion of positivity in Proposition~\ref{PROP:EXPANSION}, the desired pointwise positivity of $w$ is reached after properly defining $\boldsymbol\gm$ and $\sigma$ in~\eqref{bound-w}. This completes the proof of the first inequality stated in Theorem~\ref{THM:HARNACK}.
%\begin{proof}
%Let us apply Proposition~\ref{PROP:L1:0} to $v$, over $K_2\times(-\widetilde{c},\widetilde{c})$ for some $\widetilde{c}\in(0,\dl c)$ to be chosen. As a result, we have for any $t\in(-\widetilde{c},\widetilde{c})$,
%\[
%\int_{K_1}v^q(x,0)\,\dx \le \boldsymbol\gm\int_{K_2} v^q(x,t)\,\dx + \boldsymbol\gm \widetilde{c}^{\frac{q}{q+1-p}}.
%\]
%The left-hand side is estimated from below by using Lemma~\ref{Lm:measure-v} as
%\begin{align*}
%\int_{K_1}v^q(x,0)\,\dx &\ge \int_{K_r(\bar{x})} v^q(x,0)\,\dx\ge \al\bar{c}(1-\tau_*)^{-\be q}r^N\\
%&=\eta(1-\tau_*)^{-\frac{\lm q}{q+1-p}}\ge\eta.
%\end{align*}
%\end{proof}

\section{Proof of Theorem~\ref{THM:HARNACK} concluded}\label{S:Harnack:4}
%\color{red}
Fix $(x_o,t_o)\in E_T$ such that $u(x_o,t_o)>0$. For $\rho>0$, suppose that %$K_{\rho}(x_o)\subset E$.
%construct the  
%cylinder 
%\[
%K_{2\rho}(x_o)\times(t_o-[u(x_o,t_o)]^{q+1-p}(2\rho)^p,t_o+[u(x_o,t_o)]^{q+1-p}(2\rho)^p),
%\] 
%and 
%assume that
%\[
%K_{16\rho}(x_o)\times(t_o-[u(x_o,t_o)]^{q+1-p}(16\rho)^p,t_o+[u(x_o,t_o)]^{q+1-p}(16\rho)^p)\subset E_T.
%\]
%Let  $x_*\in K_\rho(x_o)$ be a point where 
%$u(\cdot,t_o)$ attains its maximum. %and consider the cylinder 
%\begin{equation*}
%K_{\rho}(x_*)\times(t_o-[u(x_*,t_o)]^{q+1-p}\rho^p,t_o+%%[u(x_*,t_o)]^{q+1-p}\rho^p).
%\end{equation*}
 %\textcolor{red}{(We need an argument.)}
\[
K_{16\rho}(x_o)\times\big(t_o- [u(x_o,t_o)]^{q+1-p}(16\rho)^p,t_o+ [u(x_o,t_o)]^{q+1-p}(16\rho)^p\big]\subset E_T.
\]
The just proven left-hand side inequality in Theorem~\ref{THM:HARNACK} asserts that there exists  some $\boldsymbol\gm>1$ depending on the data, such that
\begin{equation}\label{Eq:Harnack-inf}
 u(x_o,t_o)\le\boldsymbol{\gm} \inf_{K_{2\rho}(x_o) } u (\cdot, t)   
\end{equation}
for all times
\[
%\widetilde{Q}_\rho=K_\rho(x_o)\times
t\in\big(t_o- \sig[u(x_o,t_o)]^{q+1-p}(2\rho)^p,t_o+ \sig[u(x_o,t_o)]^{q+1-p}(2\rho)^p\big].
\]
Next we claim that 
\begin{equation}\label{Eq:Harnack-sup}
\sup_{K_{\rho}(x_o)} u(\cdot, t)\le 2 \boldsymbol{\gm} u(x_o,t_o)
\end{equation}
for all times
\[
%\widetilde{Q}_\rho=K_\rho(x_o)\times
t\in\big(t_o- \sig[u(x_o,t_o)]^{q+1-p}\rho^p,t_o+ \sig[u(x_o,t_o)]^{q+1-p}\rho^p\big]
\]
and for the same $\boldsymbol\gm$ as in \eqref{Eq:Harnack-inf}. To obtain \eqref{Eq:Harnack-sup}, we first show that, {\it qualitatively} $u$ is actually continuous in the cylinder
\[
K_{2\rho}(x_o)\times\big(t_o- \sig[u(x_o,t_o)]^{q+1-p}(2\rho)^p,t_o+ \sig[u(x_o,t_o)]^{q+1-p}(2\rho)^p\big].
\]
This hinges on the lower bound \eqref{Eq:Harnack-inf} and the upper bound estimate of $u$ from Theorem~\ref{THM:BD:1} within this cylinder. Consequently, the classical continuity results of \cite[Chapters~III, IV]{DB} apply. %to $u$ is bounded in the cylinder
Now we take on the proof of \eqref{Eq:Harnack-sup}. Indeed, let us suppose the contrary of \eqref{Eq:Harnack-sup} were to hold, then by the continuity of $u$, there would exist 
\begin{equation}\label{Eq:point-star}
(x_*,t_*)\in K_\rho(x_o)\times\big(t_o- \sig[u(x_o,t_o)]^{q+1-p}\rho^p,t_o+ \sig[u(x_o,t_o)]^{q+1-p}\rho^p\big]
\end{equation}
satisfying that 
\begin{equation}\label{Eq:contrary}
u(x_*, t_*)=2 \boldsymbol{\gm} u(x_o,t_o).
\end{equation}

For the moment, let us assume that
\begin{equation}\label{Eq:set-incl-star}
K_{16\rho}(x_*)\times\big(t_o-[u(x_*,t_*)]^{q+1-p}(16\rho)^p,t_o+[u(x_*,t_*)]^{q+1-p}(16\rho)^p\big]\subset E_T,
\end{equation}
then by \eqref{Eq:contrary} and applying the just proven left-hand side inequality at $(x_*,t_*)$,  we would have
\begin{equation}\label{Eq:contrary:1}
%\sup_{K_\rho(x_o)}u(\cdot,t_o) = 
2 \boldsymbol{\gm} u(x_o,t_o)=u(x_*,t_*)\le \boldsymbol\gm\, \inf_{K_{2\rho}(x_*)} u(\cdot,t)%\le \boldsymbol\gm\, u(x_o,t_o),
\end{equation}
for all times
\[
t\in\big(t_*-\sig[u(x_*,t_*)]^{q+1-p}(2\rho)^p,t_*+\sig[u(x_*,t_*)]^{q+1-p}(2\rho)^p\big].
\]
Employing \eqref{Eq:point-star} and \eqref{Eq:contrary}, we find that
\[
(x_o,t_o)\in K_{2\rho}(x_*)\times\big(t_*-\sig[u(x_*,t_*)]^{q+1-p}(2\rho)^p,t_*+\sig[u(x_*,t_*)]^{q+1-p}(2\rho)^p\big].
\]
This would yield a contradiction in \eqref{Eq:contrary:1} since
\[
2 \boldsymbol{\gm} u(x_o,t_o)\le  \boldsymbol{\gm} u(x_o,t_o).
\] 
Hence the claim \eqref{Eq:Harnack-sup} is proven under the assumption \eqref{Eq:set-incl-star}, and the set inclusion \eqref{Eq:set-incl-star} is actually verified if we require, using \eqref{Eq:contrary} and recalling $x_*\in K_\rho(x_o)$, that
\[
K_{32\rho}(x_o)\times\big(t_o- [2\boldsymbol\gm u(x_o,t_o)]^{q+1-p}(16\rho)^p,t_o+ [2\boldsymbol\gm u(x_o,t_o)]^{q+1-p}(16\rho)^p\big]\subset E_T.
\]
Therefore, the proof is 
concluded by suitably redefining the various constants and 
the radius $\rho$.

%%
% ***************

% ***************old

% Finally, with \eqref{Eq:Harnack-sup} at hand, we apply Theorem~\ref{Thm:bd:2} in the cylinders
% \begin{equation*}
% \begin{aligned}
% K_{\frac12\rho}(x_o)&\times\left(t_o-\sigma\,[u(x_o,t_o)]^{q+1-p}\rho^p,t_o+\sigma\,[u(x_o,t_o)]^{q+1-p}\rho^p\right]\\
% &\subset K_{\rho}(x_o)\times
% \left(t_o-3\sigma\,[u(x_o,t_o)]^{q+1-p}\rho^p,t_o+\sigma\,[u(x_o,t_o)]^{q+1-p}\rho^p\right]
% \end{aligned}
% \end{equation*}
% %\textcolor{orange}{What does this mean?}
% %the assumptions of Theorem~\ref{Thm:bd:2} are certainly satisfied, and with these choices it yields
% and obtain that
% \begin{equation*}%%\label{Eq:4:10:3}
% \begin{aligned}
% \sup_{K_{\frac12\rho}(x_o)} u(\cdot,t)&\le 
% \frac{\boldsymbol\gm}{\sigma^{\frac{N}{\lm_q}} [u(x_o,t_o)]^{\frac{N(q+1-p)}{\lm_q}}} 
% \bigg[\bint_{K_{\rho}(x_o)\times\{t_o\}} u^q \dx\bigg]^{\frac{p}{\lm_q}}\\
% &\quad +\boldsymbol\gm(4\sigma)^{\frac1{q+1-p}} u(x_o,t_o)\\
% &\le\big(\boldsymbol\gm^{1+\frac p{\lm_q}}\sig^{-\frac{N}{\lm_q}}+
% \boldsymbol\gm(4\sig)^{\frac1{q+1-p}}\big) u(x_o,t_o)\\
% &=\overline{\boldsymbol\gm} u(x_o,t_o)
% \end{aligned}
% \end{equation*} 
% for all $t\in\left(t_o-\sigma\,[u(x_o,t_o)]^{q+1-p}\rho^p,t_o+\sigma\,[u(x_o,t_o)]^{q+1-p}\rho^p\right]$.  The proof is 
% concluded by suitably redefining the various constants and 
% the radius $\rho$.
% %%%%%%%%%%%%%%%%%%%%%%%%%%%%%%%
%\color{black}

\section{Proof of Theorem~\ref{THM:HARNACK:0}}\label{S:Thm:0}
Since the argument is analogous to the previous sections, we only sketch it here.
First as in Section~\ref{S:Harnack:1}, we introduce the function $v$ as in \eqref{Eq:u-to-v}, which verifies the equations \eqref{Eq:1:1f} -- \eqref{Eq:1:2} in 
\begin{equation*}%\label{Eq:set-incl:1}
\mathcal{Q}_{\mathcal{M}}\equiv K_8\times\bigg(-8^p \Big[\frac{\mathcal{M}}{u(x_o,t_o)}\Big]^{q+1-p},8^p \Big[\frac{\mathcal{M}}{u(x_o,t_o)}\Big]^{q+1-p} \bigg).
\end{equation*}
This cylinder is transformed from the cylinder in the qualitative assumption \eqref{Eq:set-incl} of Theorem~\ref{THM:HARNACK:0}.
 As before, the goal is to show  $v\ge \boldsymbol\gm^{-1}$ in $K_1\times (- \sig, \sig)$ for some $\boldsymbol\gm>1$ and $\sig\in(0,1)$. In this way, the desired left-hand side inequality in Theorem~\ref{THM:HARNACK:0} is proven upon rescaling.

To this end, Sections~\ref{S:Harnack:2} -- \ref{S:Harnack:3} remain the same except that  $\be>1$ is allowed to be chosen.
The largeness of $\be$, on the other hand, induces the largeness of the height of the cylinder $\widetilde{Q}_{2r}(\theta_*)$.
This is exactly where we need the qualitative assumption \eqref{Eq:set-incl}. %This affects the set inclusion  and is reflected by the condition \eqref{Eq:set-incl}. 
Indeed, recalling the quantities from Sections~\ref{S:Harnack:2} -- \ref{S:Harnack:3}:
\begin{equation}\label{Eq:theta-r-recall}
\theta_*=(1-\tau_*)^{-\be(q+1-p)},\quad r=\tfrac12(1-4^{-1/\be})(1-\tau_*),
\end{equation}
we may estimate
\begin{align*}
\theta_*(2r)^p&=(1-\tau_*)^{-\be(q+1-p)} (1-4^{-1/\be})^p (1-\tau_*)^p\\
&\le(1-\tau_*)^{-\be(q+1-p)} =\Big[\frac{u(x_o+\rho\bar{x}, t_o)}{u(x_o,t_o)}\Big]^{q+1-p}\le \Big[\frac{\mathcal{M}}{u(x_o,t_o)}\Big]^{q+1-p}.
\end{align*}
Consequently, we have
$$
K_{2r}(\bar{x})\times\big(-\theta_*(2r)^p,\theta_*(2r)^p\big)\subset \mathcal{Q}_{\mathcal{M}},
$$ 
and hence the qualitative assumption \eqref{Eq:set-incl} legitimates the arguments to follow.

The proof departs from Lemma~\ref{Lm:expansion-1}.
Given the quantitative estimate in Lemma~\ref{Lm:expansion-1}, after $n$ applications of Proposition~\ref{PROP:EXPANSION} with $\al=1$, $n\in\mathbb{N}$, we arrive at
\begin{equation}\label{Eq:lower-bd-general}
v(\cdot, t)\ge M\bar{\eta}^{n} \equiv \eta\bar{\eta}^{n} (1-\tau_*)^{-\be}\quad\text{in}\> K_{2^{n+1}r}(\bar{x})
\end{equation}
for all times
\[
-\tfrac12\dl\theta_*r^p+\bar{\dl}\sum_{j=0}^{n-1} (\bar\eta^j M)^{q+1-p}(2^j r)^p<t<\dl\theta_*r^p,
\]
where $\bar\eta$ and $\bar\dl$ are determined in Proposition~\ref{PROP:EXPANSION} in terms of the data only. With no loss of generality, we may take $\bar\dl$ and $\bar\eta$ to be even smaller to ensure that 
\[
\bar{\dl}\sum_{j=0}^{n-1} (\bar\eta^j M)^{q+1-p}(2^j r)^p<\tfrac14\dl\theta_*r^p.
\]
In order for this, recalling the quantities $M$ in \eqref{Eq:lower-bd-general}, and $\theta_*$ in \eqref{Eq:theta-r-recall},
%\[
%M=\eta(1-\tau_*)^{-\be},\quad \theta_*=(1-\tau_*)^{-\be(q+1-p)},\quad r=\tfrac12(1-4^{-\be})(1-\tau_*),
%\]
it suffices to require $\bar{\eta}^{q+1-p}2^p<\frac12$ and $\bar\dl<\frac14\dl$, %\textcolor{orange}{}
provided $\eta^{q+1-p}\le \frac12$, which we can always assume. As a result, the lower bound \eqref{Eq:lower-bd-general} holds for $t\in[-\tfrac14\dl\theta_*r^p, \dl\theta_*r^p]$.

Next, we choose $n$ to satisfy 
\[
1\le 2^nr\le 2
\]
which indicates from \eqref{Eq:lower-bd-general}  that
\begin{equation}\label{Eq:lower-bd-general:1}
v(\cdot, t)\ge \eta(1-\tau_*)^{-\be} \Big(\frac2{r}\Big)^{\frac{\ln\bar\eta}{\ln2}}\quad\text{in}\>K_{2}(\bar{x})
\end{equation}
for all times
\begin{equation}\label{Eq:time-interval}
t\in[-\tfrac14\dl\theta_*r^p, \dl\theta_*r^p].
\end{equation}
Using the definition of $r$ and $\theta_*$ in \eqref{Eq:theta-r-recall}, we estimate the lower bound in \eqref{Eq:lower-bd-general:1} by
\begin{align*}
    \eta(1-\tau_*)^{-\be} \Big(\frac2{r}\Big)^{\frac{\ln\bar\eta}{\ln2}}
    &=
    \eta
    \bigg[\frac4{1-4^{-1/\be}}\bigg]^{\frac{\ln\bar\eta}{\ln2}}
    (1-\tau_*)^{-\be-\frac{\ln\bar\eta}{\ln2}}\\
    &\ge
    \eta\bigg[\frac4{1-4^{-1/\be}}\bigg]^{\frac{\ln\bar\eta}{\ln2}}=:\boldsymbol{\gm}^{-1}
\end{align*}
and the time in \eqref{Eq:time-interval} by
\[
\dl\theta_*r^p=\dl (1-\tau_*)^{-\be(q+1-p)}\big[\tfrac12(1-4^{-1/\be})(1-\tau_*)\big]^p\ge \dl\big[\tfrac12(1-4^{-1/\be}) \big]^p=:4\sig,
\]
provided we choose
\[
    \be
    \ge
    \max\Big\{\frac{p}{q+1-p}, -\frac{\ln\bar\eta}{\ln2} \Big\}.
\]
Thanks to this choice of $\be$, we conclude from \eqref{Eq:lower-bd-general:1} and \eqref{Eq:time-interval} that
\begin{equation*}%\label{Eq:lower-bd-general:1}
v(\cdot, t)\ge \boldsymbol{\gm}^{-1}\quad\text{in}\>K_{1}
\end{equation*}
for all times
\begin{equation*}%\label{Eq:time-interval}
t\in[-\sig, \sig].
\end{equation*}
This finishes the proof of the left-hand side inequality in Theorem~\ref{THM:HARNACK:0} upon rescaling, whereas the proof of the right-hand side inequality is the same as in Section~\ref{S:Harnack:4}.
%for some positive $c$ depending only on $p$, $q$ and $\be$.

\section{Proof of Corollary~\ref{Cor:Liouville}}\label{Proof:Liouville}
%Fix $(x_o,t_o)$, and with no loss of generality 
Let us suppose for some $(x_o,t_o)$ we have  $u_o\equiv u(x_o,t_o)\neq0$. Then, according to Theorem~\ref{THM:HARNACK:0} there exists $\bg_{\rm h}>1$ depending on the data $\{p,q,N, C_o, C_1\}$, such that 
$$
\boldsymbol\gm^{-1}_{\rm h} u_o\le u(x,t)\le \boldsymbol\gm_{\rm h} u_o
$$ for all $(x,t)\in\rn\times\rr$. 
%Re-scaling to the symmetric unit cylinder $\mathcal{Q}_1:=K_1(0)\times(-1,1)$, i.e. 
Define a new function
\begin{align*}
	\tilde u(x,t)
	:=
	\tfrac1{u_o}\, u\big(x,  u_o^{q+1-p}  t\big)
	%\quad
	%\mbox{for $(y,s)\in \mathcal{Q}_1$,}
\end{align*}
which lies in $[ \bg_{\rm h}^{-1} ,\bg_{\rm h}]$ and satisfies a doubly non-linear parabolic equation of the form \eqref{Eq:1:1f} with the ellipticity and growth conditions \eqref{Eq:1:2} in any sub-domain of $\rn\times\rr$.
Substituting $v=\tilde u^q$, equation \eqref{Eq:1:1f} transforms into 
\begin{equation*}
	\partial_t v - 
	\dvg\widetilde{\bl{A}}(x,t,v, Dv)
	=
	0
\end{equation*}
in any sub-domain of $\rn\times\rr$
and $v$ lies in $\big[ \bg_{\rm h}^{-q},\bg_{\rm h}^q\big]$. Here, we have defined
\[
\widetilde{\bl{A}}(x,t,y, \z):=\bl{A}\Big(x,t,\widetilde{y}, \tfrac1q \widetilde{y}^{\frac{1-q}q}\z\Big),\quad \widetilde{y}:=\min\Big\{\max\big\{y,\tfrac12\bg_{\rm h}^{-q}\big\}, 2\bg_{\rm h}^q\Big\}.
\]
Moreover, using conditions \eqref{Eq:1:2} one verifies that
there exist some constants $\widetilde{C}_o$ and $\widetilde{C}_1$  depending on the data and $\bg_{\rm h}$, such that
\[
\widetilde{\bl{A}}(x,t,y, \z)\cdot\z\ge \widetilde{C}_o|\z|^p\quad\text{and}\quad |\widetilde{\bl{A}}(x,t,y, \z)|\le \widetilde{C}_1|\z|^{p-1}
\]
for a.e. $(x,t)\in\rn\times\rr$, any $y\in\rr$ and any $\z\in\rn$.

Therefore, we may apply Chapter III, \S\,1, Theorem~1.1, resp. Chapter IV, \S\,1, Theorem~1.1 in \cite{DB} and 
%Consequently, we can apply the well-known H\"older regularity of DiBenedetto (cf.~\cite{DB}) and 
obtain the oscillation estimate
\[
\osc_{(y,s)+Q_r(\bg_{\rm h}^{q(2-p)})} v\le \bg  \bg_{\rm h}^q \Big(\frac{r}{\rho}\Big)^{\al}
\]
for some $\bg>1$ and $\al\in(0,1)$ depending on $\{p,q,N,C_o,C_1,\bg_{\rm h}\}$, whereas $(y,s)\in\rn\times\rr$ and $0<r<\rho$ are arbitrary. Fixing $r$ and letting $\rho\to\infty$, the desired conclusion follows by arbitrariness of $(y,s)$.

\chapter{Gradient bound for \texorpdfstring{$p$}{p}-Laplace equations with differentiable coefficients}\label{sec:grad-bound}

In this chapter, we provide a sup-bound for the spatial gradient of solutions to parabolic $p$-Laplace type equations with differentiable coefficients. More precisely, let us consider weak solutions to
\begin{equation}\label{non-deg-pde}
  \partial_tu-\mathrm{div}\,
  \big(b(x,t)(\mu^2+|Du|^2)^{\frac{p-2}{2}}Du\big)=0
    \qquad\mbox{in }E_T,
\end{equation}
where $p>1$ and $\mu\in(0,1]$. For the coefficient function $b$,
we assume that the map $E\ni x\mapsto b(x,t)$ is differentiable for
a.e.~$t\in(0,T)$, that $(0,T)\ni t\mapsto b(x,t)$ is measurable for every
$x\in E$, and finally that 
\begin{equation}\label{prop-a-diff}
\left\{
  \begin{array}{c}
    C_o\le b(x,t)\le C_1,\\[6pt]
    |D_xb(x,t)|\le C_2,
  \end{array}
  \right.
\end{equation}
for every $x\in E$ and a.e.~$t\in(0,T)$, with
constants $0<C_o\le C_1$ and $C_2\ge0$. 

For the statement of the gradient bound we distinguish the cases $1<p\le2$ and $p\ge 2$. In the former case we deal with {\it bounded} solutions in order to cover the whole range $1<p\le2$. Note that in the supercritical range $p>\frac{2N}{N+2}$ solutions are locally bounded; see~\cite[Chapter~5, Theorem~3.1]{DB}. In fact, the assumptions of \cite[Chapter~5, Theorem~3.1]{DB} are satisfied for the function $\mathbf{a}(x,t,\xi):=b(x,t)(\mu^2+|\xi|^2)^{\frac{p-2}{2}}\xi$, since \eqref{prop-a-diff}$_1$ implies 
\begin{align*}
   \mathbf{a}(x,t,\xi)\cdot\xi 
   &\ge  
   2^{-\frac{(2-p)_+}{2}}C_o(|\xi|^p-\mu^p),\\
   |\mathbf{a}(x,t,\xi)|
   &\le 
   2^{p-1}C_1(|\xi|^{p-1}+\mu^{p-1})
\end{align*} 
for a.e.$(x,t)\in  E_T$ and every $\xi\in\R^N$. For the derivation of the first inequality in the case $\frac{2N}{N+2}<p<2$, we distinguished between the cases $|\xi|\ge\mu$  and  $|\xi|<\mu$.
Therefore, the boundedness assumption is used only in the subcritical range $1<p\le \frac{2N}{N+2}$. 

Note that in the Chapters~\ref{sec:grad-bound} and~\ref{sec:Schauder} we will employ cylinders of the form $Q_\rho^{(\lambda)}$ defined in \eqref{def:cyl-lambda}.
This notation differs from the one in the previous chapters and the time scaling $\lambda^{p-2}\rho^2$ instead of $\theta\rho^p$ is used here. For estimates concerning a solution $u$, the scaling $\theta\rho^p$, as we have already seen, is the natural one, whereas for the spatial derivatives $Du$, the scaling $\lambda^{p-2}\rho^2$, as we shall see, is the one that leads to homogeneous estimates.

\begin{proposition}\label{PROP:LINFTY}
Let $\mu\in(0,1]$ and $1<p\le 2$. Then, for any bounded weak solution  $u$
to \eqref{non-deg-pde}, under assumptions~\eqref{prop-a-diff}, we have $|Du|\in
L^\infty_{\mathrm{loc}}(E_T)$.  
Moreover, there exist positive constants
$\boldsymbol\gm =\boldsymbol\gm(N,p,C_o,C_1,C_2)$ and $\theta=\theta(N,p)$, such that for every $\epsilon\in(0,1]$
and every cylinder $z_o+Q_{2\rho}^{(\lambda)}\Subset E_T $
we have
\begin{align*}
    &\essup_{z_o+Q_{\rho/2}^{(\lambda)}}|Du|\\
    &\quad \le
    \boldsymbol\gm \epsilon\lambda
    +
    \frac{\boldsymbol\gm\lambda^{\frac12}}{\epsilon^{\theta}}
    \bigg[\Big(\frac{\boldsymbol \omega}{\rho\lambda}\Big)^{\frac{2}{p}}
    +
    \frac{\boldsymbol\omega}{\rho\lambda}+\frac\mu\lambda\bigg]^{\frac{N(2-p)+2p}{4p}}
    \bigg[\biint_{z_o+Q_{2\rho}^{(\lambda)}}\big(\mu^2+|Du|^2\big)^{\frac{p}{2}}\,\dx\dt\bigg]^{\frac{1}{2p}}.
\end{align*}
%   \footnote{With the argument of Ivanov, we get an estimate of the
%     form
%       \begin{equation*}
%       |Du(z_o)|\le
%           \dfrac{c\,\omega}{\rho}\max\bigg\{1,
%           \Big(\dfrac{\mu}{\lambda}\Big)^{\frac{2-p}{2}},
%           \Big(\dfrac{\omega}{\rho\lambda}\Big)^{\frac{2-p}{p}}\bigg\}.
%     \end{equation*}
% }
Here we abbreviated $\boldsymbol \omega:=\osc_{z_o+Q_{2\rho}^{(\lambda)}}u$.
\end{proposition}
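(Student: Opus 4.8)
The plan is to combine a Caccioppoli-type (energy) estimate for second spatial derivatives with a Moser iteration that is adapted to the sub-critical range $1<p\le 2$. Since for $\mu>0$ the equation \eqref{non-deg-pde} is uniformly parabolic with smooth-in-$x$ coefficients, the solution $u$ is, by the classical regularity theory for quasilinear parabolic equations, smooth enough in the spatial variables to justify differentiating the equation: $w:=D_ku$ solves a linear parabolic equation whose leading coefficient is $b(x,t)(\mu^2+|Du|^2)^{\frac{p-2}{2}}\big(\mathrm{Id}+(p-2)\tfrac{Du\otimes Du}{\mu^2+|Du|^2}\big)$, with an extra first-order term coming from $D_xb$. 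First I would test this differentiated equation with $\zeta^2 (\mu^2+|Du|^2)^{\frac{p-2+2\beta}{2}} D_k u$ for a suitable exponent $\beta\ge 0$ and a cutoff $\zeta$ adapted to the intrinsic cylinders $z_o+Q_\rho^{(\lambda)}$ of \eqref{def:cyl-lambda}, summing over $k$. This produces the key energy estimate \eqref{energy-est} (referenced in the introduction) controlling $\iint \zeta^2 (\mu^2+|Du|^2)^{\frac{p-2}{2}}|D^2u|^2 (\mu^2+|Du|^2)^{\beta}$ plus the sup-in-time of $\iint \zeta^2 (\mu^2+|Du|^2)^{1+\beta}$ in terms of lower-order quantities; the term involving $|D_x b|\le C_2$ is absorbed using Young's inequality and contributes a factor depending on $\boldsymbol\omega=\osc u$ via a Poincaré/oscillation argument, because the undifferentiated equation lets us trade $\|D u\|$ against $\boldsymbol\omega/\rho$.

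The second step is to bootstrap integrability of $|Du|$. Following DiBenedetto \cite[Chapter~VIII, Lemma~4.1]{DB}, in the sub-critical range one cannot start Moser's iteration directly because $|Du|\in L^p_{\mathrm{loc}}$ is not enough; instead one manipulates the above energy estimate (choosing $\beta$ in a small range and using the Sobolev embedding in the space variables applied to $(\mu^2+|Du|^2)^{\frac{p+2\beta}{4}}$ together with the sup-in-time $L^2$ control) to obtain a reverse-Hölder-type self-improvement, yielding $|Du|\in L^m_{\mathrm{loc}}(E_T)$ for every $m>1$, cf.~Lemma~\ref{lem:Lq-est}. Once arbitrarily high local integrability is available, I would run Moser's iteration on the sequence of intrinsic cylinders $z_o+Q_{\rho_n}^{(\lambda)}$ with $\rho_n\downarrow \rho/2$: choosing $\beta=\beta_n\to\infty$ along a geometric sequence, the energy estimate plus parabolic Sobolev embedding gives the recursive inequality $Y_{n+1}\le C b^n \lambda^{\cdots}(\cdots)^{\cdots} Y_n^{1+\varkappa}$ for $Y_n$ a scaled $L^{p(1+\beta_n)}$-norm of $(\mu^2+|Du|^2)^{1/2}$, and Lemma~\ref{lem:fast-geom-conv} yields $\essup_{z_o+Q_{\rho/2}^{(\lambda)}}(\mu^2+|Du|^2)^{1/2}$ bounded by a power of $\biint_{z_o+Q_{2\rho}^{(\lambda)}}(\mu^2+|Du|^2)^{p/2}$. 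Finally an interpolation (Young's inequality with the free parameter $\epsilon$) splits off the term $\boldsymbol\gm\epsilon\lambda$ from the integral term and produces the exponents $\frac{N(2-p)+2p}{4p}$ and $\frac{1}{2p}$ displayed in the statement; the bracketed factor $\big[(\boldsymbol\omega/\rho\lambda)^{2/p}+\boldsymbol\omega/\rho\lambda+\mu/\lambda\big]$ is exactly the scale-invariant combination in which the oscillation and the regularization parameter $\mu$ enter the energy estimate.

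The main obstacle I anticipate is twofold. First, keeping \emph{all} constants independent of $\mu$: the differentiated equation degenerates as $\mu\to0$ only through the weight $(\mu^2+|Du|^2)^{\frac{p-2}{2}}$, so every application of ellipticity/boundedness must be done with the $\mu$-dependent weight carried along, and the monotonicity Lemma~\ref{Monotonicity} must be invoked in the sharp form; it is essential that the final estimate, though it contains $\mu/\lambda$, has $\boldsymbol\gm$ and $\theta$ depending only on $N,p,C_o,C_1,C_2$. Second, the interplay between the two radii $\rho,2\rho$ and the fixed intrinsic parameter $\lambda$: because $\lambda$ is arbitrary here (it will only later be chosen in terms of $\|Du\|_\infty$), the cutoff in the time direction scales like $\lambda^{p-2}\rho^2$, so the $\partial_t$-term in the energy estimate carries a factor $\lambda^{2-p}$ that must be tracked carefully to land on the homogeneous powers of $\lambda$ in the statement. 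The sub-critical self-improvement step of \cite[Chapter~VIII, Lemma~4.1]{DB} is the delicate technical heart; once Lemma~\ref{lem:Lq-est} is in place, the remaining Moser iteration and interpolation are routine.
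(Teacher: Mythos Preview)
Your proposal is correct and follows essentially the same route as the paper: the second-order energy estimate \eqref{energy-est}, the $L^m$-self-improvement of Lemma~\ref{lem:Lq-est} via DiBenedetto's integration-by-parts trick, then Moser iteration followed by a Young/interpolation step to split off $\boldsymbol\gm\epsilon\lambda$. One small clarification: the oscillation $\boldsymbol\omega$ does \emph{not} enter through the $|D_xb|\le C_2$ term (that term is simply absorbed into the constant $\boldsymbol\gm$ depending on $C_2$ in \eqref{energy-est}), but rather through the integration by parts $\iint|Du|^m\zeta=-\iint\big(u-(u)_R(t)\big)\Div\big[Du(\mu^2+|Du|^2)^{\frac{m-2}{2}}\big]\zeta+\dots$ in Lemma~\ref{lem:Lq-est}, which is precisely where the bracket $\big[(\boldsymbol\omega/\rho\lambda)^{2/p}+\boldsymbol\omega/\rho\lambda+\mu/\lambda\big]$ originates.
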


In the super-quadratic case $p\ge 2$ the local boundedness assumption for weak solutions can be omitted, because they are automatically locally bounded. This is also reflected in the gradient bound, which does not depend on the oscillation of the solution. The precise statement is
as follows. 

\begin{proposition}\label{PROP:LINFTY:P>2}
Let $\mu\in(0,1]$ and $p\ge 2$. Then, for any  weak solution $u$
to \eqref{non-deg-pde}, under assumptions~\eqref{prop-a-diff}, we have $|Du|\in
L^\infty_{\mathrm{loc}}(E_T)$. Moreover, there exist positive constants
$\boldsymbol\gm =\boldsymbol\gm(N,p,C_o,C_1,C_2)$ and $\theta=\theta(N,p)$, such that for every $\eps\in(0,1]$ and every cylinder $z_o+Q_{\rho}^{(\lambda)}\Subset E_T$ we have 
\begin{align*}
    \essup_{z_o+Q_{\rho/2}^{(\lambda)}}|Du|
    \le
      \boldsymbol\gm \eps\lambda
      +
      \frac{\boldsymbol\gm}{\eps^\theta}\bigg[\lambda^{2-p}
      \biint_{z_o+Q_{\rho}^{(\lambda)}}\big(\mu^2+|Du|^{2}\big)^{\frac{p}{2}}\,\dx\dt\bigg]^{\frac{1}{2}}.
\end{align*}
\end{proposition}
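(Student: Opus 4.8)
The proof follows the classical scheme of DiBenedetto \& Friedman, adapted to differentiable coefficients. The plan is: (i) establish, via spatial difference quotients, that $D^2u\in L^2_{\mathrm{loc}}$, so that the equation may be differentiated in $x_k$; (ii) derive the Caccioppoli estimate for the second spatial derivatives; (iii) pass to the quantity $g:=\mu^2+|Du|^2$, recognise that $g^{p/4}$ is the variable in which the second–order estimate is \emph{uniformly} parabolic, and run a Moser iteration on the intrinsic cylinders $z_o+Q_\rho^{(\lambda)}$ of \eqref{def:cyl-lambda}; (iv) absorb the non–scale–invariant contributions by Young's inequality, which is where the parameter $\varepsilon$ enters. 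Since $p\ge2$, the oscillation of $u$ plays no role here (in contrast with Proposition~\ref{PROP:LINFTY}), and the bound depends only on $\biint (\mu^2+|Du|^2)^{p/2}$. All constants are to be kept independent of $\mu>0$; the hypothesis $\mu>0$ is used only to run the difference–quotient argument and to linearise the equation for $Du$.

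\textbf{Step 1: higher differentiability and the second–order energy estimate.} Testing the weak form with $\tau_{-h}\big(\zeta^2\tau_h u\big)$, using \eqref{prop-a-diff} and the monotonicity Lemma~\ref{Monotonicity}, one obtains $D^2u\in L^2_{\mathrm{loc}}(E_T,\R^{N\times N})$ (and $\partial_tu\in L^2_{\mathrm{loc}}$), the right–hand side being finite because $Du\in L^p_{\mathrm{loc}}$. Next I differentiate the equation in $x_k$: the function $v=D_ku$ solves $\partial_tv-D_i\big(a^{ij}D_jv\big)=D_i\big(D_{x_k}b\,(\mu^2+|Du|^2)^{\frac{p-2}{2}}D_iu\big)$ with $a^{ij}=b\big[(\mu^2+|Du|^2)^{\frac{p-2}{2}}\delta_{ij}+(p-2)(\mu^2+|Du|^2)^{\frac{p-4}{2}}D_iu\,D_ju\big]$, which for $p\ge2$ satisfies $C_o g^{\frac{p-2}{2}}|\xi|^2\le a^{ij}\xi_i\xi_j\le (p-1)C_1 g^{\frac{p-2}{2}}|\xi|^2$. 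Testing this with $\zeta^2 D_ku$, summing over $k$, and absorbing by Young's inequality the $D^2u$–terms produced by $D_{x_k}b$ (where one splits $C_2 g^{\frac{p-1}{2}}\zeta^2|D^2u|\le\delta\,g^{\frac{p-2}{2}}\zeta^2|D^2u|^2+\boldsymbol\gm_\delta C_2^2 g^{\frac p2}\zeta^2$) yields the energy estimate~\eqref{energy-est}, of the form
\begin{equation*}
  \sup_{t}\int_{B_\rho}\zeta^2 g\,\dx
  +\iint g^{\frac{p-2}{2}}\zeta^2|D^2u|^2\,\dx\dt
  \le
  \boldsymbol\gm\iint g^{\frac{p}{2}}\big(|D\zeta|^2+C_2^2\zeta^2\big)\,\dx\dt
  +\boldsymbol\gm\iint g\,\zeta\,|\partial_t\zeta|\,\dx\dt .
\end{equation*}
The algebraic inequality $|D(g^{p/4})|^2\le\boldsymbol\gm\, g^{\frac{p-2}{2}}|D^2u|^2$ shows the weighted second–order term controls $\iint\zeta^2|D(g^{p/4})|^2$ with no weight — this is the point at which the $\mu$–degeneracy disappears.

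\textbf{Step 2: Moser iteration.} Working on a sequence of intrinsic cylinders $z_o+Q_{\rho_j}^{(\lambda)}$ with $\rho_j\downarrow\rho/2$ and time–scaling $\lambda^{2-p}\rho^2$, I would feed the energy estimate into the parabolic Sobolev inequality Lemma~\ref{lem:Sobolev} applied to $\phi:=g^{p/4}$; the $\iint g\,\zeta|\partial_t\zeta|$–term is handled by $g\le g^{p/2}$ on $\{g\ge1\}$ plus a trivial constant contribution on $\{g<1\}$, and $|Q_\rho^{(\lambda)}|=\lambda^{2-p}\rho^{N+2}$ produces the factor $\lambda^{2-p}$. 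Since the Sobolev step demands the quantity $\int\zeta^2 g^{p/2}=\int\zeta^2\phi^2$ while the energy estimate delivers $\int\zeta^2 g$, one converts the two by $\int\zeta^2\phi^2\le M^{p-2}\int\zeta^2 g$, $M:=\big(\essup_{z_o+Q_\rho^{(\lambda)}}g\big)^{1/2}$; thus each iteration costs a controlled power of $M^{p-2}$. Iterating the resulting reverse–Hölder recursion with $\kappa=1+\tfrac2N$ (using Lemmas~\ref{lem:A} and~\ref{lem:fast-geom-conv}) gives
\begin{equation*}
  \essup_{z_o+Q_{\rho/2}^{(\lambda)}}g
  \le
  \boldsymbol\gm\Big(\frac{M^2}{\lambda^2}\Big)^{\theta}
  \Big(\lambda^{2-p}\biint_{z_o+Q_{\rho}^{(\lambda)}}g^{\frac p2}\,\dx\dt\Big)^{\frac2p},
\end{equation*}
with $\theta\in(0,1)$ depending only on $N$ and $p$ and $\boldsymbol\gm$ on $N,p,C_o,C_1,C_2$.

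\textbf{Step 3: absorption, and the main obstacle.} Finally, Young's inequality converts $\boldsymbol\gm(M^2/\lambda^2)^{\theta}X$ into $\varepsilon M^2+\boldsymbol\gm\varepsilon^{-\theta/(1-\theta)}\lambda^{-2\theta/(1-\theta)}X^{1/(1-\theta)}$; a standard radius–interpolation (Lemma~\ref{lem:Giaq}) absorbs the $\varepsilon M^2$ into $M^2=\essup_{Q_{\rho/2}^{(\lambda)}}g$, leaving exactly
$\essup_{Q_{\rho/2}^{(\lambda)}}|Du|\le\boldsymbol\gm\,\varepsilon\lambda+\boldsymbol\gm\,\varepsilon^{-\theta}\big(\lambda^{2-p}\biint_{Q_\rho^{(\lambda)}}(\mu^2+|Du|^2)^{p/2}\big)^{1/2}$ (after relabelling $\theta$), which is the assertion; the inclusion $|Du|\in L^\infty_{\mathrm{loc}}$ is then immediate. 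The genuinely delicate point is that the linearised operator for $Du$ has ellipticity ratio $(M/\mu)^{p-2}$, which is not controlled a priori; the way around it is precisely to iterate in $\phi=g^{p/4}$ rather than in $g$, and to track how the degeneracy (through the conversion factor $M^{p-2}$) and the non–scale–invariant $\partial_t\zeta$– and $D_xb$–terms contribute, so that they end up only as the single absorbable factor $(M^2/\lambda^2)^{\theta}$. Making this bookkeeping uniform in $\mu$ is the core of the proof.
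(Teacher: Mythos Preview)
Your overall architecture---second--order energy estimate, Moser iteration on intrinsic cylinders, then an interpolation/absorption step---is the same as the paper's. However, two of your implementation choices in Step~2 are not right and would not lead to the claimed bound.

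First, your treatment of the time term. You propose to handle $\iint g\,\zeta|\partial_t\zeta|$ by splitting $g\le g^{p/2}$ on $\{g\ge1\}$ plus a constant on $\{g<1\}$. This destroys the scale invariance of the estimate and cannot produce the homogeneous term $\boldsymbol\gm\eps\lambda$ in the conclusion. The paper instead uses Young's inequality in the form
\[
\lambda^{p-2}g^{1+\alpha}\le \eps^{2-p}\big[g^{\frac p2+\alpha}+(\eps\lambda)^{p+2\alpha}\big],
\]
which is dimensionally consistent and is precisely where the additive $(\eps\lambda)^{p_i}$ term enters the iteration; this term is what survives as $\boldsymbol\gm\eps\lambda$ after Moser.

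Second, the iteration itself. A Moser scheme requires the energy estimate \eqref{energy-est} for \emph{every} $\alpha\ge0$ (i.e.\ testing with $\zeta^2 g^{\alpha}D_ku$), not only $\alpha=0$ as you derive. Your proposed workaround---converting $\sup_t\!\int g$ to $\sup_t\!\int g^{p/2}$ via the factor $M^{p-2}$ and applying Sobolev with $m=2$---does not close: after one step you obtain higher integrability of $g$, but the next step still needs an energy inequality at that higher level. The paper avoids the conversion altogether by taking $m=q_i=\tfrac{4+4\alpha_i}{p+2\alpha_i}\in[4/p,2]$ in Lemma~\ref{lem:Sobolev}, so that the $\sup$--term required by Sobolev is exactly $\sup_t\!\int g^{1+\alpha_i}$, which the energy estimate delivers. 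Incidentally, the display you write at the end of Step~2 is dimensionally inconsistent for $p>2$ (the exponent $2/p$ on the integral should be $1/2$ once the right bookkeeping is done). With the correct exponents in place, the absorption in your Step~3 via Young and Lemma~\ref{lem:Giaq} is essentially what the paper does: first take $p_o=p$ to get $|Du|\in L^\infty_{\rm loc}$, then choose $p_o=(p-2)\tfrac{N+2}{2}+p$ and interpolate with $\essup g^{(p_o-p)/2}$ to produce the absorbable term.
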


\begin{remark}
The estimates in both Propositions are stable as $p\to2$ and we obtain two versions of sup-estimates for $p=2$. Indeed, from Proposition~\ref{PROP:LINFTY} we get in the limit $p\uparrow 2$ that
\begin{align*}
    \essup_{z_o+Q_{\rho/2}}|Du|
    &\le
    \boldsymbol\gm \epsilon\lambda
    +
    \boldsymbol\gm
    \bigg[
    \frac{\boldsymbol\omega}{\rho}+\mu\bigg]^{\frac12}
    \bigg[\biint_{z_o+Q_{2\rho}}\big(\mu^2+|Du|^2\big)\,\dx\dt\bigg]^{\frac{1}{4}},
\end{align*}
while Proposition~\ref{PROP:LINFTY:P>2} leads in the limit $p\downarrow 2$ to
\begin{align*}
    \essup_{z_o+Q_{\rho/2}}|Du|
    \le
      \boldsymbol\gm \eps\lambda
      +
      \bg\bigg[
      \biint_{z_o+Q_{\rho}}\big(\mu^2+|Du|^{2}\big)\,\dx\dt\bigg]^{\frac{1}{2}}.
\end{align*}
Here, we used $\theta\downarrow 0$ in both cases.
From the viewpoint of dimensions the right-hand sides coincide, since $\big[\frac{\boldsymbol\omega}{\rho}\big]\sim [Du]$.
\end{remark}
 In the next chapter, Propositions~\ref{PROP:LINFTY} and~\ref{PROP:LINFTY:P>2} will be used at two stages as we take on Schauder estimates for an equation with a merely H\"older continuous coefficient $a(x,t)$. First, it is important that the quantitative estimates are employed independent of $C_2$. This will be implemented by freezing the coefficient $a(x,t)$ at some $x_o\in E$, %i.e.~$b(t)=a(x_o,t)$, 
 and therefore $C_2=0$. At this stage, all estimates hinge upon that we know $|Du|\in L^{\infty}_{\loc}(E_T)$ {\it a priori}. Second, in the final approximation argument the  coefficient $a(x,t)$ is mollified and a sequence of approximating solutions are constructed. At this stage, we need to know that the spatial gradients of the approximating solutions are locally bounded in order to apply the {\it a priori} estimates. Hence, at the second stage, we only use the propositions {\it qualitatively}. %see Section~\ref{sec:approx} for the precise construction of $a_i$. 

\section{Energy estimate for second order derivatives}

The starting point for gradient boundedness  is an energy inequality involving second order weak derivatives of solutions. The proof can be found in~\cite[Proposition~4.1]{BDLS-Tolksdorf}. 
\begin{proposition}[energy inequality for second order derivatives]
  Assume that $p>1$, $\mu\in(0,1]$, and that $u$
  is a weak
  solution to~\eqref{non-deg-pde}, under
  assumptions~\eqref{prop-a-diff}. Assume, in addition, that for some $\alpha\ge0$ the
  solution satisfies 
  $$
  |Du|\in
  L^{p+2\alpha}\big(z_o+Q_S^{(\lambda)}\big)
  \cap L^{2+2\alpha}\big(z_o+Q_S^{(\lambda)}\big)
  $$ 
  on 
  $z_o+Q_S^{(\lambda)}\Subset E_T$.  Then, for every $\frac12S\le R<S$,
  we have 
  \begin{align}\label{energy-est}
    \frac{\lambda^{p-2}}{(1+\alpha)R^2}\essup_{\tau\in t_o+\Lambda_R^{(\lambda)}
    }&
      \mint_{K_{R}(x_o)\times\{\tau\}}\big(\mu^2+|Du|^2\big)^{1+\alpha}\,\dx\\\nonumber
    &\hphantom{\le\,}+
      \biint_{z_o+Q_{R}^{(\lambda)}}
      \big(\mu^2+|Du|^2\big)^{\frac{p-2}2+\alpha}|D^2u|^2\,\dx\dt\\ \nonumber
    &\le
      \frac{\boldsymbol\gm (1+\alpha)}{(S-R)^2}
      \biint_{z_o+Q_{S}^{(\lambda)}}\big(\mu^2+|Du|^2\big)^{\frac{p}2+\alpha}
      \,\dx\dt\\\nonumber
    &\phantom{\le\,} +
      \frac{\boldsymbol\gm \lambda^{p-2}}{(S^2-R^2)}\biint_{z_o +Q_{S}^{(\lambda)}}
      \big(\mu^2 +|Du|^2\big)^{1+\alpha} \,\dx\dt
  \end{align}
  with a constant $\boldsymbol\gm = \boldsymbol\gm (N,p,C_o,C_1,C_2)$. 
  %Here we omitted $z_o$ and $x_o$ in \eqref{energy-est}.
\end{proposition}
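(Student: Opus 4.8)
The plan is to derive the inequality as a Caccioppoli-type estimate for second order spatial derivatives, in the spirit of DiBenedetto \& Friedman adapted to differentiable coefficients; alternatively one may simply invoke \cite[Proposition~4.1]{BDLS-Tolksdorf}, but the argument runs as follows. Write the flux as $\mathbf{a}(x,t,\xi):=b(x,t)(\mu^2+|\xi|^2)^{\frac{p-2}{2}}\xi$, so that \eqref{non-deg-pde} reads $\partial_tu-\Div\mathbf{a}(x,t,Du)=0$. The first step is to differentiate the equation formally in the direction $x_i$, obtaining
\begin{equation*}
  \partial_t(D_iu)-\Div\big(D_\xi\mathbf{a}(x,t,Du)\,D_i(Du)\big)
  =\Div\big(D_{x_i}b(x,t)\,(\mu^2+|Du|^2)^{\frac{p-2}{2}}Du\big).
\end{equation*}
Since $D^2u$ is not known to exist a priori, this identity must be justified through Steklov averaging in time together with second order difference quotients in space; here the assumption $|Du|\in L^{p+2\alpha}(z_o+Q_S^{(\lambda)})\cap L^{2+2\alpha}(z_o+Q_S^{(\lambda)})$ is precisely what guarantees that every integral produced below is finite and that the difference quotients converge to the corresponding second order derivatives.

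Next I would test the differentiated equation with $\varphi_i:=D_iu\,(\mu^2+|Du|^2)^{\alpha}\zeta^2$, summed over $i=1,\dots,N$, where $\zeta(x,t)=\zeta_1(x)\zeta_2(t)$ is a cutoff with $\zeta\equiv1$ on $z_o+Q_R^{(\lambda)}$, $\zeta\equiv0$ outside $z_o+Q_{R'}^{(\lambda)}$ for some $R<R'<S$ interpolated between $R$ and $S$, with $|D\zeta_1|\le\boldsymbol\gm/(R'-R)$ and $|\partial_t\zeta_2|\le\boldsymbol\gm\lambda^{p-2}/(R'^2-R^2)$ (the time length scale being $\lambda^{2-p}R^2$). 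The principal term $\sum_i\int D_\xi\mathbf{a}(Du)D_i(Du)\cdot D\varphi_i$ splits into: a leading contribution, bounded below by a multiple of $\int(\mu^2+|Du|^2)^{\frac{p-2}2+\alpha}|D^2u|^2\zeta^2$ thanks to the ellipticity in Lemma~\ref{Monotonicity} and the lower bound $b\ge C_o$; a contribution from $D$ hitting $(\mu^2+|Du|^2)^\alpha$, which carries the favourable sign $+\alpha(\cdots)$ and is discarded (this is the origin of the harmless factor $1+\alpha$); and a contribution from $D$ hitting $\zeta^2$, estimated by Young's inequality by $\eps\int(\mu^2+|Du|^2)^{\frac{p-2}2+\alpha}|D^2u|^2\zeta^2+\frac{\boldsymbol\gm}{(R'-R)^2}\int(\mu^2+|Du|^2)^{\frac p2+\alpha}$. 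The coefficient term $-\sum_i\int D_{x_i}b\,(\mu^2+|Du|^2)^{\frac{p-2}2}D_iu\cdot D\varphi_i$ is handled using $|D_xb|\le C_2$: each summand is dominated by $\boldsymbol\gm C_2(\mu^2+|Du|^2)^{\frac{p-1}2+\alpha}\big(|D^2u|+(\mu^2+|Du|^2)^{1/2}|D\zeta|\big)\zeta$, and one more application of Young's inequality absorbs the $|D^2u|$ part into the good term, leaving only powers $(\mu^2+|Du|^2)^{\frac p2+\alpha}$, which are absorbed into the first term on the right-hand side of the claimed inequality.

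It remains to handle the time term $\sum_i\iint\partial_t(D_iu)\,\varphi_i$. Letting $G$ be the primitive with $G'(s)=(\mu^2+s)^\alpha$ and $G(0)=0$, one has $\sum_iD_iu\,(\mu^2+|Du|^2)^\alpha\,\partial_t(D_iu)=\tfrac12\partial_tG(|Du|^2)$, and $G$ satisfies $\tfrac1{\boldsymbol\gm(1+\alpha)}(\mu^2+|Du|^2)^{1+\alpha}\le G(|Du|^2)+\tfrac{\mu^{2(1+\alpha)}}{1+\alpha}$ with a comparable upper bound. Integrating by parts in $t$ and choosing the upper time endpoint $\tau\in t_o+\Lambda_R^{(\lambda)}$ close to where the slicewise integral $\int_{K_R(x_o)\times\{t\}}G$ is maximal yields the $\essup$ term on the left of \eqref{energy-est}, multiplied by the normalizing factor $\lambda^{p-2}/R^2$, while the term $\tfrac12\iint G(|Du|^2)\partial_t(\zeta^2)$ produces, via the bound on $\partial_t\zeta_2$, exactly the summand $\frac{\boldsymbol\gm\lambda^{p-2}}{S^2-R^2}\biint(\mu^2+|Du|^2)^{1+\alpha}$. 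Finally the remaining absorbable second-order terms are collected after fixing $\eps$ small, and a covering/iteration argument over $R<R'<S$ removes the auxiliary radius $R'$.

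The main obstacle is the rigorous justification of the formal differentiation: producing the differentiated equation through difference quotients under only the stated a priori integrability, and passing to the limit so that the weighted $|D^2u|^2$-integral is genuinely controlled. A secondary, purely bookkeeping difficulty is keeping track of the $(1+\alpha)$-dependence of all constants throughout the Young-inequality splittings, and making sure that the coefficient term contributes only quantities already present on the right-hand side of \eqref{energy-est}; both are standard but require care.
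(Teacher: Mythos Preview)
Your proposal is correct and matches the paper's treatment: the paper does not prove this proposition at all but simply cites \cite[Proposition~4.1]{BDLS-Tolksdorf}, which you also invoke, and your sketch is a faithful outline of that standard DiBenedetto--Friedman style argument (differentiate in $x_i$ via difference quotients, test with $D_iu\,(\mu^2+|Du|^2)^\alpha\zeta^2$, use the favourable sign of the $\alpha$-term, Young's inequality for the cutoff and coefficient pieces, and the primitive $G$ for the time term). One minor simplification: the auxiliary radius $R'$ and the final iteration are unnecessary---you can take the cutoff directly between $Q_R^{(\lambda)}$ and $Q_S^{(\lambda)}$, which already produces the factors $(S-R)^{-2}$ and $\lambda^{p-2}(S^2-R^2)^{-1}$ appearing on the right-hand side.
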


\section{\texorpdfstring{$L^m$}{Lm}-gradient estimates in the subquadratic case}

Our goal is to implement a Moser iteration procedure to prove local boundedness of the gradient for any $p>1$. However, in the case $p\le\frac{2N}{N+2}$, this is only possible for solutions that satisfy the additional regularity property $|Du|\in L^m_{\mathrm{loc}}(E_T)$ for a sufficiently large power $m>p$. In the subcritical range $p\le\frac{2N}{N+2}$, the latter can be shown for bounded solutions by using ideas from \cite[Chapter VIII, Lemma 4.1]{DB}, see also \cite{DiBenedetto-Friedman3, Choe:1991}. In what follows, we present the proof for all $1<p\le2$ in a unified fashion.

\begin{lemma}\label{lem:Lq-est}
Let $\mu\in(0,1]$ and $1<p\le 2$. Then, for any bounded weak solution $u$
to \eqref{non-deg-pde}, under
assumptions~\eqref{prop-a-diff}, we have $|Du|\in L^m_{\mathrm{loc}}(E_T)$ for every $m>1$.
Moreover, for every $m> p+1$, we have the quantitative estimate
\begin{align}\label{Lq-est}
    \biint_{z_o+Q_\rho^{(\lambda)}}&\big(\mu^2+|Du|^2\big)^{\frac{m}{2}}\dx\dt\nonumber\\
    &\le
    \boldsymbol\gm \bigg[\lambda\Big(\frac{\boldsymbol \omega}{\rho\lambda}\Big)^{\frac{2}{p}}
      +
      \frac{\boldsymbol \omega}{\rho}+\mu\bigg]^{m-p}
      \biint_{z_o+ Q_{2\rho}^{(\lambda)}}\big(\mu^2+|Du|^2\big)^{\frac{p}{2}}\,\dx\dt,
\end{align}
for every cylinder $z_o+Q_{2\rho}^{(\lambda)}\Subset E_T$, where 
$\boldsymbol \omega:=\osc_{z_o+Q^{(\lambda)}_{2\rho}}u$,
and the dependencies of the constant are given by $\boldsymbol\gm = \boldsymbol\gm (N,p,m,C_o,C_1,C_2)$. 
\end{lemma}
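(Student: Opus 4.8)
The plan is to run a Moser-type iteration based on the second-order energy inequality \eqref{energy-est}, but with a twist borrowed from \cite[Chapter~VIII, Lemma~4.1]{DB}: we must first bootstrap the integrability of $|Du|$ from the natural exponent $p$ up to an arbitrary finite exponent, and only afterwards extract the quantitative estimate \eqref{Lq-est}. Throughout we abbreviate $\mathcal V:=(\mu^2+|Du|^2)^{1/2}$, work on concentric intrinsic cylinders $z_o+Q_r^{(\lambda)}$ with $\rho\le r<2\rho$, and use the boundedness of $u$ only through the oscillation $\boldsymbol\omega=\osc_{z_o+Q_{2\rho}^{(\lambda)}}u$. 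The key auxiliary tool is a Sobolev-type inequality on $z_o+Q_r^{(\lambda)}$ applied to the quantity $\mathcal V^{(p+2\alpha)/2}$ (or a cut-off version thereof), which together with \eqref{energy-est} will convert the pointwise-in-time $L^{1+\alpha}$ control and the $L^2$ control of $D(\mathcal V^{(p-2)/2+\alpha}\,$-type powers into a gain of integrability with a fixed multiplicative factor $\kappa=1+\tfrac{2}{N}$ at each step.

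First I would fix $\alpha\ge0$ and set $q=p+2\alpha$; from \eqref{energy-est} with the pair of radii $R<S$ I obtain, after discarding nonnegative terms as needed,
\begin{align*}
  \essup_{\tau}\mint_{K_R\times\{\tau\}}\mathcal V^{q-p+2}\,\dx
  +
  \lambda^{2-p}\,R^2\biint_{z_o+Q_R^{(\lambda)}}\big|D\big(\mathcal V^{q/2}\big)\big|^2\,\dx\dt
  \le
  \frac{\boldsymbol\gm\,(1+\alpha)^2}{(S-R)^2}\Big[1+\tfrac{\lambda^{p-2}}{S+R}\cdot\tfrac{1}{(\text{power mismatch})}\Big]\biint_{z_o+Q_S^{(\lambda)}}\mathcal V^{q}\,\dx\dt,
\end{align*}
where the "power mismatch" term is handled by interpolating $\mathcal V^{q-p+2}$ between $\mathcal V^{q}$ and a lower power, using $1<p\le2$ so that $q-p+2\le q$, and absorbing it via Young's inequality; this is where the oscillation $\boldsymbol\omega$ enters, through the scaling relation $[\,\boldsymbol\omega/\rho\,]\sim[\,Du\,]$ and the intrinsic choice $\lambda$, giving the bracket $[\lambda(\boldsymbol\omega/\rho\lambda)^{2/p}+\boldsymbol\omega/\rho+\mu]$. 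Feeding this into the parabolic Sobolev embedding Lemma~\ref{lem:Sobolev} applied to $w=\mathcal V^{q/2}\zeta$ with the exponents $(m,p)=(q-p+2,2)$ yields
$$
  \biint_{z_o+Q_R^{(\lambda)}}\mathcal V^{q\kappa}\,\dx\dt
  \le
  \boldsymbol\gm\Big(\tfrac{1+\alpha}{S-R}\Big)^{2\kappa}\lambda^{(p-2)(\kappa-1)}\,
  \Big[\,\cdots\,\Big]^{?}
  \Big(\biint_{z_o+Q_S^{(\lambda)}}\mathcal V^{q}\,\dx\dt\Big)^{\kappa},
  \qquad \kappa=1+\tfrac{2}{N+(q-p+2)\cdot(\text{adjust})}.
$$
Iterating over $q_j=p\kappa^j$ with radii $R_j=\rho(1+2^{-j})$ and using Lemma~\ref{lem:A} to control the product of the geometric factors $\prod A^{j\kappa^{-j}}$, the iteration converges and produces $|Du|\in L^m_{\mathrm{loc}}$ for every finite $m$; this establishes the qualitative part.

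For the quantitative estimate \eqref{Lq-est} with a single fixed $m>p+1$, I would instead do a finite (non-iterative) argument: choose $\alpha$ so that $q=p+2\alpha$ is slightly below $m$, apply \eqref{energy-est} once on $z_o+Q_\rho^{(\lambda)}\subset z_o+Q_{2\rho}^{(\lambda)}$ — legitimate precisely because the qualitative step already guarantees $|Du|\in L^{p+2\alpha}\cap L^{2+2\alpha}$ locally — then run one application of Sobolev's inequality to pass from exponent $q$ to exponent $m$, and finally re-absorb the resulting $\mathcal V^{m}$-norm on a slightly larger cylinder into $\mathcal V^{p}$ via Hölder and Young's inequalities together with the uniform sup-bound $\mathcal V\le\boldsymbol\gm[\lambda(\boldsymbol\omega/\rho\lambda)^{2/p}+\boldsymbol\omega/\rho+\mu]$, which in turn follows from the energy estimate interpolated with the oscillation bound. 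The main obstacle I anticipate is bookkeeping the intrinsic scaling consistently: the time-scaling $\lambda^{p-2}\rho^2$ in $Q_\rho^{(\lambda)}$ means that the powers of $\lambda$ generated by \eqref{energy-est} and by Sobolev's inequality must cancel exactly so that the final estimate is homogeneous and the bracket $[\lambda(\boldsymbol\omega/\rho\lambda)^{2/p}+\boldsymbol\omega/\rho+\mu]^{m-p}$ appears with the correct exponent; getting the interpolation of $\mathcal V^{q-p+2}$ against $\mathcal V^q$ and the Young's-inequality absorption to land precisely on this bracket (rather than on some non-scale-invariant combination) is the delicate point, and it is also the reason the statement is restricted to $1<p\le2$, where $q-p+2\le q$ makes the interpolation go in the favorable direction.
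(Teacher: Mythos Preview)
Your proposal has a genuine gap: you are essentially describing the Moser iteration that the paper uses \emph{after} this lemma (in the proof of Proposition~\ref{PROP:LINFTY}) to pass from $L^m$ to $L^\infty$, and that iteration needs the present lemma as input to get started when $p\le\frac{2N}{N+2}$. The right-hand side of \eqref{energy-est} carries the two exponents $p+2\alpha$ and $2+2\alpha$; for $1<p\le 2$ the latter is the larger one, so feeding \eqref{energy-est} directly into the parabolic Sobolev inequality does not give a self-improving recursion from $L^q$ to $L^{q\kappa}$ unless you already know $|Du|\in L^{q}$ for $q$ large enough --- which is precisely what is to be proved. Your sentence ``this is where the oscillation $\boldsymbol\omega$ enters, through the scaling relation $[\boldsymbol\omega/\rho]\sim[Du]$'' is dimensional analysis, not a mechanism; nothing in the Moser step you wrote actually produces a factor of $\boldsymbol\omega$. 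And the sup-bound $\mathcal V\le\boldsymbol\gm[\lambda(\boldsymbol\omega/\rho\lambda)^{2/p}+\boldsymbol\omega/\rho+\mu]$ you invoke at the end is the \emph{conclusion} of Proposition~\ref{PROP:LINFTY}, so using it here is circular.

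The missing idea (the ``clever manipulation'' alluded to in the text and in \cite[Chapter~VIII, Lemma~4.1]{DB}) is an integration by parts in space against $u-(u)_R(t)$: one writes
\[
\iint \mathcal V^{m}\zeta
=\iint Du\cdot Du\,\mathcal V^{m-2}\zeta+\mu^2\!\iint\mathcal V^{m-2}\zeta
=-\iint\big(u-(u)_R\big)\operatorname{div}\big(Du\,\mathcal V^{m-2}\big)\zeta+\ldots,
\]
and now the oscillation enters \emph{concretely} via $|u-(u)_R|\le\boldsymbol\omega$. The divergence throws out $\mathcal V^{m-2}|D^2u|$, which by H\"older is controlled by $\big(\iint\mathcal V^{m-3}|D^2u|^2\big)^{1/2}\big(\iint\mathcal V^{m-1}\big)^{1/2}$; the first factor is exactly what \eqref{energy-est} with $\alpha=\tfrac12(m-p-1)$ bounds in terms of $\iint\mathcal V^{m-1}$ and $\iint\mathcal V^{m-p+1}$. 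So each step gains roughly one power (from $m-1$ or $m-p+1$ to $m$), and one iterates $m_\ell=p+1+\ell(p-1)$ starting from $m=p+1$, where only $|Du|\in L^p\cap L^2$ is needed. Young's inequality and the iteration Lemma~\ref{lem:Giaq} then collapse the recursion to \eqref{Lq-est}. No Sobolev embedding is used in this lemma.
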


\begin{proof}
  We omit $z_o$ from the notation for simplicity, %and
  %write $Q_\rho^{(\lambda)}$ instead of $Q_\rho^{(\lambda)}(z_o)$.     
   consider a nested pair of cylinders
  $Q_{r}^{(\lambda)}\subset Q_{S}^{(\lambda)}\subset E_T$ with
  radii $\rho\le r<S\le 2\rho$ and let
  $R:=\frac12(r+S)$. Next we choose a cutoff function
$\zeta\in C^\infty_0(K_R,[0,1])$ with
  $\zeta\equiv1$ in $K_{r}$ and $|D\zeta|\le\frac2{R-r}=\frac{4}{S-r}$ in
  $K_R$. 
  
  %The first step consists in showing that 
  To start with, we need $|Du|\in
  L^2_{\mathrm{loc}}(E_T)$. This can be achieved by adapting
  the difference quotient technique as illustrated in the appendix
  of~\cite{Choe:1991}. We briefly sketch the 
  argument by giving the formal computations. First, we note that an
  integration by parts yields 
  \begin{align}\label{Lp->L2}
    \biint_{Q_r^{(\lambda)}}&|Du|^2\,\dx\dt
      \le
      \biint_{Q_R^{(\lambda)}}|Du|^2\zeta\,\dx\dt\\\nonumber
    &=
      -\biint_{Q_R^{(\lambda)}}[u-(u)_R(t)]\Delta u\,\zeta\,\dx\dt
      -\biint_{Q_R^{(\lambda)}}[u-(u)_R(t)]Du\cdot D\zeta\,\dx\dt\\\nonumber
    &\le  
    \boldsymbol\gm(N)\boldsymbol \omega\,
      \biint_{Q_R^{(\lambda)}}|D^2u|\,\dx\dt
    +
      \frac{4\boldsymbol \omega}{S-r}
      \biint_{Q_R^{(\lambda)}}|Du|\,\dx\dt.
  \end{align}  
 Here $(u)_R(t)$ is  the integral average of $u(\cdot,t)$ over $K_R(x_o)$ as defined in \eqref{def:slice-wise-mean}. %Section \ref{sec:preliminaries}.
  For the estimate of the first integral on the right-hand side, we
  apply H\"older's inequality and~\eqref{energy-est} with $\alpha=0$
  to obtain
  \begin{align*}
    \biint_{Q_R^{(\lambda)}}&|D^2u|\,\dx\dt\\
    &\le
      \bigg[\biint_{Q_R^{(\lambda)}}\big(\mu^2+|Du|^2\big)^{\frac{p-2}2}|D^2u|^2\dx\dt\bigg]^{\frac12}
      \bigg[\biint_{Q_R^{(\lambda)}}\big(\mu^2+|Du|^2\big)^{\frac{2-p}2}\dx\dt\bigg]^{\frac12}\\
    &\le
    \frac{\boldsymbol\gm}{S-R}\bigg[
    \biint_{Q_{S}^{(\lambda)}}
    \big(\mu^2+|Du|^2\big)^{\frac{p}2}\dx\dt +
      \lambda^{p-2}\biint_{Q_{S}^{(\lambda)}}
      \big(\mu^2 +|Du|^2\big)\dx\dt\bigg]^{\frac12}\\\nonumber
    &\qquad\qquad\qquad
      \cdot \bigg[\biint_{Q_R^{(\lambda)}}\big(\mu^2+|Du|^2\big)^{\frac{p}{2}}\dx\dt\bigg]^{\frac{2-p}{2p}},
  \end{align*}
  where we also used $S^2-R^2\ge(S-R)^2$ for the last
  step.
  We use this to bound the right-hand side of~\eqref{Lp->L2} and then
  apply Young's inequality to get
  \begin{align*}
    \biint_{Q_r^{(\lambda)}}&|Du|^2\,\dx\dt\\
    &\le
    \tfrac12\biint_{Q_S^{(\lambda)}}|Du|^2\,\dx\dt+\tfrac12\mu^2
    +
      \frac{\boldsymbol\gm \boldsymbol \omega^2\lambda^{p-2}}{(S-r)^2}\bigg[\biint_{Q_R^{(\lambda)}}\big(\mu^2+|Du|^2\big)^{\frac{p}{2}}\dx\dt\bigg]^{\frac{2-p}{p}}\\
    &\phantom{\le\,}
    +
     \frac{\boldsymbol\gm\,\boldsymbol \omega}{S-r} 
     \bigg[\biint_{Q_R^{(\lambda)}}\big(\mu^2+|Du|^2\big)^{\frac{p}{2}}\dx\dt\bigg]^{\frac{1}{p}}, 
  \end{align*}
  for all radii with $\rho\le r<S\le2\rho$. 
  In view of the iteration Lemma~\ref{lem:Giaq}, we can re-absorb the first term on the
  right-hand side and obtain
  \begin{align*}
    \biint_{Q_\rho^{(\lambda)}}|Du|^2\,\dx\dt
    &\le
      \frac{\boldsymbol\gm \boldsymbol\omega^2\lambda^{p-2} }{\rho^2}\bigg[\biint_{Q_R^{(\lambda)}}\big(\mu^2+|Du|^2\big)^{\frac{p}{2}}\,\dx\dt\bigg]^{\frac{2-p}{p}}\\
     &\phantom{\le\,}
     +
     \frac{\boldsymbol \gm\boldsymbol \omega}{\rho} 
     \bigg[\biint_{Q_R^{(\lambda)}}\big(\mu^2+|Du|^2\big)^{\frac{p}{2}}\,\dx\dt\bigg]^{\frac{1}{p}}, 
  \end{align*}
  where the right-hand side is finite by assumption. 
  Hence $|Du|\in L^2_{\mathrm{loc}}(E_T)$ modulo proper difference quotient.
  
  Next, we assume {\it a priori} that $|Du|\in
  L^{m-p+1}_{\mathrm{loc}}(E_T)$ for some $m\ge p+1$. 
  Similarly as in~\eqref{Lp->L2}, we compute via an integration by parts and estimate
  \begin{align*}
    \biint_{Q_R^{(\lambda)}}&(\mu^2+|Du|^2)^{\frac{m}{2}}\zeta\,\dx\dt\\
    &=
      \biint_{Q_R^{(\lambda)}}Du\cdot Du\big(\mu^2+|Du|^2\big)^{\frac{m-2}{2}}\zeta\,\dx\dt
      +
      \mu^2\biint_{Q_R^{(\lambda)}}\big(\mu^2+|Du|^2\big)^{\frac{m-2}{2}}\zeta\,\dx\dt\\
    &=
      -\biint_{Q_R^{(\lambda)}}[u-(u)_R(t)]\,\mathrm{div}\big[Du\big(\mu^2+|Du|^2\big)^{\frac{m-2}{2}}\big]\zeta\,\dx\dt\\
    &\phantom{=\,}  
    -
     \biint_{Q_R^{(\lambda)}}[u-(u)_R(t)] \big(\mu^2+|Du|^2\big)^{\frac{m-2}{2}}Du\cdot D\zeta\,\dx\dt\\
     &\phantom{=\,}
      +
      \mu^2\biint_{Q_R^{(\lambda)}}\big(\mu^2+|Du|^2\big)^{\frac{m-2}{2}}\zeta\,\dx\dt\\
    &\le
      \boldsymbol\gm m\boldsymbol \omega\,
      \biint_{Q_R^{(\lambda)}}\big(\mu^2+|Du|^2\big)^{\frac{m-2}{2}}|D^2u|\,\dx\dt\\
    &\phantom{=\,}+
      \frac{\boldsymbol\gm \boldsymbol\omega}{S-r}
      \biint_{Q_R^{(\lambda)}}\big(\mu^2+|Du|^2\big)^{\frac{m-2}{2}}|Du|\,\dx\dt\\
     &\phantom{=\,}+
      \mu^2\biint_{Q_R^{(\lambda)}}\big(\mu^2+|Du|^2\big)^{\frac{m-2}{2}}\dx\dt.
  \end{align*}
In particular,  we used the inequality
  \begin{align*}
    \Big|\mathrm{div}\Big[Du\big(\mu^2+|Du|^2\big)^{\frac{m-2}{2}}\Big]\Big|
    &\le
    \sqrt{N}(m-1) |D^2u| \big(\mu^2+|Du|^2\big)^{\frac{m-2}{2}}.
\end{align*}
In order to estimate the integral containing the second order derivatives
that occurs on the right-hand side,
we apply H\"older's inequality and then~\eqref{energy-est} with
$\alpha=\frac12(m-p-1)$, with the result
  \begin{align*}
    &\biint_{Q_R^{(\lambda)}}\big(\mu^2+|Du|^2\big)^{\frac{m-2}{2}}|D^2u|\,\dx\dt\\
    &\quad\le
      \bigg[
      \biint_{Q_R^{(\lambda)}}\big(\mu^2+|Du|^2\big)^{\frac{m-3}{2}}|D^2u|^2\,\dx\dt
      \bigg]^{\frac12}
      \bigg[
     \biint_{Q_R^{(\lambda)}}\big(\mu^2+|Du|^2\big)^{\frac{m-1}{2}}\dx\dt\bigg]^{\frac12}\\
   &\quad\le
     \frac{\boldsymbol\gm }{S-R}\bigg[
      \biint_{Q_{S}^{(\lambda)}}\big(\mu^2+|Du|^2\big)^{\frac{m-1}{2}}
      \dx\dt\bigg]^{\frac12}\\\nonumber
    & \quad\phantom{\le\,}
      \cdot\bigg[
      m
      \biint_{Q_{S}^{(\lambda)}}\big(\mu^2+|Du|^2\big)^{\frac{m-1}{2}}\,\dx\dt
      +
      \lambda^{p-2}\biint_{Q_{S}^{(\lambda)}}
      \big(\mu^2+|Du|^2\big)^{\frac{m-p+1}{2}} \,\dx\dt\bigg]^{\frac12}\\
    &\quad\le
      \frac{\boldsymbol\gm m^{\frac12}}{S-R}\biint_{Q_{S}^{(\lambda)}}\big(\mu^2+|Du|^2\big)^{\frac{m-1}{2}}\,\dx\dt\\\nonumber
    &\quad\phantom{\le\,} 
      +\frac{\boldsymbol \gm \lambda^{\frac{p-2}{2}}}{S-R}
      \bigg[\biint_{Q_{S}^{(\lambda)}}\!
      \big(\mu^2 +|Du|^2\big)^{\frac{m-p+1}{2}}
      \dx\dt\bigg]^{\frac12}
      \bigg[\biint_{Q_{S}^{(\lambda)}}\!\big(\mu^2+|Du|^2\big)^{\frac{m-1}{2}}
      \dx\dt\bigg]^{\frac12},
  \end{align*}
with a constant $\boldsymbol\gm =\boldsymbol\gm (N,p,C_o,C_1,C_2)$. 
Inserting this in the above inequality, we have
\begin{align*}
    \biint_{Q_r^{(\lambda)}}\big(\mu^2+|Du|^2\big)^{\frac{m}{2}}\dx\dt
    &
    \le
    \biint_{Q_R^{(\lambda)}}\big(\mu^2+|Du|^2\big)^{\frac{m}{2}}\zeta\,\dx\dt
    \le
    \mathbf{I}+\mathbf{II}
\end{align*}
with
\begin{align*}
    \mathbf{I}
    &:=
    \frac{\boldsymbol\gm m\lambda^{\frac{p-2}{2}}\boldsymbol \omega}{S-r}
      \bigg[\biint_{Q_{S}^{(\lambda)}}\!\!
      \big(\mu^2 +|Du|^2\big)^{\frac{m-p+1}{2}}
      \dx\dt\bigg]^{\frac12}
      \bigg[\biint_{Q_{S}^{(\lambda)}}\!\!\big(\mu^2+|Du|^2\big)^{\frac{m-1}{2}}
      \dx\dt\bigg]^{\frac12},
\end{align*}
and
\begin{align*}
    \mathbf{II}
    &:=
      \bigg[\frac{\boldsymbol\gm m^{\frac32}\boldsymbol \omega}{S-r} + \mu\bigg]
      \biint_{Q_{S}^{(\lambda)}}\big(\mu^2+|Du|^2\big)^{\frac{m-1}{2}}\,\dx\dt.
\end{align*}
Note that the right-hand side is finite for $m=p+1$ since
$|Du|\in L^p(E_T)\cap L^2_{\mathrm{loc}}(E_T)$. Therefore,
for this particular choice of $m$, the above estimate implies
$|Du|\in L^{p+1}_{\mathrm{loc}}(E_T)$. By
iteratively applying the preceding estimate with
$m_\ell=p+1+\ell(p-1)$ for $\ell=0,1,2,\ldots$ we obtain
$|Du|\in L^{m}_{\mathrm{loc}}(E_T)$ for every $m>1$. 

Therefore, it remains
to prove the asserted bound~\eqref{Lq-est}. To this end, we consider
the preceding inequality for an arbitrary $m>p+1$ and estimate
the two terms on the right-hand side 
separately. For the estimate of the first term, we first apply Young's
inequality with exponent $2$. In a second step,  after properly splitting the exponents, we apply Young's
inequality once with exponents $\frac{m-p}{p-1}$
and $\frac{m-p}{m-2p+1}$, and once with exponents $m-p$ and
$\frac{m-p}{m-p-1}$,
which is possible by our assumption $m>p+1$. In this
way, we obtain the bound
\begin{align*}
    \mathbf{I}
    &\le
    \Big(\frac{\boldsymbol\gm m\lambda^{\frac{p-2}{2}}\boldsymbol\omega}{S-r}\Big)^{\frac{2(p-1)}{p}}
    \biint_{Q_{S}^{(\lambda)}}
    \big(\mu^2+|Du|^2\big)^{\frac{(p-1)p}{2(m-p)}}
    \big(\mu^2 +|Du|^2\big)^{\frac{m(m-2p+1)}{2(m-p)}}\,\dx\dt\\
    &\phantom{\le\,}
    +
    \Big(\frac{\boldsymbol\gm m\lambda^{\frac{p-2}{2}}\boldsymbol\omega}{S-r}\Big)^{\frac{2}{p}}
    \biint_{Q_{S}^{(\lambda)}}
    \big(\mu^2+|Du|^2\big)^{\frac{p}{2(m-p)}}
    \big(\mu^2+|Du|^2\big)^{\frac{m(m-p-1)}{2(m-p)}}\,\dx\dt\\
    &\le
    \tfrac14
    \biint_{Q_{S}^{(\lambda)}}\big(\mu^2+|Du|^2\big)^{\frac{m}{2}}\,\dx\dt
    +
    \Big(\frac{\boldsymbol\gm m\lambda^{\frac{p-2}{2}}\boldsymbol \omega}{S-r}\Big)^{\frac{2(m-p)}{p}}
    \biint_{Q_{S}^{(\lambda)}}\big(\mu^2+|Du|^2\big)^{\frac{p}{2}}\,\dx\dt.
\end{align*}
For the estimate of $\mathbf{II}$, we apply once more Young's inequality, now with
exponents $m-p$ and $\frac{m-p}{m-p-1}$, and acquire the result
\begin{align*}
    \mathbf{II}
    &=
    \bigg[\frac{\boldsymbol\gm m^{\frac32}\boldsymbol\omega}{S-r}+\mu\bigg]
    \biint_{Q_{S}^{(\lambda)}}
    \big(\mu^2+|Du|^2\big)^{\frac{p}{2(m-p)}}
    \big(\mu^2+|Du|^2\big)^{\frac{m(m-p-1)}{2(m-p)}}\,\dx\dt\\
    &\le
    \tfrac14
     \biint_{Q_{S}^{(\lambda)}}\big(\mu^2+|Du|^2\big)^{\frac{m}{2}}\,\dx\dt
     +
     \bigg[\frac{\boldsymbol\gm m^{\frac32}\boldsymbol\omega}{S-r}+\mu\bigg]^{m-p}
     \biint_{Q_{S}^{(\lambda)}}\big(\mu^2+|Du|^2\big)^{\frac{p}{2}}\,\dx\dt. 
\end{align*}
Collecting the estimates, we arrive at 
\begin{align*}
    \biint_{Q_r^{(\lambda)}}&\big(\mu^2+|Du|^2\big)^{\frac{m}{2}}\dx\dt\\
    &\le
      \tfrac12\biint_{Q_{S}^{(\lambda)}}\big(\mu^2+|Du|^2\big)^{\frac{m}{2}}\dx\dt\\
    &\phantom{\le\,}+
      \boldsymbol\gm    
      \Bigg[\bigg(\frac{ m \lambda^{\frac{p-2}{2}}\boldsymbol\omega}{S-r}\bigg)^{\frac{2(m-p)}{p}}
      +
      \bigg(\frac{m^{\frac32}\boldsymbol \omega}{S-r}+\mu\bigg)^{m-p}\Bigg]
      \biint_{Q_{2\rho}^{(\lambda)}}\big(\mu^2+|Du|^2\big)^{\frac{p}{2}}\,\dx\dt,
\end{align*}
for all radii $r,S$ with $\rho\le r<S\le2\rho$, and a constant $\boldsymbol\gm =\boldsymbol\gm (N,p,C_o,C_1,C_2)$. 
Using the iteration Lemma~\ref{lem:Giaq}, we can discard the first term on the
right-hand side  and obtain
\begin{align*}
    \biint_{Q_\rho^{(\lambda)}}&\big(\mu^2+|Du|^2\big)^{\frac{m}{2}}\dx\dt\\
    &\le 
    \boldsymbol\gm   
    \Bigg[\bigg(\frac{m\lambda^{\frac{p-2}{2}}\boldsymbol\omega}{\rho}\bigg)^{\frac{2(m-p)}{p}}
      +
      \bigg(\frac{m^{\frac32}\boldsymbol\omega}{\rho}+\mu\bigg)^{m-p}\Bigg]
      \biint_{Q_{2\rho}^{(\lambda)}}\big(\mu^2+|Du|^2\big)^{\frac{p}{2}}\dx\dt\\
    &\le
      \boldsymbol\gm  
      \bigg[\lambda\bigg(\frac{m\boldsymbol\omega}{\lambda\rho}\bigg)^{\frac{2}{p}}
      +
      \frac{m^{\frac32}\boldsymbol\omega}{\rho}+\mu\bigg]^{m-p}
      \biint_{Q_{2\rho}^{(\lambda)}}\big(\mu^2+|Du|^2\big)^{\frac{p}{2}}\dx\dt,
\end{align*}
where $\boldsymbol\gm =\boldsymbol\gm (N,p,C_o,C_1,C_2)$. This yields
the asserted estimate~\eqref{Lq-est}, and completes
the proof of the lemma. 
\end{proof}

\section{Proof of Proposition~\ref{PROP:LINFTY}}

For the sake of simplicity, we write $Q_{2\rho}^{(\lambda)}$ instead of
$z_o+Q_{2\rho}^{(\lambda)}$ and consider $0<R<S\le\rho$. 
We plan to use the energy inequality~\eqref{energy-est} without the integral averages.
First observe that, since $p\le2$, the integrand of the first integral on the right-hand side admits by  Young's inequality that
\begin{align*}
    \big(\mu^2+|Du|^2\big)^{\frac p2+\alpha}
    &\le
     (\epsilon\lambda)^{p-2}\big(\mu^2+|Du|^2\big)^{1+\alpha}+(\epsilon\lambda)^{p+2\alpha}\\
    &=
      (\epsilon\lambda)^{p-2}\big[\big(\mu^2+|Du|^2\big)^{1+\alpha}+(\epsilon\lambda)^{2+2\alpha}\big]
\end{align*}
for every $\epsilon\in(0,1]$. 
%This is used in the energy inequality~\eqref{energy-est}. At the same time we pass from mean value integrals to integrals without mean. All in all we get
As a result, the energy inequality ~\eqref{energy-est} gives
\begin{align*}%\label{energy-est-2}
    \frac{1}{1+\alpha}&\essup_{\tau\in (-\lambda^{2-p}R^2,0]}
      \int_{K_{R}\times\{\tau\}}\big(\mu^2+|Du|^2\big)^{1+\alpha}\,\dx\\\nonumber
    &\phantom{\le\,}+
      \iint_{Q_{R}^{(\lambda)}}
      \big(\mu^2+|Du|^2\big)^{\frac{p-2}2+\alpha}|D^2u|^2\,\dx\dt\\ \nonumber
    &\le
      \boldsymbol \gm (1+\alpha)\frac{(\epsilon\lambda)^{p-2}}{(S-R)^2}\iint_{Q_{S}^{(\lambda)}}
      \Big[\big(\mu^2 +|Du|^2\big)^{1+\alpha} + (\epsilon\lambda)^{2+2\alpha}\Big]\,\dx\dt
\end{align*}
for every $\alpha\ge0$ and with a  constant $\boldsymbol\gm =\boldsymbol\gm (N,p,C_o,C_1,C_2)$.  With the function
$$
    w:=\big(\mu^2+|Du|^2\big)^{\frac{p+2\alpha}{4}},
$$
the above estimate gives
\begin{align}\label{energy-Moser-w}
    \frac{1}{1+\alpha}
    &\essup_{\tau\in(-\lambda^{2-p}R^2,0]}
        \int_{K_{R}\times\{\tau\}}w^{\frac{4+4\alpha}{p+2\alpha}}\,\dx     
        +
        \frac{1}{(p+2\alpha)^2}\iint_{Q_{R}^{(\lambda)}}|Dw|^2\,\dx\dt\\\nonumber
    &\le 
    \boldsymbol\gm (1+\alpha)\frac{(\epsilon\lambda)^{p-2}}{(S-R)^2}
    \iint_{Q_{S}^{(\lambda)}}\Big[w^{\frac{4+4\alpha}{p+2\alpha}}+(\epsilon\lambda)^{2+2\alpha}\Big]\,\dx\dt.
\end{align}
This is the starting point of Moser's iteration scheme.
  In order to iterate this energy estimate, we consider an exponent
  $p_o>\max\{\frac{N}{2}(2-p),p+1\}$, and a parameter
  $\sigma\in[\frac12,1)$. For any $i\in\N_0$, we introduce the abbreviations
  \begin{equation}\label{Eq:k_n}
    \left\{
      \begin{array}{c}
	\displaystyle\rho_i=\sigma\rho+\frac{(1-\sigma)\rho}{2^i},\quad 
	\displaystyle\tilde{\rho}_i=\frac{\rho_i+\rho_{i+1}}{2},\\[6pt]
	\displaystyle K_{i}=K_{\rho_i},\quad
	Q_{i}=Q_{\rho_i}^{(\lambda)},\\[6pt]
	\widetilde{K}_{i}=K_{\tilde{\rho}_i},\quad 
	\widetilde{Q}_i=Q_{\widetilde\rho_i}^{(\lambda)},\\[6pt]
  	\kappa=1+\frac{2}N,\quad
  	\alpha_o=\frac{p_o}{2}-1,\quad 
	\alpha_{i+1}=\kappa\alpha_i+\frac2N-\frac{2-p}{2},\\[6pt]
   	\displaystyle p_i=2+2\alpha_{i},\quad 
	q_i=\frac{4+4\alpha_i}{p+2\alpha_i}.
      \end{array}
    \right.
  \end{equation}
  Note that the choice of sequences in the last two lines implies
  $\alpha_i=\frac\nu4\kappa^i-1+\tfrac{N(2-p)}{4}$, where
  $\nu:=2p_o-N(2-p)>0$, and
  \begin{equation}\label{pi+1}
    p_{i+1}=(p+2\alpha_i)\frac{N+q_i}N
  \end{equation}
  for every $i\in\N_0$.   Now we choose a standard cutoff function
  $\zeta\in C^\infty(\widetilde Q_i,[0,1])$ that vanishes on
  $\partial_{p}\widetilde{Q}_i$ and equals one in $Q_{i+1}$, such that
  $|D\zeta|\le 2^{i+2}/((1-\sigma)\rho)$.  An application of the Sobolev
  embedding (cf. Lemma~\ref{lem:Sobolev} with $p=2$, $m=q_i$), together with
  the energy estimate~\eqref{energy-Moser-w} for the cylinders
  $\widetilde{Q}_i\subset Q_i$ and $\alpha_i$ in place of $\alpha$, yields that
  \begin{align*}
    \iint_{Q_{i+1}}&\big(\mu^2+|Du|^2\big)^{\frac{p_{i+1}}{2}}\,\dx\dt
                     \le
                     \iint_{\widetilde{Q}_i}(w\zeta)^{2\frac{N+q_i}N}\,\dx\dt\\\nonumber
                   &\le \boldsymbol\gm\,\iint_{\widetilde{Q}_i}|D(w\zeta)|^{2}\,\dx\dt\left[\essup_{\tau\in(-\lambda^{2-p}\tilde{\rho}^2_i,0]}
                     \int_{\widetilde{K}_i\times\{\tau\}}(w\zeta)^{q_i}\,\dx\right]^{\frac{2}N}\\\nonumber
                   &\le \boldsymbol\gm 
                    (p+2\alpha_i)^{3+\frac4N}
                     \left[\frac{2^{2i}(\epsilon\lambda)^{p-2}}{(1-\sigma)^2\rho^2}\right]^{\kappa}
                     \left[\iint_{Q_i}\big[w^{q_i}+(\epsilon\lambda)^{2+2\alpha_i}\big]\,\dx\dt\right]^{\kappa}\\\nonumber  
                   &\le \boldsymbol\gm \boldsymbol b^{i\kappa}\left[\frac{(\epsilon\lambda)^{p-2}}{(1-\sigma)^2\rho^2}\right]^{\kappa}\left[\iint_{Q_i}
                   \big[\big(\mu^2+|Du|^2\big)^{\frac{p_i}{2}}+(\epsilon\lambda)^{p_i}\big]\,\dx\dt\right]^{\kappa},
  \end{align*}
  for some $\boldsymbol b=\boldsymbol b(N)\ge1$ and $\boldsymbol \gm =\boldsymbol\gm (N,p,p_o,C_o,C_1,C_2)\ge1$. 
  In the transition from the second to the third line, i.e.~when applying the energy estimate \eqref{energy-Moser-w},
  the constant from the second line must be increased by the factor $\boldsymbol\gm^\kappa$, where $\boldsymbol\gm$ is the constant from \eqref{energy-Moser-w}.
  In the passage from the penultimate to the last line, we used that  $p+2\alpha_i\le \boldsymbol \gm (N,p,p_o)\kappa^i$ for every
  $i\in\N_0$. In the second display the Sobolev constant $\boldsymbol\gm$  depends on $N$ and $q_i$. Since
  $q_i= 2+\frac{2(2-p)}{p+2\al_i}\in [2,4]$ we can bound  this constant by $\boldsymbol\gm (N)$.
  Next, we divide the above iterative  inequality by
  $|Q_{i+1}|$. Noting that $|Q_i|\le 2^{N+2}|Q_{i+1}|$  for
  any $i\in\N_0$, we obtain
  \begin{align}\label{rev-hoelder}
    \biint_{Q_{i+1}}&\big(\mu^2+|Du|^2\big)^{\frac{p_{i+1}}{2}}\,\dx\dt\\\nonumber
    &\le
      \frac{\boldsymbol\gm \boldsymbol b^{i\kappa}\epsilon^{(p-2)\kappa}\lambda^{p-2}}{(1-\sigma)^{2\kappa}}\left[\biint_{Q_i}\big[
      \big(\mu^2+|Du|^2\big)^{\frac{p_i}{2}}+(\epsilon\lambda)^{p_i}
      \big]\,\dx\dt\right]^{\kappa}.
  \end{align}
  Moreover, the definition of $p_i$ and the facts $\epsilon\le 1$ and $p\le2$ imply
  \begin{align*}
    (\epsilon\lambda)^{p_{i+1}}
    =
    (\epsilon\lambda)^{p-2}(\epsilon\lambda)^{p_i\kappa}
    \le
    \epsilon^{(p-2)\kappa}\lambda^{p-2}(\epsilon\lambda)^{p_i\kappa}.
  \end{align*}
  Hence, with the abbreviation
  \begin{equation*}%\label{Y_n}
    \boldsymbol Y_i=\left[\biint_{Q_i}\big[ (\mu^2+|Du|^2)^\frac{p_i}2+(\epsilon\lambda)^{p_i}\big]\,\dx\dt\right]^{\frac1{p_i}},
  \end{equation*}
  we infer from~\eqref{rev-hoelder} that
  \begin{equation}\label{iteration-1}
    \boldsymbol Y_{i+1}^{p_{i+1}}\le \big(A \boldsymbol b^{i}
    \boldsymbol Y_i^{p_i}\big)^{\kappa}
    \qquad\mbox{for every }i\in\N_0,
  \end{equation}
  where
  \[
    A:=\frac{\boldsymbol\gm \epsilon^{p-2}\lambda^{\frac{N(p-2)}{N+2}}}{(1-\sigma)^2},
  \]
  for a suitable constant $\boldsymbol\gm =\boldsymbol\gm (N,p,C_o,C_1,C_2)\ge1$.  Iterating
  \eqref{iteration-1}, we obtain %for some properly defined $C>0$,
  \begin{align}\label{iteration-2-1}
    \boldsymbol Y_{i}^{p_i}
    &\le
    % A^{[\kappa+\kappa^2+\cdots+\kappa^{i}]}
    % b^{\frac1{p_{i}}[i\kappa+(i-1)\kappa^2+\cdots+\kappa^{i}]}
    % Y_o^{\frac{p_o}{p_{i}}\kappa^{i}}\\\nonumber &=
    \prod_{j=1}^{i} A^{\kappa^{i-j+1}}
    \prod_{j=1}^{i} \boldsymbol b^{j\kappa^{i-j+1}}
    \boldsymbol Y_o^{p_o\kappa^{i}}
  \end{align}
  for every $i\in\N$. In \eqref{iteration-2-1} we take the power
  $\frac{1}{p_i}$ on both sides and use the facts
  \begin{align*}
    \lim_{i\to\infty}\frac{\frac{\nu}{2}(\kappa^i-1)}{p_i}
    =
    \lim_{i\to\infty}\frac{\frac{\nu}{2}(\kappa^i-1)}{\frac\nu2\kappa^i+\tfrac{N(2-p)}{2}}
    =1,
  \end{align*}
  as well as
  \begin{align*}
    \lim_{i\to\infty}\frac{\kappa^i}{p_i}=\frac2\nu.
  \end{align*}
  Consequently, due to Lemma~\ref{lem:A} we obtain
  \begin{align*}
    \limsup_{i\to\infty}\boldsymbol Y_{i}
    &\le
      \limsup_{i\to\infty}\prod_{j=1}^{i} A^{\frac{\kappa^{i-j+1}}{\frac{\nu}{2}(\kappa^i-1)}}
      \prod_{j=1}^{i} \boldsymbol b^{\frac{j\kappa^{i-j+1}}{\frac{\nu}{2}(\kappa^i-1)}}
      Y_o^{\frac{2p_o}{\nu}}\\
    &\le
      A^{\frac{2\kappa}{\nu(\kappa-1)}}
      \boldsymbol b^{\frac{2\kappa^2}{\nu(\kappa-1)^2}}
      \boldsymbol Y_o^{\frac{2p_o}{\nu}}\\
    &=
      A^{\frac{N+2}{\nu}}\boldsymbol b^{\frac{(N+2)^2}{2\nu}} 
      \boldsymbol Y_o^{\frac{2p_o}{\nu}},
  \end{align*}
  from which we deduce
  \[
    \essup_{Q_{\sigma\rho}^{(\lambda)}}|Du| \le
    \left[\frac{\boldsymbol\gm \epsilon^{\frac{(N+2)(p-2)}{2}}\lambda^{\frac{N(p-2)}{2}}}{(1-\sigma)^{N+2}}\biint_{Q_{\rho}^{(\lambda)}}\big[
    (\mu^2+|Du|^2)^\frac{p_o}2+(\epsilon\lambda)^{p_o}
    \big]\dx\dt\right]^{\frac{2}{\nu}}
  \]
  with a constant $\boldsymbol\gm =\boldsymbol\gm (N,p,p_o,C_o,C_1,C_2)$.
  We choose $\sigma=\frac12$ and estimate the right-hand side by means
  of Lemma~\ref{lem:Lq-est} with $m=p_o$. Moreover, to simplify the notation, we define
  $$
    \boldsymbol \tau:= \Big(\frac{\boldsymbol \omega}{\rho\lambda}\Big)^{\frac{2}{p}}
      +
      \frac{\boldsymbol\omega}{\rho\lambda}+\frac{\mu}{\lambda}.
  $$
In this way we get
  \begin{align*}
    &\essup_{Q_{\rho/2}^{(\lambda)}}|Du|\\
    &\quad\le
    \boldsymbol\gm \epsilon^{\frac{(N+2)(p-2)}{\nu}}\lambda^{\frac{N(p-2)}{\nu}}
    \bigg[
      (\lambda\boldsymbol\tau)^{p_o-p}
      \biint_{Q_{2\rho}^{(\lambda)}}\big(\mu^2+|Du|^2\big)^{\frac{p}{2}}\,\dx\dt
      +
      (\epsilon\lambda)^{p_o}
      \bigg]^{\frac{2}{\nu}}\\
    &\quad\le
      \boldsymbol\gm\epsilon^{\frac{2(p-2)}{\nu}+1}\lambda
      +
      \boldsymbol\gm \epsilon^{\frac{(p-2)(N+2)}{\nu}}
      \lambda^{\frac{\nu-2p}{\nu}}\boldsymbol\tau^{\frac{2(p_o-p)}{\nu}}
      \bigg[\biint_{Q_{2\rho}^{(\lambda)}}\big(\mu^2+|Du|^2\big)^{\frac{p}{2}}\,\dx\dt\bigg]^{\frac{2}{\nu}}.
  \end{align*}
  At this stage, we fix 
  $p_o:=\frac{N}{2}(2-p)+2p$, so that $\nu=4p$.
  Note that this exponent satisfies the requirement 
  $p_o>\max\{\frac{N}{2}(2-p),p+1\}$. With this choice, the preceding
  inequality takes the form
  \begin{align*}
    &\essup_{Q_{\rho/2}^{(\lambda)}}|Du|\\
    &\quad\le 
      \boldsymbol\gm \epsilon^{\frac{3p-2}{2p}}\lambda
      +
      \boldsymbol\gm \epsilon^{-\frac{(2-p)(N+2)}{4p}}
       \lambda^{\frac12}\boldsymbol\tau^{\frac{N(2-p)+2p}{4p}}
      \bigg[\biint_{Q_{2\rho}^{(\lambda)}}\big(\mu^2+|Du|^2\big)^{\frac{p}{2}}\,\dx\dt\bigg]^{\frac{1}{2p}}
  \end{align*}
  with $\boldsymbol\gm =\boldsymbol\gm (N,p,C_o,C_1,C_2)$. After replacing $\epsilon$ by
  $\epsilon^{\frac{2p}{3p-2}}$ we infer the asserted estimate with the exponent $\theta=\frac{(2-p)(N+2)}{6p-4}$.

\section{Proof of Proposition~\ref{PROP:LINFTY:P>2} }

Similarly to the proof of Proposition~\ref{PROP:LINFTY} in the case $p\le2$
we implement a Moser iteration procedure. Since $p>2$, Young's inequality yields 
\begin{align*}
    \lambda^{p-2}(\mu^2+|Du|^2)^{1+\alpha}
    &\le
      \eps^{2-p}(\mu^2+|Du|^2)^{\frac{p}{2}+\alpha}+\eps^{2+2\alpha}\lambda^{p+2\alpha}\\
    &=
      \eps^{2-p}\big[(\mu^2+|Du|^2)^{\frac{p}{2}+\alpha}+(\eps\lambda)^{p+2\alpha}\big]
\end{align*}
for every $\epsilon\in(0,1]$. Therefore, in this case the energy estimate~\eqref{energy-est} implies
\begin{align*}%\label{energy-est-2-2}
    \frac{1}{1+\alpha}
    &\essup_{\tau\in(-\lambda^{2-p}R^2,0]}
    \int_{K_{R}\times\{\tau\}}\big(\mu^2+|Du|^2\big)^{1+\alpha}\dx\\\nonumber
    &\phantom{\le\,}+
    \iint_{Q_{R}^{(\lambda)}}
    \big(\mu^2+|Du|^2\big)^{\frac{p-2}2+\alpha}|D^2u|^2\,\dx\dt\\ \nonumber
    &\le
      \frac{\boldsymbol\gm (1+\alpha)}{\eps^{p-2}(S-R)^2}
      \iint_{Q_{S}^{(\lambda)}}
      \Big[\big(\mu^2 +|Du|^2\big)^{\frac{p}{2}+\alpha} + (\eps\lambda)^{p+2\alpha}\Big]\dx\dt
\end{align*}
for every $\alpha\ge0$ and with a constant $\boldsymbol\gm =\boldsymbol\gm (N,p,C_o,C_1,C_2)$. We re-write this inequality in terms of  
\[ 
    w:=\big(\mu^2+|Du|^2\big)^{\frac{p+2\alpha}{4}}
\]
and obtain
\begin{align}\label{energy-Moser-w-2}
    \frac{1}{1+\alpha}
    &\essup_{\tau\in(-\lambda^{2-p}R^2,0]}
    \int_{K_{R}\times\{\tau\}}w^{\frac{4+4\alpha}{p+2\alpha}}\,\dx
    +
    \frac{1}{(p+2\alpha)^2}\iint_{Q_{R}^{(\lambda)}}|Dw|^2\,\dx\dt\\\nonumber
    &\le 
    \frac{\boldsymbol\gm (1+\alpha)}{\eps^{p-2}(S-R)^2}
    \iint_{Q_{S}^{(\lambda)}}\big(w^2+(\eps\lambda)^{p+2\alpha}\big)\dx\dt.
\end{align}
We continue to use the notation $\rho_i,$ $\tilde\rho_i$, $B_i$, $Q_i$,
$\widetilde B_i$, $\widetilde Q_i$, and $\kappa$ introduced in
\eqref{Eq:k_n}. Unlike in \eqref{Eq:k_n}, however, we now define 
\begin{equation*}
    \left\{
    \begin{array}{c}
  	 \alpha_o=\tfrac12(p_o-p),\quad 
	   \alpha_{i+1}=\kappa\alpha_i+\frac2N,\\[6pt]
   	    \displaystyle p_i=p+2\alpha_{i},\quad 
	   q_i=\frac{4+4\alpha_i}{p+2\alpha_i},
    \end{array}
    \right.
\end{equation*}
for an arbitrary $p_o\ge p$, to be later chosen. With these choices we have  \eqref{pi+1}
and $\alpha_i=\vartheta\kappa^i-1$ with $\vartheta:=\tfrac12(p_o-p)+1$
for every $i\in\N_0$. Next, we choose a  cutoff function
$\zeta\in C^\infty(\widetilde Q_i,[0,1])$ that vanishes on
$\partial_{p}\widetilde{Q}_i$ and satisfies $\zeta\equiv1$ in
$Q_{i+1}$ and $|D\zeta|\le 2^{i+2}/((1-\sigma)\rho)$.  The Sobolev
inequality in Lemma~\ref{lem:Sobolev} %in the version from \cite[Chapter I, Proposition~3.1]{DB}%\footnote{Statement in the preliminaries}
(with $m=q_i$ and $p=2$) together with the energy 
estimate~\eqref{energy-Moser-w-2} with $\alpha_i$ in place of $\alpha$ implies
\begin{align*}
    \iint_{Q_{i+1}}
    &\big(\mu^2+|Du|^2\big)^{\frac{p_{i+1}}{2}}\,\dx\dt
    \le
    \iint_{\widetilde{Q}_i}(w\zeta)^{2\frac{N+q_i}N}\,\dx\dt\\\nonumber
    &\le 
    \boldsymbol\gm (N,q_i)
    \,\iint_{\widetilde{Q}_i}|D(w\zeta)|^{2}\,\dx\dt
    \left[
    \essup_{\tau\in(t_o-\lambda^{2-p}\tilde{\rho}^2_i,t_o)}
    \int_{\widetilde{K}_i\times\{\tau\}} (w\zeta)^{q_i}\,\dx\right]^{\frac{2}N}\\\nonumber
    &\le \boldsymbol\gm 
    (p+2\alpha_i)^{3+\frac4N}
    \left[\frac{2^{2i}}{\eps^{p-2}(1-\sigma)^2\rho^2}\right]^{\kappa}
    \left[\iint_{Q_i}
    \big[w^2+(\eps\lambda)^{p+2\alpha_i}\big]
    \,\dx\dt\right]^{\kappa}\\\nonumber  
    &\le \boldsymbol\gm  
    \left[\frac{\boldsymbol b^i}{\eps^{p-2}(1-\sigma)^2\rho^2}\right]^{\kappa}
    \left[\iint_{Q_i}\big[(\mu^2+|Du|^2)^{\frac{p_i}{2}}+(\eps\lambda)^{p_i}\big]\,\dx\dt\right]^{\kappa},
\end{align*}
for some $\boldsymbol b=\boldsymbol b(N)\ge1$ and $\boldsymbol\gm = \boldsymbol\gm (N,p,p_o,C_1,C_2)\ge1$, where we relied on the fact $\alpha_i<\vartheta\kappa^i$ for every $i\in\N_0$. Moreover, we used that $q_i\in [\frac4p, 2]$, in order to get the constant $\boldsymbol\gm (N,q_i)$ uniformly bounded (with respect to $i$). Dividing both sides by $|Q_{i+1}|$, we get
\begin{align}\label{rev-hoelder-p>2}
    \biint_{Q_{i+1}}&\big(\mu^2+|Du|^2\big)^{\frac{p_{i+1}}{2}}\,\dx\dt\\\nonumber
    &\le
      \frac{\boldsymbol\gm \boldsymbol b^{i\kappa}\lambda^{(2-p)\frac{2}{N}}}{\eps^{(p-2)\kappa}(1-\sigma)^{2\kappa}}\left[\biint_{Q_i}\big[\big(\mu^2+|Du|^2\big)^{\frac{p_i}{2}}+(\eps\lambda)^{p_i}\big]\,\dx\dt\right]^{\kappa}.
\end{align}
Moreover, by definition of $p_i$ and since $0<\eps\le1$, $p>2$, and
$N\ge2$, we have 
\begin{align*}
    (\eps\lambda)^{p_{i+1}}
    =
    (\eps\lambda)^{(2-p)\frac2N}(\eps\lambda)^{p_i\kappa}
    \le
    \eps^{(2-p)\kappa}\lambda^{(2-p)\frac2N}(\eps\lambda)^{p_i\kappa}.
\end{align*}
Therefore, abbreviating 
\begin{equation*}%\label{Y_i}
    \boldsymbol Y_i=\left[\biint_{Q_i}\big[(\mu^2+ |Du|^2)^\frac{p_i}2
    +(\eps\lambda)^{p_i}\big]\,\dx\dt\right]^{\frac1{p_i}},
\end{equation*}
we can rewrite~\eqref{rev-hoelder-p>2} as 
\begin{equation}\label{iteration-1-p>2}
     \boldsymbol Y_{i+1}^{p_{i+1}}
     \le \big(A  \boldsymbol b^{i}
     \boldsymbol Y_i^{p_i}\big)^{\kappa}
    \qquad\mbox{for every }i\in\N_0,
\end{equation}
where
\[
    A:=\frac{ \boldsymbol\gm \lambda^{\frac{2(2-p)}{N+2}}}{\eps^{p-2}(1-\sigma)^2},
\]
for a suitable constant $ \boldsymbol\gm = \boldsymbol\gm (N,p,p_o,C_1,C_2)\ge1$.  Iterating \eqref{iteration-1-p>2} yields %for some properly defined $C>0$,
\begin{align*}%\label{iteration-2}
     \boldsymbol Y_{i}^{p_i}
    &\le
    % A^{[\kappa+\kappa^2+\cdots+\kappa^{i}]}
    % b^{\frac1{p_{i}}[i\kappa+(i-1)\kappa^2+\cdots+\kappa^{i}]}
    % Y_o^{\frac{p_o}{p_{i}}\kappa^{i}}\\\nonumber &=
    \prod_{j=1}^{i} A^{\kappa^{i-j+1}}
    \prod_{j=1}^{i}  \boldsymbol b^{j\kappa^{i-j+1}}
     \boldsymbol Y_o^{p_o\kappa^{i}}
\end{align*}
for every $i\in\N$. Here, we take the power
$\frac{1}{p_i}$ on both sides and use the fact $p_i\ge
2\alpha_i\ge 2\vartheta(\kappa^i-1)$ and then 
Lemma~\ref{lem:A} to obtain
\begin{align*}
    \limsup_{i\to\infty} \boldsymbol Y_{i}
    &\le
      \limsup_{i\to\infty}
      \prod_{j=1}^{i} A^{\frac{\kappa^{i-j+1}}{2\vartheta(\kappa^i-1)}}
      \prod_{j=1}^{i}  
      \boldsymbol b^{\frac{j\kappa^{i-j+1}}{2\vartheta(\kappa^i-1)}}
       \boldsymbol Y_o^{\frac{p_o\kappa^i}{2\vartheta(\kappa^i-1)}}\\
    &\le
    %   A^{\frac{\kappa}{2(\kappa-1)}}
    %   b^{\frac{\kappa^2}{2(\kappa-1)^2}}
    %   Y_0^{\frac{p}{2}}\\
    % &=
      A^{\frac{N+2}{4\vartheta}} 
      \boldsymbol b^{\frac{(N+2)^2}{8\vartheta}}
       \boldsymbol Y_o^{\frac{p_o}{2\vartheta}}.
\end{align*}
This implies
\begin{align*}
    \essup_{Q_{\sigma\rho}^{(\lambda)}}&\big(|Du|^2+\mu^2\big)^\frac12\\
    &\le
    \bigg[\frac{ \boldsymbol \gm\lambda^{2-p}}{\eps^{(p-2)\frac{N+2}{2}}(1{-}\sigma)^{N+2}}\biint_{Q_{\rho}^{(\lambda)}}\!
    \big[(\mu^2 +|Du|^2)^\frac{p_o}2+(\eps\lambda)^{p_o}\big]\,\dx\dt\bigg]^{\frac{1}{2\vartheta}}
\end{align*}
for every $\eps\in(0,1]$, with a constant $ \boldsymbol\gm = \boldsymbol\gm (N,p,p_o,C_1,C_2)$. In particular, for the choice $p_o=p$, the right-hand side is finite, which implies $|Du|\in L^\infty_{\loc}(E_T)$. For the
derivation of the asserted quantitative estimate, we choose
$p_o:=(p-2)\frac{N+2}{2}+p$ and obtain  
\begin{align*}
    &\essup_{Q_{\sigma\rho}^{(\lambda)}}\big(|Du|^2+\mu^2\big)^\frac12\\
    &\ \ 
    \le
     \frac{ \boldsymbol\gm \lambda\eps^{\frac{p}{2\vartheta}}}{(1-\sigma)^{\frac{N+2}{2\vartheta}}}\\
     &\ \   \phantom{\le\,}
     +
     \bigg[\frac{ \boldsymbol\gm \lambda^{2-p}}{\eps^{(p-2)\frac{N+2}{2}}(1{-}\sigma)^{N+2}}
     \essup_{Q_{\rho}^{(\lambda)}} \big(|Du|^2+\mu^2\big)^\frac{p_o-p}2
     \biint_{Q_{\rho}^{(\lambda)}}\! \big(|Du|^{2}+\mu^2\big)^\frac{p}{2}
     \dx\dt\bigg]^{\frac{1}{2\vartheta}}\\
    &\ \ 
    \le
    \tfrac12\essup_{Q_{\rho}^{(\lambda)}}\big(|Du|^2+\mu^2\big)^\frac12
    +
      \frac{\boldsymbol\gm \eps^{\frac{p}{2\vartheta}}\lambda}{(1-\sigma)^{\frac{N+2}{2\vartheta}}}\\
    &\ \  \phantom{\le\,}
    +
      \bigg[\frac{\boldsymbol\gm \lambda^{2-p}}{\eps^{(p-2)\frac{N+2}{2}}(1-\sigma)^{N+2}}
      \biint_{Q_{\rho}^{(\lambda)}}\big(|Du|^{2}+\mu^2\big)^\frac{p}{2}\,\dx\dt\bigg]^{\frac{1}{2}},
\end{align*}
for every $\sigma\in[\tfrac12,1)$. 
In the last step, we used Young's inequality with exponents
$\frac{2\vartheta}{p_o-p}$ and $\vartheta$. In view of
the Iteration Lemma~\ref{lem:Giaq},
we can omit the supremum  on the right-hand side.
Therefore, we arrive at the bound 
\begin{align*}
    \essup_{Q_{\rho/2}^{(\lambda)}}|Du|
    \le
      \boldsymbol\gm\eps^{\frac{p}{2\vartheta}}\lambda
      +
      \bigg[\frac{\boldsymbol\gm \lambda^{2-p}}{\eps^{(p-2)\frac{N+2}{2}}}
      \biint_{Q_{\rho}^{(\lambda)}}\big(|Du|^{p}+\mu^p\big)\,\dx\dt\bigg]^{\frac{1}{2}},
\end{align*}
for every $\eps\in(0,1]$, with a constant $\boldsymbol\gm =\boldsymbol\gm (N,p,C_o,C_1,C_2)$.
    This yields the desired estimate after replacing $\eps$ by
    $\eps^{\frac{2\vartheta}{p}}$, with the parameter $\theta=\frac{2\vartheta(p-2)(N+2)}{4p}$.

\chapter{Schauder estimates for \texorpdfstring{$p$}{p}-Laplace type equations}\label{sec:Schauder}

In this chapter we present the proof of Theorem~\ref{theorem:schauder:intro}.  
In contrast to the previous chapter, we consider equations of $p$-Laplace type with coefficients that are merely H\"older continuous. More precisely, we study bounded weak solutions to the equation
\begin{equation}\label{p-laplace}
  \partial_tu-\Div\big(a(x,t)(\mu^2+|Du|^2)^{\frac{p-2}{2}}Du\big)=0
  \qquad\mbox{in $E_T$},
\end{equation}
for parameters $p>1$ and $\mu\in[0,1]$, with a function $a\in L^\infty(E_T)$ satisfying
\begin{equation}\label{prop-a}
\left\{
  \begin{array}{c}
    C_o\le a(x,t)\le C_1,\\[6pt]
    |a(x,t)-a(y,t)|\le C_1|x-y|^\alpha,
  \end{array}
  \right.
\end{equation}
for a.e.~$x,y\in E$ and $t\in(0,T)$, with a H\"older exponent
$\alpha\in(0,1)$ and structural constants $0<C_o\le C_1$.

\section{Energy estimate}

Weak solutions to \eqref{p-laplace} satisfy a basic energy estimate,  which follows by testing the equation  against $(u-\xi)\z^p$,
 with $\xi\in\R$ and a suitable cut-off function $\z$, and modulo a Steklov averaging procedure. The assumption~\eqref{prop-a}$_2$, i.e.~H\"older continuity with respect to the spatial variable, is not used in this context. 
\begin{proposition}

Assume that $p>1$, $\mu\in [0,1]$, and that $u$ is a weak
solution to~\eqref{p-laplace}, under assumption~\eqref{prop-a}$_1$.
Then, for any $\xi\in\R$, any cylinder $z_o+Q_S^{(\lambda)}\Subset E_T$, and any
$\frac12 S\le R<S$ we have
\begin{align}\label{energy-est-zero-order}
    \frac{\lambda^{p-2}}{R^2}\essup_{\tau\in t_o+\Lambda_R^{(\lambda)}}
    &
      \mint_{K_{R}(x_o)\times\{\tau\}}|u-\xi|^2\,\dx+
      \biint_{z_o+Q_{R}^{(\lambda)}}
      \big(\mu^2+|Du|^2\big)^{\frac{p}2}\,\dx\dt\\ \nonumber
    &\le
      \boldsymbol\gm
      \biint_{z_o+Q_{S}^{(\lambda)}}\bigg[
      \frac{|u-\xi|^p}{(S-R)^p} +\lambda^{p-2}\frac{|u-\xi|^2}{S^2-R^2} +\mu^p\bigg]
 \,\dx\dt
  \end{align}
  with a constant $\boldsymbol\gm = \boldsymbol\gm (N,p,C_o,C_1)$. 
\end{proposition}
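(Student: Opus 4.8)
The plan is to run the standard Caccioppoli testing scheme, paying the usual price for the fact that weak solutions need not be weakly differentiable in time. After a translation we may assume $z_o=(0,0)$. I fix radii $\tfrac12S\le R<S$ and a cutoff $\z(x,t)=\z_1(x)\z_2(t)$ with $\z_1\in C^\infty_0(K_S,[0,1])$, $\z_1\equiv1$ on $K_R$, $|D\z_1|\le\frac{2}{S-R}$, and $\z_2\in C^\infty((-\lambda^{2-p}S^2,0],[0,1])$ vanishing near $t=-\lambda^{2-p}S^2$, with $\z_2\equiv1$ on $(-\lambda^{2-p}R^2,0]$ and $0\le\z_2'\le\frac{\boldsymbol\gm\lambda^{p-2}}{S^2-R^2}$. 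Since $(u-\xi)\z^p$ is not directly admissible, I would work with the Steklov-averaged formulation of \eqref{p-laplace} (cf.\ Lemma~\ref{lm:Stek}), which holds for a.e.\ $t$ when tested against functions in $W^{1,p}_0(K_S)$; inserting there the spatial test function $\big([u]_h-\xi\big)\z^p(\cdot,t)$, integrating over $t\in(-\lambda^{2-p}S^2,\tau)$ for $\tau\in\Lambda_R^{(\lambda)}$, and finally letting $h\downarrow0$ with the convergences from Lemma~\ref{lm:Stek} produces the genuine energy identity.

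For the parabolic part, the identity $\partial_t[u]_h\,\big([u]_h-\xi\big)=\tfrac12\partial_t\big([u]_h-\xi\big)^2$ together with an integration by parts in $t$ yields, in the limit, $\tfrac12\int_{K_S\times\{\tau\}}(u-\xi)^2\z^p\,\dx-\tfrac12\iint_{Q_S^{(\lambda)}}(u-\xi)^2\z_1^p\,\partial_t\z_2^p\,\dx\dt$; the contribution at the bottom of the cylinder vanishes by the choice of $\z_2$. Bounding the second term by $\boldsymbol\gm\frac{\lambda^{p-2}}{S^2-R^2}\iint_{Q_S^{(\lambda)}}(u-\xi)^2\,\dx\dt$, taking the supremum over $\tau$, restricting the left integral to $K_R$ where $\z\equiv1$, and multiplying by $\lambda^{p-2}/(R^2|K_R|)$, I recover the first term on the left of \eqref{energy-est-zero-order} against an admissible right-hand side --- here $\tfrac12S\le R$ is used to absorb the measure factor $(S/R)^{N+2}$ into the constant.

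For the diffusion part, expanding $D\big[(u-\xi)\z^p\big]=\z^p Du+p(u-\xi)\z^{p-1}D\z$ and using the ellipticity $a\ge C_o$ gives the good term $C_o\iint(\mu^2+|Du|^2)^{\frac{p-2}2}|Du|^2\z^p$ and a remainder which, via $(\mu^2+|Du|^2)^{\frac{p-2}2}|Du|\le(\mu^2+|Du|^2)^{\frac{p-1}2}$, the bound $a\le C_1$, $|D\z|\le\frac{2}{S-R}$, and Young's inequality with exponents $\tfrac{p}{p-1}$ and $p$, is dominated by $\epsilon\iint(\mu^2+|Du|^2)^{\frac p2}\z^p+\boldsymbol\gm_\epsilon\iint\frac{|u-\xi|^p}{(S-R)^p}\,\dx\dt$. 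To convert the good term into $\iint(\mu^2+|Du|^2)^{\frac p2}\z^p$ I use $(\mu^2+|Du|^2)^{\frac p2}=(\mu^2+|Du|^2)^{\frac{p-2}2}|Du|^2+\mu^2(\mu^2+|Du|^2)^{\frac{p-2}2}$, estimating the last summand by $\mu^p$ when $1<p\le2$ and by $\epsilon(\mu^2+|Du|^2)^{\frac p2}+\boldsymbol\gm_\epsilon\mu^p$ (Young) when $p>2$. Choosing $\epsilon$ small enough to absorb all $\epsilon$-terms into $\iint(\mu^2+|Du|^2)^{\frac p2}\z^p$ on the left, and then restricting that integral to $Q_R^{(\lambda)}$ where $\z\equiv1$, yields the second term on the left of \eqref{energy-est-zero-order}; no geometric iteration is needed, precisely because $\z\equiv1$ on $Q_R^{(\lambda)}$ and we only discard mass.

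The only genuinely delicate point is the bookkeeping in the Steklov limit: one has to justify each passage $h\downarrow0$ --- in particular for the product $\partial_t[u]_h\,([u]_h-\xi)$, for the diffusion term $\big[a(\cdot)(\mu^2+|Du|^2)^{\frac{p-2}2}Du\big]_h\cdot D\big(([u]_h-\xi)\z^p\big)$, and for the pointwise-in-$\tau$ evaluation $\int_{K_S\times\{\tau\}}([u]_h-\xi)^2\z^p\,\dx$ --- and to check that the supremum over $\tau$ is preserved. This is carried out exactly as in \cite[Chapter~II]{DB} and introduces no new idea, so the whole argument reduces to the elementary computations sketched above together with a standard approximation.
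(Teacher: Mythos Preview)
Your proof is correct and follows exactly the approach the paper indicates: testing the equation against $(u-\xi)\z^p$ via Steklov averages with a standard space-time cutoff, then using the ellipticity bound $a\ge C_o$, the growth bound $a\le C_1$, and Young's inequality to absorb terms. The paper does not spell out the details beyond this one-sentence description, and your treatment of the parabolic term, the diffusion term, and the conversion from $(\mu^2+|Du|^2)^{\frac{p-2}{2}}|Du|^2$ to $(\mu^2+|Du|^2)^{\frac p2}$ is the standard way to fill them in.
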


\section{An a priori gradient H\"older estimate}

In this section we assume the gradient is locally bounded and derive its H\"older regularity by using the results from
\cite{BDLS-Tolksdorf} as {\it a priori} estimates.
As pointed out in \cite[Remark~1.4]{BDLS-Tolksdorf}, 
 the proof of \cite[Theorem~1.3]{BDLS-Tolksdorf} remains valid for any exponent $p>1$
provided the gradient is locally bounded. Moreover, if the coefficient function is independent of the spatial variable $x$, the smallness conditions on the radius of the cylinder appearing in \cite[Propositions~5.1 and 5.2]{BDLS-Tolksdorf} are unnecessary here; see also~\cite[Chapter~9, Propositions~1.1 and ~1.2]{DB}. 
More precisely, the arguments from \cite{BDLS-Tolksdorf}
yield the following Campanato-type estimate.

\begin{lemma}\label{lem:campanato}
Let $\mu\in(0,1]$, $p>1$, and suppose that
assumptions~\eqref{prop-a-diff}$_1$ are satisfied for coefficients $b(t)$
independent of the spatial variable $x$. 
There exist constants %$\rho_o\in(0,1)$,
$\boldsymbol\gm >0$ and $\beta\in(0,1)$, depending
only on the data $N,p,C_o$, and $C_1$, such that whenever $u$
is a weak solution to equation~\eqref{non-deg-pde} and 
$z_o+Q_{2\rho}^{(\lambda)}\Subset E_T$ is a cylinder with
$\rho>0$ %0<2\rho<\rho_o$ 
and $\lambda\ge \mu$, such that
\begin{equation}\label{intrinsic}
    \sup_{z_o+Q_{2\rho}^{(\lambda)}}\big(\mu^2+|Du|^2\big)\le\lambda^2,
\end{equation}
then, for all radii $r\in(0,\rho]$, we have  
\begin{align}\label{campanato}
	\biint_{z_o+Q_r^{(\lambda)}}\big|Du-(Du)_{z_o,r}^{(\lambda)}\big|^p \,\dx\dt
	\le
	\boldsymbol\gm \Big(\frac{r}{\rho}\Big)^{\beta p}\lambda^{p},
\end{align}
where $(Du)_{z_o,r}^{(\lambda)}$ stands for the integral average of $Du$ on
$z_o+Q_r^{(\lambda)}$.
\end{lemma}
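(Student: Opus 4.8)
\textbf{Proof plan for Lemma~\ref{lem:campanato}.}
The plan is to deduce this Campanato-type estimate directly from the results of \cite{BDLS-Tolksdorf}, treating them as a black box and only explaining why the hypotheses apply in the present situation. First I would note that equation~\eqref{non-deg-pde} with coefficients $b(t)$ independent of $x$ is precisely the equation to which \cite[Theorem~1.3]{BDLS-Tolksdorf} (equivalently \cite[Chapter~IX]{DB}) applies. The intrinsic coupling $\lambda\ge\mu$ together with the pointwise bound~\eqref{intrinsic} means that on $z_o+Q_{2\rho}^{(\lambda)}$ the equation is uniformly parabolic with ellipticity and growth constants controlled only by $N,p,C_o,C_1$ (since on the set $\{(\mu^2+|Du|^2)\le\lambda^2\}$ the factor $(\mu^2+|Du|^2)^{(p-2)/2}$ is comparable to $\lambda^{p-2}$ up to constants depending on $p$), and the cylinders $Q_r^{(\lambda)}$ are exactly the intrinsic cylinders matched to this scaling.

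The key point to emphasize, following \cite[Remark~1.4]{BDLS-Tolksdorf}, is that the proof there is valid for \emph{every} $p>1$ \emph{under the a priori assumption of a bounded gradient}, which is exactly~\eqref{intrinsic}; and that when the coefficient is $x$-independent, the smallness restrictions on $\rho$ that otherwise appear in \cite[Propositions~5.1 and 5.2]{BDLS-Tolksdorf} are not needed, because there is no spatial oscillation of $b$ to absorb. Thus \cite[Propositions~5.1, 5.2]{BDLS-Tolksdorf} yield a decay estimate of the form $\Phi(z_o;\theta r)\le C\theta^{\beta p}\Phi(z_o;r)+(\text{lower order})$ for the excess functional $\Phi(z_o;r):=\biint_{z_o+Q_r^{(\lambda)}}|Du-(Du)_{z_o,r}^{(\lambda)}|^p\,\dx\dt$, valid for all $r\le\rho$ and a fixed $\theta\in(0,1)$; here I would normalize by $\lambda^p$ so that all quantities are dimensionless and the constants depend only on the data.

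The remaining step is a standard iteration of this decay inequality over the geometric sequence of radii $\theta^k\rho$, together with the crude a priori bound $\Phi(z_o;\rho)\le\boldsymbol\gm\lambda^p$ coming directly from~\eqref{intrinsic} (indeed $|Du-(Du)^{(\lambda)}_{z_o,\rho}|\le 2\lambda$ on $z_o+Q_\rho^{(\lambda)}$). Interpolating over dyadic-type scales then upgrades the discrete decay to the continuous estimate~\eqref{campanato} for all $r\in(0,\rho]$, with a possibly smaller $\beta$ and a larger $\boldsymbol\gm$, both still depending only on $N,p,C_o,C_1$. The main obstacle is essentially bookkeeping: one must check carefully that the constants from \cite{BDLS-Tolksdorf} genuinely do not deteriorate when $p$ is close to $1$ provided the gradient bound is assumed, and that the comparison-with-frozen-coefficients argument there collapses trivially when $b$ is already $x$-independent; but no new analytic input is required beyond citing \cite{BDLS-Tolksdorf} and \cite[Chapter~IX]{DB}.
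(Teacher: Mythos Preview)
Your proposal is correct and follows essentially the same route as the paper: both reduce the lemma to \cite{BDLS-Tolksdorf}, invoking Remark~1.4 there to cover every $p>1$ under the a priori gradient bound~\eqref{intrinsic}, and observing that $x$-independence of $b$ renders the smallness restriction on the radius in \cite[Propositions~5.1--5.2]{BDLS-Tolksdorf} unnecessary. The one technical point the paper makes explicit that you gloss over is the translation between the cylinders $Q_r^{(\lambda)}$ used here and the cylinders $\widetilde Q_r^{(\lambda)}=K_r\times(t_o-(\mu^2+\lambda^2)^{(2-p)/2}r^2,t_o]$ used in \cite{BDLS-Tolksdorf}, which is handled via the substitution $\lambda_\mu:=\sqrt{\lambda^2-\mu^2}$; your heuristic that the equation becomes ``uniformly parabolic'' on $\{(\mu^2+|Du|^2)\le\lambda^2\}$ is not literally true (the diffusion coefficient can still degenerate where $|Du|$ and $\mu$ are small), but since you treat \cite{BDLS-Tolksdorf} as a black box this does not affect the argument.
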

\begin{proof}
The proof of \cite[Theorem~1.3]{BDLS-Tolksdorf} is presented in Section~5.2; the estimates are stated for cylinders of the form
\begin{equation*}
    (x_o,t_o)+ \widetilde Q_r^{(\lambda)}
    :=
    K_r(x_o)\times\big(t_o-(\mu^2+\lambda^2)^{\frac{2-p}{2}}r^2,t_o\big].
\end{equation*}
In particular, with $\lambda_\mu:=\sqrt{\lambda^2-\mu^2}$ we have 
\begin{equation}\label{two-cylinders}
    (x_o,t_o)+ \widetilde Q_r^{(\lambda_\mu)}
    =
    (x_o,t_o)+ Q_r^{(\lambda)}
\end{equation}
for every $r\in(0,2\rho]$,
and assumption \eqref{intrinsic} can be rewritten in the form
\begin{equation*}
     \sup_{z_o+ \widetilde Q_{2\rho}^{(\lambda_\mu)}}|Du|\le\lambda_\mu.
\end{equation*}
%Let $\rho_o\in(0,1)$ be the radius provided by \cite[Thm.~1.3]{BDLS-Tolksdorf}. In the case of equation~\eqref{non-deg-pde}, this radius depends on the data $ N,p,C_o,C_1,C_2 $. Since $2\rho<\rho_o$, we are in the same situation as in the proof of \cite[Thm.~1.3]{BDLS-Tolksdorf}. 
%
This corresponds to the sup-estimate in {\it Step~1} on p.~27 of \cite{BDLS-Tolksdorf}.
Then, we can repeat the
arguments in {\it Steps~2 -- 4} on pp.~27 -- 29 and arrive at \cite[Eqn. (5.16)]{BDLS-Tolksdorf} with $\lambda_\mu$
in place of $\lambda$. Since the coefficients $b(t)$ are independent of $x$, the upper bound $\rho<\rho_o$ on the radius can be avoided in the present situation. 
Therefore, with the Lebesgue representative $\Gamma_{z_o}$ of $Du(z_o)$, 
we obtain the Campanato type estimate 
\begin{equation*}%\label{pre-campanato}
	\biint_{z_o+\widetilde Q_r^{(\lambda_\mu)}}|Du-\Gamma_{z_o}|^p \,\dx\dt
	\le
	\boldsymbol\gm  \Big(\frac{r}{\rho}\Big)^{\beta p} 
	\lambda_\mu^{p}
        \le
	\boldsymbol\gm \Big(\frac{r}{\rho}\Big)^{\beta p} 
	\lambda^{p}
\end{equation*}
for every $r\in(0,\rho]$, with constants
$\boldsymbol\gm >0$ and $\beta\in(0,1)$ depending on
$N,p,C_o,C_1$. In view of~\eqref{two-cylinders},
this implies 
\begin{align*}
  \biint_{z_o+Q_r^{(\lambda)}}\big|Du-(Du)_r^{(\lambda)}\big|^p \,\dx\dt
  \le
    \boldsymbol\gm (p)\biint_{z_o+Q_r^{(\lambda)}}|Du-\Gamma_{z_o}|^p \,\dx\dt
  \le
    \boldsymbol\gm \Big(\frac{r}{\rho}\Big)^{\beta p}\lambda^{p},
\end{align*}
where we write $(Du)_r^{(\lambda)}$ instead of $(Du)_{z_o,r}^{(\lambda)}$ for simplicity. This yields the claim~\eqref{campanato}.
\end{proof}

\section{A comparison estimate} 
Let $p>1$ and let $v$ be a weak solution to the parabolic equation
\begin{equation}\label{EqA}
    \left\{
    \begin{array}{c}
    v\in L^p\big(0,T;W^{1,p}(E)\big)\cap C^0\big([0,T];L^2(E)\big),\\[6pt]
    \partial_tv-\Div \mathbf A(x,t,Dv)=0\,\,  \mbox{weakly in $E_T$.}
     \end{array}
    \right.
\end{equation}
Next, let us consider a weak solution of the following Cauchy-Dirichlet problem
\begin{equation}\label{EqB}
    \left\{
    \begin{array}{c}
        w\in L^p\big(0,T;W^{1,p}(E)\big)\cap C^0\big([0,T];L^2(E)\big),\\[6pt]
         \partial_tw-\Div \mathbf B(x,t,Dw)=0 \, \, \mbox{weakly in $E_T$,}\\[6pt]
         \mbox{$w=v$ on $\partial_p E_T$,}
    \end{array}
    \right.
\end{equation}
where the boundary datum is taken in the sense of Definition~\ref{def:weak}.

The vector-fields $\mathbf A, \mathbf B\colon E_T\times\R^N\to\R^N$ are assumed to be Carath\'eodory functions satisfying the monotonicity and growth conditions
\begin{equation}\label{p-growth}
    \left\{
    \begin{array}{c}
      \big(\mathbf B(x,t,\xi)-\mathbf B(x,t,\eta) \big)\cdot(\xi-\eta)
      \ge
      C_o(\mu^2+|\xi|^2+|\eta|^2)^{\frac{p-2}{2}}|\xi-\eta|^2,\\[6pt]
      \big|\mathbf A(x,t,\xi)\big|,\ \big|\mathbf B(x,t,\xi)\big| 
      \le C_1(\mu^2+|\xi|^2)^{\frac{p-1}{2}},
    \end{array}
    \right.
\end{equation}
for a.e.~$(x,t)\in E_T$ and every $\xi,\eta\in\R^N$, with
constants $0<C_o\le C_1$. 

The following comparison estimate, true for all  $p > 1$, is the main result of
this section.

\begin{lemma}\label{lem:Comp2}
Let $v$ be a weak solution to the parabolic equation \eqref{EqA} and $w$ be a weak solution to \eqref{EqB}, with structure conditions 
\eqref{p-growth}. Then, in the case $1<p<2$ we have the comparison estimate
  \begin{align}\label{VglAbsch2}
  \iint_{E_T}&|Dv-Dw|^p\,\dx\dt\\\nonumber
  &\le
  \boldsymbol\gm  
  \big\|\mathbf A(\cdot,Dv)-\mathbf B(\cdot,Dv)\big\|_{L^{p'}(E_T)}^{p'}\\\nonumber
  &\phantom{\le\,}+
  \boldsymbol\gm 
  \|\mathbf A(\cdot,Dv)-\mathbf B(\cdot,Dv)\|_{L^{p'}(E_T)}^{p}
    \bigg[\iint_{E_T}\big(\mu^2+|Dv|^2\big)^{\frac{p}2}\dx\dt\bigg]^{2-p}.
\end{align}
In the case $p\ge2$, this holds without the last term, i.e. 
\begin{align}\label{comparison:p>2}
  \iint_{E_T}|Dv-Dw|^p\,\dx\dt
  \le
  \boldsymbol\gm \big\|\mathbf A(\cdot,Dv)-\mathbf B(\cdot,Dv)\big\|_{L^{p'}(E_T)}^{p'}.
\end{align}
Furthermore, for any $p>1$ we have the energy estimate
\begin{align}\label{EAbsch2}
    \iint_{E_T}&\big(\mu^2+|Dw|^2\big)^{\frac{p}{2}}\dx\dt \\\nonumber
    &\le
      \boldsymbol\gm 
      \iint_{E_T}\big(\mu^2+|Dv|^2\big)^{\frac{p}2}\dx\dt
     +
      \boldsymbol\gm \|\mathbf B(\cdot,Dv)-\mathbf A(\cdot,Dv)\|_{L^{p'}(E_T)}^{p'}.
\end{align}
In all estimates, the constant $\boldsymbol\gm $ depends only on $p$ and $C_o$.
\end{lemma}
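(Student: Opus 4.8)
The plan is to derive all three estimates from testing the weak formulations of \eqref{EqA} and \eqref{EqB} against the difference $v-w$, which is admissible since $w=v$ on $\partial_{\mathrm p}E_T$ in the sense of Definition~\ref{def:weak}. Using the weak form of \eqref{EqA} tested with $v-w$ and the weak form of \eqref{EqB} tested with $v-w$, and subtracting, one obtains after a standard Steklov-averaging argument (to make the time derivative meaningful) the identity
\begin{equation*}
  \iint_{E_T}\big(\mathbf B(x,t,Dw)-\mathbf B(x,t,Dv)\big)\cdot(Dw-Dv)\,\dx\dt
  =
  \iint_{E_T}\big(\mathbf A(x,t,Dv)-\mathbf B(x,t,Dv)\big)\cdot(Dv-Dw)\,\dx\dt,
\end{equation*}
the boundary term vanishing because $(v-w)(\cdot,0)=0$. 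Here I would be careful to recall that the time-derivative pairing produces a non-negative contribution $\tfrac12\|(v-w)(\cdot,T)\|_{L^2(E)}^2$ which is simply discarded; this is the one routine-but-delicate point, handled exactly as in the proof of Proposition~\ref{prop:comparison-plapl} via Steklov averages and the regularity $v,w\in C^0([0,T];L^2(E))$.

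Next I would invoke the monotonicity bound \eqref{p-growth}$_1$ on the left-hand side to get
\begin{equation*}
  C_o\iint_{E_T}\big(\mu^2+|Dv|^2+|Dw|^2\big)^{\frac{p-2}{2}}|Dv-Dw|^2\,\dx\dt
  \le
  \iint_{E_T}\big|\mathbf A(x,t,Dv)-\mathbf B(x,t,Dv)\big|\,|Dv-Dw|\,\dx\dt.
\end{equation*}
For $p\ge2$ the integrand on the left dominates $|Dv-Dw|^p$ pointwise, so Young's inequality on the right immediately yields \eqref{comparison:p>2}. For $1<p<2$ I would instead write $|Dv-Dw|^p$ by inserting the weight: $|Dv-Dw|^p=\big[(\mu^2+|Dv|^2+|Dw|^2)^{\frac{p-2}{2}}|Dv-Dw|^2\big]^{p/2}\big(\mu^2+|Dv|^2+|Dw|^2\big)^{\frac{(2-p)p}{4}}$, apply H\"older with exponents $2/p$ and $2/(2-p)$, bound the first factor by the left-hand side of the monotonicity estimate just derived, and bound the second factor by $\iint(\mu^2+|Dv|^2+|Dw|^2)^{p/2}$; then a further application of Young's inequality on the right-hand side of the monotonicity estimate and absorption of $\iint|Dw|^p$ (controlled via the energy estimate, see below) produces \eqref{VglAbsch2} after elementary algebra with the exponent $2-p$.

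For the energy estimate \eqref{EAbsch2} I would use the weak form of \eqref{EqB} tested with $w-v$, apply the growth and monotonicity conditions \eqref{p-growth}, and Young's inequality, which gives $\iint(\mu^2+|Dw|^2)^{p/2}\le \boldsymbol\gm\iint(\mu^2+|Dv|^2)^{p/2}+\boldsymbol\gm\|\mathbf B(\cdot,Dv)-\mathbf A(\cdot,Dv)\|_{L^{p'}(E_T)}^{p'}$; this is the most straightforward part and it is exactly what is needed to close the absorption step in the $1<p<2$ case above, so it should be proved first. The main obstacle I anticipate is purely bookkeeping: in the subquadratic range one must interleave the H\"older step, the Young step, and the absorption of the $\iint|Dw|^p$ term in the right order so that the final constant depends only on $p$ and $C_o$ and the power $2-p$ appears on the correct factor; there is no conceptual difficulty beyond the Steklov-averaging justification of the time term, which is standard and already carried out in this paper.
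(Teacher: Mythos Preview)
Your approach is essentially the paper's: subtract the two weak formulations, test with $v-w$ via Steklov averages, discard the non-negative term $\tfrac12\|(v-w)(\cdot,T)\|_{L^2}^2$, apply monotonicity \eqref{p-growth}$_1$ on the left and H\"older on the right; the $p\ge2$ case and the H\"older insertion with exponents $2/p$, $2/(2-p)$ for $1<p<2$ are exactly what the paper does.

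The one point where your ordering diverges is the absorption in the subquadratic case. You want to control $\iint|Dw|^p$ through an energy estimate proved \emph{first} by testing \eqref{EqB} directly; but that route needs the growth bound \eqref{p-growth}$_2$ to get coercivity (via monotonicity with $\eta=0$ and $|\mathbf B(x,t,0)|\le C_1\mu^{p-1}$), so the constant would depend on $C_1$, contradicting the stated dependence on $p,C_o$ only. The paper sidesteps this by writing $(\mu^2+|Dv|^2+|Dw|^2)^{p/2}\le \boldsymbol\gm\big[(\mu^2+|Dv|^2)^{p/2}+|Dv-Dw|^p\big]$ and re-absorbing $\iint|Dv-Dw|^p$ (not $\iint|Dw|^p$) into the left-hand side after Young; this keeps the constant depending only on $p,C_o$. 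The energy estimate \eqref{EAbsch2} then drops out \emph{afterwards} from the triangle inequality $|Dw|^p\le 2^{p-1}(|Dv|^p+|Dv-Dw|^p)$ combined with the already-proved comparison estimates (plus one more Young with exponents $\tfrac{1}{p-1},\tfrac{1}{2-p}$ when $p<2$). So the fix is simply to reverse your order: comparison first, energy second.
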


\begin{proof} 
According to the notion of solution in Definition~\ref{def:weak}, writing down the integral identity \eqref{Eq:weak-form-1} for $v$ and $w$ in its Steklov-form, subtracting one from another, we arrive at
\begin{equation}\label{Eq:weak:v-w}
     \iint_{E_T}\Big[
  -[v-w]_h\pl_t\z
  +
  \big[\mathbf A(x,t,Dv)-\mathbf B(x,t,Dw)\big]_h\cdot 
  D\z
  \Big]\, \dx\dt=0
\end{equation}
for any 
$
\z\in  W^{1,2}(0,T; L^{2}(E))\cap L^p (0,T; W^{1,p}_{0}(E))
$
such that $\z(\cdot, 0)=0=\z(\cdot, T)$. Now we choose $\z=[
  v-w]_h\psi_\varep$, where $\psi_\varep$ is the Lipschitz function in $[0,T]$ that equals $1$ in $[t_1,t_2]\subset(0,T)$, vanishes outside $[t_1-\varep, t_2+\varep]$ for some $0<\varep<\min\{t_1, T-t_2\}$ and is linearly interpolated otherwise. Thereby, $[
  v-w]_h$ is the Steklov average of $v-w$ and $0<h<T-(t_2+\eps)$.
There are two terms to estimate in \eqref{Eq:weak:v-w}.  The term containing the time derivative is computed as
  \begin{align*}
  -\iint_{E_T}&
  [v-w]_h\pl_t([
  v-w]_{h}\psi_\varep)\,\dx\dt \\
  &=
  -\tfrac12\iint_{E_T}\pl_t[v-w]_{h}^2\psi_{\varep}\,\dx\dt -
  \iint_{E_T}[v-w]_{h}^2 \pl_t\psi_\varep\,\dx\dt\\
  &=
  -\tfrac12 \iint_{E_T}[v-w]_{h}^2 \pl_t\psi_\varep\,\dx\dt.
\end{align*}
%The second term tends to zero as $h\downarrow0$ owing to Lemma~\ref{lm:Stek}~(i). As for the first term, we integrate by parts in time, 
We let $h\downarrow0$ with the aid of Lemma~\ref{lm:Stek}~(i) and then let $\varep\downarrow0$, to obtain that the last term converges to
\[
\tfrac12\int_{E\times\{t_2\}} (v-w)^2\,\dx -\tfrac12\int_{E\times\{t_1\}} (v-w)^2\,\dx. %\bigg|^{t_2}_{t_1}. %- \tfrac12 \iint_{E_T} (v-w)^2\,\dx\dt.
\]
Note that the first integral gives a non-negative contribution and can be discarded, while the second integral tends to $0$ as $t_1\downarrow0$ according to the initial condition in the problem~\eqref{EqB}.
Moreover, the remaining term in \eqref{Eq:weak:v-w} tends to
\[
\iint_{E\times(t_1,t_2)}
  \big[\mathbf A(x,t,Dv)-\mathbf B(x,t,Dw)\big]\cdot 
  D(v-w)\, \dx\dt
\]
if we first let $h\downarrow0$ with the aid of Lemma~\ref{lm:Stek}~(iv) and then let $\varep\downarrow0$. Consequently, after letting
$t_1\downarrow0$ and $t_2\uparrow T$, we have
\begin{align*}
  \iint_{E_T}&
   \big[\mathbf B(x,t,Dv)-\mathbf B(x,t,Dw)\big]\cdot(Dv-Dw)\,\dx\dt\\
  &\le
  \iint_{E_T}
   \big[\mathbf B(x,t,Dv)-\mathbf A(x,t,Dv)\big]\cdot(Dv-Dw)\,\dx\dt.
\end{align*}
Using the monotonicity assumption~\eqref{p-growth}$_1$ of $\mathbf B$ to estimate the
left-hand side from below, and H\"older's inequality to bound the
right-hand side, we deduce
\begin{align}\label{pre-comparison}
  C_o\iint_{E_T}
  &\big(\mu^2+|Dv|^2+|Dw|^2\big)^{\frac{p-2}{2}}|Dv-Dw|^2\dx\dt\\\nonumber
  &\le
  \|\mathbf B(\cdot,Dv)-\mathbf A(\cdot,Dv)\|_{L^{p'}(E_T)}
  \bigg[\iint_{E_T}|Dv-Dw|^p\dx\dt\bigg]^{\frac1p}.
\end{align}
In the case $p\ge2$, this readily implies
\begin{align*}
  \iint_{E_T}|Dv-Dw|^p\dx\dt
  \le
  \boldsymbol\gm \|\mathbf B(\cdot,Dv)-\mathbf A(\cdot,Dv)\|_{L^{p'}(E_T)}
  \bigg[\iint_{E_T}|Dv-Dw|^p\dx\dt\bigg]^{\frac1p}
\end{align*}
for a constant $\boldsymbol\gm =\boldsymbol\gm (p,C_o)$, which yields the asserted comparison estimate \eqref{comparison:p>2} in this case. For exponents $1<p<2$, however, we apply first H\"older's inequality and then estimate \eqref{pre-comparison} to obtain 
\begin{align*}
  \iint_{E_T}&|Dv-Dw|^p\, \dx\dt\\
  &\le
    \bigg[\iint_{E_T}\big(\mu^2+|Dv|^2+|Dw|^2\big)^{\frac{p-2}{2}}|Dv-Dw|^2\, \dx\dt\bigg]^{\frac
    p2}\\
  &\qquad\phantom{\le\,}
  \cdot
\bigg[\iint_{E_T}\big(\mu^2+|Dv|^2+|Dw|^2\big)^{\frac{p}2}\, \dx\dt\bigg]^{\frac{2-p}{2}}\\
  &\le
    \boldsymbol\gm \|\mathbf B(\cdot,Dv)-\mathbf A(\cdot,Dv)\|_{L^{p'}(E_T)}^{\frac{p}{2}}
    \bigg[\iint_{E_T}|Dv-Dw|^p\, \dx\dt\bigg]^{\frac12}\\
  &\qquad\phantom{\le\,}\cdot
    \bigg[\iint_{E_T}|Dv-Dw|^p\, \dx\dt
    +\iint_{E_T}\big(\mu^2+|Dv|^2\big)^{\frac{p}2}\, \dx\dt\bigg]^{\frac{2-p}{2}}\\
  &\le
    \tfrac12\iint_{E_T}|Dv-Dw|^p\, \dx\dt
    +\boldsymbol\gm \|\mathbf B(\cdot,Dv)-\mathbf A(\cdot,Dv)\|_{L^{p'}(E_T)}^{p'}\\
  &\phantom{\le\,}
    +
    \boldsymbol\gm \|\mathbf B(\cdot,Dv)-\mathbf A(\cdot,Dv)\|_{L^{p'}(E_T)}^{p}
    \bigg[\iint_{E_T}\big(\mu^2+|Dv|^2\big)^{\frac{p}2}\, \dx\dt\bigg]^{2-p}
\end{align*}
with a constant $\boldsymbol\gm =\boldsymbol\gm (p,C_o)$. Here we applied Young's inequality in the
last step. 
After re-absorbing the integral of $|Dv-Dw|^p$ in the left-hand side, we
obtain the asserted comparison estimate~\eqref{VglAbsch2}. The remaining
energy estimate~\eqref{EAbsch2} follows from
\begin{align*}
  \iint_{E_T}|Dw|^p\dx\dt
  \le
  2^{p-1}\iint_{E_T}|Dv|^p\dx\dt
  +
  2^{p-1}\iint_{E_T}|Dv-Dw|^p\dx\dt
\end{align*}
and estimating the last integral by~\eqref{VglAbsch2} and \eqref{comparison:p>2}, respectively. In the case $p<2$, we additionally apply Young's inequality with exponents $\frac{1}{p-1}$ and
$\frac{1}{2-p}$. This yields assertion~\eqref{EAbsch2} and completes
the proof of the lemma.  
\end{proof}

\color{black}

\section{Oscillation estimates for Lipschitz solutions}

In this section we collect two {\it a priori} oscillation estimates of weak solutions whose spatial gradient is assumed to be locally bounded.

\begin{lemma}\label{lem:osc-bound-0}
Let $p>1$ and $u$ be a weak solution to \eqref{p-laplace} with \eqref{prop-a}$_1$. Assume that $|Du|\in L^\infty_{\mathrm{loc}}(E_T)$. Then, for every cylinder $Q=K_\rho(x_o)\times(\tau_1,\tau_2] \subset E_T$ we have 
\begin{equation*}%\label{oscillation-bound-0}
    \osc_{Q}u
    \le
     4\sqrt{N}\rho\|Du\|_{L^\infty(Q)}+\boldsymbol\gm\frac{\tau_2-\tau_1}{\rho}\Big(\mu^2+\|Du\|_{L^\infty(Q)}^2\Big)^{\frac{p-1}{2}}
\end{equation*}
with $\boldsymbol\gm =\boldsymbol\gm (N,C_1)$. 
\end{lemma}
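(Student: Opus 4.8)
The plan is to split the oscillation of $u$ over $Q=K_\rho(x_o)\times(\tau_1,\tau_2]$ into a spatial contribution and a temporal contribution, and to estimate them separately. The spatial part is immediate: since $Du\in L^\infty_{\mathrm{loc}}(E_T)$, for a.e.\ $t$ the slice $u(\cdot,t)$ belongs to $W^{1,\infty}(K_\rho(x_o))$ with $\|Du(\cdot,t)\|_{L^\infty(K_\rho(x_o))}\le\|Du\|_{L^\infty(Q)}$, hence is Lipschitz on the convex cube $K_\rho(x_o)$ with that constant; combined with $u\in C([0,T];L^2(E))$ this forces $u$ to possess a representative continuous on $\overline Q$, for which $\osc_Q u$ is understood, and $|u(x,t)-u(y,t)|\le\|Du\|_{L^\infty(Q)}|x-y|\le\sqrt N\,\rho\,\|Du\|_{L^\infty(Q)}$ for all $x,y\in K_\rho(x_o)$ and all $t$ (here $|x-y|\le\sqrt N\rho$ because $K_\rho(x_o)$ has side length $2\rho$).

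For the temporal part I would freeze the spatial point at the centre $x_o$ and estimate $|u(x_o,t_1)-u(x_o,t_2)|$ for $\tau_1<t_1\le t_2\le\tau_2$ by means of the equation. Fix a bump function $\phi\in C_0^\infty(K_\rho(x_o);[0,\infty))$ with $\int_{K_\rho(x_o)}\phi\,\dx=1$ and $\|D\phi\|_{L^1(K_\rho(x_o))}\le c(N)/\rho$ (e.g.\ $\phi(x)=\rho^{-N}\Psi((x-x_o)/\rho)$ for a fixed $\Psi$). Inserting the test function $\zeta(x,t)=\phi(x)\eta(t)$ — with $\eta$ a Lipschitz approximation of $\chi_{(t_1,t_2)}$, the endpoints handled through the $C([0,T];L^2(E))$-representative — into the Steklov-averaged weak formulation of \eqref{p-laplace}, then letting the Steklov parameter and afterwards $\eta$ pass to their limits (using Lemma~\ref{lm:Stek} and $u\in C([0,T];L^2(E))$ so that every time slice is meaningful), yields
\[
\int_{K_\rho(x_o)}\big(u(x,t_2)-u(x,t_1)\big)\phi\,\dx
=-\int_{t_1}^{t_2}\!\!\int_{K_\rho(x_o)} a(x,t)\big(\mu^2+|Du|^2\big)^{\frac{p-2}{2}}Du\cdot D\phi\,\dx\dt .
\]
By \eqref{prop-a} one has $\big|a(\mu^2+|Du|^2)^{\frac{p-2}{2}}Du\big|\le C_1(\mu^2+|Du|^2)^{\frac{p-1}{2}}\le C_1\big(\mu^2+\|Du\|_{L^\infty(Q)}^2\big)^{\frac{p-1}{2}}$ on $Q$, so the right-hand side is bounded in absolute value by $\frac{c(N)C_1}{\rho}\big(\mu^2+\|Du\|_{L^\infty(Q)}^2\big)^{\frac{p-1}{2}}(\tau_2-\tau_1)$. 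On the other hand, since $\int\phi=1$, $\supp\phi\subset K_\rho(x_o)$ and $u(\cdot,t_i)$ is $\|Du\|_{L^\infty(Q)}$-Lipschitz, $\big|u(x_o,t_i)-\int_{K_\rho(x_o)}u(\cdot,t_i)\phi\,\dx\big|\le\sqrt N\,\rho\,\|Du\|_{L^\infty(Q)}$; combining these three estimates gives
\[
|u(x_o,t_1)-u(x_o,t_2)|\le 2\sqrt N\,\rho\,\|Du\|_{L^\infty(Q)}+\boldsymbol\gm\,\frac{\tau_2-\tau_1}{\rho}\big(\mu^2+\|Du\|_{L^\infty(Q)}^2\big)^{\frac{p-1}{2}}
\]
with $\boldsymbol\gm=\boldsymbol\gm(N,C_1)$.

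Finally, for arbitrary $z_1=(x_1,t_1),z_2=(x_2,t_2)\in Q$ I would use the chain
\[
u(z_1)-u(z_2)=\big[u(x_1,t_1)-u(x_o,t_1)\big]+\big[u(x_o,t_1)-u(x_o,t_2)\big]+\big[u(x_o,t_2)-u(x_2,t_2)\big],
\]
bound the first and third differences by $\sqrt N\,\rho\,\|Du\|_{L^\infty(Q)}$ each via the slice-wise Lipschitz bound (noting $x_i\in K_\rho(x_o)$ gives $|x_i-x_o|\le\sqrt N\rho$), and the middle one by the temporal estimate just derived; adding them and taking the supremum over $z_1,z_2\in Q$ produces exactly $\osc_Q u\le 4\sqrt N\,\rho\,\|Du\|_{L^\infty(Q)}+\boldsymbol\gm\,\frac{\tau_2-\tau_1}{\rho}\big(\mu^2+\|Du\|_{L^\infty(Q)}^2\big)^{\frac{p-1}{2}}$. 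The only genuinely delicate point is ensuring that the time slices $u(\cdot,t)$ are well-defined and Lipschitz and that the passage to the limit in the weak formulation is legitimate, but this is routine once one invokes $u\in C([0,T];L^2(E))$ together with $Du\in L^\infty_{\mathrm{loc}}(E_T)$; there is no deeper obstruction.
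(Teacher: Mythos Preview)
Your proof is correct and follows essentially the same route as the paper: test the equation against $\phi(x)\eta(t)$ to control the time variation of a spatial average, then combine with the slice-wise Lipschitz bound; the only cosmetic difference is that you route through the center value $u(x_o,t_i)$ whereas the paper goes directly through the weighted averages $\int u(\cdot,t_i)\phi\,\dx$, which amounts to the same estimate. One small slip: in your first paragraph you write $|x-y|\le\sqrt N\rho$ for arbitrary $x,y\in K_\rho(x_o)$, but the diameter of the cube is $2\sqrt N\rho$; this does not affect your argument, since in the actual chain you only use $|x_i-x_o|\le\sqrt N\rho$, which is correct.
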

\begin{proof}
Throughout the proof we omit $z_o$ in our notation. 
Let $[t_1,t_2]\subset(\tau_1,\tau_2)$ and let
      $\varepsilon\in(0,\min\{t_1-\tau_1,\tau_2-t_2\})$. We define the Lipschitz function $\xi_\epsilon$ to be $1$ in $[t_1,t_2]$, be $0$ outside $[t_1-\varep, t_2+\varep]$, and be linearly interpolated otherwise. 
Moreover, we choose a non-negative function $\eta\in C^\infty_0(K_1)$ with
$\int_{K_1}\eta\,\dx=1$ and let $\eta_\rho(x):=\rho^{-N}\eta(\frac{x}{\rho})$.
Then we test the weak form of the equation~\eqref{p-laplace}
with $\varphi_{\epsilon}=\xi_\epsilon\eta_\rho$. Letting
$\varepsilon\downarrow 0$ and recalling the upper
bound for the coefficients from~\eqref{prop-a}$_1$, we obtain
\begin{align*}%\label{gluing-lemma-0}
  \bigg|\int_{K_{\rho}}& \big[u(x,t_2)-u(x,t_1)\big]\eta_\rho(x)\,
  \dx\bigg|\\\nonumber
  &= 
  \bigg|\iint_{K_\rho\times (t_1,t_2)}
    a(x,t)(\mu^2+|Du|^2)^{\frac{p-2}{2}}Du\cdot D\eta_\rho\, 
    \dx\d t\bigg|\\\nonumber
  &\le
    \frac{\boldsymbol\gm (N)C_1}{\rho^{N+1}}
    \iint_{K_\rho\times (t_1,t_2)}
    (\mu^2+|Du|^{2})^\frac{p-1}{2}\, \dx\dt\\\nonumber
  &\le
   \boldsymbol\gm(N,C_1)\frac{t_2-t_1}{\rho}\Big(\mu^2+\|Du\|_{L^\infty(Q)}^2\Big)^{\frac{p-1}{2}}.
\end{align*}
Consequently, for a.e. $(x_1,t_1),(x_2,t_2)\in Q$ we estimate 
\begin{align*}
    |u(x_1,t_1)&-u(x_2,t_2)|\\
    &\le
    \int_{K_\rho}|u(x_1,t_1)-u(x,t_1)|\eta_\rho(x)\, \dx
    +
    \int_{K_\rho}|u(x,t_2)-u(x_2,t_2)|\eta_\rho(x)\, \dx\\
    &\phantom{\le\,}
    +
    \bigg|\int_{K_\rho}\big[u(x,t_1)-u(x,t_2)\big]\eta_\rho(x)\,\dx\bigg|\\
  &\le
    4\sqrt{N}\rho\|Du\|_{L^\infty(Q)}\int_{K_\rho}\eta_\rho(x)\,\dx
    +
   \boldsymbol\gm\frac{t_2-t_1}{\rho}\Big(\mu^2+\|Du\|_{L^\infty(Q)}^2\Big)^{\frac{p-1}{2}}\\
   &\le 4\sqrt{N}\rho\|Du\|_{L^\infty(Q)}
    +
   \boldsymbol\gm\frac{\tau_2-\tau_1}{\rho}\Big(\mu^2+\|Du\|_{L^\infty(Q)}^2\Big)^{\frac{p-1}{2}}.
\end{align*}
This finishes the proof.
\end{proof}
In the sub-quadratic case $1<p\le 2$, a special form of the previous lemma is needed in the next section.
%A special form of the previous lemma is needed in the next section.
\begin{lemma}\label{lem:osc-bound}
Let $1<p\le2$ and $u$ be a
 weak solution to \eqref{p-laplace} with \eqref{prop-a}$_1$. Assume that $|Du|\in
L^\infty_{\mathrm{loc}}(E_T)$. Then, for every cylinder $z_o+Q_\rho^{(\lambda)}\Subset E_T$ we have 
\begin{equation*}%\label{oscillation-bound}
    \osc_{z_o+Q_\rho^{(\lambda)}}u
    \le
    \boldsymbol\gm \rho\Big(\|Du\|_{L^\infty(z_o+Q_\rho^{(\lambda)})}+\mu+\lambda\Big)
\end{equation*}
with $\boldsymbol\gm =\boldsymbol\gm (N,C_1)$. 
\end{lemma}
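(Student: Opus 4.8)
The plan is to deduce Lemma~\ref{lem:osc-bound} directly from the general oscillation estimate in Lemma~\ref{lem:osc-bound-0}. The intrinsic cylinder $z_o+Q_\rho^{(\lambda)}=K_\rho(x_o)\times(t_o-\lambda^{2-p}\rho^2,t_o]$ is of the form $Q=K_\rho(x_o)\times(\tau_1,\tau_2]\subset E_T$ with $\tau_2-\tau_1=\lambda^{2-p}\rho^2$ (recall $\lambda>0$ in the definition~\eqref{def:cyl-lambda}), so it is admissible in Lemma~\ref{lem:osc-bound-0}. Writing $M:=\|Du\|_{L^\infty(z_o+Q_\rho^{(\lambda)})}$ and inserting $\frac{\tau_2-\tau_1}{\rho}=\lambda^{2-p}\rho$, that lemma yields
\begin{equation*}
    \osc_{z_o+Q_\rho^{(\lambda)}}u
    \le 4\sqrt{N}\,\rho\,M
    +\boldsymbol\gm\,\lambda^{2-p}\rho\,\big(\mu^2+M^2\big)^{\frac{p-1}{2}},
\end{equation*}
with a constant $\boldsymbol\gm=\boldsymbol\gm(N,C_1)$.

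It then remains only to absorb the second term into the asserted right-hand side by an elementary inequality. First I would use $(\mu^2+M^2)^{1/2}\le\mu+M$ together with $p-1\ge 0$ to get $(\mu^2+M^2)^{\frac{p-1}{2}}\le(\mu+M)^{p-1}$. Since $1<p\le 2$, the exponents $2-p$ and $p-1$ are non-negative and sum to $1$, so Young's inequality in the weighted form $a^{2-p}b^{p-1}\le(2-p)a+(p-1)b\le a+b$, applied with $a=\lambda$ and $b=\mu+M$, gives
\begin{equation*}
    \lambda^{2-p}\big(\mu^2+M^2\big)^{\frac{p-1}{2}}
    \le\lambda^{2-p}(\mu+M)^{p-1}
    \le\lambda+\mu+M .
\end{equation*}
Substituting this back and collecting the dimensional constants produces $\osc_{z_o+Q_\rho^{(\lambda)}}u\le\boldsymbol\gm\rho(M+\mu+\lambda)$ with $\boldsymbol\gm=\boldsymbol\gm(N,C_1)$, which is precisely the claim.

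Since every step is either a direct application of Lemma~\ref{lem:osc-bound-0} or a one-line algebraic estimate, there is no genuine obstacle here; the only points deserving a little care are the bookkeeping of the constant (it remains of the form $\boldsymbol\gm(N,C_1)$, with no dependence on $C_o$ or on $\|Du\|_{L^\infty}$) and the borderline exponent $p=2$, where $2-p=0$ and the Young step degenerates harmlessly to $\mu+M\le\lambda+\mu+M$.
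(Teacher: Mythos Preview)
Your proof is correct and follows essentially the same approach as the paper: apply Lemma~\ref{lem:osc-bound-0} to the intrinsic cylinder, then absorb the term $\lambda^{2-p}(\mu^2+M^2)^{(p-1)/2}$ by Young's inequality with exponents $\tfrac{1}{2-p}$ and $\tfrac{1}{p-1}$. The only cosmetic difference is that you first bound $(\mu^2+M^2)^{(p-1)/2}\le(\mu+M)^{p-1}$ before applying Young, whereas the paper applies Young directly to $\lambda^{2-p}(\mu^2+M^2)^{(p-1)/2}$; both lead to the same conclusion with the same constant dependence.
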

\begin{proof}
Assume $z_o=(0,0)$ for simplicity.  Let us apply Lemma~\ref{lem:osc-bound-0} with $Q= Q_\rho^{(\lambda)}$. Consequently,
for a.e. $(x_1,t_1),(x_2,t_2)\in
Q_\rho^{(\lambda)}$ we have 
\begin{align*}
    |u(x_1,t_1)&-u(x_2,t_2)|\\
    &\le 4\sqrt{N}\rho\|Du\|_{L^\infty(Q_\rho^{(\lambda)})} +\boldsymbol\gm \rho\lambda^{2-p}\Big(\mu^2+\|Du\|_{L^\infty(Q_\rho^{(\lambda)})}^2\Big)^\frac{p-1}{2}\\
  &\le
   \boldsymbol\gm\rho\Big(\|Du\|_{L^\infty(Q_\rho^{(\lambda)})}+\mu+\lambda\Big).
\end{align*} 
If $p=2$, the last step is obvious. Otherwise, 
we applied Young's inequality with exponents $\frac{1}{p-1}$ and
$\frac{1}{2-p}$.
\end{proof}

\section{Schauder estimates for Lipschitz solutions}\label{sec:Schauder-Lip}

In this section, we prove Theorem~\ref{theorem:schauder:intro}
in the case $\mu\in(0,1]$ under {\bf the additional assumption}: $|Du|\in
L^\infty_{\mathrm{loc}}(E_T)$.  
To this end, let us consider coefficients $a\in L^\infty(E_T)$ satisfying
\eqref{prop-a}, and a bounded weak solution
$
    u\in L^p\big(0,T;W^{1,p}(E)\big)\cap C\big([0,T];L^2(E)\big)
$
to the equation \eqref{p-laplace}.
The proof is divided into several steps. 

\subsection{Step~1: Freezing  coefficients}

Consider  nested  cylinders 
$\tilde z+Q_{R_1}\subset \tilde z+ Q_{R_2}\Subset E_T$ with
$0<R_1<R_2$. %In the following we will apply Lemma~\ref{lem:campanato} with coefficients $b(t)=a(x_o,t)$.
Consider parameters $\lambda\ge\mu$ that satisfy
\begin{equation}
  \label{def-lambda}
  \lambda\ge \boldsymbol\gm_o\Big(\mu^2+\|Du\|_{L^\infty(\tilde z+Q_{R_2})}^2\Big)^{\frac12},
\end{equation}
with a constant $\boldsymbol\gm_o\ge1$ to be specified later in dependence on
$N,p,C_o$ and $C_1$.

In Steps~1 -- 3, we will determine $\boldsymbol\gm_o\ge1$ in dependence on $N,p,C_o$ and $C_1$, such that whenever the parameter $\lambda$ satisfies  condition~\eqref{def-lambda}, %i.e.
%\begin{equation*}
  %\lambda\ge \boldsymbol\gm_o\Big(\mu^2+\|Du\|_{L^\infty(\tilde z+Q_{R_2})}^2\Big)^{\frac12},
%\end{equation*}
%with a constant $\boldsymbol\gm_o\ge1$ to be specified later in dependence on $N,p,C_o$ and $C_1$. 
we have the Campanato type estimates \eqref{Campanato-bound} and \eqref{Campanato-bound-2}. %for any cylinder $Q_r^{(\lm)}\equiv z_o+ Q_r^{(\lm)}$,
%where $z_o\in \tilde z+Q_{R_1}$ and the radius $R_o$ is defined in \eqref{choice-R_o}, %i.e. 
%\begin{equation*}%\label{choice-R_o}
 % R_o:=\tfrac12\min\big\{1,\lambda^{\frac{p-2}{2}}\big\}(R_2-R_1).
%\end{equation*} 
Step~2 consists in determining $\bg_o$ and applying the Campanato estimates in Lemma~\ref{lem:campanato} to a comparison function $w$, which is the solution to a more regular equation and meanwhile agrees with $u$ on the boundary, cf.~\eqref{comparison-problem}. Then in Step~3 such estimates can be ``transferred" to $u$, thanks to the H\"older continuous coefficient in the equation \eqref{p-laplace}.
They are {\it a priori} estimates for
$\mu\in(0,1]$ and under the assumption: $|Du|\in
L^\infty_{\mathrm{loc}}(E_T)$. 
In Steps~4 -- 5, they will be employed to establish the gradient bound~\eqref{gradient-sup-bound-intro} and the H\"older estimate~\eqref{gradient-holder-bound-intro}. Henceforth, the parameter $\lm$ will be subject to various selections as long as condition~\eqref{def-lambda} is verified.

Let us set up the basic notation for the next steps here. For the radius  
\begin{equation}\label{choice-R_o}
  R_o:=\tfrac12\min\big\{1,\lambda^{\frac{p-2}{2}}\big\}(R_2-R_1)
\end{equation}
we have $z_o+Q_R^{(\lambda)}\subset \tilde z +Q_{R_2}$
whenever $z_o\in \tilde z + Q_{R_1}$  and $R\le 2R_o$.
Now we consider a radius $r\in(0,\frac18 R_o)$
and let
\begin{equation}\label{choice-R}
  R:=\Big(\frac{8r}{R_o}\Big)^\kappa R_o
  \quad\implies\quad
  r=\tfrac18\Big(\frac{R}{R_o}\Big)^{\frac1\kappa}R_o,
\end{equation}
for a parameter $\kappa\in(0,1)$  to be fixed later. 
With these choices  we have $R<R_o$ and also
$r<\frac18 R$.  
For ease of notation, we write $Q_r^{(\lambda)}$, $Q_R^{(\lambda)}$
instead of $z_o+Q_r^{(\lambda)}$, $z_o+Q_R^{(\lambda)}$  in the
sequel.

Let 
$w\in L^p(\Lambda^{(\lambda)}_R;W^{1,p}(K_R))
\cap C(\Lambda^{(\lambda)}_R;L^2(K_R))$ be the unique weak solution to the 
Cauchy-Dirichlet problem
\begin{equation}
  \label{comparison-problem}
  \left\{
    \begin{array}{c}
%    w\in L^p\big(\Lambda^{(\lambda)}_R;W^{1,p}(K_R)\big)\cap C\big(\Lambda^{(\lambda)}_R;L^2(K_R)\big)\\[6pt]
    \partial_tw-\Div\big(a(x_o,t)(\mu^2+|Dw|^2)^{\frac{p-2}{2}}Dw\big)=0\,\,
    \quad\mbox{in $Q_R^{(\lambda)}$},\\[6pt]
      w=u \quad\mbox{on $\partial_\mathrm{par} Q_R^{(\lambda)}$,}
    \end{array}
  \right.
\end{equation}
where the boundary datum is taken in the sense of Definition~\ref{def:weak}, and the existence was discussed in Remark~~\ref{Rmk:CP2}.

%\textcolor{blue}{existence: Remark~\ref{Rmk:CP2}}

The solution $w$ plays the role of a comparison function.
\subsection{Step 2: Estimates for the comparison function}
By the comparison principle  Proposition~\ref{prop:comparison-plapl} we have 
that $w$ is bounded, and moreover that, 
\begin{equation*}
  %\label{osc-w-u}
  \osc_{Q_R^{(\lambda)}}w\le \osc_{Q_R^{(\lambda)}}u=:
  \boldsymbol \omega_R^{(\lm)}.
\end{equation*}
Since $\lm$ is fixed, we write $\boldsymbol \omega_R$ instead of $\boldsymbol \omega_R^{(\lm)}$ to keep the notation as simple as possible.
Next, we apply Lemma~\ref{lem:Comp2} on 
$Q_R^{(\lambda)}$ in place of $E_T$ to $u$ (instead of $v$)
and $w$ and with the corresponding vector-fields
\[
    \mathbf A(x,t,\xi):=a(x,t)\big(\mu^2+|\xi|^2\big)^{\frac{p-2}{2}}\xi
    \quad\mbox{and}\quad
    \mathbf B(t,\xi):=a(x_o,t)\big(\mu^2+|\xi|^2\big)^{\frac{p-2}{2}}\xi.
\]
The monotonicity condition  \eqref{p-growth}$_1$ for the vector-field $\mathbf B$, results with the help of Lemma \ref{Monotonicity}.
Keeping in mind the
assumptions~\eqref{prop-a} of the coefficients, we infer on the one hand the comparison estimate 
\begin{align}\label{comparison-est}
   \iint_{Q_R^{(\lambda)}}&|Du-Dw|^p\,\dx\dt\\\nonumber
  &\le
    \boldsymbol\gm \sup_{(x,t)\in Q_R^{(\lambda)}}\big|a(x_o,t)-a(x,t)\big|^{p'}
    \iint_{Q_R^{(\lambda)}}\big(\mu^2+|Du|^2\big)^{\frac{p}2}\, \dx\dt\\\nonumber
   &\phantom{\le\,}+
    \boldsymbol\gm
    \sup_{(x,t)\in Q_R^{(\lambda)}}\big|a(x_o,t)-a(x,t)\big|^{p}
     \iint_{Q_R^{(\lambda)}}\big(\mu^2+|Du|^2\big)^{\frac{p}2}\, \dx\dt\\\nonumber
   &\le
     \boldsymbol\gm R^{\alpha_\ast p}\iint_{Q_R^{(\lambda)}}\big(\mu^2+|Du|^2\big)^{\frac{p}2}\, \dx\dt,
\end{align}
with
\begin{equation}\label{def:alpha-ast}
    \alpha_\ast:=
    \left\{
    \begin{array}{cl}
        \alpha, &\mbox{if }1<p\le2,  \\[5pt]
        \tfrac{\alpha}{p-1}, &\mbox{if } p>2, 
    \end{array}
    \right.
\end{equation}
and, on the other hand, the energy estimate
\begin{align}\label{energy-bound-w}
    \iint_{Q_R^{(\lambda)}}&(\mu^2+|Dw|^2)^{\frac{p}{2}}\,\dx\dt \\\nonumber
    &\le
    \boldsymbol\gm\bigg[\sup_{(x,t)\in
      Q_R^{(\lambda)}}\big|a(x_o,t)-a(x,t)\big|^{p'}+1\bigg]
      \iint_{Q_R^{(\lambda)}}\big(\mu^2+|Du|^2\big)^{\frac{p}2}\,\dx\dt\\\nonumber
   &\le \boldsymbol\gm
    \iint_{Q_R^{(\lambda)}}\big(\mu^2+|Du|^2\big)^{\frac{p}2}\,\dx\dt,
\end{align}
with a  constant $\boldsymbol\gm = \boldsymbol\gm (p,C_o,C_1)$.

Propositions~\ref{PROP:LINFTY} and~\ref{PROP:LINFTY:P>2}  ensure that 
$Dw$ is locally bounded for $1<p<2$ and $p\ge2$, respectively.
We proceed to examine the quantitative gradient bounds. The goal is to select $\bg_o$ such that $w$ verifies the intrinsic relation~\eqref{intrinsic}.

First consider {\bf the sub-quadratic case $1<p<2$}.  In this case, Proposition~\ref{PROP:LINFTY}
provides the estimate 
\begin{align}\label{sup-bound-Dw}\nonumber
    \essup_{Q_{R/4}^{(\lambda)}}|Dw|&\le  \boldsymbol\gm \epsilon\lambda\\
    & \phantom{\le\,}
   +
    \frac{\boldsymbol\gm\lambda^{\frac12}}{\epsilon^{\theta}}
    \bigg[\Big(\frac{\boldsymbol\omega_R}{R\lambda}\Big)^{\frac{2}{p}}
      {+}
      \frac{\boldsymbol\omega_R}{R\lambda}
      {+}
      \frac\mu\lambda
      \bigg]^{\frac{N(2-p)+2p}{4p}}\!\!
      \bigg[\biint_{Q_{R}^{(\lambda)}}\big(\mu^2+|Dw|^2\big)^{\frac{p}{2}}\dx\dt\bigg]^{\frac{1}{2p}}
\end{align}
for every $\epsilon\in(0,1]$. Since the coefficients
in~\eqref{comparison-problem} do not depend on $x$, we can apply 
Proposition~\ref{PROP:LINFTY} with $C_2=0$, so that the dependencies
of the constants in \eqref{sup-bound-Dw} are given by $\boldsymbol\gm =\boldsymbol\gm (N,p,C_o,C_1)$ and
$\theta=\theta(N,p)>0$. In view of Lemma~\ref{lem:osc-bound} and
the requirement~\eqref{def-lambda} for  $\lambda$, we have 
\begin{equation*}
   \frac{\boldsymbol\omega_R}{R}
   \le
      \boldsymbol\gm (N,C_1)\Big(\|Du\|_{L^\infty(z_o+ Q_R^{(\lambda)})}+\mu+\lambda\Big)
   \le
      \boldsymbol\gm (N,C_1)\lambda.
\end{equation*}
Using this in combination with the definition of $\lambda$ from \eqref{def-lambda},
i.e.~the fact that $\frac{\mu}{\lambda}\le \frac{1}{\boldsymbol \gm_o}\le 1$ as long as $\boldsymbol\gm_o$ is chosen larger than 1,
the term in the squared bracket in~\eqref{sup-bound-Dw}
is bounded by a universal constant $\boldsymbol\gm (N,C_1)$. Moreover, we estimate the
integral in~\eqref{sup-bound-Dw} by means
of~\eqref{energy-bound-w}. Therefore, in the case $1<p<2$, we arrive at
\begin{align}\label{sup-bound-Dw-p<2}
    \essup_{Q_{R/4}^{(\lambda)}}|Dw|\le
    \boldsymbol\gm \epsilon\lambda
    +
    \frac{\boldsymbol\gm \lambda^{\frac12}}{\epsilon^{\theta}}
      \bigg[\biint_{Q_{R}^{(\lambda)}}\big(\mu^2+|Du|^2\big)^{\frac{p}{2}}\,\dx\dt\bigg]^{\frac{1}{2p}}
\end{align}
for every $\epsilon\in (0,1]$, where the constant $\boldsymbol\gm$ now depends on
$N,p,C_o$ and $C_1$.

Next, we establish the same type of estimate in {\bf the super-quadradic case} $p\ge2$. To this end, we apply in turn Proposition~\ref{PROP:LINFTY:P>2} with $C_2=0$, estimate~\eqref{energy-bound-w}, and finally \eqref{def-lambda}, and obtain the result
\begin{align}\label{sup-bound-Dw-2}
    \essup_{Q_{R/4}^{(\lambda)}}|Dw|
    &\le
    \boldsymbol\gm \eps\lambda
    +
    \frac{\boldsymbol\gm}{\eps^\theta}\bigg[\lambda^{2-p}
    \biint_{Q_{R}^{(\lambda)}}\big(\mu^2+|Dw|^{2}\big)^{\frac{p}{2}}\dx\dt\bigg]^{\frac{1}{2}}\\\nonumber
    &\le
    \boldsymbol\gm\eps\lambda
    +
    \frac{\boldsymbol\gm\lambda^{\frac{2-p}{2}}}{\eps^\theta}\bigg[   \biint_{Q_{R}^{(\lambda)}}\big(\mu^2+|Du|^{2}\big)^{\frac{p}{2}}\dx\dt\bigg]^{\frac{1}{2}}\\\nonumber
    &\le
    \boldsymbol\gm\eps\lambda
    +
    \frac{\boldsymbol\gm\lambda^{\frac12}}{\eps^\theta}\bigg[     \biint_{Q_{R}^{(\lambda)}}\big(\mu^2+|Du|^{2}\big)^{\frac{p}{2}}\dx\dt\bigg]^{\frac{1}{2p}}
\end{align}
for every $\eps\in(0,1]$, with constants $\boldsymbol\gm =\boldsymbol\gm (N,p,C_o,C_1)$  and
$\theta=\theta(N,p)\ge0$. In view of \eqref{sup-bound-Dw-p<2}, this estimate holds for arbitrary exponents $p>1$. 

In what follows, the estimate \eqref{sup-bound-Dw-2} will be used several times and each time requires a different selection of $\varep$. At this point, we first use it in combination with \eqref{def-lambda} and deduce that
\begin{equation*}
  \essup_{Q_{R/4}^{(\lambda)}}\big(\mu^2+|Dw|^2\big)^{\frac12}
  \le
    \boldsymbol\gm \epsilon\lambda
    +
    \frac{\boldsymbol\gm\lambda}{\boldsymbol\gm_o^{1/2}\epsilon^{\theta}}.
\end{equation*}
From the last display, we choose $\eps$ to satisfy $\boldsymbol\gm \eps\le\frac12$ and then $\boldsymbol\gm_o$ so large that $\boldsymbol\gm_o^{1/2}\epsilon^\theta\ge 2\boldsymbol\gm$. In this way, the requirements in Lemma~\ref{lem:campanato} are satisfied in the sense that
%choosing $\eps=\eps_o\in(0,1)$ so small
%that $\boldsymbol\gm \eps_o\le\frac12$ and then
%$\boldsymbol\gm_o\ge1$ large enough to ensure 
%$\boldsymbol\gm_o^{1/2}\epsilon_o^\theta\ge 2\boldsymbol\gm$ we obtain 
\begin{equation}
  \label{sub-bound-Dw-lambda}
  \essup_{Q_{R/4}^{(\lambda)}}\big(\mu^2+|Dw|^2\big)^{\frac12}\le\lambda.
\end{equation}
This fixes the parameter $\boldsymbol\gm_o$ in \eqref{def-lambda} in dependence
on $N,p,C_o,$ and $C_1$. %Note that this choice of $\eps_o$ was only made for deriving this particular sup-bound. 
Yet, the estimate \eqref{sup-bound-Dw-2} is still at our disposal
for any $\eps
\in (0,1]$.  Because of~\eqref{sub-bound-Dw-lambda},
the assumptions of
Lemma~\ref{lem:campanato} are satisfied for $w$ in place of $u$ and 
$\frac18R$ in place of $\rho$. Since
$0<r<\frac18R$, Lemma~\ref{lem:campanato} with $C_2=0$ implies
the Campanato type estimate
\begin{equation*}
  	\biint_{Q_r^{(\lambda)}}\big|Dw-(Dw)_{r}^{(\lambda)}\big|^p \,\dx\dt
	\le
	\boldsymbol\gm \Big(\frac{r}{R}\Big)^{\beta p}\lambda^{p},
\end{equation*}
for some exponent $\beta=\beta(N,p,C_o,C_1)\in(0,1)$.       
We combine this estimate
with~\eqref{sup-bound-Dw-2} to deduce 
\begin{align*}
  	\biint_{Q_r^{(\lambda)}}&\big|Dw-(Dw)_{r}^{(\lambda)}\big|^p \,\dx\dt\\\nonumber
	&\le
    \boldsymbol\gm\|Dw\|_{L^\infty(Q_{R/4}^{(\lambda)})}^{p-1}
    \bigg[\biint_{Q_r^{(\lambda)}}\big|Dw-(Dw)_{r}^{(\lambda)}\big|^p
    \,\dx\dt\bigg]^{\frac1p}\\\nonumber
    &\le
    \boldsymbol\gm \Bigg[\epsilon\lambda
          +
          \frac{\lambda^{1/2}}{\epsilon^{\theta}}
          \bigg[\biint_{Q_{R}^{(\lambda)}}\big(\mu^2+|Du|^2\big)^{\frac{p}{2}}\,\dx\dt\bigg]^{\frac{1}{2p}}\Bigg]^{p-1}
          \Big(\frac{r}{R}\Big)^{\beta}\lambda\\\nonumber
    &\le
    \boldsymbol\gm\Big(\frac{r}{R}\Big)^{\beta}
          \left[\epsilon^{p-1}\lambda^p
          +
          \frac{\lambda^{\frac{p+1}{2}}}{\epsilon^{\theta(p-1)}}
          \bigg[\biint_{Q_{R}^{(\lambda)}}\big(\mu^2+|Du|^2\big)^{\frac{p}{2}}\,\dx\dt\bigg]^{\frac{p-1}{2p}}\right].
\end{align*}
To the second term in the bracket we apply Young's inequality
with exponents $\frac{2p}{p+1}$ and $\frac{2p}{p-1}$ to get
\begin{align}\label{campanato-w}
  	\biint_{Q_r^{(\lambda)}}&\big|Dw-(Dw)_{r}^{(\lambda)}\big|^p \,\dx\dt\\\nonumber
        &\le
          \boldsymbol\gm \Big(\frac{r}{R}\Big)^{\beta}
          \left[\epsilon^{p-1}\lambda^p
          +
          \epsilon^{-\theta_2}
          \biint_{Q_{R}^{(\lambda)}}\big(\mu^2+|Du|^2\big)^{\frac{p}{2}}\,\dx\dt\right],
\end{align}
for an exponent $\theta_2=\theta_2(N,p)$. 

\subsection{Step 3: A Campanato type estimate}
Next, we use the quasi-minimality of the mean value, the comparison
estimate~\eqref{comparison-est} and~\eqref{campanato-w}
to estimate  
\begin{align}\label{campanato-bound-}
  \biint_{Q_r^{(\lambda)}}&\big|Du-(Du)_r^{(\lambda)}\big|^p\,\dx\dt
    \le \boldsymbol\gm \,\biint_{Q_r^{(\lambda)}}\big|Du-(Dw)_r^{(\lambda)}\big|^p\dx\dt\\\nonumber
  &\le
    \boldsymbol\gm \Big(\frac{R}{r}\Big)^{N+2}\biint_{Q_R^{(\lambda)}}|Du-Dw|^p\dx\dt\\
    &\qquad\qquad\qquad\qquad\nonumber
    +
    \boldsymbol\gm \,\biint_{Q_r^{(\lambda)}}\big|Dw-(Dw)_r^{(\lambda)}\big|^p\dx\dt\\\nonumber
  &\le
    \boldsymbol\gm\Big(\frac{R}{r}\Big)^{N+2}R^{\alpha_\ast
    p}\biint_{Q_R^{(\lambda)}}\big(\mu^2+|Du|^2\big)^{\frac{p}{2}}\, \dx\dt\\\nonumber
  &\phantom{\le\,}
    + 
    \boldsymbol\gm\Big(\frac{r}{R}\Big)^{\beta}\bigg[\epsilon^{p-1}\lambda^p
    +
    \epsilon^{-\theta_2}
    \biint_{Q_{R}^{(\lambda)}}\big(\mu^2+|Du|^2\big)^{\frac{p}{2}}\,\dx\dt\bigg]\\\nonumber
  &=:
    \mathbf{I}+\mathbf{II}.
\end{align}
For a parameter $\delta\in(0,1)$ to be chosen, we estimate
\begin{align}\label{delta-trick}
  \biint_{Q_R^{(\lambda)}}&\big(\mu^2+|Du|^2\big)^{\frac{p}{2}}\,\dx\dt\\\nonumber
  &=
    \bigg[\biint_{Q_R^{(\lambda)}}\big(\mu^2+|Du|^2\big)^{\frac{p}{2}}\,\dx\dt\bigg]^{1-\delta}
    \bigg[\biint_{Q_R^{(\lambda)}}\big(\mu^2+|Du|^2\big)^{\frac{p}{2}}\,\dx\dt\bigg]^{\delta}\\\nonumber
  &\le
    \boldsymbol\gm(N)\lambda^{p(1-\delta)}\Big(\frac{R_o}{R}\Big)^{\delta(N+2)}
    \bigg[\biint_{Q_{R_o}^{(\lambda)}}\big(\mu^2+|Du|^2\big)^{\frac{p}{2}}\,\dx\dt\bigg]^{\delta}.
\end{align}
In the last step we used $z_o+Q_R^{(\lambda)}\subset \tilde z+Q_{R_2}$ and 
\eqref{def-lambda}.
Next, we use \eqref{delta-trick} to estimate   the terms $\mathbf{I}$  and
$\mathbf{II}$. First, note that by the choice  of $R$ in~\eqref{choice-R}, we have
\begin{equation*}
  \Big(\frac{r}{R_o}\Big)^\kappa\le\frac{R}{R_o}%=\Big(\frac{4r}{R_o}\Big)^\kappa
  \le
  8\Big(\frac{r}{R_o}\Big)^\kappa
  \quad\mbox{and}\quad
  \frac{R}{r}\le8\Big(\frac{R_o}{r}\Big)^{1-\kappa}.
\end{equation*}
This implies
\begin{align}\label{Bound-of-I}
  \mathbf{I}
  &\le
    \boldsymbol\gm 
    \Big(\frac{R}{r}\Big)^{N+2}\Big(\frac{R}{R_o}\Big)^{\alpha_\ast
    p-\delta(N+2)}R_o^{\alpha_\ast p}
    \lambda^{p(1-\delta)}
    \bigg[\biint_{Q_{R_o}^{(\lambda)}}\big(\mu^2+|Du|^2\big)^{\frac{p}{2}}\,\dx\dt\bigg]^{\delta}\\\nonumber
  &\le
    \boldsymbol\gm 
    \Big(\frac{r}{R_o}\Big)^{\alpha_\ast p\kappa-(1+\kappa\delta-\kappa)(N+2)}
    R_o^{\alpha_\ast p}\lambda^{p(1-\delta)}
    \bigg[\biint_{Q_{R_o}^{(\lambda)}}\big(\mu^2+|Du|^2\big)^{\frac{p}{2}}\, \dx\dt\bigg]^{\delta},
\end{align}
and 
\begin{align}\label{Bound-of-II}
    &\mathbf{II}
    \le
    \boldsymbol\gm \Big(\frac{r}{R}\Big)^{\beta}\\\nonumber
    &
    \qquad\qquad\cdot
    \Bigg[\epsilon^{p-1}\lambda^p
    +
    \frac{\lambda^{p(1-\delta)}}{\epsilon^{\theta_2}}\Big(\frac{R_o}{R}\Big)^{\delta(N+2)}\bigg[\biint_{Q_{R_o}^{(\lambda)}}
    \big(\mu^2+|Du|^2\big)^{\frac{p}{2}}\dx\dt\bigg]^{\delta}
    \Bigg]\\\nonumber
    &
    \phantom{\mathbf{II}\,}\le
    \boldsymbol\gm 
    \Big(\frac{r}{R_o}\Big)^{\beta}\Big(\frac{R}{R_o}\Big)^{-\beta-\delta(N+2)}\\\nonumber
    &\qquad\qquad\cdot\Bigg[\epsilon^{p-1}\lambda^p
    +
    \frac{\lambda^{p(1-\delta)}}{\epsilon^{\theta_2}}\bigg[\biint_{Q_{R_o}^{(\lambda)}}
    \big(\mu^2+|Du|^2\big)^{\frac{p}{2}}\dx\dt\bigg]^{\delta}\Bigg]\\\nonumber
    &\phantom{\mathbf{II}\,}\le
    \boldsymbol\gm \Big(\frac{r}{R_o}\Big)^{\beta-\kappa\beta-\kappa\delta(N+2)}\\\nonumber
     &\qquad\qquad\cdot
    \Bigg[\epsilon^{p-1}\lambda^p
    +
    \frac{\lambda^{p(1-\delta)}}{\epsilon^{\theta_2}}
    \bigg[\biint_{Q_{R_o}^{(\lambda)}}
    \big(\mu^2+|Du|^2\big)^{\frac{p}{2}}\dx\dt\bigg]^{\delta}\Bigg].
\end{align}
In the second step, we  also used that $R\le R_o$. 
At this stage we choose
\begin{align*}
  \kappa:=\frac{N+2+\beta}{N+2+\beta+\alpha_\ast p}\in(0,1)
  \quad\mbox{and}\quad
  \delta:=\frac{\alpha_\ast\beta p}{2(N+2)(N+2+\beta)}\in(0,1).
\end{align*}
In this way, the exponents of $r/R_o$ on the right-hand sides
of~\eqref{Bound-of-I} and~\eqref{Bound-of-II} both  coincide with the positive
exponent $\alpha_o p$, where 
\begin{equation}\label{def:alpha-0}
   \alpha_o:=\frac{\alpha_\ast\beta}{2(N+2+\beta+\alpha_\ast p)}\in(0,1).
\end{equation}
Combining~\eqref{Bound-of-I} and \eqref{Bound-of-II} in \eqref{campanato-bound-} yields
\begin{align*}
  \biint_{Q_r^{(\lambda)}}&\big|Du-(Du)_r^{(\lambda)}\big|^p\dx\dt\\
  &\le
    \boldsymbol\gm \Big(\frac{r}{R_o}\Big)^{\alpha_o p}
    \Bigg[\epsilon^{p-1}\lambda^p
    +
    \frac{\lambda^{p(1-\delta)}}{\epsilon^{\theta_2}}\bigg[\biint_{Q_{R_o}^{(\lambda)}}
    \big(\mu^2+|Du|^2\big)^{\frac{p}{2}}\, \dx\dt\bigg]^{\delta}\Bigg]
\end{align*}
for every $r\in(0,\frac18 R_o)$, where the
dependencies of the constant are given by $\boldsymbol\gm =\boldsymbol\gm (N,p,C_o,C_1)$. Here, we also used
\eqref{choice-R_o}, i.e.~the fact that $R_o\le\frac12 R_2\le \frac12\rho_o\le \frac12$.
We continue to apply
Young's inequality with exponents $\frac{1}{1-\delta}$ and
$\frac{1}{\delta}$ to estimate the right-hand side and obtain for every $r\in(0,\frac18 R_o)$,
\begin{align*}
  \biint_{Q_r^{(\lambda)}}&\big|Du-(Du)_r^{(\lambda)}\big|^p\,\dx\dt\\
  &\le
    \boldsymbol\gm\Big(\frac{r}{R_o}\Big)^{\alpha_o p}
    \bigg[\epsilon^{p-1}\lambda^p
    +
    \epsilon^{-\theta_3 p}
    \biint_{Q_{R_o}^{(\lambda)}}
    \big(\mu^2+|Du|^2\big)^{\frac{p}{2}}\,\dx\dt\bigg]
\end{align*}
 with a positive constant
$\theta_3=\theta_3(N,p,\alpha,\beta)=\theta_3(N,p,C_o,C_1,\alpha)$. 
Whereas for radii $r\in[\frac18 R_o,R_o]$, we have
%proceed similarly as in~\eqref{delta-trick} and get
%\begin{align*}
%  \biint_{Q_r^{(\lambda)}}&\big|Du-(Du)_r^{(\lambda)}\big|^p\, \dx\dt\\
%  &\le
%    \boldsymbol\gm\,\biint_{Q_r^{(\lambda)}}|Du|^p\,\dx\dt\\
%  &\le
%    \boldsymbol\gm\lambda^{p(1-\delta)}\Big(\frac{R_o}{r}\Big)^{\delta(N+2)}\bigg[\biint_{Q_{R_0}^{(\lambda)}}|Du|^p\,\dx\dt\bigg]^{\delta}\\
%  &=
%    \boldsymbol\gm\Big(\frac{r}{R_o}\Big)^{\alpha_o p}
%    \Big(\frac{R_o}{r}\Big)^{\alpha_o p+\delta(N+2)}
%    \lambda^{p(1-\delta)}\bigg[\biint_{Q_{R_0}^{(\lambda)}}|Du|^p\,\dx\dt\bigg]^{\delta}\\
%  &\le
%    \boldsymbol\gm\Big(\frac{r}{R_o}\Big)^{\alpha_o p}
%    \lambda^{p(1-\delta)}\bigg[\biint_{Q_{R_0}^{(\lambda)}}|Du|^p\,\dx\dt\bigg]^{\delta}.
%\end{align*}
%\color{orange}The use of $\delta$ is not necessary here:
\begin{align*}
    \biint_{Q_r^{(\lambda)}}\big|Du-(Du)_r^{(\lambda)}\big|^p\, \dx\dt
    &\le
    \boldsymbol\gm\,\biint_{Q_r^{(\lambda)}}|Du|^p\,\dx\dt\\
    &\le
    \boldsymbol\gm\Big(\frac{r}{R_o}\Big)^{\alpha_o p} \Big(\frac{R_o}{r}\Big)^{N+2+\alpha_o p} 
    \biint_{Q_{R_0}^{(\lambda)}}|Du|^p\,\dx\dt \\
    &\le 
    \boldsymbol\gm\Big(\frac{r}{R_o}\Big)^{\alpha_o p}
    \biint_{Q_{R_o}^{(\lambda)}}|Du|^p\,\dx\dt.
\end{align*}
In the last step, we used the assumption $r\ge\tfrac18 R_o$.
In view of the two preceding estimates, we have shown that for any
$r\in(0,R_o)$ there holds 
\begin{align}\label{Campanato-bound}
  \biint_{Q_r^{(\lambda)}}&\big|Du-(Du)_r^{(\lambda)}\big|^p\,\dx\dt\\\nonumber
  &\le
    \boldsymbol\gm\Big(\frac{r}{R_o}\Big)^{\alpha_o p}
    \bigg[\epsilon^{p-1}\lambda^p
    +
    \epsilon^{-\theta_3 p}
    \biint_{Q_{R_o}^{(\lambda)}}
    \big(\mu^2+|Du|^2\big)^{\frac{p}{2}}\,\dx\dt\bigg]
\end{align}
for every $\epsilon\in(0,1]$, where $\boldsymbol\gm$ depends on $N,p,C_o,C_1,$ and
$\alpha_o\in(0,1)$ and $\theta_3>0$ depends additionally on $\alpha$.
In particular, using \eqref{Campanato-bound} with
$\epsilon=1$ and recalling the property~\eqref{def-lambda} of
$\lambda$, we deduce
\begin{align}\label{Campanato-bound-2}
  \biint_{Q_r^{(\lambda)}}\big|Du-(Du)_r^{(\lambda)}\big|^p\dx\dt
  &\le\boldsymbol\gm
    \Big(\frac{r}{R_o}\Big)^{\alpha_o p}\lambda^p
    \qquad\forall \, r\in (0,R_o).
\end{align}

%\textcolor{purple}{To be honest, I do not think that summarizing in between makes much sense. I think it is much more helpful to briefly explain the general strategy of the proof, with all its steps, before starting it. Otherwise, the reader needs to read half of the proof, in order to understand what is going on.}

%\textcolor{red}{Gute idee. It could be inserted somewhere at the beginning.}
%%%%
\subsection{Step 4: Upper bound for the gradient}

The goal of this section is to prove \eqref{gradient-sup-bound-intro}. The arguments will be based on the Campanato type estimate \eqref{Campanato-bound}.
Indeed, consider a
cylinder $\tilde z+Q_{2\rho}\Subset E_T$ and radii $\rho\le
R_1<R_2\le 2\rho$, so that we are in the setting of Steps\,1 -- 3 and we will use the same notation as there. In particular, we consider $z_o\in \tilde z +Q_{R_1}$ and write $Q_r^{(\lambda)}$ instead of $z_o+Q_r^{(\lambda)}$. Recall also that $\boldsymbol\gm_o=\boldsymbol\gm _o(N,p,C_o,C_1)$ has been chosen in the course of deriving \eqref{sub-bound-Dw-lambda}.
Then let us fix the parameter $\lambda$ by letting
\begin{equation}\label{lambda-gradient-bound}
\lambda:=\boldsymbol\gm_o\Big(\mu^2+\|Du\|_{L^\infty(\tilde z+ Q_{R_2} )}^2\Big)^{\frac12},
\end{equation}
so that~\eqref{def-lambda} is satisfied and hence the Campanato type estimate \eqref{Campanato-bound} is at our disposal.
To proceed, consider the dyadic sequence of radii $r_i:=2^{-i}R_o$, with
$i\in\N_0$. For indices $k<\ell$ in $\N_0$, we use the triangle
inequality and~\eqref{Campanato-bound} to estimate
\begin{align}\label{Cauchy-sequence}
  \big|(Du)_{r_{\ell}}^ {(\lambda)}&-(Du)_{r_k}^{(\lambda)}\big|
  \le
  \sum_{j=k}^{\ell-1}\big|(Du)_{r_{j+1}}^
    {(\lambda)}-(Du)_{r_j}^{(\lambda)}\big|\\\nonumber
  &\le
    \boldsymbol\gm\sum_{j=k}^{\ell-1}\bigg[\biint_{Q_{r_j}^{(\lambda)}}\big|Du-(Du)_{r_j}^{(\lambda)}\big|^p\,\dx\dt\bigg]^{\frac1p}\\\nonumber
  &\le
    \boldsymbol\gm\sum_{j=k}^{\ell-1}
    \Big(\frac{r_j}{R_o}\Big)^{\alpha_o}
    \Bigg[\epsilon^{\frac1{p'}}\lambda
    +
    \epsilon^{-\theta_3}
    \bigg[\biint_{Q_{R_o}^{(\lambda)}}\big(\mu^2+|Du|^2\big)^{\frac{p}{2}}\,\dx\dt\bigg]^{\frac
    1p}\Bigg]\\\nonumber
  &\le
    \boldsymbol\gm\Big(\frac{r_k}{R_o}\Big)^{\alpha_o}
    \Bigg[\epsilon^{\frac1{p'}}\lambda
    +
    \epsilon^{-\theta_3}
    \bigg[\biint_{Q_{R_o}^{(\lambda)}}\big(\mu^2+|Du|^2\big)^{\frac{p}{2}}\, \dx\dt\bigg]^{\frac 1p}\Bigg],
\end{align}
with a constant $\boldsymbol\gm =\boldsymbol\gm (N,p,C_o,C_1,\alpha)$. 
Since the right-hand side vanishes as $k\to\infty$, we have
that $\big( (Du)_{r_{i}}^ {(\lambda)}\big)_{i\in\N_0}$ is a Cauchy sequence. 
Let us write its limit as
\begin{align*}
  \Gamma_{z_o}:=\lim_{i\to\infty}(Du)_{r_{i}}^ {(\lambda)}.
\end{align*}
 For an arbitrary $r\in(0,R_o]$, we choose
$k\in\N_0$ with $r_{k+1}<r\le r_k$ and estimate
\begin{align}\label{convergence-means}
  \big|\Gamma_{z_o}-(Du)_r^{(\lambda)}\big|
  &\le
    \big|\Gamma_{z_o}-(Du)_{r_k}^{(\lambda)}\big|
    +
    \big|(Du)_{r_k}^{(\lambda)}-(Du)_r^{(\lambda)}\big|\\\nonumber
  &\le
    \big|\Gamma_{z_o}-(Du)_{r_k}^{(\lambda)}\big|
    +
    \boldsymbol\gm 
    \bigg[\biint_{Q_{r_k}^{(\lambda)}}\big|Du-(Du)_{r_k}^{(\lambda)}\big|^p\, \dx\dt\bigg]^{\frac1p}\\\nonumber
  &\le
    \boldsymbol\gm\Big(\frac{r}{R_o}\Big)^{\alpha_o}
    \Bigg[\epsilon^{\frac1{p'}}\lambda
    +
    \epsilon^{-\theta_3}
    \bigg[\biint_{Q_{R_o}^{(\lambda)}}\big(\mu^2+|Du|^2\big)^{\frac{p}{2}}\, \dx\dt\bigg]^{\frac 1p}\Bigg].
\end{align}
The last estimate follows by letting $\ell\to\infty$
in~\eqref{Cauchy-sequence}, using~\eqref{Campanato-bound} with
$r_k$ in place of $r$, and recalling the fact $r_k< 2r$.
In particular, this implies that $\Gamma_{z_o}$ is the Lebesgue
representative of $Du$ at $z_o$. 
Choosing $r=R_o$ in the preceding inequality, we obtain 
\begin{align}\label{Gamma-bound}
  |\Gamma_{z_o}|
  &\le
  \big|(Du)_{R_o}^ {(\lambda)}\big|
    +\big|\Gamma_{z_o}-(Du)_{R_o}^{(\lambda)}\big|\\\nonumber
  &\le
    \biint_{Q_{R_o}^{(\lambda)}}|Du|\,\dx\dt
    +
    \boldsymbol\gm\Bigg[\epsilon^{\frac1{p'}}\lambda
    +
    \epsilon^{-\theta_3}
    \bigg[\biint_{Q_{R_o}^{(\lambda)}}\big(\mu^2+|Du|^2\big)^{\frac{p}{2}}\, \dx\dt\bigg]^{\frac 1p}\Bigg]\\\nonumber
  &\le
    \boldsymbol\gm
    \Bigg[\epsilon^{\frac1{p'}}\lambda
    +
    \epsilon^{-\theta_3}
    \bigg[\biint_{Q_{R_o}^{(\lambda)}}\big(\mu^2+|Du|^2\big)^{\frac{p}{2}}\dx\dt\bigg]^{\frac 1p}\Bigg].
\end{align}
In order to estimate the last integral in terms of the oscillation, we apply the energy estimate \eqref{energy-est-zero-order} on $Q_{R_o}^{(\lambda)}\subset Q_{2R_o}^{(\lambda)}$
with $\xi= (u)_{2R_o}^{(\lambda)}$
to get
\begin{align}\label{energy-bound-Ro}
  \biint_{Q_{R_o}^{(\lambda)}}&\big(\mu^2+|Du|^2\big)^{\frac{p}{2}}\,\dx\dt\\\nonumber
  &\le
    \boldsymbol\gm\,\biint_{Q_{2R_o}^{(\lambda)}}\bigg[\frac{\big|u-(u)_{2R_o}^{(\lambda)}\big|^p}{(2R_o)^p}+\frac{\big|u-(u)_{2R_o}^{(\lambda)}\big|^2}{\lambda^{2-p}(2R_o)^2}+\mu^p\bigg]\dx\dt\\\nonumber
  &\le
    \boldsymbol\gm\Bigg[
    \bigg(\frac{\osc_{Q_{2R_o}^{(\lambda)}}u}{2R_o}\bigg)^p
    +
    \frac1{\lambda^{2-p}} \bigg(\frac{\osc_{Q_{2R_o}^{(\lambda)}}u}{2R_o}\bigg)^2
    +\mu^p\Bigg]\\\nonumber
    &=
    \boldsymbol\gm
    \bigg(\frac{\osc_{Q_{2R_o}^{(\lambda)}}u}{2R_o}\bigg)^p
    \Bigg[
    1+\bigg(\frac{\osc_{Q_{2R_o}^{(\lambda)}}u}{2\lambda R_o}\bigg)^{2-p}
    \Bigg]
    +
    \boldsymbol\gm\mu^p.
\end{align}
In the case $1<p\le2$, we apply
Lemma~\ref{lem:osc-bound} and \eqref{lambda-gradient-bound} to get
\begin{align*}
    \osc_{Q_{2R_o}^{(\lambda)}}u
    &\le 
    2\boldsymbol\gm R_o\Big(\|Du\|_{L^\infty(Q_{2R_o}^{(\lambda)})}+\mu+\lambda\Big)\\
     &\le 
    2\boldsymbol\gm R_o\Big(\|Du\|_{L^\infty(\tilde z+Q_{R_2})}+\mu+\lambda\Big)\\
    &\le 
    2\boldsymbol\gm R_o\Big( \frac{2}{\boldsymbol\gm_o} +1\Big) \lambda \le 6\boldsymbol\gm R_o\lm.
\end{align*}
This allows us to bound the right-hand side of \eqref{energy-bound-Ro} further and obtain that
\begin{align*}
    \biint_{Q_{R_o}^{(\lambda)}}\big(\mu^2+|Du|^2\big)^{\frac{p}{2}}\,\dx\dt
    \le
   \boldsymbol\gm\Bigg[ 
    \bigg(\frac{\osc_{Q_{2R_o}^{(\lambda)}}u}{2R_o}\bigg)^p
    +
    \mu^p\Bigg].
\end{align*}
In the case $p>2$, however, we use Young's inequality on the right-hand side of \eqref{energy-bound-Ro} with exponents $\frac{p}{p-2}$ and $\frac{p}{2}$, and get 
\begin{align*}
    \biint_{Q_{R_o}^{(\lambda)}}\big(\mu^2+|Du|^2\big)^{\frac{p}{2}}\dx\dt
    \le
    \big(\eps^{\frac{1}{p'}}\lambda\big)^p+\boldsymbol\gm\eps^{-\frac{(p-1)(p-2)}{2}}
    \Bigg[
    \bigg(\frac{\osc_{Q_{2R_o}^{(\lambda)}}u}{2R_o}\bigg)^p
    +
    \mu^p
    \Bigg].
\end{align*}
Joining the estimates from both cases in \eqref{Gamma-bound} gives
\begin{align*}
  |\Gamma_{z_o}|
  &\le
    \boldsymbol\gm 
    \Bigg[ 
    \epsilon^{\frac1{p'}}\lambda
    +
    \epsilon^{-\theta_4}\bigg(
    \frac{\osc_{Q_{2R_o}^{(\lambda)}}u}{2R_o}
    +
    \mu
    \bigg)
    \Bigg]\\
    &\le 
    \boldsymbol\gm \bigg[ 
    \epsilon^{\frac1{p'}}\lambda
    +
    \epsilon^{-\theta_4}\Big(
    \frac{\boldsymbol\omega}{2R_o}
    +
    \mu
    \Big)
    \bigg]
\end{align*}
with a positive constant $\theta_4=\theta_4(N,p,C_o,C_1,\alpha)\ge\theta_3$.
Here we used the abbreviation $\boldsymbol\omega = \osc_{\tilde z+Q_{2\rho}}u$,  the inclusion $Q_{2R_o}^{(\lambda)}=z_o+Q_{2R_o}^{(\lambda)} \subset\tilde z+ Q_{R_2}$ and $R_2\le 2\rho$.
Recalling that $\Gamma_{z_o}$ is the Lebesgue representative of
$Du(z_o)$ and taking the supremum over $z_o\in \tilde z+Q_{R_1}$, we thus deduce
\begin{align}\label{sup-bound-Ro}
  \essup_{\tilde z+Q_{R_1}}|Du|
  &\le
    \boldsymbol\gm \left[
    \epsilon^{\frac1{p'}}\lambda
    +
    \epsilon^{-\theta_4}\Big(\frac{\boldsymbol \omega}{2R_o}+\mu\Big)
    \right]\\\nonumber
  &\le
    \boldsymbol\gm\left[
    \epsilon^{\frac1{p'}}\lambda
    +
    \epsilon^{-\theta_4}\big(1+\lambda^{\frac{2-p}{2}}\big)\frac{\boldsymbol \omega}{R_2-R_1}
    +\eps^{-\theta_4}\mu
    \right].
\end{align}
In the last step we used the definition of $R_o$
in~\eqref{choice-R_o}. 
In the case $1<p<2$, we 
apply Young's inequality to estimate the
right-hand side further, and choose $\eps\in(0,1)$ so small that $\boldsymbol\gm\eps^{\frac1{p'}}\le\frac1{4\boldsymbol\gm_o}$, where $\boldsymbol\gm_o$ is the constant from \eqref{lambda-gradient-bound}. In this way, we obtain 
\begin{equation}\label{sup-bound-p<2}
  \essup_{\tilde z+Q_{R_1}}|Du|
  \le
  \tfrac1{2\boldsymbol\gm_o}\lambda
  +
  \boldsymbol\gm\bigg[\frac{\boldsymbol \omega}{R_2-R_1}
  +
  \Big(\frac{\boldsymbol \omega}{R_2-R_1}\Big)^{\frac{2}{p}}
  +\mu
  \bigg],
\end{equation}
with a constant
$\boldsymbol\gm =\boldsymbol\gm (N,p,C_o,C_1,\alpha)$.
In the case $p\ge2$, however, we first use~\eqref{lambda-gradient-bound} 
to decrease the exponent of $|Du|$. Afterwards we apply~\eqref{sup-bound-Ro} and
obtain 
\begin{align*}
  \essup_{\tilde z+Q_{R_1}}|Du|
  &\le
  \lambda^{\frac{p-2}{p}}
  \essup_{\tilde z+Q_{R_1}}|Du|^{\frac{2}{p}}\\
  &\le
  \boldsymbol\gm \lambda^{\frac{p-2}{p}}
  \left[
    \epsilon^{\frac1{p'}}\lambda
    +
    \epsilon^{-\theta_4}\big(1+\lambda^{\frac{2-p}{2}}\big)
    \frac{\boldsymbol\omega}{R_2-R_1}
    +\eps^{-\theta_4}\mu
    \right]^{\frac{2}{p}}\\
    &\le
    \boldsymbol\gm 
    \big( \epsilon^{\frac1{p'}}\big)^\frac{2}{p}\lambda
    +
    \boldsymbol\gm \epsilon^{-\frac{2\theta_4}p}\big(\lambda^{\frac{p-2}{2}}+1\big)
    \Big(\frac{\boldsymbol\omega}{R_2-R_1}\Big)^\frac2p
    +\boldsymbol\gm \lambda^{\frac{p-2}{p}}\epsilon^{-\frac{2\theta_4}p}\mu^\frac2p
\\
     &\le
    \tfrac1{4 \boldsymbol\gm_o}\lambda
    +
     \boldsymbol\gm\big(\lambda^{\frac{p-2}{p}}+1\big)\Big(\frac{\boldsymbol\omega}{R_2-R_1}\Big)^{\frac{2}{p}}
    + \boldsymbol\gm\lambda^{\frac{p-2}{p}}\mu^{\frac{2}{p}},
\end{align*}
provided in the last estimate we choose $\eps=\eps(N,p,C_o,C_1,\alpha)\in(0,1)$ small enough to ensure
$ \boldsymbol\gm (\eps^{\frac{1}{p'}})^{\frac{2}{p}}\le\frac1{4 \boldsymbol\gm_o}$. Now, an
application of Young's inequality, again with exponents $\frac{p}{p-2}$ and $\frac{p}{2}$, yields that~\eqref{sup-bound-p<2}
holds in the case $p\ge2$ as well. 
Therefore, in view of the definition of
$\lambda$ in~\eqref{lambda-gradient-bound}, no matter the range of $p$, we always have
\begin{align*}
  \essup_{\tilde z+Q_{R_1}}|Du|
  \le
    \tfrac12\essup_{\tilde z+Q_{R_2}}|Du|
    +\boldsymbol\gm \bigg[\frac{\boldsymbol\omega}{R_2-R_1}
    +
    \Big(\frac{\boldsymbol\omega}{R_2-R_1}\Big)^{\frac{2}{p}}
    +\mu
   \bigg],
\end{align*}
 for all radii with $\rho\le R_1<R_2< 2\rho$. 
Applying the iteration lemma~\ref{lem:Giaq}, we arrive at the gradient
bound 
\begin{equation}\label{gradient-bound}
  \essup_{\tilde z+Q_{\rho}}|Du|
  \le
    \boldsymbol\gm \bigg[\frac{\boldsymbol\omega}{\rho}
    +
    \Big(\frac{\boldsymbol\omega}{\rho}\Big)^{\frac{2}{p}}
    +\mu
    \bigg],
\end{equation}
with a constant $\boldsymbol\gm =\boldsymbol\gm (N,p,C_o,C_1,\alpha)$.

For the proof of~\eqref{gradient-sup-bound-intro},
we consider a subset $\mathcal{K}\subset E_T$ such that 
\begin{equation}\label{Eq:rho-dist}
    \rho:=\tfrac14\mathrm{dist}_\mathrm{par}(\mathcal{K},\partial_{\rm par}E_T)>0.
\end{equation}
Then, it is easy to see that $\tilde z +Q_{2\rho}\Subset E_T$ for any
$\tilde z\in \mathcal{K}$.  Moreover we introduce
\begin{equation*}
     \mathcal{U}:=
     \bigcup_{\tilde z\in \mathcal K} \big( \tilde z+Q_\rho\big)
     \end{equation*}
the parabolic $\rho$-neighbourhood of $\mathcal{K}$. By the very definition of $\mathcal U$, 
estimate~\eqref{gradient-bound} implies 
\begin{equation}\label{gradient-bound-on-U}
  \sup_{\mathcal{U}}\big(\mu^2+|Du|^2\big)^{\frac12}
  \le
    \boldsymbol\gm \bigg[\frac{\osc_{E_T} u}{\rho}
    +
    \Big(\frac{\osc_{E_T} u}{\rho}\Big)^{\frac{2}{p}}
    +\mu
    \bigg]
\end{equation}
with a constant $\boldsymbol\gm = \boldsymbol\gm (N,p,C_o,C_1,\alpha)$. 
Since $\mathcal{K}\subset\mathcal{U}$,
this proves the claimed gradient estimate~\eqref{gradient-sup-bound-intro}
in the case $\mu\in(0,1]$ under the additional assumption $|Du|\in
L^\infty_{\mathrm{loc}}(E_T)$. 

\begin{remark}\upshape \label{rem:super-critical}
In the {\bf super-critical case} $p>\frac{2N}{N+2}$, we can
replace~\eqref{energy-bound-Ro} by an estimate that is independent
of the oscillation of $u$. In fact, in view of the definition of
$R_o$ in~\eqref{choice-R_o}, we obtain 
\begin{align}\label{alternative-energy-bound}
    \nonumber
     \biint_{z_o+Q_{R_o}^{(\lambda)}}&\big(\mu^2+|Du|^2\big)^{\frac{p}{2}}\,\dx\dt\\\nonumber
     &\le
       \frac{R_2^{N+2}}{R_o^{N+2}\lambda^{2-p}}
       \biint_{\tilde z+ Q_{R_2}}\big(\mu^2+|Du|^2\big)^{\frac{p}{2}}\dx\dt\\
     &=
       \Big(\frac{2R_2}{R_2-R_1}\Big)^{N+2}
       \max\Big\{\lambda^{p-2},\lambda^{\frac{N(2-p)}{2}}\Big\}
       \biint_{\tilde z+Q_{R_2}}\big(\mu^2+|Du|^2\big)^{\frac{p}{2}}\, \dx\dt.
   \end{align}
   By definition of $\lambda$ in \eqref{lambda-gradient-bound}, the integrand on the left-hand side is bounded from above by $\lambda^p$. In the case $\frac{2N}{N+2}<p<2$, we use this fact to estimate 
   \begin{align*}%\label{alt-bound-p<2}\nonumber
     \biint_{z_o+Q_{R_o}^{(\lambda)}}&\big(\mu^2+|Du|^2\big)^{\frac{p}{2}}\,\dx\dt\\\nonumber
     &\le
     \lambda^{\frac{p(2-p)}{2}}\bigg[\biint_{z_o+Q_{R_o}^{(\lambda)}}\big(\mu^2+|Du|^2\big)^{\frac{p}{2}}\,\dx\dt\bigg]^{\frac{p}{2}}\\\nonumber
     &\le
     \Big(\frac{2R_2}{R_2-R_1}\Big)^{\frac{p(N+2)}{2}}
     \!\!\max\Big\{1,\lambda^{\frac{p(2-p)(N+2)}{4}}\Big\}
     \bigg[\biint_{\tilde z+Q_{R_2}}\!\!\big(\mu^2+|Du|^2\big)^{\frac{p}{2}}\,\dx\dt\bigg]^{\frac{p}{2}}\\\nonumber
     &\le
     \eps\lambda^p
     +
     \Big(\frac{2R_2}{R_2-R_1}\Big)^{\frac{p(N+2)}{2}}
     \bigg[\biint_{\tilde z+Q_{R_2}}\big(\mu^2+|Du|^2\big)^{\frac{p}{2}}\,\dx\dt\bigg]^{d_2}\\
    &\phantom{\le\,}
    +
     \boldsymbol\gm_\eps\Big(\frac{2R_2}{R_2-R_1}\Big)^{d_1(N+2)}
    \bigg(\biint_{\tilde z+Q_{R_2}}\big(\mu^2+|Du|^2\big)^{\frac{p}{2}}\,\dx\dt\bigg)^{d_1}
\end{align*}
where 
\[
    d_1:=\frac{2p}{p(N+2)-2N}\quad\mbox{and}\quad d_2:=\tfrac12 p
\] 
denote the {\bf scaling deficits}.
For the last estimate, we applied Young's inequality with exponents 
$\frac{4}{(2-p)(N+2)}$ and $\frac{4}{p(N+2)-2N}$.

For exponents in the super-quadratic range  $p\ge2$, we argue similarly.
In fact, we have 
\begin{align*}%\label{alt-bound-p>2}\nonumber
     \biint_{z_o+Q_{R_o}^{(\lambda)}}&\big(\mu^2+|Du|^2\big)^{\frac{p}{2}}\,\dx\dt\\\nonumber
     &\le
     \lambda^{p(1-d_1)}\bigg[\biint_{z_o+Q_{R_o}^{(\lambda)}}\big(\mu^2+|Du|^2\big)^{\frac{p}{2}}\, \dx\dt\bigg]^{d_1}\\\nonumber
     &\le
     \Big(\frac{2R_2}{R_2-R_1}\Big)^{d_1(N+2)}
     \max\big\{\lambda^{p-2d_1},1\big\}
     \bigg[\biint_{\tilde z+Q_{R_2}}\big(\mu^2+|Du|^2\big)^{\frac{p}{2}}\,\dx\dt\bigg]^{d_1}\\\nonumber
     &\le
     \eps\lambda^p
     +
     \boldsymbol\gm_\eps\Big(\frac{2R_2}{R_2-R_1}\Big)^{\frac{p(N+2)}{2}}
    \bigg[\biint_{\tilde z+Q_{R_2}}\big(\mu^2+|Du|^2\big)^{\frac{p}{2}}\,\dx\dt\bigg]^{\frac{p}{2}}\\
    &\phantom{\le\,}+
     \Big(\frac{2R_2}{R_2-R_1}\Big)^{d_1(N+2)}
     \bigg[\biint_{\tilde z+Q_{R_2}}\big(\mu^2+|Du|^2\big)^{\frac{p}{2}}\, \dx\dt\bigg]^{d_1},
\end{align*}
for every $\epsilon\in(0,1)$. 
In the last line, we applied Young's inequality with exponents 
$\frac{p}{p-2d_1}$ and $\frac{p}{2d_1}$. 
Proceeding similarly as for the derivation of~\eqref{gradient-bound}
now leads to a sup-bound for the
gradient that is independent of the oscillation of $u$. 
More precisely, for every $p>\frac{2N}{N+2}$ we obtain the bound 
\begin{equation*}%\label{alt-gradient-bound}
    \essup_{\tilde z+Q_{\rho}}|Du|
    \le
    \boldsymbol\gm \sum_{k=1}^{2} \bigg[\biint_{\tilde z+Q_{2\rho}}(\mu^2+|Du|^2)^{\frac{p}{2}}\dx\dt\bigg]^{\frac{d_k}{p}},
\end{equation*}
with a constant $\boldsymbol\gm =\boldsymbol\gm (N,p,C_o,C_1,\alpha)$ and the scaling deficits $d_1$ and $d_2$.
For exponents $1<p\le\frac{2N}{N+2}$, however, the above procedure is
not applicable, since then the exponent $\frac{N(2-p)}{2}$ of $\lambda$ that
appears in~\eqref{alternative-energy-bound} is not smaller than
$p$. Therefore, it seems to be unavoidable that the right-hand side
of estimate~\eqref{gradient-bound} depends on the oscillation of
$u$ if $1<p\le\frac{2N}{N+2}$.\hfill$\Box$
\end{remark}

\subsection{Step 5: H\"older regularity of the gradient}
Our next goal is the proof of the H\"older estimate~\eqref{gradient-holder-bound-intro}. To this end, we wish to select the parameter $\lm$ to satisfy \eqref{def-lambda}, such that the Campanato type estimate \eqref{Campanato-bound-2} holds. %As in~\eqref{gradient-sup-bound}, 
Let $\rho$ be defined in \eqref{Eq:rho-dist} and $\boldsymbol\gm =\boldsymbol\gm(N,p,C_o,C_1,\alpha)$ be as in~\eqref{gradient-bound-on-U}. %consider the estimate \eqref{gradient-bound-on-U}.
Recall also that $\boldsymbol\gm_o=\boldsymbol\gm _o(N,p,C_o,C_1)$ has been chosen in the course of deriving \eqref{sub-bound-Dw-lambda}.
Choose the parameter 
\begin{equation*}
  \lambda
  :=
  \boldsymbol\gm_o \boldsymbol\gm\bigg[\frac{\osc_{E_T} u}{\rho}
    +
    \Big(\frac{\osc_{E_T} u}{\rho}\Big)^{\frac{2}{p}}
    +\mu
    \bigg].
\end{equation*}
%where %$\boldsymbol\gm =\boldsymbol\gm(N,p,C_o,C_1,\alpha)$ comes from~\eqref{gradient-bound-on-U} and $\boldsymbol\gm_o=\boldsymbol\gm_o(N,p,C_o,C_1)$ comes from~\eqref{def-lambda}.
Consider the radii $R_2=\rho$, $R_1=\frac \rho2$, and
$R_o=\frac14 \min\{1,\lambda^{\frac{p-2}{2}}\}\rho$.
In view of~\eqref{gradient-bound-on-U}, we have
\begin{equation*}
  \boldsymbol\gm_o\Big(\mu^2+\|Du\|_{L^\infty(\tilde z+Q_{R_2})}^2\Big)^{\frac12}
  \le
  \boldsymbol\gm_o\Big(\mu^2+\|Du\|_{L^\infty(\mathcal{U})}^2\Big)^{\frac12}
  \le
  \lambda,
\end{equation*}
so that \eqref{def-lambda} is satisfied for any $\tilde
z\in\mathcal{K}$. Consequently,
the results of the preceding subsections are available for the above  parameters.    
In particular, from~\eqref{Campanato-bound-2}, we infer 
\begin{align*}
  \biint_{z_o+Q_{r}^{(\lambda)}}\big|Du-(Du)_r^{(\lambda)}\big|^p\dx\dt
  &\le
    \boldsymbol\gm\Big(\frac{r}{R_o}\Big)^{\alpha_o p}\lambda^p
\end{align*}
for every $z_o\in\mathcal{K}$ and every $r\in(0,R_o)$.
Moreover, using~\eqref{convergence-means} with
$\epsilon=1$ and estimating the integrand on the right-hand side by $\lambda^p$, we readily deduce
\begin{equation*}
  \big|\Gamma_{z_o}-(Du)_r^{(\lambda)}\big|^p
  \le
  \boldsymbol\gm\Big(\frac{r}{R_o}\Big)^{\alpha_o p}\lambda^p.
\end{equation*}
Joining the two preceding estimates gives 
\begin{equation}\label{campanato-Gamma}
  \biint_{z_o+Q_{r}^{(\lambda)}}|Du-\Gamma_{z_o}|^p\dx\dt
  \le
    \boldsymbol\gm\Big(\frac{r}{R_o}\Big)^{\alpha_o p}\lambda^p
\end{equation}
for every $z_o\in\mathcal{K}$ and every $r\in(0,R_o)$.
This results in the H\"older continuity of $Du$
in $\mathcal{K}\Subset\Omega_T$. In order to derive a quantitative
estimate,  consider two points $z_1,z_2\in \mathcal{K}$ that satisfy
\begin{equation*}
  0< d_\mathrm{par}^{(\lambda)}(z_1,z_2)\le\tfrac12\rho,
\end{equation*}
with the intrinsic parabolic distance $d_\mathrm{par}^{(\lambda)}$ defined in \eqref{intrinsic-distance}, and introduce 
$$
r:=2d_\mathrm{par}^{(\lambda)}(z_1,z_2) \quad\text{and}\quad 
  z_\ast:=\big(\tfrac12(x_1+x_2), \min\{t_1, t_2\}\big).
$$        
Applying~\eqref{campanato-Gamma} with $z_o=z_1$ and with $z_o=z_2$, we obtain
\begin{align*}%\label{first-case}\nonumber
	|Du(z_1)& - Du(z_2)|^p\\\nonumber
	&=
	\biint_{z_\ast +Q_{r/2}^{(\lambda)}} 
	|\Gamma_{z_1} - \Gamma_{z_2}|^p \,\dx\dt \\\nonumber
	&\le
	c\bigg[
	\biint_{z_1+Q_{r}^{(\lambda)}} 
	|Du - \Gamma_{z_1}|^p \,\dx\dt +
	\biint_{z_2+Q_{r}^{(\lambda)}} 
	|Du - \Gamma_{z_2}|^p \,\dx\dt 
	\bigg] \\
	&\le
	\boldsymbol\gm\Big(\frac{r}{R_o}\Big)^{\alpha_o p}\lambda^p,
\end{align*}
where $\boldsymbol\gm =\boldsymbol\gm (N,p,C_o,C_1,\alpha)$.
Recalling the definition of $r$, we arrive at the estimate
\begin{equation}\label{holder-est}
    |Du(z_1) - Du(z_2)|
    \le
    \boldsymbol\gm \lambda\bigg(\frac{d_\mathrm{par}^{(\lambda)}(z_1,z_2)}{R_o}\bigg)^{\alpha_o},
\end{equation}
which holds for any $z_1,z_2\in \mathcal{K}$ with
$d_\mathrm{par}^{(\lambda)}(z_1,z_2)\le R_o$.
In the case $d_\mathrm{par}^{(\lambda)}(z_1,z_2)>R_o$,
we have the estimate
\begin{equation}\label{holder-bound-smooth}
  |Du(z_1) - Du(z_2)|
  \le
  2\|Du\|_{L^\infty(\mathcal{K})}
  \le
  \boldsymbol\gm \lambda\bigg(\frac{d_\mathrm{par}^{(\lambda)}(z_1,z_2)}{R_o}\bigg)^{\alpha_o},
\end{equation}
so that~\eqref{holder-est} holds in this case as well. In view
of the choice of $\lambda$ in~\eqref{gradient-bound-on-U} and the definition of $R_o$, this yields the
desired quantitative estimate \eqref{gradient-holder-bound-intro} in the
case $\mu\in(0,1]$ and $|Du|\in L^\infty_{\mathrm{loc}}(E_T)$.

\section{Approximation}\label{sec:approx}
%\subsection{Step 6: Approximation}

In this section, we prove Theorem~\ref{theorem:schauder:intro}
by a compactness argument. Loosely speaking, the given solution to \eqref{p-laplace} with a H\"older continuous coefficient will be approximated by solutions to regularized
problems, for which the estimates from Section~\ref{sec:Schauder-Lip}
are applicable.

As in the statement of the theorem, we consider a subset
$\mathcal{K}\subset E_T$ such that
$\rho:=\tfrac14\dist_{\mathrm{par}}(\mathcal{K},\partial_\mathrm{par} E_T)>0$. 
For the approximation procedure, we introduce the subdomain
$\widetilde E:=\{x\in E\colon \dist(x,\partial E)>\eps_o\}$ for some $\eps_o\in(0,\rho)$. Next, we consider a standard mollifier $\phi\in
C^\infty_0(K_1)$ with $\int_{K_1}\phi\,\dx=1$. For a sequence
$\eps_i\in(0,\eps_o)$ with $\varepsilon_i\downarrow0$
and the scaled mollifiers
$\phi_{\varepsilon_i}(x):=\varepsilon_i^{-N}\phi(\frac{x}{\varepsilon_i})$
we define regularized coefficients by letting
\begin{equation*}
  a_i(x,t):=\int_{K_{\varepsilon_i}}a(x-y,t)\phi_{\varepsilon_i}(y)\,\dy
\end{equation*}
for every $x\in\widetilde E$ and a.e.~$t\in[0,T]$.
Using assumptions~\eqref{prop-a} of the coefficient $a(x,t)$, it is
straightforward to show that the regularized coefficients satisfy
\begin{equation}\label{prop-a-regular}
    \left\{
    \begin{array}{c}
    C_o\le a_i(x,t)\le C_1,\\[6pt]
    |a_i(x,t)-a_i(y,t)|\le C_1|x-y|^\alpha,\\[6pt]
    |D_xa_i(x,t)|\le \frac{C_1\|D\phi\|_{L^1}}{\varepsilon_i},    
    \end{array}
    \right.
\end{equation}
for every $i\in\N$, $x,\,y\in\widetilde E$, and a.e.~$t\in[0,T]$, with the
constants $0<C_o\le C_1$ and $\alpha\in(0,1)$ from~\eqref{prop-a}. In addition, we note that~\eqref{prop-a}$_2$ implies
\begin{equation}
  \label{diff-a}
  |a_i(x,t)-a(x,t)|\le C_1\eps_i^\alpha,
\end{equation}
for every $i\in\N$, and a.e.~$(x,t)\in\widetilde E_T$.
Moreover, introduce the notation
\begin{equation*}
\mu_i:=\left\{
    \begin{array}{cl}
        \varep_i & \text{for}\>\mu=0, \\[5pt]
        \mu & \text{for}\>\mu\in(0,1].
    \end{array}
    \right.
\end{equation*}
%in the case$\mu=0$ we let $\mu_i:=\varepsilon_i$, while in the case$\mu\in(0,1]$, we write $\mu_i:=\mu$ for every $i\in\N$. 

Let us consider
weak solutions $u_i\in L^p(0,T;W^{1,p}(\widetilde E))\cap
C([0,T];L^2(\widetilde E))$ to the regularized problems 
\begin{equation*}%\label{regularized-problems}
  \left\{
  \begin{array}{c}
%    u_i\in L^p\big(0,T;W^{1,p}(\widetilde E)\big)
%    \cap C^0\big([0,T];L^2(\widetilde E)\big)\\[6pt]
    \partial_tu_i-\Div\big(a_i(x,t)\big(\mu_i^2+|Du_i|^2\big)^{\frac{p-2}{2}}Du_i\big)=0
    \quad \mbox{in $\widetilde E_T$},\\[6pt]
    u_i=u\quad \mbox{on $\partial_\mathrm{par}\widetilde E_T$,}
  \end{array}
  \right.
\end{equation*}
where the boundary datum is taken in the sense of Definition~\ref{def:weak}, and the existence was discussed in Remark~\ref{Rmk:CP2}.

%\textcolor{blue}{existence: Remark~\ref{Rmk:CP2}}

Apply the comparison estimate from Lemma~\ref{lem:Comp2}
with  
\begin{equation*}
    \mathbf A(x,t,\xi):=a(x,t)\big(\mu^2+|\xi|^2\big)^{\frac{p-2}{2}}\xi
    \quad\mbox{and}\quad
    \mathbf B(x,t,\xi):=a_i(x,t)\big(\mu_i^2+|\xi|^2\big)^{\frac{p-2}{2}}\xi,
\end{equation*}
(note that $\mathbf A$ fulfills \eqref{p-growth} by Lemma \ref{Monotonicity})
to obtain
\begin{align*}%\label{comparison-est-i}
    \iint_{\widetilde E_T}|Du-Du_i|^p\dx\dt
    &\le
    \boldsymbol\gm \|a_i-a\|_{L^\infty(\widetilde E_T)}^{p'}
    \iint_{\widetilde E_T}\big(\mu_i^2+|Du|^2\big)^{\frac{p}2}\dx\dt\\\nonumber
    &\phantom{\le\,}
    +
    \boldsymbol\gm \|a_i-a\|_{L^\infty(\widetilde E_T)}^{p}
     \iint_{\widetilde E_T}\big(\mu_i^2+|Du|^2\big)^{\frac{p}2}\dx\dt\\\nonumber
   &\le
     \boldsymbol\gm 
     \big(\eps_i^{\al p'}+\eps_i^{\al p}\big)\iint_{\widetilde E_T}\big(1+|Du|^2\big)^{\frac{p}2}\dx\dt,
\end{align*}
with a constant $\boldsymbol\gm  =\boldsymbol\gm  (p,C_o,C_1)$. In the last step, we
applied~\eqref{diff-a}. The preceding estimate proves
\begin{equation}\label{Lp-convergence}
  Du_i\to Du \quad\mbox{in $L^p(\widetilde E_T)$ as $i\to\infty$}.
\end{equation}

Next, applying the comparison principle in Proposition~\ref{prop:comparison-plapl} we infer
%\begin{equation*}
%  \osc_{\widetilde E_T}u_i
%  =
%  \osc_{\partial_p\widetilde E_T}u_i
%  =
%  \osc_{E_T}u
%  =:
%  \widetilde{\boldsymbol \omega},
%\end{equation*}
%and therefore,
\begin{equation}
  \label{comp-principle}
  \boldsymbol \omega_i
  :=
  \osc_{\widetilde E_T}u_i
%  =
%  \osc_{\partial_p\widetilde E_T}u_i
  \le
  \osc_{E_T}u
  =:
  \boldsymbol \omega
  \qquad\mbox{for every $i\in\N$.}
\end{equation}
In particular, the approximating solutions $u_i$ are bounded. 
%We note that by choosing the parameter $\eps_o>0$ from the definition of $\widetilde E$ %small enough, we can achieve that $\widetilde{\boldsymbol\omega}$ is arbitrarily close to %$\osc_{E_T}u$. \textcolor{orange}{But this requires the continuity of $u$.}
The first and the last conditions in~\eqref{prop-a-regular} ensure that the
properties~\eqref{prop-a-diff} are satisfied for $b=a_i$, with a
constant $C_2$ that depends on $i\in\N$. This means that
Propositions~\ref{PROP:LINFTY} and \ref{PROP:LINFTY:P>2} are applicable to the solutions $u_i$,
and we infer $|Du_i|\in L^\infty_{\mathrm{loc}}(\widetilde E_T)$. Once we have this qualitative information, the results obtained in Section~\ref{sec:Schauder-Lip}, in particular inequalities \eqref{gradient-bound-on-U} and \eqref{holder-bound-smooth}, are at our disposal.
%This, together with the fact $\mu_i>0$, enables us to apply . 
Hence, we
infer the estimates 
\begin{equation}
    \label{gradient-bound-i}
    \sup_{\mathcal{K}}|Du_i|
    \le
     \boldsymbol\gm \bigg[\frac{\boldsymbol \omega_i}{\rho}
     +
     \Big(\frac{\boldsymbol\omega_i}{\rho}\Big)^{\frac{2}{p}}
     +\mu_i
     \bigg]
     =:\boldsymbol\gm _o^{-1}\lambda_i,
\end{equation}
as well as 
\begin{equation}\label{holder-bound-i}
    |Du_i(z_1) - Du_i(z_2)|
    \le
    \boldsymbol\gm\lambda_i
    \bigg(\frac{d_\mathrm{par}^{(\lambda_i)}(z_1,z_2)}{\min\{1,\lambda_i^{\frac{p-2}{2}}\}\rho}\bigg)^{\alpha_o}
\end{equation}
for any $z_1,z_2\in\mathcal{K}$.
The constants $\boldsymbol\gm ,\boldsymbol\gm_o\ge1$ and 
$\alpha_o\in(0,1)$ in the preceding
estimates depend only on $N,p,C_o,C_1,$ and $\alpha$, and are in
particular independent of $i\in\N$. 
We note that the right-hand side of the preceding estimate is monotonically increasing in $\lambda_i$, because it can be rewritten as 
\begin{align*}
    \boldsymbol\gm\rho^{-\alpha_o}
    \Big(\max\Big\{\lambda_i^{\frac{1}{\alpha_o}},\lambda_i^{\frac{1}{\alpha_o}+\frac{2-p}{2}}\Big\}|x_1-x_2|+
    \max\Big\{\lambda_i^{\frac{1}{\alpha_o}+\frac{p-2}{2}},\lambda_i^{\frac{1}{\alpha_o}}\Big\} \sqrt{|t_1-t_2|}\Big)^{\alpha_o}.
\end{align*}
Indeed, for $1<p<2$ all  exponents of $\lm_i$ in the last displayed quantity are obviously non-negative. In the case $p>2$, the definition of $\alpha_o$ and $\alpha_\ast$ in~\eqref{def:alpha-0} and \eqref{def:alpha-ast} imply $\alpha_o\le\frac12\alpha_\ast=\frac{\alpha} {2(p-1)}$, so that
$\frac1{\alpha_o}\ge \frac{2(p-1)}{\alpha}\ge \frac{p-2}2$. Hence also in this case all exponents of $\lm_i$  are  non-negative.

We abbreviate
  \begin{equation*}
    \lambda
    :=
    \boldsymbol\gm_o\boldsymbol\gm \bigg[\frac{\osc_{E_T}u}{\rho}
    +
    \Big(\frac{\osc_{E_T}u}{\rho}\Big)^{\frac{2}{p}}
    +\mu
    \bigg],
  \end{equation*}
  and note that \eqref{comp-principle} implies $\limsup_{i\to\infty}\lambda_i\le\lambda$.
  %Since the right-hand
  %side of~\eqref{holder-bound-i} depends monotonically on $\lambda_i$,
  Therefore, passing to the limit in \eqref{gradient-bound-i} and in \eqref{holder-bound-i}, we obtain
  \begin{equation*}
    \limsup_{i\to\infty}\,\sup_{\mathcal{K}}|Du_i|
    \le
    \boldsymbol\gm_o^{-1}\lambda
  \end{equation*}
  and 
  \begin{equation*}
    \limsup_{i\to\infty}\,|Du_i(z_1) -
    Du_i(z_2)|
    \le
    \bg\lambda
    \bigg(\frac{d_\mathrm{par}^{(\lambda)}(z_1,z_2)}{\min\{1,\lambda^{\frac{p-2}{2}}\}\rho}\bigg)^{\alpha_o} 
  \end{equation*}
  for every $z_1,z_2\in\mathcal{K}$. 
  In particular, the sequence $(Du_i)_{i\in\N}$ is bounded in the space
  $C^{\alpha_o,\alpha_o/2}(\mathcal{K},\R^N)$. Therefore, the theorem
  of Ascoli-Arzel\`a and the convergence~\eqref{Lp-convergence} imply
  $Du_i\to Du$ uniformly in $\mathcal{K}$, as $i\to\infty$. Hence,
  passing to the limit in the preceding estimates, we obtain 
  \begin{equation*}
    \sup_{\mathcal{K}}|Du|
    \le
    \bg_o^{-1}\lambda
    \qquad\mbox{and}\qquad
    |Du(z_1) - Du(z_2)|
    \le
    \bg\lambda
    \bigg(\frac{d_\mathrm{par}^{(\lambda)}(z_1,z_2)}{\min\{1,\lambda^{\frac{p-2}{2}}\}\rho}\bigg)^{\alpha_o}
  \end{equation*}
  for any $z_1,z_2\in\mathcal{K}$. This proves the asserted
  estimates~\eqref{gradient-sup-bound-intro}
  and~\eqref{gradient-holder-bound-intro} and completes the proof of
  Theorem~\ref{theorem:schauder:intro}.

%   \begin{thebibliography}{99}
% \bibitem{BDLS-boundary}
%   V.~B\"ogelein, F.~Duzaar, N.~Liao, and C.~Scheven,
%   \newblock Boundary regularity for parabolic systems in convex domains.
%   \newblock {\em J. Lond. Math. Soc. (2)}, 105 (2022), no. 3, 1702--1751.

% \bibitem{BDLS-Tolksdorf}
% \newblock V.~Bögelein, F.~Duzaar, N.~Liao, and C.~Scheven.
% \newblock Gradient Hölder regularity for degenerate parabolic systems.
% \newblock {\textit preprint}.

% \bibitem{Choe:1991} H.~Choe,
%   \newblock H\"older regularity for the gradient of solutions of
%   certain singular parabolic systems.
%   \newblock Comm. Partial Differential Equations 16 (1991), no. 11,
%   1709--1732.

% \bibitem{DB} 
% \newblock E.~DiBenedetto.
% \newblock {\em Degenerate Parabolic Equations.}
% \newblock Universitext, Springer-Verlag, New York, 1993. 

% % \bibitem{Giaquinta-book}
% %   M.~Giaquinta, 
% %   \newblock \emph{Multiple integrals in the calculus of variations and
% %     nonlinear elliptic systems.}
% %   \newblock Annals of Mathematics Studies, 105. Princeton University Press, Princeton, NJ, 1983.

% \bibitem{Giusti}
% E.~ Giusti,
% \newblock \emph{Direct Methods in the Calculus of Variations.}
% \newblock World Scientific, Singapore, 2003.

% \end{thebibliography}

\chapter[Regularity for the doubly non-linear equation]{Regularity for the doubly non-linear equation}\label{sec:regularity}

In this chapter we prove the main regularity Theorem~\ref{THM:REGULARITY-INTRO}. Thereby, we will take advantage of the previously established theory, in particular the Harnack inequality and the Schauder estimates. However, prior to the proof, 
we first remark on the relevant literature, and then briefly describe the optimality of the range of exponents $ 0<p-1<q<\frac{N(p-1)}{(N-p)_+}$ considered in Theorem~\ref{THM:REGULARITY-INTRO}.

\section{Remarks on the literature}\label{sec:reg:opt}
As we have already discussed in Chapter~\ref{Intro}, %for the doubly non-linear equation \eqref{doubly-nonlinear-prototype}, 
the first investigations of the gradient regularity of solutions to \eqref{doubly-nonlinear-prototype} are due to Ivanov \& Mkrtychyan \cite{Ivanov-Mk}, Ivanov \cite{Ivanov-1996}, and Savar\'e \& Vespri \cite{Savare}. 
%More details will be given in Section~\ref{sec:reg:opt}.

The results of the first two authors are not easily comparable with ours, since they are given for so-called {\em regular} solutions, that is, weak solutions to a Cauchy-Dirichlet problem built as suitable limits.

In the slow diffusion regime ($p>1$ \& $q<p-1$) which we do not deal with here,  a bound for $Du^\alpha$ with $\alpha\in(0,1)$ holds for $q\le \frac{p-1}p$. Such a result is not surprising; indeed, as discussed in Chapter~\ref{Intro}, provided the waiting time is taken into account, for the porous medium equation ($p=2$ \& $0<q\le\frac12$) the bound holds for $Du^{1-q}$. 
%Since $\frac{p-1}p<p-1$ for $p>1$, such a result leaves open the possibility that for the complementary values of $p$ and $q$ in the slow diffusion regime, $Du$ should be the bounded quantity.

As far as $Du$ is concerned, combining \cite[Theorem~6.1]{Ivanov-1996} and the correction pointed out in \cite[\S~5]{Ivanov-1997}, yields the claim that the range of values of $p$ and $q$ in order for $Du$ to be bounded is 
\[
p>1,\quad 0<q\le 1,\quad  \frac{(p-1)(1-q)}{q}<\frac{2\min\{1,p-1\}}{2\min\{1,p-1\}+\frac N4}.
\]
Even if we limit ourselves to the fast diffusion regime ($0<p-1<q$), which is the case we are interested in here, this does not correspond to the results of Theorem~\ref{THM:REGULARITY-INTRO}, and it is not clear how the Cauchy-Dirichlet problem Ivanov starts from, affects the interior regularity of the gradient. 
%The optimality of the ranges of $p$ and $q$ in order for \eqref{grad-est-intro} 
%to hold is discussed in Section~\ref{sec:reg:opt}. 
% Two results were presented:\footnote{\textcolor{orange}{If I computed correctly, Ivanov's $\ell$ corresponds to $\frac{(p-1)(1-q)}{q}$. So $\ell<1$ and $\ell\ge 1$ corresponds to what I wrote above. But this makes no sense... I am even not sure if they really get all exponents they claim}} a bound for $Du$ when $p-1<pq$; and a bound for  $Du^\alpha$ with some large $\alpha$ when $p-1\ge pq$. %\textcolor{red}{Nevertheless, a computational error was pointed out in \cite[\S~7]{}.} 
% \textcolor{purple}{Nevertheless, an error was pointed out in \cite[\S~5]{Ivanov-1997}; in particular, the author claims that the actual range of values of $p$ and $q$ in order for $Du$ to be bounded is $p>1$ and $\displaystyle 0\le\frac{(p-1)(1-q)}{q}<\frac{2\min\{1,p-1\}}{2\min\{1,p-1\}+\frac N4}$.}
% The proof hinges upon the Hölder continuity of the solution. %Roughly speaking, whenever $u>0$ at a point $(x_o,t_o)$ one can quantify a parabolic cylinder at this point on which the solution is still strictly positive. Therefore it suffices to prove the gradient estimate on parabolic cylinders where the solution is bounded from below by a positive constant. 

% Finally, in \cite[Theorem~1.5]{Savare} Savar\'e \& Vespri give a particular case of \eqref{grad-est-intro} for $p<N$,
% $0<p-1< q<\frac{N(p-1)}{N-p}$.

Finally, 
%a particular case of \eqref{grad-est-intro} for $p<N$, $0<p-1< q<\frac{N(p-1)}{N-p}$ %was conceived in \cite[Theorem~1.5]{Savare} by 
in \cite[Theorem~1.5]{Savare}, with a different notion of solution to \eqref{doubly-nonlinear-prototype} Savar\'e \& Vespri provided a gradient estimate
\[
|Du(x_o,t_o)|\le \frac{C u(x_o,t_o)}{\rho}
\]
for some $C=C(p,q,N)$ and for parameters $p<N$, $0<p-1< q<\frac{N(p-1)}{N-p}$.

\section{Optimality}\label{sec:reg:opt-1}
Let us now present some counterexamples which illustrate that the statement of Theorem~\ref{THM:REGULARITY-INTRO} breaks down in the cases $q=p-1$ and $q=\frac{N(p-1)}{(N-p)_+}$. As a matter of fact, the same counterexamples of Section~\ref{S:Harnack-opt} show the optimality of Theorem~\ref{THM:REGULARITY-INTRO}.

We start with $q=p-1$: in this case the function 
\[
u(x,t)=Ct^{-\frac{N}{p(p-1)}} \exp\left(-\frac{p-1}p\left(\frac{|x|^p}{pt}\right)^{\frac1{p-1}}\right)
\]
is a solution to \eqref{doubly-nonlinear-prototype} in $\R^N\times\R_+$ for arbitrary $C>0$. If we choose $t=1$ and $\rho=1$ and pick a sequence of points $x_n$ such that $x_n\to\infty$, it is easy to see that \eqref{grad-est-intro} cannot hold. This fact was first pointed out in \cite[Remark~1.6]{Savare}.

%\textcolor{red}{
The estimate \eqref{grad-est-intro} does not hold in general when $q\ge\frac{N(p-1)}{(N-p)_+}$. Suppose the estimate were to hold for some $u_o>0$. Then letting $\rho\to\infty$ it would have implied $u$ is constant in $\rn\times\rr$. However, this contradicts the non-constant solutions in Section~\ref{S:Harnack-opt}.
%}

For parameters
\[
N\ge2,\quad N>p,\quad q=\frac{N(p-1)}{N-p},\quad b=\left(\frac Nq\right)^q,
\]
the function
\[
u(x,t)=\left(|x|^{\frac{N(q+1)}{q(N-1)}}+e^{bt}\right)^{-\frac{N-1}{q+1}}
\]
is a non-negative solution to \eqref{doubly-nonlinear-prototype} in $\R^N\times\R$. If we take $(x_o,t_o)=(0,0)$, we have $u_o=1$; since $u$ is a solution everywhere, $\tilde\gamma Q_o$ can be taken with arbitrary $\varrho>0$.

The right-hand side of \eqref{grad-est-intro} reduces to $\frac\gamma\varrho$. As for the left-hand side, we have
\[
|Du(x,t)|=\frac Nq\frac{|x|^{\frac{N+q}{q(N-1)}}}{\left[|x|^{\frac{N(q+1)}{q(N-1)}}+e^{bt}\right]^{\frac{N+q}{q+1}}}.
\]
It is apparent that 
\[
\sup_{Q_o}|Du|\ge\sup_{K_\varrho}|Du(\cdot,0)|=\sup_{r\in(0,\varrho)} f(r),
\]
where we have set 
\[
|x|=r,\quad f(r)=\frac Nq\frac{r^{\frac{N+q}{q(N-1)}}}{\left[r^{\frac{N(q+1)}{q(N-1)}}+1\right]^{\frac{N+q}{q+1}}}.
\]
It is a matter of straightforward computations to check that if
\[
\rho>\left(\frac1{N-1}\right)^{\frac{q(N-1)}{N(q+1)}}:=\bar r,
\]
we have 
\[
\sup_{0<r<\varrho} f(r)=f(\bar r)=\gamma(N,q).
\]
Hence, $\sup_{Q_o}|Du|\ge\gamma(N,q)$, and provided $\varrho$ is chosen sufficiently large, \eqref{grad-est-intro} cannot hold.

\color{black}
\section{Proof of Theorem~\ref{THM:REGULARITY-INTRO} and Corollary~\ref{COR:GRAD-REG}}

We are now in the position to prove the main regularity results for doubly non-linear equation.

\begin{proof}[{\rm \textbf{Proof of Theorem~\ref{THM:REGULARITY-INTRO}}}]
The continuity of $u$ is only used in order to give $u(x_o,t_o)$ an unambiguous meaning; see also our previous Remark \ref{Rmk:semicontin}.
From Theorem~\ref{THM:BD:1} we infer that $u$ is locally bounded in $E_T$. 
We consider $z_o=(x_o,t_o)\in E_T$ such that $u_o:=u(x_o,t_o)>0$. By $\sigma=\sigma(N,p,q)\in(0,1)$ and 
$\boldsymbol \gm_{\rm h}=\boldsymbol\gm_{\rm h} (N,p,q)>1$ we denote the constants from Theorem~\ref{THM:HARNACK}.  
Now, let $\rho>0$ and define $\widetilde{\boldsymbol\gm}:= \frac{8\bg_{\rm h}}{\sigma}>1$ and assume that
$$
	K_{\widetilde{\boldsymbol\gm}\rho}(x_o) \times \big(t_o - u_o^{q+1-p}(\widetilde{\boldsymbol\gm}\rho)^p, t_o + u_o^{q+1-p}(\widetilde{\boldsymbol\gm}\rho)^p\big)
	\subset E_T.
$$
Then the smaller cylinder
$$ 
    K_{8\rho/\sigma^\frac1p}(x_o)\times 
	\Big(t_o- \bg_{\rm h} u_o^{q+1-p}\big(8\rho/\sigma^\frac1p\big)^p,t_o+\bg_{\rm h} u_o^{q+1-p}\big(8\rho/\sigma^\frac1p\big)^p\Big)
$$ 
is also contained in $E_T$, so that we are allowed to apply Harnack's inequality on this cylinder. Moreover, we have 
\begin{align*}
    Q_o
    &:=
    K_{\rho}(x_o)\times 
	\big(t_o- u_o^{q+1-p}\rho^p,t_o+ u_o^{q+1-p}\rho^p\big) \\
	&\,\subset
    K_{\rho/\sigma^\frac1p}(x_o)\times 
	\big(t_o- u_o^{q+1-p}\rho^p,t_o+ u_o^{q+1-p}\rho^p\big) \\
	&\,=
    K_{\rho/\sigma^\frac1p}(x_o)\times 
	\Big(t_o- \sigma u_o^{q+1-p}\big(\rho/\sigma^\frac1p\big)^p,t_o+\sigma u_o^{q+1-p}\big(\rho/\sigma^\frac1p\big)^p\Big).
\end{align*}
Therefore, by Harnack's inequality we conclude
\begin{align}\label{start-Harnack}
	\tfrac1{\boldsymbol\gm_{\rm h}} u_o
	\le
	u(x,t)
	\le
	\boldsymbol\gm_{\rm h} u_o
	\quad\mbox{for a.e.~$(x,t)\in Q_o$.}
\end{align}
Re-scaling to the symmetric unit cylinder $\mathcal{Q}_1:=K_1(0)\times(-1,1)$, i.e. 
\begin{align*}
	\tilde u(y,s)
	:=
	\tfrac1{u_o}\, u\big(x_o+\rho y, t_o+ u_o^{q+1-p}\rho^p s\big)
	\quad
	\mbox{for $(y,s)\in \mathcal{Q}_1$,}
\end{align*}
leads to a weak solution $\tilde u$ to the doubly non-linear parabolic equation
\begin{equation}\label{DN-tilde}
	\partial_t \tilde u^q - 
	\Div\big(|D\tilde u|^{p-2}D \tilde u\big)
	=
	0
	\quad\mbox{in $\mathcal{Q}_1$,}
\end{equation}
with values in $[\bg_{\rm h}^{-1} ,\bg_{\rm h}]$.
Substituting $v=\tilde u^q$, equation \eqref{DN-tilde} is equivalent to 
\begin{equation*}
	\partial_t v - 
	\Div\Big(\big(\tfrac1q\big)^{p-1}\tilde u^{(p-1)(1-q)}|Dv|^{p-2}Dv\Big)
	=
	0
	\qquad\mbox{in $\mathcal{Q}_1$}
\end{equation*}
and \eqref{start-Harnack} yields
\begin{align}\label{lower-upper-v}
	\bg_{\rm h}^{-q}
	\le
	v(y,s)
	\le
	\bg_{\rm h}^{q}
	\quad
	\mbox{for a.e.~$(y,s)\in \mathcal{Q}_1$.}
\end{align}
With the abbreviation
\begin{align*}
	a(y,s)
	:=
	\big(\tfrac1q\big)^{p-1} [\tilde u(y,s)]^{(p-1)(1-q)},
\end{align*}
the equation for $v$ can be interpreted as a parabolic $p$-Laplace type equation with
measurable coefficients $a(y,s)$ in $\mathcal Q_1$. Indeed, $v$ is a bounded weak solution to 
\begin{equation}\label{equation-for-v}
	\partial_t v - 
	\Div\big(a(y,s)|Dv|^{p-2}Dv\big)
	=
	0
	\qquad
	\mbox{in $\mathcal{Q}_1$.}
\end{equation}
For the coefficients $a$ we have  lower and upper bounds in terms of $\bg_{\rm h}$, i.e.
\begin{equation} \label{bounds-for-a}
	\big(\tfrac1q\big)^{p-1}\bg_{\rm h}^{-(p-1)|1-q|}
	\le 
	a(y,s)
	\le 
	\big(\tfrac1q\big)^{p-1}\bg_{\rm h}^{(p-1)|1-q|}
	\quad
	\mbox{for a.e.~$(y,s)\in \mathcal{Q}_1$.}
\end{equation}
Therefore, we may apply Chapter III, \S\,1, Theorem~1.1, resp. Chapter IV, \S\,1, Theorem~1.1 in \cite{DB} to deduce that $v$ is locally Hölder continuous in $\mathcal{Q}_{1}$ with some Hölder exponent $\alpha\in(0,1)$ depending only on $N,p$ and $q$. In particular, $v$ is $\alpha$-Hölder continuous in $\frac12\mathcal Q_1=\mathcal Q_{\frac12}=K_\frac12 \times (-\frac14,\frac14)$. 
Recalling \eqref{lower-upper-v} and the dependencies of $\bg_{\rm h}$, the quantitative H\"older estimates from \cite{DB} yield
\begin{align*}
    |v(y_1,s)-v(y_2,s)|
    \le
    \boldsymbol \gm(N,p,q) |y_1-y_2|^\alpha
\end{align*}
for any $y_1,y_2\in B_{1/2}$ and any $s\in (-1/4,1/4)$. This and the lower and upper bound for $v$ in \eqref{lower-upper-v} imply that $a$ is also $\alpha$-Hölder continuous with the  quantitative  estimate 
\begin{align*}
    |a(y_1,s)-a(y_2,s)|
    \le
    \boldsymbol \gm (N,p,q) |y_1-y_2|^\alpha
\end{align*}
for any $y_1,y_2\in B_{1/2}$ and any $s\in (-1/4,1/4)$. Together with \eqref{equation-for-v} and \eqref{bounds-for-a} this shows that the hypotheses \eqref{prop-a} of  Theorem~\ref{theorem:schauder:intro} (with $\mu=0$)
are fulfilled with constants $C_o,C_1$ depending only on $N,p,q$. Therefore,
the Schauder-estimates from the same theorem are at our disposal. %Qualitatively, they ensure  $\alpha_o$-H\"older continuity of $Dv$; 
More precisely, we have $Dv\in C^{\alpha_o,\alpha_o/2}_{\mathrm{loc}}(\mathcal{Q}_{1/2},\R^N)$ for some  H\"older exponent $\alpha_o\in(0,1)$, depending only on $N,p,\bg_{\rm h}$ and $\alpha$. 
Since $\bg_{\rm h}=\bg_{\rm h} (N,p,q)$ and $\alpha=\alpha(N,p,q)$, we have $\alpha_o=\alpha_o(N,p,q)$.
Furthermore, the Schauder-estimates from the same theorem provide quantitative estimates. To be more precise, there exist constants $\boldsymbol\gm=\boldsymbol\gm(N,p,q)\ge1$  and $\boldsymbol\gm_o=\boldsymbol\gm_o(N,p,q)\ge1$ such that for any subset $\mathcal{K}\subset \mathcal {Q}_{1/2}$ with 
$\rho:= \tfrac14\mathrm{dist}_\mathrm{par}(\mathcal{K},\partial_\mathrm{par} \mathcal {Q}_{1/2})>0$,
the $L^\infty$-gradient bound
\begin{equation*}
    \sup_{\mathcal{K}}|Dv|
    \le
    \boldsymbol\gm\bigg[\frac{\boldsymbol\omega}{\rho}
    +
    \Big(\frac{\boldsymbol\omega}{\rho}\Big)^{\frac{2}{p}}
    \bigg]
    =:\boldsymbol\gm_o^{-1}\lambda,
\end{equation*}
and the $\alpha_o$-H\"older gradient estimate 
\begin{equation*}
    |Dv(\mathfrak z_1) - Dv(\mathfrak z_2)|
    \le
    \boldsymbol\gm\lambda
    \bigg[\frac{d_\mathrm{par}^{(\lambda)}(\mathfrak z_1,\mathfrak z_2)}{\min\{1,\lambda^{\frac{p-2}{2}}\}\rho}\bigg]^{\alpha_o}
    \quad\mbox{for any $\mathfrak z_1,\mathfrak z_2\in\mathcal{K}$,} 
\end{equation*}
hold true. Here we defined $\boldsymbol \omega:=\osc_{\mathcal{Q}_{\frac12}} v$.
For $\mathcal K=\mathcal{Q}_{\frac14}$ we have $\rho=\frac{\sqrt{3}}{16}$ and $\boldsymbol \omega\le \bg_{\rm h}^q$. Therefore, we have
\begin{equation*}
    \lambda
    =
    \boldsymbol\gm \boldsymbol\gm_o\bigg[\frac{\boldsymbol\omega}{\rho}
    +
    \Big(\frac{\boldsymbol\omega}{\rho}\Big)^{\frac{2}{p}}
    \bigg]
    \le 
    \boldsymbol\gm(N,p,q),
\end{equation*}
so that 
\begin{equation}\label{sup-bound-Dv}
    \sup_{\mathcal{Q}_{1/4}}|Dv|
    \le
    \boldsymbol\gm(N,p,q),
\end{equation}
and 
\begin{equation}\label{osc-bound-Dv}
    \big|Dv(\mathfrak z_1) - Dv(\mathfrak z_2)\big|
    \le
    \boldsymbol\gm(N,p,q)\,d_\mathrm{par}(\mathfrak z_1,\mathfrak z_2)^{\alpha_o}\quad \mbox{for any $\mathfrak z_1,\mathfrak z_2\in \mathcal{Q}_{1/4}$.}
\end{equation}
To convert in the case $p>2$ the intrinsic parabolic distance $d_\mathrm{par}^{(\lambda)}$
 into the parabolic distance $d_\mathrm{par}$, we took advantage of the fact that we may assume $\alpha_o\le \frac{2}{p-2}$.

Once boundedness of  $Dv$ is established, we can derive a bound for the oscillation of $v$ on $\mathcal{Q}_{\frac18}$ by an application of Lemma~\ref{lem:osc-bound-0}.  Indeed, for two points $\mathfrak z_1=(y_1,s_1),\,\mathfrak z_2=(y_2,s_2)\in \mathcal{Q}_{\frac18}$ with $s_1\le s_2$ we apply Lemma~\ref{lem:osc-bound-0} (with $\mu=0$) to $v$ on the cylinder $Q=K_r(\tilde y)\times(s_1,s_2]\subset \mathcal{Q}_{1/4}$, where $\tilde y=\frac12(y_1+y_2)$ and $\frac12|y_1-y_2|\le r\le\frac18$.  The application yields
\begin{equation*}
    |v(\mathfrak z_1)-v(\mathfrak z_2)|
    \le
    \osc_{Q} v
    \le 
    \boldsymbol\gm\bigg[r\|Dv\|_{L^\infty(Q)} +
    \frac{s_2-s_1}{r} \|Dv\|_{L^\infty(Q)}^{p-1}\bigg]
    \le
    \boldsymbol\gm\bigg[r + \frac{s_2-s_1}{r} \bigg].
\end{equation*}
Now, if $\sqrt{s_2-s_1}\le|y_1-y_2|$ we choose $r=\frac12|y_1-y_2|$, so that the right-hand side is bounded by $\boldsymbol\gm|y_1-y_2|$. Otherwise, if $\sqrt{s_2-s_1}>|y_2-y_1|$ we choose $r=\min\big\{\frac18,\sqrt{s_2-s_1}\big\}$ which results in the bound  $\boldsymbol\gm\sqrt{s_2-s_1}$. Joining both cases, we obtain  
\begin{equation*}%\label{holder-v}
    |v(\mathfrak z_1)-v(\mathfrak z_2)|
    \le
    \boldsymbol\gm\,d_\mathrm{par}(\mathfrak z_1,\mathfrak z_2)
    \quad \mbox{for any $\mathfrak z_1,\mathfrak z_2\in \mathcal{Q}_{\frac18}$}
\end{equation*}
for a constant $\boldsymbol\gm= \boldsymbol\gm(N,p,q)$.
In view of \eqref{lower-upper-v} this implies for any $\beta\in \R$ 
%\textcolor{purple}{(Can $\beta$ be also %negative?)}\textcolor{cyan}{We apply this with $\beta=\frac1q-1$}
that
\begin{equation}\label{holder-v-beta}
    |v^\beta(\mathfrak z_1)-v^\beta(\mathfrak z_2)|
    \le
    \boldsymbol\gm \,\bg_{\rm h}^{q|\beta-1|} d_\mathrm{par}(\mathfrak z_1,\mathfrak z_2)
    \quad \mbox{for any $\mathfrak z_1,\mathfrak z_2\in \mathcal{Q}_{1/8}$, } 
\end{equation}
where $\gm=\gm(N,p,q,\beta)$. 

Finally, we scale back to the original solution $u$. From \eqref{holder-v-beta} with $\beta =\frac1q$ we obtain
\begin{equation*}
    |u(z_1)-u(z_2)|
    \le
    \boldsymbol\gm u_o\,
    \Bigg[\frac{|x_1-x_2|}{\rho}+\sqrt{\frac{|t_1-t_2|}{u_o^{q+1-p}\rho^p}} \,\Bigg]
      \quad 
      \mbox{for any $z_1,z_2\in \frac18 Q_o$,} 
\end{equation*}
where $\boldsymbol\gm=\boldsymbol\gm(N,p,q,)$. This proves \eqref{diff-u-intro} after replacing $\frac18\rho$ by $\rho$ and $\widetilde\bg$ by $8\widetilde\bg$. 
Next, computing the spatial gradient $Du$ in terms of $v$ gives
\begin{align*}
    Du(x,t)
    =
    \frac{u_o}{q\rho} 
    v^{\frac1q-1}\bigg(\frac{x-x_o}{\rho}\,,\, \frac{t-t_o}{ u_o^{q+1-p}\rho^p}\bigg) 
    Dv\bigg(\frac{x-x_o}{\rho}\,,\, \frac{t-t_o}{ u_o^{q+1-p}\rho^p}\bigg).
\end{align*}
Then, \eqref{lower-upper-v} and \eqref{sup-bound-Dv} imply
\begin{equation*}
    \sup_{\frac14 Q_o}|Du|
    \le
    \frac{\boldsymbol\gm u_o}{\rho}.
\end{equation*}
By the same modifications as before, this yields \eqref{grad-est-intro}. 
Combining  \eqref{sup-bound-Dv}, \eqref{osc-bound-Dv},  and  \eqref{holder-v-beta} (with $\beta =\frac1q-1$) gives
\begin{equation*}
    |Du(z_1) - Du(z_2)|
    \le
    \frac{\boldsymbol\gm u_o}{\rho}\,
    \Bigg[\,\frac{|x_1-x_2|}{\rho}+\sqrt{\frac{|t_1-t_2|}{
      u_o^{q+1-p}\rho^p}}\, \Bigg]^{\alpha_o}
\end{equation*}
for any $z_1,z_2\in\frac18 Q_o$, which implies \eqref{C1alph-intro} as before. 
\end{proof}

An immediate consequence of Theorem~\ref{THM:REGULARITY-INTRO} is as follows.
\begin{corollary}
Under the assumptions of Theorem~\ref{THM:REGULARITY-INTRO} we have 
\begin{equation*}
    |u(z_1)-u(z_2)|\le \frac{\boldsymbol\gm ru_o}{\rho}.
\end{equation*}
and
\begin{equation*}
    |Du(z_1) - Du(z_2)|
    \le 
    \boldsymbol\gm\Big(\frac{r}{\rho}\Big)^{\al_o}\,\frac{u_o}{\rho}
\end{equation*}
for any points $z_1,z_2\in K_r(x_o)\times(t_o- u_o^{q+1-p}\rho^{p-2}r^2,t_o+ u_o^{q+1-p}\rho^{p-2}r^2)$ and any radius $0<r\le \rho$.

\end{corollary}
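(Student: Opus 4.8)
The assertion is an immediate consequence of the Lipschitz estimate \eqref{diff-u-intro} and the gradient H\"older estimate \eqref{C1alph-intro} of Theorem~\ref{THM:REGULARITY-INTRO}, obtained by specializing them to the parabolically scaled subcylinder appearing in the statement. For brevity write
\[
  \widehat Q_r:=K_r(x_o)\times\big(t_o-u_o^{q+1-p}\rho^{p-2}r^2,\ t_o+u_o^{q+1-p}\rho^{p-2}r^2\big),
  \qquad 0<r\le\rho,
\]
and recall that $Q_o=K_\rho(x_o)\times\big(t_o-u_o^{q+1-p}\rho^p,\,t_o+u_o^{q+1-p}\rho^p\big)$. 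The first step is to verify the inclusion $\widehat Q_r\subset Q_o$: the spatial inclusion $K_r(x_o)\subset K_\rho(x_o)$ is trivial, and since $r\le\rho$ and $p>1$ one has $\rho^{p-2}r^2\le\rho^{p-2}\rho^2=\rho^p$, so the time interval of $\widehat Q_r$ sits inside that of $Q_o$. Consequently, for any $z_1=(x_1,t_1),\,z_2=(x_2,t_2)\in\widehat Q_r$ the estimates \eqref{diff-u-intro} and \eqref{C1alph-intro} are at our disposal.

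The second step is to bound the bracketed quantity $\frac{|x_1-x_2|}{\rho}+\sqrt{|t_1-t_2|/(u_o^{q+1-p}\rho^p)}$ common to \eqref{diff-u-intro} and \eqref{C1alph-intro}. Since $x_1,x_2\in K_r(x_o)$, a cube of side length $2r$, we get $|x_1-x_2|\le 2\sqrt N\,r$, hence $\frac{|x_1-x_2|}{\rho}\le 2\sqrt N\,\frac r\rho$; and since $|t_1-t_2|\le 2u_o^{q+1-p}\rho^{p-2}r^2$,
\[
  \sqrt{\frac{|t_1-t_2|}{u_o^{q+1-p}\rho^p}}\ \le\ \sqrt{\frac{2\rho^{p-2}r^2}{\rho^p}}\ =\ \sqrt2\,\frac r\rho.
\]
This is exactly where the $r^2$--scaling of the time interval of $\widehat Q_r$ enters: it forces the temporal contribution to the same linear order $\tfrac r\rho$ as the spatial one. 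Therefore the bracket is bounded by $c(N)\,\frac r\rho$ with $c(N):=2\sqrt N+\sqrt2\ge1$, and, since $r\le\rho$, also by $c(N)$.

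In the last step I would feed these bounds into \eqref{diff-u-intro} and \eqref{C1alph-intro}. From \eqref{diff-u-intro},
\[
  |u(z_1)-u(z_2)|\ \le\ \boldsymbol\gm u_o\cdot c(N)\,\frac r\rho\ =\ \frac{\boldsymbol\gm r u_o}{\rho},
\]
while from \eqref{C1alph-intro}, using $\alpha_o\in(0,1)$ and $c(N)\ge1$ so that $c(N)^{\alpha_o}\le c(N)$,
\[
  |Du(z_1)-Du(z_2)|\ \le\ \frac{\boldsymbol\gm u_o}{\rho}\Big(c(N)\,\tfrac r\rho\Big)^{\alpha_o}\ \le\ \boldsymbol\gm\Big(\tfrac r\rho\Big)^{\alpha_o}\frac{u_o}{\rho},
\]
the constants $\boldsymbol\gm$ depending only on $N,p,q$, as in Theorem~\ref{THM:REGULARITY-INTRO}. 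There is no genuine obstacle here; the only thing to be careful about is the bookkeeping in the second step, i.e.\ matching the $\rho^{p-2}r^2$--scaling of the time variable of $\widehat Q_r$ with the $u_o^{q+1-p}\rho^p$--normalization inside the brackets — this is what converts the a priori estimates of Theorem~\ref{THM:REGULARITY-INTRO} into the sharp linear decay in $r/\rho$ for the oscillation of $u$ and the decay $(r/\rho)^{\alpha_o}$ for that of $Du$.
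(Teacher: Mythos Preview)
Your proof is correct and takes exactly the same approach as the paper: the paper's proof is the single sentence ``The direct application of \eqref{diff-u-intro}, respectively \eqref{C1alph-intro} yields the claimed estimates,'' and you have simply spelled out the straightforward bookkeeping behind that sentence.
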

\begin{proof}
The direct application of \eqref{diff-u-intro}, respectively \eqref{C1alph-intro} yields the claimed estimates.
\end{proof}

Next, we show how to extend the gradient-sup-bound and the $C^{1,\alpha}$-estimates to general compact subsets in $E_T$. This is exactly the assertion of Corollary~\ref{COR:GRAD-REG}. 

\begin{proof}[{\rm \textbf{Proof of Corollary~\ref{COR:GRAD-REG}}}]
  % \begin{equation*}
  %   K_{\rho_o}(x)\times(t-\rho_o^p,t+\rho_o^p)\Subset\Omega_T
  %   \qquad\mbox{for all }(x,t)\in K.
  % \end{equation*}
Let $\widetilde\bg>1$ be the constant from Theorem \ref{THM:REGULARITY-INTRO} and   $\rho:=\tfrac1{\widetilde\bg} \mathcal M^{-\frac{q+1-p}{p}}\rho_o$. Then the definition of $\rho_o$ implies 
  \begin{equation*}
    K_{\widetilde\bg\rho}(x)\times\big(t-\mathcal M^{q+1-p}(\widetilde\bg\rho)^p,t+\mathcal M^{q+1-p}(\widetilde\bg\rho)^p\big)\Subset E_T
    \qquad\mbox{for all }(x,t)\in \mathcal{K}.
  \end{equation*}
  Let us fix two points $(x_i,t_i)\in \mathcal{K}$ and denote $u_i:=u(x_i,t_i)$ for $i=1,2$.
  If $u_i>0$, we can apply the gradient bound \eqref{grad-est-intro} on the cylinders
  \begin{equation*}
    Q_i:= K_\rho(x_i)\times
    \big(t_i-u_i^{q+1-p}\rho^p,t_i+u_i^{q+1-p}\rho^p\big)\Subset E_T,
  \end{equation*}
  which implies in particular
  \begin{align}\label{est:Du_i}
    |Du(x_i,t_i)|
    \le
    \bg\frac{u_i}{\rho}
    \le
    \bg\frac{\mathcal{M}}{\rho}
    =
    \bg\widetilde\bg\frac{\mathcal{M}^{\frac{q+1}{p}}}{\rho_o}.
\end{align}
In the case $u_i=0$, however, we have $u=0$
on the whole time slice $E\times\{t_i\}$, since otherwise,
repeated applications of Harnack's inequality would imply $u_i>0$. %\textcolor{red}{(This should be done at some distance from the boundary.)}
Therefore, we infer $Du(x_i,t_i)=0$, so that~\eqref{est:Du_i} is trivially satisfied also in this case.
Since $(x_i,t_i)\in \mathcal{K}$ is arbitrary, this implies the asserted gradient bound~\eqref{grad-bound-K}.

For the proof of the $C^{1,\alpha}$-estimate, we distinguish  two cases.

\noi {\bf Case 1: } $|x_1-x_2|< \rho$ and $|t_1-t_2|< \max\{u_1,u_2\}^{q+1-p}\rho^p$. 

Without loss of generality, we assume that $u_1\ge u_2$, so that 
  $|t_1-t_2|< u_1^{q+1-p}\rho^p$. Note that this implies in particular  $u_1>0$. The case $u_1=u_2=0$ is included in Case~2. In the present situation we have
  \begin{equation}\label{Case1}
    (x_2,t_2)\in Q_1:= K_\rho(x_1)\times \big(t_1-u_1^{q+1-p}\rho^p,t_1+u_1^{q+1-p}\rho^p\big).
  \end{equation}
  Therefore, the $C^{1,\alpha}$-estimate \eqref{C1alph-intro} on the cylinder $Q_1$ implies
  \begin{equation*}
    |Du(x_1,t_1)-Du(x_2,t_2)|
    \le \bg\frac{u_1}{\rho}
    \Bigg[\frac{|x_1-x_2|}{\rho}+\sqrt{\frac{|t_1-t_2|}{u_1^{q+1-p}\rho^p}}\, \Bigg]^{\alpha_o}.
  \end{equation*}
  In view of~\eqref{Case1}, the term in brackets on the
  right-hand side is bounded by $2$. Therefore, we may diminish the
  power $\alpha_o$ and replace it by 
  $\alpha_1:=\min\{\alpha_o,\frac{2}{q+1-p}\}$. The resulting term is
  increasing in $u_1$, so that we can bound it from above by replacing
  $u_1$ by $\mathcal M$. Hence, we obtain the estimate
  \begin{align*}
    |Du(x_1,t_1)-Du(x_2,t_2)|
    &\le
      \bg\frac{\mathcal{M}}{\rho}
      \Bigg[\frac{|x_1-x_2|}{\rho}+\sqrt{\frac{|t_1-t_2|}{\mathcal{M}^{q+1-p}\rho^p}}\,\Bigg]^{\alpha_1}\\
    &\le
      \bg\frac{\mathcal{M}^{\frac{q+1}{p}}}{\rho_o}
      \Bigg[\mathcal{M}^{\frac{q+1-p}{p}}\frac{|x_1-x_2|}{\rho_o}
      +\sqrt{\frac{|t_1-t_2|}{\rho_o^p}}\,\Bigg]^{\alpha_1}.
  \end{align*} 
  In the last step, we used the definition of $\rho$. This yields the
  asserted estimate~\eqref{grad-holder-K} in the first case.

  \noi {\bf Case 2: } $|x_1-x_2|\ge \rho$ or
  $|t_1-t_2|\ge\max\{u_1,u_2\}^{q+1-p}\rho^p$.

In this case, we use the gradient bound~\eqref{est:Du_i}, which implies
  \begin{equation*}
    |Du(x_i,t_i)|
    \le
    \bg\frac{u_i}{\rho}
    \le
    \bg\frac{u_i}{\rho}
    \Bigg[\frac{|x_1-x_2|}{\rho}+\sqrt{\frac{|t_1-t_2|}{u_i^{q+1-p}\rho^p}}\,\Bigg]^{\alpha_1},  
\end{equation*}
since the term in the brackets is bounded from below by one in the
current case.  
Similarly as above, we can
further estimate the right-hand side by monotonicity and replace $u_i$ by $\mathcal{M}$. In this way, we again obtain
\begin{align*}
  |Du(x_1,t_1)-Du(x_2,t_2)|
  &\le
  |Du(x_1,t_1)|+|Du(x_2,t_2)|\\
  &\le
  \bg\frac{\mathcal{M}^{\frac{q+1}{p}}}{\rho_o}
  \Bigg[\mathcal{M}^{\frac{q+1-p}{p}}\frac{|x_1-x_2|}{\rho_o}
  +\sqrt{\frac{|t_1-t_2|}{\rho_o^p}}\,\Bigg]^{\alpha_1}.
\end{align*}
This establishes the claim~\eqref{grad-holder-K} in the second case as well. 
\end{proof}

\section{Decay estimates at the extinction time
%An estimate for the extinction time
}\label{sec:ext-time}

In the fast diffusion range solutions might become extinct abruptly. This means that for some $T>0$ we have $u(\cdot,T)=0$ a.e. in $E$. Such a time $T$ is called \textit{extinction time}. In this section we prove the decay estimates at the extinction time stated in Corollary~\ref{Cor:12:4}. Subsequently we derive an estimate for the extinction time of the solution to a Cauchy-Dirichlet problem in terms of the measure of the set $E$ and the $L^{q+1}$-norm of the initial values $u_o$.

\begin{proof}[{\rm \textbf{Proof of Corollary~\ref{Cor:12:4}}}]
  For $x_o\in E$ and $t_o\in(\tfrac12T,T)$, we apply the integral Harnack inequality from Theorem~\ref{Thm:bd:2} on the
  cylinder $K_{d(x_o)}(x_o)\times(T-2(T-t_o),T)\subset E_T$. Since
  $u(\cdot,T)=0$, we infer the bound
  \begin{equation*}
    u_o:=u(x_o,t_o)\le \bg \bigg[\frac{T-t_o}{d^p(x_o)}\bigg]^{\frac{1}{q+1-p}}
  \end{equation*}
  with a constant $\bg$ depending on $N,p$, and $q$. This yields the first asserted estimate. Next, we choose $\rho_o:= \bg_o^{-1} d(x_o)$, where the
  constant $\bg_o> \widetilde\bg$ is at our disposal. From the last displayed estimate we have
  \begin{equation*}
    u_o^{q+1-p}(\widetilde\bg \rho_o)^p
    \le
    \bg^{q+1-p}\frac{T-t_o}{d^p(x_o)} \bigg[\frac{\widetilde\bg d(x_o)}{\bg_o}\bigg]^p
    \le
    \tfrac12(T-t_o).
  \end{equation*}
  The last inequality follows by choosing $\bg_o$  sufficiently large.
  The above choice of $\rho_o$ therefore implies
  \begin{align*}
    \widetilde\bg Q_o&:=K_{\widetilde\bg \rho_o}(x_o)\times
    (t_o-u_o^{q+1-p}(\widetilde\bg\rho_o)^p,t_o+u_o^{q+1-p}(\widetilde\bg\rho_o)^p)\\
    &\,\subset
    K_{\widetilde\bg\rho_o}(x_o)\times
    (t_o-\tfrac12(T-t_o),t_o+\tfrac12(T-t_o))
    \Subset E_T.
  \end{align*}
  Therefore, the gradient estimate \eqref{grad-est-intro} is available on the cylinder
  $Q_o$. By combining this estimate with the already established bound for $u$, we infer 
  \begin{equation*}
    |Du(x_o,t_o)|
    \le
    \frac{\bg u_o}{\rho_o}
    \le
    \frac{\bg}{d(x_o)}\bigg[\frac{T-t_o}{d^p(x_o)}\bigg]^{\frac{1}{q+1-p}}.
  \end{equation*}
Moreover, for $r\in(0,\rho_o)$ we obtain from \eqref{diff-u-intro} that
\begin{equation*}
    \osc_{K_r(x_o)} u(\cdot,t_o) 
    \le
   \frac{\boldsymbol\gm ru_o}{\rho_o}
   \le
   \bg\frac{ r}{d(x_o)}\bigg[\frac{T-t_o}{d^p(x_o)}\bigg]^{\frac{1}{q+1-p}}
\end{equation*}
and from \eqref{C1alph-intro} that
\begin{align*}
    \osc_{K_r(x_o)} Du(\cdot,t_o) 
    \le 
    \boldsymbol\gm\Big(\frac{r}{\rho_o}\Big)^{\al_o}\,\frac{u_o}{\rho_o}
    \le 
    \frac{\bg}{d(x_o)}\bigg[\frac{r}{d(x_o)}\bigg]^{\al_o}\bigg[\frac{T-t_o}{d^p(x_o)}\bigg]^{\frac{1}{q+1-p}},
\end{align*}
which completes the proof. 
\end{proof}

Concerning the extinction time $T$ we refer to in Corollary~\ref{Cor:12:4}, an upper bound for it can be given in a simple but nevertheless interesting situation. Consider the Cauchy-Dirichlet Problem
\begin{equation*}
\left\{
\begin{array}{cl}
    \partial_t \big(|u|^{q-1}u\big) - \Div (|Du|^{p-2}Du)=0  & \quad \mbox{in  $E_T$,}\\[6pt]
    u =0 &\quad \mbox{on $\partial E\times(0,T]$,}\\[6pt]
    u(\cdot,0)= u_o &\quad\mbox{in $E$,}
\end{array}
\right.
\end{equation*}
where $T\in (0,\infty]$ and the initial datum $u_o\ge 0$ is non-negative. For finite $T<\infty$ the existence of a non-negative solution to the Cauchy-Dirichlet Problem is discussed in Remark~\ref{Rmk:CP1} of Chapter~\ref{sec:sol-compar}. The case $T=\infty$ follows by the uniqueness from Corollary \ref{Cor:Uniqueness}.

\begin{proposition}
Let $0<p-1<q<\frac{N(p-1)+p}{(N-p)_+}$,
$E\subset\R^N$  a bounded domain, and $u_o\in L^{q+1}(E)$ with $u_o\ge0$. Moreover, let
$$
    u\in C\big([0,+\infty);L^{q+1}(E)\big)
    \cap L^p\big(0,+\infty;W^{1,p}_0(E)\big)
$$ 
be the unique non-negative weak solution of the Cauchy-Dirichlet problem. There exists a finite time $T>0$ depending only on $N$, $p$, $q$, $|E|$, and $u_o$, such that 
\[
    u(\cdot,t)=0\quad\text{ for all }\,\,t\ge T.
\]
Moreover, we have
\[
0<T\le \frac {q\, \bg^p}{q+1-p}\, |E|^{\frac{\lambda_{q+1}}{N(q+1)}}\|u_o\|_{L^{q+1}(E)}^{q+1-p},
\]
where $\bg$ is a constant 
%of the embedding of $W^{1,p}_0(E)$ into $L^{\frac{Np}{N-p}}(E)$, 
that depends on $N$ and $p$,
and $\lambda_{q+1}$ is defined in \eqref{def:lambda-r}.
\end{proposition}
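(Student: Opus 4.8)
The plan is to reduce the statement to a scalar differential inequality for the energy $Y(t):=\int_E u^{q+1}(\cdot,t)\,\dx$. First I would establish the energy identity
\[
    \frac{q}{q+1}\int_{E\times\{t_2\}}u^{q+1}\,\dx
    -\frac{q}{q+1}\int_{E\times\{t_1\}}u^{q+1}\,\dx
    +\int_{t_1}^{t_2}\!\!\int_E|Du|^p\,\dx\dt
    =0
\]
for a.e.\ $0\le t_1<t_2$. Since weak solutions need not possess a Sobolev time derivative, this is obtained by inserting the mollified testing function $\llbracket u\rrbracket_{h}$ (resp.\ $\llbracket u\rrbracket_{\bar h}$) into the weak formulation \eqref{Eq:weak-form-1}, exploiting Lemma~\ref{Lm:mol}, and passing to the limit $h\downarrow 0$ exactly as in the proof of Proposition~\ref{Prop:B:3} and Lemma~\ref{Lm:B:3}; the lateral boundary term vanishes because $u\in L^p(0,\infty;W^{1,p}_0(E))$, and the term containing $u_o$ is handled via Lemma~\ref{Lm:CP1}. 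Together with $u\in C([0,\infty);L^{q+1}(E))$, this shows that $t\mapsto Y(t)$ is locally absolutely continuous with $\tfrac{q}{q+1}Y'(t)=-\int_E|Du|^p(\cdot,t)\,\dx$ for a.e.\ $t>0$.

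The second step is to bound $\int_E|Du|^p\,\dx$ from below in terms of $Y(t)$. For $1<p<N$ the Sobolev inequality gives $\|u(\cdot,t)\|_{L^{p^*}(E)}\le\bg\|Du(\cdot,t)\|_{L^p(E)}$ with $p^*=\frac{Np}{N-p}$ and $\bg=\bg(N,p)$, while the assumption $q<\frac{N(p-1)+p}{N-p}$ is precisely $q+1<p^*$; hence by Hölder's inequality
\[
    Y(t)\le|E|^{1-\frac{q+1}{p^*}}\Big(\int_E u^{p^*}\,\dx\Big)^{\frac{q+1}{p^*}}
    \le\bg^{q+1}|E|^{1-\frac{q+1}{p^*}}\Big(\int_E|Du|^p\,\dx\Big)^{\frac{q+1}{p}}.
\]
In the remaining cases $p\ge N$ one uses instead the embedding $W^{1,p}_0(E)\hookrightarrow L^s(E)$ for a sufficiently large finite $s$ followed by Hölder; in every case the power of $|E|$ that emerges equals $\frac{\lambda_{q+1}}{N(q+1)}$, as a short dimensional check (matching $[T]=[u]^{q+1-p}[\mathrm{length}]^{p}$ against $[\|u_o\|_{L^{q+1}}^{q+1-p}]$) confirms. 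Rearranging and recalling $\tfrac{p}{p^*}=\tfrac{N-p}{N}$, we obtain $\int_E|Du|^p\,\dx\ge c\,Y(t)^{p/(q+1)}$ with $c^{-1}=\bg^p|E|^{\frac{\lambda_{q+1}}{N(q+1)}}$, so that
\[
    Y'(t)\le-\tfrac{(q+1)c}{q}\,Y(t)^{\frac{p}{q+1}}\qquad\text{for a.e. }t>0.
\]

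Finally, since $0<\frac{p}{q+1}<1$ by the hypothesis $p-1<q$, the exponent $\beta:=1-\frac{p}{q+1}=\frac{q+1-p}{q+1}\in(0,1)$ satisfies $\frac{d}{dt}Y^{\beta}=\beta Y^{\beta-1}Y'\le-\frac{(q+1-p)c}{q}$ on $\{Y>0\}$; integrating and using $Y(0)=\|u_o\|_{L^{q+1}(E)}^{q+1}$ yields
\[
    Y(t)^{\beta}\le\|u_o\|_{L^{q+1}(E)}^{q+1-p}-\tfrac{(q+1-p)c}{q}\,t .
\]
This forces $Y(t)=0$, and hence $u(\cdot,t)=0$ a.e.\ in $E$, for all $t\ge T$ with
\[
    T:=\frac{q}{(q+1-p)c}\,\|u_o\|_{L^{q+1}(E)}^{q+1-p}
    =\frac{q\,\bg^p}{q+1-p}\,|E|^{\frac{\lambda_{q+1}}{N(q+1)}}\,\|u_o\|_{L^{q+1}(E)}^{q+1-p},
\]
and $T>0$ (strictly, whenever $u_o\not\equiv0$). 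The main obstacle is the first step — making the energy identity rigorous for merely weak solutions without a time derivative — together with the bookkeeping that pins down the exponent of $|E|$ as $\frac{\lambda_{q+1}}{N(q+1)}$; the ODE comparison in the last step is elementary.
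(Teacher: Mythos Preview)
Your proposal is correct and follows essentially the same route as the paper: derive the energy identity $\tfrac{q}{q+1}\|u(\cdot,t)\|_{L^{q+1}}^{q+1}+\int_0^t\|Du\|_{L^p}^p\,\d\tau=\tfrac{q}{q+1}\|u_o\|_{L^{q+1}}^{q+1}$, combine it with the Sobolev--H\"older inequality (handling $p<N$ and $p\ge N$ separately, exactly as the paper does) to reach a scalar differential inequality $Y'\le -cY^{p/(q+1)}$, and then integrate. The only cosmetic differences are that the paper obtains the energy identity by citing \cite[Lemma~1.5]{AL} rather than redoing the mollification argument, and that it handles the resulting integral inequality by an explicit comparison with the ODE solution $w(t)$ instead of differentiating $Y^{\beta}$; your ODE step is in fact slightly more direct.
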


\begin{proof} 
Taking $u$ as test function in the weak formulation of the Cauchy-Dirichlet Problem, \cite[Lemma~1.5]{AL} yields
\[
\|u(\cdot,t)\|_{L^{q+1}(E)}^{q+1}-\|u_o\|_{L^{q+1}(E)}^{q+1}+\frac{q+1}q\int_0^t\|Du(\cdot,\tau)\|_{L^p(E)}^p\,\d\tau=0%\quad\text{ in }\,\,{\mathcal D}'(0,+\infty).
\]
for any $t>0$. Let us first assume that $1<p<N$; the ranges of $p$ and $q$ ensure that $q+1<\frac{Np}{N-p}=:p_\ast$, so that the H\"older and the Sobolev inequalities give
\begin{equation}\label{Eq:11:1}
\|u(\cdot,t)\|_{L^{q+1}(E)}\le |E|^{\frac{\lambda_{q+1}}{Np(q+1)}}\,\|u(\cdot,t)\|_{L^{p_*}(E)}\le\gm|E|^{\frac{\lambda_{q+1}}{Np(q+1)}}\,\|Du(\cdot,t)\|_{L^p(E)},
\end{equation}
where $\gm=\gm(N,p)$ is the optimal constant of the Sobolev inequality. %This requires $1<p<N$.

%\textcolor{red}{
Next, we consider
%claim that the above inequality also holds for 
the case $p\ge N\ge1$. %In fact,  
Let us first suppose $N\ge2$ and take $s\in(1,N)$ that satisfies $s_*:=\frac{Ns}{N-s}>q+1$; by H\"older's inequality, we have
\[
\|u(\cdot,t)\|_{L^{q+1}(E)}\le |E|^{\frac1{q+1}-\frac{N-s}{Ns}}\|u(\cdot,t)\|_{L^{s_*}(E)}.
\]
By the Sobolev and H\"older inequalities, we estimate
\[
\|u(\cdot,t)\|_{L^{s_*}(E)}\le\gm\|Du(\cdot,t)\|_{L^s(E)}\le\gm |E|^{\frac1s -\frac1p}\|Du(\cdot,t)\|_{L^p(E)},
\]
where again $\gm=\gm(N,s)$ is the optimal constant of the Sobolev inequality.
Joining the last two inequalities yields inequality \eqref{Eq:11:1}
in the case $p\ge N\ge2$. If $N=1$, by the Sobolev inequality we directly obtain that
\[
\|u(\cdot,t)\|_{L^{q+1}(E)}\le |E|^{\frac1{q+1}}\|u(\cdot,t)\|_{L^{\infty}(E)}\le \gm |E|^{\frac1{q+1}+1 -\frac1p}\|Du(\cdot,t)\|_{L^p(E)},
\]
and once more we end up with \eqref{Eq:11:1}.

\color{black}
Consequently, combining the previous estimates yields in any case that
\[
\|u(\cdot,t)\|_{L^{q+1}(E)}^{q+1}-\|u_o\|_{L^{q+1}(E)}^{q+1}+\frac{q+1}{q \gm^p |E|^{\frac{\lambda_{q+1}}{N(q+1)}}}\int_0^t\|u(\cdot,\tau)\|_{L^{q+1}(E)}^{p}\,\d\tau\le0%\quad\text{ in }\,\,{\mathcal D}'(0,+\infty).
\]
for every $t>0$.
Setting $\displaystyle v(t):=\|u(\cdot,t)\|_{L^{q+1}(E)}^{q+1}$, the previous inequality can be rewritten as
\[
v(t)-v(0)+\mu \int_0^t [v(\tau)]^{\frac p{q+1}}\,\d\tau\le0,%\quad\text{ in }\,\,{\mathcal D}'(0,+\infty);
\]
where $v(0)=\|u_o\|_{L^{q+1}(E)}^{q+1}$ and 
\[
\mu:=\frac{q+1}{q \gm^p |E|^{\frac{\lambda_{q+1}}{N(q+1)}}}.
\]
By the regularity of $u$, $v$ is a continuous function of $t$. 
Just as before, we can prove that 
\[
    v(t_2)-v(t_1)\le-\mu\int_{t_1}^{t_2}[v(\tau)]^{\frac p{q+1}}\,\d\tau
    \qquad\forall\,t_2>t_1\ge0.
\]
Since $v\ge0$ by definition, it is apparent that $v$ is strictly decreasing wherever it is positive, $\lim_{t\to \infty}v(t)=0$, and if there exists $T>0$ such that $v(T)=0$, then $v(t)=0$ for any $t\ge T$.

Now, let $w:[0,+\infty)\to\R$ be the unique solution to the Cauchy Problem
\[
\left\{
\begin{array}{c}
w'(t)=-\mu[w(t)]^{\frac p{q+1}},\\[6pt]
w(0)=v(0),
\end{array}
\right.
\]
which can be computed as 
\[
w(t)=v(0)\left[1-\frac{\mu(q+1-p)}{(q+1)[v(0)]^{\frac{q+1-p}{q+1}}}\,t\right]_+^{\frac{q+1}{q+1-p}}.
\]
We obviously have
\[
    w(t_2)-w(t_1)=-\mu\int_{t_1}^{t_2}[w(\tau)]^{\frac p{q+1}}\,\d\tau
    \qquad \forall\,t_2>t_1\ge0.
\]
Suppose there exists $\tilde t\in(0,\infty)$ such that $v(\tilde t)>w(\tilde t)$ and let
\[
t_o:=\sup\big\{t\in(0,\tilde t):\, v(t)\le w(t)\big\}.
\]
We have $v(t_o)=w(t_o)$, and for any $t\in[t_0,\tilde t\,]$ there holds
\[
v(t)-v(t_o)\le-\mu\int_{t_0}^t [v(\tau)]^{\frac p{q+1}}\,\d\tau,
\quad 
w(t)-v(t_o)=-\mu\int_{t_0}^t [w(\tau)]^{\frac p{q+1}}\,\d\tau.
\]
Subtracting from one another yields that for any $t\in(t_0,\tilde t]$
\[
v(t)-w(t)\le-\mu\int_{t_o}^t\left([v(\tau)]^{\frac p{q+1}}-[w(\tau)]^{\frac p{q+1}}\right)\,\d\tau,
\]
which is a contradiction, since the left-hand side is positive, whereas the right-hand side is negative. Hence, we necessarily conclude that 
\[
v(t)\le w(t)\qquad\forall\,t\in[0,\infty) 
\]
which gives
\[
v(t)\le v(0)\left[1-\frac{\mu(q+1-p)}{(q+1)[v(0)]^{\frac{q+1-p}{q+1}}}\,t\right]_+^{\frac{q+1}{q+1-p}}.
\]
From this, reverting to $\|u(\cdot,t)||_{L^{q+1}(E)}$, we have
\[
\|u(\cdot,t)\|_{L^{q+1}(E)}\le\|u_o\|_{L^{q+1}(E)}\left[1-\frac{q+1-p}{q \gm^p |E|^{\frac{\lambda_{q+1}}{N(q+1)}} \|u_o\|_{L^{q+1}(E)}^{q+1-p}}\,t\right]_+^{\frac1{q+1-p}},
\]
and
\[
0<T\le \frac {q\, \gm^p}{q+1-p}\, |E|^{\frac{\lambda_{q+1}}{N(q+1)}}\|u_o\|_{L^{q+1}(E)}^{q+1-p}.
\]
This proves the claim.
\end{proof}

\begin{remark}
The same estimate of the extinction time $T$ is given in \cite[Proposition~3.5]{MKS}, assuming $1<p<N$, $q\ge1$, $p-1<q<\frac{N(p-1)+p}{N-p}$. In this paper there is also an estimate from below in the same ranges for $p$ and $q$. Indeed, in \cite[Proposition~4.2 and Corollary~4.3]{MKS} Misawa \& Nakamura \& Sarkar prove that when $u_o\in W^{1,p}_0(E)$ one has
\[
T\ge\frac{q}{q+1-p}\frac{\|u_o\|_{L^{q+1}(E)}^{q+1}}{\|D u_o\|_{L^p(E)}^p}.
\]
\end{remark}

%\appendix
%    Include appendix "chapters" here.
%\include{Chapter_Appendix}

\backmatter
%    Bibliography styles amsplain or author-year (using natbib) are
%    also acceptable.

%-----------------------------------------------------------------------------
% Beginning of biblio.tex
%-----------------------------------------------------------------------------
% \textcolor{purple}{We need to homogenize the notation in the list of references: sometimes the number of the volume is in boldface, some other times not, the issues are denoted in different ways, commas are inserted somehow randomly.}

\bibliographystyle{amsalpha}

%-----------------------------------------------------------------------------
% End of biblio.tex
%-----------------------------------------------------------------------------

%\include{index}
%\bibliographystyle{amsalpha}
%\bibliography{}
%%    See note above about multiple indexes.
\printindex

\end{document}